\numberwithin{equation}{section}
\theoremstyle{plain}
\newtheorem{thm}{Theorem}[section]
\newtheorem{lem}[thm]{Lemma}
\newtheorem{prop}[thm]{Proposition}
\newtheorem{cor}[thm]{Corollary}
\theoremstyle{definition}
\newtheorem{defn}[thm]{Definition}
\theoremstyle{remark}
\newtheorem{rem}[thm]{Remark}
\newcommand{\LieTr}[2]{e^{-#1} #2 e^{#1}}
\DeclareMathOperator{\diag}{diag}
\DeclareMathOperator{\dist}{dist}
\newcommand{\Op}{{\rm Op}}
\newcommand{\Ops}{{\rm OP}S}
\renewcommand{\bar}{\overline}
\newcommand{\sgn}{{\rm sgn}}
\newcommand{\even}{{\rm even}}
\newcommand{\odd}{{\rm odd}}
\newcommand{\ora}[1]{\vec{#1}}
\newcommand{\pa}{\partial}
\newcommand{\vs}{\varsigma}
\newcommand{\vphi}{\varphi}
\newcommand{\sign}{{\rm sign}\,}
\newcommand{\normk}[2]{\| #1 \|_{#2}^{k_0,\upsilon}}
\newcommand{\absk}[2]{| #1 |_{#2}^{k_0,\upsilon}}
\renewcommand{\whi}{\widehat \imath}
\renewcommand{\wti}{\widetilde \imath}
\newcommand{\acca}{\fH}
\definecolor{lblu}{rgb}{0.5,0.5,1}
\definecolor{rosso}{rgb}{1,0,0}
\definecolor{verde}{rgb}{0,0.7,0.1}
\definecolor{blu}{rgb}{0,0.1,0.7}
\definecolor{viola}{rgb}{0.9,0.2,0.5}
\definecolor{bluverde}{rgb}{0.,0.5,1.}
\definecolor{acq}{rgb}{0,0.5,0.7}
\definecolor{giallo}{rgb}{0.9,0.9,0}
\definecolor{rossos}{rgb}{0.9,0,0.05}
\definecolor{verdes}{rgb}{0,0.5,0.2}
\definecolor{blus}{rgb}{0,0,0.4}
\definecolor{orange}{rgb}{1.0, 0.49, 0.0}
\providecommand{\vect}[2]{{\bigl(\begin{smallmatrix}#1\\#2\end{smallmatrix}\bigr)}}
\title{\bf Pure gravity traveling quasi-periodic   \\
water waves with constant vorticity}
\begin{document}

\author{M. Berti\footnote{
		SISSA, Via Bonomea 265, 34136, Trieste, Italy. 
		\textit{Email:} \texttt{berti@sissa.it}; 
	}, 
 L. Franzoi\footnote{
NYUAD Research Institute, New York University Abu Dhabi, PO Box 129188, Abu Dhabi, United Arab Emirates.
 	\textit{Email:} \texttt{lf2304@nyu.edu}; 
 },
 A. Maspero\footnote{
SISSA, Via Bonomea 265, 34136, Trieste, Italy. 
 \textit{Email:} \texttt{alberto.maspero@sissa.it} 
 }}

\maketitle

\noindent
{\bf Abstract.}
We prove the existence of small amplitude 
time quasi-periodic  solutions of the {\it pure gravity} water waves equations 
with  {\it constant vorticity},  for a bidimensional fluid over a flat bottom 
delimited by a space periodic free interface. 
Using a Nash-Moser implicit function iterative scheme  
we construct   traveling nonlinear waves which pass through each other slightly deforming and retaining forever a quasiperiodic structure. 
These 
solutions exist for any fixed value of depth and gravity
and restricting 
the vorticity  parameter to a Borel set of asymptotically full Lebesgue measure.

\smallskip

\noindent
{\it Keywords:} Traveling waves, 
 Water waves, vorticity, KAM for PDEs,  quasi-periodic solutions.

\smallskip

\noindent
{\it MSC 2010:} 76B15,  37K55, 35C07,    (37K50, 35S05).

\tableofcontents

\section{Introduction}

A problem of fundamental importance in fluid mechanics
regards 
the search for traveling 
 surface waves.
Since the pioneering work of Stokes \cite{stokes} in 1847, 
a huge literature has established the existence of  steady traveling  waves,
namely solutions (either periodic or localized in space)  which look stationary in a moving frame. 
 The majority of the results concern 
  bidimensional fluids.  At the end of the section we shortly report 
 on the vast literature on 
 this problem. 

In the recent 
work \cite{BFM} we proved the first bifurcation result of 
 {\it time quasi-periodic 
traveling} solutions 
of the  water waves equations  
under the effects of gravity, constant vorticity,  and exploiting the capillarity effects 
at the free surface.  
These solutions can not be reduced to steady solutions in any moving frame. 
For pure gravity  irrotational water waves 
with infinite depth, 
quasi-periodic traveling waves has been  obtained in Feola-Giuliani \cite{FG}. 

\smallskip
The goal of this paper is to prove 
the existence of time quasi-periodic traveling water waves, 
 also in the physically important  case of  the {\it pure gravity}
equations with non zero {\it constant vorticity}, for any  value of the 
{\it depth} of the water, finite or infinite. 
In this work we are able to use the vorticity as  a parameter:  
 the 
 solutions that we construct  exist for any value of  gravity and 
 depth of the fluid,  provided the vorticity is restricted to a Borel set of asymptotically full measure, see Theorem \ref{thm:main0}. 
We also remark that, in case of 
non zero vorticity, one can not expect the bifurcation of standing waves since they are not allowed by the linear theory. 

\smallskip

It is well known that 
 this is a subtle small divisor problem. Major difficulties 
are that: 
($i$) the vorticity parameter enters 
in  the dispersion relation only at the zero order; 
($ii$) there are  resonances among the linear frequencies which can be avoided 
only for traveling waves; 
$(iii)$ the dispersion relation of the pure gravity equations 
is sublinear at infinity;
($iv$)  the nonlinear transport term is a singular perturbation of the unperturbed linear water waves vector field. 
Related difficulties appear in the search of pure gravity time periodic {\it standing} waves
which have been constructed  in the last  years for irrotational fluids
by Iooss, Plotnikov, Toland \cite{PlTo, IPT, IP-SW1}, 
extended to time quasi-periodic standing waves solutions
in Baldi-Berti-Haus-Montalto \cite{BBHM}.  In presence of surface tension,  time periodic standing waves solutions
 were constructed  by 
Alazard-Baldi \cite{AB}, extended to  time quasi-periodic solutions 
 by Berti-Montalto  \cite{BM}.  
We  mention that also the construction of
gravity steady 
traveling waves periodic  in space
presents  small divisor difficulties  for three dimensional  fluids.
These solutions, in a moving frame,
look steady bi-periodic waves and    
have been constructed for irrotational fluids
by Iooss-Plotnikov \cite{IP-Mem-2009,IP2} using the 
speed as a bidimensional  parameter {(for capillary waves in \cite{CN} is not a small divisor problem)}.

We now recall the pure gravity water waves equations with constant vorticity. 

\paragraph{The water waves equations.}

We consider the Euler equations of hydrodynamics for a 2-dimensional 
incompressible and inviscid fluid with constant vorticity $\gamma$, under the action of 
{\it pure gravity}.
The fluid 
occupies the region
\begin{equation}
\label{domain}
\cD_{\eta, \tth} := \big\{ (x,y)\in \T\times \R \ : \ -\tth <  y<\eta(t,x) \big\} \, , 
\quad \T := \T_x :=\R/ (2\pi\Z) \, ,
\end{equation} 
with a (possibly infinite) depth $\tth > 0 $  and 
 space periodic boundary conditions.
The unknowns of the problem are the free surface  $ y = \eta (t, x)$
of the time dependent domain $\cD_{\eta,\tth} $ and the 
divergence free  velocity field $\vect{u(t,x,y)}{v(t,x,y)}$.
If the  fluid has constant vorticity 
$$ 
v_x - u_y = \gamma \, , 
 $$
the velocity field   is the sum of the Couette flow $\vect{-\gamma y}{0}$ (recently studied in \cite{BeMa}, \cite{WZZ} and references therein),  which carries
all the  vorticity $ \gamma $ of the fluid,  and an irrotational field, 
expressed as the gradient of a harmonic function $\Phi $, called the generalized velocity potential. 
Denoting $ \psi (t,x) := \Phi (t,x, \eta(t,x)) $ 
 the evaluation of the generalized velocity potential at the free interface, 
one recovers $ \Phi $ by solving the elliptic problem
\begin{equation}
\label{dir}
\Delta \Phi = 0  \ \mbox{ in } \cD_{\eta, \tth} \, , \quad
\Phi = \psi \  \mbox{ at } y = \eta(t,x) \, , \quad
\Phi_y \to  0  \  \mbox{ as } y \to  - \tth \, .
\end{equation}
The third condition in \eqref{dir}  means the impermeability  property of the bottom
$ \Phi_y ( t, x, - \tth) = 0 $ if $ \tth < \infty $, and 
$ \lim\limits_{y \to - \infty } \Phi_y ( t, x, y) = 0 $, if $ \tth = + \infty $.
Imposing  
that the fluid particles at the free surface remain on it along the evolution
(kinematic boundary condition), and that
the pressure of the fluid 
 is equal to the constant 
atmospheric pressure  at the free surface (dynamic boundary condition),  the 
time evolution of the fluid is determined by the following 
system of equations 
\begin{equation}
\label{ww}
\begin{cases}
\eta_t = G(\eta)\psi + \gamma \eta \eta_x \\
\displaystyle{\psi_t = - g\eta  - \frac{\psi_x^2}{2} + 
	\frac{(  \eta_x \psi_x + G(\eta)\psi)^2}{2(1+\eta_x^2)}  + \gamma \eta \psi_x  + \gamma \partial_x^{-1} G(\eta) \psi} \, .
\end{cases}
\end{equation}
Here $ g $ is the gravity and 
$G(\eta)$ is the  Dirichlet-Neumann operator  
\begin{equation}
\label{DN}
G(\eta)\psi := G(\eta,\tth)\psi := \sqrt{1+\eta_x^2} \, (\partial_{\vec n} \Phi )\vert_{y = \eta(x)} = (- \Phi_x \eta_x + \Phi_y)\vert_{y = \eta(x)} \, . 
\end{equation}
As observed in the irrotational case  by Zakharov \cite{Zak1}, and  in presence of constant vorticity by Wahlén \cite{Wh}, 
the water waves equations \eqref{ww} are the Hamiltonian system 
\begin{equation}\label{ham_ww}
	\eta_t = \nabla_{\psi} H(\eta,\psi)\,, \quad \psi_t = (-\nabla_{\eta} + \gamma \pa_x^{-1}\nabla_{\psi})H(\eta,\psi)\,,
\end{equation}
where $\nabla$ denotes the $L^2$-gradient, with Hamiltonian
\begin{equation}\label{ham1}
	H(\eta,\psi) = \frac12 \int_{\T} \Big( \psi \,G(\eta) \psi + g \eta^2 \Big) \wrt x 
	 + \frac{\gamma}{2} 
	\int_{\T} 
	\Big( -   \psi_x \eta^2 + \frac{\gamma}{3} \eta^3 \Big) \wrt x\,.
\end{equation}
For any  value of the vorticity $\gamma\neq 0$, the system \eqref{ham_ww} is endowed with a non canonical Poisson structure, discussed in detail in Section \ref{ham.s}.
The equations \eqref{ww} enjoy two important symmetries. 
First of all, they are time reversible. 
We say that a solution of \eqref{ww} is \emph{reversible} if  
\begin{equation}\label{time-rev}
	\eta(-t,-x) = \eta (t,x) \, , \quad  \psi (-t,-x) = - \psi (t,x) \, . 
\end{equation}
Second, since the bottom of the fluid domain is flat, they 
are \emph{invariant by space translations}.

The variables $ (\eta, \psi) $ of system \eqref{ww} belong to some Sobolev space
$ H^s_0(\T) \times \dot H^s (\T) $ for some $ s $ large.  
Here  $H^s_0(\T)$, $s \in \R$, 
denotes the Sobolev space of functions with zero average
$ H^s_0(\T) := \big\{
u \in H^s(\T) \ \colon 
\  
\int_\T u(x) \di x = 0 \big\} $
and  $\dot H^s(\T)$, $s \in \R$, the corresponding 
homogeneous Sobolev space, namely the quotient space obtained by identifying  
the  functions in $H^s(\T)$ which differ  by a constant. 
This choice of the phase space is allowed because $\int_\T \eta(t,x)\wrt x$ is a prime integral of \eqref{ww} and the right hand side of \eqref{ww} depends only on $ \eta $ and $  \psi - \frac{1}{2 \pi}\int_\T \psi \, \di x  $.

\paragraph{Linear water waves.}
 
Linearizing \eqref{ww} at the equilibrium $(\eta, \psi) = (0,0)$ gives the system 
\begin{equation}
\label{lin.ww1}
\begin{cases}
\partial_t \eta & =G(0) \psi \\
\partial_t \psi & = -g \eta + \gamma\partial_x^{-1} G(0) \psi \, ,
\end{cases} 
\end{equation}
where $G(0) $ is the  Dirichlet-Neumann operator at the flat surface $\eta = 0$.
A direct computation reveals that  $G(0) $   is  the  Fourier 
multiplier operator
\begin{equation}\label{G(0)}
G(0) := G(0,\tth) = 
\begin{cases}
D \, \tanh( \tth D) & {\rm if } \ \tth < \infty \\
|D| & {\rm if } \ \tth = + \infty \, , 
\end{cases} \qquad {\rm where} \qquad D := \frac{1}{\im} \partial_x  \, , 
\end{equation}
with symbol, for any $ j \in \Z $, 
\begin{equation}\label{def:Gj0}
G_j(0):= 	G_j(0,\tth)= \begin{cases}
j\tanh(\tth j) & \text{ if }\tth<\infty \\
\abs j & \text{ if } \tth=+\infty \, . 
\end{cases} 
\end{equation}
As we will show in Section \ref{lin:op}, 
all reversible solutions, i.e. satisfying \eqref{time-rev}, of \eqref{lin.ww1}
are the   linear superposition  of plane waves, traveling 
either to the right 
or to the left,  given by 
{\begin{equation}
	\label{linz200}
	\begin{aligned}
	\begin{pmatrix}
	\eta(t,x) \\ \psi(t,x)
	\end{pmatrix} & =  \sum_{n \in \N}  \begin{pmatrix}
	M_n \rho_n \cos ( n x - \Omega_n (\gamma) t) \\ 
	P_n \rho_n \sin ( n x - \Omega_n (\gamma) t) 
	\end{pmatrix}  + 
	\begin{pmatrix}
	M_n \rho_{-n} \cos ( n x + \Omega_{-n}(\gamma) t) \\ 
	P_{-n} \rho_{-n} \sin ( n x + \Omega_{-n} (\gamma) t) 
	\end{pmatrix} 
	\,,
	\end{aligned}
	\end{equation}}
where $\rho_n\geq 0$ are arbitrary amplitudes and   $M_n$, $P_{\pm n}$ 
are the real coefficients
\begin{equation}\label{def:Mn}
M_j :=\left( \frac{G_j(0)}{g +
	\frac{\gamma^2}{4} \frac{G_j(0)}{j^2}} \right)^{\frac14}, \ 
j \in \Z \setminus \{0\} \,  , 
\quad 
P_{\pm n} := \frac{\gamma}{2} \frac{M_n}{n} \pm M_n^{-1}, \ 
n \in \N \, .
\end{equation}
The frequencies $\Omega_{\pm n}(\gamma)$ in \eqref{linz200} are 
\begin{equation}
\label{def:Omegajk}
\Omega_j(\gamma) := 
\sqrt{ \Big( g  + \frac{\gamma^2}{4}\frac{G_j(0)}{j^2}   \Big) 
	G_j(0) } + \frac{\gamma}{2}\frac{G_j(0)}{j} \, , \quad
j \in \Z \setminus\{0 \} \, . 
\end{equation}
Note that the map $ j \mapsto \Omega_j (\gamma )$
is not even due to the vorticity term $ \gamma G_j (0) / j $, which is odd in $j $.

All the linear solutions \eqref{linz200} 
are either time periodic, quasi-periodic or almost-periodic, 
depending on the irrationality properties of the frequencies 
$ \Omega_{\pm n} (\gamma) $ and the number of non zero 
amplitudes $\rho_{\pm n}  $. 

The problem of the existence of the traveling quasi-periodic in time water waves
is formulated as follows.

\begin{defn} {\bf (Quasi-periodic traveling wave)} \label{def:TV}
	We say that $ (\eta (t,x), \psi(t,x)) $ is a 
	time quasi-periodic {\it traveling} wave  
	with irrational frequency vector  $ \omega = ( \omega_1, \ldots, \omega_\nu)  \in \R^\nu $, $ \nu \in \N $,  i.e.
	$ \omega \cdot 	\ell \neq 0 $ for any $ \ell \in \Z^\nu \setminus \{0 \} $, 
	and ``wave vectors'' $ ( j_1, \ldots, j_\nu)  \in \Z^\nu $,  if there exist 
	functions
	$ ( \breve\eta, \breve\psi ) : \T^\nu \to \R^2 $ such that 
	\begin{equation} \label{trav-etazeta}
		\begin{pmatrix}
			\eta ( t, x) \\ \psi ( t, x) 
		\end{pmatrix} = 
		\begin{pmatrix}
			\breve\eta( \omega_1  t- j_1 x ,\ldots, \omega_\nu t- j_\nu x )  \\
			\breve\psi( \omega_1  t- j_1 x ,\ldots, \omega_\nu t- j_\nu x )
		\end{pmatrix} \, . 
	\end{equation}
\end{defn}

Note that, if $ \nu = 1 $, such functions are  time periodic and indeed stationary in a moving frame
	with speed $ \omega_1 / j_1 $.
	If the number of the irrational frequencies in greater or equal than $ 2 $,  
	the waves \eqref{trav-etazeta} cannot be reduced to steady waves by any appropriate choice of the moving frame. 

\smallskip

We  shall construct traveling quasi-periodic  
solutions of the nonlinear equations  \eqref{ww} with a diophantine frequency vector $ \omega $ belonging to an open bounded 
subset $  {\mathtt \Omega} $ in $ \R^\nu $, 
namely, for some 
$ \upsilon \in (0,1) $, $ \tau >  \nu - 1 $, 
\begin{equation}\label{def:DCgt}
	\tD\tC (\upsilon, \tau) := \Big\{ \omega\in {\mathtt \Omega} \subset\R^\nu \ : \ 
	\abs{\omega\cdot \ell}\geq \upsilon \braket{\ell}^{-\tau} \ , 
	\ \forall\,\ell\in\Z^\nu \setminus \{0\} \Big\}\,, \quad \langle \ell \rangle :=  \max\{1, |\ell|\} 
	\, . 
\end{equation}
Regarding regularity, we will prove the existence of 
quasi-periodic traveling waves $ (\breve\eta,  \breve\psi ) $
belonging to some Sobolev space 
\begin{equation} \label{unified norm}
	H^s(\T^{\nu}, \R^2)
	= \Big\{ \breve f (\vf) = 
	\sum_{\ell \in \Z^{\nu}} f_{\ell} \, e^{\im \ell \cdot \vf } \ , \ \ \ f_\ell \in \R^2 \ \ : \, 
	\| \breve f \|_s^2 := \sum_{\ell \in \Z^{\nu}} | f_{\ell}|^2 \langle \ell \rangle^{2s} < \infty 
	\Big\} \, . 
\end{equation}
Fixed   finitely  many  arbitrary  
{\em  distinct} natural numbers 
\begin{equation}
	\label{Splus}
	\S^+ := \{ \bar n_1, \ldots, \bar n_\nu \}\subset \N  \ , \quad
	1 \leq \bar n_1 < \ldots < \bar n_\nu \, , 
\end{equation}
and signs
\begin{equation}
	\label{signs}
	\Sigma := \{ \sigma_1 , \ldots , \sigma_\nu \} , \quad \sigma_a \in \{ -1, 1 \} \, ,
	\quad a = 1, \ldots, \nu \, ,  
\end{equation}
we consider reversible quasi-periodic traveling wave 
solutions   of the linear system \eqref{lin.ww1}, given by 
\begin{equation}
	\label{sel.sol}
	\begin{aligned}
		\begin{pmatrix}
			\eta(t,x) \\ \psi(t,x)
		\end{pmatrix} & =  \sum_{a \in \{1, \ldots, \nu \colon   \sigma_a = + 1\}}  
		\begin{pmatrix}
			M_{\bar n_a} \sqrt{\xi_{\bar n_a}} \cos ( \bar n_a x - \Omega_{\bar n_a} (\gamma) t) \\ 
			P_{\bar n_a} \sqrt{\xi_{\bar n_a}} \sin ( \bar n_a x - \Omega_{\bar n_a} (\gamma) t) 
		\end{pmatrix} \\ 
		& +   \sum_{a \in \{1, \ldots, \nu \colon   \sigma_a = - 1\}}
		\begin{pmatrix}
			M_{\bar n_a} \sqrt{\xi_{- \bar n_a}} \cos ( \bar n_a x + \Omega_{- \bar n_a}(\gamma) t) \\ 
			P_{-\bar n_a} \sqrt{\xi_{- \bar n_a}} \sin ( \bar n_a x + \Omega_{- \bar n_a} (\gamma) t) \ 
		\end{pmatrix}  
	\end{aligned}
\end{equation}
where $ \xi_{\pm \bar n_a} >  0 $, $ a  = 1, \ldots, \nu $. The frequency vector 
of \eqref{sel.sol}  is given by 
\begin{equation}\label{Omega-gamma}
	\vec \Omega (\gamma) := (\Omega_{\sigma_a \bar n_a} (\gamma))_{a=1, \ldots, \nu}  
	\in \R^\nu\,.
\end{equation}

Theorem \ref{thm:main0} shows that the  linear 
solutions \eqref{sel.sol}
can be continued to  quasi-periodic traveling wave  solutions of the nonlinear 
water waves equations
\eqref{ww}, 
for most values of the  vorticity $ \gamma \in [\gamma_1, \gamma_2 ]\subset\R$,
with a  frequency vector 
$ \widetilde  \Omega  := ( \widetilde  \Omega_{\sigma_a \bar n_a})_{a=1, \ldots, \nu}  $,   
close to 
$  \vec \Omega (\gamma) := (\Omega_{\sigma_a \bar n_a} (\gamma))_{a =1, \ldots, \nu} $.

\begin{thm} \label{thm:main0}  {\bf (KAM for traveling gravity 
		water waves with constant vorticity)}
	Consider finitely many tangential sites $ \S^+ \subset \N  $
	as in \eqref{Splus} and signs $ \Sigma $ as in \eqref{signs}. Then 
	there exist $ \bar s >  0 $,  
	$ \varepsilon_0 \in (0,1) $ such that,  
	for any $ |\xi |   \leq \varepsilon_0^2  $, 
	$  \xi := (\xi_{ \sigma_a {\bar n}_a} )_{a = 1, \ldots, \nu} \in \R_+^\nu $, the following hold:
	\begin{enumerate}
		\item
		there exists 
		a Cantor-like set  $ {\cal G}_\xi \subset [\gamma_1, \gamma_2] $ 
		with asymptotically full measure as $ \xi \to 0 $, i.e. 
		$ \lim_{\xi \to 0} | {\cal G}_\xi |  = {\gamma}_2- {\gamma}_1 $;
		\item
		for any  $ \gamma \in {\cal G}_\xi $,  the 
		gravity water waves equations \eqref{ww}
		have a  reversible 
		quasi-periodic traveling wave solution (according to 
		Definition \ref{def:TV}) of the form 
		\begin{equation}
			\label{QP:soluz}
			\begin{aligned}
				\begin{pmatrix}
					\eta( t ,x) \\ \psi( t ,x)
				\end{pmatrix} & =  \sum_{a \in \{1, \ldots, \nu\} \colon   \sigma_a = + 1}  
				\begin{pmatrix}
					M_{\bar n_a} \sqrt{\xi_{\bar n_a}} \cos ( \bar n_a  x - \widetilde{\Omega}_{ \bar n_a}(\gamma)  t) \\ 
					P_{\bar n_a} \sqrt{\xi_{\bar n_a}} \sin ( \bar n_a x - \widetilde{\Omega}_{\bar n_a}(\gamma) t) 
				\end{pmatrix}   \\ 
				& +   \sum_{a \in \{1, \ldots, \nu\} \colon   \sigma_a = - 1}
				\begin{pmatrix}
					M_{\bar n_a} \sqrt{\xi_{- \bar n_a}} \cos ( \bar n_a x + \widetilde{\Omega}_{-\bar n_a}(\gamma)  t) \\ 
					P_{-\bar n_a} \sqrt{\xi_{- \bar n_a}} \sin ( \bar n_a x + \widetilde{\Omega}_{-\bar n_a}(\gamma)  t) 
				\end{pmatrix} + r ( t, x ) 
			\end{aligned}
		\end{equation}
		where
		$$
		r ( t, x )
		= 
		\breve r(\wt\Omega_{\sigma_1 \bar n_1}(\gamma)t-\sigma_1\bar n_1 x,\ldots, \wt\Omega_{\sigma_\nu \bar n_\nu}(\gamma)t-\sigma_\nu\bar n_\nu x)  \, , \quad \breve r \in H^{\bar s} ( \T^\nu , \R^2) \, ,
		\quad  \lim_{\xi \to 0} \frac{\| \breve r \|_{\bar s}}{\sqrt{|\xi|}} = 0 \, , 
		$$
		with a Diophantine 
		frequency vector $ \widetilde  \Omega  := ( \widetilde  \Omega_{\sigma_a \bar n_a})_{a=1, \ldots, \nu} \in \R^\nu $, depending on $\gamma, \xi$, and  satisfying 
		$ \lim_{\xi \to 0}{\widetilde \Omega} = \vec \Omega (\gamma) $. 
		In addition these quasi-periodic solutions are  linearly stable.  
	\end{enumerate}
\end{thm}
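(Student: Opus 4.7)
The plan is to recast the search for quasi-periodic traveling solutions of \eqref{ww} as the search for a zero of a nonlinear functional $\mathcal{F}(\omega, \gamma, i) = 0$, where $i \colon \mathbb{T}^\nu \to H^s_0(\mathbb{T}) \times \dot H^s(\mathbb{T})$ is an embedded torus and $\omega = \widetilde\Omega$ is an unknown frequency vector close to $\vec\Omega(\gamma)$. The traveling-wave ansatz \eqref{trav-etazeta} reduces \eqref{ww} to a system on $\mathbb{T}^\nu$ in which $\partial_t$ becomes $\omega \cdot \partial_\varphi$ and $\partial_x$ becomes $-\vec{\jmath}\cdot\partial_\varphi$, with $\vec{\jmath} = (\sigma_a \bar n_a)_a$. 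I would work in the subspace of reversible functions satisfying \eqref{time-rev} and of traveling type (invariant under the translation action $\varphi \mapsto \varphi - \vec{\jmath}\, s$), with $\gamma \in [\gamma_1,\gamma_2]$ as an external parameter. Writing $i = i_{\mathrm{app}} + I$, where $i_{\mathrm{app}}$ corresponds to the linear ansatz \eqref{sel.sol} with the amplitudes $\sqrt{\xi_{\pm \bar n_a}}$, the unknown becomes $I$ together with the frequency correction $\widetilde\Omega - \vec\Omega(\gamma)$.

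Direct contraction cannot work because of the small divisors $\omega\cdot\ell$ and $\omega\cdot\ell + \mu_j - \mu_k$, so I would run a Nash--Moser iterative scheme in the scale $H^s(\mathbb{T}^\nu,\mathbb{R}^2)$. The core analytical step at each stage is the (approximate) inversion, with tame loss in $s$, of the linearized operator $d\mathcal{F}(i_n)$. Here enters the main obstacle of the problem: the nonlinear transport term $\gamma \eta \eta_x$ in \eqref{ww}, which is a singular perturbation of order one, and the Dirichlet--Neumann operator with variable bottom profile. My plan is to introduce the Alinhac good unknown to recover a symmetric principal symbol, then perform a sequence of \emph{symplectic} changes of variable with respect to Wahl\'en's Poisson structure \eqref{ham_ww}: a diffeomorphism of $\mathbb{T}_x$ to straighten the transport, a multiplication operator to equalize the diagonal symbols, and flow conjugations generated by pseudo-differential symbols of decreasing order, in the spirit of \cite{BFM,BBHM,BM}. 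All transformations must preserve reversibility and the translation action, so that the reduced operator inherits the momentum conservation law $\omega\cdot\ell - \vec{\jmath}\cdot(j-k) = 0$ which is essential to bypass the resonances forbidden for standing waves (difficulty $(ii)$ in the introduction).

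Once the operator is reduced, modulo smoothing remainders, to a Fourier multiplier with constant coefficients, I would run a quadratic KAM reducibility scheme to diagonalize it completely. This requires second Melnikov conditions
\[
|\omega \cdot \ell + \mu_j(\gamma) - \mu_k(\gamma)| \geq \upsilon \langle \ell\rangle^{-\tau}, \qquad (\ell,j,k) \neq (0,j,j),
\]
on the perturbed normal eigenvalues $\mu_j(\gamma)$. The difficulty here is twofold: the sublinear dispersion $\Omega_j(\gamma) \sim \sqrt{G_j(0)}$ means eigenvalue differences accumulate, so precise asymptotic expansions of $\mu_j$ are needed to control these differences uniformly in $j,k$; and the parameter $\gamma$ appears in $\Omega_j(\gamma)$ only at zeroth order, so the map $\gamma \mapsto \vec\Omega(\gamma)$ is weakly non-degenerate. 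I would verify a quantitative non-degeneracy (degenerate KAM \`a la R\"ussmann) for the unperturbed linear frequencies and the normal ones, then use it, together with the momentum constraint, to prove that the set $\mathcal{G}_\xi \subset [\gamma_1,\gamma_2]$ of $\gamma$ for which all Melnikov conditions hold has measure $(\gamma_2-\gamma_1)(1 + o_{\xi\to 0}(1))$.

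The Nash--Moser iteration then converges on the intersection of all such Cantor-like sets and produces the embedded torus $i_\infty$, which yields the solution \eqref{QP:soluz} via the traveling ansatz; the remainder $\breve r$ belongs to $H^{\bar s}(\mathbb{T}^\nu,\mathbb{R}^2)$ by the tame estimates and vanishes faster than $\sqrt{|\xi|}$ because it is built from quadratic (and higher) corrections to the linear datum. Linear stability is a by-product of the full reducibility performed in the previous step: the linearization at the final solution is conjugated to a diagonal Fourier multiplier with purely imaginary eigenvalues $\im\mu_j$, so Sobolev norms of solutions of the linearized equation are preserved in time. The hardest step in this program, to which I would devote the most care, is the symplectic reduction of the variable-coefficient linearized operator while simultaneously preserving reversibility, the Wahl\'en Poisson structure, and the traveling momentum symmetry, since only all three together yield small-divisor conditions that the single parameter $\gamma$ can satisfy on a set of asymptotically full measure.
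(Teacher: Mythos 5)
Your outline correctly identifies the overall architecture of the paper's proof: action-angle coordinates on a tangential subspace, a Nash--Moser iteration in scales of Sobolev spaces with $\gamma$ as the only external parameter, the Alinhac good unknown, a cascade of conjugations reducing the linearized operator at each approximate torus to a constant-coefficient Fourier multiplier, a quadratic KAM reducibility scheme requiring second Melnikov conditions on the final normal frequencies, a R\"ussmann/degenerate-KAM transversality argument for the measure estimate, systematic exploitation of the momentum constraint $\vec{\jmath}\cdot\ell + j - j' = 0$, and linear stability as a by-product of reducibility. That is the paper's plan.

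Two points in your proposal are, however, technically off in ways worth flagging. First, you ask that all the reducing transformations be symplectic with respect to the Wahl\'en Poisson structure. The paper enforces this only through the almost-straightening of the transport part and the first symmetrization of the off-diagonal half-order terms (Sections 7.1--7.3): once the operator is block-diagonalized up to smoothing remainders (Section 7.4, Lemma \ref{block_dec_lemma}), the Hamiltonian structure is dropped and only \emph{reversibility} and \emph{momentum preservation} are retained (and carried into the KAM reducibility of Section 8). These two algebraic properties are enough to make the homological equations solvable and the final eigenvalues real, while insisting on full symplecticity at every stage (e.g., forcing every conjugator to be the time-one flow of a quadratic Hamiltonian) would be an unnecessary and substantially harder constraint. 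Second, you propose to \emph{fully} straighten the $\widetilde V\,\partial_x$ transport operator to a constant coefficient $\tm_1\partial_x$. The paper deliberately performs only an \emph{almost}-straightening (Lemma \ref{conju.tr}, Appendix A), leaving an exponentially small remainder $p_{\bar n}\,\partial_y$ of size $O(N_{\bar n -1}^{-\mathtt a})$; this is what allows the zeroth Melnikov condition \eqref{0meln} to be formulated in terms of a single final constant $\tm_1^\infty$ rather than requiring a non-resonance hypothesis at each straightening step. Without this device you would have to intertwine the straightening Cantor conditions with the Nash--Moser iteration, which considerably complicates the inclusion-of-Cantor-sets lemma (Lemma \ref{inclu.fgmp}) and the final measure estimate. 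Finally, worth noting as a simplification, not an added difficulty: because of the momentum restriction, the second Melnikov conditions \eqref{2meln-} do \emph{not} lose space derivatives (there is no $\langle j - j'\rangle^{-1}$ weight), so the reduction in decreasing orders can stop at order zero; your proposal frames the sublinear dispersion as a hard obstruction requiring precise normal-frequency asymptotics uniformly in $j,j'$, but the traveling-wave constraint in fact removes most of that difficulty compared to the standing-wave case of \cite{BBHM}.
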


Let us make some comments about the result.
\\[1mm]
\indent 1) {\it Vorticity as parameter and irrotational quasi-periodic traveling waves.}
We are able to use the vorticity $\gamma$ as a parameter, even though 
the dependence of the linear frequencies $ \Omega_j (\gamma)$ in 
 \eqref{def:Omegajk}  with respect to $ \gamma$ affects only the order $ 0 $.
In Section \ref{sec:deg_kam} we prove the non-degeneracy and the transversality of the linear frequencies $ \Omega_j (\gamma)$ with respect to $\gamma$.
If $ \gamma_1 < 0 < \gamma_2 $ we do not know if 
the value $ \gamma=0$ belongs to the set $\cG_\xi$ for which the quasi periodic solutions \eqref{QP:soluz} exist.
Nevertheless,  irrotational quasi-periodic traveling solutions for the gravity water waves equations \eqref{ww} exist for most values of the depth $\tth\in[\tth_1,\tth_2]$,
see Remark \ref{acca.param}. 
These 
  traveling wave solutions
  do not clearly reduce to  the standing wave solutions constructed 
in \cite{BBHM}, which are even in 
the space variable.  
\\[1mm]
\indent 2) {\it More general traveling solutions}.
The Diophantine condition \eqref{dioph0} could be weakened requiring only 
$  | \omega \cdot \ell | \ge\ \upsilon \langle \ell \rangle^{-\tau} $
for any $\ell\in\Z^\nu\setminus\{0\}$ with 
$ \ell_1\,\sigma_1\bar n_1 +...+\ell_\nu\,\sigma_\nu\bar n_\nu = 0 $.  In such a case 
the vector $ \omega $ could admit one non-trivial resonance.
This is the natural minimal requirement 
to look for traveling  solutions of the form 
$ U( \omega t - \vec \jmath x )$, see Definition \ref{QPTW} and Remark \ref{rem:inclu}.
For $ \nu = 2 $ solutions of these kind could be time periodic,  with clearly  a completely  different shape with respect to the classical Stokes traveling waves \cite{stokes}. 
\\[1mm]
\indent Let us make some comments about the proof.
\\[1mm]
\indent 3) {\it Symmetrization and reduction in order of the linearized operator.}
The leading order of the linearization of the water waves system \eqref{ww} at any quasi-periodic traveling wave is given by the 
Hamiltonian transport operator (see  \eqref{cL00}) 
$$
\cL_{\rm TR} :=
\omega\cdot\pa_\vf +\begin{pmatrix}
	\pa_x \wtV & 0 \\ 0 & \wtV\pa_x
\end{pmatrix}
$$ 
where $\wtV(\vf,x)$ is a small quasi-periodic traveling wave. 
By the  almost-straightening result of Lemma \ref{conju.tr}, 
for any $ (\omega, \gamma) $ satisfying  suitable 
non-resonance conditions as in \eqref{0meln}, we 
conjugate  $ \cL_{\rm TR} $  via a symplectic transformation to a  transport 
operator of the form 
$$
\omega\cdot\pa_\vf +\begin{pmatrix}
	\tm_{1} \pa_x & 0 \\ 0 & \tm_{1} \pa_x
\end{pmatrix} + \begin{pmatrix}
				\pa_y \,p_{\bar\tn} & 0 \\ 0 &  p_{\bar\tn}\, \pa_y
			\end{pmatrix}\,,
$$ 
where $\tm_{1}\in\R$ is a constant to be determined and $ p_{\bar\tn} (\vphi,x) $ is 
an  exponentially small function,
see \eqref{stime.pn.not.w}. 
For the standing waves problem in \cite{BBHM} we have that  $ \tm_{1} = 0 $
and the complete conjugation of $ {\cal L}_{\rm TR} $ 
is proved for any $ \omega $ diophantine. 
The almost-straightening Theorem \ref{thm:as}, which implies the conjugation in Lemma \ref{conju.tr}, is performed in the same spirit of the almost-reducibility Theorem \ref{iterative_KAM}. 
The KAM algebraic reduction scheme 
is like in \cite{FGMP} and in \cite{BM20}.
Here we do not perform the full straightening of the transport operator $\cL_{\rm TR} $
(i.e. we have $ \bar \tn <  \infty $) 
in order to 
formulate a simple non-resonance condition as in \eqref{0meln}.
The resulting almost-remainders are then considered
along  the Nash-Moser nonlinear iteration 
 (the estimates obtained in the 
 proof in \cite{FGMP}  after finitely many iterative steps 
 are not sufficient for our purposes).

As the almost-straightening above,
we also perform  in a symplectic way 
other steps of the  reduction to constant coefficients of the lower 
order terms of the linearized operator. This is needed in order to 
prevent  the appearance of  unstable  operators. 
Since Section \ref{sec:block_dec} we preserve only the reversible structure.
Due to  the pseudo-differential nature of   the vorticity vector field in \eqref{ww},  
there appear along the reduction process, since Section \ref{subsec:straight},  
smoothing tame remainders (cfr.  \eqref{AS.est3}),
unlike \cite{BBHM}.
\\[1mm]
\indent 4) {\it Traveling waves and  
Melnikov  non-resonance conditions.}
We strongly use 
the invariance under space translations of the Hamiltonian 
nonlinear water waves vector field
\eqref{ww}, i.e. the ``momentum conservation'', in the construction of the traveling quasi-periodic waves. 
We list the main points in which it occurs:

({\it i}) The Floquet exponents \eqref{def:FE} of the quasi-periodic solutions \eqref{QP:soluz} are a singular perturbation of the unperturbed linear frequencies in \eqref{def:Omegajk}, with 
leading terms  of order $1$. The  Melnikov non-resonance conditions 
formulated in the Cantor-like set 
  $ {\cal C}_\infty^{\upsilon} $   
  in \eqref{dioph0}-\eqref{2meln+} 
 hold on a set of large measure only thanks to the conservation of the momentum,
 see Section \ref{subsec:measest}.

({\it ii})
Thanks to the restriction on the Fourier indexes coming from the space translation invariance, we can impose Melnikov conditions 
 that \emph{do not lose} space derivatives, see \eqref{2meln-}. 
 This simplifies considerably the 
 reduction in decreasing orders of Section \ref{sec:linnorm} and 
the KAM reducibility scheme of Section \ref{sec:KAM}. Indeed, it is enough to reduce to constant coefficients 
the linearized vector operator until the order $0$
(included, in order to have a sufficiently good asymptotic expansion of the perturbed frequencies to prove the inclusion 
Lemma \ref{lemma:inWave}).  
Conversely, in \cite{BBHM} the second order Melnikov conditions verified 
for the  standing pure gravity waves  
lose several space derivatives and many more steps of regularization are needed.  

($iii$) 
The invariance by space translations in the construction of the quasiperiodic {\it traveling} waves
allows to avoid  resonances between the linear frequencies.
For example, with infinite depth  $ {\mathtt h} = + \infty $, these are given by 
 $ \Omega_j (\gamma) = \omega_j (\gamma) + \frac{\gamma}{2}  {\rm sign}(j) $.
In this case there exist   $\ell \in \Z^\nu \setminus\{0\}$ and  $j, j' \not\in \{\sigma_a \bar n_a \}_{a=1,\ldots,\nu}$, with $j \neq j'$, such that
\begin{equation}
\label{ex.resonance}
\sum_{a=1}^\nu \ell_a \,\Omega_{\sigma_a \bar n_a}(\gamma) + \Omega_j(\gamma) - \Omega_{j'}(\gamma)  \equiv 0 \quad \forall\, \gamma \, .
\end{equation}
For example if $\sigma_1 = \sigma_2$, it is sufficient to  take $  \ell = (\ell_1, \ell_2, 0, \ldots, 0) = (-1, 1, 0, \ldots, 0)$ and 
 $   j=- \sigma_1 \bar n_1 $, $   j' = - \sigma_2 \bar n_2 $.
To exclude this  resonance we exploit  the conservation of momentum, which  guarantees that resonances of the form \eqref{ex.resonance} have to be checked only on indexes fulfilling  $ \sum_{a=1}^\nu \ell_a\,\sigma_a \bar n_a +  j - j' = 0 $. 
The indexes above violate this constraint, as $\bar n_1 \neq \bar n_2$ by  \eqref{Splus}.
Along the proof, we 
systematically use this kind of arguments to exclude nontrivial resonances.

\smallskip

Before concluding this introduction, we  shortly describe the huge literature 
regarding  time periodic traveling wave solutions, 
which are steady in a moving frame. 
\\[1mm]
{\bf Literature about  time periodic traveling wave solutions.}
After the pioneering work of Stokes  \cite{stokes}, the first rigorous construction 
of small amplitude  space periodic steady traveling waves goes back to the 1920's with the papers of 
Nekrasov \cite{Nek}, Levi-Civita \cite{LC} and Struik \cite{Struik}, in case of  irrotational bidimensional flows  under the action of pure gravity.
In the presence of vorticity,  Gerstner \cite{gerstner} in 1802 gave an explicit example of periodic traveling wave, in infinite depth, and non-zero vorticity, but 
it is only with Dubreil-Jacotin \cite{dubreil} in 1934 the first bifurcation  result of 
periodic traveling waves with small vorticity, subsequently 
extended  by  Goyon \cite{goyon} and Zeidler \cite{Zei2} for large vorticity.
More recently we point out the works of Wahl\'en \cite{Wh0}  for capillary-gravity waves and non-constant vorticity, and of Martin  \cite{Martin} and Walh\'en \cite{Wh} for constant vorticity. 
All these results deal with 2d water waves, and  can ultimately 
be  deduced by the classical Crandall-Rabinowitz 
bifurcation theorem from a simple eigenvalue. 

We also mention that these local bifurcation results can be extended to  global branches 
of steady traveling waves by the theory 
of global analytic, or topological, 
bifurcation. We refer to 
Keady-Norbury \cite{KN}, Toland \cite{To}, McLeod \cite{ML}
for irrotational flows and  Constantin-Strauss \cite{CS}
for fluids with non-constant vorticity. We suggest  the reading of 
 \cite{const_book} for  further results.  
 
We finally quote the  recent numerical 
work of  Wilkening-Zhao \cite{W1} 
about  spatially quasi-periodic  gravity-capillary $ 1$d-water waves.

\section{Hamiltonian structure and linearization at the origin }\label{ham.s}

The Hamiltonian formulation of the water waves equations \eqref{ww} 
with non-zero constant 
vorticity was obtained by Constantin-Ivanov-Prodanov  \cite{CIP} and Wahl\'en 
\cite{Wh} in the case of finite depth. 
For irrotational flows it reduces to the classical Craig-Sulem-Zakharov formulation
in \cite{Zak1}, \cite{CrSu}. 

On the phase space $H^1_0(\T) \times \dot H^1(\T)$, endowed with the non canonical Poisson tensor
\begin{equation}
\label{eq:magn_sympl}
J_M(\gamma) := \begin{pmatrix}
0 & {\rm Id} \\
- {\rm Id}  & 
\gamma \partial_x^{-1} 
\end{pmatrix} \, ,
\end{equation}
we
consider the Hamiltonian $ H $ defined in \eqref{ham1}. 
Such Hamiltonian 
is well defined on $ H^1_0(\T) \times \dot H^1(\T) $ since $ G(\eta ) [1] = 0 $ 
and $ \int_{\T} G(\eta) \psi \, \di x = 0 $.
It turns out \cite{CIP, Wh} 
that equations \eqref{ww} are the Hamiltonian system generated by
$H(\eta, \psi)$ with respect to the Poisson tensor $J_M(\gamma)$, namely 
\begin{equation}
\label{ham.eq1}
\partial_t 
\begin{pmatrix}
\eta \\
\psi  
\end{pmatrix} = J_M (\gamma) 
\begin{pmatrix}
\nabla_\eta H  \\
\nabla_\psi H
\end{pmatrix}
\end{equation}
where  $ (\nabla_\eta H, \nabla_\psi H) \in \dot L^2(\T) \times L^2_0(\T) $  denote the $ L^2 $-gradients.  
	The non canonical Poisson tensor $J_M(\gamma)$ in \eqref{eq:magn_sympl} has to be regarded as an operator from (subspaces of) $(L_0^2\times \dot L^2)^* = \dot{L}^2\times L_0^2$ to $L_0^2\times \dot{L}^2$, that is
	\begin{equation*}
	J_M(\gamma) = \begin{pmatrix}
	0 & {\rm Id}_{L_0^2\to L_0^2} \\ -{\rm Id}_{\dot{L}^2\to \dot{L}^2} & \gamma\pa_x^{-1}
	\end{pmatrix}\,.
	\end{equation*}
	For sake of simplicity, throughout the paper we omit this detail,  see   \cite{BFM} for a more precise analysis.  

We describe now some symmetries of the Hamiltonian \eqref{ham1}. 
\\[1mm]
{\bf Reversible structure.}
Defining on the phase space $H_0^1(\T) \times \dot{H}^1(\T)$ the involution
\begin{equation}\label{rev_invo}
\cS\left( \begin{matrix}
\eta \\ \psi 
\end{matrix} \right) := \left( \begin{matrix}
\eta^\vee \\ -  \psi^\vee 
\end{matrix} \right) \, , \quad \eta^\vee (x) :=  \eta (-  x) \, ,  
\end{equation}
the Hamiltonian \eqref{ham1} is invariant under $\cS$, that is 
$  H \circ \cS = H $. 
Equivalently, the water waves vector field 
$ X $   in the right hand side on \eqref{ww} satisfies 
\begin{equation}\label{revNL}
X\circ \cS = - \cS \circ X \, .
\end{equation}
This property follows since the Dirichlet-Neumann operator satisfies 
$ G(  \eta^\vee ) [ \psi^\vee ] = \left( G(\eta) [\psi ] \right)^\vee  $.
\\[1mm]
{\bf Translation invariance.}
Since the bottom of the fluid domain \eqref{domain} is flat (or in case of infinite depth there is no bottom), the water waves 
equations   \eqref{ww}  are  invariant under space translations. Specifically, 
defining the translation operator  
\begin{equation}\label{trans}
\tau_\vs \colon u(x) \mapsto u(x+\vs) \, ,  \quad \varsigma \in \R \, , 
\end{equation} 
the Hamiltonian \eqref{ham1} satisfies 
$ H \circ \tau_\vs = H $ for any $\vs \in \R $.
Equivalently, the water waves vector field 
$ X $  in the right hand side on \eqref{ww} satisfies 
\begin{equation}\label{eq:mom_pres}
X\circ \tau_\vs = \tau_\vs\circ X  
\, , \quad \forall\, \vs \in \R \,  .
\end{equation}
This property follows  since 
$ \tau_\vs \circ G( \eta ) = G(\tau_\vs  \eta ) \circ \tau_\vs $ for any 
$\vs \in \R $.
\\[1mm]
{\bf Wahl\'en coordinates.} 
We introduce the Wahl\'en   \cite{Wh}  coordinates   $(\eta, \zeta)$ via the map  
\begin{equation}\label{eq:gauge_wahlen}
\left( \begin{matrix}
\eta \\ \psi
\end{matrix} \right) =  W \left(\begin{matrix}
\eta \\ \zeta
\end{matrix}\right) \, , \quad
W :=  \left(\begin{matrix}
{\rm Id} & 0 \\  \frac{\gamma}{2}\partial_x^{-1} & {\rm Id}
\end{matrix}\right)  \, , \quad
W^{-1}  := 
\left(\begin{matrix}
{\rm Id} & 0 \\ - \frac{\gamma}{2}\partial_x^{-1} & {\rm Id}
\end{matrix}\right) \ . 
\end{equation}
The change of coordinates $W$ maps the  phase space $H^1_0 \times \dot H^1  $ into itself, and it  conjugates the 
 Poisson tensor
 $J_M(\gamma)$ to the canonical one 
 \begin{equation}\label{Jtensor}
W^{-1} J_M(\gamma) (W^{-1})^{*} =  J \, , \quad 
J := \begin{pmatrix}
0 & {\rm Id} \\ - {\rm Id}  & 0
\end{pmatrix}   \, ,
\end{equation}
 so that $(\eta, \zeta)$ are Darboux coordinates. 
The Hamiltonian 
\eqref{ham1} becomes 
\begin{equation}\label{Ham-Wal}
\cH := H \circ W\,, \quad \text{ i.e. } \quad  {\cal H}(\eta,\zeta):=H\Big(\eta,\zeta + \frac{\gamma}{2}\partial_x^{-1}\eta\Big) \, , 
\end{equation}
and the Hamiltonian equations \eqref{ham.eq1} (i.e. \eqref{ww})  are transformed into
\begin{equation}\label{eq:Ham_eq_zeta}
\partial_t\left(\begin{matrix}
\eta \\ \zeta 
\end{matrix}\right) = X_{\cal H} (\eta, \zeta) \, , \quad 
X_{\cal H} (\eta, \zeta) := J
\begin{pmatrix} \nabla_\eta {\cal H} \\ \nabla_\zeta {\cal H} \end{pmatrix}  ( \eta, \zeta )\,. 
\end{equation}
By \eqref{Jtensor}, the symplectic form of  \eqref{eq:Ham_eq_zeta} is  the standard one,
\begin{align} \label{sympl-form-st}
{\cal W}  
\left( \begin{pmatrix}
\eta_1 \\
\zeta_1
\end{pmatrix}, 
\begin{pmatrix}
\eta_2 \\
\zeta_2
\end{pmatrix}
\right) 
:= 
\left( J^{-1} \left(\begin{matrix}
\eta_1 \\ \zeta_1
\end{matrix}\right), 
\left(\begin{matrix}
\eta_2 \\ \zeta_2
\end{matrix}\right) \right)_{L^2}  
= (  -  \zeta_1 , \eta_2 )_{L^2} + (\eta_1  , \zeta_2 )_{L^2}  \, , 
\end{align}
where  $ J^{-1}  =  \begin{pmatrix}
0 & - {\rm Id} \\  {\rm Id}  & 0
\end{pmatrix} $ is 
regarded as a map 
from $L_0^2\times \dot{L}^2$ into $ \dot{L}^2\times L_0^2 $. 

The transformation $W$ defined in \eqref{eq:gauge_wahlen} is reversibility preserving, 
namely it commutes with the involution
$ {\cal S} $ in \eqref{rev_invo} (see Definition \ref{rev_defn} below), and thus also the Hamiltonian $\cH$ in \eqref{Ham-Wal} is invariant under the involution $\cS$.
For this reason  we look for solutions $(\eta(t,x),\zeta(t,x))$
of \eqref{eq:Ham_eq_zeta} that are reversible, i.e., see  \eqref{time-rev}, 
\begin{equation}\label{rev:soluz}
\left(\begin{matrix}
\eta \\ \zeta
\end{matrix}\right)(-t)= \cS\left(\begin{matrix}
\eta \\ \zeta
\end{matrix}\right)(t)\, . 
\end{equation}
The corresponding solutions $(\eta(t,x), \psi (t,x))$ of \eqref{ww} induced by 
\eqref{eq:gauge_wahlen} are reversible as well. 

We finally note that  the transformation $W$ defined in \eqref{eq:gauge_wahlen} 
commutes with  the translation operator $ \tau_\vs$, 
therefore the Hamiltonian $\cH$ in \eqref{Ham-Wal} is invariant under
$ \tau_\vs $. 

\subsection{Linearization at the equilibrium}\label{lin:op}

We now prove that the reversible solutions of the linear system \eqref{lin.ww1} have the form \eqref{linz200}; we proceed in a similar way as in \cite{BFM}, Section 2.1, and we refer to it for details.
The linear system \eqref{lin.ww1} is Hamiltonian and it is  generated by the quadratic Hamiltonian
$$
H_L(\eta,\psi) := \frac{1}{2} \int_\T\left( \psi G(0)\psi + g \eta^2  \right) \wrt x =
\frac12 
\left( \b\Omega_L  \begin{pmatrix}
\eta \\ \psi
\end{pmatrix}, 
\begin{pmatrix}
\eta \\ \psi
\end{pmatrix} \right)_{L^2} 
\, .
$$
Thus, recalling \eqref{ham.eq1}, the linear system \eqref{lin.ww1} is 
\begin{equation}\label{Lin:HS}
\pa_t \begin{pmatrix}
\eta \\ \psi
\end{pmatrix} = J_M(\gamma) \b\Omega_L \begin{pmatrix}
\eta \\ \psi
\end{pmatrix} \ , \qquad 
\b\Omega_L:=\begin{pmatrix}
 g & 0 \\ 0 & G(0)
\end{pmatrix} \, .
\end{equation} 
In the Wahl\'en  coordinates  \eqref{eq:gauge_wahlen}, 
 system  \eqref{Lin:HS} is transformed into  the  linear Hamiltonian system 
\begin{equation}\label{eq:lin00_wahlen} 
\begin{aligned}
&	\pa_t\begin{pmatrix}
\eta \\ \zeta
\end{pmatrix}= J \b\Omega_W  \begin{pmatrix}
\eta \\ \zeta
\end{pmatrix}  \ ,  \\
& \b\Omega_W := W^* \b  \Omega_L W =\begin{pmatrix}
g -  \left( \frac{\gamma}{2}\right)^2 \partial_x^{-1}G(0)\partial_x^{-1} & - \frac{\gamma}{2}\partial_x^{-1}G(0) \\ \frac{\gamma}{2}G(0)\partial_x^{-1} & G(0)
\end{pmatrix}	
\end{aligned}
\end{equation}
generated by the  quadratic Hamiltonian  
\begin{equation}\label{lin_real}
\cH_L (\eta, \zeta) := (H_L \circ W) (\eta, \zeta)  = \frac12 
\left( \b\Omega_W  \left(\begin{matrix}
\eta \\ \zeta
\end{matrix}\right), 
\left(\begin{matrix}
\eta \\ \zeta
\end{matrix}\right) \right)_{L^2} \, .
\end{equation}
Let us diagonalize  \eqref{eq:lin00_wahlen}.
We first conjugate \eqref{eq:lin00_wahlen} under  the  symplectic transformation
(with respect to the standard symplectic form $ {\cal W}   $ in \eqref{sympl-form-st}) of the phase space
$$
\begin{pmatrix}
\eta \\
\zeta
\end{pmatrix} = \cM \begin{pmatrix}
u \\
v
\end{pmatrix} 
$$
where $ \cM $ is the diagonal matrix of self-adjoint Fourier multipliers 
\begin{equation}\label{eq:T_sym}
\cM:= \left(\begin{matrix}
M(D) & 0 \\ 0 & M(D)^{-1}
\end{matrix}  \right) \ , \quad 
M(D) :=\left( \frac{G(0)}{ g - 
	\frac{\gamma^2}{4} \partial_x^{-1}G(0)\partial_x^{-1}} \right)^{1/4} \, , 
\end{equation}
with the real valued symbol $ M_j $ defined in \eqref{def:Mn}.
The map $ \cal {M} $ is reversibility preserving. 

\begin{rem}
	$M(D)^{-1} $  denotes  the Fourier multiplier operator in  $ \dot H^1 $ defined as 
	$
	M(D)^{-1} [\zeta] :=  \big[ \sum_{j \neq 0} M_j^{-1} \zeta_j e^{\im j x } \big] $, 
	$  \zeta (x) =  \sum_{j \in \Z} \zeta_j e^{\im j x } $
	where $[\zeta] $ is the element in $ \dot H^1 $ with representative 
	$ \zeta (x)  $. 
\end{rem}
By a direct computation, 
the Hamiltonian system  \eqref{eq:lin00_wahlen} assumes the symmetric form
\begin{eqnarray}
\label{lin.ww3}
\pa_t \begin{pmatrix}
u \\
v
\end{pmatrix} = J \b\Omega_S \begin{pmatrix}
u \\
v
\end{pmatrix} \, , \ \  \b\Omega_S:=\cM^*\b\Omega_W\cM = \left( \begin{matrix}
\omega (\gamma, D) & -\frac{\gamma}{2}\partial_x^{-1}G(0) \\ \frac{\gamma}{2}G(0)\partial_x^{-1} & \omega (\gamma, D)
\end{matrix}\right)  \, ,
\end{eqnarray}
where
\begin{equation}\label{eq:omega0}
\omega(\gamma,D):= \sqrt{ g\,G(0) - \left( \frac{\gamma}{2}\pa_x^{-1} G(0) \right)^2 } \, . 
\end{equation}
Now we introduce   complex coordinates  by  the transformation 
\begin{equation}\label{C_transform}
\begin{pmatrix}
u \\
v
\end{pmatrix} = \cC 
\begin{pmatrix}
z \\
\bar z
\end{pmatrix} \, , \qquad 
\cC := \frac{1}{\sqrt 2} \left(\begin{matrix}
{\rm Id} & {\rm Id} \\ -\im & \im
\end{matrix}\right) \ ,\quad \cC^{-1}:= \frac{1}{\sqrt 2}\left(\begin{matrix}
{\rm Id} & \im \\ {\rm Id} & -\im
\end{matrix}\right) \, .
\end{equation}
In these variables, the Hamiltonian system  \eqref{lin.ww3} becomes the diagonal system 
\begin{eqnarray}\label{eq:lin00_ww_C}
\partial_t\left( \begin{matrix}
z \\ \bar z
\end{matrix} \right)=
\begin{pmatrix}
- \im & 0 \\ 0 & \im
\end{pmatrix} 
\b\Omega_D\left( \begin{matrix}
z \\ \bar z
\end{matrix} \right) \, , \quad \b\Omega_D := \cC^*\b\Omega_S\cC = \begin{pmatrix}
\Omega (\gamma, D) & 0 \\ 0 & \bar \Omega (\gamma, D)
\end{pmatrix} \,,
\end{eqnarray}
where 
\begin{equation}
\label{Omega}
\Omega (\gamma, D) := \omega (\gamma, D) + \im \,\frac{\gamma}{2}\partial_x^{-1} G(0)
\end{equation}
is the Fourier multiplier with symbol  $  \Omega_j(\gamma)   $ defined in \eqref{def:Omegajk} and
$\bar \Omega(\gamma, D)$ is  defined by 
$$
\bar \Omega(\gamma, D) z:= \bar{\Omega(\gamma, D) \bar z} \, , 
\quad \bar \Omega(\gamma, D)  = 
\omega (\gamma, D) - \im \,\frac{\gamma}{2}\partial_x^{-1} G(0)  \, . 
$$
Note that $\bar \Omega(\gamma, D)$ is the Fourier multiplier with symbol $\{\Omega_{-j}(\gamma)\}_{j \in \Z\setminus\{0\}}$. 

	We regard the system  \eqref{eq:lin00_ww_C} in $ \dot H^1 \times \dot H^1 $. 
The diagonal system 
\eqref{eq:lin00_ww_C} amounts to the scalar equation
\begin{equation}\label{zjF}
\pa_t z = - \im \Omega (\gamma, D) z \, , \quad 
z(x) =  \sum_{j \in \Z\setminus\{0\}} z_j e^{\im j x } \, , 
\end{equation}
which, written in the 
exponential Fourier basis,  
is an 
infinite collections of  decoupled harmonic oscillators 
\begin{equation}\label{eq:zj}
\dot z_j = - \im \Omega_j (\gamma) z_j \, , \quad j \in \Z \setminus \{0\} \, .
\end{equation}
Note that, in these complex coordinates,   
the involution $\cS$ defined in \eqref{rev_invo} reads as the map
\begin{equation}\label{inv-complex}
\begin{pmatrix}
z(x) \\ \bar{z(x)}
\end{pmatrix} \mapsto \begin{pmatrix}
\, \bar{z(-x)} \\ z(-x) \, 
\end{pmatrix} \ , 
\end{equation}
whereas, 
in the Fourier coordinates introduced in \eqref{zjF}, it amounts to 
\begin{equation}\label{inv-zj}
z_j \mapsto \overline{z_j} \, , \quad \forall j \in \Z \setminus \{ 0 \} \,  .
\end{equation}
In view of \eqref{eq:zj} and \eqref{inv-zj}
any {\em reversible} solution (which is characterized as in \eqref{rev:soluz}) of  
\eqref{zjF} has  the form 
\begin{equation}
\label{linz}
z(t,x):= \frac{1}{\sqrt{2}}\sum_{j\in \Z\setminus\{0\}} \rho_{j} \,  e^{-\im\,\left( \Omega_j(\gamma)t- j\,x \right)} 
\quad {\rm with} \quad  \rho_j \in \R \, .
\end{equation}
Let us see the form of these solutions 
back 
in  the original variables $ (\eta, \psi)$.
First, by \eqref{eq:T_sym}, \eqref{C_transform},
\begin{equation}
\label{linz2}
\begin{pmatrix}
\eta \\
\zeta
\end{pmatrix} = \cM \, \cC 
\begin{pmatrix}
z \\
\bar z
\end{pmatrix} = 
\frac{1}{\sqrt{2}}\begin{pmatrix}
M(D) & M(D)  \\
-\im M(D)^{-1} & \im M(D)^{-1} 
\end{pmatrix}\begin{pmatrix}
z \\
\bar z
\end{pmatrix} =
\frac{1}{\sqrt{2}}\begin{pmatrix}
M(D) (z + \bar z)  \\
-\im M(D)^{-1} (z - \bar z )
\end{pmatrix} \, , 
\end{equation} 
and the solutions \eqref{linz}  assume the form
$$
\begin{aligned}
\begin{pmatrix}
\eta(t,x) \\ \zeta(t,x)
\end{pmatrix} & =  \sum_{n \in \N}  \begin{pmatrix}
M_n \rho_n \cos ( n x - \Omega_n (\gamma) t) \\ 
M_n^{-1} \rho_n \sin ( n x - \Omega_n (\gamma) t) 
\end{pmatrix}  +  
\begin{pmatrix}
M_n \rho_{-n} \cos ( n x + \Omega_{-n}(\gamma) t) \\ 
- M_n^{-1} \rho_{-n} \sin ( n x + \Omega_{-n} (\gamma) t) 
\end{pmatrix} 
\,.
\end{aligned}
$$
Back to the variables $(\eta, \psi)$ with the change of coordinates  \eqref{eq:gauge_wahlen} one obtains formula \eqref{linz200}.

\paragraph{Decomposition of the phase space in Lagrangian  subspaces invariant 
	under
	\eqref{eq:lin00_wahlen}.}
We express the Fourier coefficients $ z_j \in \C $ in  \eqref{zjF} as 
$$
z_j = \frac{\alpha_j + \im \beta_j}{\sqrt{2}}, \quad (\alpha_j, \beta_j) \in \R^2 \, , \quad j \in \Z\setminus\{0\} \, .
$$
In the new coordinates $ (\alpha_j, \beta_j)_{ j \in \Z\setminus\{0\}} $,
we  write  \eqref{linz2} as (recall that $ M_j = M_{-j} $)
\begin{equation}\label{deco-real}
\begin{pmatrix}
\eta (x) \\
\zeta (x) 
\end{pmatrix}  = \sum_{j \in \Z\setminus\{0\}}  
\begin{pmatrix}
M_j ( \alpha_j \cos (jx) - \beta_j  \sin (jx) )  \\
M_j^{-1} (  \beta_j  \cos (jx) +  \alpha_j \sin (jx) ) 
\end{pmatrix} 
\end{equation}
with
\begin{equation}\label{proj-Vj}
\begin{aligned}
& \alpha_j = \frac{1}{2\pi} 
\Big( M_j^{-1} (\eta, \cos (jx))_{L^2} + M_j (\zeta, \sin (jx))_{L^2}  \Big) \, , \\
& \beta_j = 
\frac{1}{2\pi} 
\Big( M_j (\zeta, \cos (jx))_{L^2} - M_j^{-1} (\eta, \sin (jx))_{L^2}  \Big)\, .
\end{aligned}
\end{equation}
The symplectic form \eqref{sympl-form-st}  then becomes
$ 2\pi\sum_{j \in \Z\setminus\{0\}} \di \alpha_j \wedge \di \beta_j $. 
Each $ 2$-dimensional 
subspace in the sum \eqref{deco-real}, spanned by 
$ (\alpha_j, \beta_j ) \in \R^2 $ is therefore a symplectic subspace. 
The quadratic Hamiltonian $ \cH_L $ in \eqref{lin_real} reads 
\begin{equation}\label{QFH}
2 \pi \sum_{j \in \Z\setminus\{0\}} \frac{\Omega_j(\gamma)}{2} (\alpha_j^2 + \beta_j^2 )\, . 
\end{equation}
In view of \eqref{deco-real}, the involution $ \cS $ defined in \eqref{rev_invo} reads
$ (\alpha_j, \beta_j) \mapsto (\alpha_j, - \beta_j) $, $ \forall j \in \Z \setminus \{ 0 \} $.

We may also  enumerate the independent variables $ (\alpha_j, \beta_j )_{j \in \Z\setminus\{0\}} $ as
$  \big( \alpha_{-n}, \beta_{-n}, \alpha_{n}, \beta_{n} \big) $, $  n \in \N $. 
Thus  the phase space $\acca := L^2_0 \times \dot L^2 $ of 
\eqref{eq:Ham_eq_zeta} decomposes as the direct sum 
\begin{equation}
\label{acca}
\acca = \sum_{n \in \N} V_{n,+}  \oplus V_{n,-} 
\end{equation}
of $ 2 $-dimensional Lagrangian symplectic subspaces 
\begin{align} 
V_{n,+} & :=
\left\{ 
\begin{pmatrix}
\eta \\ \zeta
\end{pmatrix} 
= 
\begin{pmatrix}
M_{n} ( \alpha_{n} \cos (nx) - \beta_n \sin (nx) ) \\ 
M_{n}^{-1} ( \beta_{n} \cos (nx) + \alpha_n \sin (nx) ) 
\end{pmatrix} \, , (\alpha_n , \beta_n) \in \R^2  
\right\} \, , \label{Vn+} \\
V_{n,-} & :=
\left\{ 
\begin{pmatrix}
\eta \\ \zeta
\end{pmatrix}  = 
\begin{pmatrix}
M_{n} ( \alpha_{-n} \cos (nx) + \beta_{-n} \sin (nx) ) \\ 
M_{n}^{-1} ( \beta_{-n} \cos (nx) - \alpha_{-n} \sin (nx) ) 
\end{pmatrix} \, , (\alpha_{-n} , \beta_{-n}) \in \R^2  
\right\} \, , \label{Vn-}
\end{align}  
which are invariant for the linear Hamiltonian system \eqref{eq:lin00_wahlen}, namely
$ J  \b\Omega_W : V_{n,\sigma} \mapsto V_{n,\sigma} $.
Note that the involution $ \cS $ defined in \eqref{rev_invo} and the translation operator 
$ \tau_\vs $ in \eqref{trans} leave the subspaces $ V_{n,\sigma} $, $ \sigma \in \{ \pm  \} $,  invariant.

\subsection{Tangential and normal subspaces of the phase space}
\label{sec:decomp}

We  split  the phase space $ \acca $  in \eqref{acca} 
into a direct sum of {\em tangential} and {\em normal}  Lagrangian subspaces 
$ \acca_{\S^+,\Sigma}^\intercal $ and 
$ \acca_{\S^+,\Sigma}^\angle  $. 
Note that the 
main part of the solutions \eqref{QP:soluz} that we shall obtain in Theorem 
\ref{thm:main0} is the component  in the tangential subspace 
$ \acca_{\S^+,\Sigma}^\intercal $,  whereas the component in  the normal subspace $ \acca_{\S^+,\Sigma}^\angle  $ is much smaller. 

Recalling  the definition of the sets $\S^+$ and 
$\Sigma$ defined in \eqref{Splus} respectively \eqref{signs}, we split 
\begin{equation}\label{H-split}
\acca =\acca_{\S^+,\Sigma}^\intercal\oplus\acca_{\S^+,\Sigma}^\angle 
\end{equation}
where $\acca^{\intercal}_{\S^+, \Sigma} $ is the  finite dimensional {\em tangential subspace}  
\begin{equation}
\label{cHStang}
\acca^{\intercal}_{\S^+, \Sigma} := 
\sum_{a = 1}^\nu V_{{\bar n}_a, \sigma_a} 
\end{equation}
and $\acca^{\angle}_{\S^+, \Sigma} $ is the {\em normal subspace} defined as
its symplectic orthogonal 
\begin{equation}
\label{cHSnorm}
\acca^{\angle}_{\S^+, \Sigma} := \sum_{a = 1}^\nu V_{{\bar n}_a, - \sigma_a} 
\oplus  
\sum_{n \in \N \setminus {\mathbb S}^+} \big( V_{n,+}  \oplus V_{n,-}\big) \, . 
\end{equation}
Both the subspaces 
$\acca_{\S^+,\Sigma}^\intercal$ and $\acca_{\S^+,\Sigma}^\angle$ are Lagrangian. 
We denote  by
$ \Pi_{\S^+,\Sigma}^\intercal $ and
$  \Pi_{\S^+,\Sigma}^\angle  $ 
the symplectic projections on the subspaces $\acca_{\S^+,\Sigma}^\intercal$ and $\acca_{\S^+,\Sigma}^\angle$, respectively. 
The restricted symplectic form $\cW\vert_{\acca_{\S^+, \Sigma}^\angle} $ is represented by the symplectic structure  
$$
J_\angle^{-1} : \acca_{\S^+, \Sigma}^\angle \to \acca_{\S^+, \Sigma}^\angle \, , \
\quad 
J_\angle^{-1} := \Pi^{L^2}_\angle \,   J^{-1}_{| \acca_{\S^+,\Sigma}^\angle }    \, ,  
$$
where $ \Pi^{L^2}_\angle $ is the $ L^2 $-projector on the subspace 
$ \acca_{\S^+,\Sigma}^\angle $.  
Its 
 associated  Poisson tensor is 
$$
J_\angle :  \acca_{\S^+, \Sigma}^\angle \to \acca_{\S^+, \Sigma}^\angle \, , \quad
J_\angle := \Pi^\angle_{\S^+, \Sigma}  \, J_{| \acca_{\S^+,\Sigma}^\angle } \, .
$$
By Lemma 2.6 in \cite{BFM}, we have 
$ J^{-1}_\angle  \, J_\angle  = $ $ J_\angle \, J^{-1}_\angle =  $ $ {\rm Id}_{ \acca_{\S^+, \Sigma}^\angle} $. 
\\[1mm]
{\bf Action-angle coordinates.}
We introduce action-angle coordinates on the tangential subspace 
$ \acca^{ \intercal}_{\S^+, \Sigma}$ defined in \eqref{cHStang}. 
Given the sets $\S^+$ and $\Sigma$ defined respectively in \eqref{Splus} and \eqref{signs},  
we define the set 
\begin{equation}
\label{def.S}
\S := \{ \bar \jmath_1 , \ldots, \bar \jmath_\nu \} \subset \Z \,\setminus\{0\}\, , \quad \bar 
\jmath_a := \sigma_a \bar n_a \, ,
\quad a = 1, \ldots, \nu \, , 
\end{equation} 
and  the action-angle coordinates $ (\theta_j, I_j)_{j \in {\mathbb S}} $, by the relations  
\begin{equation}\label{ajbjAA}
\alpha_j = \sqrt{\frac{1}{\pi}(I_j + \xi_j)}\cos(\theta_j) \,, \ 
\beta_j = -\sqrt{\frac{1}{\pi}(I_j + \xi_j)}\sin(\theta_j) \, , \quad \xi_j >0 \, , \ | I_j | <  \xi_j \, ,
\ \forall j \in {\mathbb S} \, . 
\end{equation}
In view of \eqref{H-split}-\eqref{cHSnorm}, we represent any function
of the phase space $ \acca $ as
\begin{align}
\label{aacoordinates}
A(\theta,I,w)& := v^\intercal(\theta,I)+ w \notag \, ,  \\
& := \frac{1}{\sqrt{\pi}}\sum_{j\in\S}
\left[ \begin{pmatrix}
M_j\sqrt{I_j+ \xi_j}\cos(\theta_j) \\ -M_j^{-1}\sqrt{I_j+\xi_j}\sin(\theta_j)
\end{pmatrix}\cos(j x)+\begin{pmatrix}
M_j\sqrt{I_j+ \xi_j}\sin(\theta_j) \\ M_j^{-1}\sqrt{I_j+\xi_j}\cos(\theta_j)
\end{pmatrix}\sin(j x) \right] + w  \nonumber \\
& = \frac{1}{\sqrt{\pi}}\sum_{j\in\S}
\left[ \begin{pmatrix}
M_j\sqrt{I_j+ \xi_j} \cos(\theta_j - j x ) \\ 
- M_j^{-1}\sqrt{I_j+\xi_j}\sin(\theta_j - j x )
\end{pmatrix} \right] + w 
\end{align}
where $ \theta := (\theta_j)_{j \in \S} \in \T^\nu $, $ I := (I_j)_{j \in \S } \in \R^\nu $
and $ w \in  \acca^{ \angle}_{\S^+, \Sigma} $.

In view of  \eqref{aacoordinates}, 
the involution $\cS$ in \eqref{rev_invo} reads
\begin{equation}\label{rev_aa}
\vec \cS: (\theta,I,w)\mapsto \left( -\theta,I,\cS w \right)\,, 
\end{equation}
the  translation operator $\tau_\varsigma$ in  \eqref{trans} reads
\begin{equation}
\label{vec.tau}
\vec \tau_\vs : 
(\theta, \, I, \, w) \mapsto 
(\theta - \ora{\jmath} \vs, \, I, \, \tau_\vs w), \quad \forall \vs \in \R \, , 
\end{equation}
where 
\begin{equation}\label{def:vecj}
\ora{\jmath}:= (j)_{j \in \S} = ( \bar{\jmath}_1,\ldots,\bar{\jmath}_\nu) \in \Z^\nu\setminus\{0\} \, , 
\end{equation}
and the symplectic 2-form \eqref{sympl-form-st} becomes
\begin{equation}\label{sympl_form}
{\cal W} = 
\sum_{j\in\S} (\di \theta_j \wedge  \di I_j)  \, \oplus \, 
{\cal W}|_{\acca_{\S^+, \Sigma}^\angle} \,  . 
\end{equation}
We also note that  ${\cal W} $ is exact, namely 
\begin{equation}\label{liouville}
{\cal W} = d \Lambda\,,   \qquad {\rm where} \qquad 
\Lambda_{(\theta,I,w)} [ \wh\theta,\whI ,\wh{w} ] := - \sum_{j\in\S} I_j \wh{\theta}_j + \tfrac12 \left( J_\angle^{-1} w, \wh{w} \right)_{L^2} 
\end{equation}
is the  associated  Liouville 1-form.
Finally, given a Hamiltonian $ K \colon \T^\nu \times \R^\nu \times \acca_{\S^+, \Sigma}^\angle \to \R$, 
the associated Hamiltonian vector field 
(with respect to the symplectic form \eqref{sympl_form}) is
$$
X_K  := 
\big( \pa_I K, -\pa_\theta K,  J_\angle \nabla_{w} K \big)  =
\big( \pa_I K,  -\pa_\theta K,  \Pi_{\S^+,\Sigma}^\angle J \nabla_{w} K \big) 	\, , 
$$
where $\grad_w K $ denotes the $L^2$ gradient of $K$ with respect to $ w
\in \acca_{\S^+, \Sigma}^\angle$. 
\\[1mm]
{\bf Tangential and normal subspaces in complex variables.}
Each $ 2 $-dimensional symplectic subspace $ V_{n,\sigma} $, 
$ n \in \N $, $ \sigma = \pm 1 $,  defined in \eqref{Vn+}-\eqref{Vn-}
is isomorphic,   
through the linear map $ {\cal M } {\cal C} $ defined in \eqref{linz2}, 
to the complex subspace 
$$
\bH_j  
:= \Big\{  \begin{pmatrix}
z_j e^{\im j x }  \\ \overline{z_j} e^{- \im j x } 
\end{pmatrix} 	\, , \  z_j \in \C   \Big\}  \qquad {\rm with } \qquad j = n \sigma \in \Z  \, .
$$
Denoting by $ \Pi_j $  the $ L^2 $-projection on $	\bH_j   $, we have that 
$\Pi_{V_{n, \sigma}}  = {\cal M } {\cal C} \,  \Pi_j \, ({\cal M } {\cal C})^{-1} $.  
Thus $ {\cal M } {\cal C} $
is an isomorphism between the tangential subspace 
$ \acca^{\intercal}_{\S^+, \Sigma} $ defined in \eqref{cHStang} and 
$$
\bH_{\mathbb S} 
:= \Big\{ \begin{pmatrix}
z \\ \bar z
\end{pmatrix}  \, : \, 
z (x) = \sum_{j \in {\mathbb S} } z_j e^{\im j x }  \Big\}
$$
and between  the  normal subspace $ \acca^{\angle}_{\S^+, \Sigma} $ defined in \eqref{cHSnorm} and 
\begin{equation}\label{def:HS0bot}
\bH_{{\mathbb S}_0}^\bot := \Big\{ \begin{pmatrix}
z \\ \bar z
\end{pmatrix} \, : \, 
z (x) = \sum_{j \in \S_0^c } z_j e^{\im j x } \in L^2   \Big\}  \, ,
\quad \S_0^c := \Z \setminus (\S \cup \{0\}) \, . 
\end{equation}
Denoting by 
$ \Pi_{\S}^\intercal $, $\Pi_{\S_0}^\perp  $, the $ L^2 $-orthogonal 
projections on the subspaces $ \bH_\S $ and 
$ \bH_{\S_0}^\perp $, we have that
\begin{equation}\label{proiez}
\Pi_{\S^+,\Sigma}^\intercal = {\cal M } {\cal C} \,  \Pi_{\S}^\intercal \, ({\cal M } {\cal C})^{-1} \, , 
\quad
\Pi_{\S^+,\Sigma}^\angle = {\cal M } {\cal C} \, \Pi_{\S_0}^\perp \, ({\cal M } {\cal C})^{-1} \, . 
\end{equation} 
From this analysis, it follows that (cfr. Lemma 2.9 in \cite{BFM})
\begin{equation}
\label{zero_term}
\left( v^\intercal , \b\Omega_W w \right)_{L^2} = 0 \ , \qquad
\forall 
 v^\intercal  \in \acca_{\S^+,\Sigma}^\intercal , \ \  \forall w \in \acca_{\S^+,\Sigma}^\angle  \ . 
\end{equation}

\noindent
{\bf Notation.} For $ a \lesssim_s b $ means that $ a \leq C(s) b $ for some positive constant 
$ C (s) $. We denote $ \N := \{1, 2, \ldots \} $ and $ \N_0 := \{0\} \cup \N $.

\section{Functional setting}\label{sec:FS}

In this section we report basic notation, definitions, and results used along the paper, concerning traveling waves, pseudo-differential operators, tame operators, 
and the algebraic properties of Hamiltonian, reversible and momentum preserving operators. 

\smallskip

We consider functions $u(\vf,x)\in L^2\left(\T^{\nu+1},\C\right)$ depending on the space variable $x\in\T=\T_x$ and the angles $\vf\in\T^\nu=\T_\vf^\nu$ (so that $\T^{\nu+1}= \T_\vf^\nu\times \T_x$) which we expand in Fourier series as
\begin{equation}\label{u_fourier}
u(\vf,x) =  \sum_{j\in\Z} u_j(\vf)e^{\im\,jx} = 
\sum_{\ell\in\Z^\nu,j\in\Z}u_{\ell,j}e^{\im (\ell\cdot \vf +jx )}\, . 
\end{equation}
We also consider real valued functions $u(\vf,x)\in\R$, as well as vector valued functions $u(\vf,x)\in\C^2$ (or $u(\vf,x)\in\R^2$). 
When no confusion appears, we denote simply by $L^2$, $L^2(\T^{\nu+1})$, $L_x^2:=L^2(\T_x)$, $L_\vf^2:= L^2(\T^\nu)$ either the spaces of real/complex valued, scalar/vector valued, $L^2$-functions.

\paragraph{Quasi-periodic traveling waves.}

We first provide the following definition: 

\begin{defn} \label{QPTW}
	{\bf (Quasi-periodic  traveling waves)} 
	Let  $ \ora{\jmath} := (\bar{\jmath}_1, \ldots ,\bar{\jmath}_\nu) \in \Z^\nu $ be the vector defined in 
	\eqref{def:vecj}. 
	A function 
	$ u (\vf, x) $  is called a {\em quasi-periodic  traveling} wave if it has 
	the form $ u(\vf,x) = U(\vf-\ora{\jmath}x)  $ 
	where $ U :  \T^\nu \to \C^K $, $ K \in \N $, is 
	a $ (2 \pi)^\nu $-periodic function. 
\end{defn}

Comparing with Definition \ref{def:TV}, we find convenient to call 
{\em quasi-periodic  traveling} wave both the function $ u(\vf,x) = U(\vf-\ora{\jmath}x)  $ and the function of time  $ u(\omega t,x) = U(\omega t-\ora{\jmath}x)  $. 

Quasi-periodic traveling waves are characterized by the relation 	
$ u(\vf - \ora{\jmath} \vs, \cdot) = \tau_\vs u $ for any $\vs \in \R  $,
where $\tau_\vs $ is the translation operator in \eqref{trans}. 

Product and composition of quasi-periodic traveling waves are quasi-periodic traveling waves. 
Expanded in Fourier series as in \eqref{u_fourier}, 
a quasi-periodic traveling wave has the form 
\begin{equation}\label{u-trav-Fourier}
u(\vf, x) = 
\sum_{\ell\in\Z^\nu,j\in\Z, j + \vec \jmath \cdot \ell = 0 }u_{\ell,j}e^{\im (\ell\cdot \vf +jx )}
\, , 
\end{equation}
namely, comparing with  Definition \ref{QPTW}, 
\begin{equation}\label{uphiU}
u(\vf, x) = U(\vf - \vec \jmath x) \, , \quad U (\psi) = \sum_{\ell \in \Z^\nu}
U_\ell e^{\im \ell \cdot \psi} \, , \quad U_\ell = u_{\ell, - \vec \jmath \cdot \ell} \, . 
\end{equation}
The quasi-periodic traveling waves $ u (\vf, x) = U (\vf - \vec \jmath x ) $ where 
$ U (\cdot ) $ belongs to the Sobolev space $ H^s (\T^\nu, \C^K ) $ 
in \eqref{unified norm} (with values in $ \C^K $, $ K \in \N $), 
form a subspace of the 
Sobolev space 
\begin{equation} \label{Sobonorm}
H^s(\T^{\nu+1})
= \Big\{ u = \sum_{(\ell,j) \in \Z^{\nu+1}} u_{\ell,j} \, e^{\im (\ell \cdot \vf + jx)} \, : \, 
\| u \|_s^2 := \sum_{(\ell,j) \in \Z^{\nu+1}} | u_{\ell, j}|^2 \langle \ell,j \rangle^{2s} < \infty 
\Big\}
\end{equation}
where  $\langle \ell,j \rangle := \max \{ 1, |\ell|, |j| \} $. 
Note the equivalence of the norms 
$ \| u \|_{H^s (\T^{\nu}_\vf \times \T_x )} \simeq_s \| U \|_{H^s (\T^{\nu})} $.
 
For 
$ s \geq s_0 := \big[ \frac{\nu +1}{2} \big] +1 \in \N  $
one has $ H^s ( \T^{\nu+1}) \subset C ( \T^{\nu+1})$, and $H^s(\T^{\nu+1})$ is an algebra. 
Along the paper we  denote by $ \| \ \|_s $ both the Sobolev norms in 
\eqref{unified norm} and \eqref{Sobonorm}.  

For $K\geq 1$ we define the smoothing operator  $\Pi_{K}$  on the quasi-periodic traveling waves
\begin{equation}\label{pro:N}
\Pi_K :  u = \sum_{\ell \in \Z^\nu,\, j \in \S_0^c, \,j + \vec \jmath \cdot \ell = 0}
u_{\ell,j}e^{\im(\ell\cdot \vf+jx)} \mapsto
\Pi_K u = \sum_{\braket{\ell}\leq K,\, j \in \S_0^c,\, j + \vec \jmath \cdot \ell = 0}
u_{\ell,j}e^{\im(\ell\cdot \vf+jx)}   \, ,  
\end{equation}
and $ \Pi_K^\perp := {\rm Id}-\Pi_K  $. 
Writing a traveling wave as in  
\eqref{uphiU}, the projector $ \Pi_K $ in \eqref{pro:N} is equal to 
$$ 
(\Pi_K u)(\vf, x) =  U_K (\vf - \vec \jmath x ) \, , \quad U_K (\psi) := 
\sum_{\ell \in \Z^\nu,\, \langle \ell \rangle \leq K} U_\ell e^{\im \ell \cdot \psi} \, . 
$$
For a function $ u(\vphi, x) $ we define the averages
\begin{equation}\label{def:avera}
\langle u \rangle_{\vphi,x} := \frac{1}{(2 \pi)^{\nu+1}}  \int_{\T^{\nu+1}} u(\vphi,x) \wrt \vphi \wrt x 
\, , \quad 
\langle u \rangle_{\vphi}(x) := \frac{1}{(2 \pi)^{\nu}}  \int_{\T^{\nu+1}} u(\vphi,x) \wrt \vphi \, ,  
\end{equation}
and we note that, if $ u (\vphi,x) $ is a quasi-periodic traveling wave then 
$ \langle u \rangle_\vphi = \langle u \rangle_{\vphi,x}  $. 
\\[1mm]
{\bf Whitney-Sobolev functions.} 
Along the paper we consider  families of Sobolev functions $\lambda\mapsto u(\lambda)\in H^s (\T^{\nu+1}) $ and  
$\lambda\mapsto U(\lambda)\in H^s (\T^{\nu}) $ which are $k_0$-times differentiable in the sense of Whitney with respect to the parameter
$ \lambda :=(\omega,\gamma) \in F \subset \R^\nu\times [\gamma_1,\gamma_2] $
where $F\subset \R^{\nu+1}$ is a closed set.
We refer to Definition 2.1 in \cite{BBHM}, for the 
definition of a Whitney-Sobolev function $ u : F \to H^s $ where 
$ H^s $ may be either the Hilbert space $ H^s (\T^\nu \times \T) $ or $ H^s (\T^\nu) $.
Here we mention that, given $ \upsilon \in (0,1) $,  
we can identify a  Whitney-Sobolev function $ u : F \to H^s $
with $ k_0 $ derivatives 
with the equivalence class of functions $ f \in W^{k_0,\infty,\upsilon}(\R^{\nu+1},H^s)/\sim$ with respect to the equivalence relation $f\sim g$ when $\pa_\lambda^j f(\lambda) = \pa_\lambda^j g(\lambda)$ for all $\lambda\in F$, $\abs j \leq k_0-1$, 
with equivalence of the norms 
\begin{equation*}
\normk{u}{s,F}  \sim_{\nu,k_0} \norm{u}_{W^{k_0,\infty,\upsilon}(\R^{\nu+1},H^s)}:= \sum_{\abs\alpha\leq k_0} \upsilon^{\abs\alpha} \| \pa_\lambda^\alpha u \|_{L^\infty(\R^{\nu+1},H^s)} \,.
\end{equation*}
The key result is the Whitney extension theorem,  
which associates to a Whitney-Sobolev function 
$u : F \to H^s $ with $ k_0 $-derivatives a 
function $\wtu : \R^{\nu+1} \to H^s $, $\wtu $ in $ W^{k_0,\infty}(\R^{\nu+1},H^s) $
(independently of the target Sobolev space $H^s$) with an equivalent norm.  
For sake of simplicity we  
often denote  $ \| \ \|_{s,F}^{k_0,\upsilon} =  \| \ \|_{s}^{k_0,\upsilon} $.

Thanks to this equivalence, all the tame 
estimates which hold for Sobolev spaces carry over for Whitney-Sobolev functions. 
For example the following classical tame estimate for the product holds: 
(see e.g.  Lemma 2.4 in \cite{BBHM}): 
for all $s\geq s_0 > (\nu+1)/2$, 
\begin{equation}\label{prod}
\normk{u v}{s} \leq C(s,k_0) \normk{u}{s}\normk{v}{s_0} + C(s_0,k_0)\normk{u}{s_0}\normk{v}{s}\,.
\end{equation}
Moreover the following  estimates hold for the smoothing operators defined 
in \eqref{pro:N}: for any quasi-periodic traveling wave $ u $
\begin{equation}\label{SM12}
\normk{\Pi_K u}{s}  \leq K^\alpha \normk{u}{s-\alpha}\,, \  0\leq \alpha\leq s \,, \quad 
\normk{\Pi_K^\perp u}{s}  \leq K^{-\alpha} \normk{u}{s+\alpha}\,, \ \alpha\geq 0 \,.
\end{equation}
We also state a standard Moser tame estimate for the nonlinear composition operator, see e.g. Lemma 2.6 in \cite{BBHM}, 
$ u(\vf,x)\mapsto \tf(u)(\vf,x) :=f(\vf,x,u(\vf,x)) $. 
Since the variables $(\vf,x)=:y$ have the same role, we state it for a generic Sobolev space $H^s(\T^d)$.
\begin{lem}{\bf (Composition operator)}\label{compo_moser}
	Let $f\in\cC^\infty(\T^d\times\R,\R)$. If $u(\lambda)\in H^s(\T^d)$ is a family of Sobolev functions satisfying $\normk{u}{s_0}\leq 1$, then, for all $s\geq s_0:=(d+1)/2$,
	$$
	\normk{\tf(u)}{s}\leq C(s,k_0,f)\big( 1+\normk{u}{s} \big) \,.
	$$
	If $ f(\varphi, x, 0) = 0 $ then 
	$	  \normk{\tf(u)}{s}\leq C(s,k_0,f) \normk{u}{s} $. 
\end{lem}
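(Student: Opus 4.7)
The plan is to reduce the Whitney--Sobolev statement to the classical Moser composition estimate on $H^s(\T^d)$ at fixed parameter $\lambda$, and then treat the $\lambda$-derivatives via the Fa\`a di Bruno formula combined with the tame product estimate \eqref{prod}. By the norm equivalence stated after the Whitney extension theorem, it suffices to bound $\sum_{|\alpha|\leq k_0}\upsilon^{|\alpha|}\|\partial_\lambda^\alpha \tf(\wtu(\lambda))\|_{L^\infty_\lambda H^s_y}$, where $\wtu\in W^{k_0,\infty}(\R^{\nu+1},H^s)$ is a Whitney extension of $u$. For $\alpha=0$, the classical Moser composition estimate (see e.g.~Lemma~2.6 in \cite{BBHM}) applied pointwise in $\lambda$ gives $\|\tf(\wtu(\lambda))\|_{H^s_y}\leq C(s,f)(1+\|\wtu(\lambda)\|_{H^s_y})$, which already handles the undifferentiated term.

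For $1\leq |\alpha|\leq k_0$, I would expand $\partial_\lambda^\alpha \tf(\wtu(\lambda))$ by Fa\`a di Bruno as a finite linear combination of terms of the form $(\partial_u^k f)(y,\wtu(\lambda,y))\prod_{i=1}^{k}\partial_\lambda^{\alpha_i}\wtu(\lambda)$, with $\sum_i \alpha_i = \alpha$, $|\alpha_i|\geq 1$, and $k\leq |\alpha|$. Each composition factor $(\partial_u^k f)(y,\wtu)$ is again of Moser type with $\partial_u^k f\in C^\infty(\T^d\times\R,\R)$, hence bounded in $H^s_y$ by $C(s,k,f)(1+\|\wtu(\lambda)\|_{H^s_y})$ and in $H^{s_0}_y$ by a constant depending only on $f$, using $\|\wtu(\lambda)\|_{s_0}\lesssim\normk{u}{s_0}\leq 1$. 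Iterating the product estimate \eqref{prod}, I would let exactly one factor (either a $\partial_\lambda^{\alpha_i}\wtu$ or the composition factor) carry the $H^s_y$-norm while all remaining factors keep only their $H^{s_0}_y$-norm, the latter uniformly bounded by the hypothesis. Multiplying by $\upsilon^{|\alpha|}=\prod_i\upsilon^{|\alpha_i|}$ redistributes the parameter weight exactly into the Whitney norms, and summation over the finitely many Fa\`a di Bruno terms and over $|\alpha|\leq k_0$ yields $\normk{\tf(u)}{s}\leq C(s,k_0,f)(1+\normk{u}{s})$.

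For the refined case $f(\vphi,x,0)=0$, I would use the Taylor remainder factorization $f(\vphi,x,u) = u\,g(\vphi,x,u)$ where $g(\vphi,x,u):=\int_0^1 (\partial_u f)(\vphi,x,tu)\,dt\in C^\infty(\T^d\times\R,\R)$. Applying the first part of the lemma to $\tilde g$ and invoking \eqref{prod} together with $\normk{u}{s_0}\leq 1$ gives $\normk{u\,\tilde g(u)}{s}\lesssim \normk{u}{s}\normk{\tilde g(u)}{s_0}+\normk{u}{s_0}\normk{\tilde g(u)}{s}\lesssim C(s,k_0,f)\normk{u}{s}$, as claimed.

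The main technical obstacle is the Fa\`a di Bruno bookkeeping: one must ensure that in each multilinear term \emph{exactly one} factor is estimated in $H^s_y$ and all others in $H^{s_0}_y$, so that the final bound is linear (tame) in $\normk{u}{s}$ rather than polynomial. This is precisely where the hypothesis $\normk{u}{s_0}\leq 1$ enters, keeping the $s_0$-norm of every $\lambda$-derivative of $\wtu$ uniformly bounded and letting the Moser factor be absorbed into the multiplicative constant.
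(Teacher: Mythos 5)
The paper does not prove this lemma; it simply cites Lemma~2.6 of \cite{BBHM}. Your reconstruction — Whitney extension, pointwise classical Moser in $\lambda$, Fa\`a di Bruno for the $\lambda$-derivatives with the weight $\upsilon^{|\alpha|}=\prod_i\upsilon^{|\alpha_i|}$ distributing across the factors, and Hadamard factorization for the vanishing case — is the standard argument and is essentially the one behind the cited reference, so the overall strategy is sound.

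There is one small but real gap in the last step. After Hadamard, you write $\normk{u\,\tilde g(u)}{s}\lesssim\normk{u}{s}\normk{\tilde g(u)}{s_0}+\normk{u}{s_0}\normk{\tilde g(u)}{s}$ and declare it $\lesssim\normk{u}{s}$. The first summand is fine, but for the second, part 1 only gives $\normk{\tilde g(u)}{s}\lesssim 1+\normk{u}{s}$, so the hypothesis $\normk{u}{s_0}\leq 1$ alone yields $\normk{u}{s_0}\normk{\tilde g(u)}{s}\lesssim 1+\normk{u}{s}$, leaving a stray constant. To close, split $\normk{u}{s_0}(1+\normk{u}{s})=\normk{u}{s_0}+\normk{u}{s_0}\normk{u}{s}$ and use \emph{both} $\normk{u}{s_0}\leq 1$ (to bound the second piece by $\normk{u}{s}$) \emph{and} the monotonicity $\normk{u}{s_0}\leq\normk{u}{s}$ for $s\geq s_0$ (to bound the first piece). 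You implicitly appeal only to the former; make the latter explicit, otherwise the stated chain gives $1+\normk{u}{s}$, not $\normk{u}{s}$.
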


\paragraph{Constant transport equation on quasi-periodic traveling waves.} 
Let $\tm : \R^\nu \times [\gamma_1, \gamma_2]  \to \R$, $ (\omega, \gamma) \mapsto 
\tm (\omega, \gamma) $  be a real function.  For any 
$ (\omega, \gamma) $ in 
\begin{equation}\label{DCm}
	\tT\tC(\tm;\upsilon,\tau):=\big\{ (\omega, \gamma)  
	\in\R^\nu \times [\gamma_1, \gamma_2] \,:\, | \omega\cdot\ell + \tm\,j|\geq \upsilon\braket{\ell}^{-\tau}, \forall(\ell,j)\in\Z^{\nu+1}\setminus\{0\}\,, \text{ with } \ora{\jmath}\cdot\ell+j=0 \big\}\,,
\end{equation}
then, for a quasi-periodic traveling wave $u(\vf,x)$ with zero average with respect to $\vf$ (and therefore with respect to $(\vf,x)$), the transport equation $(\omega\cdot\pa_\vf + \tm \, \pa_x)v = u$ has the quasi-periodic traveling wave solution (see \eqref{u-trav-Fourier})
\begin{equation*}
	(\omega\cdot\pa_\vf+\tm\,\pa_x)^{-1}u := \sum_{(\ell,j)\in\Z^{\nu+1}\setminus\{0\}\atop \ora{\jmath}\cdot\ell+j=0}\frac{u_{\ell,j}}{\im(\omega\cdot\ell+\tm\,j)}e^{\im(\ell\cdot\vf+jx)}\,.
\end{equation*}
For any $ (\omega, \gamma) \in\R^\nu \times [\gamma_1, \gamma_2] $, 
we define its extension
\begin{equation}\label{paext}
(\omega\cdot\pa_\vf+ \tm \,\pa_x)_{\rm ext}^{-1} u(\vf,x) := \sum_{(\ell,j)\in\Z^{\nu+1}\atop \ora{\jmath}\cdot \ell + j =0}\frac{\chi((\omega\cdot\ell+\tm\, j)\upsilon^{-1}\braket{\ell}^\tau)}{\im (\omega\cdot\ell + \tm \,j)} u_{\ell,j}e^{\im(\ell\cdot\vf+jx)}\,,
\end{equation}
where $\chi\in\cC^\infty(\R,\R)$ is an even positive $\cC^\infty$ cut-off function such that
\begin{equation}\label{cutoff}
\chi(\xi) = \begin{cases}
0 & \text{ if } \ \abs\xi\leq \frac13 \\
1 & \text{ if } \ \abs\xi \geq \frac23 
\end{cases}\,, \qquad \pa_\xi \chi(\xi) >0   \quad \forall\,\xi \in (\tfrac13,\tfrac23) \,.
\end{equation}
Note that $(\omega\cdot\pa_\vf+\tm\,\pa_x)_{\rm ext}^{-1} u = (\omega\cdot\pa_\vf+\tm\,\pa_x)^{-1}u$ for all $(\omega, \gamma) \in\tT\tC(\tm;\upsilon,\tau)$. 

If $ |\tm |^{k_0,\upsilon} \leq C $ then  the following estimate holds
\begin{equation}\label{lem:diopha.eq}
\normk{(\omega\cdot\pa_\vf+\tm\,\pa_x)_{\rm ext}^{-1}u}{s,\R^{\nu+1}} \leq C(k_0)\upsilon^{-1}\normk{u}{s+\mu,\R^{\nu+1}}\,, \quad  \mu:=k_0+\tau(k_0+1) \,.
\end{equation}
Furthermore one has the estimate, for any $\omega \in \R^\nu$, $ \tm_1, \tm_2 \in\R$ and   $s\geq 0$
\begin{equation}\label{lem:diopha.eq.12}
\norm{\left( (\omega\cdot\pa_\vf+\tm_1\,\pa_x)_{\rm ext}^{-1}
- (\omega\cdot\pa_\vf+\tm_2\,\pa_x)_{\rm ext}^{-1}
\right)u}_{s} \leq C \, \upsilon^{-2}\, \abs{\tm_1 - \tm_2} \, \norm{u}_{s+2\tau+1} \, . 
\end{equation}

\paragraph{Linear operators.}
We consider $\vf$-dependent families of linear operators $A:\T^\nu\mapsto \cL(L^2(\T_x))$, $\vf\mapsto A(\vf)$, acting on subspaces of $L^2(\T_x)$. 
We also regard $A$ as an operator (which for simplicity we denote by $A$ as well) that acts on functions $u(\vf,x)$ of space and time, that is
\begin{equation}\label{linear_A}
(Au)(\vf,x) := \left( A(\vf)u(\vf,\,\cdot \,) \right)(x) \,.
\end{equation}
The action of an operator $A$ as in \eqref{linear_A} on a scalar function $u(\vf,x)\in L^2$ expanded as in \eqref{u_fourier} is
\begin{align}\label{A_expans}
Au(\vf,x) & = \sum_{j,j'\in\Z} A_j^{j'}(\vf) u_{j'}(\vf)e^{\im\, jx}  
= \sum_{j,j'\in\Z}\sum_{\ell,\ell'\in\Z^\nu} A_j^{j'}(\ell-\ell') u_{\ell',j'} e^{\im\left(\ell\cdot \vf + jx\right)} \,.
\end{align}
We identify an operator $A$ with its matrix $ \big( A_j^{j'}(\ell-\ell') \big)_{j,j'\in\Z,\ell,\ell'\in\Z^\nu}$, which is T\"oplitz with respect to the index $\ell$. In this paper we always consider T\"oplitz operators as in \eqref{linear_A}, \eqref{A_expans}. 
\\[1mm]
{\bf Real operators.} 
A linear  operator $ A $ is {\it real} if 
$ A = \bar A  $,
where $ \bar A $ is defined by  
$ \bar{A}(u):= \bar{A(\bar u)} $. 
We represent a real operator acting on $(\eta,\zeta) $ belonging to 
(a subspace of) $  L^2(\T_x,\R^2) $   by a matrix
\begin{equation}\label{real_matrix}
\cR = \begin{pmatrix}
A & B \\ C & D
\end{pmatrix}
\end{equation}
where $A,B,C,D$ are real operators acting on the scalar valued components 
$\eta,\zeta \in L^2(\T_x,\R) $. 

The  change of coordinates \eqref{C_transform}
transforms a real operator $\cR$ into a complex one acting on the variables
$(z,\bar z)  $, given by the matrix
\begin{equation}\label{C_transformed}
\bR  := \cC^{-1} \cR \cC = \left( \begin{matrix}
\cR_1 & \cR_2 \\ \bar\cR_2 & \bar\cR_1
\end{matrix} \right) \ , \\
\qquad 
\begin{matrix}
\cR_1  := \left\{ (A+D) -\im(B-C) \right\} /2 \, ,\\
 \cR_2:=\left\{ (A-D) + \im (B+C)  \right\} /2  \, . 
 \end{matrix}
\end{equation}
We call  \emph{real} a matrix operator acting on the complex variables $(z,\bar z)$ of the form
\eqref{C_transformed}.
We shall also consider  real operators $\bR $ of the form 
\eqref{C_transformed} acting on subspaces of $ L^2 $. 
\\[1mm]
{\bf Lie expansion.} Let $X(\vf)$ be a linear operator
with  associated  flow $\Phi^\tau ( \vf)$ defined by
$$
\pa_\tau \Phi^\tau (\vf) = X(\vf) \Phi^\tau (\vf) \, , \quad
\Phi^0 (\vf) = {\rm Id} \, ,  \quad  \tau \in[0,1] \, . 
$$
Let $ \Phi(\vf) := \Phi^\tau (\vf)_{|\tau = 1} $ denote the time-$1$ flow.
Given a linear operator $ A ( \vf ) $,
the conjugated operator 
$  A^+(\vf):=\Phi(\vf)^{-1}A(\vf) \Phi(\vf)  $
admits the  Lie expansion, for any  $ M \in \N_0 $, 
\begin{equation}\label{lie_abstract}
\begin{aligned}
& A^+(\vf) = 
\sum_{m=0}^M \frac{(-1)^m}{m!} \ad_{X(\vf)}^m(A(\vf)) +R_M(\vf)\,, \\ 
R_M(\vf)& = \frac{(-1)^{M+1}}{M!}\int_0^1 (1- \tau)^M \, (\Phi^\tau (\vf))^{-1} \ad_{X(\vf)}^{M+1}(A(\vf)) \Phi^\tau(\vf) \, \wrt \tau \,,
\end{aligned}
\end{equation}
where $\ad_{X(\vf)}(A(\vf)) := [X(\vf), A(\vf)] = X(\vf) A(\vf) - A(\vf) X(\vf) $ 
and  $\ad_{X(\vf)}^0 := {\rm Id} $. 

In particular, for $A=\omega\cdot\pa_\vf$, since $[X(\vf), \omega\cdot \pa_\vf] = 
- ( \omega\cdot \pa_\vf X)(\vf) $, we obtain
\begin{equation}\label{lie_omega_devf}
\begin{aligned}
\Phi(\vf)^{-1} \circ  \omega\cdot\pa_\vf \circ \Phi(\vf) = &\, \omega\cdot\pa_\vf 
+ 
\sum_{m=1}^{M} \frac{(-1)^{m+1}}{m!} \ad_{X(\vf)}^{m-1}(\omega\cdot \pa_\vf X(\vf)) \\
& + \frac{(-1)^M}{M!}\int_0^1(1- \tau)^{M} (\Phi^\tau(\vf))^{-1}\ad_{X(\vf)}^M(\omega\cdot\pa_\vf X(\vf))\Phi^\tau(\vf) \wrt \tau \,.
\end{aligned}
\end{equation}
For matrices of operators $\bX(\vf)$ and $\bA(\vf)$ as in \eqref{C_transformed}, the same formula \eqref{lie_abstract} holds.

\subsection{Pseudodifferential calculus}\label{subsec:pseudo_calc}

In this section we report fundamental notions of pseudodifferential calculus,
following \cite{BM}. 
\begin{defn}{\bf ($\Psi$DO)} 
	A  \emph{pseudodifferential} symbol $ a (x,j) $
	of order $m$ is 
	the restriction to $ \R \times \Z $ of a function $ a (x, \xi ) $ which is $ \cC^\infty $-smooth on $ \R \times \R $,
	$ 2 \pi $-periodic in $ x $, and satisfies, $ \forall \alpha, \beta \in \N_0 $,  
$ | \pa_x^\alpha \pa_\xi^\beta a (x,\xi ) | \leq C_{\alpha,\beta} \langle \xi \rangle^{m - \beta} $. 
	We denote by $ S^m $ 
	the class of  symbols  of order $ m $ and 
	$ S^{-\infty} := \cap_{m \geq 0} S^m $. 
	To a  symbol $ a(x, \xi ) $ in $S^m$ we associate its quantization acting on a $ 2 \pi $-periodic function 
	$ u(x) = \sum_{j \in \Z} u_j \, e^{\im j x} $
	as 
	$$
	[\Op(a)u](x) : = \sum_{j \in \Z}  a(x, j )  u_j \, e^{\im j x} \, . 
	$$
	We denote by  $ \Ops^m $ 
	the  set of pseudodifferential  operators of order $ m $ and
	$ \Ops^{-\infty} := \bigcap_{m \in \R} \Ops^{m} $.
	For a matrix of pseudodifferential operators 
	\begin{equation}\label{operatori matriciali sezione pseudo diff}
	\bA = \begin{pmatrix}
	A_1  & A_2 \\
	A_3 & A_4
	\end{pmatrix}, \quad A_i \in \Ops^m, \quad i =1, \ldots , 4
	\end{equation}
	we say that  $\bA \in \Ops^m$.
\end{defn}

When the symbol $ a (x) $ is independent of $ \xi $, the operator $ \Op (a) $ is 
the multiplication operator by the function $ a(x)$, i.e.\  $ \Op (a) : u (x) \mapsto a ( x) u(x )$. 
In such a case we also denote $ \Op (a)  = a (x) $.

We shall use 
the following notation, used also in \cite{AB,BM,BBHM}.
For any $m \in \R \setminus \{ 0\}$, we set
$$
|D|^m := \Op \big( \chi(\xi) |\xi|^m \big)\,,
$$
where $\chi$ is an  even, positive $\cC^\infty$ cut-off satisfying  \eqref{cutoff}.
We also identify the Hilbert transform $\cH$, acting on the $2 \pi$-periodic functions, defined by
\begin{equation}\label{Hilbert-transf}
\cH ( e^{\im j x} ) := - \im \, \sign(j) e^{\im jx} \,  \quad  \forall j \neq 0 \, ,  \quad \cH (1) := 0\,, 
\end{equation}
with the Fourier multiplier $\Op (- \im\, \sign(\xi) \chi(\xi) )$. Similarly we regard 
the  operator 
\begin{equation}\label{pax-1}
\pa_x^{-1}\left[ e^{\im jx}\right] := -\,\im \,j^{-1} \,e^{\im jx} \, \quad \forall\, j\neq 0\,,  \quad \pa_x^{-1}[1] := 0\,,
\end{equation}
as the  Fourier  multiplier $\pa_x^{-1} =  \Op \left( - \im\,\chi(\xi) \xi^{-1} \right)$ and the projector $\pi_0 $, defined on the $ 2 \pi $-periodic functions  as
\begin{equation}\label{defpi0}
\pi_0 u := \frac{1}{2\pi} \int_\T u(x)\, d x\, , 
\end{equation}
with the Fourier multiplier $ {\rm Op}\big( 1 - \chi(\xi) \big)$.
Finally we define, for  any $m \in \R \setminus \{ 0 \}$, 
$$
\langle D \rangle^m := \pi_0 + |D|^m
:= \Op \big( ( 1 - \chi(\xi)) + \chi(\xi) |\xi|^m \big) \, .
$$
Along the paper we consider families of pseudodifferential operators 
with a symbol $  a(\lambda;\vf,x,\xi)  $
which is $k_0$-times differentiable with respect to a parameter
$ \lambda:=(\omega,\gamma) $ in an open subset $ \Lambda_0 \subset \R^\nu\times [\gamma_1,\gamma_2] $. 
Note that $\partial_\lambda^k A = \Op\left( \partial_\lambda^k a \right) $ for any $k\in \N_0^{\nu+1}$. 

We  recall the  pseudodifferential norm introduced in Definition 2.11 in \cite{BM}. 

\begin{defn}
	{\bf (Weighted $\Psi$DO norm)}
	Let $ A(\lambda) := a(\lambda; \vf, x, D) \in \Ops^m $ 
	be a family of pseudodifferential operators with symbol $ a(\lambda; \vf, x, \xi) \in S^m $, $ m \in \R $, which are 
	$k_0$-times differentiable with respect to $ \lambda \in \Lambda_0 \subset \R^{\nu + 1} $. 
	For $ \upsilon \in (0,1) $, $ \alpha \in \N_0 $, $ s \geq 0 $, we define  
	$$
	\norm{A}_{m, s, \alpha}^{k_0, \upsilon} := \sum_{|k| \leq k_0} \upsilon^{|k|} 
	\sup_{\lambda \in {\Lambda}_0}\norm{\partial_\lambda^k A(\lambda)}_{m, s, \alpha}  
	$$
	where 
	$
	\norm{A(\lambda)}_{m, s, \alpha} := 
	\max_{0 \leq \beta  \leq \alpha} \, \sup_{\xi \in \R} \|  \partial_\xi^\beta 
	a(\lambda, \cdot, \cdot, \xi )  \|_{s} \  \langle \xi \rangle^{-m + \beta} $. 
	For a matrix of pseudodifferential operators $\bA \in \Ops^m$ as in \eqref{operatori matriciali sezione pseudo diff}, we define 
	$
	\norm{\bA}_{m, s, \alpha}^{k_0, \upsilon} 
	:= \max_{i = 1, \ldots, 4} \norm{A_i}_{m, s, \alpha}^{k_0, \upsilon}\,. 
	$
\end{defn}

Given a function $a(\lambda; \vf, x) \in \cC^\infty$ which is $k_0$-times differentiable with respect to $\lambda$, the weighted norm of the corresponding multiplication operator is
$ \normk{\Op(a)}{0,s,\alpha} = \normk{a}{s} $, $ \forall \alpha \in \N_0 $.

\paragraph{Composition of pseudodifferential operators.}
If $ {\rm Op}(a) $, ${\rm Op}(b) 
$ are pseudodifferential operators with symbols $a\in S^m$, $b\in S^{m'}$, 
$m,m'\in\R$,  
then the composition operator 
$ {\rm Op}(a) {\rm Op}(b) $ 
is a pseudodifferential operator  $ {\rm Op}(a\# b) $ with symbol $a\# b\in S^{m+m'}$. 
It 
admits the  asymptotic expansion: for any $N\geq 1$
\begin{align}\label{compo_symb}
(a\# b)(\lambda;\vf,x,\xi) & = \sum_{\beta= 0}^{N-1} \frac{1}{\im^\beta \beta!} \pa_\xi^\beta a(\lambda;\vf,x,\xi) \pa_x^\beta b(\lambda;\vf,x,\xi) + (r_N(a,b))(\lambda;\vf,x,\xi) 
\end{align}
where  $ r_N(a,b) \in S^{m+m'-N} $. 
The following result is proved in Lemma 2.13 in \cite{BM}.

\begin{lem}{\bf (Composition)} \label{pseudo_compo}
	Let $ A = a(\lambda; \vf, x, D) $, $ B = b(\lambda; \vf, x, D) $ be pseudodifferential operators
	with symbols $ a (\lambda;\vf, x, \xi) \in S^m $, $ b (\lambda; \vf, x, \xi ) \in S^{m'} $, $ m , m' \in \R $. Then $ A \circ B \in \Ops^{m + m'} $
	satisfies,   for any $ \alpha \in \N_0 $, $ s \geq s_0 $, 
	\begin{equation}\label{eq:est_tame_comp}
	\begin{split}
	\norm{A B}_{m + m', s, \alpha}^{k_0, \upsilon} &
	\lesssim_{m,  \alpha, k_0} C(s) \norm{ A }_{m, s, \alpha}^{k_0, \upsilon} 
	\norm{ B}_{m', s_0 + |m|+\alpha, \alpha}^{k_0, \upsilon}  \\
	& \ \quad \qquad + C(s_0) \norm{A }_{m, s_0, \alpha}^{k_0, \upsilon}  
	\norm{ B }_{m', s + |m|+\alpha, \alpha}^{k_0, \upsilon} \, .
	\end{split}
	\end{equation}
	Moreover, for any integer $ N \geq 1  $,  
	the remainder $ R_N := {\rm Op}(r_N) $ in \eqref{compo_symb} satisfies
	\begin{equation}
	\begin{aligned}
	\norm{\Op(r_N(a,b))}_{m+ m'- N, s, \alpha}^{k_0, \upsilon}
	&\lesssim_{m, N,  \alpha, k_0} 
	C(s) \norm{ A}_{m, s, N + \alpha}^{k_0, \upsilon} 
	\norm{ B }_{m', s_0 + \abs{m} +  2N  + \alpha,N+\alpha }^{k_0, \upsilon}  
	\\
	& \ \qquad \qquad  +  C(s_0)\norm{A}_{m, s_0   , N + \alpha}^{k_0, \upsilon}
	\norm{ B}_{m', s +|m| + 2 N  + \alpha, N+ \alpha }^{k_0, \upsilon}.
	\label{eq:rem_comp_tame} 
	\end{aligned}
	\end{equation}
	Both  \eqref{eq:est_tame_comp}-\eqref{eq:rem_comp_tame} hold  
	with the constant $ C(s_0) $ 
	interchanged with $ C(s) $. 
\end{lem}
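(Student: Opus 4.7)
The plan is to derive the composition estimates from the classical oscillatory integral (or Fourier-series) representation of the symbol $a\#b$ and then push all bounds through the weighted norm $\normk{\cdot}{m,s,\alpha}$ by reducing everything to the tame product estimate \eqref{prod} in $H^s(\T^{\nu+1})$. First I would write, for $u(\vf,x)=\sum_{j}\hat u_j(\vf)e^{\im j x}$, the identity
\begin{equation*}
(AB u)(\vf,x) = \sum_{j,k\in\Z} a(\vf,x,j)\,\widehat{b(\vf,\cdot,j)}(k-j)\,\hat u_j(\vf)\, e^{\im k x},
\end{equation*}
which identifies $AB=\Op(a\#b)$ with $(a\#b)(\vf,x,\xi) = \sum_{k} \hat{a}(\vf,k,\xi+\pi_x\cdot)\, b(\vf,x,\xi)\cdots$; equivalently, after rewriting, $a\#b(\vf,x,\xi)=\sum_{k} \hat a(\vf,k,\xi)\, e^{\im k x}\, b(\vf,x,\xi+k)$ up to the standard identification. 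Taylor expanding $b(\vf,x,\xi+k)$ in the $\xi$-variable to order $N-1$ and resumming in $k$ gives the asymptotic formula \eqref{compo_symb}, with remainder
\begin{equation*}
r_N(a,b)(\vf,x,\xi)=\frac{1}{(N-1)!\,\im^N}\int_0^1(1-t)^{N-1}\,\Op_\xi\big(\partial_\xi^N a(\vf,x,\xi+t\cdot)\big)\big[\partial_x^N b(\vf,\cdot,\xi)\big](x)\,\wrt t.
\end{equation*}

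For the main-term estimate \eqref{eq:est_tame_comp} I would control each term in the finite expansion by the tame product estimate: for every $\beta\leq\alpha$, and every $\lambda$-multi-index $|k|\leq k_0$, Leibniz produces a finite sum of terms of the form $\partial_x^{\beta_1}\partial_\xi^{\beta_2}\partial_\lambda^{k_1}a\cdot \partial_x^{\beta_3}\partial_\xi^{\beta_4}\partial_\lambda^{k_2}b$, which after multiplying by $\langle\xi\rangle^{-(m+m'-\alpha)}$ and taking $\sup_\xi$ gives a product of symbol seminorms controllable through \eqref{prod}. The key arithmetic is that the $\xi$-derivatives split between the two factors with total order $\alpha$, so only $\|\cdot\|_{m,s,\alpha}$ appears on the $A$-side, while the $x$-derivatives on $b$ (at most $\alpha$ of them) cost a loss of $|m|+\alpha$ Sobolev derivatives, giving the index $s_0+|m|+\alpha$ on the $B$-side, exactly as in \eqref{eq:est_tame_comp}.

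For the remainder \eqref{eq:rem_comp_tame} I would use the integral representation of $r_N(a,b)$: bounding the operator norm of $\Op_\xi$ in terms of the symbol seminorms of $\partial_\xi^N a$ gives that the $A$-factor must be differentiated $N+\alpha$ times in $\xi$ (hence the index $N+\alpha$), while the $B$-factor appears differentiated $N$ times in $x$ (and up to $\alpha$ more to absorb the $\partial_\xi$ on the remainder after a further expansion), which inflates the required Sobolev regularity of $b$ by $2N+\alpha$ derivatives together with the usual $|m|$ loss; this produces exactly the indices $s_0+|m|+2N+\alpha$ and $s+|m|+2N+\alpha$. The $\lambda$-derivatives are handled by Leibniz exactly as in the main-term case, and the weights $\upsilon^{|k|}$ reassemble the $k_0,\upsilon$-norms.

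The main obstacle I expect is purely bookkeeping: keeping track of how the $\alpha$ $\xi$-derivatives, the $N$ $x$-derivatives coming from the Taylor remainder, and the $k_0$ $\lambda$-derivatives distribute via Leibniz between the two factors, and checking that at each step the tame inequality \eqref{prod} can be applied with a low-index factor at $s_0$ (plus the mandatory loss) and a high-index factor at $s$ (plus the same loss). Once this bookkeeping is settled, no genuinely new analytic input beyond \eqref{prod} and the definition of $\normk{\cdot}{m,s,\alpha}$ is required; the symmetric version with $C(s_0)$ and $C(s)$ interchanged follows by swapping the roles of $A$ and $B$ in the Leibniz splitting, since the product estimate \eqref{prod} is symmetric.
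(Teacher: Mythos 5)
The paper does not actually prove this lemma: it cites Lemma 2.13 of \cite{BM} for the proof. Your proposal essentially reproduces the standard route taken there, namely writing the composed symbol via its Fourier-in-$x$ representation, Taylor expanding in the $\xi$-argument, and pushing everything through the tame product estimate \eqref{prod} together with Leibniz in the parameter $\lambda$. So the approach is the right one and, at the level of ideas, matches the cited reference.

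One thing you should fix is the direction of the expansion. With the quantization $[\Op(a)u](x)=\sum_j a(x,j)u_je^{\im jx}$ one finds
\[
(a\# b)(\vf,x,\xi)=\sum_{l\in\Z} a(\vf,x,\xi+l)\,e^{\im l x}\,\widehat{b}(\vf,l,\xi)\,,
\]
where $\widehat{b}(\vf,l,\xi)$ is the $l$-th Fourier coefficient of $x\mapsto b(\vf,x,\xi)$; i.e.\ the shifted $\xi$-argument sits in $a$, not in $b$, and it is $a(\vf,x,\xi+l)$ that one Taylor expands in $l$. This is what produces $\partial_\xi^\beta a\cdot\partial_x^\beta b$ in \eqref{compo_symb} and $\partial_\xi^N a$, $\partial_x^N b$ in the remainder, consistent with the integral formula you then wrote down; the intermediate formula $\sum_k\hat a(\vf,k,\xi)e^{\im kx}b(\vf,x,\xi+k)$ and the statement ``Taylor expand $b(\vf,x,\xi+k)$'' are backwards. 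Relatedly, the $|m|+\alpha$ loss on the $B$-side in \eqref{eq:est_tame_comp} is better described as coming from the conversion $\langle\xi+l\rangle^{m-\beta_1}\leq\langle\xi\rangle^{m-\beta_1}\langle l\rangle^{|m|+\beta_1}$ (with $\beta_1\leq\alpha$), the extra power $\langle l\rangle^{|m|+\alpha}$ being absorbed into the Sobolev index of $b$; there are no $x$-derivatives of $b$ in the exact symbol, only in the truncated expansion and remainder. With those corrections the bookkeeping closes exactly as you describe, including the $C(s)\leftrightarrow C(s_0)$ swap, which indeed follows from applying \eqref{prod} with the roles of the two factors exchanged.
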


The commutator between two pseudodifferential operators $ \Op(a)\in \Ops^m$ and $\Op(b)\in\Ops^{m'}$ is a pseudodifferential operator
in $ \Ops^{m+m'-1}$ with symbol $a\star b\in S^{m+m'-1}$, 
namely $ \left[ \Op(a), \Op(b)\right] = \Op\left( a\star b \right)$, 
that admits, by \eqref{compo_symb},  the expansion
\begin{equation}\label{eq:moyal_exp}
\begin{aligned}
& a\star b= -\im\left\{ a,b \right\} + \wt{r}_2(a,b) \,, \quad \wt{r}_2(a,b):=r_2(a,b)-r_2(b,a)\in S^{m+m'-2} \,, \\
& 
{\rm where} \quad  \{ a,b \}:= \pa_\xi a \pa_x b - \pa_x a \pa_\xi b \, , 
\end{aligned}
\end{equation}
is the Poisson bracket between $a(x,\xi)$ and $b(x,\xi)$. 
As a corollary of  Lemma \ref{pseudo_compo} we have: 
\begin{lem}{\bf (Commutator)} \label{pseudo_commu}
	Let $A = {\rm Op}(a) $ and $B = {\rm Op} (b) $ be pseudodifferential operators with symbols $a(\lambda;\vf,x,\xi)\in S^{m}$, $b(\lambda;\vf,x,\xi)\in S^{m'}$, $m,m'\in \R$. Then the commutator $[A,B]:=AB-BA\in \Ops^{m+m'-1}$ satisfies
	\begin{equation}\label{eq:comm_tame_AB}
	\begin{aligned}
	\norm{[A,B]}_{m+m'-1,s,\alpha}^{k_0,\upsilon} & \lesssim_{m, m', \alpha, k_0} C(s)\norm{A }_{m,s+|m'|+\alpha+2,\alpha+1}^{k_0,\upsilon}\norm{ B }_{m',s_0+|m|+\alpha+2,\alpha+1}^{k_0,\upsilon}\\
	& \qquad \quad \ + C(s_0)\norm{A }_{m,s_0+|m'|+\alpha+2,\alpha+1}^{k_0,\upsilon}\norm{ B }_{m',s+|m|+\alpha+2,\alpha+1}^{k_0,\upsilon} \,.
	\end{aligned}
	\end{equation}
\end{lem}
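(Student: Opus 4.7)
The plan is to reduce the commutator estimate directly to the remainder bound in Lemma \ref{pseudo_compo}. The key observation is that the symbols $a$ and $b$ are scalar-valued functions of $(\vf,x,\xi)$, so they commute pointwise: $a(\vf,x,\xi)\,b(\vf,x,\xi) = b(\vf,x,\xi)\,a(\vf,x,\xi)$. Applying the composition expansion \eqref{compo_symb} with $N=1$ to both $AB$ and $BA$, the principal term $ab=ba$ cancels in the difference and one obtains the exact identity
\begin{equation*}
[A,B] \,=\, \Op\bigl(r_1(a,b) - r_1(b,a)\bigr) \,\in\, \Ops^{m+m'-1}.
\end{equation*}
It therefore suffices to bound separately $\Op(r_1(a,b))$ and $\Op(r_1(b,a))$ in the weighted norm $\|\cdot\|_{m+m'-1,s,\alpha}^{k_0,\upsilon}$.

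For the first, I would apply \eqref{eq:rem_comp_tame} with $N=1$, which yields
\begin{equation*}
\|\Op(r_1(a,b))\|_{m+m'-1,s,\alpha}^{k_0,\upsilon}
\,\lesssim\, C(s)\,\|A\|_{m,s,\alpha+1}^{k_0,\upsilon}\,\|B\|_{m',s_0+|m|+\alpha+2,\alpha+1}^{k_0,\upsilon}
+ C(s_0)\,\|A\|_{m,s_0,\alpha+1}^{k_0,\upsilon}\,\|B\|_{m',s+|m|+\alpha+2,\alpha+1}^{k_0,\upsilon}.
\end{equation*}
The bound on $\Op(r_1(b,a))$ follows by swapping the roles of $A,B$ and of $m,m'$, and invoking the symmetric version of the estimate mentioned in the last sentence of Lemma \ref{pseudo_compo}. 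Summing the two inequalities and using the monotonicity of the weighted norm in the Sobolev index, namely $\|A\|_{m,s_1,\alpha}^{k_0,\upsilon}\leq \|A\|_{m,s_2,\alpha}^{k_0,\upsilon}$ for $s_1 \leq s_2$, every resulting term fits into the two pieces displayed on the right-hand side of \eqref{eq:comm_tame_AB}, after inflating any undersized Sobolev index up to $s+|m'|+\alpha+2$ or $s_0+|m|+\alpha+2$ as appropriate.

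I do not foresee a serious obstacle: the result is a routine consequence of Lemma \ref{pseudo_compo} once the principal-order cancellation $a\#b - b\#a = r_1(a,b)-r_1(b,a)$ is noted. The only care required is the index bookkeeping, i.e.\ matching which of the four summands produced by the two invocations of \eqref{eq:rem_comp_tame} carries the factor $C(s)$ versus $C(s_0)$, and verifying that the extra gain of one pseudodifferential order (which is why $\alpha$ becomes $\alpha+1$ in the right-hand side) is consistent with the fact that $r_1\in S^{m+m'-1}$ rather than $S^{m+m'}$.
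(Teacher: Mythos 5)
Your argument is correct and is the intended one: the commutator estimate is derived as a corollary of Lemma~\ref{pseudo_compo} exactly as you do, by writing $[A,B]=\Op(r_1(a,b)-r_1(b,a))$ after the principal terms $ab=ba$ cancel, invoking \eqref{eq:rem_comp_tame} with $N=1$ for each remainder, using the closing remark of Lemma~\ref{pseudo_compo} (that $C(s)$ and $C(s_0)$ may be interchanged) on the $r_1(b,a)$ term, and inflating undersized Sobolev indices by monotonicity; the bookkeeping then lands precisely on \eqref{eq:comm_tame_AB}. The paper nominally displays the Moyal decomposition $a\star b=-\im\{a,b\}+\wt r_2(a,b)$ ($N=2$), but your $N=1$ route reaches the same conclusion and spares you a separate estimate for the Poisson bracket term.
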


Finally we consider the exponential of a pseudodifferential operator of order $0$.
The following lemma follows as in Lemma 2.12 of \cite{BKM} (or Lemma 2.17 in \cite{BM}).

\begin{lem} {\bf (Exponential map)}
	\label{Neumann pseudo diff}
	If $ A := {\rm Op}(a(\lambda; \vf, x,  \xi ))$ 
	is in $ OPS^{0} $,   
	then $e^A$ is in $ OPS^{0} $ and 
	for any $s \geq s_0$, $\alpha \in \N_0 $,
	there is a constant 
	$C(s, \alpha) > 0$ so that 
	$$
	\normk{e^A - {\rm Id} }{0, s, \alpha} \leq  \normk{A}{0, s + \alpha, \alpha} \, {\rm exp} \big( C(s, \alpha) 
	\normk{A}{0, s_0 + \alpha, \alpha}\big)\, .
	$$
	The same holds for a matrix $\bA$ of the form \eqref{operatori matriciali sezione pseudo diff}  in $\Ops^0$.
\end{lem}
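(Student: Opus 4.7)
The plan is to reduce the bound on $e^A-\mathrm{Id}$ to a controlled bound on each power $A^n$ via the composition estimate \eqref{eq:est_tame_comp}, and then sum the Neumann series. Since $\Ops^0$ is an algebra under composition, each $A^n$ lies in $\Ops^0$; writing
\[
e^A - \mathrm{Id} \;=\; \sum_{n\geq 1} \frac{A^n}{n!},
\]
it suffices to prove the telescoping inequality
\[
\| A^n \|_{0,s,\alpha}^{k_0,\upsilon} \;\leq\; n\, K(s,\alpha)^{n-1}\, \| A\|_{0,s+\alpha,\alpha}^{k_0,\upsilon}\, \big(\| A\|_{0,s_0+\alpha,\alpha}^{k_0,\upsilon}\big)^{n-1}
\]
for a constant $K(s,\alpha)$ independent of $n$, since then $\sum_{n\geq 1} n\,x^{n-1}/n! = e^{x}$ delivers the claimed exponential form with $C(s,\alpha)=K(s,\alpha)$.

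The core step is the induction on $n$. The case $n=1$ is immediate from monotonicity of the norm in $s$. For the inductive step I would \emph{decompose $A^{n+1}=A^n\cdot A$} (the order matters, see below) and apply Lemma \ref{pseudo_compo} with $m=m'=0$:
\[
\| A^n\cdot A\|_{0,s,\alpha}^{k_0,\upsilon} \;\lesssim_\alpha\; C(s)\, \| A^n\|_{0,s,\alpha}^{k_0,\upsilon}\,\| A\|_{0,s_0+\alpha,\alpha}^{k_0,\upsilon} \;+\; C(s_0)\, \| A^n\|_{0,s_0,\alpha}^{k_0,\upsilon}\,\| A\|_{0,s+\alpha,\alpha}^{k_0,\upsilon}.
\]
The inductive hypothesis is then used at two different Sobolev indices: at $s$ to bound the first factor in the first term, and at $s=s_0$ (yielding $\| A^n\|_{0,s_0,\alpha}^{k_0,\upsilon}\leq n K^{n-1}(\| A\|_{0,s_0+\alpha,\alpha}^{k_0,\upsilon})^n$) to bound the first factor in the second term. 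Combining and choosing $K(s,\alpha)\geq C(s,\alpha)+C(s_0,\alpha)$ yields precisely the asserted bound at level $n+1$, because the arising factor $n(C(s)+C(s_0))/(n+1)\leq K$ is admissible for every $n\geq 1$.

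Summing the series produces
\[
\| e^A-\mathrm{Id}\|_{0,s,\alpha}^{k_0,\upsilon} \;\leq\; \| A\|_{0,s+\alpha,\alpha}^{k_0,\upsilon}\sum_{n\geq 1}\frac{\big(K(s,\alpha)\| A\|_{0,s_0+\alpha,\alpha}^{k_0,\upsilon}\big)^{n-1}}{(n-1)!} \;=\;  \| A\|_{0,s+\alpha,\alpha}^{k_0,\upsilon}\exp\!\big(C(s,\alpha)\| A\|_{0,s_0+\alpha,\alpha}^{k_0,\upsilon}\big).
\]
The matrix version then follows at no extra cost: by definition $\|\bA\|_{0,s,\alpha}^{k_0,\upsilon}$ is the max of the entrywise norms, and each entry of $\bA^n$ is a sum of at most $2^n$ products of entries of $\bA$, to which the scalar argument applies verbatim after absorbing the factor $2^n$ into $K$.

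The main obstacle is the \emph{asymmetry} of the composition estimate \eqref{eq:est_tame_comp}: it loses $\alpha$ $x$-derivatives on the second factor $B$ (a consequence of the $\partial_x^\beta b$ terms in the expansion \eqref{compo_symb} for $\beta\leq\alpha$), but not on $A$. Had one decomposed $A^{n+1}=A\cdot A^n$ instead, one would need control of $\|A^n\|_{0,s+\alpha,\alpha}^{k_0,\upsilon}$, and iterating backwards would push the Sobolev index up by $\alpha$ at each step, producing an unbounded loss $\|A\|_{0,s+n\alpha,\alpha}^{k_0,\upsilon}$. Placing the growing factor $A^n$ first and the fixed factor $A$ second keeps the high Sobolev index frozen at $s+\alpha$, which is exactly the index appearing in the conclusion of the lemma.
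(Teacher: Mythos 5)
Your proof is correct and follows the same power-series approach underlying the paper's cited references (\cite{BKM}, Lemma 2.12, and \cite{BM}, Lemma 2.17): one expands $e^A-\mathrm{Id}$ in the Neumann series, proves a telescoping bound on $\normk{A^n}{0,s,\alpha}$ by iterating the composition estimate \eqref{eq:est_tame_comp}, and sums. Your remark on the necessity of decomposing $A^{n+1}=A^n\cdot A$ rather than $A\cdot A^n$ correctly identifies the bookkeeping that keeps the Sobolev index frozen at $s+\alpha$.
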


\paragraph{Egorov Theorem.}

Consider the family of $ \vphi $-dependent diffeomorphisms of $ \T_x $ defined by 
$ y= x + \beta(\vf, x) $, with inverse
$ x= y + \breve\beta(\vf, y) $,  
where $\beta(\vf, x)$ is a small smooth function, 
and the induced operators 
$
(\cB u)(\vf, x) :=  u(\vf, x + \beta(\vf, x)) $ and 
$ (\cB^{-1}u)(\vf, y) :=  u(\vf, y + \breve\beta(\vf, y))  $. 
\begin{lem}{\bf (Composition)}\label{product+diffeo}
	Let $\normk{\beta}{2s_0+k_0+2}\leq \delta(s_0,k_0)$ small enough. 
	Then the composition operator
	$ 	\cB $
	satisfies the tame estimates, for any $s\geq s_0$,
\begin{equation}\label{est:compo-loss}
	\normk{\cB u}{s} \lesssim_{s,k_0} \normk{u}{s+k_0} + \normk{\beta}{s} \normk{u}{s_0+k_0+1} \, , 
\end{equation}
	and the function $\breve{\beta}$ defined  by the inverse diffeomorphism
	satisfies 
	$	\normk{\breve{\beta}}{s} \lesssim_{s,k_0} \normk{\beta}{s+k_0} $. 
\end{lem}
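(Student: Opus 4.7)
The plan is to treat the operator $\cB$ as composition with the near-identity diffeomorphism $\Psi(\vf,x):=(\vf,x+\beta(\vf,x))$ of $\T^{\nu+1}$, for which classical tame composition estimates are available; the whole difficulty is tracking how the Whitney parameter derivatives interact with the change of variables, which is precisely what produces the loss of $k_0$ derivatives.

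First I would establish the estimate at a single parameter value $\lambda$. The smallness hypothesis $\|\beta\|_{2s_0+k_0+2}\leq \delta$ guarantees $\|\beta\|_{C^1}\leq 1/2$, so $\Psi$ is a diffeomorphism and $|\det D\Psi|\sim 1$. By the classical Moser composition estimate (analogous to Lemma \ref{compo_moser}, applied in the $y=(\vf,x)$ variables after writing $u\circ \Psi = \tilde f(\Psi - \mathrm{Id})$ with $\tilde f(y,z):=u(y+z)$, or more directly by Faà di Bruno combined with the product estimate \eqref{prod}), one obtains the uniform bound
\begin{equation*}
\|\cB u\|_s \lesssim_s \|u\|_s + \|\beta\|_s \|u\|_{s_0+1},\qquad s\geq s_0.
\end{equation*}
This is the desired inequality \emph{without} the $k_0$ loss.

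Next I would promote this to the weighted Whitney norm $\|\cdot\|^{k_0,\upsilon}$. For any multi-index $k\in\N_0^{\nu+1}$ with $|k|\leq k_0$, Faà di Bruno gives $\pa_\lambda^k(\cB u)$ as a finite sum of terms of the form
\begin{equation*}
\bigl(\pa_\lambda^{k_0'}\pa_x^{p} u\bigr)(\vf,x+\beta)\,\prod_{i=1}^{p}\pa_\lambda^{k_i}\beta,\qquad k_0'+k_1+\cdots+k_p=k,\ \ 0\leq p\leq |k|\leq k_0.
\end{equation*}
Applying the single-parameter bound above to each such term, using \eqref{prod} to distribute the derivatives on the factors $\pa_\lambda^{k_i}\beta$, and using the smallness of $\|\beta\|_{s_0+k_0+1}$ to absorb the low-norm factors, one finds that each of these contributions is controlled by $\|u\|_{s+k_0}^{k_0,\upsilon}+\|\beta\|_s^{k_0,\upsilon}\|u\|_{s_0+k_0+1}^{k_0,\upsilon}$, which gives \eqref{est:compo-loss}. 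The loss of exactly $k_0$ $x$-derivatives appears because each $\lambda$-derivative falling on $\beta(\vf,x+\beta)$ through the chain rule generates one extra $\pa_x u$.

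Finally, for the inverse $\breve\beta$, I would observe that by definition it solves the functional equation $\breve\beta(\vf,y)=-\beta(\vf,y+\breve\beta(\vf,y))$. The implicit function theorem (which applies because $1+\pa_x\beta\geq 1/2$) gives a smooth solution, and differentiating this identity yields $\pa_y\breve\beta=-\frac{\pa_x\beta\circ\Psi^{-1}}{1+\pa_x\beta\circ\Psi^{-1}}$, together with similar formulas for higher derivatives in $(\vf,y)$ and in $\lambda$. Plugging these into the Moser composition bound already proved for $\cB$ and bootstrapping on $s$ (starting from the $L^\infty$ bound $\|\breve\beta\|_{L^\infty}\leq\|\beta\|_{L^\infty}$) produces $\|\breve\beta\|_s^{k_0,\upsilon}\lesssim_{s,k_0}\|\beta\|_{s+k_0}^{k_0,\upsilon}$; again the $k_0$ loss comes from the parameter differentiation of the composition $\beta\circ\Psi^{-1}$.

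The main technical obstacle is the accounting in step two: one must verify that the Faà di Bruno sum really closes with a \emph{tame} splitting between the high-regularity factor $\|u\|_{s+k_0}^{k_0,\upsilon}$ and the low-regularity factor $\|u\|_{s_0+k_0+1}^{k_0,\upsilon}$, rather than producing a product of two high-regularity norms. This is achieved by the standard trick of putting the $L^\infty$-type norms on all but one of the factors and exploiting the smallness of $\|\beta\|_{2s_0+k_0+2}^{k_0,\upsilon}$ to absorb them.
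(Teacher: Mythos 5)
The paper states this composition lemma without proof; it is a standard result in the Whitney--Sobolev setting (cf.\ \cite{BM}, \cite{BBHM}), and your proposal reproduces the standard argument: fixed-parameter Moser composition, Fa\`a di Bruno in $\lambda$ converting each Whitney parameter derivative into at most one extra $\pa_x$ on $u$ (the source of the $k_0$-loss in \eqref{est:compo-loss}), a tame splitting absorbed by the smallness hypothesis, and a fixed-point/bootstrap argument for $\breve\beta$. One cosmetic slip: the chain-rule $x$-derivatives created by parameter differentiation fall on the argument of $u(\vf,x+\beta)$, not on $\beta(\vf,x+\beta)$ as you wrote, but what you do with them is correct.
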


The following result is a  small variation of Proposition 2.28 of \cite{BKM}.
\begin{prop}\label{egorov} {\bf (Egorov)}
	Let $N \in \N$, $\tq_0 \in \N
	_0 $,  $S > s_0$ and assume that 
	$\pa_\lambda^k \beta(\lambda; \cdot, \cdot)$ 
	are $ \cC^\infty$ for all $|k| \leq k_0$. There exist constants  $\sigma_N, \sigma_N(\tq_0) >0$,  $\delta = \delta(S, N, \tq_0, k_0) \in (0,1)$ such that, if
	$ \normk{\beta}{s_0 + \sigma_N(\tq_0)} \leq \delta $, 
	then the conjugated operator
	$  \cB^{-1} \circ \pa_x^{m}\circ \cB $, $ m \in \Z $,   
	is a pseudodifferential operator of order $ m $ with an expansion of the form
	\begin{equation*}
	\cB^{-1} \circ \pa_x^{m} \circ \cB = \sum_{i=0}^N p_{m - i}(\lambda; \vf, y) \pa_y^{m - i}  + \cR_N(\vf)
	\end{equation*}
	with the following properties:
	\\[1mm]
	1. The principal symbol  
	$
	p_{m}(\lambda; \vf, y) = \Big( [1+\beta_x(\lambda;\vf,x)]^{m} \Big)\vert_{x=y+
		\breve \beta(\lambda;\vf,y)} $. 
		For any $s \geq s_0$ and $i=1, \ldots, N$,
	\begin{equation}\label{norm-pk}
	\normk{p_m - 1}{s} \, , \ 
	\normk{p_{m-i}}{s} \lesssim_{s, N} \normk{\beta}{s+\sigma_N}\, . 
	\end{equation}
	2. For any $ \tq \in \N^\nu_0 $ with $ |\tq| \leq \tq_0$, 
	$n_1, n_2 \in \N_0 $  with $ n_1 + n_2 + \tq_0 \leq N + 1 - k_0 - m  $,   the  
	operator $\langle D \rangle^{n_1}\partial_{\vphi}^\tq {\cal R}_N(\vphi) \langle D \rangle^{n_2}$ is 
	$\cD^{k_0} $-tame with a tame constant satisfying, for any $s_0 \leq s \leq S $,   
	\begin{equation}\label{stima resto Egorov teo astratto}
	{\mathfrak M}_{\langle D \rangle^{n_1}\partial_{\vphi}^\tq {\cal R}_N(\vphi) \langle D \rangle^{n_2}}(s) \lesssim_{S, N, \tq_0} 
	\| \beta\|_{s + \sigma_N(\tq_0)}^{k_0,\upsilon} 
	\,. 
	\end{equation}
	3.  Let $s_0 < s_1 $ 
	and assume that  $\| \beta_j \|_{s_1 + \sigma_N(\tq_0)} \leq \delta,$ 
	$j = 1,2$. Then 
	$ \| \Delta_{12} p_{m - i} \|_{s_1} \lesssim_{s_1, N} 
	\| \Delta_{12} \beta\|_{s_1 + \sigma_N} $, $ i = 0, \ldots, N $, 
	and, for any $ |\tq| \leq \tq_0$,  $n_1, n_2 \in \N_0 $ with $n_1 + n_2 + \tq_0 \leq N  - m$, 
	$$
	\| \langle D \rangle^{n_1}\partial_{\vphi}^\tq \Delta_{12} {\cal R}_N(\vphi) \langle D \rangle^{n_2} \|_{{\cal B}(H^{s_1})} \lesssim_{s_1, N, n_1, n_2} 
	\| \Delta_{12} \beta\|_{s_1 + \sigma_N(\tq_0)} \, . 
	$$
	Finally, if $ \beta (\vf, x ) $ is a quasi-periodic traveling wave,  
	then $ \cB $ is momentum preserving (we refer to 
	Definition \ref{def:mom.pres}), 
	as well as the conjugated operator $ \cB^{-1} \circ\pa_x^m \circ\cB $,
	and each function $ p_{m-i} $, $ i = 0, \ldots, N $, is a quasi-periodic traveling wave. 
\end{prop}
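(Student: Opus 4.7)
The plan is to realize $\cB$ as the time-$1$ map of a transport flow and use the resulting Heisenberg evolution for the pseudodifferential symbols, with Duhamel's formula and the composition/commutator estimates of Lemmas~\ref{pseudo_compo}--\ref{pseudo_commu} to control the smoothing remainder. First, I would construct a one-parameter family of diffeomorphisms $y\mapsto x(\tau;\vphi,y)$ interpolating between the identity and $\cB^{-1}$ by solving the ODE $\pa_\tau x(\tau;\vphi,y)= b(\tau;\vphi, x(\tau;\vphi,y))$ with $x(0)=y$ and $x(1)=y+\breve\beta(\vphi,y)$, where the generator $b$ is comparable in size to $\breve\beta$ by an implicit-function argument based on Lemma~\ref{product+diffeo}. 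The composition operator $\Phi^\tau u(\vphi,y):= u(\vphi,x(\tau;\vphi,y))$ then satisfies $\Phi^0=\mathrm{Id}$, $\Phi^1=\cB^{-1}$, and $\pa_\tau\Phi^\tau = b(\tau)\pa_y\Phi^\tau$. I then study $A(\tau):=\Phi^\tau\circ\pa_y^m\circ\Phi^{-\tau}$, which solves the Heisenberg-type ODE $\pa_\tau A(\tau) = [b(\tau)\pa_y, A(\tau)]$ with $A(0)=\pa_y^m$ and $A(1)=\cB^{-1}\circ\pa_x^m\circ\cB$.

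Next, I would postulate the ansatz
\begin{equation*}
A(\tau)=\sum_{i=0}^{N}\Op\!\big(q_{m-i}(\tau;\vphi,y,\xi)\big)+R_N(\tau),\qquad q_{m-i}\in S^{m-i},
\end{equation*}
with initial data $q_m(0)=(\im\xi)^m\chi(\xi)$ and $q_{m-i}(0)\equiv 0$ for $i\geq 1$. Substituting into the Heisenberg equation and using the symbolic expansions \eqref{compo_symb}, \eqref{eq:moyal_exp} for $[b(\tau)\pa_y,\Op(q_{m-i})]$, and matching orders of homogeneity in $\xi$, yields a triangular hierarchy of scalar transport equations $\pa_\tau q_{m-i}+b(\tau;\vphi,y)\pa_y q_{m-i} = f_{m-i}(\tau;q_m,\ldots,q_{m-i+1})$, where each $f_{m-i}$ is polynomial in the already-determined symbols and in $y$-derivatives of $b$. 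Each such equation is solved by integration along the characteristics of $b(\tau)$, and evaluating at $\tau=1$ gives $q_{m-i}|_{\tau=1} = p_{m-i}(\vphi,y)\xi^{m-i}$ for functions $p_{m-i}$ obeying the bounds \eqref{norm-pk} as a consequence of Lemma~\ref{compo_moser} and Lemma~\ref{product+diffeo}. The principal symbol can be recovered either from the transport equation for $q_m$ or by iterating the elementary identity $\cB^{-1}\circ\pa_x\circ\cB = [(1+\beta_x)|_{x=y+\breve\beta}]\,\pa_y$, which gives $p_m=[(1+\beta_x)^m]|_{x=y+\breve\beta}$.

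The remainder $R_N$ satisfies $R_N(0)=0$ and $\pa_\tau R_N=[b(\tau)\pa_y,R_N]+E_N(\tau)$, with $E_N\in\Ops^{m-N-1}$ gathering all Moyal remainders from \eqref{eq:moyal_exp}. Duhamel's formula $R_N(1)=\int_0^1 \Phi^{1-\tau}E_N(\tau)\Phi^{\tau-1}\wrt\tau$, together with Lemma~\ref{pseudo_compo}, yields a pseudodifferential bound for $R_N(1)$ of order $m-N-1$ in terms of $\normk{\beta}{s+\sigma_N}$. To obtain the tame estimate \eqref{stima resto Egorov teo astratto} on $\langle D\rangle^{n_1}\pa_\vphi^\tq R_N\langle D\rangle^{n_2}$, I would differentiate the remainder ODE in $\vphi$ and $\lambda$ up to the prescribed multiorders and choose $N$ so large that $m-N-1$ is sufficiently negative to absorb all the $D$-weights and derivatives, upgrading the bound to a $\cD^{k_0}$-tame one on Sobolev scales. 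The comparison estimate at $s_1$ for two choices $\beta_1,\beta_2$ follows by linearizing both the characteristic ODE and the symbol hierarchy in $\beta$, with no new difficulty. Finally, if $\beta$ is a quasi-periodic traveling wave, so are $b(\tau)$ and the map $y\mapsto x(\tau;\vphi,y)-y$; hence $\Phi^\tau$ commutes with the extended translation $\vec\tau_\vs$, yielding the momentum preservation of $\cB$ and of the conjugated operator, and each $p_{m-i}$ inherits the QPTW structure from its defining transport equation.

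The main obstacle will be the sharp bookkeeping of Sobolev losses on $\beta$: each iteration of the commutator expansion \eqref{compo_symb}, each composition with $y+\breve\beta$ via Lemma~\ref{product+diffeo}, and each $(\vphi,\lambda)$-differentiation consumes derivatives of $\beta$ in the $\normk{\cdot}{s}$ norm, and the final count must match the loss $\sigma_N(\tq_0)$ announced in the statement while respecting the sharp relation between $n_1+n_2+\tq_0+k_0+m$ and $N$. This is precisely the technical point that forces $\sigma_N$ to depend on $\tq_0$ and $N$, and is the reason the statement is quoted in the excerpt as a small variation of the analogous result from \cite{BKM} rather than proved from scratch.
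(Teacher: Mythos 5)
The paper does not include a proof of Proposition~\ref{egorov}: it is stated as ``a small variation of Proposition~2.28 of \cite{BKM}'' and the reader is referred to that reference. Your flow/Heisenberg approach is exactly the Moser-type Egorov argument used there, so in substance you are reproducing the cited proof rather than giving an alternative. The main ingredients line up: realize $\cB^{-1}$ as the time--$1$ map of a transport flow $\Phi^\tau$, derive the Heisenberg evolution $\pa_\tau A(\tau)=[b(\tau)\pa_y,A(\tau)]$ for $A(\tau)=\Phi^\tau\pa_y^m\Phi^{-\tau}$, solve the resulting triangular symbol hierarchy by characteristics, and control the remainder by Duhamel. The book-keeping relation $n_1+n_2+\tq_0\leq N+1-k_0-m$ is recovered exactly by the order counting you allude to: $E_N\in\Ops^{m-N-1}$, the conjugating maps $\Phi^{\pm\tau}$ preserve order, but every $\pa_\vphi$ and every $\pa_\lambda$ derivative applied to $\Phi^{\pm\tau}$ raises the order by one, so each of the $|\tq|\leq\tq_0$ and $|k|\leq k_0$ derivatives costs one unit.

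Two small imprecisions worth noting. First, the transport equation for the leading symbol is not $\pa_\tau q_m + b\,\pa_y q_m = 0$ but rather (up to sign conventions) $\pa_\tau q_m - b\,\pa_y q_m + (\pa_y b)\,\xi\pa_\xi q_m = 0$: the commutator $[b\pa_y,\Op(q_m)]$ contains, at the same order $m$, both $b\,\pa_y q_m$ and $-(\pa_y b)\,\xi\pa_\xi q_m$. Once one inserts the monomial ansatz $q_{m-i}(\tau;\vphi,y,\xi)=p_{m-i}(\tau;\vphi,y)(\im\xi)^{m-i}\chi(\xi)$, the $\xi\pa_\xi$ term collapses to $(m-i)(\pa_y b)\,p_{m-i}$ and you do get a genuine scalar transport equation, so this does not affect the outcome, but it is the term that produces the Jacobian factor in $p_m=[(1+\beta_x)^m]\vert_{x=y+\breve\beta}$. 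Second, for $m\in\Z$ one must carry the cut-off $\chi(\xi)$ through the hierarchy; since $1-\chi$ has compact support, the discrepancy is an $\Ops^{-\infty}$ contribution to $R_N$, harmless but worth recording explicitly. Neither point changes the conclusion: your proposal is a correct and complete outline of the argument that the cited reference carries out in detail.
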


\noindent
{\bf Dirichlet-Neumann operator.} 
We finally remind the following decomposition of the Dirichlet-Neumann operator proved in
\cite{BM}, in the case of infinite depth, and in \cite{BBHM},  for finite depth.

\begin{lem}\label{DN_pseudo_est}
	{\bf (Dirichlet-Neumann)}
	Assume that $\pa_\lambda^k \eta(\lambda, \cdot, \cdot) $ is $\cC^\infty (\T^\nu \times \T_x)$ for all $|k| \leq k_0$.
	There exists $ \delta(s_0, k_0) >0$ such that, if
	$ \normk{\eta}{2s_0 +2k_0  +1} \leq \delta (s_0, k_0) $, 
	then the Dirichlet-Neumann operator $ G(\eta ) = G(\eta, \tth)$ may be written as 
\begin{equation}\label{DNGeta}
	G(\eta, \tth) = G(0, \tth) + \cR_G(\eta)
\end{equation}
	where $ \cR_G(\eta) := \cR_G(\eta, \tth)  \in \Ops^{-\infty}$  
	satisfies, for all $m, s, \alpha \in \N_0$, the estimate
	\begin{align}
	\label{est:RG}
	\normk{\cR_G(\eta)}{-m, s, \alpha} 
	\leq  C(s, m, \alpha, k_0) \normk{\eta}{s+s_0 +2k_0 +m+\alpha + 3} \, . 
	\end{align}
\end{lem}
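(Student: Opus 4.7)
The plan is to derive the decomposition \eqref{DNGeta} directly from the pseudodifferential analysis of $G(\eta)$ developed in \cite{BM} (for infinite depth) and in \cite{BBHM} (for finite depth), where a full pseudodifferential expansion of $G(\eta)$ together with tame estimates in $\normk{\cdot}{m,s,\alpha}$ are established. The starting point is the standard straightening change of variable $y\mapsto y-\eta(x)\rho(y)$, with $\rho\in C^\infty(\R)$ a cut-off equal to $1$ near $y=0$ and supported away from $y=-\tth$, which reduces the elliptic problem \eqref{dir} on $\cD_{\eta,\tth}$ to an elliptic equation with $\eta$-dependent smooth coefficients on a fixed flat strip. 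A symbolic parametrix then produces an expansion for the harmonic extension $\Phi$ in terms of the boundary datum $\psi$, and evaluating the normal derivative as prescribed in \eqref{DN} gives the corresponding pseudodifferential expansion of $G(\eta)$.

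The observation that forces $\cR_G(\eta)$ to lie in $\Ops^{-\infty}$ is that, once the factor $\sqrt{1+\eta_x^2}$ in \eqref{DN} is absorbed, the principal symbol of $G(\eta)$ is $|\xi|$ modulo exponentially smoothing contributions, matching exactly the principal part of $G(0)=|D|\tanh(\tth D)$ in \eqref{G(0)}. Carrying out the explicit symbolic computation as in \cite{BM,BBHM}, one then verifies that every subprincipal symbol $a_{1-j}(\eta;x,\xi)$, $j\geq 1$, appearing in the expansion of $G(\eta)$ is a polynomial expression in $\eta$ and its spatial derivatives without constant term, and therefore vanishes identically at $\eta\equiv 0$. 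As a consequence, for each $N$ the difference $\cR_G(\eta):=G(\eta)-G(0)$ can be written as $\sum_{j=1}^{N-1}\Op(a_{1-j}(\eta))+R_N(\eta)$ with $R_N(\eta)\in \Ops^{-N}$, and since $N$ is arbitrary this shows $\cR_G(\eta)\in\Ops^{-\infty}$.

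The quantitative estimate \eqref{est:RG} would then follow by combining the tame bounds on each symbol $\normk{a_{1-j}(\eta;\cdot)}{s}\lesssim_{s,j,k_0}\normk{\eta}{s+c_j}$, obtained from the Moser composition Lemma \ref{compo_moser} applied to the smooth functions of $\eta$ and its derivatives that appear in the parametrix, together with the composition Lemma \ref{pseudo_compo}. The loss $s_0+2k_0+m+\alpha+3$ in \eqref{est:RG} is built up from three contributions: the derivative loss in the Moser composition for the nonlinear dependence of the symbols on $\eta$, the $k_0$ parameter derivatives in $\lambda=(\omega,\gamma)$, and the extra $m+\alpha$ derivatives extracted in the symbolic calculus when one pushes the expansion down to order $-m$ and takes up to $\alpha$ derivatives in $\xi$. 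The main technical obstacle is precisely this careful bookkeeping of derivative losses through the parametrix construction; by contrast the genuine smoothing cancellation producing $\cR_G(\eta)\in\Ops^{-\infty}$ is essentially free, as it already follows from the fact that $G(0)$ captures the full pseudodifferential content of $G(\eta)$ at $\eta=0$ in the variable-independent part of the expansion.
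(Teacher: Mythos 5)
Your proposal contains a genuine logical gap at the decisive step. You observe that every subprincipal symbol $a_{1-j}(\eta;x,\xi)$, $j\geq 1$, in the parametrix expansion of $G(\eta)$ is a polynomial in $\eta$ and its derivatives without constant term, hence vanishes at $\eta\equiv 0$; you then write $\cR_G(\eta)=\sum_{j=1}^{N-1}\Op(a_{1-j}(\eta))+R_N(\eta)$ with $R_N\in\Ops^{-N}$ and conclude that $\cR_G(\eta)\in\Ops^{-\infty}$ ``since $N$ is arbitrary''. This does not follow. The displayed expansion still contains $\Op(a_0(\eta))$, a genuine order-$0$ pseudodifferential operator which, by your own description, is nonzero whenever $\eta\neq 0$; increasing $N$ only pushes the \emph{tail} $R_N$ to lower order and never removes $\Op(a_0)$. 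What your argument actually yields is an asymptotic expansion $\cR_G(\eta)\sim \sum_{j\geq 1}\Op(a_{1-j}(\eta))$, which places $\cR_G(\eta)$ in $\Ops^0$ (with small norm in $\eta$), not in $\Ops^{-\infty}$. The vanishing of the subprincipal symbols \emph{at} $\eta=0$ only tells you that $G(0)$ is a Fourier multiplier — something you already knew — and says nothing about $G(\eta)-G(0)$ for $\eta\neq 0$.

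To obtain $\cR_G(\eta)\in\Ops^{-\infty}$ one needs the much stronger assertion that the symbols $a_{1-j}(\eta;x,\xi)$, $j\geq 1$, vanish \emph{identically in $(\eta,x,\xi)$}, i.e.\ that the pseudodifferential content of $G(\eta)$ beyond the principal part $|\xi|$ is empty. This is a nontrivial cancellation specific to the bidimensional fluid (one space dimension for the free surface), not a generic feature of the parametrix for a straightened elliptic boundary-value problem; it must be verified either by an exact closed-form factorization of the symbol equation or by a singular-integral/complex-variable representation of $G(\eta)$, as is actually done in \cite{BM} and \cite{BBHM}. Note in this respect that the present paper does not give a proof of Lemma \ref{DN_pseudo_est} at all — it only recalls the statement from those two references — so the step you elided (identical vanishing of the subprincipal symbols, rather than vanishing at $\eta=0$) is precisely the mathematical content that is being cited.
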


\subsection{$\cD^{k_0}$-tame and $(-\tfrac12)$-modulo-tame operators}\label{subsec:semiphi}

Tame and modulo tame operators were introduced in \cite{BM}. 
Let $ A := A(\lambda) $ be a linear operator as in \eqref{linear_A}, 
$ k_0 $-times differentiable with respect to
the parameter $ \lambda $ in an open set $ \Lambda_0 \subset \R^{\nu+1}$. 

\begin{defn}{\bf ($\cD^{k_0}$-$\sigma$-tame)} \label{Dk_0sigma}
	Let $\sigma\geq 0$. A linear operator $A:=A(\lambda)$   is $\cD^{k_0}$-$\sigma$-tame if there exists a non-decreasing function $[s_0,S]\rightarrow[0,+\infty)$, $s\mapsto\fM_A(s)$, with possibly $S=+\infty$, 
	such that, for all $s_0\leq s\leq S$ and $u\in H^{s+\sigma} $, 
	$$
	\sup_{\abs k \leq k_0}\sup_{\lambda\in \Lambda_0} \upsilon^{\abs k} \norm{(\pa_\lambda^k A(\lambda))u }_s \leq \fM_A(s_0) \norm u_{s+\sigma} + \fM_A(s)\norm u_{s_0+\sigma} \,.
$$
	We say that $\fM_A(s)$ is a \emph{tame constant} of the operator $A$. The constant $\fM_A(s)=\fM_A(k_0,\sigma,s)$ may also depend on $k_0,\sigma$ but
	we shall often omit to write them.
	When the "loss of derivatives" $\sigma$ is zero, we simply write $\cD^{k_0}$-tame instead of $\cD^{k_0}$-$0$-tame. 
	For a matrix operator as in \eqref{C_transformed}, we denote the tame constant $\fM_{\bR}(s):=\max\left\{ \fM_{\cR_1}(s),\fM_{\cR_2}(s) \right\}  $.
\end{defn}


The class of $\cD^{k_0}$-$\sigma$-tame operators is closed under composition, see
Lemma 2.20 in \cite{BM}. 
\begin{lem}{\bf (Composition)}\label{tame_compo}
	Let $A,B$ be respectively $\cD^{k_0}$-$\sigma_A$-tame and $\cD^{k_0}$-$\sigma_B$-tame operators with tame constants respectively $\fM_A(s)$ and $\fM_B(s)$. Then the composed operator $A\circ B$ is $\cD^{k_0}$-$(\sigma_A+\sigma_B)$-tame with a tame constant
	$$
	\fM_{AB}(s) \leq C(k_0) \left( \fM_A(s) \fM_B(s_0+\sigma_A) + \fM_A(s_0)\fM_B(s+\sigma_A) \right)\,.
	$$
\end{lem}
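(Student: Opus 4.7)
The plan is to apply Leibniz's rule to $\partial_\lambda^k(AB)$ and then chain the tame estimates of $A$ and $B$, tracking how the loss $\sigma_A$ of the outer factor shifts the Sobolev indices at which $B$ must be evaluated. Concretely, for any multi-index $k$ with $|k|\leq k_0$,
\begin{equation*}
\partial_\lambda^k(AB) \;=\; \sum_{k_1+k_2=k}\binom{k}{k_1}(\partial_\lambda^{k_1} A)(\partial_\lambda^{k_2} B)\,,
\end{equation*}
so it suffices to control each summand. Fix $u\in H^{s+\sigma_A+\sigma_B}$ and set $v:=\upsilon^{|k_2|}\partial_\lambda^{k_2}Bu$; then $\upsilon^{|k|}\|(\partial_\lambda^{k_1}A)(\partial_\lambda^{k_2}B)u\|_{s}=\upsilon^{|k_1|}\|(\partial_\lambda^{k_1}A)v\|_{s}$.

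Next I would apply Definition \ref{Dk_0sigma} to $A$ acting on $v$, which yields
\begin{equation*}
\upsilon^{|k_1|}\|(\partial_\lambda^{k_1}A)v\|_{s}\;\leq\;\fM_A(s_0)\|v\|_{s+\sigma_A} + \fM_A(s)\|v\|_{s_0+\sigma_A}\,,
\end{equation*}
and then apply the same definition to $B$ at the two Sobolev indices $s+\sigma_A$ and $s_0+\sigma_A$ to obtain
\begin{align*}
\|v\|_{s+\sigma_A}   & \;\leq\; \fM_B(s_0)\|u\|_{s+\sigma_A+\sigma_B} + \fM_B(s+\sigma_A)\|u\|_{s_0+\sigma_B}\,, \\
\|v\|_{s_0+\sigma_A} & \;\leq\; \fM_B(s_0)\|u\|_{s_0+\sigma_A+\sigma_B} + \fM_B(s_0+\sigma_A)\|u\|_{s_0+\sigma_B}\,.
\end{align*}
Substituting into the previous inequality and using the monotonicity of the tame constants ($\fM_B(s_0+\sigma_A)\leq\fM_B(s+\sigma_A)$, $\|u\|_{s_0+\sigma_B}\leq\|u\|_{s_0+\sigma_A+\sigma_B}$) to collect terms, each summand of the Leibniz sum is bounded by
\begin{equation*}
\fM_A(s_0)\fM_B(s_0)\|u\|_{s+\sigma_A+\sigma_B} \;+\; \bigl[\fM_A(s)\fM_B(s_0+\sigma_A)+\fM_A(s_0)\fM_B(s+\sigma_A)\bigr]\|u\|_{s_0+\sigma_A+\sigma_B}\,.
\end{equation*}

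Finally, summing over the pairs $k_1+k_2=k$ with $|k|\leq k_0$ produces a combinatorial factor $C(k_0)$ from the binomial coefficients, and the first contribution $\fM_A(s_0)\fM_B(s_0)\|u\|_{s+\sigma_A+\sigma_B}$ may be absorbed into $\fM_A(s_0)\fM_B(s_0+\sigma_A)\|u\|_{s+\sigma_A+\sigma_B}$ by monotonicity, delivering exactly the claimed tame constant $\fM_{AB}(s)\leq C(k_0)(\fM_A(s)\fM_B(s_0+\sigma_A)+\fM_A(s_0)\fM_B(s+\sigma_A))$. The argument is essentially bookkeeping and presents no real obstacle; the only item to be watchful about is the asymmetric choice to pay the entire $\sigma_A$-loss on the inner factor $B$, which is precisely what forces $\fM_B(\cdot+\sigma_A)$ (rather than $\fM_B(\cdot)$) to appear in the final bound.
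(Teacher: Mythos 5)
Your argument is correct and follows the standard route (Leibniz rule, then chaining the two tame estimates, paying the $\sigma_A$-loss on the inner factor), which is exactly how Lemma 2.20 in Berti–Montalto, the reference the paper cites for this statement, establishes it. The only minor point left implicit is that applying the $B$-estimate at the shifted index $s+\sigma_A$ presupposes that $\fM_B$ is defined up to $S+\sigma_A$ (automatic when $S=+\infty$ or when the tame constants are defined on a large enough range), but this is a bookkeeping matter that does not affect the argument.
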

It is proved in Lemma 2.22 in \cite{BM} that 
the action of a $\cD^{k_0}$-$\sigma$-tame operator $A(\lambda)$ on a 
Sobolev function $  u = u(\lambda)\in H^{s+\sigma}$
is bounded by 
$
\normk{ Au }{s} \lesssim_{k_0} \fM_A(s_0) \normk{u}{s+\sigma} + \fM_A(s)\normk{u}{s_0+\sigma} $.

Pseudodifferential operators are tame operators. We  use in particular the following lemma
which is Lemma 2.21 in \cite{BM}.
 
\begin{lem}\label{tame_pesudodiff} 
	Let $A=a(\lambda;\vf,x,D)\in\Ops^0$ be a family of pseudodifferential operators 
	satisfying $\normk{A}{0,s,0}<\infty$ for $s\geq s_0$. Then 
	$A$ is $\cD^{k_0}$-tame with a tame constant 
	satisfying $ 	\fM_A(s) \leq C(s) \normk{A}{0,s,0}  $, 
	for any $ s\geq s_0$.
\end{lem}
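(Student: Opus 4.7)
The plan is to reduce the statement to the classical tame action estimate for a zeroth order pseudodifferential operator at a single value of the parameter, and then recover the $\cD^{k_0}$-tame bound for free from the definition of the weighted norm.

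First I would handle the $\lambda$-dependence. By definition of the weighted norm, for every multi-index $k$ with $|k|\leq k_0$ one has $\partial_\lambda^k A = \Op(\partial_\lambda^k a)$ and
\begin{equation*}
\sup_{\lambda\in\Lambda_0} \upsilon^{|k|} \, \normk{\partial_\lambda^k A(\lambda)}{0,s,0}^{0,1} \leq \normk{A}{0,s,0}\,.
\end{equation*}
So if I can prove the non-parametric tame estimate
\begin{equation}\label{eq:planTame}
\| \Op(b) u \|_s \leq C(s)\, \bigl( \| b \|_{0,s_0,0}\, \| u \|_s \, + \,\| b \|_{0,s,0} \, \| u \|_{s_0}\bigr) \qquad \forall\, s\geq s_0
\end{equation}
uniformly in symbols $b\in S^0$, then applying \eqref{eq:planTame} to each $\partial_\lambda^k a$ and summing over $|k|\leq k_0$ gives a tame constant of size $C(s)\normk{A}{0,s,0}$, as required by Definition \ref{Dk_0sigma} with $\sigma=0$.

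Second, I would prove \eqref{eq:planTame} by Fourier expansion and a bilinear Sobolev product estimate. For $u(\vf,x)=\sum_{\ell',j'} u_{\ell',j'} e^{\im(\ell'\cdot\vf+j'x)}$ and $b(\vf,x,\xi)=\sum_{\ell'',m} b_{\ell'',m}(\xi) e^{\im(\ell''\cdot\vf+mx)}$, a direct computation gives
\begin{equation*}
\widehat{\Op(b)u}(\ell,j) \,=\, \sum_{\ell',j'} b_{\ell-\ell',\,j-j'}(j')\, u_{\ell',j'}\,.
\end{equation*}
Using the elementary inequality $\langle \ell,j\rangle^s \lesssim_s \langle \ell-\ell',j-j'\rangle^s + \langle \ell',j'\rangle^s$, splitting the sum into the two corresponding pieces, and applying Cauchy--Schwarz together with the bound $|b_{\ell'',m}(\xi)|\langle\xi\rangle^{0}\lesssim \|b(\cdot,\cdot,\xi)\|_{s_0}/\langle\ell'',m\rangle^{s_0}$ coming from Sobolev embedding, one obtains the two convolution-type estimates whose combination is exactly \eqref{eq:planTame}. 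This is the standard way to derive the zero-order action bound: the low-high/high-low paraproduct structure yields $\|b\|_{0,s_0,0}\|u\|_s$ from the first splitting and $\|b\|_{0,s,0}\|u\|_{s_0}$ from the second, and the $\xi$-uniformity of the norm $\|\cdot\|_{0,\cdot,0}$ is exactly what allows the bound to be uniform in $j'$.

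The main technical point — the only one requiring genuine care — is carrying the product estimate through both the $\vf$-variables and the $x$-variable simultaneously, since the Sobolev index $s$ mixes them via $\langle\ell,j\rangle$; but once the symbol is expanded in $(\vf,x)$-Fourier series and $\sup_\xi$ is used to decouple the dependence on $j'$, this reduces to the standard Sobolev algebra-type splitting. The parameter-dependence and the case of matrix operators $\bA$ as in \eqref{operatori matriciali sezione pseudo diff} follow trivially: the matrix tame constant is the max of the component constants, matching the definition of $\normk{\bA}{0,s,0}$. Altogether one obtains $\fM_A(s)\leq C(s)\normk{A}{0,s,0}$, which is the claim.
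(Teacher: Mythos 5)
The paper does not prove this lemma itself; it cites Lemma~2.21 of \cite{BM}. Your proposal therefore gives a proof from scratch, and its overall architecture — reduce to a non-parametric tame action estimate via the definition of the weighted norm, then prove that estimate by Fourier expansion of both $b$ and $u$ plus a high-low split — is the right one, and the reduction in the first paragraph (apply the non-parametric bound to each $\partial_\lambda^k a$ and sum) is correct because in Definition~\ref{Dk_0sigma} the test function $u$ is $\lambda$-independent.

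The gap is in the ``convolution-type estimate'' step. You invoke the pointwise bound $|b_{\ell'',m}(\xi)|\lesssim \|b\|_{0,s_0,0}/\langle\ell'',m\rangle^{s_0}$ and then want to close the estimate for, say, the piece
\begin{equation*}
S_2(\ell,j)=\sum_{\ell',j'}|b_{\ell-\ell',\,j-j'}(j')|\,\langle\ell',j'\rangle^s|u_{\ell',j'}|
\end{equation*}
by a Young/Schur bound against the kernel $\langle\ell-\ell',j-j'\rangle^{-s_0}$. That kernel is only square-summable on $\Z^{\nu+1}$, not $\ell^1$-summable: summability of $\langle k\rangle^{-s_0}$ would require $s_0>\nu+1$, whereas the paper works with $s_0=[(\nu+1)/2]+1\le \nu+1$. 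So the closure fails. Passing to the pointwise bound destroys the $\ell^2$-summability of the coefficients $b_{\ell'',m}(\xi)$, which is exactly what you need. The correct way to run your scheme is to leave the coefficients of $b$ untouched and apply Cauchy–Schwarz with the weight $\langle\ell-\ell',j-j'\rangle^{\pm 2s_0}$, e.g.
\begin{equation*}
S_2(\ell,j)^2 \le \Big(\sum_{\ell',j'}\langle\ell-\ell',j-j'\rangle^{-2s_0}\Big)\Big(\sum_{\ell',j'}\langle\ell-\ell',j-j'\rangle^{2s_0}|b_{\ell-\ell',\,j-j'}(j')|^2\langle\ell',j'\rangle^{2s}|u_{\ell',j'}|^2\Big)\,,
\end{equation*}
where the first factor is a finite constant because $2s_0>\nu+1$. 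Summing over $(\ell,j)$ and exchanging the order of summation, the inner $(\ell,j)$-sum at fixed $(\ell',j')$ is
$\sum_{\ell'',m}\langle\ell'',m\rangle^{2s_0}|b_{\ell'',m}(j')|^2=\|b(\cdot,\cdot,j')\|_{s_0}^2\le\|b\|_{0,s_0,0}^2$,
because the symbol argument $\xi=j'$ is now frozen. This yields $\|S_2\|_{\ell^2}\lesssim\|b\|_{0,s_0,0}\|u\|_s$, and the symmetric split for $S_1$ gives $\|b\|_{0,s,0}\|u\|_{s_0}$. The key point your proposal misses is that the uniformity in $j'$ must be exploited only after interchanging the sums, not via a pointwise Fourier-coefficient bound up front.
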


In view of the KAM reducibility scheme of Section \ref{sec:KAM} we also consider the notion of $\cD^{k_0}$-$(-\frac12)$-modulo-tame operator. 
We first recall that, given a linear operator $A$ acting as  in \eqref{A_expans},  
the majorant operator $| A |$ is defined to have the matrix elements  
$ (|A_j^{j'}(\ell-\ell') |)_{\ell,\ell'\in\Z^\nu, j,j'\in\Z} $.

\begin{defn}{\bf ($\cD^{k_0}$-$(-\frac12)$-modulo-tame)} \label{Dk0-modulo-12}
	A linear operator $A=A(\lambda)$ 
	is $\cD^{k_0}$-$(-\frac12)$-\emph{modulo-tame} if 
		there exists a non-decreasing function $[s_0,S]\rightarrow[0,+\infty]$, $s\mapsto \fM_{\langle D\rangle^\frac14 A \langle D\rangle^\frac14}^\sharp(s)$, such that for all $k\in\N_0^{\nu+1}$, $\abs k\leq k_0$, the majorant operator $\langle D\rangle^\frac14 |\pa_\lambda^k A| \langle D\rangle^\frac14 $ satisfies,  
		for all $s_0\leq s\leq S$ and $u\in H^s$,
		\begin{equation}\label{def:mod-tame}
			\sup_{ |k| \leq k_0}\sup_{\lambda \in {\Lambda}_0}\upsilon^{|k|} \| \langle D\rangle^\frac14  |\pa_\lambda^k A| \langle D\rangle^\frac14  u \|_s \leq \fM_{\langle D\rangle^\frac14  A \langle D\rangle^\frac14 }^\sharp(s_0) \norm u_s + \fM_{\langle D\rangle^\frac14  A \langle D\rangle^\frac14 }^\sharp(s) \norm u_{s_0} \,.
		\end{equation}
	For a matrix as in \eqref{C_transformed}, we denote $\fM_{\langle D\rangle^\frac14 \bR\langle D\rangle^\frac14 }^\sharp(s):= \max\big\{ \fM_{\langle D\rangle^\frac14 \cR_1\langle D\rangle^\frac14 }^\sharp(s),\fM_{\langle D\rangle^\frac14 \cR_2\langle D\rangle^\frac14 }^\sharp(s) \big\}$.
\end{defn}

Given a linear operator $A$ acting as  in \eqref{A_expans}, we define the operator
$\braket{\pa_{\vf}}^\tb A$, $ \tb\in\R$, whose matrix elements are $\braket{\ell-\ell'}^\tb A_j^{j'}(\ell-\ell')$.
From Lemma A.5-(iv) in \cite{FGP}, we deduce the following lemma.
\begin{lem}{\bf (Sum and composition)} 
	\label{modulo_sumcomp-12}
	Let $A$, $B$, $\braket{\pa_{\vf}}^\tb A$, $\braket{\pa_{\vf}}^\tb B$ 
	be $\cD^{k_0}$-$(-\frac12)$-modulo-tame operators. Then 
	$A+B$, $A\circ B$ and $\braket{\pa_{\vf}}^\tb(AB)$  are $\cD^{k_0}$-$(-\frac12)$-modulo-tame with
	\begin{align*}
		& \fM_{\langle D\rangle^\frac14(A+B)\langle D\rangle^\frac14}^\sharp(s)\leq \fM_{\langle D\rangle^\frac14 A \langle D\rangle^\frac14}^\sharp (s)+ \fM_{\langle D\rangle^\frac14 B \langle D\rangle^\frac14}^\sharp(s) \\
		&
		\fM_{\langle D\rangle^\frac14AB\langle D\rangle^\frac14}^\sharp(s) \lesssim_{k_0}\Big(  \fM_{\langle D\rangle^\frac14 A \langle D\rangle^\frac14}^\sharp(s)\fM_{\langle D\rangle^\frac14 B \langle D\rangle^\frac14}^\sharp(s_0) + \fM_{\langle D\rangle^\frac14 A \langle D\rangle^\frac14}^\sharp(s_0)\fM_{\langle D\rangle^\frac14 B \langle D\rangle^\frac14}^ \sharp(s) \Big) \\
		&	\fM_{\langle D\rangle^\frac14\langle\pa_{\vf}\rangle^\tb(AB)\langle D\rangle^\frac14}^\sharp(s)  \lesssim_{\tb,k_0} \\
		& \quad \quad  \Big(  \fM_{\langle D\rangle^\frac14\langle\pa_{\vf}\rangle^\tb A\langle D\rangle^\frac14}^\sharp(s) \fM_{\langle D\rangle^\frac14 B \langle D\rangle^\frac14}^\sharp(s_0) +  \fM_{\langle D\rangle^\frac14\langle\pa_{\vf}\rangle^\tb A\langle D\rangle^\frac14}^\sharp(s_0) \fM_{\langle D\rangle^\frac14 B \langle D\rangle^\frac14}^\sharp(s)  \notag\\
		&  \quad \quad 
		+ \fM_{\langle D\rangle^\frac14 A \langle D\rangle^\frac14}^\sharp(s) \fM_{\langle D\rangle^\frac14\langle\pa_{\vf}\rangle^\tb B\langle D\rangle^\frac14}^\sharp(s_0)+\fM_{\langle D\rangle^\frac14 A \langle D\rangle^\frac14}^\sharp(s_0) \fM_{\langle D\rangle^\frac14\langle\pa_{\vf}\rangle^\tb B\langle D\rangle^\frac14}^\sharp(s) \Big)  \, . 
	\end{align*}
\end{lem}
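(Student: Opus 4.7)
The plan is to reduce each assertion to the defining estimate \eqref{def:mod-tame} via the two elementary majorant-operator inequalities
\[
|A+B|\le |A|+|B|, \qquad |AB|\le |A|\,|B|,
\]
which hold entrywise in the matrix representation \eqref{A_expans} by the triangle inequality, combined with the Leibniz rule for parameter derivatives, $|\partial_\lambda^k(AB)|\le \sum_{k_1+k_2=k}\binom{k}{k_1}|\partial_\lambda^{k_1}A|\,|\partial_\lambda^{k_2}B|$. Because these inequalities are coordinatewise and because $\langle D\rangle^{1/4}$ is a Fourier multiplier with positive symbol, conjugating and then applying $\|\cdot\|_s$ is monotone, so the estimate \eqref{def:mod-tame} for $A$ and $B$ transfers to $A+B$ and $AB$ after controlling the remaining Fourier multipliers.

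For the sum, this is immediate: bounding $\|\langle D\rangle^{1/4}|\partial_\lambda^k(A+B)|\langle D\rangle^{1/4}u\|_s$ by the sum of the analogous quantities for $A$ and $B$, and using \eqref{def:mod-tame}, gives the first inequality with the weighted tame constants adding. For the composition, I would exploit the algebraic identity
\[
\langle D\rangle^{1/4}\cdot \langle D\rangle^{-1/2}\cdot \langle D\rangle^{1/4} = \mathrm{Id},
\]
which allows the factorization
\[
\langle D\rangle^{1/4}|A|\,|B|\langle D\rangle^{1/4} = \bigl(\langle D\rangle^{1/4}|A|\langle D\rangle^{1/4}\bigr)\,\langle D\rangle^{-1/2}\,\bigl(\langle D\rangle^{1/4}|B|\langle D\rangle^{1/4}\bigr).
\]
Since $\langle D\rangle^{-1/2}$ is a Fourier multiplier bounded on every $H^s$ with norm $\le 1$, one applies \eqref{def:mod-tame} to the right factor and then to the left factor, summing over $k_1+k_2=k\le k_0$ via Leibniz. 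The standard bilinear interpolation step (as in Lemma \ref{tame_compo}) then produces the two crossed terms that appear in the claimed bound.

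For the $\langle\partial_\vf\rangle^\tb$-weighted composition I would use the elementary subadditivity
\[
\langle \ell-\ell'\rangle^\tb \lesssim_\tb \langle \ell-\ell_1\rangle^\tb + \langle \ell_1-\ell'\rangle^\tb,
\]
applied entrywise to the convolution defining $(AB)_j^{j'}(\ell-\ell')$. This yields the majorant estimate $|\langle\partial_\vf\rangle^\tb(AB)| \lesssim_\tb |\langle\partial_\vf\rangle^\tb A|\,|B| + |A|\,|\langle\partial_\vf\rangle^\tb B|$, and the composition argument of the previous paragraph, applied to each of the two summands, delivers exactly the four-term bound in the statement.

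The only delicate point — and the reason the class is defined with the $-\tfrac12$ loss — is that the identity $\langle D\rangle^{1/4}\langle D\rangle^{-1/2}\langle D\rangle^{1/4}=\mathrm{Id}$ matches precisely the weight built into Definition \ref{Dk0-modulo-12}: the smoothing $\langle D\rangle^{-1/2}$ in the middle is what makes the class closed under composition. Everything else is a bookkeeping repetition of the argument for $\cD^{k_0}$-tame operators in Lemma \ref{tame_compo}, in line with Lemma A.5-(iv) of \cite{FGP}.
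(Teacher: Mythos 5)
Your proof is correct and follows the standard approach underlying Lemma A.5-(iv) of \cite{FGP}, which is what the paper itself cites in lieu of giving its own argument. All the key ingredients are in place: the entrywise majorant inequalities $|A+B|\le|A|+|B|$ and $|AB|\le|A||B|$, the Leibniz expansion of $\partial_\lambda^k(AB)$, reduction to $|u|\ge 0$ (so that entrywise bounds transfer to Sobolev norms), the insertion $\langle D\rangle^{1/4}\langle D\rangle^{-1/2}\langle D\rangle^{1/4}=\mathrm{Id}$ to reuse the two factor estimates, the boundedness of $\langle D\rangle^{-1/2}$ on $H^s$, and the Peetre-type inequality $\langle\ell-\ell'\rangle^\tb\lesssim_\tb\langle\ell-\ell_1\rangle^\tb+\langle\ell_1-\ell'\rangle^\tb$ for the $\langle\partial_\vf\rangle^\tb$-weighted estimate. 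The only imprecision is the phrase ``bilinear interpolation step'': the two crossed terms in the composition bound arise directly by applying \eqref{def:mod-tame} first to the outer factor and then to the inner one (on the auxiliary function $v=\langle D\rangle^{-1/2}\langle D\rangle^{1/4}|B|\langle D\rangle^{1/4}u$); no interpolation is needed, only the monotonicity of the tame constants that ensures the resulting coefficient of $\|u\|_s$ is dominated by $\fM^\sharp_{AB}(s_0)$. This does not affect the validity of the argument.
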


From the proof of Lemma 2.22 in \cite{BKM},
we deduce the following lemma.

\begin{lem}{\bf (Exponential)} \label{modulo_expo-12}
	Let $A$, $\braket{\pa_\vf}^\tb A$ be $\cD^{k_0}$-$(-\frac12)$-modulo-tame and assume 
	 $\fM_{\langle D\rangle^{\frac14} A \langle D\rangle^\frac14}^\sharp(s_0) \leq 1  $. Then 
	 $e^{\pm A}-{\rm Id}$ and $\braket{\pa_\vf}^\tb( e^{\pm A}- {\rm Id})$ are $\cD^{k_0}$-$(-\frac12)$-modulo-tame  with 
	\begin{equation*}
		\begin{aligned}
			&\fM_{\langle D\rangle^{\frac14}(e^{\pm A} -{\rm Id})\langle D\rangle^{\frac14}}^\sharp (s) \lesssim_{k_0} \fM_{\langle D\rangle^{\frac14} A \langle D\rangle^{\frac14}}^\sharp (s)\,, \\
			&
			\fM_{\langle D\rangle^{\frac14}\langle\pa_\vf\rangle^\tb (e^{\pm A}-{\rm Id})\langle D\rangle^{\frac14}}^\sharp(s) \lesssim_{k_0,\tb} \fM_{\langle D\rangle^{\frac14}\langle\pa_\vf\rangle^\tb A \langle D\rangle^{\frac14}}^\sharp (s)  + \fM_{\langle D\rangle^{\frac14} A \langle D\rangle^{\frac14}}^\sharp(s)\fM_{\langle D\rangle^{\frac14}\langle\pa_\vf\rangle^\tb A \langle D\rangle^{\frac14}}^\sharp (s_0) \,.
		\end{aligned}
	\end{equation*}
\end{lem}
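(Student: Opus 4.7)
The plan is to expand $e^{\pm A} - \mathrm{Id} = \sum_{n\geq 1} \frac{(\pm A)^n}{n!}$, to estimate each power $A^n$ using the composition Lemma \ref{modulo_sumcomp-12} iteratively, and then to sum the resulting series using the smallness assumption $\fM^\sharp_{\langle D\rangle^{1/4} A \langle D\rangle^{1/4}}(s_0) \leq 1$.

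First I would show by induction on $n\geq 1$ that
\begin{equation*}
\fM^\sharp_{\langle D\rangle^{1/4} A^n \langle D\rangle^{1/4}}(s) \leq (C(k_0))^{n-1} n \, \big(\fM^\sharp_{\langle D\rangle^{1/4} A \langle D\rangle^{1/4}}(s_0)\big)^{n-1} \fM^\sharp_{\langle D\rangle^{1/4} A \langle D\rangle^{1/4}}(s) \, .
\end{equation*}
The inductive step follows from the composition estimate of Lemma \ref{modulo_sumcomp-12}, writing $A^n = A \circ A^{n-1}$ and using that both terms on the right-hand side of the composition bound are controlled by the inductive hypothesis together with the smallness at $s_0$ (the factor of $n$ arises from the two contributions in the composition estimate). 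Summing then gives
\begin{equation*}
\fM^\sharp_{\langle D\rangle^{1/4}(e^{\pm A} - \mathrm{Id}) \langle D\rangle^{1/4}}(s) \leq \sum_{n\geq 1} \frac{1}{n!} \fM^\sharp_{\langle D\rangle^{1/4} A^n \langle D\rangle^{1/4}}(s) \lesssim_{k_0} \fM^\sharp_{\langle D\rangle^{1/4} A \langle D\rangle^{1/4}}(s) \, ,
\end{equation*}
using that $\sum_{n\geq 1} n (C(k_0))^{n-1}/n!$ converges (and here the assumption $\fM^\sharp_{\langle D\rangle^{1/4} A \langle D\rangle^{1/4}}(s_0) \leq 1$ is essential to absorb the geometric factor).

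For the second estimate, involving $\langle\partial_\vf\rangle^\tb$, I would argue similarly, proving inductively that
\begin{equation*}
\fM^\sharp_{\langle D\rangle^{1/4} \langle\partial_\vf\rangle^\tb A^n \langle D\rangle^{1/4}}(s) \lesssim_{k_0,\tb} (C(k_0,\tb))^{n-1} n^2 \, \big(\fM^\sharp_{\langle D\rangle^{1/4} A \langle D\rangle^{1/4}}(s_0)\big)^{n-1} \mathcal{E}(s) \, ,
\end{equation*}
where $\mathcal{E}(s) := \fM^\sharp_{\langle D\rangle^{1/4} \langle\partial_\vf\rangle^\tb A \langle D\rangle^{1/4}}(s) + \fM^\sharp_{\langle D\rangle^{1/4} A \langle D\rangle^{1/4}}(s) \fM^\sharp_{\langle D\rangle^{1/4} \langle\partial_\vf\rangle^\tb A \langle D\rangle^{1/4}}(s_0)$. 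Writing $\langle\partial_\vf\rangle^\tb A^n = \langle\partial_\vf\rangle^\tb(A \cdot A^{n-1})$ and applying the third bound of Lemma \ref{modulo_sumcomp-12}, the four terms produced by the composition formula split in such a way that, invoking the inductive hypothesis (together with the first claim above to control the $\tb=0$ factors), all contributions are bounded by $\mathcal{E}(s)$ times a factor depending only on $n$ and combinatorial constants. Summing the resulting series in $n$ against $1/n!$, the smallness at $s_0$ again ensures convergence, and one obtains the stated bound.

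The main technical obstacle is book-keeping the factors of $n$ produced by each application of Lemma \ref{modulo_sumcomp-12}: each composition step generates two (or four, in the $\langle\partial_\vf\rangle^\tb$ case) contributions, one of which has a ``high'' norm of $A^{n-1}$ and low of $A$, and vice versa; combined with the $1/n!$ from the exponential and the smallness of $\fM^\sharp_{\langle D\rangle^{1/4} A \langle D\rangle^{1/4}}(s_0)$, all such terms sum to an absolutely convergent series. No cancellation is needed---only the tameness of the high-low interaction---so the argument is a routine but careful geometric series estimate.
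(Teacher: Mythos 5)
Your approach — expanding $e^{\pm A}-\mathrm{Id}=\sum_{n\geq 1}\frac{(\pm A)^n}{n!}$, bounding $A^n$ by iterating the composition Lemma \ref{modulo_sumcomp-12}, and summing the series using the smallness of the modulo-tame constant at $s_0$ — is exactly the standard argument the paper invokes by citing the proof of Lemma 2.22 in \cite{BKM}, so the route matches.

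One bookkeeping slip in the first induction: the claimed bound
\[
\fM^\sharp_{\langle D\rangle^{1/4} A^n \langle D\rangle^{1/4}}(s) \leq (C(k_0))^{n-1}\,n\, \big(\fM^\sharp_{\langle D\rangle^{1/4} A \langle D\rangle^{1/4}}(s_0)\big)^{n-1} \fM^\sharp_{\langle D\rangle^{1/4} A \langle D\rangle^{1/4}}(s)
\]
does not close under the inductive step as stated. Writing $a_n(s):=\fM^\sharp_{\langle D\rangle^{1/4} A^n \langle D\rangle^{1/4}}(s)$ and $a:=a_1$, the composition lemma gives $a_n(s)\leq C\bigl(a(s)a_{n-1}(s_0)+a(s_0)a_{n-1}(s)\bigr)$, and plugging in the inductive hypothesis yields a factor $2(n-1)$, not $n$; since $2(n-1)>n$ for $n\geq 3$, the polynomial factor $n$ is not a valid closing ansatz. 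The standard fix is to absorb the factor of $2$ from the two symmetric contributions into the geometric constant: the correct inductive bound is $a_n(s)\leq (2C)^{n-1}\,a(s_0)^{n-1}\,a(s)$, with no polynomial factor in $n$. This is still summable against $1/n!$ and gives the same conclusion. (For the $\langle\pa_\vf\rangle^\tb$ case your quadratic factor $n^2$ does close once you use the geometric $(2C)^{n-1}$ bound for the $\tb=0$ powers, so that part is fine, modulo the same correction being fed into it.) With this adjustment the proof sketch is complete and matches the cited argument.
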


Given a linear operator $A$ acting as  in \eqref{A_expans}, we define the 
\emph{smoothed operator}
$\Pi_N A$, $N\in\N $ whose matrix elements are
\begin{equation}\label{PiNA}
(\Pi_N A)_j^{j'}(\ell-\ell') := \begin{cases}
A_j^{j'}(\ell-\ell') & \text{if } \braket{\ell-\ell'} \leq N \\
0 & \text{otherwise} \, . 
\end{cases}
\end{equation}
We also denote $\Pi_N^\perp:= {\rm Id}-\Pi_N$. 
Arguing as in Lemma 2.27 in \cite{BM}, we have that 	
\begin{equation}\label{modulo_smooth}
\fM_{\langle D\rangle^{\frac14}\Pi_N^\perp A \langle D\rangle^{\frac14}}^\sharp(s) \leq N^{-\tb} \fM_{\langle D\rangle^{\frac14}\langle\pa_{\vf}\rangle^\tb A \langle D\rangle^{\frac14}}^\sharp(s)\,, \ \ \fM_{\langle D\rangle^{\frac14}\Pi_N^\perp A \langle D\rangle^{\frac14}}^\sharp(s)\leq \fM_{\langle D\rangle^{\frac14}A \langle D\rangle^{\frac14}}^\sharp(s) \,.
\end{equation}

In the next lemma we provide a 
sufficient condition 
for an operator to be $\cD^{k_0}$-$(-\frac12)$-modulo-tame.

\begin{lem}\label{mod-to-tame}
	Let the operators $\langle D\rangle^{\frac14} R \langle D\rangle^{\frac14}$, $\langle D\rangle^{\frac14} [R,\pa_x] \langle D\rangle^{\frac14}$, $\langle D\rangle^{\frac14} \pa_{\vf_m}^{s_0} R \langle D\rangle^{\frac14}$, $\langle D\rangle^{\frac14} [\pa_{\vf_m}^{s_0} R,\pa_x] \langle D\rangle^{\frac14}$ and $\langle D\rangle^{\frac14} \pa_{\vf_m}^{s_0+\tb} R \langle D\rangle^{\frac14}$, $\langle D\rangle^{\frac14} [\pa_{\vf_m}^{s_0+\tb}R,\pa_x] \langle D\rangle^{\frac14}$, with $m=1,..,\nu$, be $\cD^{k_0}$-tame. Set
	\begin{align}
		&\wt{\mathbb{M}}(s) :=\max \big\{ \fM_{\langle D\rangle^{\frac14} R \langle D\rangle^{\frac14}}(s), \fM_{\langle D\rangle^{\frac14} [ R,\pa_x] \langle D\rangle^{\frac14}}(s),
		\\
		& \quad \quad \quad \quad \quad  \quad \fM_{\langle D\rangle^{\frac14} \pa_{\vf_m}^{s_0} R, \langle D\rangle^{\frac14}}(s), \fM_{\langle D\rangle^{\frac14} [\pa_{\vf_m}^{s_0} R,\pa_x] \langle D\rangle^{\frac14}}(s) \,:\, m=1,...,\nu  \big\}\,, \notag \\
		&\wt{\mathbb{M}}(s,\tb) := \max \big\{  \fM_{\langle D\rangle^{\frac14} \pa_{\vf_m}^{s_0+\tb} R \langle D\rangle^{\frac14}}(s), \fM_{\langle D\rangle^{\frac14} [\pa_{\vf_m}^{s_0+\tb} R,\pa_x] \langle D\rangle^{\frac14}}(s) \,:\, m=1,...,\nu\big\}\,, \notag \\
		&\wt\fM(s,\tb) := \max \big\{  \wt{\mathbb{M}}(s),\wt{\mathbb{M}}(s,\tb) \big\}\,. \label{fM0-12}
	\end{align}
	Then $R$ and $\langle\pa_{\vf} \rangle^\tb R$ are $\cD^{k_0}$-$(-\tfrac12)$-modulo-tame, with
	$$ \fM_{\langle D\rangle^{\frac14} R\langle D\rangle^{\frac14}}^\sharp(s) \, , \
	\fM_{\langle D\rangle^{\frac14}\langle\pa_{\vf} \rangle^\tb R \langle D\rangle^{\frac14}}^\sharp (s) \lesssim_{s_0} \wt\fM(s,\tb)\,. $$
\end{lem}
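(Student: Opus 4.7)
The claim is a Schur-type criterion reducing the majorant bound to commutator and angle-derivative tame bounds. Setting $Q := \langle D \rangle^{1/4} R \langle D \rangle^{1/4}$, and using that $\langle D \rangle^{1/4}$ is a positive Fourier multiplier commuting with $\pa_x$ and $\pa_{\vf_m}$, the hypotheses on $R$ translate verbatim into $\cD^{k_0}$-tame bounds for $Q$, $[Q, \pa_x]$, $\pa_{\vf_m}^{s_0} Q$ and $[\pa_{\vf_m}^{s_0} Q, \pa_x]$, with constant $\wt{\mathbb{M}}(s)$. Since the sandwich factors are positive, $|Q| = \langle D \rangle^{1/4} |R| \langle D \rangle^{1/4}$, so the desired $(-\tfrac12)$-modulo-tame estimate for $R$ is literally a $\cD^{k_0}$-tame estimate for $|Q|$. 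The argument for $\langle \pa_\vf \rangle^\tb R$ is identical with the enlarged hypothesis set encoded in $\wt{\mathbb{M}}(s, \tb)$.

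The core pointwise estimate comes from the identities
\begin{equation*}
\im(\ell_m - \ell'_m)\, Q_j^{j'}(\ell - \ell') = (\pa_{\vf_m} Q)_j^{j'}(\ell - \ell'), \qquad \im(j - j')\, Q_j^{j'}(\ell - \ell') = -([Q, \pa_x])_j^{j'}(\ell - \ell').
\end{equation*}
Iterating the first $s_0$ times and using $\langle \ell - \ell' \rangle^{s_0} \lesssim_{\nu, s_0} 1 + \sum_m |\ell_m - \ell'_m|^{s_0}$ and $\langle j - j' \rangle \le 1 + |j - j'|$, and noting that $\pa_\lambda^k$ commutes with $\pa_x$ and $\pa_{\vf_m}$, one obtains for every $|k| \le k_0$
\begin{equation*}
\langle \ell - \ell' \rangle^{s_0} \langle j - j' \rangle \bigl| \pa_\lambda^k Q_j^{j'}(\ell - \ell') \bigr| \lesssim_{\nu, s_0} \sum_{A \in \mathcal{F}} \bigl| \pa_\lambda^k A_j^{j'}(\ell - \ell') \bigr|,
\end{equation*}
where $\mathcal{F} := \{Q,\, [Q, \pa_x],\, \pa_{\vf_m}^{s_0} Q,\, [\pa_{\vf_m}^{s_0} Q, \pa_x] : m = 1, \ldots, \nu\}$.

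The estimate of $\| \pa_\lambda^k |Q|\, u \|_s$ then proceeds via Cauchy--Schwarz in the convolution representation $(|Q| u)_{\ell, j} = \sum_{\ell', j'} | Q_j^{j'}(\ell - \ell') |\, |u_{\ell', j'}|$, inserting the weight $\langle \ell - \ell' \rangle^{-2s_0} \langle j - j' \rangle^{-2}$, whose reciprocal sum over $(\ell - \ell', j - j') \in \Z^{\nu+1}$ is finite because $2 s_0 > \nu + 1 > \nu$. The weighted remainder, combined with the pointwise bound above and the Sobolev triangle inequality $\langle \ell, j \rangle^{2s} \lesssim_s \langle \ell - \ell', j - j' \rangle^{2s} + \langle \ell', j' \rangle^{2s}$, splits into two pieces: one bounded by $\wt{\mathbb{M}}(s_0)^2 \| u \|_s^2$, using the tame bound on $A \in \mathcal{F}$ at order $s_0$ to control the column sums $\sum_{\ell, j} |A_j^{j'}(\ell - \ell')|^2$; the other by $\wt{\mathbb{M}}(s)^2 \| u \|_{s_0}^2$, using the tame bound at order $s$. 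Replacing $s_0$ by $s_0 + \tb$ in the pointwise estimate and invoking the hypotheses collected in $\wt{\mathbb{M}}(s, \tb)$ gives the bound for $\langle \pa_\vf \rangle^\tb R$.

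The main obstacle is balancing the $\langle D \rangle^{1/4}$ sandwich against the $\langle j - j' \rangle$-decay produced by the single commutator $[R, \pa_x]$: the commutator delivers exactly one factor of $\langle j - j' \rangle$, which is what the $-2$ power in the Cauchy--Schwarz weight requires, and this is precisely what pins down the $-\tfrac12$ order of the resulting modulo-tameness. This explains why the hypotheses must include commutators of both $R$ and $\pa_{\vf_m}^{s_0} R$ with $\pa_x$, rather than just $R$ and $\pa_{\vf_m}^{s_0} R$ alone.
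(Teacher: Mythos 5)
Your proposal is correct and follows essentially the same strategy as the paper's proof: the two identities (commutator with $\pa_x$ controls $|j-j'|$, $\pa_{\vf_m}^{s_0}$ controls $\langle\ell-\ell'\rangle^{s_0}$) yield the pointwise weight bound, which combined with Cauchy--Schwarz against the summable Schur weight $\langle\ell-\ell'\rangle^{-2s_0}\langle j-j'\rangle^{-2}$ gives the modulo-tame constant. Your reformulation through $Q = \langle D\rangle^{1/4}R\langle D\rangle^{1/4}$ (using $|Q| = \langle D\rangle^{1/4}|R|\langle D\rangle^{1/4}$ and the fact that $\langle D\rangle^{1/4}$ commutes with $\pa_x$ and $\pa_{\vf_m}$) is a clean organizational device, but the substance is identical to the paper's argument, which applies the tame estimates column-by-column directly to $R$ and carries the $\langle j\rangle^{1/4}\langle j'\rangle^{1/4}$ factors inside the sums.
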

\begin{proof}
	The matrix elements of $\langle D\rangle^\frac14[ \langle \pa_{\vf} \rangle^\tb R,\pa_x]\langle D\rangle^\frac14 $ are given, for any  $\ell, \ell'\in\Z^\nu$, $j,j'\in\Z$, by $\langle j \rangle^\frac14 \langle \ell-\ell'  \rangle^{\tb} \im(j-j') \langle j' \rangle^\frac14 R_j^{j'}(\ell-\ell') $. From Definition \ref{Dk_0sigma} with $\sigma=0$, we have, for any $|k|\leq k_0$, $\ell'\in\Z^\nu$, $j\in\Z$,
	$$ \upsilon^{2|k|} \sum_{\ell,j} \langle \ell,j \rangle^{2s}|(\pa_\lambda^k R)_j^{j'}(\ell-\ell')|^2 \leq 2 (\fM_{R}(s))^2 \langle \ell',j' \rangle^{2s_0} + 2(\fM_{R}(s_0))^2 \langle \ell',j' \rangle^{2s}\,. $$
	Using the inequality $\langle \ell-\ell' \rangle^{2(s_0+\tb)}\langle j-j' \rangle^2\lesssim_{s_0+\tb} 1 + |\ell-\ell'|^{2(s_0+\tb)}+|j-j'|^2+|\ell-\ell'|^{2(s_0+\tb)}|j-j'|^2$, 
	we therefore obtain, for any $\ell\in\Z^\nu$, $j\in\Z$, recalling \eqref{fM0-12},
	\begin{equation*}
		\begin{aligned}
			&\upsilon^{2|k|}\sum_{\ell,j} \langle \ell,j \rangle^{2s} \langle j \rangle^\frac12 \langle \ell-\ell' \rangle^{2(s_0+\tb)}\langle j-j' \rangle^2\langle j' \rangle^\frac12 |(\pa_\lambda^k R)_j^{j'}(\ell-\ell')|^2  \\
			& \quad \lesssim_{s_0+\tb} (\wt\fM(s,\tb))^2 \langle\ell',j' \rangle^{2s_0} + (\wt\fM(s_0,\tb))^2 \langle\ell',j' \rangle^{2s}\,.
		\end{aligned}
	\end{equation*}
	For any $s_0 \leq s \leq S$ and any $|k|\leq k_0$, by Cauchy-Schwartz inequality, we finally deduce
	\begin{equation*}
		\begin{aligned}
			& \| \langle D \rangle^\frac14 |\langle\pa_{\vf}\rangle^\tb\pa_\lambda^k R| \langle D\rangle^\frac14 h \|_s^2 \leq \sum_{\ell,j}\langle \ell,j \rangle^{2s} \Big(  \sum_{\ell',j'} \langle j \rangle^\frac14 \langle \ell-\ell'  \rangle^{\tb} \langle j' \rangle^\frac14 | (\pa_\lambda^k R)_j^{j'}(\ell-\ell') | |h_{\ell',j'}| \Big)^2 \\
			& \quad = \sum_{\ell,j}\langle \ell,j \rangle^{2s} \Big(  \sum_{\ell',j'} \langle j \rangle^\frac14 \langle \ell-\ell'  \rangle^{s_0+\tb}\langle j-j'\rangle \langle j' \rangle^\frac14 | (\pa_\lambda^k R)_j^{j'}(\ell-\ell') | |h_{\ell',j'}| \frac{1}{\langle \ell-\ell' \rangle^{s_0}\langle j-j' \rangle } \Big)^2\\
			& \lesssim_{s_0} \sum_{\ell,j}\langle \ell,j \rangle^{2s}\sum_{\ell',j'} \langle j \rangle^\frac12 \langle \ell-\ell'  \rangle^{2(s_0+\tb)}\langle j-j'\rangle^2 \langle j' \rangle^\frac12 | (\pa_\lambda^k R)_j^{j'}(\ell-\ell') |^2 |h_{\ell',j'}|^2\\
			& \lesssim_{s_0,\tb} \upsilon^{-2|k|}\sum_{\ell',j'} |h_{\ell',j'}|^2  \big( (\wt\fM(s,\tb))^2 \langle\ell',j' \rangle^{2s_0} + (\wt\fM(s_0,\tb))^2 \langle\ell',j' \rangle^{2s} \big)\,.
		\end{aligned}
	\end{equation*}
	This proves that $\langle\pa_\vf\rangle^\tb R$ is $\cD^{k_0}$-$(-\frac12)$-modulo-tame, with $\fM_{\langle D\rangle^{\frac14}\langle\pa_{\vf} \rangle^\tb R \langle D\rangle^{\frac14}}^\sharp (s) \lesssim_{s_0} \wt\fM(s,\tb)$. 
\end{proof}

\subsection{Hamiltonian, Reversible and Momentum preserving operators}\label{subsec:ham_st}

Along the paper we 
exploit in a crucial way several algebraic properties of the water waves equations: 
the Hamiltonian and the reversible structure as well as the invariance under space translations. 
We characterize these properties following \cite{BFM}.

\begin{defn}{\bf (Hamiltonian and Symplectic operators)}\label{def:HS}
A matrix operator $\cR $ as in \eqref{real_matrix} is 
\begin{enumerate}
\item
{\it Hamiltonian} if 
the matrix $ J^{-1} \cR $
is self-adjoint, namely   
$ B^* = B $, $ C^*=C $, $  A^* = - D $
and $ A, B, C, D $ are real; 
\item {\it symplectic} 
 if    $ {\cal W} ( \cR  u, \cR   v) =
{\cal W} (u, v) $ for any  $ u,v \in L^2 (\T_x, \R^2) $, 
where the symplectic 2-form $ {\cal W} $ is defined in \eqref{sympl-form-st}. 
\end{enumerate}
\end{defn}

Let $\cS$ be an involution  as in \eqref{rev_invo} acting on the real variables
$ (\eta, \zeta) \in \R^2 $, 
or 
as in \eqref{rev_aa} acting on the action-angle-normal variables $ (\theta, I, w ) $, or as in
\eqref{inv-complex} acting in the $ (z, \bar z )$ complex variables introduced in \eqref{C_transform}.  
\begin{defn}{\bf (Reversible and reversibility preserving operators)}\label{rev_defn}
	A $ \vf $-dependent family of operators $\cR (\vf) $, $ \vf \in \T^\nu $, is  \emph{reversible} if $\cR(-\vf) \circ\cS = -\cS \circ \cR(\vf)$ for all $\vf\in\T^\nu$. 
	It is \emph{reversibility preserving} if $\cR(-\vf)\circ \cS = \cS \circ \cR(\vf)$ for all $\vf\in\T^\nu$. 
\end{defn}

Since in the complex coordinates $(z,\bar z) $  
the involution $\cS$ defined in \eqref{rev_invo} reads as in 
\eqref{inv-complex}, 
an operator $\bR (\vf)$  as in \eqref{C_transformed} is 
reversible, respectively anti-reversible,  if, for any $i=1,2$,
\begin{equation}\label{Ri-RAR}
\cR_{i} (- \vf) \circ \cS = - \cS \circ \cR_{i} (\vf)  \, , \quad {\rm resp.} \  \
\cR_{i} (- \vf) \circ \cS =  \cS \circ \cR_{i} (\vf)  \, , 
\end{equation}
where, with a small abuse of notation, we still denote $ (\cS u)(x) = \overline{u(-x)}$. 
Moreover, recalling that in the Fourier coordinates such involution reads as in \eqref{inv-zj}, 
we obtain the following lemma (cfr. Lemmata 3.18 and 
3.19 of \cite{BFM}).

\begin{lem}\label{rev_defn_C}
	A $ \vf $-dependent family of operators $\bR (\vf)$, $ \vf \in \T^\nu $,  as in \eqref{C_transformed} is   reversible if, for any $ i = 1, 2 $, 
		\begin{equation}\label{rev:Fourier}
		\left( \cR_{i} \right)_j^{j'}(-\vf) = - \bar{ \left( \cR_{i} \right)_{j}^{j'}(\vf) } \quad \forall\,\vf\in\T^\nu \, , \ \ i.e. \ \left( \cR_{i} \right)_j^{j'}(\ell) = - 
		\bar{ \left( \cR_{i} \right)_{j}^{j'}(\ell) } \,  \quad \forall\,\ell\in\Z^\nu \,;
		\end{equation}
it is reversibility preserving if, for any $ i = 1, 2 $, 
		$$
		\left( \cR_{i} \right)_j^{j'}(-\vf) = \bar{ \left( \cR_{i} \right)_{j}^{j'}(\vf) } \ \  \forall\,\vf\in\T^\nu   \, , \ \ i.e.   \ \left( \cR_{i} \right)_j^{j'}(\ell) = \bar{ \left( \cR_{i} \right)_{j}^{j'}(\ell) } 
		\,  \ \  \forall\,\ell\in\Z^\nu \,.
$$
	A pseudodifferential operator 
	$  \Op(a(\vf, x, \xi))$ is reversible, respectively reversibility 
	preserving, if and only if its symbol
	satisfies
	$
	a(- \vf, - x, \xi) = - \overline{a(\vf, x, \xi)} $,
	resp. $	a(- \vf, - x, \xi) =  \overline{a(\vf, x, \xi)} $. 
	\end{lem}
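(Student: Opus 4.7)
The plan is to translate the abstract reversibility condition \eqref{Ri-RAR} into a pointwise statement on Fourier matrix entries, and then into a symbolic statement for pseudodifferential operators, by directly computing the action of $\cS$ on a generic Fourier mode.

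First I would compute $\cS \circ \cR_i(\vf)$ and $\cR_i(-\vf) \circ \cS$ on a function $u(x) = \sum_{j'} u_{j'} e^{\im j' x}$. Using $(\cS u)(x) = \overline{u(-x)}$, one finds
\begin{equation*}
(\cS u)_{j'} = \overline{u_{j'}}, \qquad
\bigl(\cR_i(-\vf) \circ \cS\bigr) u \,=\, \sum_{j,j'} (\cR_i)_j^{j'}(-\vf) \, \overline{u_{j'}} \, e^{\im j x},
\end{equation*}
while applying $\cS$ to $(\cR_i(\vf) u)(x) = \sum_{j,j'} (\cR_i)_j^{j'}(\vf) u_{j'} e^{\im j x}$ — i.e.\ conjugating and flipping $x$ — yields
\begin{equation*}
\bigl(\cS \circ \cR_i(\vf)\bigr) u \,=\, \sum_{j,j'} \overline{(\cR_i)_j^{j'}(\vf)} \, \overline{u_{j'}} \, e^{\im j x}.
\end{equation*}
Equating the two expressions with the appropriate sign from \eqref{Ri-RAR} gives the $\vf$-pointwise identity
\begin{equation*}
(\cR_i)_j^{j'}(-\vf) = \mp \, \overline{(\cR_i)_j^{j'}(\vf)} \qquad \forall\, j,j' \in \Z,
\end{equation*}
with the minus (resp.\ plus) sign for reversible (resp.\ reversibility preserving) operators.

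Next I would expand $(\cR_i)_j^{j'}(\vf) = \sum_{\ell \in \Z^\nu} (\cR_i)_j^{j'}(\ell)\, e^{\im \ell \cdot \vf}$ and match Fourier coefficients. The change of variable $\ell \mapsto -\ell$ turns $(\cR_i)_j^{j'}(-\vf)$ into $\sum_\ell (\cR_i)_j^{j'}(-\ell) e^{\im \ell \cdot \vf}$, while $\overline{(\cR_i)_j^{j'}(\vf)}$ becomes $\sum_\ell \overline{(\cR_i)_j^{j'}(-\ell)} e^{\im \ell \cdot \vf}$. Identifying coefficients at each $\ell$ gives exactly the Fourier characterization stated in \eqref{rev:Fourier} (and its reversibility-preserving analogue).

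For the pseudodifferential part, I would apply the first step directly to $A(\vf) = \Op(a(\vf,x,\xi))$, acting by $(A(\vf) u)(x) = \sum_j a(\vf,x,j) u_j e^{\im j x}$. The computation above becomes
\begin{equation*}
\bigl(\cS \circ A(\vf) u\bigr)(x) = \sum_j \overline{a(\vf,-x,j)}\, \overline{u_j} \, e^{\im j x}, \qquad
\bigl(A(-\vf) \circ \cS u\bigr)(x) = \sum_j a(-\vf,x,j)\, \overline{u_j}\, e^{\im j x}.
\end{equation*}
Since $u$ is arbitrary, equating coefficients (with the appropriate sign) yields $a(-\vf,x,\xi) = \mp \overline{a(\vf,-x,\xi)}$ for every $\xi \in \Z$; replacing $x$ by $-x$ gives the claimed relations $a(-\vf,-x,\xi) = \mp \overline{a(\vf,x,\xi)}$, which extend to all real $\xi$ by smoothness of the symbol. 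The proof is essentially bookkeeping — no estimates enter — so the only mild care point is keeping track of how conjugation and the $x \mapsto -x$ flip interact with the Fourier-in-$\vf$ expansion in the two equivalent forms \eqref{rev:Fourier}.
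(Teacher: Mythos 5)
Your proof is correct and follows essentially the same route the paper takes: pass to Fourier coordinates via the complex involution \eqref{inv-complex}--\eqref{inv-zj}, equate the action of the two sides of \eqref{Ri-RAR} on a generic Fourier mode, and read off the pointwise (then Fourier-coefficient, then symbol-level) conditions. The paper omits the computation and cites Lemmata 3.18--3.19 of \cite{BFM}, which carry it out in the same manner.
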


Note that the composition of a reversible operator with a reversibility preserving operator is reversible.
The flow 
generated by a reversibility preserving operator is reversibility preserving. 
If $ \cR (\vf) $ is reversibility preserving, then $ (\omega \cdot \pa_\vf \cR) (\vf) $
is reversible.

We shall say that a linear operator of the form $  \omega\cdot \pa_\vf + A(\vf)$ is reversible if $A(\vf)$ is reversible.  Conjugating the linear operator $
\omega\cdot\pa_\vf+A(\vf)$ by a family of invertible linear maps $\Phi(\vf)$, we get the transformed operator
\begin{equation}\label{trasf-op}
\begin{aligned}
&  \Phi^{-1}(\vf) \circ \big(  \omega\cdot \pa_\vf + A(\vf) \big) \circ  \Phi(\vf) = \omega\cdot\pa_\vf + A_+(\vf)\,, \\
&   A_+(\vf)  := \Phi^{-1}(\vf)\left( \omega\cdot \pa_\vf\Phi(\vf) \right) + \Phi^{-1}(\vf) A(\vf) \Phi(\vf)\,.
\end{aligned}
\end{equation}
The conjugation of a reversible operator with a reversibility preserving operator  is reversible. 

	A function $ u (\vf, \cdot) $ is called
	{\it reversible} if $ \cS u (\vf, \cdot ) = u(-\vf, \cdot ) $
	   and 
	{\it antireversible} if 	$ - \cS u (\vf, \cdot ) = u(-\vf, \cdot) $.
	The same definition holds in the action-angle-normal variables $ (\theta, I, w ) $
	with the involution  $ \vec \cS $ defined in \eqref{rev_aa} and in the $ (z, \bar z )$ complex variables with the involution in \eqref{inv-complex}.

A reversibility preserving operator maps reversible, respectively anti-reversible, functions into 
reversible, respectively anti-reversible, functions, see Lemma 3.22 in \cite{BFM}. 

We also remark that, 
if  $ X $ is a reversible vector field, according to \eqref{revNL},
	and $  u(\vf, x)  $ is a  reversible quasi-periodic function, 
	then the linearized operator  $ \di_u X( u(\vf, \cdot) ) $  is reversible, according to
	Definition \ref{rev_defn} (see e.g. Lemma 3.22 in \cite{BFM}). 

\smallskip

Finally we recall  that the projections $\Pi^\intercal_{\S^+, \Sigma}$, $\Pi^\angle_{\S^+, \Sigma}$ 
of Section \ref{sec:decomp} 
	are reversibility  preserving.

\begin{lem}\label{proj_rev} {\bf (Lemma 3.23 in \cite{BFM})}
	The projections $\Pi^\intercal_{\S^+, \Sigma}$, $\Pi^\angle_{\S^+, \Sigma}$ 
	defined in Section \ref{sec:decomp} 
	commute with the involution 
	$ \cS $ defined in \eqref{rev_invo}, i.e. 
	are reversibility  preserving.
	The orthogonal projectors $\Pi_{\S}$ and $\Pi_{\S_0}^\bot $ 
	commute with the involution in \eqref{inv-complex}, i.e.  
	are reversibility  preserving. 
\end{lem}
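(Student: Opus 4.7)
The plan is to reduce the statement to the invariance of the building blocks $V_{n,\sigma}$ under the involution $\cS$, and then invoke the general fact that projections onto complementary invariant subspaces of an invertible operator commute with that operator.

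\textbf{Step 1: invariance of the subspaces $V_{n,\sigma}$ under $\cS$.} Using the parametrization \eqref{Vn+}, a generic element of $V_{n,+}$ reads
\begin{equation*}
\begin{pmatrix} \eta(x) \\ \zeta(x) \end{pmatrix} = \begin{pmatrix} M_n(\alpha_n \cos(nx) - \beta_n\sin(nx)) \\ M_n^{-1}(\beta_n \cos(nx) + \alpha_n\sin(nx)) \end{pmatrix}.
\end{equation*}
Since $\cos$ is even and $\sin$ is odd in $x$, a direct computation shows that $\cS(\eta,\zeta)^\top$ is again of the same form with $(\alpha_n,\beta_n)$ replaced by $(\alpha_n,-\beta_n)$. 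Thus $\cS(V_{n,+}) \subseteq V_{n,+}$, and the identical computation with \eqref{Vn-} gives $\cS(V_{n,-}) \subseteq V_{n,-}$. In particular, each $V_{n,\sigma}$ is $\cS$-invariant.

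\textbf{Step 2: invariance of $\acca^{\intercal}_{\S^+,\Sigma}$ and $\acca^{\angle}_{\S^+,\Sigma}$, and commutation of the projectors.} From \eqref{cHStang}--\eqref{cHSnorm} the tangential and normal subspaces are direct sums of the $V_{n,\sigma}$'s, hence by Step~1 both are $\cS$-invariant. Now for any $u\in\acca$ write $u = \Pi^\intercal u + \Pi^\angle u$ according to \eqref{H-split}; applying $\cS$ gives $\cS u = \cS \Pi^\intercal u + \cS\Pi^\angle u$ with $\cS\Pi^\intercal u \in \acca^{\intercal}_{\S^+,\Sigma}$ and $\cS\Pi^\angle u \in \acca^{\angle}_{\S^+,\Sigma}$. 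By uniqueness of the decomposition, $\Pi^\intercal (\cS u) = \cS\Pi^\intercal u$ and similarly for $\Pi^\angle$, proving that $\Pi^\intercal_{\S^+,\Sigma}$ and $\Pi^\angle_{\S^+,\Sigma}$ are reversibility preserving.

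\textbf{Step 3: the complex projectors $\Pi_{\S}$ and $\Pi_{\S_0}^\bot$.} In the Fourier basis the involution \eqref{inv-complex} reduces to the coefficientwise conjugation $z_j \mapsto \overline{z_j}$ (see \eqref{inv-zj}), which preserves the Fourier support of any function. Since $\Pi_{\S}$ and $\Pi_{\S_0}^\bot$ are the $L^2$-orthogonal projectors onto the subspaces determined by the index sets $\S$ and $\S_0^c$ respectively, they trivially commute with this coefficientwise involution. Equivalently, this is consistent with Step~2 via the isomorphism relations \eqref{proiez}, because $\cM\cC$ is reversibility preserving.

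The main (and only) non-trivial point is the elementary parity computation in Step~1; everything else is formal from the direct-sum structure and the Fourier-diagonal nature of the involution in complex coordinates.
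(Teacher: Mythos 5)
Your proof is correct. The paper itself cites this lemma from \cite{BFM} without reproducing a proof, but your key step — the parity computation showing $\cS$-invariance of each $V_{n,\sigma}$ — is precisely the observation the paper already records just after \eqref{Vn+}–\eqref{Vn-} (``Note that the involution $\cS$ $\ldots$ leave the subspaces $V_{n,\sigma}$ $\ldots$ invariant''), and the remaining steps are the natural formal consequences of the direct-sum decomposition and the Fourier-diagonal form \eqref{inv-zj} of the involution, so your argument matches the intended route.
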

Next we define momentum preserving operators.
\begin{defn}	\label{def:mom.pres}
	{\bf (Momentum preserving operators)}	
	A  $ \vf $-dependent family of linear operators 
	$A(\vf) $, $ \vf \in \T^\nu $,  is  {\em momentum preserving} if
	$$
	A(\vf - \ora \jmath \vs )  \circ \tau_\vs = \tau_\vs \circ A(\vf ) \,  , \quad 
	\forall \,\vf \in \T^\nu \, , \ \vs \in \R \,  ,
$$
	where the translation operator $\tau_\vs$ is defined in \eqref{trans}.
	A linear matrix operator $\bA(\vf ) $ of the form \eqref{real_matrix} or \eqref{C_transformed} is  
	{\em momentum preserving} if each of its components is momentum preserving.
\end{defn}

If $ X $ is a vector field translation invariant, i.e. \eqref{eq:mom_pres} holds, and
	$ u   $ is a quasi-periodic traveling wave, 
	then the linearized operator  $ \di_u X( u(\vf, \cdot) ) $  is momentum preserving. 

Momentum preserving operators are closed under several operations
 (cfr. Lemma 3.25 in \cite{BFM}):
\begin{lem}\label{lem:mom_prop} 
	Let $A(\vf), B(\vf)$ be  momentum preserving operators. Then the composition
$A (\vf) \circ B (\vf) $  and
the adjoint $ (A(\vf))^*$ are momentum preserving.
 If $A(\vf)$ is invertible, then $A(\vf)^{-1}$ is momentum preserving.
 Assume that 
		$
		\partial_{t} \Phi^t (\vf) = A (\vf) \Phi^t (\vf) $,
$\Phi^0 (\vf) = {\rm Id} $, 
		has a unique  propagator $\Phi^t (\vf) $, $ t\in[0,1] $. 
		Then $\Phi^t ( \vf ) $ is  momentum preserving.
\end{lem}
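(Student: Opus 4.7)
The plan is to use the defining identity $A(\vf-\ora{\jmath}\vs)\circ\tau_\vs=\tau_\vs\circ A(\vf)$ as a rigid algebraic relation and verify each of the four closure statements by direct symbolic manipulation, invoking uniqueness of ODE flows only in the last item. Throughout, I would exploit that translations $\tau_\vs$ form a one-parameter group of unitary operators on $L^2$ with $\tau_\vs^{-1}=\tau_{-\vs}=\tau_\vs^*$, and that the parameter $\vf$ plays a purely indexing role (adjoints are taken only in the spatial variable).

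First, for the composition, given $A(\vf),B(\vf)$ momentum preserving, I would chain the two identities:
\begin{equation*}
A(\vf-\ora{\jmath}\vs)\,B(\vf-\ora{\jmath}\vs)\,\tau_\vs
= A(\vf-\ora{\jmath}\vs)\,\tau_\vs\,B(\vf)
= \tau_\vs\,A(\vf)\,B(\vf).
\end{equation*}
For the adjoint, I would take $*$ of the defining relation, yielding $\tau_{-\vs}A(\vf-\ora{\jmath}\vs)^*=A(\vf)^*\tau_{-\vs}$, and then relabel $\vf\mapsto \vf-\ora{\jmath}\vs$ and $\vs\mapsto -\vs$ to bring it back to the standard form $A(\vf-\ora{\jmath}\vs)^*\tau_\vs=\tau_\vs A(\vf)^*$. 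For the inverse, assuming $A(\vf)$ is invertible (so $A(\vf-\ora{\jmath}\vs)$ is too), I would multiply the defining identity by $A(\vf-\ora{\jmath}\vs)^{-1}$ on the left and by $A(\vf)^{-1}$ on the right to obtain $\tau_\vs A(\vf)^{-1}=A(\vf-\ora{\jmath}\vs)^{-1}\tau_\vs$.

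For the propagator statement, which is the only nontrivial item, I would set up a uniqueness argument. Define
\begin{equation*}
U(t):=\Phi^t(\vf-\ora{\jmath}\vs)\circ\tau_\vs,\qquad V(t):=\tau_\vs\circ\Phi^t(\vf),
\end{equation*}
and check $U(0)=V(0)=\tau_\vs$. Differentiating and using the momentum-preservation of $A$ to rewrite $\tau_\vs A(\vf)=A(\vf-\ora{\jmath}\vs)\tau_\vs$, both curves satisfy the same linear ODE $\partial_t W(t) = A(\vf-\ora{\jmath}\vs)\,W(t)$. By the assumed uniqueness of the propagator, $U(t)=V(t)$ for all $t\in[0,1]$, which is precisely the momentum-preserving property for $\Phi^t$. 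No step is genuinely delicate here: the main point to verify carefully is simply the interchange $\tau_\vs A(\vf)=A(\vf-\ora{\jmath}\vs)\tau_\vs$ that lets $V(t)$ satisfy the desired equation, and the tacit use of $\tau_\vs^*=\tau_{-\vs}$ in the adjoint case; otherwise the proof is essentially a sequence of one-line algebraic identities.
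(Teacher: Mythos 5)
Your proof is correct and is essentially the same algebraic verification that one would find in the cited reference (Lemma 3.25 of \cite{BFM}): chain the defining identity for composition, conjugate by adjoints and relabel $(\vf,\vs)\mapsto(\vf-\ora{\jmath}\vs,-\vs)$ for $A^*$, pre- and post-multiply by inverses, and invoke uniqueness of the linear ODE with the same generator and same initial datum for the flow. All four steps, including the careful use of $\tau_\vs^*=\tau_{-\vs}$ and the commutation inside the derivative of $V(t)$, are handled correctly.
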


We shall say that a linear operator of the form $  \omega\cdot \pa_\vf + A(\vf)$ is momentum preserving if $A(\vf)$ is momentum preserving.  
In particular, conjugating a momentum preserving operator 
$ \omega\cdot\pa_\vf+A(\vf) $ 
by a family of invertible linear momentum preserving maps $\Phi(\vf)$, 
we obtain  the transformed operator 
$ \omega\cdot\pa_\vf + A_+(\vf) $ in
\eqref{trasf-op} which  is momentum preserving. 

	Given a   momentum preserving linear operator $ A(\vf)$
	and a quasi-periodic traveling wave $u$, according to Definition \ref{QPTW}, then
$A(\vf) u $ is a quasi-periodic traveling wave.
\smallskip

The characterizations of 
the momentum preserving property, in Fourier space and for a  pseudo-differential 
operator, is given below (see Lemmata 3.28 and 3.29 in \cite{BFM}).

\begin{lem}
	\label{lem:mom_pseudo} 
	Let $ \vf $-dependent family of operators $ A(\vf) $, $ \vf \in \T^\nu $, 
	is momentum preserving  if and only if 
	the matrix elements of $A(\vf)$, defined by \eqref{A_expans},  fulfill
	$$
	A_j^{j'}(\ell ) \neq 0 \quad \Rightarrow  \quad \ora{\jmath}\cdot \ell + j-j' = 0 \, , \quad
	\forall\, \ell \in\Z^\nu , \ \ j,j'\in\Z \, . 
	$$
	A pseudodifferential operator 
	$ \Op(a(\vf, x, \xi))$ is momentum preserving if and only if
	 its symbol	satisfies
	$
	a(\vf - \ora{\jmath}\vs, x, \xi) =  a(\vf, x+\vs, \xi) $ for any $ \vs \in \R $. 
\end{lem}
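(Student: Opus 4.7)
The plan is to reduce both equivalences to a direct Fourier computation of what the intertwining relation $A(\vf-\ora{\jmath}\vs)\circ\tau_\vs = \tau_\vs\circ A(\vf)$ forces on the matrix elements. The first statement will follow by comparing Fourier coefficients in $\vs$ of two explicit expressions; the second will be derived from the first (or computed directly on $e^{\im j'x}$).

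First I would test both sides of the momentum-preservation identity on the basis $\{e^{\im j'x}\}$. On the right, using $A(\vf)e^{\im j'x} = \sum_{j}A_j^{j'}(\vf)e^{\im jx}$ and the definition \eqref{trans} of $\tau_\vs$, one gets $\tau_\vs A(\vf)e^{\im j'x}=\sum_j A_j^{j'}(\vf)e^{\im j\vs}\,e^{\im jx}$. On the left, since $\tau_\vs e^{\im j'x}=e^{\im j'\vs}e^{\im j'x}$, one gets $A(\vf-\ora{\jmath}\vs)\tau_\vs e^{\im j'x}=\sum_j A_j^{j'}(\vf-\ora{\jmath}\vs)\,e^{\im j'\vs}\,e^{\im jx}$. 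Equating coefficients of $e^{\im jx}$ yields the identity, valid for every $\vf,\vs$:
\begin{equation*}
A_j^{j'}(\vf-\ora{\jmath}\vs)=e^{\im(j-j')\vs}\,A_j^{j'}(\vf).
\end{equation*}
Expanding $A_j^{j'}(\vf)=\sum_{\ell\in\Z^\nu}A_j^{j'}(\ell)e^{\im\ell\cdot\vf}$ and comparing Fourier coefficients in $\vf$ turns this into $e^{-\im(\ora{\jmath}\cdot\ell)\vs}A_j^{j'}(\ell)=e^{\im(j-j')\vs}A_j^{j'}(\ell)$ for all $\vs\in\R$; since this must hold on a set of $\vs$ of full measure, either $A_j^{j'}(\ell)=0$ or $\ora{\jmath}\cdot\ell+j-j'=0$. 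Conversely, if the latter selection rule holds for all nonzero matrix elements, one runs the same calculation backwards and recovers $A(\vf-\ora{\jmath}\vs)\circ\tau_\vs=\tau_\vs\circ A(\vf)$, establishing the equivalence in the first assertion.

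For the pseudodifferential statement, I would either invoke the first part applied to the Fourier representation of $\Op(a)$, or argue directly. Acting on $u(x)=\sum_{j'}u_{j'}e^{\im j'x}$, one has $\Op(a(\vf,\cdot,\cdot))u(x)=\sum_{j'}a(\vf,x,j')u_{j'}e^{\im j'x}$, so
\begin{equation*}
\tau_\vs\,\Op(a(\vf))u(x)=\sum_{j'}a(\vf,x+\vs,j')\,u_{j'}e^{\im j'\vs}e^{\im j'x},
\end{equation*}
while
\begin{equation*}
\Op(a(\vf-\ora{\jmath}\vs))\,\tau_\vs u(x)=\sum_{j'}a(\vf-\ora{\jmath}\vs,x,j')\,u_{j'}e^{\im j'\vs}e^{\im j'x}.
\end{equation*}
Momentum preservation is equivalent to equality of these two expressions for every $u$, hence to $a(\vf-\ora{\jmath}\vs,x,j')=a(\vf,x+\vs,j')$ for all $\vs,x$ and all integers $j'$; smoothness of $a(\vf,x,\xi)$ in $\xi$ upgrades this pointwise equality on $\Z$ to the stated equality for all $\xi\in\R$.

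The proof is essentially bookkeeping — no serious obstacle — but the one point requiring a little care is the passage from "for all $\vs$" to the algebraic constraint $\ora{\jmath}\cdot\ell+j-j'=0$: one must note that the continuous exponentials $\vs\mapsto e^{\im c\vs}$ for distinct integers $c$ agree on $\R$ only when $c=0$, which is why the Fourier–coefficient comparison produces the sharp selection rule rather than merely a modular constraint.
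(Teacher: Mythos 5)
Your Fourier computation for the matrix-element characterization is exactly right and is the natural argument: testing the intertwining relation on $e^{\im j'x}$ and expanding $A_j^{j'}$ in $\vf$-Fourier modes yields $e^{-\im(\ora{\jmath}\cdot\ell)\vs}A_j^{j'}(\ell)=e^{\im(j-j')\vs}A_j^{j'}(\ell)$ for all $\vs\in\R$, and since distinct characters of $\R$ cannot agree identically, $A_j^{j'}(\ell)\neq 0$ forces $\ora{\jmath}\cdot\ell+j-j'=0$; the converse is the same computation read backwards. The paper only cites \cite{BFM} for this lemma without reproducing a proof, and this is the intended argument.

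The last step of the pseudodifferential part, however, is not valid as stated. From $a(\vf-\ora{\jmath}\vs,x,j')=a(\vf,x+\vs,j')$ for all integers $j'$, smoothness of $a$ in $\xi$ does \emph{not} give the same identity for all $\xi\in\R$: two $\cC^\infty$ symbols can agree on $\Z$ and differ on $\R$ (e.g.\ $0$ versus a cut-off of $\sin(\pi\xi)$), and the paper's definition of a $\Psi$DO explicitly says $a(x,j)$ is only the restriction to $\Z$ of a smooth extension, so the extension carries genuine ambiguity off the integers. The ``if'' direction is unproblematic: if the relation holds for all $\xi$, specialize to $\xi=j'$. For the ``only if'' direction one should either read the conclusion modulo symbol modification off $\Z$ (the usual abuse in this setting), or construct an explicit normalization: set $\tilde a(\vf,x,\xi):=a(\vf-\ora{\jmath}x,0,\xi)$, which is $2\pi$-periodic in $x$ because $\ora{\jmath}\in\Z^\nu$, agrees with $a$ at every $\xi\in\Z$ by your identity with $\vs=-x$ (hence $\Op(\tilde a)=\Op(a)$), and satisfies $\tilde a(\vf-\ora{\jmath}\vs,x,\xi)=\tilde a(\vf,x+\vs,\xi)$ for every $\xi\in\R$ by construction. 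With that replacement for the ``smoothness upgrades'' sentence your argument is complete.
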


We finally  note that the symplectic projections $ \Pi^\intercal_{\S^+, \Sigma}$, $ \Pi^\angle_{\S^+, \Sigma}$,
	 are momentum preserving.

\begin{lem}\label{lem:proj.momentum}
{\bf (Lemma 3.31 in \cite{BFM})}
	The symplectic projections $ \Pi^\intercal_{\S^+, \Sigma}$, $ \Pi^\angle_{\S^+, \Sigma}$,
	the $ L^2 $-projections  $ \Pi^{L^2}_\angle  $
	and  $\Pi_{\S}$, $\Pi_{\S_0}^\bot $ 
	defined in Section \ref{sec:decomp} 
	commute with the translation operators 
	$ \tau_\vs $ defined in \eqref{trans}, i.e. are momentum preserving.
\end{lem}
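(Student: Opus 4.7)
The plan is to reduce all five statements to the single observation that the translation operator $\tau_\vs$ is a Fourier multiplier: on the basis $\{e^{\im j x}\}_{j\in\Z}$ it acts diagonally as $\tau_\vs(e^{\im j x}) = e^{\im j \vs} e^{\im j x}$. Consequently, $\tau_\vs$ commutes with every operator that is diagonal in the Fourier basis, and with every $L^2$-orthogonal projection onto a closed subspace characterized solely by a set of Fourier indices.

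First I would treat the two projections $\Pi_{\S}^{\intercal}$ and $\Pi_{\S_0}^{\perp}$ acting in complex coordinates. By the very definitions surrounding \eqref{def:HS0bot}, these are the $L^2$-orthogonal projections onto the subspaces $\bH_\S$ and $\bH_{\S_0}^{\perp}$, each determined by the index set $\S$, resp.\ $\S_0^c$. The characterization in Lemma \ref{lem:mom_pseudo} (momentum preservation in Fourier indices) then gives $\tau_\vs\circ\Pi_\S^\intercal = \Pi_\S^\intercal\circ\tau_\vs$ and similarly for $\Pi_{\S_0}^\perp$, so both are momentum preserving (recalling that momentum preservation reduces to commutation with $\tau_\vs$ when there is no $\vf$-dependence).

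Next I would handle the symplectic projections on $\acca$. Here I invoke the identity \eqref{proiez}, namely
\begin{equation*}
\Pi_{\S^+,\Sigma}^\intercal = \cM\cC\,\Pi_{\S}^\intercal\,(\cM\cC)^{-1}\,, \qquad
\Pi_{\S^+,\Sigma}^\angle = \cM\cC\,\Pi_{\S_0}^\perp\,(\cM\cC)^{-1}\,.
\end{equation*}
The matrix $\cC$ in \eqref{C_transform} has constant entries, and $\cM$ defined in \eqref{eq:T_sym} is a diagonal matrix of Fourier multipliers $M(D), M(D)^{-1}$; both therefore commute with $\tau_\vs$. Composition with the previous step (and with Lemma \ref{lem:mom_prop}, which is stated for $\vf$-dependent families but applies in particular to $\vf$-independent operators) yields that $\Pi_{\S^+,\Sigma}^\intercal$ and $\Pi_{\S^+,\Sigma}^\angle$ are momentum preserving.

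Finally, for the $L^2$-projector $\Pi^{L^2}_\angle$, I would use the explicit orthogonal decomposition of $\acca$ provided in Section \ref{lin:op}: by \eqref{deco-real}--\eqref{proj-Vj} the subspace $\acca^\angle_{\S^+,\Sigma}$ defined in \eqref{cHSnorm} is generated by Fourier modes $\cos(jx)$ and $\sin(jx)$ whose index $j$ ranges in a set invariant under $\vs\mapsto\vs$; equivalently, its $L^2$-orthogonal complement $\acca^\intercal_{\S^+,\Sigma}$ is spanned by finitely many such Fourier modes indexed by $\bar n_a$. Since $\tau_\vs$ leaves each two-dimensional subspace $V_{n,\pm}$ invariant (it maps $\cos(jx),\sin(jx)$ into their linear combinations), the decomposition \eqref{acca} is $\tau_\vs$-invariant, hence the associated $L^2$-orthogonal projector $\Pi^{L^2}_\angle$ commutes with $\tau_\vs$. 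No step is expected to present any genuine obstacle, as each of the projections is ultimately defined through a Fourier-index set, and the whole content of the lemma is that this structural feature is preserved under the various conjugations used to build them.
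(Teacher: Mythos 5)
Your argument is correct in structure and follows the natural route: reduce everything to the fact that $\tau_\vs$ commutes with Fourier multipliers and with $L^2$-orthogonal projections onto subspaces determined by fixed Fourier index sets, then push through the conjugation identity \eqref{proiez}. One intermediate claim is false, though: $\acca^\intercal_{\S^+,\Sigma}$ is \emph{not} the $L^2$-orthogonal complement of $\acca^\angle_{\S^+,\Sigma}$. The subspaces $V_{n,+}$ and $V_{n,-}$ from \eqref{Vn+}--\eqref{Vn-} are not $L^2$-orthogonal (pairing the basis vectors obtained by setting one of $\alpha_{\pm n},\beta_{\pm n}$ to $1$ gives $\pi(M_n^2-M_n^{-2})\neq 0$), so $\acca^\intercal$ and $\acca^\angle$ are \emph{symplectic} complements, not $L^2$-orthogonal ones; that is precisely why the paper introduces $\Pi^{L^2}_\angle$ as a distinct object from the symplectic projector $\Pi^\angle_{\S^+,\Sigma}$.

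The error is harmless because your final step does not actually use it. What you need, and do correctly state, is that $\acca^\angle_{\S^+,\Sigma}$ is $\tau_\vs$-invariant, being a direct sum of the $\tau_\vs$-invariant planes $V_{n,\sigma}$. To conclude that $\Pi^{L^2}_\angle$ commutes with $\tau_\vs$ you should then explicitly invoke that $\tau_\vs$ is $L^2$-unitary: for a unitary operator preserving a closed subspace, the $L^2$-orthogonal complement is preserved as well, and the orthogonal projector commutes with the operator. Delete the identification of the $L^2$-orthogonal complement of $\acca^\angle$ with $\acca^\intercal$, and make the unitarity of $\tau_\vs$ an explicit hypothesis, and the proof is complete and in line with the cited reference.
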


\paragraph{Quasi-periodic traveling waves in action-angle-normal coordinates.}

We now discuss  how the momentum preserving condition reads in the 
coordinates $(\theta, I, w)$ introduced in \eqref{aacoordinates}. 
Recalling \eqref{vec.tau},
if $u(\vf,x)$ is a quasi-periodic traveling wave with action-angle-normal components $(\theta(\vf), I(\vf), w(\vf, x))$, the condition $\tau_\vs u =  u(\vf - \ora{\jmath} \vs, \cdot)$ becomes 
$$
\begin{pmatrix}
\theta(\vf) - \ora{\jmath} \vs \\
I(\vf) \\
\tau_\vs w(\vf, \cdot)
\end{pmatrix}  = 
\begin{pmatrix}
\theta(\vf -  \ora{\jmath} \vs) \\
I(\vf- \ora{\jmath} \vs) \\
w(\vf - \ora{\jmath} \vs, \cdot)
\end{pmatrix} \, , \quad \forall\, \vs \in \R \, . 
$$
As we look for $\theta(\vf)$ of the form $\theta(\vf) = \vf + \Theta(\vf)$, with 
a $ (2 \pi)^\nu $-periodic function $ \Theta : \R^\nu \mapsto \R^\nu $, 
$ \vf \mapsto \Theta (\vf ) $, the traveling wave condition becomes
\begin{equation}
\label{mompres_aa1}
\begin{pmatrix}
\Theta(\vf)  \\
I(\vf) \\
\tau_\vs w(\vf, \cdot)
\end{pmatrix}  = 
\begin{pmatrix}
\Theta(\vf -  \ora{\jmath} \vs) \\
I(\vf- \ora{\jmath} \vs) \\
w(\vf - \ora{\jmath} \vs, \cdot)
\end{pmatrix} \, , \quad \forall\, \vs \in \R \, . 
\end{equation}

\begin{defn} {\bf(Traveling wave variation)}\label{trav-vari}
	We call a traveling wave variation 
	$ g(\vf) = (g_1(\vf), g_2(\vf), g_3(\vf, \cdot)) \in 
	\R^\nu \times \R^\nu \times \acca_{\S^+,\Sigma}^\angle $ a function satisfying \eqref{mompres_aa1}, i.e.
	$ g_1(\vf) = g_1(\vf - \ora{\jmath}\vs)$,  
	$ g_2(\vf) = g_2(\vf - \ora{\jmath}\vs)$, 
$	\tau_\vs g_3(\vf) = g_3(\vf - \ora{\jmath}\vs)$ for any $ \vs \in \R $, 
	or equivalently  $D \vec \tau_\vs g(\vf) = g(\vf - \ora{\jmath}\vs) $ for any  $\vs \in \R $, 
	where $D \vec \tau_\vs$ is the differential of $ \vec \tau_\vs$, namely
	$$
	D\vec \tau_\vs
	\begin{pmatrix}
	\Theta \\
	I \\
	w
	\end{pmatrix} 
	=
	\begin{pmatrix}
	\Theta  \\
	I \\
	\tau_\vs w
	\end{pmatrix} \, , \quad \forall\, \vs \in \R \, . 
	$$
\end{defn}

According to  Definition \ref{def:mom.pres}, 
a linear operator acting in $ \R^\nu \times \R^\nu \times \acca_{\S^+,\Sigma}^\angle $ is momentum preserving if 
$$
A(\vf - \ora \jmath \vs)\circ D\vec \tau_\vs  = D\vec \tau_\vs \circ A(\vf) 
\, , \quad \forall\, \vs \in \R \, . 
$$
If $A(\vf)$ is a momentum preserving linear operator acting on $\R^\nu \times \R^\nu \times \acca_{\S^+,\Sigma}^\angle$ and 
	$g \in \R^\nu \times \R^\nu \times \acca_{\S^+,\Sigma}^\angle$ is  a traveling wave variation, then   $ A(\vf) g(\vf)$ is a traveling wave variation.

\section{Transversality of linear frequencies}\label{sec:deg_kam}

In this section we extend the  KAM theory approach used in \cite{BaBM, BM,  BBHM, BFM} 
in order to deal with the linear 
frequencies $ \Omega_j(\gamma) $  defined in \eqref{def:Omegajk}, 
of the pure gravity water waves with constant vorticity. We use the vorticity as a parameter. 
In the proof of the key transversality 
Proposition \ref{prop:trans_un}, it is necessary to exploit  the momentum condition 
 for avoiding resonances. 
We shall also exploit that the  tangential sites 
$ \S:=\{ \,\bar{\jmath}_1, \ldots ,\bar{\jmath}_\nu \} \subset \Z\setminus\{0\}$  
defined  in \eqref{def.S}, have  all distinct 
modulus $ | \bar{\jmath}_a | = \bar n_a $, see assumption \eqref{Splus}.

We first introduce the following  definition of non-degenerate function. 

\begin{defn}\label{def:non-deg}
	A function $f=(f_1,\dots,f_N):[\gamma_1,\gamma_2]\rightarrow\R^N$ is 
	\emph{non-degenerate} if, for any $c\in \R^N\setminus\{0\}$, the scalar function $f\cdot c$ is not identically zero on the whole interval $[\gamma_1,\gamma_2]$.
\end{defn}

From a geometric point of view, the function $ f $ is non-degenerate if and only if the image   curve $ f([\gamma_1,\gamma_2]) \subset \R^N $ is not contained in any  hyperplane of $ \R^N $. 

\smallskip

We shall use in the sequel 
that the maps $  \gamma \mapsto \Omega_j (\gamma) $ are analytic in $ [\gamma_1, \gamma_2] $. 
For any $j\in\Z\setminus\{0\}$,  we decompose the linear frequencies $\Omega_j(\gamma) $ as   
\begin{equation}\label{Om-om}
\Omega_j(\gamma) = \omega_j(\gamma) + \frac{\gamma}{2}\frac{G_j(0)}{j}\,, \quad \omega_j (\gamma) := \sqrt{ g \,G_j(0) + \Big( \frac{\gamma}{2}\frac{G_j(0)}{j} \Big)^2 } \, , 
\end{equation}
where $G_j(0) $ is the Dirichlet-Neumann operator defined in \eqref{def:Gj0}. 
Note that $ j \mapsto 
\omega_j (\gamma) $ is even in $ j $, whereas the component   due to the vorticity 
$ j \mapsto \gamma \frac{G_j(0)}{j} $ is odd.  

\begin{lem}\label{lem:non_deg_vectors}
	{\bf (Non-degeneracy-I)}
	The following frequency vectors are non-degenerate:
	\begin{enumerate}
		\item $\ora{\Omega}(\gamma) := ( \Omega_j (\gamma) )_{j \in \S} \in\R^\nu$;
		\item $\big( \ora{\Omega}(\gamma),1 \big)\in\R^{\nu+1}$; 
		\item $\big( \ora{\Omega}(\gamma),\Omega_j(\gamma) \big) \in\R^{\nu+1}$, for any  $j\in\Z\setminus\left(\{ 0 \}\cup\S\cup (-\S)\right)$;
		\item $\big( \ora{\Omega}(\gamma),\Omega_j(\gamma) ,\Omega_{j'}(\gamma) \big)\in\R^{\nu+2}$, for any  $j, j'\in\Z\setminus\left( \{0\}\cup\S\cup(-\S) \right)$ and  $|j| \neq |j'|$.
	\end{enumerate}
\end{lem}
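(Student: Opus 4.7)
The strategy is to reduce each non-degeneracy statement to a Vandermonde-type linear-algebra argument applied to the Taylor coefficients of the linear frequencies at $\gamma = 0$, after splitting into even and odd parts in $\gamma$.

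First I would use the decomposition \eqref{Om-om}, writing $\Omega_j(\gamma) = \omega_j(\gamma) + \gamma \mu_j$ with $\mu_j := G_j(0)/(2j)$, and noting that $\omega_j(\gamma)$ is real-analytic on all of $\R$, is even in $\gamma$, and depends only on $|j|$ (since $G_j(0)$ is even in $j$). In particular $\omega_{\bar\jmath_a} = \omega_{\bar n_a}$. Suppose that some non-trivial linear combination $L(\gamma)$ of the components of one of the four vectors vanishes identically on $[\gamma_1,\gamma_2]$; by real-analyticity it vanishes on all of $\R$. Projecting onto the odd part in $\gamma$ gives a relation $\gamma \cdot (\text{combination of } \mu_j)= 0$, while projecting onto the even part yields an identity
$$
\sum_b c_b\, \omega_{n_b}(\gamma) + (\text{constant if item 2}) \equiv 0\,,
$$
where the $n_b$ range over $\{\bar n_1,\dots,\bar n_\nu\}$ in item 1--2, over $\{\bar n_a\}\cup\{|j|\}$ in item 3, and over $\{\bar n_a\}\cup\{|j|,|j'|\}$ in item 4.

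The main computational step is to expand
$$
\omega_n(\gamma) = \sqrt{g\,G_n(0)}\sum_{k\ge 0}\binom{1/2}{k}\left(\frac{G_n(0)}{4 g\,n^2}\right)^{\!k}\gamma^{2k}\,,
$$
so that the coefficient of $\gamma^{2k}$ equals a nonzero scalar (depending only on $k$ and $g$) times $\sqrt{G_n(0)}\,\lambda_n^k$, where
$$
\lambda_n := \frac{G_n(0)}{n^2} = \begin{cases} \tanh(\tth\, n)/n & \text{if }\tth<\infty\\ 1/n & \text{if }\tth=+\infty\,.\end{cases}
$$
The key monotonicity observation is that $n\mapsto \lambda_n$ is strictly decreasing on $\N$ (for finite depth this follows from $\sinh(2\tth n)>2\tth n$), hence injective. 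Under the hypotheses of each item the integers $n_b$ are pairwise distinct, so the values $\lambda_{n_b}$ are pairwise distinct as well.

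The conclusion then follows from a Vandermonde argument: the vanishing of enough Taylor coefficients of the even part forces, for each $k$ in a range of length equal to the number of unknowns,
$$
\sum_b c_b\,\sqrt{G_{n_b}(0)}\,\lambda_{n_b}^k = 0\,,
$$
and the Vandermonde matrix $[\lambda_{n_b}^k]$ being invertible forces $c_b=0$ for every $b$; in item 2 the vanishing at $k=0$ then yields $c_0 = 0$ as well. This contradicts the non-triviality of the combination. The argument is essentially algebraic and routine; the only mild subtleties are (i) invoking analytic continuation in $\gamma$ to evaluate at $\gamma=0$ (which lies outside $[\gamma_1,\gamma_2]$ in general, but inside the domain of analyticity of each $\Omega_j$), and (ii) verifying in item 4 that the hypothesis $|j|\neq|j'|$ is exactly what is needed to ensure $\nu+2$ distinct values of $\lambda_n$ so that the Vandermonde matrix has the right size.
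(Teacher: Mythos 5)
Your proof is correct and follows essentially the same route as the paper: both arguments reduce the claim to the invertibility of a Vandermonde matrix built from the Taylor coefficients of $\omega_n(\gamma)$ at $\gamma=0$ (equivalently, from the even-order derivatives of $\Omega_j$), combined with the strict monotonicity of $n \mapsto G_n(0)/n^2$. The only (minor) differences are organizational — you bookkeep via the even/odd decomposition in $\gamma$ rather than by differentiating and evaluating at $0$ directly, and your one-line proof of monotonicity via $\sinh(2u)>2u$ is slightly slicker than the paper's calculation of the sign of $\partial_y f$ — but the substance is identical.
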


\begin{proof}
		We first compute the  jets 
		of the functions $ \gamma \mapsto 
		\Omega_j (\gamma) $ at $ \gamma = 0 $. Using that $G_j(0)=G_{|j|}(0)> 0 $, see \eqref{def:Gj0}, 
	we write \eqref{Om-om} as 
	\begin{equation}\label{not_poly}
	\Omega_j(\gamma) = \sqrt{g\,G_{|j|}(0)} \left( \sqrt{1+ \gamma^2
	\tc_j^2 }  + \gamma \sgn(j)  \tc_j  \right)\,, \quad 
	\tc_j :=  \frac{1}{2|j|}\,  \sqrt{\frac{G_{|j|}(0)}{g}} \, , 
	\end{equation}
	for any $ j \in\Z\setminus\{0\} $.  
Each function $ \gamma \mapsto \sqrt{1+ \gamma^2
	\tc_j^2 }  + \gamma \,  \sgn(j)  \tc_j  $ is real analytic  on the whole real line $\R$, and in a neighborhood 
 of $ \gamma = 0 $, it admits the power series expansion
	\begin{equation}\label{pwe}
		\begin{aligned}
		\Omega_j(\gamma) & = \sqrt{g\,G_{|j|}(0)}\Big( 1  + \
		\sum_{n\geq 1} a_n ( \gamma^2 \tc_j^2 )^{n} + \gamma \, \sgn(j) \tc_j \Big) \\
		& = \sqrt{g\,G_{|j|}(0)} + \frac{\sgn(j)}{2} \frac{G_{|j|}(0)}{|j|} \gamma + \sum_{n\geq 1} \frac{a_{n}}{g^{n-\frac12}2^{2n}} \frac{(G_{|j|}(0))^{n+ \frac12}}{|j|^{2n}} 
		 \gamma^{2n} 
		\end{aligned}
	\end{equation}
where $a_n := \binom{1/2}{n} \neq 0$ for any $ n \geq 1$ are  binomial coefficients. 
From \eqref{pwe}, we deduce that, for any $j\in\Z\setminus\{0\}$, for any $n \geq 1 $,
	\begin{equation}\label{deriv_2n}
		\pa_\gamma^{2n} \Omega_j(0)  
		 =  b_{2n} g_j\Big(\frac{G_{|j|}(0)}{|j|^2}\Big)^n  
		 \quad {\rm with} \quad
		g_j:=\sqrt{g\,G_{|j|}(0)}>0\,, \   b_{2n}:= \frac{(2n)! \, a_{n}}{g^n 2^{2n}} \neq 0   \, . 
	\end{equation}
		We now 
	prove that, for any $N$ and integers 
	$1\leq |j_1|<|j_2|< \ldots < |j_N|$, the function $[\gamma_1,\gamma_2]\ni\gamma \mapsto (\Omega_{j_1}(\gamma),...,\Omega_{j_N}(\gamma))\in\R^N $ is non-degenerate according to Definition \ref{def:non-deg}.
	Suppose, by contradiction, that  $(\Omega_{j_1}(\gamma),...,\Omega_{j_N}(\gamma))$ is degenerate, i.e. there exists $c\in\R^N\setminus\{0\}$ such that
	\begin{equation}\label{nd_contra}
		c_1 \Omega_{j_1}(\gamma) + ... + c_N \Omega_{j_N}(\gamma) = 0  \quad \forall\,\gamma\in [\gamma_1, \gamma_2] \, ,
	\end{equation}
hence, by analyticity, it is identically zero for any $ \gamma \in \R$.
Differentiating \eqref{nd_contra}  we get
	\begin{equation*}
		\begin{cases}
		c_1 (\pa_\gamma^{2}\Omega_{j_1})(\gamma) + ... + c_N (\pa_\gamma^{2}\Omega_{j_N})(\gamma) = 0 \\
		.... \\
		c_1 (\pa_\gamma^{2N}\Omega_{j_1})(\gamma) + ... + c_N (\pa_\gamma^{2N}\Omega_{j_N})(\gamma) = 0 \,.
		\end{cases}
	\end{equation*}
As a consequence the $N\times N$ matrix
	\begin{equation}\label{Agam}
		\cA(\gamma) := \begin{pmatrix}
		(\pa_\gamma^{2}\Omega_{j_1})(\gamma) & \cdots & (\pa_\gamma^{2}\Omega_{j_N})(\gamma) \\
		\vdots & \ddots & \vdots \\
		(\pa_\gamma^{2N}\Omega_{j_1})(\gamma) & \cdots & (\pa_\gamma^{2N}\Omega_{j_N})(\gamma)
		\end{pmatrix}
	\end{equation}
	is singular for any $\gamma\in\R$ and 
	\begin{equation}\label{absurd_det}
		\det\cA(\gamma) = 0 \quad \forall\,\gamma\in\R \ . 
	\end{equation}
 In particular, at $\gamma=0 $ we have $\det\cA(0)=0$. On the other hand, by \eqref{deriv_2n} and the multi-linearity of the determinant, we compute
	\begin{equation}\label{vande}
		\det\cA(0) = 
		b_{2}...b_{2N} \prod_{a=1}^{N} g_{j_a} f(j_a)    \, 
		 \det \begin{pmatrix}
		1 & \cdots & 1 \\
		f(j_1) & \cdots & f(j_N) \\
		\vdots & \ddots & \vdots \\
		f(j_1)^{N-1} & \cdots & f(j_N)^{N-1}
		\end{pmatrix}\,, \  f(j):=  \frac{G_{|j|}(0)}{|j|^2}\,.	
	\end{equation}
	This is a Vandermonde determinant, which is therefore given by
	\begin{equation}\label{vande_det}
		\det\cA(0) = b_{2}...b_{2N} \prod_{a=1}^{N} g_{j_a} f(j_a)  
		\,  \prod_{1 \leq p < q \leq N} (f(j_q) - f(j_p)) \,.
	\end{equation}
	Note that the function $f(j)=|j|^{-2}G_{|j|}(0) > 0 $ is even in $j \in \Z\setminus\{0\}$. We claim that the function $f(j)$ is monotone for any $j >0$, from which, together with \eqref{deriv_2n} and the assumption $1\leq |j_1|<...<|j_N|$, we obtain $\det\cA(0)\neq 0$, in contradiction with \eqref{absurd_det}.
	
We now prove the monotonicity of the function $f:(0,+\infty)\to(0,+\infty) $,  
	\begin{equation*}
		f(y) := y^{-2} G_y(0) \stackrel{\eqref{def:Gj0}}{=} \begin{cases}
		y^{-1} \tanh(\tth y) & \text{ if }\tth<+\infty \\
		y^{-1} & \text{ if } \tth=+\infty \,.
		\end{cases}
	\end{equation*}
	For $\tth=+\infty$ the function $f(y)=y^{-1}$ is trivially monotone decreasing. 
	We then consider the case $\tth<+\infty$, when $f(y)= y^{-1}\tanh(\tth y)$. We compute
	\begin{equation*}
		\pa_y f(y) = y^{-2}\big( -\tanh(\tth y) + \tth y(1-\tanh^2(\tth y)) \big) = y^{-2} g(\tth y)\,,
	\end{equation*}
	where $g(x):=-\tanh(x) + x(1-\tanh^2(x))$. Then $\pa_y f(y)<0$ for any $y>0$ if and only if $g(x)<0$ for any $x>0$. We note that
	$\lim_{x\to 0^+}g(x) = 0 $, $ \lim_{x\to+\infty}g(x) = -1 $ and
	$g(x)$ is monotone decreasing for $x>0$ because
	$
	\pa_x g(x) = -2x \tanh(x)(1-\tanh^2(x)) < 0 $, $ \forall\,x>0 $.

We have proved items 1, 3, 4 of the Lemma.
	We show now item $2$, proving that the function $[\gamma_1,\gamma_2]\ni\gamma\mapsto(1,\Omega_{\bar\jmath_1}(\gamma),...,\Omega_{\bar\jmath_\nu}(\gamma))$ is non-degenerate according to Definition \ref{def:non-deg}. 
	By contradiction, suppose that there exists $c=(c_0,c_1,...,c_\nu)\in\R^{\nu+1}\setminus\{0\}$ such that
	\begin{equation}\label{NDcontra}
		c_0 + c_1 \Omega_{\bar\jmath_1}(\gamma) + ... + c_\nu \Omega_{\bar\jmath_\nu}(\gamma) = 0 \quad \forall\,\gamma\in[\gamma_1,\gamma_2] \, , 
	\end{equation}
	and thus, by analyticity, for all $ \gamma \in \R $.  
Differentiating  \eqref{NDcontra} with respect to $\gamma$  we find that the $(\nu+1)\times(\nu+1)$-matrix
	\begin{equation}\label{Bgam}
		\cB(\gamma) := \begin{pmatrix}
		1 & \Omega_{\bar\jmath_1}(\gamma) & \cdots & \Omega_{\bar\jmath_\nu}(\gamma) \\
		0  &(\pa_\gamma^{2}\Omega_{\bar\jmath_1})(\gamma) & \cdots & (\pa_\gamma^{2}\Omega_{\bar\jmath_\nu})(\gamma) \\
		\vdots & \vdots & \ddots & \vdots \\
		0 & (\pa_\gamma^{2\nu}\Omega_{\bar\jmath_1})(\gamma) & \cdots & (\pa_\gamma^{2\nu}\Omega_{\bar\jmath_\nu})(\gamma)
		\end{pmatrix}
	\end{equation}
	is singular for all $\gamma\in\R$, and so $\det\cB(\gamma)=0$ for all $\gamma\in\R$. By the structure of the matrix \eqref{Bgam}, we get that $\det\cB(\gamma) = \det\cA(\gamma)$, where the matrix $\cA(\gamma)$ is given in \eqref{Agam}, with $N=\nu$ and $j_p=\bar\jmath_p$ for any $p=1,..,\nu$. We have already proved that $\det\cA(0)\neq 0$ and this gives the claimed contradiction.
\end{proof}

Note that in items 3 and 4 of Lemma \ref{lem:non_deg_vectors} we require
that $ j $ and $ j' $ do not belong to 
$   \{0\} \cup {\mathbb S} \cup (-{\mathbb S}) $. In order to
deal  in Proposition \ref{prop:trans_un} when $ j $ and $  j' $  belong to $-{\mathbb S}$, 
we need also the following lemma. It is actually a direct 
consequence of the proof of
Lemma \ref{lem:non_deg_vectors},  noting that
$\Omega_j (\gamma) - \omega_j (\gamma)$ is linear in $\gamma$ (cfr. \eqref{Om-om})
and its derivatives of order higher than two identically vanish.

\begin{lem}
	\label{rem:omega-nondeg}
	{\bf (Non-degeneracy-II)}
	Let $\ora{\omega}(\gamma):= \left( \omega_{\bar{\jmath}_1}(\gamma),\ldots,\omega_{\bar{\jmath}_\nu}(\gamma) \right)$. The following  vectors are non-degenerate:
	\begin{enumerate}
		\item 
		 $(\ora{\omega}(\gamma),\gamma)\in\R^{\nu+1}$; 
		\item 
		$\left( \ora{\omega}(\gamma),\omega_j(\gamma),\gamma \right)\in\R^{\nu+2}$  for any  $j\in\Z\setminus\left(\{ 0 \}\cup\S\cup (-\S)\right)$.
	\end{enumerate}
\end{lem}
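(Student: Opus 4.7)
\textbf{Proof plan for Lemma \ref{rem:omega-nondeg}.} The strategy is a direct adaptation of the Vandermonde argument used in Lemma \ref{lem:non_deg_vectors}, exploiting the key structural fact that $\omega_j(\gamma)$ is an \emph{even} analytic function of $\gamma$. Indeed, since $G_j(0) = G_{|j|}(0) > 0$, the definition \eqref{Om-om} gives
\[
\omega_j(\gamma) \;=\; \sqrt{g\,G_{|j|}(0)}\,\sqrt{1 + \gamma^2 \mathtt{c}_j^2}\,,
\]
with $\mathtt{c}_j = (2|j|)^{-1}\sqrt{G_{|j|}(0)/g}$, so that all odd-order derivatives of $\omega_j$ at $\gamma = 0$ vanish, while the even-order ones were computed in \eqref{deriv_2n} to be $\partial_\gamma^{2n}\omega_j(0) = b_{2n}\,g_{|j|}\,f(|j|)^{n}$ for $n \geq 1$, with $b_{2n} \neq 0$, $g_{|j|} > 0$ and $f(y) := G_y(0)/y^2$ strictly monotone on $(0,+\infty)$. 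The linear-in-$\gamma$ term $\tfrac{\gamma}{2}G_j(0)/j$ appearing in $\Omega_j$ is precisely the piece being dropped when we replace $\Omega_j$ by $\omega_j$; in compensation we add back $\gamma$ as an independent component of the vector.

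\textbf{Item 1.} I would argue by contradiction: suppose there exists $c = (c_1,\ldots,c_{\nu+1}) \in \R^{\nu+1}\setminus\{0\}$ with
\[
\sum_{a=1}^{\nu} c_a\,\omega_{\bar\jmath_a}(\gamma) + c_{\nu+1}\,\gamma \;\equiv\; 0 \qquad \forall\,\gamma\in[\gamma_1,\gamma_2],
\]
which by real-analyticity extends to all $\gamma\in\R$. Differentiating $2n$ times for $n = 1,\ldots,\nu$ annihilates the $c_{\nu+1}\gamma$ term and yields the linear system
\[
\sum_{a=1}^{\nu} c_a\,\partial_\gamma^{2n}\omega_{\bar\jmath_a}(\gamma) \;=\; 0, \qquad n=1,\ldots,\nu.
\]
Evaluating at $\gamma = 0$ and using \eqref{deriv_2n}, the matrix of coefficients factors, exactly as in \eqref{vande}, into diagonal pieces times a Vandermonde matrix built on the values $f(\bar n_1),\ldots,f(\bar n_\nu)$. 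Since $1 \leq \bar n_1 < \ldots < \bar n_\nu$ and $f$ is strictly monotone on $(0,+\infty)$, these values are pairwise distinct, so the determinant does not vanish and $c_1 = \ldots = c_\nu = 0$. Plugging back, $c_{\nu+1}\gamma \equiv 0$ forces $c_{\nu+1}=0$, contradicting $c \neq 0$.

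\textbf{Item 2.} The same scheme applies verbatim with one additional frequency. Supposing $c_0\,\omega_j(\gamma) + \sum_{a=1}^{\nu} c_a\,\omega_{\bar\jmath_a}(\gamma) + c_{\nu+1}\gamma \equiv 0$ on $\R$, I differentiate $2n$ times for $n = 1,\ldots,\nu+1$ to remove the $\gamma$-term and I evaluate at $\gamma = 0$. The resulting $(\nu+1)\times(\nu+1)$ matrix is again a diagonal-times-Vandermonde, now built on $f(|j|), f(\bar n_1),\ldots, f(\bar n_\nu)$. The hypothesis $j \in \Z\setminus(\{0\}\cup\S\cup(-\S))$ means $|j| \notin \{\bar n_1,\ldots,\bar n_\nu\}$, so all $\nu+1$ arguments of $f$ are pairwise distinct positive integers; by strict monotonicity of $f$ the Vandermonde determinant is nonzero, forcing $c_0 = c_1 = \ldots = c_\nu = 0$ and then $c_{\nu+1}=0$.

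No step presents a genuine obstacle: the only point needing care is to differentiate enough times to kill the linear term $c_{\nu+1}\gamma$ (order $\geq 2$ suffices, since $\omega_j$ has no odd Taylor coefficients), and then to invoke the strict monotonicity of $f(y) = y^{-2}G_y(0)$ on $(0,+\infty)$, already established within the proof of Lemma \ref{lem:non_deg_vectors}, to guarantee non-vanishing of the Vandermonde determinant.
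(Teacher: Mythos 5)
Your proof is correct and follows exactly the same route as the paper, which simply remarks that the result is a direct consequence of the proof of Lemma~\ref{lem:non_deg_vectors} because $\Omega_j(\gamma)-\omega_j(\gamma)=\tfrac{\gamma}{2}G_j(0)/j$ is linear in $\gamma$, so that $\partial_\gamma^{2n}\omega_j(0)=\partial_\gamma^{2n}\Omega_j(0)$ for $n\geq 1$ and the same Vandermonde computation applies after differentiating at least twice to kill the $c_{\nu+1}\gamma$ term. You have merely spelled out the details that the paper leaves implicit; the argument is the same.
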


For later use, we provide the following asymptotic estimate of the linear frequencies.
\begin{lem}{\bf (Asymptotics)}\label{rem:exp_omegaj_gam}
	For any $j\in\Z\setminus\{0\}$ we have
	\begin{equation}\label{expa_om_gam}
		\omega_j(\gamma) = \sqrt g |j|^\frac12 + \frac{c_j(\gamma)}{\sqrt g |j|^\frac12} \,,
	\end{equation}
	where, for any $n\in\N_0$, there exists a constant $C_{n,\tth}>0$ such that
	\begin{equation}\label{bound_cj_gam}
		\sup_{j\in\Z\setminus\{0\}\atop \gamma\in[\gamma_1,\gamma_2]} |\pa_\gamma^n c_j(\gamma) | \leq C_{n,\tth} \,.
	\end{equation}
\end{lem}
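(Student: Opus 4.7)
The plan is to directly compute the difference $\omega_j(\gamma)-\sqrt{g}\,|j|^{1/2}$ using the explicit formula \eqref{Om-om} and the definition \eqref{def:Gj0} of $G_j(0)$. Writing $\mathtt{t}_j := G_j(0)/|j|$, so that $\mathtt{t}_j=\tanh(\mathtt{h}|j|)\in[\tanh(\mathtt{h}),1]$ when $\mathtt{h}<\infty$ and $\mathtt{t}_j=1$ when $\mathtt{h}=+\infty$, formula \eqref{Om-om} gives the key identity
\begin{equation*}
\omega_j(\gamma)^2 = g|j|\,\mathtt{t}_j + \tfrac{\gamma^2}{4}\mathtt{t}_j^2 =: P_j(\gamma),
\end{equation*}
so that $\omega_j(\gamma)=\sqrt{P_j(\gamma)}$, with $P_j$ a polynomial of degree $2$ in $\gamma$. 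From this it is natural to \emph{define} $c_j(\gamma):=\sqrt{g}\,|j|^{1/2}\bigl(\omega_j(\gamma)-\sqrt{g}\,|j|^{1/2}\bigr)$, so that \eqref{expa_om_gam} holds by construction, and the content of the lemma is the uniform bound \eqref{bound_cj_gam}.

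For the $n=0$ bound I would rationalize the difference:
\begin{equation*}
c_j(\gamma) = \sqrt{g}\,|j|^{1/2}\,\frac{\omega_j(\gamma)^2 - g|j|}{\omega_j(\gamma)+\sqrt{g}\,|j|^{1/2}}
= \frac{g|j|\,(\mathtt{t}_j-1)+\tfrac{\gamma^2}{4}\mathtt{t}_j^2}{1 + \sqrt{\mathtt{t}_j + \tfrac{\gamma^2\mathtt{t}_j^2}{4g|j|}}}\, .
\end{equation*}
The denominator is bounded away from zero uniformly in $j$ and $\gamma\in[\gamma_1,\gamma_2]$, since $\mathtt{t}_j\geq \tanh(\mathtt{h})>0$ (or $=1$). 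For the numerator, in the infinite depth case $g|j|(\mathtt{t}_j-1)\equiv 0$, while in the finite depth case $g|j|(1-\tanh(\mathtt{h}|j|))=2g|j|/(e^{2\mathtt{h}|j|}+1)$ is bounded (indeed exponentially small as $|j|\to\infty$); and $\gamma^2\mathtt{t}_j^2/4$ is trivially bounded on $[\gamma_1,\gamma_2]$. This gives $|c_j(\gamma)|\leq C_{0,\mathtt{h}}$.

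For the higher derivatives, since $\mathtt{t}_j$ does not depend on $\gamma$ and $\sqrt{g}\,|j|^{1/2}$ is a constant (in $\gamma$),
\begin{equation*}
\pa_\gamma^n c_j(\gamma) = \sqrt{g}\,|j|^{1/2}\,\pa_\gamma^n \omega_j(\gamma), \qquad n\geq 1.
\end{equation*}
I would then apply Faà di Bruno to $\omega_j=\sqrt{P_j}$: the derivative $\pa_\gamma^n \omega_j$ is a finite linear combination of terms of the form $P_j^{(a_1)}\cdots P_j^{(a_k)}\,P_j^{-k+1/2}$ with $k\geq 1$ and $a_1+\dots+a_k=n$, each $a_i\geq 1$. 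Since $P_j'=\gamma\mathtt{t}_j^2/2$, $P_j''=\mathtt{t}_j^2/2$, and $P_j^{(a)}\equiv 0$ for $a\geq 3$, each $P_j^{(a_i)}$ is bounded uniformly in $j$ and $\gamma$; while $P_j(\gamma)\geq g|j|\tanh(\mathtt{h})$ (or $\geq g|j|$) gives $P_j^{-k+1/2}\leq C_k\,|j|^{-k+1/2}$. Hence $|\pa_\gamma^n\omega_j|\leq C_n\,|j|^{-1/2}$ for each $n\geq 1$, and multiplying by $\sqrt{g}\,|j|^{1/2}$ yields the uniform bound \eqref{bound_cj_gam}.

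There is no serious obstacle here: the argument is a direct calculation built on the algebraic identity $\omega_j^2=P_j(\gamma)$ and the uniform lower bound $P_j\gtrsim |j|$. The only subtlety is keeping track of the $|j|$-dependence so that the factor $\sqrt{g}\,|j|^{1/2}$ in the definition of $c_j$ exactly cancels the $|j|^{-1/2}$ decay of $\pa_\gamma^n\omega_j$ for $n\geq 1$ and the decay of $\omega_j-\sqrt{g}\,|j|^{1/2}$ for $n=0$; the analysis works verbatim for both finite and infinite depth (in the latter $\mathtt{t}_j\equiv 1$ makes the bound even more transparent).
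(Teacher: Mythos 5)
Your proof is correct and uses the same rationalization to produce an explicit $c_j(\gamma)$ that coincides with the paper's formula \eqref{cj_zero}, and the key uniform bounds (lower bound on $\mathtt{t}_j=G_{|j|}(0)/|j|$ away from zero, exponential decay of $1-\mathtt{t}_j$) are identical. For $n\ge 1$ your observation that $\partial_\gamma^n c_j=\sqrt g\,|j|^{1/2}\partial_\gamma^n\omega_j$, combined with Fa\`a di Bruno applied to $\omega_j=\sqrt{P_j}$ with $P_j$ a $\gamma$-polynomial of degree $2$ bounded below by $g|j|\tanh(\mathtt h)$, is a slightly tidier bookkeeping than differentiating the rationalized quotient, but it is the same argument in substance.
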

\begin{proof}
	By \eqref{Om-om}, we deduce \eqref{expa_om_gam} with
	\begin{equation}\label{cj_zero}
		c_j(\gamma):= \frac{ g |j| 
		\big(\frac{G_{|j|}(0)}{|j|}-1\big) + 
		\big(\frac{\gamma}{2}\frac{G_{|j|}(0)}{|j|}\big)^2}{
		1+	\sqrt{\frac{G_{|j|}(0)}{|j|} + \frac{1}{g|j|}\Big(\frac{\gamma}{2}\frac{G_{|j|}(0)}{|j|}\Big)^2}} \,.
	\end{equation}
	The bounds \eqref{bound_cj_gam} follow 
	exploiting that $ \frac{G_{|j|}(0)}{|j|} - 1 = -  \frac{2}{1+e^{2 \tth |j|}}$, 
	by  \eqref{def:Gj0}.
\end{proof}

The next proposition is the main result of the section.  
We remind that 
$\ora{\jmath} = ( \bar{\jmath}_1, \ldots ,\bar{\jmath}_\nu ) $ 
denotes the vector in $  \Z^\nu \setminus \{0 \}$ 
of tangential sites, cfr. \eqref{def:vecj} and \eqref{def.S}.  We also recall
that $ \S_0^c = \Z \setminus (\S \cup \{0\}) $.

\begin{prop} {\bf (Transversality)} \label{prop:trans_un}
	There exist $m_0\in\N$ and $\rho_0>0$ such that, for any $\gamma\in[\gamma_1,\gamma_2]$, the following hold:
	\begin{align}
	&\max_{0\leq n \leq m_0}
	| \partial_\gamma^n \ora{\Omega}(\gamma)\cdot \ell | \geq \rho_0\braket{\ell} \,, \quad \forall\,\ell\in\Z^\nu\setminus\{0\} \,; \label{eq:0_meln}\\
	&\begin{cases}
	\max\limits_{0\leq n \leq m_0}|\partial_\gamma^n\,( \ora{\Omega}(\gamma)\cdot \ell + \Omega_j(\gamma) ) | \geq \rho_0 	\braket{\ell} \\
	\ora{\jmath}\cdot \ell + j = 0 \,, \quad \ell\in\Z^\nu\,, \ j\in\S_0^c \, ; 
	\end{cases} \label{eq:1_meln}\\
	&\begin{cases}
	\max\limits_{0\leq n \leq m_0}| \partial_\gamma^n\,( \ora{\Omega}(\gamma)\cdot \ell + \Omega_j(\gamma)-\Omega_{j'}(\gamma) ) | \geq \rho_0 	\braket{\ell} \\
	\ora{\jmath}\cdot \ell + j -j'= 0 \,, \quad \ell\in\Z^\nu\,, \ j,j'\in\S_0^c \,, \ (\ell,j,j')\neq (0,j,j) 
	\, ; 
	\end{cases}  \label{eq:2_meln-}\\
	&\begin{cases}
	\max\limits_{0\leq n \leq m_0}| \partial_\gamma^n\,( \ora{\Omega}(\gamma)\cdot \ell + \Omega_j(\gamma)+\Omega_{j'}(\gamma) ) | \geq \rho_0 	\braket{\ell} \\
	\ora{\jmath}\cdot \ell + j + j' = 0 \,, \ \ell\in\Z^\nu\,, \ j, j' \in\S_0^c  \, . 
	\end{cases} \label{eq:2_meln+}
	\end{align}
	We call $\rho_0$ the {\em amount of non-degeneracy} and $m_0$ the {\em index of non-degeneracy}.
\end{prop}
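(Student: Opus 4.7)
The plan is to establish \eqref{eq:0_meln}--\eqref{eq:2_meln+} by a contradiction argument combined with a compactness--asymptotics dichotomy. The two key ingredients are the non-degeneracy of the relevant $\gamma$-analytic frequency vectors (Lemmata~\ref{lem:non_deg_vectors} and~\ref{rem:omega-nondeg}), which governs bounded integer indices, and the asymptotic expansion of Lemma~\ref{rem:exp_omegaj_gam}, which governs the regime where the integer indices tend to infinity. The momentum conservation constraint appearing in each estimate is used both to tie $|j|,|j'|$ to $|\ell|$ in the asymptotic regime, and to exclude the unique trivial resonance $(\ell,j,j')=(0,j,j)$ in \eqref{eq:2_meln-}.

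For each of the four estimates I would fix the associated $\gamma$-analytic function, e.g.\ $f_{\ell,j,j'}(\gamma) := \ora{\Omega}(\gamma)\cdot \ell + \Omega_j(\gamma) - \Omega_{j'}(\gamma)$ for \eqref{eq:2_meln-}. Assuming by contradiction that no $(m_0,\rho_0)$ works, I extract sequences $(\gamma_k,\ell_k,j_k,j_k')$ satisfying the momentum constraint with $\max_{n\leq k}|\partial_\gamma^n f_{\ell_k,j_k,j_k'}(\gamma_k)|/\braket{\ell_k}\to 0$. Up to subsequence $\gamma_k\to \gamma_\infty\in[\gamma_1,\gamma_2]$, and either $(\ell_k)$ stabilizes to a constant $\ell_*\in\Z^\nu$ (the bounded regime) or $|\ell_k|\to\infty$ with $\ell_k/|\ell_k|\to c\in\R^\nu$, $|c|=1$ (the asymptotic regime).

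In the bounded regime, the momentum constraint in \eqref{eq:0_meln}--\eqref{eq:2_meln-} also bounds $|j_k|,|j_k'|$, so $(\ell_k,j_k,j_k')$ stabilizes to some $(\ell_*,j_*,j_*')$; the vanishing of every $\gamma$-derivative at $\gamma_\infty$, together with real analyticity of $f_{\ell_*,j_*,j_*'}$ on $[\gamma_1,\gamma_2]$, would force it to be identically zero, which I rule out by case analysis on whether $j_*,j_*'$ lie in $\S$, $-\S$, or neither. When the indices lie outside $\pm \S\cup\{0\}$, items (1)--(4) of Lemma~\ref{lem:non_deg_vectors} apply directly. When one or both of $j_*,j_*'$ lie in $-\S$, I use the identity $\Omega_{-n}(\gamma)=2\omega_n(\gamma)-\Omega_n(\gamma)$, which follows from \eqref{Om-om} and the parity of $\omega_j$ in $j$, to rewrite $f_{\ell_*,j_*,j_*'}(\gamma)$ as a linear combination of the components of $(\ora{\omega}(\gamma),\gamma)$ or $(\ora{\omega}(\gamma),\omega_{j_*'}(\gamma),\gamma)$ and invoke Lemma~\ref{rem:omega-nondeg}. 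The trivial resonance excluded in \eqref{eq:2_meln-} is precisely where this recombination vanishes identically.

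In the asymptotic regime I divide by $\braket{\ell_k}$ and pass to the limit: for \eqref{eq:0_meln}, \eqref{eq:1_meln}, \eqref{eq:2_meln-} the momentum constraint yields $|j_k|,|j_k'|\lesssim |\ell_k|$, so by Lemma~\ref{rem:exp_omegaj_gam} the contribution of $\Omega_{j_k},\Omega_{j_k'}$ and their derivatives to $f_{\ell_k,j_k,j_k'}/\braket{\ell_k}$ is $O(|\ell_k|^{-1/2})$, and every $\partial_\gamma^n (f_{\ell_k,j_k,j_k'}/\braket{\ell_k})$ converges to $\partial_\gamma^n(\ora{\Omega}(\gamma)\cdot c)$, which by Lemma~\ref{lem:non_deg_vectors}(1) has a derivative of some bounded order that is uniformly nonzero in $\gamma\in[\gamma_1,\gamma_2]$. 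For \eqref{eq:2_meln+}, if $|j_k|+|j_k'|$ remains bounded the preceding argument applies; otherwise the positivity and growth of $\omega_{j_k}+\omega_{j_k'}\sim \sqrt{g}(|j_k|^{1/2}+|j_k'|^{1/2})$, together with the uniform bound on the vorticity correction from Lemma~\ref{rem:exp_omegaj_gam}, already give $|f_{\ell_k,j_k,j_k'}|\gtrsim \braket{\ell_k}$ at the level of the zeroth derivative. The main obstacle I anticipate is the case analysis in the bounded regime for \eqref{eq:2_meln-} and \eqref{eq:2_meln+} when one or both of $j_*,j_*'$ land in $-\S$: one must verify carefully that the recombined coefficient vector in $(\ora{\omega}(\gamma),\gamma)$-space is non-zero whenever $(\ell_*,j_*,j_*')$ differs from the excluded trivial resonance, and here the distinctness $|\bar{\jmath}_1|<\cdots<|\bar{\jmath}_\nu|$ from \eqref{Splus} plays a subtle but essential role.
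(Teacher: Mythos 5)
Your overall strategy — a contradiction argument, compactness in $\gamma$, a dichotomy between the regimes where $(\ell_k)$ stabilizes and where $|\ell_k|\to\infty$, and the use of Lemmata~\ref{lem:non_deg_vectors}, \ref{rem:omega-nondeg} and~\ref{rem:exp_omegaj_gam} — is exactly the one followed by the paper. There is, however, a concrete gap in both regimes for \eqref{eq:2_meln-}, and it stems from the same misreading of the momentum constraint.

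For \eqref{eq:2_meln-}, the momentum condition is $\ora{\jmath}\cdot\ell + j - j' = 0$, which bounds only the \emph{difference} $j-j'$ in terms of $|\ell|$, not $j$ and $j'$ individually. Consequently, your assertion ``in the bounded regime, the momentum constraint ... also bounds $|j_k|,|j_k'|$, so $(\ell_k,j_k,j_k')$ stabilizes'' is false: with $\ell_k=\bar\ell$ fixed, one can have $j_k,j_k'\to\infty$ of the same sign with $j_k-j_k'=-\ora{\jmath}\cdot\bar\ell$ constant. This is precisely Case~(a) of Step~1 in the paper's proof, which you omit. That case is handled there by exploiting the cancellation
$\big| |j_m|^{1/2}-|j_m'|^{1/2}\big| \leq |j_m-j_m'|/(|j_m|^{1/2}+|j_m'|^{1/2}) \to 0$
together with the observation \eqref{ossGjsgn} that $\frac{G_j(0)}{j}-\frac{G_{j'}(0)}{j'}\to 0$ for $j\cdot j'>0$, to conclude that the limiting relation is $\ora{\Omega}(\gamma)\cdot\bar\ell\equiv 0$, contradicted by Lemma~\ref{lem:non_deg_vectors}--1. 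Likewise, in your asymptotic regime you claim ``the momentum constraint yields $|j_k|,|j_k'|\lesssim|\ell_k|$'' for \eqref{eq:2_meln-}; this is false when $j_k\cdot j_k'>0$ and both grow much faster than $|\ell_k|$. Your claimed rate $O(|\ell_k|^{-1/2})$ for the contribution of $\Omega_{j_k}-\Omega_{j_k'}$ therefore does not follow, and its zeroth-order limit need not vanish. The paper sidesteps this by using only $n\geq 1$ derivatives in Step~2, so the $\gamma$-independent $\sqrt{g}\,|j|^{1/2}$ terms drop out, and concludes that $\ora{\Omega}(\gamma)\cdot\bar c$ is constant (not zero), which requires Lemma~\ref{lem:non_deg_vectors}--2 (non-degeneracy of $(\ora\Omega(\gamma),1)$) rather than item~1 of that lemma.

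A similar but milder issue affects your treatment of \eqref{eq:2_meln+}: momentum $\ora{\jmath}\cdot\ell+j+j'=0$ bounds $|j+j'|$, not $|j|+|j'|$, which can blow up when $j,j'$ have opposite signs. Your ``positivity and growth'' remark does not by itself give $|f_{\ell_k,j_k,j_k'}|\gtrsim\braket{\ell_k}$, since $\ora\Omega\cdot\ell$ may partially cancel $\Omega_j+\Omega_{j'}$ when $|j|^{1/2}+|j'|^{1/2}$ is comparable to $|\ell|$; one needs a further comparison of $|j|^{1/2}+|j'|^{1/2}$ with $\langle\ell\rangle$, and when that quantity is $\lesssim\langle\ell\rangle$, one again passes to $n\geq 1$ derivatives and invokes Lemma~\ref{lem:non_deg_vectors}--2. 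Once these cases are supplied, your argument lines up with the paper's.
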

\begin{proof}
	We prove separately  \eqref{eq:0_meln}-\eqref{eq:2_meln+}. 
	We set for brevity
	$ \Gamma := [ \gamma_1, \gamma_2]$. 
	\\
	{\bf Proof of \eqref{eq:0_meln}}. By contradiction, assume that
	for any $m\in\N$ there exist $\gamma_m\in \Gamma $ and $\ell_m\in\Z^\nu\setminus\{0\}$ such that
	\begin{equation}\label{eq:0_abs_m}
	\Big| \partial_\gamma^n \ora{\Omega}(\gamma_m) \cdot \frac{\ell_m}{\braket{\ell_m}}  \Big| < \frac{1}{\braket{m}} \,, \quad \forall\,0\leq n\leq m \, .
	\end{equation}
	The sequences $(\gamma_m)_{m\in\N}\subset\Gamma$ and $(\ell_m/\braket{\ell_m})_{m\in\N}\subset \R^\nu\setminus\{0\}$ are both bounded. By compactness, up to subsequences
	$\gamma_m\to \bar\gamma\in\Gamma$ and $\ell_m/\braket{\ell_m}\rightarrow\bar c\neq 0$. 
	Therefore, for any $n\in\N_0$, passing to the limit
	for $m\rightarrow + \infty$ in  \eqref{eq:0_abs_m}, we get $\partial_\gamma^n \ora{\Omega}(\bar\gamma)\cdot \bar c = 0$.
	By the analyticity of $ \ora{\Omega}(\gamma)$, we deduce 
	that the function $ \gamma \mapsto \ora{\Omega}(\gamma)\cdot \bar c$ is identically zero on $\Gamma$, which contradicts Lemma \ref{lem:non_deg_vectors}-1, since 
	$ \bar c \neq 0 $.
	\\[1mm]
	{ \bf Proof of \eqref{eq:1_meln}}. 
	By contradiction,  assume that, for any $m\in\N$, there exist $\gamma_m\in\Gamma$, $\ell_m\in\Z^\nu$ and $j_m\in\S_0^c$, 
	such that, for any $ n\in\N_0$ with $n\leq m$,
	\begin{equation}\label{eq:1_abs_m}
	\begin{cases}
	\big| \partial_\gamma^n \big( \ora{\Omega}(\gamma)\cdot\frac{\ell_m}{\braket{\ell_m}}+\frac{1}{\braket{\ell_m}}\Omega_{j_m}(\gamma) \big)_{|\gamma= \gamma_m} \big|  < \frac{1}{\braket{m}} \\
	\ora{\jmath}\cdot \ell_m + j_m = 0 \, . 
	\end{cases} 
	\end{equation}
	Up to subsequences 
	$\gamma_m\rightarrow\bar\gamma\in\Gamma$ and $\ell_m/\braket{\ell_m}\rightarrow \bar c\in\R^\nu$. \\
	{\sc Step 1.} We consider first the case when the sequence 
	{\it $(\ell_m)_{m\in\N}\subset \Z^\nu$ is bounded}. Up to subsequences, we have definitively that 
	$\ell_m=\bar \ell\in\Z^\nu $. Moreover, since $j_m$ and $\ell_m$ satisfy 
	the momentum restriction $ \ora{\jmath}\cdot \ell_m + j_m = 0 $
	also the sequence $(j_m)_{m\in\N}$ is bounded and, up to subsequences,  definitively  $ j_m = \bar \jmath \in\S_0^c $. Therefore, for any $ n \in \N_0 $,  
	taking  $m\rightarrow \infty$ in \eqref{eq:1_abs_m} we obtain
	\begin{equation*}
	\partial_\gamma^n\big( \ora{\Omega}(\gamma)\cdot \bar \ell + \Omega_{\bar \jmath}(\gamma) \big)_{|\gamma = \bar \gamma} = 0 \ , \ \forall\, n\in\N_0  \, , \quad 
	\ora{\jmath}\cdot \bar \ell + \bar \jmath = 0  \, . 
	\end{equation*}
	By analyticity this implies 
	\begin{equation}\label{eq:1_limit_b}
	\ora{\Omega}(\gamma)\cdot \bar \ell + \Omega_{\bar \jmath}(\gamma) = 0 \, , 
	\ \forall \, \gamma \in \Gamma \, ,  \quad 
	\ora{\jmath}\cdot \bar \ell + \bar \jmath = 0  \, . 
	\end{equation}
	We distinguish two cases:
	\begin{itemize}
		\item  Let $\bar \jmath \notin -\S $. By \eqref{eq:1_limit_b} the vector  
		$\big( \ora{\Omega}(\gamma),\Omega_{\bar \jmath}(\gamma) \big) $ 
		is degenerate according to Definition \ref{def:non-deg} 
		with $c:=(\bar \ell, 1)\neq 0$. This contradicts Lemma \ref{lem:non_deg_vectors}-3.
		\item Let $\bar \jmath \in -\S$. With no loss of generality suppose $\bar \jmath = - \bar{\jmath}_1$. Then, denoting $\bar \ell = (\bar{\ell}_1, \ldots ,\bar{\ell}_\nu)$, and 
		\eqref{eq:1_limit_b} reads, for any $ \gamma \in \Gamma $,  
		$$ 	(\bar{\ell}_1+1)\omega_{\bar{\jmath}_1}(\gamma) + \sum_{a=2}^{\nu}\bar{\ell}_a\omega_{\bar{\jmath}_a}(\gamma) + \frac{\gamma}{2}\Big( (\bar{\ell}_1-1)\frac{G_{\bar{\jmath}_1}(0)}{\bar{\jmath}_1} + \sum_{a=2}^\nu \bar{\ell}_a \frac{G_{\bar{\jmath}_a}(0)}{\bar{\jmath}_a} \Big) = 0 \,. $$
		By Lemma \ref{rem:omega-nondeg}-1 the   vector   $(\ora{\omega}(\gamma),\gamma) $ 
		is non-degenerate. Therefore $ \bar \ell_1 = - 1 $ and $ \bar \ell_a = 0 $
		for any $ a = 2, \ldots, \nu  $, and $  - 2 \frac{G_{\bar{\jmath}_1}(0)}{\bar{\jmath}_1}  = 0  $,
		which is a contradiction.
	\end{itemize}
	{\sc Step 2.} We consider now the case when the sequence {\it $(\ell_m)_{m\in\N}$ is unbounded}. Up to subsequences
	$ \ell_m \rightarrow \infty$ as $m\rightarrow\infty$ and  $  \lim_{m\rightarrow\infty}\ell_m/\braket{\ell_m} =: \bar c  \neq 0 $. By  \eqref{Om-om},  
	Lemma \ref{rem:exp_omegaj_gam},  \eqref{def:Gj0}, 
and since the momentum condition implies 
$ | j_m |^\frac12 = | \vec \jmath \cdot \ell_m |^{\frac12} \leq C |\ell_m |^{\frac12}$, we deduce,
	for any $ n \in \N_0 $,
	\begin{equation*}
	\begin{split}
	\partial_\gamma^n \frac{1}{\braket{\ell_m}}\Omega_{j_m}(\gamma_m) & 
	=  \partial_\gamma^n \Big( \frac{1}{\braket{\ell_m}}\sqrt{g}\abs{j_m}^\frac12 + \frac{c_{j_m}(\gamma)}{\braket{\ell_m}\sqrt{g}\abs{j_m}^\frac12} + \frac{\gamma}{2\braket{\ell_m}}\frac{G_{j_m}(0)}{j_m} \Big)_{|\gamma= \gamma_m} \to 0  
	\end{split}
	\end{equation*}
	 for $ m \rightarrow \infty $.
	Therefore
	\eqref{eq:1_abs_m}  becomes, in the limit $m\rightarrow\infty $,   
	$	\partial_\gamma^n  \ora{\Omega}(\bar \gamma) \cdot \bar c  \, 
	= 0  $ for any $ n\in\N_0 $. 
	By analyticity,   this implies $\ora{\Omega}(\gamma)\cdot \bar{c}  = 0 $ for any $ \gamma\in\Gamma$, contradicting 
	 the non-degeneracy of $ \ora{\Omega}(\gamma)$ in Lemma \ref{lem:non_deg_vectors}-1, since $ \bar c \neq 0$.
	\\[1mm]
	{\bf Proof of \eqref{eq:2_meln-}}. 
We  assume $ j_m \neq j_m' $ because the case $ j_m = j_m' $ is included in \eqref{eq:0_meln}. By contradiction, we assume that, for any $m\in\N$,  there exist $\gamma_m\in\Gamma$, $\ell_m\in\Z^\nu$ and $j_m,j_m'\in\S_0^c$, 
$(\ell_m, j_m, j_m') \neq (0, j_m, j_m) $, 	
 such that, for any $0\leq n\leq m$,
	\begin{equation}\label{eq:2_abs_m}
	\begin{cases}
	\big| \partial_\gamma^n\big( \ora{\Omega}(\gamma) \cdot \frac{\ell_m}{\braket{\ell_m}}+ \frac{1}{\braket{\ell_m}}\big( \Omega_{j_m}(\gamma)-\Omega_{j_m'}(\gamma) \big) \big)_{|
		\gamma= \gamma_m} \big| < \frac{1}{\braket{m}} \\
	\ora{\jmath} \cdot \ell_m + j_m -j_m' =0 \, . 
	\end{cases} 
	\end{equation}
	We have that  $  \ell_m \neq 0 $, otherwise, by the momentum condition 
	$ j_m = j_m' $. 
	Up to subsequences $\gamma_m\rightarrow\bar\gamma\in\Gamma$ and $\ell_m/\braket{\ell_m}\rightarrow \bar c\in\R^\nu \setminus \{0\}  $. \\
	{\sc Step 1.} We start with the case when {\it  $(\ell_m)_{m\in\N}\subset\Z^\nu$ is bounded}. Up to subsequences, we have definitively that  $\ell_m = \bar \ell\in\Z^\nu \setminus \{0\}$.
	The sequences $(j_m)_{m\in\N}$ and $(j'_m)_{m\in\N}$ may be bounded or unbounded. Up to subsequences, we consider the different cases:
	\\[1mm]
	\indent {\it Case (a). $|j_m|,|j'_m|\to+\infty$ for $m\to\infty$.}
	We have that $ j_m\cdot j'_m >0$, because, otherwise, 
	$ | j_m - j_m' | = |j_m| + |j_m'| \to + \infty $ contradicting that 
	 $| j_m - j_m' |  = |\ora{\jmath}\cdot\ell_m| \leq C $. 
	 Recalling  \eqref{def:Gj0} we have, for any $ j \cdot j' > 0 $, that  
	 \begin{equation}\label{ossGjsgn}
		\Big| \frac{G_j(0)}{j} - \frac{G_{j'}(0)}{j'} \Big| \leq |\sgn(j)-\sgn(j')| + \frac{2e^{-2\tth |j|}}{1+e^{-2\tth|j|}} + \frac{2e^{-2\tth |j'|}}{1+e^{-2\tth|j'|}} \leq C_\tth\Big( \frac{1}{|j|^\frac12} + \frac{1}{|j'|^\frac12} \Big) \,.
	\end{equation}
	Moreover, by the momentum condition 
	$ \ora{\jmath} \cdot \ell_m + j_m -j_m' =0 $, we deduce
\begin{equation}\label{diffsqrt}
	|\sqrt{|j_m|}  - \sqrt{|j_m'|}| = \frac{||j_m|  - |j_m'||}{\sqrt{|j_m|}  + \sqrt{|j_m'|}} 
	\leq \frac{|j_m  - j_m'|}{\sqrt{|j_m|}  + \sqrt{|j_m'|}} 
	\leq \frac{ C |\ell_m|}{\sqrt{|j_m|}  + \sqrt{|j_m'|}}  \, .
\end{equation}
By \eqref{Om-om}, Lemma \ref{rem:exp_omegaj_gam}, 
	$ j_m\cdot j'_m >0$, \eqref{ossGjsgn}, \eqref{diffsqrt},
	 we conclude that 
	\begin{align*}
	\pa_\gamma^n (\Omega_{j_m}(\gamma)-\Omega_{j_m'}(\gamma)) 
	& = 
	\sqrt{g}  \partial_\gamma^n \big( 
	\sqrt{|j_m|}  - \sqrt{|j_m'|} \big) \\
	&  + 
	 \partial_\gamma^n \Big(    \frac{c_{j_m}(\gamma)}{\sqrt g |j_m|^\frac12} -
	 \frac{c_{j_m'}(\gamma)}{\sqrt g |j_m'|^\frac12} +
	 \frac{\gamma}{2} \Big(\frac{G_{j_m}(0)}{j_m} -  \frac{G_{j_m'}(0)}{j_m'}
	 \Big) \Big) \to 0 
	\end{align*}
	as $	 m \to + \infty $. 
	Passing to the limit in \eqref{eq:2_abs_m}, we obtain
	$ \pa_\gamma^n \{ \ora{\Omega}(\gamma)\cdot\bar\ell 
 \}_{|\gamma=\bar\gamma} = 0 $ for any $ n\in\N_0 $. 
	Hence the analytic function $\gamma\mapsto\ora{\Omega}(\gamma)\cdot\bar \ell $ is identically zero, contradicting 
	Lemma \ref{lem:non_deg_vectors}-1, since $ \bar \ell \neq 0 $.
	\\[1mm]
	\indent {\it Case (b). $(j_m)_{m\in\N}$ is bounded and $|j_m'| \to \infty$} (or viceversa): this case is excluded by the momentum condition 
	$ \ora{\jmath} \cdot \ell_m + j_m -j_m' =0 $  in \eqref{eq:2_abs_m}  and  
	since $ (\ell_m) $ is bounded.
	\\[1mm]
	\indent {\it Case (c). Both $(j_m)_{m\in\N}$, $(j_m')_{m\in\N}$ are bounded:}
	we have definitively that  $j_m= \bar \jmath$ and 
	$ j_m'=\bar{\jmath}' $, 
	with $\bar \jmath, \bar{\jmath}'\in\S_0^c$  and, since $ j_m \neq j_m' $, 
	\begin{equation}
	\label{elljjneq0}
	\bar \jmath \neq  \bar{\jmath}' \, . 
	\end{equation}
	Therefore \eqref{eq:2_abs_m} becomes,  in the limit $m\rightarrow\infty $,  
	\begin{equation*}
	\partial_\gamma^n\big( \ora{\Omega}(\gamma) \cdot \bar \ell + \Omega_{\bar \jmath}(\gamma)-\Omega_{\bar{\jmath}'}(\gamma)  \big)_{| \gamma= \bar \gamma }  = 0 \, , \ \forall\,n\in\N_0\, , 
	\quad 
	\ora{\jmath}\cdot \bar \ell + \bar \jmath - \bar{\jmath}' = 0 \, . 
	\end{equation*}
	By analyticity, we obtain that
	\begin{equation}\label{eq:2-_limit_b}
	\ora{\Omega}(\gamma) \cdot \bar \ell + \Omega_{\bar \jmath}(\gamma) - \Omega_{\bar{\jmath}'}(\gamma) = 0   \quad  \forall\,\gamma\in\Gamma \, , \quad 
	\ora{\jmath}\cdot \bar \ell + \bar \jmath-\bar{\jmath}' = 0 \, . 
	\end{equation}
	We distinguish several cases:
	\begin{itemize}
		\item Let $\bar \jmath,\bar{\jmath}' \notin -\S$ and 
		$|\bar{\jmath}|\neq |\bar{\jmath}'|$. By \eqref{eq:2-_limit_b} the vector $(\ora{\Omega}(\gamma),\Omega_{\bar \jmath}(\gamma),\Omega_{\bar{\jmath}'}(\gamma))$ is degenerate with $c:= (\bar \ell,1,-1)\neq 0$, contradicting  Lemma \ref{lem:non_deg_vectors}-4.
		\item  Let $\bar \jmath,\bar{\jmath}' \notin-\S$ and $\bar{\jmath}'=- \bar{\jmath}$. 
		In view of  \eqref{Om-om},  the first equation in \eqref{eq:2-_limit_b} becomes
		$$ \ora{\omega}(\gamma)\cdot \bar \ell + \frac{\gamma}{2} \Big( \sum_{a=1}^{\nu}\bar{\ell}_a\frac{G_{\bar{\jmath}_a }(0)}{\bar{\jmath}_a} +2 \frac{G_{\bar \jmath}(0)}{\bar \jmath} \Big) = 0   \quad \forall \gamma \in \Gamma \, . $$
		By Lemma \ref{rem:omega-nondeg}-1 the vector $(\ora{\omega}(\gamma),\gamma)$ is non-degenerate, thus $ \bar \ell = 0 $ and $ 2 \frac{G_{\bar \jmath}(0)}{\bar \jmath}  = 0 $,  which is a contradiction. 
		\item Let $\bar{\jmath}'\notin -\S$ and $\bar{\jmath}\in-\S$. With no loss of generality 
		suppose  $\bar \jmath= -\bar{\jmath}_1 $. In view of  \eqref{Om-om}, 
		the first equation in \eqref{eq:2-_limit_b} implies that, for any $\gamma\in \Gamma $,
		$$ (\bar{\ell}_1+1)\omega_{\bar{\jmath}_1}(\gamma)  + \sum_{a=2}^{\nu}\bar{\ell}_a\omega_{\bar{\jmath}_a}(\gamma)-\omega_{\bar{\jmath}'}(\gamma)
		+  \frac{\gamma}{2}\Big( (\bar{\ell}_1-1)\frac{G_{\bar{\jmath}_1}(0)}{\bar{\jmath}_1} + \sum_{a=2}^\nu \bar{\ell}_a \frac{G_{\bar{\jmath}_a}(0)}{\bar{\jmath}_a} -\frac{G_{\bar{\jmath}'}(0)}{\bar{\jmath}'} \Big) = 0\,. $$
		By Lemma \ref{rem:omega-nondeg}-2
		the vector $\big( \ora{\omega}(\gamma),\omega_{\bar{\jmath}'}(\gamma),\gamma \big)$ is non-degenerate, which is a contradiction.
		 
		\item Last, let $\bar \jmath,\bar{\jmath}'\in -\S$ and $\bar{\jmath}\neq \bar{\jmath}'$, by \eqref{elljjneq0}.
		With no loss of generality  suppose  $\bar{\jmath} =- \bar{\jmath}_1$ and $\bar{\jmath}' = -\bar{\jmath}_2$.
		Then the first equation in \eqref{eq:2-_limit_b} reads, for any $\gamma\in\Gamma$,
		\begin{equation*}
			\begin{aligned}
			&(\bar{\ell}_1+1)\omega_{\bar{\jmath}_1}(\gamma) + \left( \bar{\ell}_2 - 1 \right)\omega_{\bar{\jmath}_2} +\sum_{a=3}^{\nu}\bar{\ell}_a\omega_{\bar{\jmath}_a}(\gamma)\\
			 &\ \  \ \ +  \frac{\gamma}{2}\Big( (\bar{\ell}_1-1)\frac{G_{\bar{\jmath}_1}(0)}{\bar{\jmath}_1}+ (\bar{\ell}_2+1 )\frac{G_{\bar{\jmath}_2}(0)}{\bar{\jmath}_2} + \sum_{a=3}^\nu \bar{\ell}_a \frac{G_{\bar{\jmath}_a}(0)}{\bar{\jmath}_a}  \Big) = 0\,   .  
			\end{aligned}
		\end{equation*}
Since the vector $(\ora{\omega}(\gamma),\gamma)$ is non-degenerate by Lemma \ref{rem:omega-nondeg}-1, it implies $\bar{\ell}_1=-1$, $\bar{\ell}_2 = 1$, $\bar{\ell}_3=\ldots=\bar{\ell}_\nu=0$. Inserting these values in  the momentum condition in \eqref{eq:2-_limit_b} we obtain $-2\bar{\jmath}_1+ 2 \bar{\jmath}_2 = 0$. This contradicts $\bar{\jmath}\neq \bar{\jmath}'$.
	\end{itemize}
	{\sc Step 2.} We finally consider the case when {\it $(\ell_m)_{m\in\N}$ is unbounded}. Up to subsequences 
	$ \ell_m \rightarrow \infty$ as $m\rightarrow\infty$ and 
	$	 \lim_{m\to\infty}\ell_m/\braket{\ell_m} =: \bar c  \neq 0 $.  
	By \eqref{Om-om}, Lemma \ref{rem:exp_omegaj_gam}, 
	\eqref{ossGjsgn}, 
	we have, for any $ n \geq 1 $, 
	\begin{align*}
	 \partial_\gamma^n\frac{1}{\braket{\ell_m}} 
	\Big( \Omega_{j_m}(\gamma)-\Omega_{j_m'}(\gamma) \Big)_{|\gamma= \gamma_m}  
	& =
	\partial_\gamma^n \Big( 
	 \frac{1}{\braket{\ell_m}\sqrt{g}}\Big( \frac{c_{j_m}(\gamma)}{|j_m|^\frac12}- \frac{c_{j_m'}(\gamma)}{|j_m'|^\frac12} \Big) \\ 
	 & \qquad + \frac{\gamma}{2\braket{\ell_m}}\Big( \frac{G_{j_m}(0)}{j_m}-\frac{G_{j_m'}(0)}{j_m'} \Big)_{|\gamma= \gamma_m}   \Big) 
	\to 0   \nonumber 
	\end{align*}
	  as $ m  \to \infty$. 
	Therefore, for any $ n \geq 1 $, taking $m\rightarrow\infty $
	in \eqref{eq:2_abs_m} we get 
	$
	\partial_\gamma^n\big( \ora{\Omega}(\gamma)\cdot \bar c  \big)
	_{|\gamma = \bar \gamma} = 0 $. 
	By analyticity this implies 
	$\ora{\Omega}(\gamma)\cdot\bar c  =  \bar d $, for all  $ \gamma \in  \Gamma $,
	contradicting Lemma \ref{lem:non_deg_vectors}-2, since $ \bar c \neq 0 $. 
	\\[1mm]
	{\bf Proof of \eqref{eq:2_meln+}}. It follows as  \eqref{eq:2_meln-} and we omit it.	
\end{proof}

\begin{rem}\label{acca.param}
	For the irrotational gravity water waves equations \eqref{ww} with $\gamma=0$, quasi-periodic traveling waves solutions exist for most values of the \emph{depth} $\tth \in [\tth_1,\tth_2]$. 
	In detail, the non-degeneracy of the linear frequencies 
	with respect to the parameter $\tth$ as in Lemma \ref{lem:non_deg_vectors}
	 is proved precisely in Lemma 3.2 in \cite{BBHM}, whereas the transversality properties hold by restricting the bounds in Lemma 3.4 in \cite{BBHM} to the Fourier sites satisfying the momentum conditions. We are not able to use $\tth$ as a parameter for any value of
	 $ \gamma \neq 0 $ (in this case we do not know if  the non-degeneracy properties of  Lemma \ref{lem:non_deg_vectors} hold with respect to $ \tth $). 
\end{rem}

\section{Proof of Theorem \ref{thm:main0}}\label{sec:NM}

Under the 
rescaling $ (\eta,\zeta)\mapsto (\varepsilon \eta, \varepsilon\zeta) $, the Hamiltonian  system \eqref{eq:Ham_eq_zeta} transforms into the Hamiltonian system generated by 
\begin{equation}\label{cHepsilon}
\cH_\varepsilon(\eta,\zeta) := \varepsilon^{-2} \cH (\varepsilon\eta,\varepsilon\zeta) = \cH_L(\eta,\zeta) + \varepsilon P_\varepsilon(\eta,\zeta) \,,
\end{equation}
where $ \cH  $ is the water waves Hamiltonian \eqref{Ham-Wal} expressed in the
Wahl\'en coordinates \eqref{eq:gauge_wahlen}, 
$\cH_L $ is defined in \eqref{lin_real} and, denoting $ \cH_{\geq 3} := \cH - \cH_L   $ 
the cubic part of the Hamiltonian,  
$$
P_\varepsilon(\eta,\zeta):= \varepsilon^{-3} \cH_{\geq 3}  (\varepsilon\eta,\varepsilon\zeta  ) \, . 
$$
We now study the Hamiltonian system  generated by the Hamiltonian $\cH_\varepsilon(\eta,\zeta) $, 
in  the action-angle and normal 
coordinates $ (\theta, I, w) $ defined in Section \ref{sec:decomp}. 
Thus we consider the Hamiltonian $H_\varepsilon (\theta, I, w )$ defined by 
\begin{equation}\label{Hepsilon}
H_\varepsilon := \cH_\varepsilon \circ A = \varepsilon^{-2} \cH \circ \varepsilon A
\end{equation}
where  
$A$ is the map defined in \eqref{aacoordinates}. 
The associated symplectic form is given in \eqref{sympl_form}. 

By   \eqref{zero_term} (see also \eqref{QFH}, \eqref{ajbjAA}), 
in the variables $ (\theta, I, w) $
the quadratic Hamiltonian $\cH_L $ 
defined in \eqref{lin_real}  simply reads, up to a constant,
$$
\cN:= \cH_L\circ A = \ora{\Omega}(\gamma)\cdot I + \tfrac12 \left( \b\Omega_W w,w \right)_{L^2} 
$$
where $\ora{\Omega}(\gamma) \in \R^\nu $ is defined in \eqref{Omega-gamma} and $\b\Omega_W$ in \eqref{eq:lin00_wahlen}.
Thus the Hamiltonian $H_\varepsilon$ in \eqref{Hepsilon} is
\begin{equation}\label{cNP}
H_\varepsilon =\cN + \varepsilon P  \qquad {\rm with}
\qquad P:= P_\varepsilon \circ A \, . 
\end{equation}
We look for an embedded invariant torus
$$
i :\T^\nu \rightarrow \R^\nu \times \R^\nu \times \acca_{\S^+,\Sigma}^\angle \,, \quad \vf \mapsto i(\vf):= ( \theta(\vf), I(\vf), w(\vf)) \, , 
$$
of the Hamiltonian vector field 
$ X_{H_\varepsilon} := ( \pa_I H_\varepsilon , -\pa_\theta H_\varepsilon, \Pi_{\S^+,\Sigma}^\angle J \nabla_{w} H_\varepsilon ) $ 
filled by quasi-periodic solutions with  frequency 
vector $\omega\in\R^\nu$.

\subsection{Nash-Moser theorem of hypothetical conjugation}

Instead of looking directly for quasi-periodic solutions of  $ X_{H_\varepsilon} $
we look for quasi-periodic solutions 
of the family of modified Hamiltonians, where $\alpha\in \R^\nu$ are additional parameters,
\begin{equation}\label{Halpha}
H_\alpha := \cN_\alpha + \varepsilon P \,, \quad \cN_\alpha:= \alpha \cdot I + \tfrac12 \left( w, \b\Omega_W w \right)_{L^2} \, .
\end{equation}
We consider   the nonlinear operator
\begin{align}\label{F_op}
\cF(i,\alpha) & := \cF(\omega,\gamma,\varepsilon;i,\alpha) := \omega\cdot\pa_\vf i(\vf) - X_{H_\alpha}(i(\vf)) \notag \\
& = \begin{pmatrix}
\omega\cdot \pa_\vf \theta(\vf) & - \alpha -\varepsilon \pa_I P(i(\vf)) \\
\omega\cdot \pa_\vf I(\vf) &+ \varepsilon \pa_\theta P(i(\vf)) \\
\omega\cdot \pa_\vf w(\vf) &  - \, \Pi_{\S^+,\Sigma}^\angle J ( \b\Omega_W w(\vf) +\varepsilon \nabla_{w} P(i(\vf)) )  
\end{pmatrix} \, . 
\end{align}
If $\cF(i,\alpha)=0$, then 
the embedding $\vf\mapsto i(\vf)$ is an invariant torus for the Hamiltonian vector field $X_{H_\alpha}$, filled with quasi-periodic solutions with frequency $\omega$.

Each Hamiltonian $H_\alpha$ in \eqref{Halpha} is invariant under the
involution $\vec \cS$   
and the translations $\vec \tau_\vs$, $\vs \in \R$, defined respectively in \eqref{rev_aa} and in  \eqref{vec.tau}:
\begin{equation}
\label{Halpha.symm}
H_\alpha\circ \vec \cS = H_\alpha \, , \qquad
H_\alpha\circ \vec\tau_\vs = H_\alpha \, ,  \quad \forall\, \vs \in \R \, . 
\end{equation}
We look for a reversible   traveling torus embedding 
$  i (\vphi) = $ $ ( \theta(\vf), I(\vf), w(\vf)) $, namely 
satisfying  
\begin{equation}\label{RTTT}
\vec \cS i(\vf)=  i(-\vf) \, , \qquad 
\vec \tau_\vs i(\vf) = i(\vf - \ora{\jmath} \vs) \, , \quad \forall \,\vs \in \R \, . 
\end{equation}
Note that, by \eqref{F_op} and \eqref{Halpha.symm}, 
	the operator $ \cF ( \cdot , \alpha) $ maps a reversible, respectively traveling, wave 
	into an anti-reversible,  respectively traveling, wave variation, according to Definition \ref{trav-vari}.

The norm of the periodic components of the embedded torus
\begin{equation}\label{ICal}
\fI (\vf):= i(\vf)-(\vf,0,0) := \left( \Theta(\vf), I(\vf), w(\vf) \right)\,, \quad \Theta(\vf):= \theta(\vf)-\vf\,,
\end{equation}
is $
\norm{ \fI }_s^{k_0,\upsilon} := \norm{\Theta}_{H_\vf^s}^{k_0,\upsilon} + \norm{I}_{H_\vf^s}^{k_0,\upsilon} + \norm{w}_s^{k_0,\upsilon} $, 
where
\begin{equation}\label{k0_def}
k_0:= m_0 + 2
\end{equation}
and $m_0 \in \N $ is the index of non-degeneracy provided by Proposition \ref{prop:trans_un}, which only depends on the linear unperturbed frequencies. We will often omit to write the dependence of the various constants with respect to $k_0$, which
is considered as an absolute constant. We look for quasi-periodic solutions of frequency $\omega$ belonging to a $\delta$-neighbourhood (independent of $\varepsilon$)
$$
\t\Omega := \big\{ \omega \in \R^\nu \ : \
\dist \big( \omega, \ora{\Omega}[\gamma_1,\gamma_2] \big) < \delta \big\} \,, \quad \delta >0 \,,
$$
of the curve
$\ora{\Omega}[\gamma_1,\gamma_2]$ defined by \eqref{Omega-gamma}.

The next theorem, whose proof is based on an implicit function iterative 
scheme of Nash-Moser type, 
provides, for $ \varepsilon $ small enough, 
 a solution $ (i_\infty, \alpha_\infty)(\omega,\gamma; 	\varepsilon) $ of the nonlinear operator
 $ {\cal F}(\varepsilon, \omega, \gamma; i,\alpha) = 0 $ for all the values of
 $ (\omega, \gamma) $ in the Cantor like set   $ {\cal C}_\infty^{\upsilon} $   below.

\begin{thm}  {\bf (Theorem of hypothetical conjugation)}
\label{NMT}
	There exist positive constants ${\rm a_0},\varepsilon_0, C$ depending on $\S$, $k_0 $ and $\tau\geq 1$ such that, for all $\upsilon = \varepsilon^{\rm a}$, ${\rm a}\in (0,{\rm a}_0)$ and for all $\varepsilon\in (0,\varepsilon_0)$, there exist 
	\begin{enumerate}
		\item 
		a $k_0$-times differentiable function of the form 
		$ \alpha_\infty :  \, \t\Omega \times [\gamma_1,\gamma_2] \mapsto \R^\nu $, 
		\begin{align}\label{alpha_infty}
		& \alpha_\infty(\omega,\gamma) := \omega + r_\varepsilon(\omega,\gamma)  \quad \text{ with } \quad  |r_\varepsilon|^{k_0,\upsilon} \leq C \varepsilon \upsilon^{-1} \, ;  
		\end{align}
		\item
		a family of embedded reversible traveling tori $i_\infty (\vf) $ (cfr. \eqref{RTTT}), 
		defined for all $(\omega,\gamma)\in\t\Omega \times[\gamma_1,\gamma_2] $, satisfying
		\begin{equation}\label{i.infty.est}
		\| i_\infty (\vf) -(\vf,0,0) \|_{s_0}^{k_0,\upsilon} \leq C \varepsilon\upsilon^{-1} \, ; 
		\end{equation}
		\item 
		a sequence of $k_0$-times differentiable functions $\mu_j^\infty : \R^\nu \times [\gamma_1,\gamma_2] \rightarrow \R$, $j\in \S_0^c  = \Z\,\setminus\,(\S\cup\{0\})$, of the form
		\begin{equation}\label{def:FE}
		\mu_j^\infty(\omega,\gamma) = \tm_{1}^\infty(\omega,\gamma)j+ \tm_{\frac12}^\infty(\omega,\gamma) \Omega_j(\gamma)  -\tm_{0}^\infty(\omega,\gamma) \sgn(j) + \fr_j^\infty(\omega,\gamma)\,,
		\end{equation}
		with $\Omega_j(\gamma) $ defined in \eqref{def:Omegajk}, satisfying
		\begin{equation}\label{coeff_fin_small}
		 |\tm_1^\infty |^{k_0,\upsilon} \leq C\varepsilon \, , \ 
		  | \tm_{\frac12}^\infty-1 |^{k_0,\upsilon} +  
		|\tm_{0}^\infty|^{k_0,\upsilon} \leq C\varepsilon \upsilon^{-1} \, , \quad   \sup_{j\in\S_0^c} |j|^\frac12  | \fr_j^\infty |^{k_0,\upsilon} \leq C \varepsilon \upsilon^{-3} \, , 
		\end{equation}
	\end{enumerate}
	such that, for all $(\omega,\gamma)$ in the Cantor-like set
	\begin{align}
	\cC_\infty^\upsilon :=
	 & \Big\{ (\omega,\gamma) \in \t\Omega\times[\gamma_1,\gamma_2] \ : \
	 | \omega \cdot \ell | \ge\ 8 \upsilon \langle \ell \rangle^{-\tau} \, , \ 
	 \ \forall\,\ell\in \Z^\nu\setminus\{ 0\} \, , \label{dioph0} \\
&
	  \ \abs{ \omega \cdot \ell -\tm_1^\infty(\omega, \gamma) j } \geq 8 \upsilon \braket{\ell}^{-\tau} \,,  \
	  \forall\, \ell \in\Z^\nu , \, j\in \S_0^c \text{ with } \ora{\jmath}\cdot\ell + j =0 
	  ;  \label{0meln}\\
	&  \ \abs{ \omega\cdot \ell + \mu_j^\infty(\omega,\gamma) } \geq 4 \upsilon \abs j^{\frac12}\braket{\ell}^{-\tau} \,,  \label{1meln} 
	\forall\, \ell \in\Z^\nu , \, j\in \S_0^c \text{ with } \ora{\jmath}\cdot\ell + j =0  \,;   \\
	& 
	\ \abs{ \omega\cdot \ell + \mu_j^\infty(\omega,\gamma)-\mu_{j'}^\infty(\omega,\gamma) }\geq 4 \upsilon\,
	\braket{\ell}^{-\tau} \,,  \label{2meln-}\\
	&\  \quad \quad \forall \ell\in\Z^\nu, \, j,j'\in\S_0^c,\, (\ell,j,j')\neq (0,j,j) \text{ with } \ora{\jmath}\cdot \ell + j-j'=0 \, ,   \nonumber \\
	& \  \abs{ \omega\cdot \ell + \mu_j^\infty(\omega,\gamma) +\mu_{j'}^\infty(\omega,\gamma)  } \geq 4\upsilon \,\big(\abs j^\frac12 + |j'|^\frac12 \big) \braket{\ell}^{-\tau}  \,, \label{2meln+}\\
	& \ \quad \quad \forall\,\ell\in \Z^\nu , \, j ,   j'\in\S_0^c \, , 
	\text{ with } \ora{\jmath}\cdot\ell+j+j'=0\, \Big\}  \,,\nonumber
	\end{align}
	the function $i_\infty(\vf):= i_\infty(\omega,\gamma,\varepsilon;\vf)$ is a 
	solution of $\cF(\omega,\gamma,\varepsilon; (i_\infty,\alpha_\infty)(\omega,\gamma))=0$. As a consequence, the embedded torus $\vf\mapsto i_\infty(\vf)$ is invariant for the Hamiltonian vector field $X_{H_{\alpha_\infty(\omega,\gamma)}}$ as it is filled by  quasi-periodic reversible traveling wave solutions  with frequency $\omega$.	
\end{thm}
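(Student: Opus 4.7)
\medskip

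\noindent\textbf{Proof proposal for Theorem \ref{NMT}.}
The plan is to construct $(i_\infty,\alpha_\infty)$ as the limit of a Nash--Moser iterative scheme, started at the trivial embedding $i_0(\vf)=(\vf,0,0)$ and the value $\alpha_0=\omega$, which at each stage produces an approximate solution $(i_n,\alpha_n)$ of $\cF=0$ whose error $\cF(i_n,\alpha_n)$ decays super-exponentially fast in suitable low-regularity Sobolev norms. The scheme is standard in shape, but each Newton step requires the inversion of the linearized operator $d_{i,\alpha}\cF(i_n)$ restricted to finite-dimensional subspaces $\Pi_{N_n}$ (the projectors in \eqref{pro:N}) with scales $N_n=N_0^{\chi^n}$, $\chi\in(1,2)$. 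Throughout the iteration one works on open parameter sets $\cC_n\subset\tilde\Omega\times[\gamma_1,\gamma_2]$ over which the previously imposed non-resonance conditions hold; the intersection $\bigcap_n\cC_n$ will ultimately contain the Cantor set $\cC_\infty^\upsilon$ in \eqref{dioph0}--\eqref{2meln+}. At every step one must preserve the reversible and momentum-preserving structure (cf.\ \eqref{RTTT}), so all changes of variables used in the reduction are reversibility preserving and momentum preserving in the sense of Definitions \ref{rev_defn} and \ref{def:mom.pres}; this forces the traveling wave ansatz to persist in the limit.

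The heart of the proof is the construction, at each approximate solution $(i_n,\alpha_n)$, of an \emph{approximate right inverse} of $d_{i,\alpha}\cF(i_n)$ satisfying tame estimates on $\cC_n$. Following the strategy summarized in comments $(3)$--$(4)$ of the introduction, I would first reduce $d_{i,\alpha}\cF(i_n)$ to its linearization on the normal subspace $\cH^\angle_{\S^+,\Sigma}$ by using an isotropic-torus correction $\grave a$ la Berti--Bolle, relying on the exact symplecticity (Liouville form \eqref{liouville}) and the additional parameters $\alpha$. On the normal component, the linearized operator has the structure
\begin{equation*}
\cL_\omega = \omega\cdot\pa_\vf + \begin{pmatrix}\pa_x\tilde V & 0 \\ 0 & \tilde V\pa_x\end{pmatrix} + (\text{lower order and pseudo-differential of order $\tfrac12$ and $0$}),
\end{equation*}
with $\tilde V$ a small quasi-periodic traveling wave; to invert it, I would apply in order: $(a)$ the almost-straightening Lemma \ref{conju.tr} (sketched in comment 3) that conjugates $\cL_{\rm TR}$ to constant transport $\tm_1^\infty\pa_x$ plus an exponentially small remainder, using the Diophantine condition \eqref{0meln} on $\omega\cdot\ell-\tm_1 j$ restricted to momentum $\vec\jmath\cdot\ell+j=0$; $(b)$ a symplectic symmetrization and successive reductions in decreasing orders of the symbols, producing constants $\tm_{1/2}^\infty$, $\tm_0^\infty$ together with smoothing remainders (Lemmas \ref{DN_pseudo_est}, \ref{pseudo_compo}, \ref{egorov}); and $(c)$ the KAM almost-reducibility scheme of Section \ref{sec:KAM}, which block-diagonalizes the remaining order-zero part into the Floquet exponents $\mu_j^\infty$ of the form \eqref{def:FE}, under the second Melnikov conditions \eqref{1meln}--\eqref{2meln+}. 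The crucial point, exploited throughout, is that the momentum conservation restricts the Fourier indices to $\vec\jmath\cdot\ell+j-j'=0$ etc., so that the Melnikov conditions do \emph{not} lose space derivatives and are verifiable on a set of large measure (this is where the momentum-preserving character of each conjugation is essential). With the approximate inverse in hand one runs the standard Moser smoothing scheme in Whitney--Sobolev norms using the tame estimates \eqref{prod}, \eqref{SM12}, and Lemma \ref{compo_moser}.

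The iteration is set up so that, at step $n$, the successive approximation satisfies $\|\fI_{n+1}-\fI_n\|_{s_0}^{k_0,\upsilon}\le C\varepsilon\upsilon^{-1}N_n^{-a}$ for some $a>0$, and analogously for $|\alpha_{n+1}-\alpha_n|^{k_0,\upsilon}$; together with the polynomial loss coming from the inversion and the high-norm divergence controlled by interpolation, one gets convergence in $\|\cdot\|_{s_0}^{k_0,\upsilon}$ and boundedness in a higher norm $\|\cdot\|_{S}^{k_0,\upsilon}$. The limit $(\fI_\infty,\alpha_\infty-\omega)$ then satisfies \eqref{i.infty.est} and \eqref{alpha_infty}, while the Floquet exponents extracted from the KAM diagonalization give \eqref{def:FE}--\eqref{coeff_fin_small}. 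All these objects are defined on the whole of $\tilde\Omega\times[\gamma_1,\gamma_2]$ by the Whitney extension theorem, and one verifies that on $\cC_\infty^\upsilon$ all non-resonance conditions used along the iteration actually hold for the limit quantities $\tm_1^\infty,\mu_j^\infty$, so that $\cF(i_\infty,\alpha_\infty)=0$ there; reversibility and traveling-wave symmetry \eqref{RTTT} pass to the limit because each step preserves them.

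\textbf{Main obstacle.} The decisive and most delicate step is the construction of the approximate inverse of the linearized operator, specifically the \emph{almost-straightening} of the transport term $\cL_{\rm TR}$ with only the order-one Diophantine condition \eqref{0meln}. Unlike the standing wave problem of \cite{BBHM}, here $\tm_1^\infty\not\equiv0$ and full straightening is impossible; one must stop at an exponentially small remainder $p_{\bar\tn}$ and carry it through the subsequent KAM reduction and through the Nash-Moser scheme, verifying that the tame loss of the resulting approximate inverse is compatible with the quadratic Newton convergence. A related difficulty is the pseudo-differential, order-zero nature of the vorticity contribution, which produces smoothing tame remainders (cf.\ \eqref{AS.est3}) that must be tracked via the $(-\tfrac12)$-modulo-tame calculus of Section \ref{subsec:semiphi} in order to close the reducibility estimates. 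The measure estimate $|\cC_\infty^\upsilon|\ge (\gamma_2-\gamma_1)-O(\upsilon^c)$, carried out via Proposition \ref{prop:trans_un} using $\gamma$ as a parameter and the asymptotic expansion \eqref{def:FE}, is then a comparatively routine consequence of the transversality, but again the momentum restrictions on $(\ell,j,j')$ are what make it nontrivial to satisfy simultaneously \eqref{1meln}--\eqref{2meln+}.
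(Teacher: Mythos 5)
Your proposal follows essentially the same route as the paper: a Nash--Moser iteration (Theorem~\ref{NASH}) whose Newton steps are powered by an almost-approximate right inverse of the linearized operator (Theorem~\ref{alm.approx.inv}), which in turn is built by the isotropic-torus symplectic reduction, the good unknown, the almost-straightening of the transport operator, the order-by-order symmetrizations, and the KAM almost-reducibility of Sections~\ref{sec:approx_inv}--\ref{sec:KAM}, with reversibility and momentum preservation tracked at every step and the final Cantor set expressed in terms of the Whitney-extended limits $\tm_1^\infty$, $\mu_j^\infty$. You also correctly single out the two genuine novelties compared with~\cite{BBHM} — the almost (rather than full) straightening of $\cL_{\rm TR}$ with the exponentially small remainder $p_{\bar\tn}$ carried through the iteration, and the smoothing tame remainders produced by the vorticity's pseudodifferential term — so the proposal matches the paper's argument in both structure and emphasis.
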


Note that the Cantor-like set 
  $ {\cal C}_\infty^{\upsilon} $   
  in \eqref{dioph0}-\eqref{2meln+}  is defined in terms of the  functions 
  $ {\mathtt m}_1^\infty (\omega, \gamma) $ and the ``final" perturbed normal 
  frequencies
  $  \mu_j^\infty (\omega, \gamma)$, 
  $ j  \in {\mathbb S}_0^c $, which are defined for {\it all} the values of the
  parameters $ (\omega, \gamma) $. 
This formulation completely decouples the Nash-Moser implicit function theorem 
construction of $ (\alpha_\infty, i_\infty)(\omega, \gamma)  $ (in Sections
\ref{sec:approx_inv}-\ref{sec:NaM})
from the discussion about the measure of the  parameters where all the required
``non-resonance" conditions are verified (Section \ref{subsec:measest}).  
This approach
simplifies considerably the presentation because the measure estimates 
required to build $ ( i_\infty, \alpha_\infty)(\omega, \gamma)  $
are not verified at each step along the Nash-Moser iteration
(the set 
$ {\cal C}_\infty^{\upsilon} $  in \eqref{dioph0}-\eqref{2meln+}  could be empty,
in such a case the functions $ (\alpha_\infty, i_\infty)(\omega, \gamma) $ 
constructed in  Theorem \ref{NMT} are obtained by just
finitely many sums). 
In order to define the  extended functions $ ( i_\infty, \alpha_\infty) $ for all the values of 
 $ (\omega, \gamma) $,  preserving the weighted norm $ \| \ \|^{k_0, \upsilon} $, we use
the  Whitney extension theory reported in Section \ref{sec:FS}.

We also remind that the conditions on the indexes in \eqref{dioph0}-\eqref{2meln+}
(where $ \vec \jmath \in \Z^\nu $ is the vector in \eqref{def:vecj}) are due to the fact that
we look for traveling wave solutions. 
These restrictions are essential to prove the measure estimates of the next section. 

\begin{rem}\label{rem:inclu}
The Diophantine condition \eqref{dioph0} could be weakened requiring only 
$  | \omega \cdot \ell | \ge\ \upsilon \langle \ell \rangle^{-\tau} $
for any 
$ \ell \cdot \vec \jmath = 0 $.  In such a case 
the vector $ \omega $ could admit one non-trivial resonance, i.e.
$ \bar \ell \in \Z^\nu \setminus \{0\} $ such that $ \omega \cdot \bar \ell = 0 $, thus the orbit $ \{ \omega t \}_{t \in \R} $ 
would densely fill a ($\nu - 1$)-dimensional torus, orthogonal to $ \bar \ell $.
In any case $ \vec \jmath  \cdot \bar \ell \neq 0 $ (otherwise 
$ |\omega \cdot \bar \ell| \geq \upsilon \langle \bar \ell \rangle^{-\tau} > 0 $, contradicting
that $ \omega \cdot \bar \ell = 0  $) and then
the closure of the set $ \{ \omega t - \vec \jmath x \}_{t \in \R, x \in \R} $ is dense in $ \T^\nu $.
This is the natural minimal requirement 
to look for traveling quasi-periodic solutions 
$ U( \omega t - \vec \jmath x )$ (Definition \ref{QPTW}). 
\end{rem}

The next goal is to deduce Theorem \ref{thm:main0} from Theorem \ref{NMT}. 

\subsection{Measure estimates: proof of Theorem \ref{thm:main0}}\label{subsec:measest}

We now want to prove the existence of quasi-periodic solutions of the original Hamiltonian system $ H_\varepsilon $ in \eqref{cNP}, which is equivalent after a rescaling to  \eqref{eq:Ham_eq_zeta},
 and not of just of the Hamiltonian system generated by the  modified Hamiltonian 
 $ H_{\alpha_\infty} $. We proceed as follows. 
By \eqref{alpha_infty}, the function $\alpha_\infty(\,\cdot\,,\gamma)$ from $\t\Omega$ into its image $\alpha_\infty(\t\Omega,\gamma)$ is invertible and 
\begin{equation}\label{inv_alpha}
\begin{aligned}
& \beta = \alpha_\infty(\omega,\gamma) = \omega+r_\varepsilon(\omega,\gamma) \ \Leftrightarrow \\
&   \omega = \alpha_\infty^{-1}(\beta,\gamma) = \beta+\breve{r}_\varepsilon(\beta,\gamma) \, , \quad \abs{ \breve{r}_\varepsilon }^{k_0,\upsilon} \leq C\varepsilon\upsilon^{-1} \,.
\end{aligned}
\end{equation}
Then, for any $\beta\in\alpha_\infty(\cC_\infty^\upsilon)$, Theorem \ref{NMT} proves the existence of an embedded invariant torus filled by quasi-periodic solutions with Diophantine frequency $\omega=\alpha_\infty^{-1}(\beta,\gamma)$ for the Hamiltonian
\begin{equation*}
H_\beta = \beta \cdot I+  \tfrac12(w,\b\Omega_W w)_{L^2} + \varepsilon P \,.
\end{equation*}
Consider the curve of the unperturbed tangential frequency vector 
$ \ora{\Omega}(\gamma) $
in \eqref{Omega-gamma}. In Theorem \ref{MEASEST} below we prove that for "most" values of $\gamma\in[\gamma_1,\gamma_2]$ the vector $(\alpha_\infty^{-1}(\ora{\Omega}(\gamma),\gamma) ,\gamma )$ is in $\cC_\infty^\upsilon$,
obtaining
an embedded torus for the Hamiltonian $H_\varepsilon$ in  \eqref{Hepsilon}, filled by quasi-periodic solutions with Diophantine frequency vector 
$\omega = \alpha_\infty^{-1}(\ora{\Omega}(\gamma),\gamma) $, denoted  $ \wt \Omega $ in Theorem \ref{thm:main0}.
Thus $\varepsilon A(i_\infty(\wt \Omega t))$,  where $A$ is defined in \eqref{aacoordinates},
is a quasi-periodic traveling wave solution of the water waves equations 
\eqref{eq:Ham_eq_zeta} written in the Wahl\'en  variables. 
Finally, going back to the original Zakharov variables via \eqref{eq:gauge_wahlen} 
we obtain solutions of \eqref{ww}. 
This proves Theorem \ref{thm:main0} together with the following measure estimates.

\begin{thm} {\bf (Measure estimates)}\label{MEASEST}
	Let
	\begin{equation}\label{param_small_meas}
	\upsilon = \varepsilon^{\rm a} \,, \quad  0 <{\rm a}<\min\{ {\rm a}_0,1/(4 m_0^2) \}\,, \quad \tau > m_0 (2m_0\nu + \nu+2) \,,
	\end{equation}
	where $m_0$ is the index of non-degeneracy given in Proposition \ref{prop:trans_un} and $k_0:= m_0+2$. Then, for $ \varepsilon \in (0, \varepsilon_0) $ small enough,  the measure of the set
	\begin{equation}\label{Gvare}
	\cG_\varepsilon := \big\{ \gamma \in [\gamma_1,\gamma_2] 
	\ : \  \big( \alpha_\infty^{-1}( \ora{\Omega}(\gamma),\gamma ),\gamma \big) 
	\in \cC_\infty^\upsilon \big\}
	\end{equation}
	satisfies $ | \cG_\varepsilon | \rightarrow\gamma_2-\gamma_1$ as 
	$\varepsilon\rightarrow 0$.
\end{thm}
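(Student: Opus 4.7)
The strategy is the classical one in KAM theory for PDEs: decompose the complement of $\cG_\varepsilon$ in $[\gamma_1,\gamma_2]$ into the union of ``resonant'' sets indexed by $(\ell,j,j')$, each associated with one of the non-resonance conditions \eqref{dioph0}--\eqref{2meln+}, and apply a Rüssmann--type measure lemma to bound each individually, summing over the indices using the momentum restrictions. Throughout, write $\omega(\gamma):=\alpha_\infty^{-1}(\vec\Omega(\gamma),\gamma)$ and note that by \eqref{inv_alpha} one has $\omega(\gamma)=\vec\Omega(\gamma)+\breve r_\varepsilon(\vec\Omega(\gamma),\gamma)$ with $|\breve r_\varepsilon|^{k_0,\upsilon}\lesssim \varepsilon\upsilon^{-1}$, a perturbation which is small in $C^{k_0}$ under the choice $\upsilon=\varepsilon^{\rm a}$, ${\rm a}<1/(4m_0^2)$.

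The first main step is the perturbative transversality: plugging the expansion \eqref{def:FE} of $\mu_j^\infty$ together with the smallness \eqref{coeff_fin_small} of $\tm_1^\infty,\tm_{1/2}^\infty-1,\tm_0^\infty,\fr_j^\infty$ into each resonance function
\begin{equation*}
\phi_{\ell,j,j'}^{(\pm)}(\gamma):=\omega(\gamma)\cdot\ell+\mu_j^\infty(\omega(\gamma),\gamma)\pm\mu_{j'}^\infty(\omega(\gamma),\gamma),
\end{equation*}
(and analogously for the $0$-th and first Melnikov conditions) one compares with the unperturbed analogue built from $\vec\Omega(\gamma)$ and $\Omega_j(\gamma)$ treated in Proposition \ref{prop:trans_un}. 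Since the latter satisfies
$\max_{0\le n\le m_0}|\partial_\gamma^n(\cdot)|\ge \rho_0\braket{\ell}$ on all indices permitted by the momentum constraint, and the perturbative corrections are of size $\varepsilon\upsilon^{-1}$ in $C^{k_0}$, for $\varepsilon$ small enough one deduces the same transversality for the perturbed functions with $\rho_0$ replaced by $\rho_0/2$. The second main step is a standard Rüssmann lemma: if $g\in C^{k_0}([\gamma_1,\gamma_2];\R)$ satisfies $\max_{0\le n\le m_0}|\partial_\gamma^n g(\gamma)|\ge\rho>0$ on the interval, then
\begin{equation*}
\bigl|\{\gamma\in[\gamma_1,\gamma_2]:|g(\gamma)|<\eta\}\bigr|\le C\bigl(\eta/\rho\bigr)^{1/m_0}.
\end{equation*}
Applying it to $\phi_{\ell,j,j'}^{(\pm)}/\braket{\ell}$ yields measure estimates of the form $C(\upsilon\braket{\ell}^{-\tau-1}/\rho_0)^{1/m_0}$ for each resonant sublevel set.

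The third step is the summability: the key point, and the reason for the momentum restrictions in \eqref{dioph0}--\eqref{2meln+}, is that for the first Melnikov condition and \eqref{2meln+} the constraint $\vec\jmath\cdot\ell+j=0$, resp.\ $\vec\jmath\cdot\ell+j+j'=0$, determines $j$, resp.\ one of $j,j'$, so that the sum reduces essentially to a single series in $\ell$; for \eqref{2meln-} one further uses the decomposition between the ``low'' regime $|j|,|j'|\lesssim\braket{\ell}^{C}$ (only finitely many pairs per $\ell$) and the ``high'' regime where $|j|,|j'|\gg\braket{\ell}$, handled separately as indicated below. With the choice $\tau>m_0(2m_0\nu+\nu+2)$ in \eqref{param_small_meas}, one obtains
\begin{equation*}
\bigl|[\gamma_1,\gamma_2]\setminus\cG_\varepsilon\bigr|\le C\upsilon^{1/m_0}\sum_{\ell\in\Z^\nu}\braket{\ell}^{\nu-(\tau+1)/m_0}\lesssim \upsilon^{1/m_0}=\varepsilon^{{\rm a}/m_0}\longrightarrow 0,
\end{equation*}
proving $|\cG_\varepsilon|\to\gamma_2-\gamma_1$ as claimed.

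The subtle point, which I expect to be the principal obstacle, is the second Melnikov condition \eqref{2meln-}, where both $j$ and $j'$ can range in $\S_0^c$, subject only to $\vec\jmath\cdot\ell+j-j'=0$, so that $j'$ is determined by $(\ell,j)$ but $j$ itself is free. Here the resolution is to exploit the asymptotic expansion \eqref{def:FE} and Lemma \ref{rem:exp_omegaj_gam}: writing
\begin{equation*}
\mu_j^\infty-\mu_{j'}^\infty=\tm_1^\infty(j-j')+\tm_{1/2}^\infty(\Omega_j-\Omega_{j'})-\tm_0^\infty(\sgn j-\sgn j')+\fr_j^\infty-\fr_{j'}^\infty,
\end{equation*}
the momentum constraint forces $j-j'=-\vec\jmath\cdot\ell$ bounded by $C\braket{\ell}$, and splitting into the two cases $\sgn(j)=\sgn(j')$ and $\sgn(j)\ne\sgn(j')$ one shows, via \eqref{ossGjsgn}--\eqref{diffsqrt} as in the proof of Proposition \ref{prop:trans_un}, that for $|j|+|j'|$ sufficiently large compared to $\braket{\ell}/\upsilon$ the Melnikov inequality is automatically satisfied (the ``trivial'' high-frequency regime), while in the complementary finite regime one applies transversality and the Rüssmann lemma. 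Summing the contributions of both regimes yields the claimed bound, which combined with analogous (but simpler) arguments for \eqref{dioph0}, \eqref{0meln}, \eqref{1meln}, \eqref{2meln+} completes the proof.
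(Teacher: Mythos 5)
Your overall scheme (perturbed transversality via Proposition \ref{prop:trans_un}, Rüssmann's lemma, summability over $\ell$ using the momentum constraints, and the observation that the second Melnikov condition \eqref{2meln-} with $j,j'$ of the same sign is the hard case) matches the paper. But the resolution you give for that hard case contains a genuine error. You claim that for $|j|+|j'|$ large compared to $\langle\ell\rangle/\upsilon$ ``the Melnikov inequality is automatically satisfied''. This is false. When $j\cdot j'>0$ and $j-j'=-\vec\jmath\cdot\ell$ is bounded, the expansion \eqref{def:FE} together with Lemma \ref{rem:exp_omegaj_gam} shows that as $\min\{|j|,|j'|\}\to\infty$
\begin{equation*}
\vec\Omega_\varepsilon(\gamma)\cdot\ell+\mu_j^\infty(\gamma)-\mu_{j'}^\infty(\gamma)
\longrightarrow \bigl(\vec\Omega_\varepsilon(\gamma)-\tm_1^\infty(\gamma)\,\vec\jmath\bigr)\cdot\ell,
\end{equation*}
and the limit can be arbitrarily small for a set of $\gamma$'s of positive measure; nothing is automatic. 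What actually happens is that the second Melnikov condition \emph{reduces} to the transport-type first Melnikov condition \eqref{0meln}, which is precisely why \eqref{0meln} is part of $\cC_\infty^\upsilon$. This is the content of Lemma \ref{lemma:inWave}: for $\min\{|j|,|j'|\}\gtrsim\upsilon_0^{-2}\langle\ell\rangle^{2(\tau_0+1)}$ one has the inclusion of resonant sets $R_{\ell,j,j'}^{(II)}(\upsilon,\tau)\subset\bigcup_{\ell\neq 0}R_\ell^{(T)}(\upsilon_0,\tau_0)$, and the measure of the latter must still be estimated (by Rüssmann), not declared trivial. The paper further chooses $\upsilon_0=\upsilon^{1/(4m_0)}>\upsilon$ and $\tau_0=m_0\nu<\tau$ so that the bound for $\bigcup R_\ell^{(T)}(\upsilon_0,\tau_0)$ is $\lesssim\upsilon_0^{1/m_0}$, while the remaining ``low'' pairs with $|j|,|j'|\lesssim\upsilon_0^{-3}\langle\ell\rangle^{2(\tau_0+1)}$ are polynomially many in $\ell$ and are summable precisely because $\tau>m_0(2m_0\nu+\nu+2)$. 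Your proposal misses both the need to invoke \eqref{0meln} in the high-frequency regime and the calibration of $(\upsilon_0,\tau_0)$ against $(\upsilon,\tau)$ that makes the two resulting measure contributions simultaneously small; as written the argument would leave a divergent sum over $j$.
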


The rest of this section is devoted to prove Theorem \ref{MEASEST}. By \eqref{inv_alpha}
we have
\begin{equation}\label{Om-per}
\ora{ \Omega}_\varepsilon (\gamma):= \alpha_\infty^{-1}(\ora{\Omega}(\gamma),\gamma) =
\ora{\Omega}(\gamma) +\ora{r}_\varepsilon \,,
\end{equation}
where $\ora{r}_\varepsilon(\gamma) 
:= \breve{r}_\varepsilon(\ora{\Omega}(\gamma),\gamma) $
satisfies 
\begin{equation}\label{eq:tang_res_est}
|\pa_\gamma^k {\vec r}_\varepsilon (\gamma)| \leq C \varepsilon\upsilon^{-(1+k)} \,, \quad \forall\,\abs k \leq k_0 \,, \ \text{uniformly on } [\gamma_1,\gamma_2] \,.
\end{equation}
We also denote, with a small abuse of notation, for all $j\in\S_0^c$, 
\begin{equation}\label{eq:final_eig_kappa}
\mu_j^\infty(\gamma):= \mu_j^\infty \big( \ora{\Omega}_\varepsilon(\gamma),\gamma \big) := \tm_1^\infty(\gamma) j +\tm_{\frac12}^\infty(\gamma)\Omega_j(\gamma)- \tm_{0}^\infty(\gamma) \sgn(j) + \fr_j^\infty(\gamma)\,,
\end{equation}
where 
$\tm_1^\infty (\gamma) :=\tm_1^\infty (\ora{\Omega}_\varepsilon(\gamma),\gamma) $, $\tm_{\frac12}^\infty (\gamma) :=\tm_{\frac12}^\infty (\ora{\Omega}_\varepsilon(\gamma),\gamma) $, $\tm_{0}^\infty (\gamma) :=\tm_{0}^\infty (\ora{\Omega}_\varepsilon(\gamma),\gamma) $ and $\fr_j^\infty (\gamma) :=\fr_j^\infty (\ora{\Omega}_\varepsilon(\gamma),\gamma) $.
By \eqref{coeff_fin_small} and \eqref{eq:tang_res_est} 
we get the estimates   
\begin{align}
&  |\pa_\gamma^k\tm_1^\infty(\gamma)|
\leq C \varepsilon\upsilon^{-k} \, , \, 
\big| \pa_\gamma^k\big( \tm_{\frac12}^\infty(\gamma)-1 \big) \big| + 
| \pa_\gamma^k\tm_{0}^\infty(\gamma) |  \leq C \varepsilon\upsilon^{-k-1}, \label{small_coeff_k}\\
&\sup_{j\in\S_0^c }|j|^\frac12\abs{ \pa_\gamma^k \fr_j^\infty(\gamma) } \leq C \varepsilon\upsilon^{-3-k}\,, \quad \forall\, 0\leq k\leq k_0 \,. \label{small_rem_k}
\end{align}
Recalling \eqref{dioph0}-\eqref{2meln+}, the Cantor set in \eqref{Gvare} becomes
\begin{align*}
\cG_\varepsilon  := & \Big\{ 
\gamma \in [\gamma_1,\gamma_2] \ :  
 \ | \ora{\Omega}_\varepsilon(\gamma) \cdot \ell | \geq 8 \upsilon \braket{\ell}^{-\tau} \,, 
\ \forall\,\ell\in \Z^\nu\setminus\{ 0\}\,;  \\
& \ \  | (\ora{\Omega}_\varepsilon(\gamma)  -\tm_1^\infty(\gamma) \vec \jmath ) 
\cdot \ell | \geq 8 \upsilon \braket{\ell}^{-\tau} \,, 
\  \forall\, \ell \in\Z^\nu \setminus \{0\} \, ; \\
& \  \  | \ora{\Omega}_\varepsilon(\gamma)\cdot \ell + \mu_j^\infty(\gamma) | \geq 4 \upsilon 
|j|^{\frac12}\braket{\ell}^{-\tau} \,, \  \forall\, \ell \in\Z^\nu \, ,  \, j\in\S_0^c \, , \text{ with } \ora{\jmath}\cdot\ell + j =0  \,;  \notag \\
& \ \ | \ora{\Omega}_\varepsilon(\gamma)\cdot \ell + \mu_j^\infty(\gamma)-\mu_{j'}^\infty(\gamma) | \geq 4 \upsilon \, 
\braket{\ell}^{-\tau} \,,  \\
& \ \  \forall \ell\in\Z^\nu, \, j,j'\in\S_0^c,\, (\ell,j,j')\neq (0,j,j) \text{ with } \ora{\jmath}\cdot \ell + j-j'=0  \, ; \notag \\
& \ \ | \ora{\Omega}_\varepsilon(\gamma)\cdot \ell + \mu_j^\infty(\gamma) +\mu_{j'}^\infty(\gamma)  | \geq 4\upsilon \,\big(|j|^\frac12 + |j'|^\frac12 \big) \braket{\ell}^{-\tau}  \,, \\
&  \ \  \forall\,\ell\in \Z^\nu , \, j,  j'\in\S_0^c \text{ with } \ora{\jmath}\cdot\ell+j+j'=0 \Big\} \, .  \notag 
\end{align*}
We estimate the measure of the complementary set
\begin{align}\label{union_gec}
\cG_\varepsilon^c & := [\gamma_1,\gamma_2] \setminus\cG_\varepsilon  \\
& = \left( \bigcup_{\ell\neq 0} R_{\ell}^{(0)} \cup R_{\ell}^{(T)} \right) \cup \left( \bigcup_{\ell\in\Z^\nu, \, j\in\S_0^c\atop \ora{\jmath}\cdot\ell+j=0} R_{\ell,j}^{(I)} \right) 
\cup \left( \bigcup_{(\ell,j,j')\neq(0,j,j), j \neq j' 
	\atop \ora{\jmath}\cdot\ell+j-j'=0 } R_{\ell,j,j'}^{(II)} \right)
\cup \left( \bigcup_{\ell\in\Z^\nu, j,  j'\in\S_0^c \, , 
	\atop \ora{\jmath}\cdot\ell+j+j'=0} Q_{\ell,j,j'}^{(II)} \right) \,,\notag
\end{align}
where the ``nearly-resonant sets" are, recalling the notation $ \Gamma = [\gamma_1, \gamma_2 ] $, 
\begin{align}
R_{\ell}^{(0)}  := R_{\ell}^{(0)} (\upsilon,\tau) := & \big\{ \gamma\in \Gamma \, : \, | \ora{\Omega}_\varepsilon(\gamma) \cdot \ell | < 8 \upsilon \braket{\ell}^{-\tau}  \big\} \,, \label{R00d}\\
R_{\ell}^{(T)} := R_{\ell}^{(T)} (\upsilon,\tau) := & \big\{ \gamma\in \Gamma \, : \, | (\ora{\Omega}_\varepsilon(\gamma) -\tm_1^\infty(\gamma) \vec \jmath) \cdot \ell | < 8 \upsilon \braket{\ell}^{-\tau}  \big\} \,, \label{R00}\\
R_{\ell,j}^{(I)} := R_{\ell,j}^{(I)} (\upsilon,\tau) :=  & \big\{ \gamma\in \Gamma  \, : \, | \ora{\Omega}_\varepsilon(\gamma)\cdot \ell + \mu_j^\infty(\gamma) | < 4 \upsilon | j|^\frac12 \braket{\ell}^{-\tau}  \big\} \,, \label{RI0}\\
R_{\ell,j,j'}^{(II)} := R_{\ell,j,j'}^{(II)} (\upsilon,\tau) := & \big\{ \gamma\in \Gamma  \, : \, | \ora{\Omega}_\varepsilon(\gamma)\cdot \ell +\mu_j^\infty(\gamma)-\mu_{j'}^\infty(\gamma) | < 4 \upsilon \,
\braket{\ell}^{-\tau}  \big\} \, ,  \label{RII0} \\
Q_{\ell,j,j'}^{(II)} := Q_{\ell,j,j'}^{(II)} (\upsilon,\tau)  :=  & \Big\{ \gamma\in \Gamma \,  : \, | \ora{\Omega}_\varepsilon(\gamma)\cdot \ell + \mu_j^\infty(\gamma)+\mu_{j'}^\infty(\gamma) | < 
\frac{4 \upsilon \big(|j|^\frac12 +|j'|^\frac12 \big)}{ \braket{\ell}^{\tau}}  \Big\} \, .  \label{QII0}
\end{align}
Note that in the third union in \eqref{union_gec} we may require $ j \neq j' $ because
$R_{\ell,j,j}^{(II)} \subset  R_\ell^{(0)}$. 
In the sequel we shall always suppose the momentum conditions on the 
indexes $ \ell, j, j' $  written in \eqref{union_gec}. 
Some of the above sets are empty.
\begin{lem}\label{lem:emptysets}
	For $\varepsilon\in(0,\varepsilon_0)$ small enough, 
	 if $Q_{\ell,j,j'}^{(II)}\neq \emptyset$ then $ |j|^\frac12 + |j'|^\frac12\leq C \braket{\ell}$.
\end{lem}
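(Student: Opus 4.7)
The plan is to expand $\mu_j^\infty + \mu_{j'}^\infty$ using formula \eqref{eq:final_eig_kappa} and exploit the asymptotic lower bound $\Omega_j(\gamma)\gtrsim |j|^{1/2}$. Suppose $\gamma \in Q_{\ell,j,j'}^{(II)}$. Writing $\mu_j^\infty + \mu_{j'}^\infty = \tm_1^\infty(j+j') + \tm_{1/2}^\infty(\Omega_j+\Omega_{j'}) - \tm_0^\infty(\sgn(j)+\sgn(j')) + \fr_j^\infty + \fr_{j'}^\infty$ and using the momentum condition $j+j' = -\ora{\jmath}\cdot\ell$ attached to $Q_{\ell,j,j'}^{(II)}$ in \eqref{union_gec}, the triangle inequality applied to the defining inequality \eqref{QII0} yields
\[
\tm_{1/2}^\infty (\Omega_j(\gamma)+\Omega_{j'}(\gamma)) \leq |\ora{\Omega}_\varepsilon(\gamma)\cdot\ell| + |\tm_1^\infty|\,|\ora{\jmath}\cdot\ell| + 2|\tm_0^\infty| + |\fr_j^\infty|+|\fr_{j'}^\infty| + 4\upsilon(|j|^{\frac12}+|j'|^{\frac12})\langle\ell\rangle^{-\tau}.
\]

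The first term is bounded by $C\langle\ell\rangle$ using \eqref{Om-per} and the boundedness of $\vec\Omega(\gamma)$ on $[\gamma_1,\gamma_2]$. The remaining terms on the right hand side, by the smallness estimates \eqref{small_coeff_k}--\eqref{small_rem_k}, are all $\leq C\varepsilon\upsilon^{-3}\langle\ell\rangle$ (keeping in mind that $|\fr_j^\infty| + |\fr_{j'}^\infty| \leq C\varepsilon\upsilon^{-3}$ since $|j|,|j'|\geq 1$). On the left hand side, for $\varepsilon$ small enough $\tm_{1/2}^\infty \geq 1/2$ by \eqref{small_coeff_k}, and the key analytic input is the uniform lower bound $\Omega_j(\gamma) \geq c_\tth |j|^{1/2}$ for all $j\in\Z\setminus\{0\}$ and $\gamma\in[\gamma_1,\gamma_2]$, which follows from the decomposition \eqref{Om-om}, the asymptotic \eqref{expa_om_gam} of Lemma \ref{rem:exp_omegaj_gam}, and the bound $|\frac{\gamma}{2}G_j(0)/j| \leq C$ (since $|G_j(0)/j| \leq 1$ by \eqref{def:Gj0}).

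Combining these ingredients,
\[
\tfrac{c_\tth}{2}(|j|^{\frac12}+|j'|^{\frac12}) \leq C\langle\ell\rangle + C\varepsilon\upsilon^{-3}\langle\ell\rangle + 4\upsilon\langle\ell\rangle^{-\tau}(|j|^{\frac12}+|j'|^{\frac12}).
\]
For $\upsilon$ (and hence $\varepsilon$) small enough, the last term is absorbed into the left hand side (using $4\upsilon \leq c_\tth/4$), and $C\varepsilon\upsilon^{-3} = C\varepsilon^{1-3{\rm a}}\leq C$ by the choice \eqref{param_small_meas}, yielding $|j|^{1/2}+|j'|^{1/2}\leq C\langle\ell\rangle$.

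There is no serious obstacle: the argument is a routine dominant-balance estimate, and the only point requiring care is verifying that $\Omega_j(\gamma) > 0$ and in fact grows like $|j|^{1/2}$ uniformly in $\gamma\in[\gamma_1,\gamma_2]$, which is immediate from the asymptotics already proved. The smallness parameter $\varepsilon\upsilon^{-3}$ is controlled by the constraint ${\rm a}<1/3$ built into \eqref{param_small_meas}.
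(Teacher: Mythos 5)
Your proof is correct and takes essentially the same approach as the paper: both expand $\mu_j^\infty + \mu_{j'}^\infty$ via \eqref{eq:final_eig_kappa}, use the momentum condition to replace $j+j'$ by $-\ora{\jmath}\cdot\ell$, invoke the $|j|^{1/2}$ growth of $\Omega_j(\gamma)$ (via Lemma \ref{rem:exp_omegaj_gam}), and absorb the small corrections and the $4\upsilon(\cdot)\langle\ell\rangle^{-\tau}$ term for $\varepsilon$ small. The only minor difference is cosmetic (you isolate $\tm_{1/2}^\infty(\Omega_j+\Omega_{j'})$ on the left, while the paper bounds $|\mu_j^\infty+\mu_{j'}^\infty|$ from below as in \eqref{RII2}).
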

\begin{proof}
	If $ Q_{\ell,j,j'}^{(II)}\neq \emptyset$
	then there exists $\gamma\in[\gamma_1,\gamma_2]$ such that
	\begin{equation}\label{RII1}
	\abs{ \mu_j^\infty(\gamma) + 
	\mu_{j'}^\infty(\gamma)} < \frac{4\upsilon \big(|j|^\frac12 +|j'|^\frac12 \big) }{\braket{\ell}^\tau} + 	C | \ell | \, .
	\end{equation}
	By \eqref{eq:final_eig_kappa} we have 
	$$
	\mu_{j}^\infty(\gamma) + \mu_{j'}^\infty(\gamma) = \tm_1^\infty(\gamma)(j+j') + 
	\tm_{\frac12}^\infty(\gamma) ( \Omega_{j}(\gamma) + \Omega_{j'}(\gamma) )
	- \tm_{0}^\infty(\gamma)( \sgn(j)+\sgn(j')) 
	+ \fr_j^\infty(\gamma)+ \fr_{j'}^\infty(\gamma) \, .
	$$
	Then, by \eqref{small_coeff_k}-\eqref{small_rem_k} with $ k = 0 $, 
	Lemma \ref{rem:exp_omegaj_gam} and
	the momentum condition $j+j'=-\ora{\jmath}\cdot\ell $,
	we deduce, for $\varepsilon$ small enough,
	\begin{align}
	|\mu_{j}^\infty(\gamma) + \mu_{j'}^\infty(\gamma)|  & 
	  \geq - C\varepsilon 
	| \ell| +\tfrac{\sqrt g}{2}\,\big| | j|^\frac12 + |j'|^\frac12 \big|  - C' -C\varepsilon\upsilon^{- 3} \, .  \label{RII2}
	\end{align}
	Combining \eqref{RII1} and \eqref{RII2}, 
	we deduce 
	$ |  |j|^{\frac12} + |j'|^\frac12 | \leq C\braket{\ell}$, for  $ \varepsilon $ small enough. 
\end{proof}

In order to estimate the measure of the sets \eqref{R00d}-\eqref{QII0},  
the key point is to prove that the perturbed frequencies satisfy 
transversality properties similar to the ones 
\eqref{eq:0_meln}-\eqref{eq:2_meln+} satisfied by the unperturbed frequencies. 
By Proposition \ref{prop:trans_un}, 
 \eqref{Om-per}, and the estimates \eqref{eq:tang_res_est}, 
\eqref{small_coeff_k}-\eqref{small_rem_k} we deduce 
the following  lemma
(cfr. Lemma 5.5 in \cite{BFM}). 

\begin{lem} {\bf (Perturbed transversality)} \label{lem:pert_trans}
	For $\varepsilon\in(0,\varepsilon_0)$ small enough and for all $\gamma\in[\gamma_1,\gamma_2]$, 
	\begin{align}
	& \max_{0\leq n \leq m_0}
	| \partial_\gamma^n \ora{\Omega}_\varepsilon (\gamma)\cdot \ell | \geq \frac{\rho_0}{2} \braket{\ell} \,, \quad \forall\,\ell\in\Z^\nu\setminus\{0\} \,; \label{eq:00_meln}\\
	&
	\max_{0\leq n \leq m_0} | \partial_\gamma^n 
	(\ora{\Omega}_\varepsilon(\gamma)
	 -\tm_1^\infty(\gamma) \vec \jmath) \cdot \ell  | \geq \frac{\rho_0}{2}\braket{\ell} \, , 
	  \quad \forall \ell\in\Z^\nu \setminus \{0\} \label{eq:0_meln_pert}\\	
	& 
	\begin{cases}
	\max_{0\leq n \leq m_0}| \partial_\gamma^n( \ora{\Omega}_\varepsilon(\gamma)\cdot \ell + \mu_j^\infty(\gamma) ) | \geq \frac{\rho_0}{2}	\braket{\ell} \, , \\
	\ora{\jmath}\cdot \ell + j = 0 \,, \quad \ell\in\Z^\nu\,, \ j\in\S_0^c \,;
	\end{cases} 
 \label{eq:1_meln_pert}\\
	&\begin{cases}
	\max_{0\leq n \leq m_0}| \partial_\gamma^n( \ora{\Omega}_\varepsilon(\gamma)\cdot \ell + \mu_j^\infty(\gamma)-\mu_{j'}^\infty(\gamma) ) | \geq \frac{\rho_0}{2}	\braket{\ell} \\
	\ora{\jmath}\cdot \ell + j -j'= 0 \,, \quad \ell\in\Z^\nu\,, \ j,j'\in\S_0^c \,, \ (\ell,j,j')\neq (0,j,j) \, ; 
	\end{cases} \label{eq:2_meln-_pert}\\
	&\begin{cases}
	\max_{0\leq n \leq m_0}| \partial_\gamma^n ( \ora{\Omega}_\varepsilon(\gamma)\cdot \ell + \mu_j^\infty(\gamma)+\mu_{j'}^\infty(\gamma) ) | \geq \frac{\rho_0}{2} 	\braket{\ell} \\
	\ora{\jmath}\cdot \ell + j + j'= 0 \,, \quad \ell\in\Z^\nu\,, \ j,   j'\in\S_0^c  \, . 
	\end{cases} \label{eq:2_meln+_pert}
	\end{align}
\end{lem}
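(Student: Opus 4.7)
The plan is to deduce each perturbed inequality \eqref{eq:00_meln}--\eqref{eq:2_meln+_pert} from its unperturbed counterpart in Proposition \ref{prop:trans_un} by a straightforward splitting argument: write the perturbed quantity as the unperturbed one plus a correction and exploit the smallness of the correction. Throughout, $\vec\Omega_\varepsilon(\gamma) = \vec\Omega(\gamma) + \vec r_\varepsilon(\gamma)$ satisfies \eqref{eq:tang_res_est}, and by \eqref{eq:final_eig_kappa} we have the decomposition
$$
\mu_j^\infty(\gamma)=\Omega_j(\gamma)+\tm_1^\infty(\gamma)\,j +(\tm_{\frac12}^\infty(\gamma)-1)\Omega_j(\gamma)-\tm_0^\infty(\gamma)\sgn(j)+\fr_j^\infty(\gamma),
$$
with the coefficients controlled by \eqref{small_coeff_k}--\eqref{small_rem_k}.

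First, I would carry out the estimate for \eqref{eq:00_meln} and \eqref{eq:0_meln_pert}: since $|\partial_\gamma^n(\vec r_\varepsilon\cdot\ell)|\le C\varepsilon\upsilon^{-1-n}|\ell|$ and $|\partial_\gamma^n(\tm_1^\infty\vec\jmath\cdot\ell)|\le C\varepsilon\upsilon^{-n}|\ell|$, for $0\le n\le m_0$ the correction is bounded by $C\varepsilon\upsilon^{-m_0-1}\langle\ell\rangle$, which is $o(\langle\ell\rangle)$ by the choice $\upsilon=\varepsilon^{\rm a}$ with ${\rm a}<1/(4m_0^2)$ in \eqref{param_small_meas}. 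Combined with \eqref{eq:0_meln}, this gives \eqref{eq:00_meln} and \eqref{eq:0_meln_pert} for $\varepsilon$ sufficiently small. The arguments for \eqref{eq:1_meln_pert} and \eqref{eq:2_meln-_pert} proceed similarly by applying \eqref{eq:1_meln} and \eqref{eq:2_meln-}, respectively: here the \emph{momentum conditions} $\vec\jmath\cdot\ell+j=0$ and $\vec\jmath\cdot\ell+j-j'=0$ provide the key bounds $|j|\lesssim\langle\ell\rangle$ and $|j-j'|\lesssim\langle\ell\rangle$, which force $|\tm_1^\infty j|,|\tm_1^\infty(j-j')|\lesssim\varepsilon\upsilon^{-n}\langle\ell\rangle$. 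Using the asymptotics in Lemma \ref{rem:exp_omegaj_gam} together with \eqref{ossGjsgn}--\eqref{diffsqrt}, one checks that $|\partial_\gamma^n\Omega_j|\lesssim\langle\ell\rangle^{1/2}$ and $|\partial_\gamma^n(\Omega_j-\Omega_{j'})|\lesssim\langle\ell\rangle$, so the terms $(\tm_{\frac12}^\infty-1)\Omega_j$ and $(\tm_{\frac12}^\infty-1)(\Omega_j-\Omega_{j'})$ contribute at most $C\varepsilon\upsilon^{-n-1}\langle\ell\rangle$. Finally $|\partial_\gamma^n(\fr_j^\infty\pm\fr_{j'}^\infty)|\lesssim\varepsilon\upsilon^{-n-3}$ is negligible. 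Summing, the total correction is $\le C\varepsilon\upsilon^{-m_0-3}\langle\ell\rangle=o(\langle\ell\rangle)$, so \eqref{eq:1_meln_pert}, \eqref{eq:2_meln-_pert} follow.

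The hard part will be \eqref{eq:2_meln+_pert}, because the momentum condition $\vec\jmath\cdot\ell+j+j'=0$ only bounds $|j+j'|\lesssim|\ell|$ and allows $|j|,|j'|$ to be arbitrarily large compared to $\langle\ell\rangle$; in this regime $(\tm_{\frac12}^\infty-1)(\Omega_j+\Omega_{j'})$ is no longer of lower order than $\langle\ell\rangle$, so the perturbative bootstrap above breaks down. I would overcome this by a dichotomy governed by a large absolute constant $M$:
\begin{itemize}
\item If $|j|+|j'|\le M\langle\ell\rangle^2$, then $\sqrt{|j|}+\sqrt{|j'|}\le 2\sqrt{M}\langle\ell\rangle$, whence $|(\tm_{\frac12}^\infty-1)\partial_\gamma^n(\Omega_j+\Omega_{j'})|\lesssim_M\varepsilon\upsilon^{-n-1}\langle\ell\rangle$, and the previous perturbative scheme applies, invoking \eqref{eq:2_meln+}.
\item If $|j|+|j'|>M\langle\ell\rangle^2$, the transversality is achieved already at $n=0$: from \eqref{Om-om} and Lemma \ref{rem:exp_omegaj_gam}, $\Omega_j+\Omega_{j'}\ge\tfrac12\sqrt{g}(|j|^{1/2}+|j'|^{1/2})$ for the relevant indices, so
$$
|\vec\Omega_\varepsilon(\gamma)\!\cdot\!\ell+\mu_j^\infty(\gamma)+\mu_{j'}^\infty(\gamma)|\ge\tfrac14\sqrt{g}(\sqrt{|j|}+\sqrt{|j'|})-C\langle\ell\rangle-C\varepsilon\upsilon^{-3}\ge\tfrac14\sqrt{g\,M}\langle\ell\rangle-C\langle\ell\rangle-C\varepsilon\upsilon^{-3}\,,
$$
using also $|\tm_1^\infty(j+j')|\le C\varepsilon|\ell|$ via the momentum relation. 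Choosing $M$ large (independent of $\varepsilon$) and then $\varepsilon$ small, this lower bound exceeds $\tfrac{\rho_0}{2}\langle\ell\rangle$, completing the proof of \eqref{eq:2_meln+_pert}.
\end{itemize}
Putting everything together and choosing $\varepsilon_0$ small enough so that all the smallness requirements above hold uniformly in $\gamma\in[\gamma_1,\gamma_2]$ yields the lemma.
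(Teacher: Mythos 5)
Your proposal is correct, and it fills in a proof the paper delegates to \cite{BFM} (Lemma 5.5 there) without spelling out any details. The splitting-plus-smallness argument is exactly the intended one, and crucially you have identified the genuine subtlety: in \eqref{eq:2_meln+_pert} the momentum constraint $\ora{\jmath}\cdot\ell+j+j'=0$ only controls $j+j'$, so the Leibniz term $\partial_\gamma^n(\tm_{\frac12}^\infty-1)\cdot(\Omega_j+\Omega_{j'})$ carries the undifferentiated factor $\Omega_j+\Omega_{j'}\sim\sqrt{|j|}+\sqrt{|j'|}$, which is not $O(\braket{\ell})$ in general; the dichotomy by comparing $|j|+|j'|$ to $M\braket{\ell}^2$ is the right fix. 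Two small bookkeeping points worth making explicit in a final write-up: first, for \eqref{eq:1_meln_pert} and \eqref{eq:2_meln-_pert} you should check all $n\le m_0$ via Leibniz, and the dangerous term is again $\partial_\gamma^n(\tm_{\frac12}^\infty-1)\cdot\Omega_j$ (resp.\ $\cdot(\Omega_j-\Omega_{j'})$), which the momentum condition together with \eqref{ossGjsgn}--\eqref{diffsqrt} bounds by $C\varepsilon\upsilon^{-n-1}\braket{\ell}^{1/2}$ (resp.\ $C\varepsilon\upsilon^{-n-1}\braket{\ell}$), hence small; second, in your Case 2 estimate the constant $C'$ appearing from $\fr_j^\infty,\tm_0^\infty$ is absorbed using $\braket{\ell}\ge 1$, so the final choice of $M$ should ensure $\tfrac14\sqrt{gM/2}-C-C'\ge\rho_0/2$, and this is independent of $\varepsilon$ once $\varepsilon\upsilon^{-3}$ is small.
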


The transversality estimates \eqref{eq:00_meln}-\eqref{eq:2_meln+_pert} and an application of R\"ussmann Theorem 17.1 in \cite{Russ}, directly 
imply the following bounds for the sets in \eqref{union_gec}
	 (cfr. Lemma 5.6 in \cite{BFM}).

\begin{lem} {\bf (Estimates of the resonant sets)} \label{lem:meas_res}
	The measure of the sets \eqref{union_gec}-
	\eqref{QII0} satisfy
	\begin{align*}
	| R_\ell^{(0)} |, | R_\ell^{(T)} | \lesssim ( \upsilon\braket{\ell}^{-(\tau+1)} )^{\frac{1}{m_0}}  \,, & \quad | R_{\ell,j}^{(I)} |\lesssim \big( \upsilon |j|^{\frac12}\braket{\ell}^{-(\tau+1)} \big)^{\frac{1}{m_0}} \,,\\
	| R_{\ell,j,j'}^{(II)} |\lesssim \big( \upsilon 
	\braket{\ell}^{-(\tau+1)} \big)^{\frac{1}{m_0}}\,, & \quad | Q_{\ell,j,j'}^{(II)} |\lesssim \big( \upsilon\,\big( |j|^\frac12 + |j'|^{\frac12} \big)\braket{\ell}^{-(\tau+1)} \big)^{\frac{1}{m_0}} \, .
	\end{align*}
\end{lem}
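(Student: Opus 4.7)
My plan is to obtain each of the five measure bounds by a uniform application of R\"ussmann's quantitative sublevel-set lemma (Theorem 17.1 in \cite{Russ}), applied to the scalar resonance function attached to each set, after verifying transversality from Lemma \ref{lem:pert_trans} and uniform $\cC^{m_0+1}$ regularity after normalization.

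Concretely, for the set $R_\ell^{(0)}$ I would consider $f_\ell(\gamma):= \ora{\Omega}_\varepsilon(\gamma)\cdot\ell/\braket{\ell}$. By \eqref{eq:00_meln}, $ \max_{0\leq n\leq m_0}|\partial_\gamma^n f_\ell(\gamma)|\geq \rho_0/2 $ on $[\gamma_1,\gamma_2]$. From the asymptotics of $\Omega_j(\gamma)$ in Lemma \ref{rem:exp_omegaj_gam} and the bound \eqref{eq:tang_res_est} on $\ora r_\varepsilon$, one checks that $|\partial_\gamma^n f_\ell|\leq C_{n}$ uniformly in $\ell$ for $0\leq n\leq m_0+1$. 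R\"ussmann's theorem applied with threshold $h=8\upsilon\braket{\ell}^{-(\tau+1)}$ then yields $|R_\ell^{(0)}|\lesssim (\upsilon\braket{\ell}^{-(\tau+1)})^{1/m_0}$. The same reasoning with $f_\ell^{(T)}(\gamma):=(\ora{\Omega}_\varepsilon(\gamma)-\tm_1^\infty(\gamma)\,\vec\jmath)\cdot\ell/\braket{\ell}$ and \eqref{eq:0_meln_pert} gives the bound on $R_\ell^{(T)}$, using that $|\pa_\gamma^n\tm_1^\infty|\lesssim \varepsilon\upsilon^{-n}$ from \eqref{small_coeff_k} is small.

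For the Melnikov sets $R_{\ell,j}^{(I)}$, $R_{\ell,j,j'}^{(II)}$, $Q_{\ell,j,j'}^{(II)}$ I would normalize by $\braket{\ell}$, $\braket{\ell}$, and $|j|^{1/2}+|j'|^{1/2}+\braket{\ell}$ respectively, and employ \eqref{eq:1_meln_pert}, \eqref{eq:2_meln-_pert}, \eqref{eq:2_meln+_pert}. The delicate point is uniform $C^{m_0+1}$-control of the normalized function, since $\mu_j^\infty(\gamma)=\tm_1^\infty j+\tm_{1/2}^\infty\Omega_j -\tm_0^\infty\sgn(j)+\fr_j^\infty$ has a growing component $\tm_1^\infty j$ and a square-root growing component $\tm_{1/2}^\infty\Omega_j$. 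For $R_{\ell,j}^{(I)}$ the momentum constraint $\vec\jmath\cdot\ell+j=0$ forces $|j|\leq C\braket{\ell}$, so dividing by $\braket{\ell}$ produces bounded derivatives (using also Lemma \ref{rem:exp_omegaj_gam}), and R\"ussmann with $h=4\upsilon|j|^{1/2}\braket{\ell}^{-(\tau+1)}$ gives the claimed estimate. For $R_{\ell,j,j'}^{(II)}$ the constraint $j-j'=-\vec\jmath\cdot\ell$ controls the difference of the singular parts; writing $\tm_1^\infty(j-j')$ and $\tm_{1/2}^\infty(\Omega_j-\Omega_{j'})$ and exploiting $|\Omega_j-\Omega_{j'}|\lesssim |j-j'|/(|j|^{1/2}+|j'|^{1/2})+O(1)\lesssim \braket{\ell}$ via Lemma \ref{rem:exp_omegaj_gam} again yields bounded normalized derivatives and the bound $(\upsilon\braket{\ell}^{-(\tau+1)})^{1/m_0}$. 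Finally, $Q_{\ell,j,j'}^{(II)}$ is automatically empty outside $|j|^{1/2}+|j'|^{1/2}\lesssim\braket{\ell}$ by Lemma \ref{lem:emptysets}, so after dividing by $(|j|^{1/2}+|j'|^{1/2})$ one again obtains uniform $C^{m_0+1}$ bounds and transversality from \eqref{eq:2_meln+_pert} (after renormalization both $\geq \rho_0/2$ up to a constant, since $|j|^{1/2}+|j'|^{1/2}\lesssim\braket{\ell}$), giving the desired R\"ussmann bound.

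The main obstacle is ensuring that the uniform $\cC^{m_0+1}$ bounds on the resonance functions, once divided by the appropriate normalization factor, hold with constants independent of $(\ell,j,j')$. This is where the asymptotic expansion \eqref{expa_om_gam}--\eqref{bound_cj_gam} of $\Omega_j(\gamma)$, the decay of $\fr_j^\infty$ in \eqref{small_rem_k}, the smallness of $\tm_1^\infty$, $\tm_0^\infty$, $\tm_{1/2}^\infty-1$ in \eqref{small_coeff_k}, and the momentum constraints are all needed simultaneously. Once these uniform bounds are in place the five measure estimates are then a direct application of R\"ussmann's theorem, exactly as carried out in Lemma 5.6 of \cite{BFM}, to which we can refer for the detailed computations.
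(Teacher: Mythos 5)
Your proposal is correct and follows essentially the same route as the paper: the paper itself states that Lemma \ref{lem:meas_res} follows directly from the transversality estimates \eqref{eq:00_meln}--\eqref{eq:2_meln+_pert} (Lemma \ref{lem:pert_trans}) together with an application of R\"ussmann's Theorem 17.1 in \cite{Russ}, and refers to Lemma 5.6 in \cite{BFM} for the details. You correctly identify the normalization step, the role of the momentum constraint (and Lemma \ref{lem:emptysets}) in obtaining uniform $\cC^{m_0+1}$ bounds, and the precise threshold fed into R\"ussmann's lemma.

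One small clarification that would tighten your $Q_{\ell,j,j'}^{(II)}$ argument: rather than dividing by $|j|^{1/2}+|j'|^{1/2}$ (which makes the upper bound on the normalized derivatives of the term $\ora{\Omega}_\varepsilon(\gamma)\cdot\ell$ depend on $\braket{\ell}/(|j|^{1/2}+|j'|^{1/2})$, not uniformly bounded in $\ell$), it is cleaner to normalize by $\braket{\ell}$ alone in all four Melnikov cases. The lower bound from \eqref{eq:2_meln+_pert} then becomes $\rho_0/2$, the $\cC^{m_0+1}$ bounds hold uniformly via Lemma \ref{lem:emptysets} and the momentum constraint $j+j'=-\ora{\jmath}\cdot\ell$ (which kills the $\tm_1^\infty(j+j')$ growth), and the R\"ussmann threshold is $4\upsilon(|j|^{1/2}+|j'|^{1/2})\braket{\ell}^{-(\tau+1)}$, yielding directly the stated bound.
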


	We now estimate the measure of all the sets in  \eqref{union_gec}. 
	By Lemma \ref{lem:meas_res}, and the choice of  
$ \tau $ in	\eqref{param_small_meas}, we have 
	\begin{align}
	\label{R0est}
	&\Big| \bigcup_{\ell \neq 0} R^{(0)}_\ell \cup R^{(T)}_\ell
	\Big| \leq \sum_{\ell \neq 0} | R^{(0)}_\ell| + | R^{(T)}_\ell|
	 \lesssim \sum_{\ell\neq 0} \Big( \frac{\upsilon}{\braket{\ell}^{\tau +1}} \Big)^{\frac{1}{m_0}} \lesssim \upsilon^{\frac{1}{m_0}} \,,\\
	\label{R1est}
	&\abs{\bigcup_{\ell \neq 0,  j = - \ora{\jmath}\cdot\ell}R_{\ell,j}^{(I)}} \leq 
	\sum_{\ell \neq 0} | R_{\ell,- \ora{\jmath}\cdot\ell}^{(I)}| 
	\lesssim  \sum_{\ell} \Big( \frac{\upsilon}{\braket{\ell}^{\tau + \frac12}} \Big)^{\frac{1}{m_0}} \lesssim \upsilon^{\frac{1}{m_0}} \,, 
\end{align}
	and using also Lemma \ref{lem:emptysets},   
\begin{align}
	\label{Q2est}
	&\abs{\bigcup_{\ell, \, j,  j'\in\S_0^c\atop \ora{\jmath}\cdot \ell +j+j'=0} Q_{\ell,j,j'}^{(II)}} \leq \sum_{\ell, \abs j \leq C \braket{\ell}^2, \atop j'=
	-\ora{\jmath}\cdot \ell - j } |Q_{\ell,j,j'}^{(II)}|
	\lesssim
	\sum_{\ell, \abs j \leq C \braket{\ell}^2 } \left( \frac{\upsilon}{\braket{\ell}^{\tau}} \right)^{\frac{1}{m_0}}   \lesssim \upsilon^{\frac{1}{m_0}} \, . 	\end{align}
	We are left with estimating the measure of 
	\begin{align}\label{union_gec_RII}
	\bigcup_{(\ell,j,j')\neq(0,j,j), j \neq j' \atop \ora{\jmath}\cdot\ell+j-j'=0 } 
	\!\!\!\!\!\!\!\! R_{\ell,j,j'}^{(II)} & = 
	\left( \bigcup_{ j \neq j'  \,,\ j\cdot j' <0 \atop \ora{\jmath}\cdot\ell+j-j'=0 } R_{\ell,j,j'}^{(II)} \right)\cup 
\left( \bigcup_{j \neq j' \,, \ j\cdot j'  >0 \atop \ora{\jmath}\cdot\ell+j-j'=0 } R_{\ell,j,j'}^{(II)}\right)  := \tI_1+ \tI_2 \, . 
	\end{align}
	We first estimate the measure of $ \tI_1 $. 
	For $ j\cdot j' < 0 $, the momentum condition reads
	$ j-j' = \sgn(j)(|j|+|j'|) = - \ora{\jmath}\cdot\ell $, 
	thus  $|j|, |j'| \leq C \la \ell \ra$.
	Hence, by Lemma \ref{lem:meas_res} and  the choice of  
$ \tau $ in	\eqref{param_small_meas}, we have 
	\begin{align}
	\label{tI1}
	|\tI_1|
	\leq
	\sum_{ \ell, |j| \leq C \la \ell \ra,   j' = 
	j + \ora{\jmath}\cdot\ell } |R_{\ell,j,j'}^{(II)}|
	\lesssim 
	\sum_{\ell, \abs{j} \leq C \braket{\ell}} \left( \frac{\upsilon}{\braket{\ell}^{\tau+1}} \right)^{\frac{1}{m_0}}   \lesssim  \upsilon^{\frac{1}{m_0}} \, . 
	\end{align}
	Then we estimate the measure of  $ \tI_2 $ in \eqref{union_gec_RII}. 
	The key step is given in the next lemma. Remind 
	the definition of the sets
	$ R_{\ell,j, j'}^{(II)} $ and  
$ R_{\ell}^{(T)}  $
 in \eqref{RI0} and \eqref{RII0}.
	
\begin{lem}\label{lemma:inWave}
Let  $\upsilon_0 \geq \upsilon$ and $ \tau \geq \tau_0\geq 1 $.  
There is a constant $ C_1 > 0 $ such that, for $ \varepsilon $ small enough, 
for any $ \ora{\jmath}\cdot\ell+j-j'=0 $, $ j \cdot j' >  0 $,  
\begin{equation}\label{incluwave}
\min \{|j|,|j'|\} \geq C_1 \upsilon_0^{-2}\braket{\ell}^{2(\tau_0+1)} 
\quad  \Longrightarrow \quad 
R_{\ell,j, j'}^{(II)} (\upsilon,\tau) \subset 
\bigcup_{\ell \neq 0} R_{\ell}^{(T)}(\upsilon_0,\tau_0) \, . 
\end{equation}
\end{lem}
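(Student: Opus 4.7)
The plan is to take any $\gamma$ in $R_{\ell,j,j'}^{(II)}(\upsilon,\tau)$ satisfying the hypotheses and show that the \emph{same} index $\ell$ (which is automatically nonzero, since $\ora{\jmath}\cdot\ell+j-j'=0$ together with $(\ell,j,j')\ne(0,j,j)$ force $\ell\ne 0$) places $\gamma$ in $R_{\ell}^{(T)}(\upsilon_0,\tau_0)$. The key algebraic observation is that, using $\sgn(j)=\sgn(j')$ (which holds since $j\cdot j'>0$) and the momentum condition $j-j'=-\ora{\jmath}\cdot\ell$, the expression \eqref{eq:final_eig_kappa} gives
\begin{equation*}
\mu_j^\infty(\gamma)-\mu_{j'}^\infty(\gamma)= -\tm_1^\infty(\gamma)\,\ora{\jmath}\cdot\ell + \tm_{\tfrac12}^\infty(\gamma)\bigl(\Omega_j(\gamma)-\Omega_{j'}(\gamma)\bigr) + \bigl(\fr_j^\infty(\gamma)-\fr_{j'}^\infty(\gamma)\bigr),
\end{equation*}
and therefore
\begin{equation*}
\ora{\Omega}_\varepsilon(\gamma)\cdot\ell+\mu_j^\infty(\gamma)-\mu_{j'}^\infty(\gamma)=(\ora{\Omega}_\varepsilon(\gamma)-\tm_1^\infty(\gamma)\vec\jmath)\cdot\ell+\tm_{\tfrac12}^\infty(\gamma)\bigl(\Omega_j-\Omega_{j'}\bigr)+\bigl(\fr_j^\infty-\fr_{j'}^\infty\bigr).
\end{equation*}

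Next I will bound the last two terms smallness when $\min\{|j|,|j'|\}$ is large. For $\Omega_j-\Omega_{j'}$, I use the decomposition \eqref{Om-om} together with the expansion \eqref{expa_om_gam} of Lemma \ref{rem:exp_omegaj_gam}, the elementary identity $||j|^{1/2}-|j'|^{1/2}|=||j|-|j'||/(|j|^{1/2}+|j'|^{1/2})$, the estimate $||j|-|j'||=|j-j'|=|\ora{\jmath}\cdot\ell|\leq C\braket{\ell}$ (valid since $\sgn(j)=\sgn(j')$), and the bound \eqref{ossGjsgn} for $|G_j(0)/j-G_{j'}(0)/j'|$. These together yield
\begin{equation*}
|\Omega_j(\gamma)-\Omega_{j'}(\gamma)|\leq \frac{C\braket{\ell}}{\min\{|j|,|j'|\}^{1/2}}.
\end{equation*}
For the remainders I use \eqref{small_rem_k} with $k=0$, which gives $|\fr_j^\infty-\fr_{j'}^\infty|\leq C\varepsilon\upsilon^{-3}\min\{|j|,|j'|\}^{-1/2}$. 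Finally $|\tm_{\tfrac12}^\infty|\leq 2$ by \eqref{small_coeff_k}.

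Now I assemble the estimate. If $\min\{|j|,|j'|\}\geq C_1\upsilon_0^{-2}\braket{\ell}^{2(\tau_0+1)}$, then $\min\{|j|,|j'|\}^{1/2}\geq C_1^{1/2}\upsilon_0^{-1}\braket{\ell}^{\tau_0+1}$, hence
\begin{equation*}
|\tm_{\tfrac12}^\infty||\Omega_j-\Omega_{j'}|+|\fr_j^\infty-\fr_{j'}^\infty|\leq \frac{2C\upsilon_0}{C_1^{1/2}\braket{\ell}^{\tau_0}}+\frac{C\varepsilon\upsilon^{-3}\upsilon_0}{C_1^{1/2}\braket{\ell}^{\tau_0+1}}.
\end{equation*}
Choosing $C_1$ large enough and using $\varepsilon\upsilon^{-3}\to 0$ (which holds for $\varepsilon$ small by the choice $\upsilon=\varepsilon^{\rm a}$ with ${\rm a}<1/3$, a consequence of \eqref{param_small_meas}), this quantity is bounded by $2\upsilon_0\braket{\ell}^{-\tau_0}$. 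Combining with the defining inequality of $R_{\ell,j,j'}^{(II)}(\upsilon,\tau)$, namely $|\ora{\Omega}_\varepsilon\cdot\ell+\mu_j^\infty-\mu_{j'}^\infty|<4\upsilon\braket{\ell}^{-\tau}\leq 4\upsilon_0\braket{\ell}^{-\tau_0}$ (using $\upsilon\leq\upsilon_0$, $\tau\geq\tau_0$), the triangle inequality yields $|(\ora{\Omega}_\varepsilon-\tm_1^\infty\vec\jmath)\cdot\ell|<8\upsilon_0\braket{\ell}^{-\tau_0}$, i.e., $\gamma\in R_{\ell}^{(T)}(\upsilon_0,\tau_0)$.

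\textbf{Main obstacle.} No real obstacle is expected; the argument is essentially bookkeeping once one recognizes that the momentum condition together with $\sgn(j)=\sgn(j')$ removes the ``big'' piece $\tm_1^\infty(j-j')$ of the eigenvalue difference in a way that is proportional to $\tm_1^\infty\vec\jmath\cdot\ell$, exactly matching the structure of the transport resonant set $R_{\ell}^{(T)}$. The only delicate point is that the surviving $\sqrt{g}(|j|^{1/2}-|j'|^{1/2})$ term must be gained back by the largeness hypothesis on $\min\{|j|,|j'|\}$, and similarly for the smoothing remainder $\fr_j^\infty-\fr_{j'}^\infty$, which requires that $\varepsilon\upsilon^{-3}$ stay bounded.
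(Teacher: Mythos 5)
Your proof is correct and essentially the same as the paper's; the paper argues by contrapositive (if $\gamma\notin\bigcup_{\ell\neq 0}R_\ell^{(T)}(\upsilon_0,\tau_0)$ then $\gamma\notin R_{\ell,j,j'}^{(II)}(\upsilon,\tau)$) using exactly the same decomposition $\ora{\Omega}_\varepsilon\cdot\ell+\mu_j^\infty-\mu_{j'}^\infty=(\ora{\Omega}_\varepsilon-\tm_1^\infty\vec\jmath)\cdot\ell+\tm_{\tfrac12}^\infty(\Omega_j-\Omega_{j'})+(\fr_j^\infty-\fr_{j'}^\infty)$, the bound $\bigl||j|^{1/2}-|j'|^{1/2}\bigr|\le C\braket{\ell}/\min\{|j|,|j'|\}^{1/2}$, the estimate \eqref{ossGjsgn}, and the largeness hypothesis on $\min\{|j|,|j'|\}$. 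The only cosmetic difference is that you run the triangle inequality in the forward direction while the paper runs it in reverse.
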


\begin{proof}
If $  \gamma \in [ \gamma_1, \gamma_2] \setminus 
\bigcup_{\ell \neq 0} R_{\ell}^{(T)}(\upsilon_0,\tau_0) $, then	
\begin{equation}\label{1mely}
	|(\ora{\Omega}_\varepsilon(\gamma)-\tm_1^\infty(\gamma)\ora{\jmath})\cdot \ell|\geq 8\upsilon_0\braket{\ell}^{- \tau_0} \, , \quad  \forall \ell\in\Z^\nu\setminus\{0\} \, . 
\end{equation}
	Then, by \eqref{eq:final_eig_kappa}, the momentum condition $j-j'=-\ora{\jmath}\cdot\ell$, \eqref{small_coeff_k}, \eqref{small_rem_k},
	Lemma \ref{rem:exp_omegaj_gam},
	the condition $ j\cdot j' > 0 $, \eqref{ossGjsgn},  and  \eqref{1mely}, we deduce that 
	\begin{equation*}
		\begin{aligned}
	& 	|\ora{\Omega}_\varepsilon(\gamma)\cdot\ell  + \mu_{j}^\infty(\gamma) - \mu_{j'}^\infty(\gamma)| \geq |\ora{\Omega}_\varepsilon(\gamma)\cdot\ell + \tm_1^\infty (j-j') |- |\tm_{\frac12}^\infty||\Omega_{j}(\gamma)-\Omega_{j'}(\gamma) |- |\fr_j^\infty(\gamma)-\fr_{j'}^\infty(\gamma)| \\
		& \geq  | (\ora{\Omega}_\varepsilon(\gamma) - \tm_1^\infty \vec \jmath ) \cdot\ell  |
		  -(1-C \varepsilon \upsilon^{-1})\big||j|^\frac12 -|j'|^\frac12\big| - C\Big( \frac{1}{|j|^\frac12} + \frac{1}{|j'|^\frac12} \Big) -C 
		\frac{\varepsilon}{\upsilon^3}
		\Big( \frac{1}{|j|^\frac12} + \frac{1}{|j'|^\frac12} \Big)\\
		& \geq  \frac{8\upsilon_0}{\braket{\ell}^{\tau_0}} - \frac12 \frac{|j-j'|}{|j|^\frac12 +|j'|^\frac12} - C\Big( \frac{1}{|j|^\frac12} + \frac{1}{|j'|^\frac12} \Big) 
		 \geq \frac{8\upsilon_0}{\braket{\ell}^{\tau_0}} - C \Big( \frac{\braket{\ell}}{|j|^\frac12} + \frac{\braket{\ell}}{|j'|^\frac12} \Big) \\
		 & \geq \frac{4\upsilon_0}{\braket{\ell}^{\tau_0}} 
		\end{aligned}
	\end{equation*}
for any	
$ 	|j|,|j'| > C_1 \upsilon_0^{-2}\braket{\ell}^{2(\tau_0+1)} $, for $ C_1 > C^2 / 64 $. Since
$\upsilon_0 \geq \upsilon$ and $ \tau \geq \tau_0 $ we deduce that	
$$
|\ora{\Omega}_\varepsilon(\gamma)\cdot\ell  + \mu_{j}^\infty(\gamma) - \mu_{j'}^\infty(\gamma)| 
\geq 4\upsilon \braket{\ell}^{-\tau} \, , 
$$
namely   that  $\gamma \not\in  R_{\ell,j,j'}^{(II)} (\upsilon, \tau)$.
\end{proof}	

Note that the set of indexes $ (\ell, j, j' )$ such that 
$ \ora{\jmath}\cdot\ell+j-j'=0 $ 
and 
$ \min \{|j|,|j'|\} < C_1 \upsilon_0^{-2}\braket{\ell}^{2(\tau_0+1)} $ 
is included, for $ \upsilon_0 $ small enough, into 
 the set 
\begin{equation}\label{defIl}
 {\cal I}_\ell := \Big\{
 (\ell,j,j') \ : \, \ora{\jmath}\cdot\ell+j-j'=0  \, , \ 
 |j|, |j'|  \leq \upsilon_0^{-3}  \langle  \ell \rangle^{2(\tau_0+1)} \Big\} 
\end{equation}
because
$
\max \{ |j| , |j'| \} \leq \min \{ |j| , |j'| \} + |j - j' |  <  
C_1 \upsilon_0^{-2} \langle \ell \rangle^{2(\tau_0+1)} + 
C \langle \ell \rangle \leq \upsilon_0^{-3}  \langle \ell \rangle^{2(\tau_0+1)} $.
As a consequence, by Lemma \ref{lemma:inWave} we deduce that 
\begin{align}
& 
\tI_2 = 
\bigcup_{ j\neq j'\,,\ j\cdot j' > 0 \atop \ora{\jmath}\cdot\ell+j-j'=0} R_{\ell,j,j'}^{(II)} (\upsilon, \tau)   \subset
\Big(\bigcup_{\ell  \neq 0} R_{\ell}^{(T)} (\upsilon_0, \tau_0)  \Big) 
\bigcup
 \Big(\bigcup_{(\ell,j,j') \in {\cal I}_\ell} 
 R_{\ell,j,j'}^{(II)} (\upsilon, \tau)  \Big)   \, .  \label{primainc}
 \end{align}
 
 \begin{lem}\label{espotau0}
 Let $ \tau_0 := m_0 \nu $ and  
$ \upsilon_0 = \upsilon^{\frac{1}{4m_0}} $.
Then 
\begin{equation} \label{tI2}
|\tI_2|\leq  C \upsilon^{\frac{1}{4 m_0^2}} \, . 
\end{equation}
 \end{lem}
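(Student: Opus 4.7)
The plan is to bound separately the measures of the two pieces in the inclusion \eqref{primainc}, namely
\begin{equation*}
\tI_2 \subset \Big(\bigcup_{\ell \neq 0} R_\ell^{(T)}(\upsilon_0, \tau_0)\Big) \,\cup\, \Big(\bigcup_{(\ell,j,j') \in \mathcal{I}_\ell} R_{\ell,j,j'}^{(II)}(\upsilon, \tau)\Big),
\end{equation*}
using in each case the R\"ussmann-type measure bounds of Lemma \ref{lem:meas_res} with suitable parameters, together with the choices $\tau_0 = m_0\nu$ and $\upsilon_0 = \upsilon^{1/(4m_0)}$ stated in the lemma.

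For the first piece, I will apply Lemma \ref{lem:meas_res} with parameters $(\upsilon_0,\tau_0)$ to get $|R_\ell^{(T)}(\upsilon_0,\tau_0)| \lesssim (\upsilon_0 \langle\ell\rangle^{-(\tau_0+1)})^{1/m_0}$, and sum over $\ell\neq 0$. The series $\sum_\ell \langle\ell\rangle^{-(\tau_0+1)/m_0}$ converges because our choice $\tau_0=m_0\nu$ gives $(\tau_0+1)/m_0=\nu+1/m_0 > \nu$; the resulting bound is $\lesssim \upsilon_0^{1/m_0} = \upsilon^{1/(4m_0^2)}$, which is exactly the desired size.

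For the second piece, the key observation is that for each fixed $\ell$ the momentum relation $j'=j+\ora{\jmath}\cdot\ell$ forces $j'$ to be a function of $j$, so by the definition \eqref{defIl} of $\mathcal{I}_\ell$ the number of admissible pairs $(j,j')$ is bounded by $C\upsilon_0^{-3}\langle\ell\rangle^{2(\tau_0+1)}$. Applying Lemma \ref{lem:meas_res} with parameters $(\upsilon,\tau)$ then yields
\begin{equation*}
\Big|\bigcup_{(\ell,j,j') \in \mathcal{I}_\ell} R_{\ell,j,j'}^{(II)}(\upsilon, \tau)\Big| \,\lesssim\, \upsilon_0^{-3}\,\upsilon^{1/m_0}\sum_{\ell}\langle\ell\rangle^{2(\tau_0+1)-(\tau+1)/m_0}.
\end{equation*}
The condition \eqref{param_small_meas} on $\tau$ is precisely tailored so that $(\tau+1)/m_0-2(\tau_0+1)>\nu$ (since $2m_0(\tau_0+1)+m_0\nu = 2m_0^2\nu+m_0\nu+2m_0 < \tau$), guaranteeing convergence of the series. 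With $\upsilon_0=\upsilon^{1/(4m_0)}$ this contribution is bounded by $\upsilon^{1/m_0-3/(4m_0)}=\upsilon^{1/(4m_0)}$, which for $m_0\ge 1$ is dominated by $\upsilon^{1/(4m_0^2)}$ since $\upsilon\in(0,1)$.

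There is no real obstacle here: the heavy lifting was done in Lemma \ref{lemma:inWave}, which reduced the resonant set $\tI_2$ (where $j$ and $j'$ could be arbitrarily large because the momentum constraint does not bound them individually when $j\cdot j'>0$) to the union of a purely transport-type resonant set plus a union over a polynomially bounded range of indices $\mathcal{I}_\ell$. What remains is only to balance the two parameters $\upsilon_0$ and $\tau_0$ so that (i) both $\ell$-series converge and (ii) the worse of the two resulting powers of $\upsilon$ is still at least $\upsilon^{1/(4m_0^2)}$; the exponent $1/(4m_0)$ in $\upsilon_0$ is the minimal choice making $\upsilon_0^{-3}\upsilon^{1/m_0}$ positive in $\upsilon$, and $\tau_0=m_0\nu$ is the minimal integer choice making the first series converge. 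Summing the two estimates yields \eqref{tI2}.
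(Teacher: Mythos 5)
Your proof is correct and follows essentially the same route as the paper's: bound the first union in \eqref{primainc} via Lemma \ref{lem:meas_res} applied at parameters $(\upsilon_0,\tau_0)$ to obtain $\upsilon_0^{1/m_0}=\upsilon^{1/(4m_0^2)}$, bound the second union by counting the $\lesssim \upsilon_0^{-3}\langle\ell\rangle^{2(\tau_0+1)}$ admissible indices in $\mathcal I_\ell$ and summing the R\"ussmann bound to get $\upsilon_0^{-3}\upsilon^{1/m_0}=\upsilon^{1/(4m_0)}$, then observe the second exponent is the larger one so the first dominates. The convergence and parameter checks you carry out (that $(\tau_0+1)/m_0>\nu$ and that \eqref{param_small_meas} ensures $(\tau+1)/m_0-2(\tau_0+1)>\nu$) are exactly the ones implicit in the paper's one-line justifications.
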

 
 \begin{proof}
 By \eqref{R0est} (applied with $ \upsilon_0, \tau_0 $ instead of $ \upsilon, \tau $), 
 and $\tau_0 = m_0 \nu$, the measure of 
\begin{equation}\label{stima.wtR}
\Big| \bigcup_{\ell \neq 0} R^{(T)}_\ell (\upsilon_0, \tau_0)  \Big|
	 \lesssim \upsilon_0^{\frac{1}{m_0}}  \lesssim \upsilon^{\frac{1}{4 m_0^2}} \, . 
\end{equation}
Moreover, recalling \eqref{defIl}, 
	\begin{equation}\label{stima.altra}
\Big| \bigcup_{(\ell,j,j') \in {\cal I}_\ell} 
 R_{\ell,j,j'}^{(II)} (\upsilon, \tau) \Big|  	
 \lesssim \sum_{\ell \in \Z^\nu \atop |j|\leq  C_1 \upsilon_0^{-3} \braket{\ell}^{2(\tau_0+1)} } \left( \frac{\upsilon}{\braket{\ell}^{\tau+1}} \right)^{\frac{1}{m_0}}  \lesssim \sum_{\ell\in\Z^\nu} \frac{\upsilon^{\frac{1}{m_0}}\upsilon_0^{-3}}{\braket{\ell}^{\frac{\tau +1}{m_0}-2(\tau_0+1)}} \leq C \upsilon^{\frac{1}{4m_0}} \,,
	\end{equation}
	by the choice of $ \tau $ in \eqref{param_small_meas} and $ \upsilon_0 $. 
	The bound 
	\eqref{tI2} follows by \eqref{stima.wtR} and \eqref{stima.altra}.
	\end{proof}

\begin{proof}[Proof of Theorem \ref{MEASEST} completed.]
	By  \eqref{union_gec}, \eqref{R0est}, \eqref{R1est}, \eqref{Q2est}, \eqref{union_gec_RII}, \eqref{tI1} and \eqref{tI2} we deduce that 
$$
	\abs{\cG_\varepsilon^c} \leq C \upsilon^{\frac{1}{4 m_0^2}} \,.
	$$
	For $\upsilon= \varepsilon^\ta$ as in \eqref{param_small_meas}, we get 
	$| \cG_\varepsilon | \geq \gamma_2-\gamma_1 - C \varepsilon^{\ta/ 4 m_0^2} $. The proof of Theorem \ref{MEASEST} is concluded.
\end{proof}

\begin{rem}
We have actually imposed in Lemma \ref{espotau0} the stronger non-resonance 
condition \eqref{0meln}  with $\upsilon_0  = \upsilon^{\frac{1}{4 m_0}} > \upsilon $.
Since it has no significant importance for Lemma \ref{conju.tr}
we keep $ \upsilon $. 
\end{rem}

\section{Approximate inverse}\label{sec:approx_inv}

In order to implement a convergent Nash-Moser scheme that leads to a solution of $\cF(i,\alpha)=0$, where $ \cF (i, \alpha) $ is the nonlinear operator  defined in \eqref{F_op},  
we construct an \emph{almost approximate right inverse} of the linearized operator
\begin{equation*}
\di_{i,\alpha}\cF(i_0,\alpha_0)[\whi,\wh\alpha] = \omega\cdot \pa_\vf \whi - \di_i X_{H_\alpha}\left( i_0(\vf) \right)[\whi] - \left(\wh\alpha,0,0\right) \,.
\end{equation*}
Note that $\di_{i,\alpha}\cF(i_0,\alpha_0)=\di_{i,\alpha}\cF(i_0)$ is independent of $\alpha_0$.
We assume that the torus $ i_0 (\vf) = ( \theta_0 (\vf), I_0 (\vf), w_0 (\vf)) $ 
is  reversible and traveling,  according to  \eqref{RTTT}.

In the sequel we shall assume  the smallness condition,  
for some $\tk := \tk (\tau,\nu)>0$, 
$ \varepsilon\upsilon^{-\tk} \ll 1 $.

We closely follow the  strategy presented in \cite{BB} and implemented for the water waves 
equations in \cite{BM,BBHM, BFM}. As shown in \cite{BFM} this construction preserves
the momentum preserving properties needed for the search of traveling waves and the estimates are very similar. Thus we are short.

\smallskip

First of all, we state  tame estimates for the composition operator induced by the Hamiltonian vector field $X_{P}= ( \pa_I P , - \pa_\theta P, \Pi_{\S^+,\Sigma}^\angle J \nabla_{w} P )$ in \eqref{F_op} (see Lemma 6.1 of \cite{BFM}).
\begin{lem}{\bf (Estimates of the perturbation $P$)} \label{XP_est}
	Let $\fI(\vf)$ in \eqref{ICal} satisfy $\norm{ \fI }_{3 s_0 + 2 k_0 + 5}^{k_0,\upsilon}\leq 1$. Then, for any $ s \geq s_0 $, 
	$	\norm{ X_{P}(i) }_{s}^{k_0,\upsilon} \lesssim_s 1 + \norm{ \fI }_{s+2 s_0 + 2 k_0 + 3}^{k_0,\upsilon} $, 
	and, for all $\whi:= (\wh\theta,\whI,\whw)$,
	\begin{align*}
	\norm{ \di_i X_{P}(i)[\whi] }_{s}^{k_0,\upsilon} &\lesssim_s \norm{\whi}_{s+1}^{k_0,\upsilon} + \norm{ \fI }_{s+2 s_0 + 2 k_0 + 4}^{k_0,\upsilon}\norm{ \whi}_{s_0+1}^{k_0,\upsilon} \,, \\
	\norm{ \di_i^2 X_{P}(i)[\whi,\whi] }_{s}^{k_0,\upsilon} &\lesssim_s \norm{\whi}_{s+1}^{k_0,\upsilon}\norm{\whi}_{s_0+1}^{k_0,\upsilon} + \norm{ \fI }_{s+2 s_0 + 2 k_0 + 5}^{k_0,\upsilon} ( \norm{\whi}_{s_0+1}^{k_0,\upsilon} )^2 \,.
	\end{align*}
\end{lem}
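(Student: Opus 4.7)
\smallskip

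\noindent \textbf{Proof plan for Lemma \ref{XP_est}.} The plan is to reduce the tame estimates for $X_P$, $d_i X_P$, and $d_i^2 X_P$ to tame estimates on the $L^2$-gradients of the Hamiltonian $\mathcal{H}$ in \eqref{Ham-Wal} evaluated at the curve $(\eta,\zeta) = \varepsilon A(i(\varphi))$. Since $P = P_\varepsilon \circ A$ with $P_\varepsilon(\eta,\zeta) = \varepsilon^{-3}\mathcal{H}_{\geq 3}(\varepsilon\eta,\varepsilon\zeta)$, the chain rule gives
\begin{equation*}
X_P(i) = (dA(i))^{-1}\, J_{\text{ext}}\, \nabla \mathcal{H}_{\geq 3}(\varepsilon A(i))\cdot \varepsilon^{-2},
\end{equation*}
up to the obvious projections onto the tangential/normal components encoded in \eqref{F_op}. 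Thus I need: (i) tame estimates for $A$ and its derivatives on the action-angle-normal variables; (ii) tame estimates for $\nabla \mathcal{H}_{\geq 3}$ at the perturbed state $(\eta,\zeta) = \varepsilon A(i(\varphi))$; (iii) combine via the composition/product rules \eqref{prod} and Lemma \ref{compo_moser}.

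For step (i) I would analyze the formula \eqref{aacoordinates}: the tangential part $v^\intercal(\theta,I)$ is a finite trigonometric polynomial in $\theta$ whose coefficients involve $\sqrt{I_j+\xi_j}$, so under the standing assumption $|I_j| < \xi_j$ (guaranteed by $\|\mathfrak{I}\|_{s_0}^{k_0,\upsilon}$ small) the map $A$ is smooth and its $k$-th derivative in $(\theta,I)$ is a Moser composition of $\sqrt{\cdot}$ with $I$, while the normal part is linear in $w$. Applying Lemma \ref{compo_moser} and \eqref{prod} component-wise gives $\|\varepsilon A(i)\|_s^{k_0,\upsilon} \lesssim_s \varepsilon(1+\|\mathfrak{I}\|_s^{k_0,\upsilon})$, and analogous bounds with one/two derivatives of $A$ applied to $\hat\imath$.

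For step (ii) the main nonlinear object in $\mathcal{H}_{\geq 3}$ is the Dirichlet–Neumann operator $G(\eta)$. By Lemma \ref{DN_pseudo_est}, $G(\eta)\psi = G(0)\psi + \mathcal{R}_G(\eta)\psi$ where $\mathcal{R}_G(\eta)\in \Ops^{-\infty}$ is a smoothing operator with tame bounds on its pseudodifferential norm controlled by $\|\eta\|_{s + s_0 + 2k_0 + m + \alpha + 3}^{k_0,\upsilon}$. Together with the polynomial terms in \eqref{ham1} (products of $\eta$, $\psi_x$, etc.), this gives tame estimates on $\nabla_\eta \mathcal{H}$ and $\nabla_\psi \mathcal{H}$; the shape theory functional derivatives of $G(\eta)$ with respect to $\eta$ (standard shape derivative formulas) are again pseudodifferential of order at most one, so their tame bounds follow from Lemmas \ref{pseudo_compo} and \ref{DN_pseudo_est}. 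The first and second derivatives in $(\eta,\zeta)$ inherit the same tame structure, losing at most a fixed finite number of derivatives; tracking the worst term (the one needing $G(\eta)$ evaluated once or twice more) gives precisely the loss $2s_0 + 2k_0 + 3$, $+4$, $+5$ appearing in the statement (one, two, three shape derivatives of $G$ in the three estimates).

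The hard part is \emph{bookkeeping} the regularity loss through the three compositions $\mathfrak{I}\mapsto \varepsilon A(i)\mapsto \nabla\mathcal{H}(\varepsilon A(i))\mapsto X_P$: one must ensure that every intermediate tame constant depends only on $\|\mathfrak{I}\|_{s_0 + \text{const}}^{k_0,\upsilon}$ (absorbed by the smallness hypothesis) and not on higher norms at low index $s$. The strategy is the standard ``interpolation plus Moser'' scheme: at each step use the asymmetric tame bound $\|fg\|_s\lesssim_s \|f\|_s\|g\|_{s_0}+\|f\|_{s_0}\|g\|_s$ from \eqref{prod} and \eqref{eq:est_tame_comp}, and for $G(\eta)$ the decomposition of Lemma \ref{DN_pseudo_est} plus the tame action of pseudodifferential operators in Lemma \ref{tame_pesudodiff}. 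Because this is entirely analogous to Lemma 6.1 of \cite{BFM} --- the only novelty being the presence of the term $\gamma\,\partial_x^{-1}G(\eta)\psi$ in \eqref{ww}, which is handled just like $G(\eta)\psi$ up to the harmless multiplication by $\partial_x^{-1}$ and the bounded parameter $\gamma$ --- the detailed computations reduce to routine verification and can be omitted, with the estimates claimed in the statement following by collecting the contributions.
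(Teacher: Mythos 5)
The paper does not prove this lemma: it simply refers to Lemma 6.1 of \cite{BFM}, where the analogous statement is established for the capillary-gravity case. Your proposal reconstructs precisely the argument one would find there — reduce to the chain rule $P = P_\varepsilon \circ A$, use that $A$ is a symplectomorphism from the action–angle–normal variables to the Wahl\'en variables so that $X_P = (dA)^{-1}\, X_{P_\varepsilon} \circ A$, bound $\nabla \mathcal{H}_{\geq 3}$ via the Dirichlet–Neumann decomposition of Lemma \ref{DN_pseudo_est} and the shape-derivative formula \eqref{shapeder}, and propagate the tame structure through the compositions with Lemma \ref{compo_moser} and \eqref{prod}. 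So you take the same route as the paper and identify the right tools.

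Two small imprecisions, neither of which breaks the argument. First, your formula for $X_P$ needs no extra projection: since $A$ is a symplectomorphism from the symplectic form \eqref{sympl_form} to the standard one on $\acca$, the pullback $X_P = (dA)^{-1} X_{P_\varepsilon} \circ A$ already carries the correct tangential/normal structure of \eqref{F_op}; invoking ``the obvious projections'' on top of it is redundant and risks double-counting. Second, the attribution of the increments $+3,+4,+5$ to ``one, two, three shape derivatives of $G$'' is not quite right: by \eqref{shapeder} each shape derivative of $G(\eta)$ is again an operator of order at most one (essentially $G$ composed with multiplication by $B, V$ and one $\pa_x$), so no derivative is genuinely lost per shape derivative. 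The $+1$ per level of $d_i$ is the standard bookkeeping loss from the extra factor $\hat\imath$ in each iterated Leibniz/product-rule application of \eqref{prod} together with the low-index control required in Lemma \ref{DN_pseudo_est}; you do not need the shape-derivative explanation to close the estimate. You could also note that the only structural difference from \cite{BFM} is the absence of the surface-tension term (which only improves matters) and the presence of $\gamma \pa_x^{-1} G(\eta)$, which, as you correctly observe, is harmless since $\pa_x^{-1}$ is bounded and $\gamma$ is a compact parameter range.
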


Along this section, we assume the following hypothesis, which is verified by the approximate solutions obtained at each step of the Nash-Moser Theorem \ref{NASH}.
\begin{itemize}
	\item {\sc ANSATZ.} The map $(\omega,\gamma)\mapsto \fI_0(\omega,\gamma) = i_0(\vf;\omega,\gamma)- (\vf,0,0)$ is $k_0$-times differentiable with respect to the parameters $(\omega,\gamma)\in \R^\nu\times [\gamma_1,\gamma_2]$ and, for some $\mu:=\mu(\tau,\nu)>0$, $\upsilon\in (0,1)$,
	\begin{equation}\label{ansatz}
	\norm{\fI_0}_{s_0+\mu}^{k_0,\upsilon} + \abs{ \alpha_0-\omega }^{k_0,\upsilon} \leq C \varepsilon \upsilon^{-1} \,.
	\end{equation} 
\end{itemize}
We first modify the approximate torus $i_0 (\vf) $ to obtain a nearby isotropic torus $i_\delta (\vf) $, namely such that the pull-back 1-form  $i_\delta^*\Lambda $  is closed, 
where $\Lambda$ is the Liouville 1-form defined in 
\eqref{liouville}.  
Consider the pull-back $ 1$-form 
\begin{align}\label{ak}
i_0^*\Lambda & = \sum_{k=1}^{\nu} a_k(\vf) \di \vf_k \, , \quad
a_k(\vf) := -\big( [ \pa_\vf \theta_0(\vf) ]^\top I_0(\vf) \big)_k +\tfrac12 
\big( J_\angle^{-1} w_0(\vf), \pa_{\vf_k} w_0(\vf) \big)_{L^2} \, , 
\end{align} 
and define $A_{kj}(\vf)  := \pa_{\vf_k} a_j(\vf) - \pa_{\vf_j}a_k(\vf) $.
The next Lemma follows as in Lemma 5.3 in  \cite{BBHM} and 
Lemma 6.2 in \cite{BFM}. 
Let 
$ Z(\vf)   
:= \cF(i_0,\alpha_0)(\vf) = \omega\cdot \pa_\varphi i_0(\vf) - X_{H_{\alpha_0}}(i_0(\vf)) $. 

\begin{lem} {\bf (Isotropic torus)} \label{torus_iso}
	The torus $i_\delta(\vf):= ( \theta_0(\vf),I_\delta(\vf),w_0(\vf) )$, defined by
	$$
	I_\delta(\vf):= I_0(\vf) + [ \pa_\vf \theta_0(\vf) ]^{-\top}\rho(\vf) \,, 
	\quad \rho = (\rho_j)_{j=1, \ldots,\nu} \, , \quad \rho_j(\vf)
	:= \Delta_\vf^{-1} \sum_{k=1}^{\nu}\pa_{\vf_k}A_{kj}(\vf)\,,
	$$
	is isotropic. Moreover, there is $\sigma:= \sigma(\nu,\tau)$ such that, for all 
	$ s \geq s_0 $, 
	\begin{align}
	\norm{ I_\delta-I_0 }_s^{k_0,\upsilon} &\lesssim_s \norm{\fI_0}_{s+1}^{k_0,\upsilon}     \label{ebb1} \, , \quad 
	\norm{ I_\delta-I_0 }_s^{k_0,\upsilon}  \lesssim_s \upsilon^{-1}
	\big( \norm{Z}_{s+\sigma}^{k_0,\upsilon} +\norm{Z}_{s_0+\sigma}^{k_0,\upsilon} \norm{ \fI_0 }_{s+\sigma}^{k_0,\upsilon} \big)  \\
	\norm{ \cF(i_\delta,\alpha_0) }_s^{k_0,\upsilon} &\lesssim_s  \norm{Z}_{s+\sigma}^{k_0,\upsilon} +\norm{Z}_{s_0+\sigma}^{k_0,\upsilon} \norm{ \fI_0 }_{s+\sigma}^{k_0,\upsilon} \, , \quad 
	\norm{ \di_i(i_\delta)[\whi] }_{s_1} 
	\lesssim_{s_1} \norm{ \whi }_{s_1+1}  \, ,   \label{ebb3} 
	\end{align}
	for $  s_1 \leq s_0 + \mu $ (cfr. \eqref{ansatz}).
	Furthermore  $i_\delta(\vf)$
	is  a reversible and traveling torus,  cfr.  \eqref{RTTT}. 
\end{lem}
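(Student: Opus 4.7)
The construction is the one introduced by Berti--Bolle and used in \cite{BBHM,BFM}, and I would follow it step by step. The idea is to correct only the action component $I_0 \mapsto I_\delta = I_0 + [\partial_\varphi \theta_0]^{-\top} \rho$, choosing $\rho$ so as to kill the obstruction to isotropy. Since isotropy of the embedding $\varphi \mapsto i_\delta(\varphi)$ is equivalent to the closedness of the pull-back one-form $i_\delta^* \Lambda$, and using the explicit formula \eqref{ak} together with $[\partial_\varphi \theta_0]^\top [\partial_\varphi \theta_0]^{-\top} = \mathrm{Id}$, a direct computation shows that the new components are $a_j^\delta = a_j - \rho_j$ and hence the antisymmetric matrix associated to $i_\delta^* \Lambda$ is $A_{kj}^\delta = A_{kj} + \partial_{\varphi_j} \rho_k - \partial_{\varphi_k} \rho_j$. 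Defining $\rho_j := \Delta_\varphi^{-1} \sum_k \partial_{\varphi_k} A_{kj}$ and using the antisymmetry $A_{kj} = -A_{jk}$ one checks that $A^\delta_{kj} \equiv 0$; note that the zero-average condition needed to invert $\Delta_\varphi$ is automatic, being the divergence of an antisymmetric tensor.

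For the quantitative estimates, the rough bound $\|I_\delta - I_0\|_s^{k_0,\upsilon} \lesssim_s \|\mathfrak{I}_0\|_{s+1}^{k_0,\upsilon}$ in \eqref{ebb1} follows directly from \eqref{ak} and the tame product estimate \eqref{prod}, since $\Delta_\varphi^{-1} \partial_{\varphi_k}$ is of order $-1$. The sharper bound involving $Z = \mathcal{F}(i_0, \alpha_0)$ is the core of the lemma: since a true invariant torus of the Hamiltonian $H_{\alpha_0}$ with diophantine frequency is automatically isotropic, the matrix $A_{kj}$ must be controlled by the error $Z$. Concretely, one computes $\omega \cdot \partial_\varphi A_{kj}$ and shows, using the Hamiltonian identity $\omega \cdot \partial_\varphi (i_0^* \Lambda) - i_0^*(d H_{\alpha_0}) = (\text{terms linear in } Z)$, that it is a linear expression in $Z$ and its first derivatives; inverting the transport operator $\omega \cdot \partial_\varphi$ via the diophantine estimate \eqref{lem:diopha.eq} and paying a loss $\sigma = \sigma(\nu,\tau)$ of derivatives yields the claimed bound on $I_\delta - I_0$. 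The bound \eqref{ebb3} on $\mathcal{F}(i_\delta, \alpha_0)$ then follows by Taylor expansion,
\begin{equation*}
\mathcal{F}(i_\delta, \alpha_0) = \mathcal{F}(i_0, \alpha_0) + d_i \mathcal{F}(i_0)[i_\delta - i_0] + \text{quadratic remainder},
\end{equation*}
combined with Lemma \ref{XP_est} and the just-established estimates on $I_\delta - I_0$; the bound on $d_i i_\delta[\widehat\imath]$ is immediate by differentiating the explicit formula for $\rho$.

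Finally, preservation of the reversible and traveling structure is inherited from $i_0$. Under the involution $\vec{\mathcal{S}}$ of \eqref{rev_aa}, the coefficients $a_k$ transform so that $A_{kj}(-\varphi) = -A_{kj}(\varphi)$, whence $\rho_j(-\varphi) = \rho_j(\varphi)$ and $I_\delta$ satisfies the same parity as $I_0$; similarly, under the translation $\vec\tau_\varsigma$ of \eqref{vec.tau} the matrix $A_{kj}$ is invariant (both $\theta_0$, $I_0$, $w_0$ are traveling by assumption), so $\rho$, and hence $I_\delta$, is a traveling wave. The main technical difficulty is the derivation of the sharp estimate on $I_\delta - I_0$ in terms of $Z$: one must identify precisely the algebraic cancellation that makes $\omega \cdot \partial_\varphi A_{kj}$ linear in $Z$, which is a Hamiltonian computation using that $X_{H_{\alpha_0}}$ preserves the symplectic form $\mathcal{W}$; once this is in hand, everything reduces to applying \eqref{lem:diopha.eq} and \eqref{prod}.
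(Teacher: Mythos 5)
Your proposal is correct and follows essentially the same approach as the paper, which for this lemma simply refers to Lemma~5.3 in \cite{BBHM} and Lemma~6.2 in \cite{BFM} (the Berti--Bolle construction). One small omission worth noting: to verify that $A^\delta_{kj}\equiv 0$ with the choice $\rho_j = \Delta_\varphi^{-1}\sum_k \partial_{\varphi_k}A_{kj}$, antisymmetry of $A$ alone is not sufficient — you also need the cocycle identity $\partial_{\varphi_m}A_{kj}+\partial_{\varphi_k}A_{jm}+\partial_{\varphi_j}A_{mk}=0$, which holds automatically because $A_{kj}=\partial_{\varphi_k}a_j-\partial_{\varphi_j}a_k$ is the coefficient matrix of the (exact, hence closed) two-form $d(i_0^*\Lambda)$; combining this with antisymmetry yields $\Delta_\varphi A_{kj}=\partial_{\varphi_k}\sum_m\partial_{\varphi_m}A_{mj}-\partial_{\varphi_j}\sum_m\partial_{\varphi_m}A_{mk}$, which is exactly the identity needed.
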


We first find an  approximate inverse of the linearized operator $\di_{i,\alpha}\cF(i_\delta)$. We introduce the symplectic diffeomorphism $G_\delta:(\phi,y,\tw) \rightarrow (\theta,I,w)$ of the phase space $ \T^\nu\times \R^\nu \times \acca_{\S^+,\Sigma}^\angle$, 
\begin{equation}\label{Gdelta}
\begin{pmatrix}
\theta \\ I \\ w
\end{pmatrix} := G_\delta \begin{pmatrix}
\phi \\ y \\ \tw
\end{pmatrix} := \begin{pmatrix}
\theta_0(\phi) \\ 
I_\delta(\phi) + \left[ \pa_\phi \theta_0(\phi) \right]^{-\top}y + \left[(\pa_\theta\wtw_0)(\theta_0(\phi))  \right]^\top J_\angle^{-1} \tw \\
w_0(\phi) + \tw
\end{pmatrix}\,,
\end{equation}
where $\wtw_0(\theta):= w_0(\theta_0^{-1}(\theta))$.
It is proved in Lemma 2 of \cite{BB} that $G_\delta$ is symplectic, because the torus $i_\delta$ is isotropic (Lemma \ref{torus_iso}). In the new coordinates, $i_\delta$ is the trivial embedded torus $(\phi,y,\tw)=(\phi,0,0)$.
Moreover	the diffeomorphism $G_\delta$ in \eqref{Gdelta} is reversibility and momentum preserving, 
	in the sense that  (Lemma 6.3 in \cite{BFM})
$
	\vec \cS \circ G_\delta = G_\delta \circ \vec  \cS  $, 
$	\vec \tau_\vs \circ G_\delta = G_\delta \circ \vec \tau_\vs $, 
$ \forall \,\vs \in \R $, 
	where $\vec \cS $ and $ \vec \tau_\vs $ are defined respectively in \eqref{rev_aa}, \eqref{vec.tau}. 
	
Under the symplectic diffeomorphism $G_\delta $, the Hamiltonian vector field $X_{H_\alpha}$ changes into
$$
X_{K_\alpha} = \left(DG_\delta  \right)^{-1} X_{H_\alpha} \circ G_\delta 
\qquad {\rm where} \qquad K_\alpha := H_\alpha \circ G_\delta 
$$
is reversible and momentum preserving, in the sense that 
$ K_\alpha\circ \vec \cS = K_\alpha $,
$ K_\alpha\circ \vec \tau_\vs = K_\alpha $,  $ \forall\, \vs \in \R  $.

The Taylor expansion of $K_\alpha$ at the trivial torus $(\phi,0,0)$ is
\begin{equation}\label{taylor_Kalpha}
\begin{aligned}
K_\alpha(\phi,y,\tw) =& \ K_{00}(\phi,\alpha) + K_{10}(\phi,\alpha) \cdot y + 
( K_{01}(\phi,\alpha),\tw )_{L^2} + \tfrac12 K_{20}(\phi) y\cdot y \\
& + ( K_{11}(\phi)y,\tw )_{L^2} + \tfrac12 ( K_{02}(\phi)\tw,\tw )_{L^2} + K_{\geq 3}(\phi,y,\tw)\,,
\end{aligned}
\end{equation}
where $K_{\geq 3}$ collects all terms at least cubic in the variables $(y,\tw)$.
Here $K_{00}\in \R$, $K_{10}\in\R^\nu$, $K_{01}\in \acca_{\S^+,\Sigma}^\angle $, whereas $K_{20} $ is a $ \nu \times \nu $ symmetric matrix, 
$K_{11}\in\cL ( \R^\nu,\acca_{\S^+,\Sigma}^\angle )$ and 
$ K_{02} $ is a self-adjoint operator  acting on $\acca_{\S^+,\Sigma}^\angle $.

The Hamilton equations associated to \eqref{taylor_Kalpha} are
\begin{equation}\label{hameq_Kalpha}
\begin{cases}
\dot\phi =   K_{10}(\phi,\alpha) + K_{20}(\phi)y + [K_{11}(\phi)]^\top \tw + \pa_y K_{\geq 3}(\phi,y,\tw) \\
\dot y =   - \pa_\phi K_{00}(\phi,\alpha) - [\pa_\phi K_{10}(\phi,\alpha)]^\top y - [\pa_\phi K_{01}(\phi,\alpha)]^\top \tw  \\
\ \ \ \ \ - \pa_\phi\left( \tfrac12 K_{20}(\phi)y\cdot y + \left( K_{11}(\phi)y,\tw \right)_{L^2} + \tfrac12 \left( K_{02}(\phi)\tw,\tw \right)_{L^2} + K_{\geq 3}(\phi,y,\tw) \right) \\
\dot\tw =  J_\angle \, \left( K_{01}(\phi,\alpha)+ K_{11}(\phi)y + K_{02}(\phi)\tw + \nabla_{\tw} K_{\geq 3}(\phi,y,\tw) \right) \, , 
\end{cases} 
\end{equation}
where $\pa_\phi K_{10}^\top $ is the $\nu\times\nu$ transposed matrix and $\pa_\phi K_{01}^\top , K_{11}^\top: \acca_{\S^+,\Sigma}^\angle\rightarrow\R^\nu$ are defined by the duality relation 
$ (\pa_\phi K_{01}[\wh\phi],\tw  )_{L^2}=\wh\phi\cdot [\pa_\phi K_{01} ]^\top \tw $ 
for any $\wh\phi\in\R^\nu$, $\tw\in\acca_{\S^+,\Sigma}^\angle$. 

The terms
$K_{00}, K_{01}$, $K_{10} -  \omega$   in the Taylor expansion \eqref{taylor_Kalpha} vanish at an exact solution: 
indeed,  
arguing as  in Lemma 5.4 in \cite{BBHM},  
	there is $ \sigma := \sigma (\nu, \tau) > 0 $, such that, for all $ s \geq s_0 $, 
	\begin{equation}\label{Kcoeff_est}
	\norm{ \pa_\phi K_{00}(\cdot , \alpha_0) }_s^{k_0,\upsilon} + \norm{ K_{10}(\cdot ,\alpha_0)-\omega}_s^{k_0,\upsilon} + \norm{ K_{01}( \cdot ,\alpha_0) }_s^{k_0,\upsilon} \lesssim_s  \norm{Z}_{s+\sigma}^{k_0,\upsilon} + \norm{Z}_{s_0+\sigma}^{k_0,\upsilon} \norm{ \fI_0 }_{s+\sigma}^{k_0,\upsilon} \, .  
	\end{equation}
Under the linear change of variables
\begin{equation*}
DG_\delta(\vf,0,0)\begin{pmatrix}
\wh\phi \\ \why \\ \wh\tw
\end{pmatrix}:= \begin{pmatrix}
\pa_\phi \theta_0(\vf) & 0 & 0 \\ \pa_\phi I_\delta(\vf) & [\pa_\phi \theta_0(\vf)]^{-\top} &  [(\pa_\theta\wtw_0)(\theta_0(\vf))]^\top  J_\angle^{-1} \\\pa_\phi w_0(\vf) & 0 & {\rm Id}
\end{pmatrix}\begin{pmatrix}
\wh\phi \\ \why \\ \wh\tw
\end{pmatrix} \,,
\end{equation*}
the linearized operator $\di_{i,\alpha}\cF(i_\delta)$ is approximately transformed into the one obtained when one linearizes the Hamiltonian system \eqref{hameq_Kalpha} at $(\phi,y,\tw) = (\vf,0,0)$, differentiating also in $\alpha$ at $\alpha_0$ and changing $\pa_t \rightsquigarrow \omega\cdot \pa_\vf$, namely
\begin{equation}\label{lin_Kalpha}
\begin{pmatrix}
\widehat \phi  \\
\widehat y    \\ 
\widehat \tw \\
\widehat \alpha
\end{pmatrix} \mapsto
\begin{pmatrix}
\omega\cdot \pa_\vf \wh\phi - \pa_\phi K_{10}(\vf)[\wh\phi] - \pa_\alpha K_{10}(\vf)[\wh\alpha] - K_{20}(\vf)\why - [K_{11}(\vf)]^\top \wh\tw  \\
\omega\cdot \pa_\vf\why + \pa_{\phi\phi}K_{00}(\vf)[\wh\phi]+ \pa_\alpha\pa_\phi K_{00}(\vf)[\wh\alpha] + [\pa_\phi K_{10}(\vf)]^\top \why + [\pa_\phi K_{01}(\vf)]^\top  \wh\tw  \\
\omega\cdot \pa_\vf \wh\tw - J_\angle \,  \big( \pa_\phi K_{01}(\vf)[\wh\phi] + \pa_\alpha K_{01}(\vf)[\wh\alpha] + K_{11}(\vf) \why + K_{02}(\vf) \wh\tw \big)   
\end{pmatrix}. 
\end{equation}
In order to construct an ``almost approximate" inverse of \eqref{lin_Kalpha}, we need that
\begin{equation}\label{Lomegatrue}
\cL_\omega := \Pi_{\S^+,\Sigma}^\angle \left( \omega\cdot \pa_\vf - J K_{02}(\vf) \right)|_{\acca_{\S^+,\Sigma}^\angle}
\end{equation}
is "almost invertible" (on traveling waves) up to remainders of size $O(N_{\tn-1}^{-{\ta}})$, where, for $\tn\in\N_0$
\begin{equation}\label{scales}
N_\tn:= K_\tn^p \,, \quad K_\tn: = K_0^{\chi^\tn} \,, \quad \chi= 3/2\, .
\end{equation}
The $ (K_\tn)_{\tn \geq 0} $ is the scale used in the nonlinear Nash-Moser iteration of Section \ref{sec:NaM} and $ (N_\tn)_{\tn \geq 0} $ is the one in Lemma \ref{conju.tr}
and Theorem \ref{iterative_KAM}.
Let $H_\angle^s(\T^{\nu+1}):= H^s(\T^{\nu+1})\cap \acca_{\S^+,\Sigma}^\angle$.
\begin{itemize}
	\item[(AI)]  {\bf Almost invertibility of $\cL_\omega$}: 
	{\it There exist positive real numbers 
		$  \sigma $, $ \mu(\tb) $, $ \ta $, $ p  $, $ K_0 $ and 
		a subset $\t\Lambda_o \subset \tD\tC(\upsilon,\tau)\times [\gamma_1,\gamma_2]$ such that, for all $(\omega,\gamma) \in \t\Lambda_o$, the operator $\cL_\omega$ may be decomposed as
		\begin{equation}\label{Lomega}
		\cL_\omega = \cL_\omega^< + \cR_\omega + \cR_\omega^\perp \,,
		\end{equation}
		where,
		for any traveling wave function 
		$ g\in H_\angle^{s+\sigma}(\T^{\nu+1},\R^2)$ and 
		for any $  (\omega,\gamma) \in \t\Lambda_o $, there is a 
		traveling wave solution $ h \in H_\angle^{s}(\T^{\nu+1},\R^2) $ of 
		$ \cL_\omega^< h = g$ satisfying, for all $s_0\leq s\leq S - \mu(\tb)-\sigma$,  
		\begin{equation}\label{almi4}
		\norm{ (\cL_\omega^<)^{-1}g }_s^{k_0,\upsilon} \lesssim_S \upsilon^{-1}
		\big( \norm g_{s+\sigma}^{k_0,\upsilon}+ \norm g_{s_0+\sigma}^{k_0,\upsilon}\norm{ \fI_0}_{s+\mu({\tb})+\sigma}^{k_0,\upsilon} \big) \,.
		\end{equation}
		In addition, if $ g $ is anti-reversible, then $ h $ is reversible. Moreover, 
		for any $s_0\leq s \leq S- \mu(\tb)-\sigma$, 
		for any traveling wave $ h \in \acca_{\S^+,\Sigma}^\angle    $, the operators $\cR_\omega, \cR_\omega^\perp$ satisfy the estimates
		\begin{align}
		\norm{ \cR_\omega h }_s^{k_0,\upsilon} & \lesssim_S  \varepsilon \upsilon^{-3}N_{\tn-1}^{- \ta} \big( \norm h_{s+\sigma}^{k_0,\upsilon}+ \norm h_{s_0+\sigma}^{k_0,\upsilon} \norm{ \fI_0}_{s+\mu(\tb)+\sigma}^{k_0,\upsilon} \big) \,, 
		\label{almi1}
		\\
		\norm{ \cR_\omega^\perp h}_{s_0}^{k_0,\upsilon} & \lesssim_S K_\tn^{-{\rm b}} \big( \norm h_{s_0+{\rm b}+\sigma}^{k_0,\upsilon} + \norm h_{s_0+\sigma}^{k_0,\upsilon}\norm{\fI_0}_{s_0+\mu(\tb)+\sigma+{\rm b}}^{k_0,\upsilon} \big) \,, \ \forall\, {\rm b}>0 \,,
		\label{almi2}
		\\
		\norm{ \cR_\omega^\perp h}_s^{k_0,\upsilon} & \lesssim_S \norm h_{s+\sigma}^{k_0,\upsilon}+ \norm h_{s_0+\sigma}^{k_0,\upsilon}\norm{\fI_0}_{s+\mu(\tb)+\sigma}^{k_0,\upsilon} \label{almi3}
		\,.
		\end{align}	
	}
\end{itemize}
This assumption shall be verified by Theorem \ref{almo.inve} 
at each 
step of the Nash-Moser  iteration.

In order to find an almost approximate inverse of the linear operator in \eqref{lin_Kalpha} (and so of $\di_{i,\alpha}\cF(i_\delta)$), it is sufficient to  invert the operator
\begin{equation}\label{Dsys}
\D\big[ \wh\phi,\why,\wh\tw,\wh\alpha \big]:=\begin{pmatrix}
\omega\cdot \pa_\vf\wh\phi - \pa_\alpha K_{10}(\vf)[\wh\alpha] - K_{20}(\vf)\why- K_{11}^\top(\vf)\wh\tw \\
\omega\cdot \pa_\vf \why +\pa_\alpha\pa_\phi K_{00}(\vf)[\wh\alpha] \\
\cL_\omega^< \wh\tw -  J_\angle  \left( \pa_\alpha K_{01}(\vf)[\wh\alpha] + K_{11}(\vf)\why \right)
\end{pmatrix}
\end{equation}
obtained neglecting in \eqref{lin_Kalpha} the terms $\pa_\phi K_{10}$, $\pa_{\phi\phi}K_{00}$, $\pa_\phi K_{00}$, $\pa_\phi K_{01}$ (they vanish at an exact solution by  \eqref{Kcoeff_est}) and the small remainders $\cR_\omega$, $\cR_\omega^\perp$ appearing in \eqref{Lomega}. 

As in  section 6 of \cite{BFM} we have  the following result, where we denote
$ \normk{(\phi,y,\tw,\alpha)}{s}:= \max \big\{ \normk{(\phi,y,\tw)}{s},\abs{\alpha}^{k_0,\upsilon} \big\} $ (see \cite[Proposition 6.5]{BFM}):
\begin{prop}\label{Dsystem}
	Assume \eqref{ansatz} (with $\mu=\mu({\tb})+\sigma$) and {\rm (AI)}. Then, for all $(\omega,\gamma)\in\t\Lambda_o $, for any anti-reversible traveling wave 
	variation $ g =(g_1,g_2,g_3)$, 
	there exists a unique solution $\D^{-1}g:= ( \wh\phi,\why,\wh\tw,\wh\alpha)$
	of  $\D ( \wh\phi,\why,\wh\tw,\wh\alpha) = g $
	where $( \wh\phi,\why,\wh\tw)$ is a reversible traveling wave variation.  
Moreover, for any $s_0\leq s\leq S- \mu(\tb)-\sigma$, 
$
	\normk{\D^{-1}g}{s} \lesssim_{S} \upsilon^{-1}\big( \normk{g}{s+\sigma}+\normk{\fI_0}{s+\mu({\tb})+\sigma}\normk{g}{s_0+\sigma} \big) $. 
\end{prop}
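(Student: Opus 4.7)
The plan is to solve the triangular-like system $\D[\wh\phi,\why,\wh\tw,\wh\alpha]=g$ in the Berti--Bolle style: use the three scalar equations of \eqref{Dsys} in the order second, third, first, treating $\wh\alpha\in\R^\nu$ as a parameter until it is pinned down by a single Fredholm compatibility condition.

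\emph{Step 1 (second equation).} Since $g$ is an anti-reversible traveling wave variation, the parity $\vec\cS g(\vf)=-g(-\vf)$ (cf. \eqref{rev_aa}) forces $g_2$ to be odd in $\vf$, hence $\langle g_2\rangle_\vf=0$. Moreover, $\pa_\alpha\pa_\phi K_{00}(\vf)$ is a $\vf$-gradient of a scalar function and so has zero $\vf$-average. Therefore, for any $\wh\alpha\in\R^\nu$, the equation
$\omega\cdot\pa_\vf\why=g_2-\pa_\alpha\pa_\phi K_{00}(\vf)[\wh\alpha]$
is solvable on diophantine $\omega$, and we set
$\why=\why_0+Y_1\wh\alpha$ with $\why_0:=(\omega\cdot\pa_\vf)_{\rm ext}^{-1}g_2$ and $Y_1\wh\alpha:=-(\omega\cdot\pa_\vf)_{\rm ext}^{-1}\pa_\alpha\pa_\phi K_{00}[\wh\alpha]$. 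The tame bound \eqref{lem:diopha.eq} controls both, losing $\mu=k_0+\tau(k_0+1)$ derivatives.

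\emph{Step 2 (third equation).} With $\why=\why(\wh\alpha)$ from Step 1, the third equation becomes $\cL_\omega^<\wh\tw=g_3+J_\angle\pa_\alpha K_{01}[\wh\alpha]+J_\angle K_{11}\why$. The right hand side is an anti-reversible traveling wave variation in $\acca_{\S^+,\Sigma}^\angle$ by the reversibility/translation properties of $K_{01},K_{11},J_\angle$ and by Step 1. The almost-invertibility assumption (AI) via $(\cL_\omega^<)^{-1}$, together with the estimate \eqref{almi4}, produces a reversible traveling wave $\wh\tw=\wh\tw_0+W_1\wh\alpha$ depending affinely on $\wh\alpha$, with tame bounds in terms of $\|g\|_{s+\sigma}^{k_0,\upsilon}$, $\|\fI_0\|_{s+\mu(\tb)+\sigma}^{k_0,\upsilon}$ and $|\wh\alpha|^{k_0,\upsilon}$.

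\emph{Step 3 (determining $\wh\alpha$ and solving the first equation).} The mean of the first equation in $\vf$ reads
\begin{equation*}
\bigl\langle\pa_\alpha K_{10}\bigr\rangle\wh\alpha+\bigl\langle K_{20}\why+K_{11}^\top\wh\tw\bigr\rangle=-\langle g_1\rangle\,.
\end{equation*}
Substituting the affine representations of $\why,\wh\tw$ in $\wh\alpha$ yields a linear system $\cU\wh\alpha=-\langle g_1\rangle-\langle K_{20}\why_0+K_{11}^\top\wh\tw_0\rangle$ with
$\cU:=\langle\pa_\alpha K_{10}\rangle+\langle K_{20}Y_1+K_{11}^\top W_1\rangle$.
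By \eqref{Kcoeff_est} and the ansatz \eqref{ansatz}, $\langle\pa_\alpha K_{10}\rangle={\rm Id}+O(\varepsilon\upsilon^{-1})$ and the remaining averages are $O(\varepsilon\upsilon^{-2})$, so $\cU$ is invertible with $|\cU^{-1}|^{k_0,\upsilon}\lesssim 1$ by Neumann series, determining $\wh\alpha$ uniquely. With this choice, the RHS of the first equation has zero mean, and we set $\wh\phi:=(\omega\cdot\pa_\vf)_{\rm ext}^{-1}\bigl(g_1+\pa_\alpha K_{10}\wh\alpha+K_{20}\why+K_{11}^\top\wh\tw\bigr)$, normalized so that $\langle\wh\phi\rangle_\vf=0$.

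\emph{Step 4 (structural properties and estimates).} The operators $(\omega\cdot\pa_\vf)_{\rm ext}^{-1}$ and $(\cL_\omega^<)^{-1}$ are reversibility preserving and momentum preserving on their respective spaces, and so are the multiplications by $K_{10},K_{20},K_{11},K_{01}$ and $J_\angle$ (since $G_\delta$ is reversibility and translation preserving, so $K_\alpha$ is). Hence $(\wh\phi,\why,\wh\tw)$ is a reversible traveling wave variation. The quantitative bound asserted in the proposition follows by combining the tame estimate \eqref{lem:diopha.eq} for the Steps 1 and 3 inversions, the estimate \eqref{almi4} for Step 2, Lemma \ref{XP_est} and \eqref{Kcoeff_est} to control the coefficients $K_{ij}$ and $\pa_\alpha K_{ij}$, and the product estimate \eqref{prod}, interpolating as usual.

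The delicate point is Step 3: one must verify that the $\wh\alpha$-to-mean map $\cU$ is well-conditioned, which relies on \eqref{Kcoeff_est} (so that $K_{10}$ is close to $\omega$) and on the smallness from the ansatz \eqref{ansatz}; and one must propagate the reversibility/traveling structure consistently through all three inversions, in particular checking that Step 1 yields a well-defined $\why$ precisely because $\langle g_2\rangle=0$ by the anti-reversibility of $g$. All other estimates are routine tame-norm bookkeeping.
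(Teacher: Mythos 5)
Your proposal is essentially correct and follows the same triangular-resolution scheme as the paper, which cites the Berti--Bolle framework (\cite{BB}) and Section 6 of \cite{BFM}: solve the second equation for $\why$ as an affine function of $\wh\alpha$ (solvable since $g_2$ has zero $\vf$-mean by anti-reversibility and $\pa_\alpha\pa_\phi K_{00}$ has zero mean as a $\phi$-gradient); solve the third equation for $\wh\tw$ via $(\cL_\omega^<)^{-1}$ from (AI), again affine in $\wh\alpha$; pin down $\wh\alpha$ from the $\vf$-average compatibility of the first equation, where the map $\cU$ is invertible because $\langle\pa_\alpha K_{10}\rangle_\vf = {\rm Id} + O(\varepsilon\upsilon^{-1})$ by \eqref{Kcoeff_est} and the ansatz \eqref{ansatz}; then solve the first equation for $\wh\phi$ and propagate the (anti)-reversible/traveling structure and tame bounds via \eqref{lem:diopha.eq}, \eqref{almi4}, \eqref{prod}.

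Two small remarks. First, on terminology: you call $(\omega\cdot\pa_\vf)_{\rm ext}^{-1}$ and $(\cL_\omega^<)^{-1}$ \emph{reversibility preserving}; in the paper's nomenclature of Definition \ref{rev_defn} they are \emph{reversible} operators (they swap the parity class, sending anti-reversible data to reversible solutions), which is exactly what is needed here. Second, for the claimed uniqueness of $\D^{-1}g$ you should make explicit that \emph{both} $\langle\why\rangle_\vf = 0$ and $\langle\wh\phi\rangle_\vf = 0$ are imposed: the constant part of $\why$ is an additional free parameter of the formal kernel of $\D$ (not just the constant part of $\wh\phi$), and the mean-zero choice is what $(\omega\cdot\pa_\vf)_{\rm ext}^{-1}$ implicitly enforces in your Step 1. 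With those normalizations the solution is the one the paper denotes $\D^{-1}g$.
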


Finally we conclude that the operator
\begin{equation}\label{bT0}
\bT_0 := \bT_0(i_0):=  ( D\wtG_\delta )(\vf,0,0) \circ \D^{-1} \circ (D G_\delta ) (\vf,0,0)^{-1}
\end{equation}
is an almost approximate right inverse for $\di_{i,\alpha}\cF(i_0)$, where
$	\wtG_\delta(\phi,y,\tw,\alpha) := \left( G_\delta(\phi,y,\tw),\alpha \right) $
is the identity on the $\alpha$-component. 
Arguing exactly as in  Theorem 6.6 in \cite{BFM} we deduce the following. 

\begin{thm} {\bf (Almost approximate inverse)} \label{alm.approx.inv}
	Assume {\rm (AI)}.  Then there is $\bar\sigma :=\bar\sigma(\tau,\nu,k_0)>0$ such that, if \eqref{ansatz} holds with $\mu=\mu(\tb)+\bar\sigma$, then, for all $(\omega,\gamma)\in\t\Lambda_o$ and for any anti-reversible traveling 
	wave variation $g:=(g_1,g_2,g_3)$,  
	 the operator $\bT_0$ defined in \eqref{bT0} satisfies, for all $s_0 \leq s \leq S - \mu(\tb)- \bar\sigma$,
	\begin{equation}\label{tame-es-AI}
	\normk{\bT_0 g}{s} \lesssim_{S} \upsilon^{-1} \big( \normk{g}{s+\bar\sigma} +\normk{\fI_0}{s+\mu(\tb)+\bar\sigma}\normk{g}{s_0+\bar\sigma}  \big)\,.
	\end{equation}
	Moreover, the first three components of $\bT_0 g $  form a reversible 
	traveling wave variation. 
	Finally, $\bT_0$ is an almost approximate right inverse of $\di_{i,\alpha}\cF(i_0)$, namely
	\begin{equation*}
	\di_{i,\alpha}\cF(i_0) \circ \bT_0 - {\rm Id} = \cP(i_0) + \cP_\omega(i_0)+\cP_\omega^\perp(i_0)\,,
	\end{equation*}
	where, for any traveling wave variation $ g $,  for all $s_0 \leq s \leq S-\mu(\tb)-\bar\sigma$,
	\begin{align}
	\normk{\cP g}{s} & \lesssim_{S} \upsilon^{-1}
	\Big(  \normk{\cF(i_0,\alpha_0)}{s_0+\bar\sigma}\normk{g}{s+\bar\sigma}  \label{pfi0} \\
	& \qquad + \,   \big(  \normk{\cF(i_0,\alpha_0)}{s+\bar\sigma}+\normk{\cF(i_0,\alpha_0)}{s_0+\bar\sigma}\normk{\fI_0}{s+\mu(\tb)+\bar\sigma}  \big)\normk{g}{s_0+\bar\sigma}  \Big)\, ,  \\
	\normk{\cP_\omega g}{s} & \lesssim_{S} \varepsilon\upsilon^{-4} N_{\tn-1}^{-\ta} \big( \normk{g}{s+\bar\sigma}+ \normk{\fI_0}{s+\mu(\tb)+\bar\sigma}\normk{g}{s_0+\bar\sigma}  \big)\, , \label{pfi1}  \\
	\normk{\cP_\omega^\perp g}{s_0} & \lesssim_{S,b} \upsilon^{-1} K_\tn^{-b} \left( \normk{g}{s_0+\bar\sigma+b}+\normk{\fI_0}{s_0+\mu(\tb)+b+\bar\sigma}\normk{g}{s_0+\bar\sigma} \right)\,, \quad  \forall\,b>0\,,   \label{pfi2} \\
	\normk{\cP_\omega^\perp g}{s} & \lesssim_{S} \upsilon^{-1}\big(  \normk{g}{s+\bar\sigma}+ \normk{\fI_0}{s+\mu(\tb)+\bar\sigma}\normk{g}{s_0+\bar\sigma} \big) \,. 
	\label{pfi3} 
	\end{align}
\end{thm}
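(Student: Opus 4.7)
The plan is to assemble $\bT_0$ as a composition of three tame operators and then track the errors introduced at each stage. First, I would establish tame bounds for the linear changes of variables $(DG_\delta)(\vf,0,0)$ and $(D\wtG_\delta)(\vf,0,0)$ and their inverses, using the formula for $DG_\delta$ written just before \eqref{lin_Kalpha} together with the estimates \eqref{ebb1}--\eqref{ebb3} on the isotropic correction and the ansatz \eqref{ansatz} on $\fI_0$. The standard tame estimates \eqref{prod} for products and the fact that $[\pa_\phi \theta_0]^{-\top}$ is close to the identity (by \eqref{ansatz}) give, for any traveling-wave variation $g$ and for all $s_0 \leq s \leq S - \mu(\tb) - \bar\sigma$,
\begin{equation*}
  \normk{(DG_\delta)(\vf,0,0)^{\pm 1} g}{s} \lesssim_s \normk{g}{s+\sigma_1} + \normk{\fI_0}{s+\sigma_1}\normk{g}{s_0+\sigma_1}
\end{equation*}
for some $\sigma_1 = \sigma_1(\tau,\nu,k_0)$, and similarly for $D\wtG_\delta$. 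Composing with the bound \eqref{tame-es-AI}-type estimate on $\D^{-1}$ provided by Proposition \ref{Dsystem}, and absorbing loss of derivatives into a single constant $\bar\sigma$, yields the tame bound \eqref{tame-es-AI} for $\bT_0$. The preservation of the reversible-traveling structure for the first three components of $\bT_0 g$ follows from Proposition \ref{Dsystem}, since $(DG_\delta)(\vf,0,0)^{-1}$ sends an anti-reversible traveling wave variation into one of the same type (Lemma 6.3-type arguments, using that $G_\delta$ commutes with $\vec\cS$ and $\vec\tau_\vs$), and $(D\wtG_\delta)(\vf,0,0)$ pushes a reversible traveling variation back to $(\eta,\zeta,\theta,I,w)$-coordinates.

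Next, I would analyze the composition $\di_{i,\alpha}\cF(i_0) \circ \bT_0 - \text{Id}$ by splitting it, following the scheme of \cite{BM, BBHM, BFM}, into three contributions:
\begin{equation*}
  \di_{i,\alpha}\cF(i_0) \circ \bT_0 - \text{Id} = \underbrace{\bigl(\di_{i,\alpha}\cF(i_0) - \di_{i,\alpha}\cF(i_\delta)\bigr)\circ \bT_0}_{(A)} + \underbrace{\di_{i,\alpha}\cF(i_\delta)\circ \bT_0 - \cE_\delta}_{(B)} + \underbrace{\cE_\delta - \text{Id}}_{(C)}\,,
\end{equation*}
where $\cE_\delta$ is the operator obtained by replacing, inside $(DG_\delta)\circ\D\circ(D\wtG_\delta)^{-1}$, the approximation \eqref{lin_Kalpha}. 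The contribution $(A)$ is controlled by $\|i_0 - i_\delta\|_s$, which by \eqref{ebb1}--\eqref{ebb3} is bounded in terms of $\normk{\cF(i_0,\alpha_0)}{s+\sigma}$; combined with Lemma \ref{XP_est}, this produces the factor $\normk{\cF(i_0,\alpha_0)}{\cdot}$ appearing in \eqref{pfi0}. The contribution $(C)$ contains the terms in \eqref{lin_Kalpha} neglected in \eqref{Dsys}, namely $\pa_\phi K_{10}$, $\pa_{\phi\phi}K_{00}$, $\pa_\phi K_{00}$, $\pa_\phi K_{01}$. By \eqref{Kcoeff_est} each of these is controlled by $\normk{Z}{s+\sigma}=\normk{\cF(i_0,\alpha_0)}{s+\sigma}$ up to factors of $\normk{\fI_0}{s+\sigma}$, which again contributes to $\cP$ and produces the bound \eqref{pfi0}.

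The contribution $(B)$ is where the decomposition $\cL_\omega = \cL_\omega^< + \cR_\omega + \cR_\omega^\perp$ from assumption (AI) enters: replacing $\cL_\omega^<$ by $\cL_\omega$ inside $\D$ costs precisely $\cR_\omega + \cR_\omega^\perp$ applied to the $\wh\tw$-component of $\D^{-1}g$. Conjugating back by the changes of coordinates and using the estimates \eqref{almi1}, \eqref{almi2}, \eqref{almi3}, together with the tame bound on $\bT_0$ already proved and the ansatz \eqref{ansatz}, leads to the terms $\cP_\omega$ and $\cP_\omega^\perp$ satisfying \eqref{pfi1}, \eqref{pfi2}, \eqref{pfi3} respectively. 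The main technical point, in my view, is bookkeeping the various losses of derivatives $\sigma$, $\mu(\tb)$, $\bar\sigma$ so that all three error estimates hold on the same range $s_0\leq s \leq S-\mu(\tb)-\bar\sigma$; this amounts to choosing $\bar\sigma$ to dominate the losses coming from $(DG_\delta)^{\pm 1}$, from Proposition \ref{Dsystem}, from Lemma \ref{torus_iso}, and from \eqref{Kcoeff_est}--\eqref{almi3} simultaneously, and then interpolating with \eqref{prod} to distribute high and low norms correctly in the bilinear tame estimates. No genuinely new idea beyond \cite{BFM, BBHM} is needed: the algebraic structure is identical, the novelty being only that the small remainders $\cR_\omega$ now carry the extra factor $\varepsilon\upsilon^{-3}N_{\tn-1}^{-\ta}$ from the almost-straightening step, which is exactly what appears in \eqref{pfi1}.
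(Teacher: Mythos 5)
Your plan is correct and follows essentially the same route the paper takes, namely the standard Berti--Bolle scheme used in \cite{BM,BBHM,BFM}, which the paper invokes directly ("Arguing exactly as in Theorem 6.6 in \cite{BFM}"). You correctly identify the three error sources — the passage $i_0\to i_\delta$ (controlled via \eqref{ebb1}--\eqref{ebb3} and \eqref{Kcoeff_est}, giving $\cP$), the remainder $\cR_\omega$ from (AI) (carrying the factor $\varepsilon\upsilon^{-3}N_{\tn-1}^{-\ta}$ and producing $\cP_\omega$ in \eqref{pfi1}, with the extra $\upsilon^{-1}$ from $\D^{-1}$), and the projection error $\cR_\omega^\perp$ (producing $\cP_\omega^\perp$) — and you correctly note that the reversible/traveling structure is propagated through Proposition \ref{Dsystem} and the equivariance of $G_\delta$.

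Two small remarks. First, the split you label $(B)$/$(C)$ is slightly garbled: $\cE_\delta - \mathrm{Id}$, where $\cE_\delta = (D\wtG_\delta)(\vf,0,0)\circ\bD_{\rm lin}\circ\D^{-1}\circ(DG_\delta)(\vf,0,0)^{-1}$ with $\bD_{\rm lin}$ the operator \eqref{lin_Kalpha}, contains \emph{both} the neglected Taylor coefficients $\pa_\phi K_{10}$, $\pa_{\phi\phi}K_{00}$, $\pa_\phi K_{00}$, $\pa_\phi K_{01}$ \emph{and} the remainders $\cR_\omega+\cR_\omega^\perp$ acting on the third component, since $\bD_{\rm lin}$ has the full $\cL_\omega$ in place of $\cL_\omega^<$. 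Your $(B)$ should instead be the (also $\cF$-controlled) discrepancy between $\di_{i,\alpha}\cF(i_\delta)$ and $(D\wtG_\delta)\bD_{\rm lin}(DG_\delta)^{-1}$, coming from the fact that $i_\delta$ is not an exact solution. This reorganization does not change anything: the Taylor and conjugation-discrepancy contributions both land in $\cP$ via \eqref{Kcoeff_est} and the isotropy estimates, and only the precise bookkeeping differs. Second, to actually close \eqref{tame-es-AI}--\eqref{pfi3} on the stated range of $s$, $\bar\sigma$ must also absorb the loss $\sigma$ from Lemma \ref{torus_iso}, the losses in \eqref{Kcoeff_est}, and the fixed loss in the tame bounds for $(DG_\delta)^{\pm 1}$; you flagged this and it is indeed the main bookkeeping point, consistent with how $\bar\sigma$ is chosen in the cited references.
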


\section{The linearized operator in the normal subspace}\label{sec:linnorm}

We now write an explicit expression of the linear operator $\cL_\omega$ defined in \eqref{Lomegatrue}. 
As in Lemma 7.1 in \cite{BFM}, since 
the diffeomorphism $ G_\delta $ in \eqref{Gdelta} is just a
 translation   along
the infinite dimensional normal variable  $ w $,  
we have the following structural result. 

\begin{lem}\label{lem:K02}
	The Hamiltonian operator $\cL_\omega$ defined in \eqref{Lomegatrue}, acting on 
	the normal subspace $ \acca_{\S^+,\Sigma}^\angle $,  has the form
	\begin{equation}\label{cLomega_again}
	\cL_\omega = \Pi_{\S^+,\Sigma}^\angle 
	(\cL -\varepsilon J R)|_{\acca_{\S^+,\Sigma}^\angle}  \,,
	\end{equation}
	where: 
	\\[1mm]
	1. $ \cL $ is the Hamiltonian operator 
		\begin{equation}\label{cL000}
		\cL := \omega\cdot \pa_\vf  - J \pa_u\nabla_u \cH(T_\delta(\vphi)) \, ,
		\end{equation}
		where  
		$\cH$  is the water waves Hamiltonian 
		in the Wahl\'en variables defined in \eqref{Ham-Wal}, evaluated at 
		the reversible traveling wave
		\begin{equation}\label{Tdelta}
		T_\delta(\phi):=  \varepsilon A ( i_\delta (\phi) ) =  \varepsilon A\left( \theta_0(\phi),I_\delta(\phi),w_0(\phi) \right) = \varepsilon v^\intercal\left( \theta_0(\phi),I_\delta(\phi) \right) + \varepsilon w_0(\phi)\,,
		\end{equation}
		the torus  $i_\delta(\vf):= ( \theta_0(\vf),I_\delta(\vf),w_0(\vf) )$ is defined in Lemma 
		\ref{torus_iso} 
		and  $A(\theta,I,w) $, $ v^\intercal(\theta,I)$ in \eqref{aacoordinates};
		2. 
		$ R (\phi) $ has the `finite rank" form 
		\begin{equation}\label{finite_rank_R}
		R(\phi)[h] = \sum_{j=1}^\nu \left( h,g_j \right)_{L^2} \chi_j \,, \quad \forall\, h\in\acca_{\S^+,\Sigma}^\angle\,,
		\end{equation}
		for functions $g_j,\chi_j \in \acca_{\S^+,\Sigma}^\angle$ which satisfy, for some $\sigma:= \sigma(\tau,\nu, k_0) > 0 $,  for all $ j = 1, \ldots, \nu $, for all $s\geq s_0$, 
		\begin{equation}
		\label{gjchij_est}
		\begin{aligned}
		\norm{ g_j }_s^{k_0,\upsilon} + \norm{ \chi_j }_s^{k_0,\upsilon} & \lesssim_s 1 + \norm{ \fI_\delta }_{s+\sigma}^{k_0,\upsilon} \,, \\
		\norm{ \di_i g_j [\whi]}_s + \norm{\di_i \chi_j [\whi]}_s & \lesssim_s \norm{ \whi }_{s+\sigma} + \norm{ \whi }_{s_0+\sigma} 
		\norm{ \fI_\delta }_{s+\sigma} \,.
		\end{aligned}
		\end{equation}
	The operator $ \cL_\omega $ is reversible and momentum preserving.
\end{lem}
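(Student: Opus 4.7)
My plan is to compute the Hessian $K_{02}(\vf) = \pa_\tw^2 K_\alpha(\vf,0,0)$ appearing in \eqref{Lomegatrue} by direct differentiation of $K_\alpha = H_\alpha\circ G_\delta$, and then separate the part that reassembles into the full water waves Hessian $\pa_u\nabla_u\cH(T_\delta)$ from a genuinely finite-rank remainder. First, observe from \eqref{Gdelta} that, restricted to $y=0$, the map $\tw\mapsto G_\delta(\vf,0,\tw)$ is affine: $\theta=\theta_0(\vf)$, $I=I_\delta(\vf)+L(\vf)\tw$, $w=w_0(\vf)+\tw$, where $L(\vf):=[(\pa_\theta\wtw_0)(\theta_0(\vf))]^\top J_\angle^{-1}$ maps $\acca_{\S^+,\Sigma}^\angle\to\R^\nu$ and hence has rank at most $\nu$. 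Writing $H_\alpha=\alpha\cdot I+\tfrac12(\b\Omega_W w,w)_{L^2}+\varepsilon P$ with $P=P_\varepsilon\circ A$ and $A(\theta,I,w)=v^\intercal(\theta,I)+w$, the linear term $\alpha\cdot I$ contributes $0$ upon double differentiation and the quadratic term gives exactly $\b\Omega_W|_{\acca^\angle}$. The chain rule then yields
\[
K_{02}(\vf)=\b\Omega_W\big|_{\acca^\angle}+\varepsilon\,\pa_w^2 P(i_\delta(\vf))+\varepsilon\,Q_{\rm fr}(\vf),
\]
where $Q_{\rm fr}(\vf):=L(\vf)^*\pa_I^2 P\,L(\vf)+L(\vf)^*\pa_I\pa_w P+\pa_w\pa_I P\,L(\vf)$, all evaluated at $i_\delta(\vf)$.

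Next, I identify the main term. Since $w$ enters $A$ only linearly and additively, $\pa_w^2 P(i_\delta(\vf))=\Pi^\angle_{\S^+,\Sigma}\,\pa_u\nabla_u P_\varepsilon(A(i_\delta(\vf)))\,\Pi^\angle_{\S^+,\Sigma}$, and from $P_\varepsilon(u)=\varepsilon^{-3}\cH_{\geq 3}(\varepsilon u)$ together with $T_\delta=\varepsilon A(i_\delta)$ one obtains $\varepsilon\,\pa_u\nabla_u P_\varepsilon(A(i_\delta))=\pa_u\nabla_u\cH_{\geq 3}(T_\delta)$. Adding the $\b\Omega_W$ coming from $\cH_L$, this gives precisely $\pa_u\nabla_u\cH(T_\delta)$, so that, using commutation of $\Pi^\angle$ with $\omega\cdot\pa_\vf$,
\[
\cL_\omega=\Pi^\angle_{\S^+,\Sigma}\bigl(\omega\cdot\pa_\vf-J\,\pa_u\nabla_u\cH(T_\delta)\bigr)\big|_{\acca^\angle}-\varepsilon\,\Pi^\angle_{\S^+,\Sigma}\,J\,Q_{\rm fr}\big|_{\acca^\angle}=\Pi^\angle_{\S^+,\Sigma}\bigl(\cL-\varepsilon J R\bigr)\big|_{\acca^\angle}
\]
with $R:=Q_{\rm fr}$. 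The finite-rank form \eqref{finite_rank_R} of $R$ is immediate from the three summands in $Q_{\rm fr}$: each factors through the $\nu$-dimensional space $\R^\nu$, so expanding against the canonical basis $(e_k)_{k=1,\ldots,\nu}$ one reads off pairs $(g_j,\chi_j)$ (e.g.\ $\chi_j=\Pi^\angle J\,\nabla_w\pa_{I_j}P(i_\delta(\vf))$ or $\chi_j=\Pi^\angle J L^* e_j$, with corresponding $g_j$). The tame bounds \eqref{gjchij_est} then follow from the tame estimates for $L$ (which reduce to estimates on $w_0$ and $\theta_0$ via Lemma \ref{torus_iso}, since $\wtw_0=w_0\circ\theta_0^{-1}$) together with the tame estimates on the derivatives of $X_P$ provided by Lemma \ref{XP_est}, with all derivative losses absorbed into the generic constant $\sigma=\sigma(\tau,\nu,k_0)$.

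Finally, the reversible and momentum preserving character of $\cL_\omega$ is inherited as follows: the Hamiltonian $\cH$ is invariant under the involution $\cS$ and the translations $\tau_\vs$; the torus $T_\delta$ is a reversible traveling wave, because $i_\delta$ is so by Lemma \ref{torus_iso} and $A$ is reversibility and momentum preserving (see \eqref{aacoordinates}--\eqref{vec.tau}); hence $\cL=\omega\cdot\pa_\vf-J\pa_u\nabla_u\cH(T_\delta)$ is reversible and momentum preserving by the general principle recalled in Sections \ref{subsec:ham_st} (linearization of an equivariant vector field at an equivariant point). Since $G_\delta$ itself is reversibility and momentum preserving and commutes with the projections $\Pi^\angle_{\S^+,\Sigma}$ (Lemmata \ref{proj_rev}, \ref{lem:proj.momentum}), the finite-rank operator $R$ shares the same algebraic properties, and so does $\cL_\omega$.

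The main obstacle I anticipate is the careful bookkeeping of the three cross-terms in $Q_{\rm fr}$: one must keep track of the several adjoints $L^*$, distinguish between gradients with respect to $I\in\R^\nu$ and with respect to $w\in\acca^\angle$, and confirm that after composing with $J$ and projecting with $\Pi^\angle$ the resulting operator takes exactly the rank-$\nu$ form \eqref{finite_rank_R} with both $g_j$ and $\chi_j$ in $\acca^\angle$. None of this is conceptually difficult, but verifying the tame estimates for each of these pieces uniformly in the ansatz \eqref{ansatz} requires combining Lemma \ref{XP_est}, Lemma \ref{torus_iso}, and the product estimate \eqref{prod} in a uniform way.
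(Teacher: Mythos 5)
Your scaffolding is the intended argument: differentiate $K_\alpha=H_\alpha\circ G_\delta$ twice in $\tw$ through the affine map $\tw\mapsto G_\delta(\vf,0,\tw)$, and separate the ambient Hessian $\pa_u\nabla_u\cH(T_\delta)$ from the cross-terms $Q_{\rm fr}$ that factor through $\R^\nu$ via $L(\vf)$. There is, however, a gap at the identification $\pa_w^2P(i_\delta)=\Pi^\angle_{\S^+,\Sigma}\,\pa_u\nabla_uP_\varepsilon(A(i_\delta))\,\Pi^\angle_{\S^+,\Sigma}$. The Hamiltonian vector field in the normal direction is $J_\angle\nabla_wK$, where $\nabla_w$ is the $L^2$-gradient, so the Hessian in $\tw$ is $\Pi^{L^2}_\angle\,\pa_u\nabla_u P_\varepsilon(A(i_\delta))\big|_{\acca^\angle_{\S^+,\Sigma}}$ --- with the \emph{$L^2$-orthogonal} projector $\Pi^{L^2}_\angle$, not the symplectic $\Pi^\angle_{\S^+,\Sigma}$. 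These differ here: $\acca^\intercal_{\S^+,\Sigma}$ and $\acca^\angle_{\S^+,\Sigma}$ are symplectically but not $L^2$-orthogonal (from \eqref{Vn+}--\eqref{Vn-} one checks that $V_{n,+}$ and $V_{n,-}$ fail to be $L^2$-orthogonal as soon as $M_n\neq 1$). With your formula, passing from $K_{02}$ to \eqref{cLomega_again} would require $\Pi^\angle_{\S^+,\Sigma}J\Pi^\angle_{\S^+,\Sigma}=\Pi^\angle_{\S^+,\Sigma}J$, i.e. $J\,\acca^\intercal_{\S^+,\Sigma}\subset\acca^\intercal_{\S^+,\Sigma}$, which is false for generic $M_n$.

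What actually makes the statement hold --- and what your proof leaves unstated --- is the identity $\Pi^\angle_{\S^+,\Sigma}\,J\,\Pi^{L^2}_\angle=\Pi^\angle_{\S^+,\Sigma}\,J$ on $L^2$: if $u$ is $L^2$-orthogonal to $\acca^\angle_{\S^+,\Sigma}$, then $\cW(Ju,w)=(J^{-1}Ju,w)_{L^2}=(u,w)_{L^2}=0$ for every $w\in\acca^\angle_{\S^+,\Sigma}$, hence $Ju$ lies in the $\cW$-complement $\acca^\intercal_{\S^+,\Sigma}$ and $\Pi^\angle_{\S^+,\Sigma}Ju=0$; this is the relation behind $J_\angle J_\angle^{-1}={\rm Id}$ cited from Lemma 2.6 of \cite{BFM}. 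Once you put $\Pi^{L^2}_\angle$ in place of $\Pi^\angle_{\S^+,\Sigma}$ and invoke this identity, your decomposition $K_{02}=\Pi^{L^2}_\angle\big(\b\Omega_W+\varepsilon\pa_u\nabla_uP_\varepsilon(A(i_\delta))\big)\big|_{\acca^\angle_{\S^+,\Sigma}}+\varepsilon Q_{\rm fr}$ gives exactly \eqref{cLomega_again} with $R=Q_{\rm fr}$, and the remaining points (finite-rank form, tame bounds via Lemmata \ref{torus_iso} and \ref{XP_est}, reversibility and momentum preservation) go through as you sketch. One minor remark: your $Q_{\rm fr}$ is a sum of three operators each of rank at most $\nu$, so \eqref{finite_rank_R} should be understood as a sum of $O(\nu)$ rank-one terms rather than exactly $\nu$.
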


In order to compute  $ d X $
we use the "shape derivative" formula,  see e.g. \cite{Lan05},
\begin{equation}\label{shapeder}
G'(\eta)[\wh\eta] \psi := \lim_{\epsilon\rightarrow 0} \tfrac{1}{\epsilon}
\big(  G(\eta+\epsilon\wh\eta)\psi -G(\eta)\psi \big) = - G(\eta)(B \wh\eta) -\pa_x(V \wh\eta) \,,
\end{equation}
where
\begin{equation}\label{BfVf}
B(\eta,\psi):= \frac{G(\eta) \psi +\eta_x \psi_x }{1+\eta_x^2} \,, 
\quad V(\eta,\psi):= \psi_x - B(\eta, \psi) \eta_x \,.
\end{equation}
Then, recalling \eqref{Ham-Wal}, \eqref{eq:gauge_wahlen}, \eqref{ham1} and 
\eqref{shapeder} the operator $ \cL $ in \eqref{cL000} is given by 
\begin{equation}\label{Linea10}
\begin{aligned}
\cL = \omega\cdot \pa_\vf   
&+ \begin{pmatrix}
\pa_x\wtV + G(\eta)B  & -G(\eta)   \\
g + B\wtV_x + B G(\eta) B& \wtV\pa_x - B G(\eta) 
\end{pmatrix}  \\
& +\frac{\gamma}{2}\begin{pmatrix}
-G(\eta)\pa_x^{-1} & 0 \\   \pa_x^{-1}G(\eta)B- BG(\eta)\pa_x^{-1} -\frac{\gamma}{2}\pa_x^{-1}G(\eta)\pa_x^{-1} &-\pa_x^{-1}G(\eta) 
\end{pmatrix} \,,
\end{aligned}
\end{equation}
where    
\begin{equation}\label{tc}
\wtV:= V - \gamma \eta  \, , 
\end{equation}
and the functions
$ B := B(\eta,\psi) $, $ V := V(\eta,\psi)$ in \eqref{Linea10}-\eqref{tc} are evaluated  at
the reversible traveling wave $ (\eta,\psi) := W T_\delta(\vf) $ where $ T_\delta(\vf) $ is 
defined in \eqref{Tdelta}. 	
\\[1mm]
{\bf Notation.} In \eqref{Linea10} and hereafter 
the function $ B $ is identified with the corresponding multiplication operators $h \mapsto B h$, and, where there is no parenthesis, composition of operators is understood. For example 
$ B G(\eta) B $ means $ B \circ G(\eta) \circ B $.

\begin{rem}\label{phase-space-ext}
	We consider the operator $ \cL $ in \eqref{Linea10} acting 
	on (a dense subspace of) the whole $  L^2 (\T) \times  L^2 (\T) $. In particular we extend 
	the operator $ \pa_x^{-1} $ to act on the whole $ L^2 (\T) $ as in \eqref{pax-1}. 
\end{rem}

	The following algebraic properties are a direct consequence of the reversible and space-invariance properties of the water waves equations 
	explained in Section 
	\ref{ham.s} and the fact that the approximate solution 
	$ (\eta, \zeta) = T_\delta(\vf) $ is a reversible traveling wave
		(cfr. Lemma 7.3 in \cite{BFM}).

\begin{lem}
	\label{BVtilde.mom}
	The functions  $ (\eta, \zeta) = T_\delta(\vf) $ and $B,  \wt V  $ defined  in \eqref{BfVf}, 
	\eqref{tc} are  quasi-periodic traveling waves.
	The functions $(\eta,\zeta)= T_\delta(\vf)$  are $ ( \even(\vf,x),\odd(\vf,x))$, 
	$B$ is $\odd(\vf,x)$ and  $\wtV$ is $\even(\vf,x)$.  
	The Hamiltonian operator $ \cL $ is  
	reversible and momentum preserving.
\end{lem}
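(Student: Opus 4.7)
The strategy is to establish the three claims in order, exploiting that each symmetry (traveling wave, reversibility, momentum preservation) propagates through the constructions yielding $T_\delta, B, \wt V$ and $\cL$.

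First, for $T_\delta$. By Lemma \ref{torus_iso} the isotropic torus $i_\delta = (\theta_0, I_\delta, w_0)$ is reversible and traveling in the sense of \eqref{RTTT}. Writing $\theta_0(\vf) = \vf + \Theta_0(\vf)$ and using \eqref{aacoordinates}, each summand of $v^\intercal(\theta_0(\vf), I_\delta(\vf))$ has the schematic form $f(I_\delta(\vf))\cos(\theta_0(\vf)_a - \bar\jmath_a x)$; the traveling wave relations \eqref{mompres_aa1} for $\Theta_0, I_\delta, w_0$, together with $\bar\jmath_a = \bar\jmath_a$, make $x \mapsto x+\vs$, $\vf \mapsto \vf - \vec\jmath\vs$ leave $T_\delta$ invariant, proving $T_\delta$ is a quasi-periodic traveling wave. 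Analogously, combining \eqref{aacoordinates} with $\vec\cS i_\delta(\vf) = i_\delta(-\vf)$ and \eqref{rev_invo} produces $\cS T_\delta(\vf) = T_\delta(-\vf)$. Since in Wahl\'en coordinates the involution acts as $(\eta,\zeta)\mapsto (\eta^\vee,-\zeta^\vee)$, this reversibility yields $\eta$ even and $\zeta$ odd in $(\vf,x)$.

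Second, the parities of $B$ and $\wt V$. Because $W$ commutes with $\cS$, the Zakharov potential $\psi = \zeta + \tfrac\gamma2 \pa_x^{-1}\eta$ is odd: indeed, $\pa_x^{-1}$ sends zero-mean even functions to odd functions (a direct check on Fourier series, or by differentiating the relation $v(-\vf,-x) = w(\vf,x)$), so $\pa_x^{-1}\eta$ is odd and hence $\psi$ is odd. The identity $G(\eta^\vee)[\psi^\vee] = (G(\eta)\psi)^\vee$ combined with $\eta(-\vf,x) = \eta^\vee(\vf,x)$, $\psi(-\vf,x) = -\psi^\vee(\vf,x)$ then makes $G(\eta)\psi$ odd. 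Bookkeeping the parities in \eqref{BfVf}--\eqref{tc}: $\eta_x$ odd, $\psi_x$ even, $1+\eta_x^2$ even, so the numerator of $B$ is odd (odd $+$ odd$\cdot$even), the denominator even, giving $B$ odd; then $V = \psi_x - B\eta_x$ is even (even $-$ odd$\cdot$odd), and $\wt V = V - \gamma\eta$ stays even. The traveling wave property passes to $B$ and $\wt V$ because products, quotients and compositions preserve this class, and the Dirichlet--Neumann operator satisfies $\tau_\vs\circ G(\eta) = G(\tau_\vs\eta)\circ \tau_\vs$, so $G(\eta)\psi$ is a traveling wave whenever $\eta,\psi$ are.

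Finally, reversibility and momentum preservation of $\cL$. By Lemma \ref{lem:K02}, $\cL = \omega\cdot\pa_\vf - J\,\pa_u\nabla_u\cH(T_\delta(\vf))$ is obtained by linearizing the Wahl\'en water--waves vector field $X_\cH = J\nabla\cH$ at the reversible traveling wave $T_\delta(\vf)$. Since $\cH \circ \cS = \cH$ and $\cH\circ\tau_\vs = \cH$ (proven in Section \ref{ham.s}), the vector field $X_\cH$ is reversible and translation invariant, i.e.\ satisfies \eqref{revNL} and \eqref{eq:mom_pres}. The general principles recorded in Section \ref{subsec:ham_st} — that the linearization of a reversible vector field at a reversible quasi-periodic function is a reversible operator (Lemma \ref{rev_defn}, cf.\ Lemma 3.22 of \cite{BFM}) and the linearization of a translation invariant vector field at a quasi-periodic traveling wave is momentum preserving (Definition \ref{def:mom.pres}) — then transfer these two properties to $\cL$ directly.

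The entire argument is symmetry bookkeeping; no step is a genuine obstacle, since the framework of Sections \ref{ham.s}--\ref{sec:FS} was set up precisely so that the involution and translation structures propagate automatically through the action--angle change, the Dirichlet--Neumann operator, and the linearization.
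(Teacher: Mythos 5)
Your proof is correct and takes essentially the same approach the paper intends; the paper simply defers the details to Lemma 7.3 of [BFM], citing the symmetry framework of Sections 2--3, which is exactly what you spell out. Your parity bookkeeping for $\eta, \zeta, \psi, G(\eta)\psi, B, V, \wt V$ is accurate, your use of the traveling-wave relations for $i_\delta$ to derive $\tau_\vs T_\delta(\vf) = T_\delta(\vf - \ora\jmath\vs)$ and $\cS T_\delta(\vf) = T_\delta(-\vf)$ is sound, and your appeal to the linearization principles of Section 3.4 (reversible/translation-invariant vector field linearized at a reversible traveling wave gives a reversible/momentum-preserving operator) is the right closing step for $\cL$.
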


For the sequel we will always assume the following ansatz (satisfied by the approximate solutions obtained along the nonlinear  Nash-Moser iteration of Section \ref{sec:NaM}): for some constants $\mu_0 :=\mu_0(\tau,\nu)>0$, $\upsilon\in (0,1)$, (cfr. 
Lemma \ref{torus_iso})
\begin{equation}\label{ansatz_I0_s0}
\norm{ \fI_0 }_{s_0+\mu_0} ^{k_0,\upsilon} \, ,  \ 
\norm{ \fI_\delta }_{s_0+\mu_0}^{k_0,\upsilon}  
\leq 1 
\,.
\end{equation} 
In order to estimate the variation of the eigenvalues with respect to the approximate invariant torus, we need also to estimate the variation with respect to the torus $i(\vf)$ in another low norm $\norm{ \ }_{s_1}$ for all Sobolev indexes $s_1$ such that
\begin{equation}\label{s1s0}
s_1+\sigma_0 \leq s_0 +\mu_0 \,, \quad \text{ for some } \ \sigma_0:=\sigma_0(\tau,\nu)>0 \,. 
\end{equation}
Thus, by \eqref{ansatz_I0_s0}, we have
$ \norm{ \fI_0 }_{s_1+\sigma_0} ^{k_0,\upsilon} $,
$  \norm{ \fI_\delta }_{s_1+\sigma_0}^{k_0,\upsilon} \leq 1 $.
The constants $\mu_0$ and $\sigma_0$ represent the \emph{loss of derivatives} accumulated along the reduction procedure of the next sections. What is important is that they are independent of the Sobolev index $s$. In the following sections we shall denote by $\sigma:=\sigma(\tau,\nu,k_0)>0 $, 
$ \sigma_N(\tq_0) := \sigma_N(\tq_0,\tau,\nu,k_0) $, 
$ \sigma_M:= \sigma_M(k_0,\tau,\nu)>0 $, $ \aleph_M (\alpha ) $
constants (which possibly increase from lemma to lemma) representing  losses 
of derivatives along the finitely many steps of the reduction procedure.

\begin{rem}
	In the next sections 
	$ \mu_0 :=\mu_0(\tau,\nu, M, \alpha) > 0 $  will depend
	also on indexes $ M, \alpha  $, whose 
	maximal values will be fixed depending only on $ \tau $ and $ \nu $ (and $ k_0 $ which is however considered an absolute constant along the paper). 
	In particular $ M $ is fixed in \eqref{M_choice}, whereas the maximal 
	value of $ \alpha $ depends on 
	$ M $, as explained in Remark \ref{fix:alpha}. 
\end{rem}

\begin{rem}
	Starting from Section \ref{subsec:straight}, we introduce in the estimates upper bounds on the regularity $s \geq s_0$. We shall control the terms in Sobolev spaces $H^s$ with $s_0 \leq s \leq S- \sigma$, where $\sigma$ 
	 denotes a loss of derivatives of the finitely many steps of the reduction
	 (possibly increasing along the steps), whereas $ S>s_0+k_0$ is  
	 any finite Sobolev index. The index $ S$ has to be taken finite in view of 
	 Lemma \ref{conju.tr} (see also Appendix \ref{app:FGMP}). The largest 
	 regularity index $S$ will be fixed in \eqref{sigma1}. In particular, it is compatible with the condition \eqref{s1s0}, namely $s_1 +\sigma_0 \leq s_0 + \mu_0 < S $. 
\end{rem}

As a consequence of Moser composition Lemma \ref{compo_moser} and \eqref{ebb1}, 
the Sobolev norm of the function $u=T_\delta(\vf)$ defined in \eqref{Tdelta} satisfies for all $s\geq s_0$
\begin{equation}\label{uI0}
\norm u_s^{k_0,\upsilon} = \norm \eta_s^{k_0,\upsilon} + \norm \zeta_s^{k_0,\upsilon} \leq \varepsilon C(s)\big( 1 + \norm{\fI_0}_s^{k_0,\upsilon} \big)
\end{equation}
(the map $A$ defined in \eqref{aacoordinates} is smooth). Similarly, using \eqref{ebb3}, 
\begin{equation*}\label{Delta12}
\norm{ \Delta_{12}u }_{s_1} \lesssim_{s_1} \varepsilon \norm{i_2-i_1}_{s_1} \,, \quad \text{ where } \quad \Delta_{12}u:=u(i_2)-u(i_1) \, . 
\end{equation*}
We finally recall that $\fI_0 = \fI_0(\omega,\gamma)$ is defined for all $(\omega,\gamma)\in\R^\nu\times [\gamma_1,\gamma_2]$ and that the functions $B,\wtV$ and $c$ appearing in $\cL$ in \eqref{Linea10} 
are $\cC^\infty$ in $(\vf,x)$, as $u=(\eta,\zeta)=T_\delta(\vf)$ is.

\smallskip

In Sections \ref{subsec:good}-\ref{sec:order0}  
	we are going to make several transformations, whose aim is to conjugate 
the operator $ {\cal L} $ in \eqref{Linea10}
to a constant coefficients Fourier multiplier, up to a pseudo-differential operator 
	of order $ - 1 / 2 $ plus a remainder that satisfies tame estimates,  
	see $ {\cal L}_8 $ in  \eqref{cL8}.  
	Finally, in Section \ref{sec:conc} we shall  conjugate the restricted operator 
	$ {\cal L}_\omega $ in \eqref{cLomega_again}.

\subsection{Linearized good unknown of Alinhac}\label{subsec:good}

The first step is to 
conjugate the linear operator $ \cL $ in \eqref{Linea10}
by the  symplectic (Definition \ref{def:HS}) multiplication matrix  operator
\begin{equation*}\label{good_unknown}
\cZ := \left( \begin{matrix}
{\rm Id} & 0 \\ B & {\rm Id}
\end{matrix}\right) \ , \qquad \cZ^{-1}=\left( \begin{matrix}
{\rm Id} & 0 \\ - B & {\rm Id}
\end{matrix} \right) \,,
\end{equation*}
obtaining 
\begin{equation}\label{cL0}
\begin{aligned}
\cL_1 & := \cZ^{-1} \cL \cZ  =   \omega\cdot \pa_\vf 
+  \begin{pmatrix}
\pa_x \wtV & -G(\eta) \\
a   & \wtV \pa_x
\end{pmatrix} 
-    \frac{\gamma}{2}\begin{pmatrix}
G(\eta)\pa_x^{-1} & 0 \\ \frac{\gamma}{2}\pa_x^{-1}G(\eta)\pa_x^{-1} & \pa_x^{-1}G(\eta)
\end{pmatrix} \,,  
\end{aligned}
\end{equation}
where $a$ is the function
\begin{equation}\label{ta}
a := g+ \wtV B_x + \omega\cdot\pa_\vf B \, .
\end{equation}
The matrix $\cZ$  amounts to introduce, as in \cite{Lan05} and
\cite{BM,BBHM}, a linearized version of the ``good unknown of Alinhac".

\begin{lem}\label{lem:good_unknwon}
	The maps $\cZ^{\pm 1}-{\rm Id}$ are $\cD^{k_0}$-tame with tame constants satisfying, for some $\sigma:=\sigma(\tau,\nu,k_0)> 0 $, for all $ s \geq s_0 $,	
	\begin{equation*}
	\label{lem:ga1}
	\fM_{\cZ^{\pm 1}-{\rm Id}}(s)\, , \ \fM_{(\cZ^{\pm 1}-{\rm Id})^*}(s) \lesssim_s \varepsilon\big( 1 + \norm{ \fI_0 }_{s+\sigma}^{k_0,\upsilon} \big) \,.
	\end{equation*} 
	The function $a$ in \eqref{ta} is a quasi-periodic traveling wave $\even(\vf,x)$. There is  $\sigma:= \sigma(\tau,\nu,k_0)>0$ such that, for all $ s \geq  s_0 $,
	\begin{equation} \label{lem:ga2}
	\norm{a-g}_s^{k_0,\upsilon} + \| \wtV \|_s^{k_0,\upsilon} + \norm{ B }_s^{k_0,\upsilon} \lesssim_s \varepsilon \big( 1 + \normk{\fI_0}{s+\sigma} \big) \,.
	\end{equation}
	Moreover, for any $s_1$ as in \eqref{s1s0},
	\begin{align}
		&\norm{\Delta_{12}a}_{s_1}+
	\| \Delta_{12}\wtV \|_{s_1}+\norm{\Delta_{12}B}_{s_1}\lesssim_{s_1} \varepsilon \norm{ i_1-i_2 }_{s_1+\sigma} \,, \label{lem:ga4} \\ 
	& \| \Delta_{12} (\cZ^{\pm 1})h \|_{s_1}, \| \Delta_{12}
	(\cZ^{\pm 1})^* h\|_{s_1} \lesssim_{s_1} \varepsilon \norm{i_1-i_2}_{s_1+\sigma} \norm{h}_{s_1} \,. \label{lem:ga6}
	\end{align}
	The operator $\cL_1$ is Hamiltonian, reversible and momentum preserving.
\end{lem}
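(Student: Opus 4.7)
The plan is to proceed in four coordinated steps: carry out the algebraic conjugation explicitly, deduce the tame estimates on the various coefficients from those already established for $u = T_\delta(\vf)$, transfer the symmetry/traveling-wave properties from $B,\wtV$ via Lemma \ref{BVtilde.mom}, and finally invoke the structural lemmas of Section \ref{subsec:ham_st} for the Hamiltonian, reversible and momentum preserving character of $\cL_1$.

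First I would carry out the conjugation $\cL_1 = \cZ^{-1}\cL\cZ$ entry by entry on the matrix \eqref{Linea10}. The symplectic map $\cZ$ differs from the identity only in the $(2,1)$-slot, so that $\cZ^{-1}(\omega\cdot\pa_\vf \cZ)$ contributes precisely $\omega\cdot\pa_\vf B$ to the $(2,1)$-entry and nothing else. For the remaining part of $\cL$, the key cancellations are: in position $(1,1)$, the term $G(\eta)B$ coming from $M_{12}\cdot B$ cancels the one already present; in position $(2,1)$, the $BG(\eta)B$ summands cancel, the operator-valued terms combine as $-B\wtV\,\pa_x + \wtV B\,\pa_x = 0$ (commutativity of multiplications), and the remaining multiplicative terms reduce to $-B\wtV_x + (B\wtV)_x = \wtV B_x$ by the Leibniz rule, producing the coefficient $a$ in \eqref{ta}; in positions $(1,2)$ and $(2,2)$ the conjugation is immediate. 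An analogous direct computation on the $\gamma/2$-block gives the claimed form; the cancellation $\pa_x^{-1}G(\eta)B - \pa_x^{-1}G(\eta)B = 0$ in the $(2,1)$-entry is what yields the simplified expression in \eqref{cL0}.

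For the quantitative estimates, I would observe that $\cZ^{\pm 1}-{\rm Id}$ is just multiplication by $\pm B$, so by Lemma \ref{tame_pesudodiff} its $\cD^{k_0}$-tame constant is controlled by $\normk{B}{s}$ (and the same for adjoints, as $B$ is real). The formula \eqref{BfVf} expresses $B$ as a rational combination of $\eta_x,\psi_x$ and $G(\eta)\psi$; combining Lemma \ref{DN_pseudo_est} (expanding $G(\eta) = G(0)+\cR_G(\eta)$), the Moser estimate of Lemma \ref{compo_moser} applied to the function $1/(1+\eta_x^2)$, the product estimate \eqref{prod}, and the ansatz bound \eqref{uI0} on $u = T_\delta(\vf) = (\eta,\zeta)$ (recalling $\psi = \zeta + \tfrac{\gamma}{2}\pa_x^{-1}\eta$), yields $\normk{B}{s}\lesssim_s \varepsilon(1+\normk{\fI_0}{s+\sigma})$. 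The same scheme controls $\wtV = \psi_x - B\eta_x - \gamma\eta$ and, via \eqref{ta}, the coefficient $a-g = \wtV B_x + \omega\cdot\pa_\vf B$ (using that $\omega\cdot\pa_\vf$ commutes with the norms). The $\Delta_{12}$ estimates \eqref{lem:ga4}-\eqref{lem:ga6} follow by the standard mean-value argument applied to the same formulas, since each building block depends smoothly on $u$ and $\norm{\Delta_{12}u}_{s_1}\lesssim_{s_1}\varepsilon\norm{i_1-i_2}_{s_1}$.

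For the algebraic properties, I would argue as follows. By Lemma \ref{BVtilde.mom}, $B$ is $\odd(\vf,x)$ and $\wtV$ is $\even(\vf,x)$; hence $B_x$ is $\even$, $\wtV B_x$ is $\even$, and $\omega\cdot\pa_\vf B$ is $\even$, so $a$ is a quasi-periodic traveling wave which is $\even(\vf,x)$. The map $\cZ$ is symplectic by Definition \ref{def:HS} (it is lower-triangular with unit diagonal and real off-diagonal entry $B = B^*$); since $\cL$ is Hamiltonian, so is $\cL_1$. Reversibility preservation of $\cZ$ follows from the $\odd(\vf,x)$ character of $B$ through Lemma \ref{rev_defn_C}, and momentum preservation follows from the fact that $B$ is a quasi-periodic traveling wave combined with Lemma \ref{lem:mom_pseudo} (or equivalently Lemma \ref{lem:mom_prop} for products). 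Since $\cL$ is reversible and momentum preserving (Lemma \ref{BVtilde.mom}), conjugation by $\cZ$ preserves these properties, completing the proof. The main potential obstacle is purely bookkeeping: tracking the non-commutation $\wtV\pa_x\circ B \neq B\circ \wtV\pa_x$ at the operator level in the $(2,1)$-entry — but this is resolved cleanly by the observation that the operator-valued pieces cancel thanks to commutativity of multiplication operators, leaving only the multiplicative term $\wtV B_x$.
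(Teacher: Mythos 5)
Your proposal is correct and takes essentially the same approach as the paper's proof, which is extremely terse (two sentences citing the formulas \eqref{BfVf}, \eqref{tc}, \eqref{ta}, Lemmas \ref{compo_moser} and \ref{DN_pseudo_est}, the product estimate \eqref{prod}, and the oddness of $B$ for the algebraic properties). What you add is the explicit verification of the conjugation $\cZ^{-1}\cL\cZ$: your entry-by-entry computation — in particular the tracking of $-B\,\partial_x\wtV + B\wtV_x + \wtV\,\partial_xB = \wtV B_x$ (Leibniz rule, commutativity of multiplication) and the cancellation of $BG(\eta)B$ in the $(2,1)$ slot, as well as the corresponding cancellations in the $\gamma/2$-block — is correct and matches what the paper leaves entirely implicit. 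The invocation of Lemma \ref{tame_pesudodiff} to convert the $\Psi$DO bound on the multiplication operator $B$ into a $\cD^{k_0}$-tame constant, the use of \eqref{uI0} for the ansatz bound on $u = T_\delta(\vf)$, and the transfer of symmetry/momentum properties through Lemmas \ref{BVtilde.mom}, \ref{rev_defn_C}, \ref{lem:mom_pseudo} are all consistent with the paper's intent. This is a faithful reconstruction, not a different argument.
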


\begin{proof}
By the expressions of $ B, \widetilde V, a $ in 
\eqref{BfVf}, \eqref{tc}, \eqref{ta} the composition estimates of 
Lemma \ref{compo_moser},  
 \eqref{prod} 
and the bounds for the Dirichlet-Neumann operator in Lemma \ref{DN_pseudo_est}. 
Since $ B $ is an $ \odd (\vphi, x)$ quasi-periodic traveling wave,
the matrix operator $\cZ$  is reversibility and momentum preserving (Definitions \ref{rev_defn} and \ref{def:mom.pres}).
\end{proof}

\subsection{Almost-straightening of the first order transport operator}\label{subsec:straight}
	
We now write the operator $ \cL_1 $ in \eqref{cL0} as 
\begin{equation}
\label{cL00}
\cL_1  = \omega\cdot \pa_\vf +
\begin{pmatrix}
\pa_x \wt V & 0 \\
0 & \wt V \pa_x 
\end{pmatrix} + 
\begin{pmatrix}
-\frac{\gamma}{2}G(0)\pa_x^{-1} & -G(0) \\
 a - \left(\frac{\gamma}{2}\right)^2\pa_x^{-1}G(0)\pa_x^{-1}& 
-\frac{\gamma}{2}\pa_x^{-1}G(0)
\end{pmatrix} 
 + \bR_1\,,
\end{equation}
where, using the decomposition \eqref{DNGeta}
of the Dirichlet-Neumann operator,   
\begin{equation}
\label{bRG}
\bR_1 :=- \begin{pmatrix}
\frac{\gamma}{2}\cR_G(\eta)\pa_x^{-1} & \cR_G(\eta) \\ \left(\frac{\gamma}{2}\right)^2\pa_x^{-1}\cR_G(\eta)\pa_x^{-1} & \frac{\gamma}{2}\pa_x^{-1}\cR_G(\eta)
\end{pmatrix}  
\end{equation}
is a small remainder in $ \Ops^{-\infty}$. 
	The aim of this section is to conjugate the variable  coefficients quasi-periodic 
transport operator
\begin{equation}\label{LTR}
	\cL_{\rm TR} := \omega\cdot\pa_\vf +  \begin{pmatrix}
	\pa_x \wtV & 0 \\ 0 & \wtV \pa_x
	\end{pmatrix}
\end{equation}	
to a constant coefficients transport operator 
$\omega\cdot\pa_\vf + \tm_{1,\bar\tn}\,\pa_y $,
up to an exponentially small 
remainder, see \eqref{R2perp}-\eqref{stime.pn.not.w},
where $\tn\in\N_0$ and the 
scale $(N_{\tn})_{\tn\in\N_0}$ is defined, for $N_0>1$, by
\begin{equation}\label{scala.strai}
	N_{\tn}:=N_0^{\chi^{\tn}}\,, \quad \chi=3/2\,, \quad N_{-1}:=1\,.
\end{equation} 
Such small remainder  is left 
because we assume  only finitely many 
non-resonance conditions, see \eqref{tDtCn}. 
This enables  
to deduce  Lemma \ref{inclu.fgmp}, and then 
to formulate the non-resonance condition \eqref{0meln}, 
stated in terms of the ``final" function $ \tm_1^\infty (\omega, \gamma)$, which implies
\eqref{tDtCn}  at any step of the nonlinear Nash-Moser 
iteration of Section \ref{sec:NaM}.

\smallskip

In the next lemma we conjugate $ \cL_{\rm TR} $ by a \emph{symplectic} 
(Definition \ref{def:HS}) transformation
\begin{equation}\label{defcE}
\cE:= \begin{pmatrix}
	(1+\beta_x(\vf,x)) \circ \cB & 0 \\ 0 & \cB
\end{pmatrix} \, , \quad 
\cE^{-1}:=\begin{pmatrix}
	\cB^{-1}\circ (1+\beta_x(\vf,x))^{-1} & 0 \\ 0 & \cB^{-1}
\end{pmatrix}
\end{equation}
where the composition operator 
\begin{equation}\label{defcB}
	(\cB u)(\vf, x) := 
	u\left(\vf,x+ \beta(\vf, x)\right) 
\end{equation}
is induced by  a $\vf$-dependent diffeomorphism $y=x+\beta(\vf,x)$ 
of the torus $\T_x$, for some small quasi-periodic traveling wave 
$\beta:\T_\vf^\nu\times \T_x\to \R $, $\odd(\vf,x)$. 
Let 
	\begin{equation}\label{tbta}
		\tb:= [\ta] + 2 \in \N \,, \quad \ta:=3(\tau_1+1)  \geq 1 \,, \quad \tau_1:= k_0 +(k_0+1)\tau\,.
	\end{equation}

\begin{lem}\label{conju.tr}
{\bf (Almost-Straightening of the transport operator)}
There exists $\tau_2(\tau,\nu)> \tau_1(\tau,\nu)+1 + \ta $ 
	such that, for all $S >s_0+ k_0  $, there are $N_0:=N_0(S,\tb)\in\N$ 
	and $\updelta := \updelta(S,\tb) \in (0,1) $ such that, if
$ N_0^{\tau_2} \varepsilon \upsilon^{-1} < \updelta    $
	the following holds true. For any $\bar \tn \in \N_0 $: 
\\[1mm]
1.  There exist a  constant $\tm_{1,\bar\tn} := \tm_{1,\bar\tn}(\omega,\gamma) \in \R $, where $\tm_{1,0}=0$,
		defined for any $(\omega,\gamma)\in\R^\nu\times[\gamma_1,\gamma_2] $,   and a
		quasi-periodic traveling wave $\beta(\vf,x):=\beta_{\bar\tn}(\vf,x)$, $\odd(\vf,x)$, satisfying,
		for some $\sigma=\sigma(\tau,\nu,k_0)>0$, the  estimates
		\begin{equation}\label{beta.FGMP.est}
			|\tm_{1,\bar\tn}|^{k_0,\upsilon}\lesssim\varepsilon \,,\quad  \normk{\beta}{s} \lesssim_{S} \varepsilon\upsilon^{-1} (1+\normk{\fI_0}{s+\sigma+\tb}) \,, \quad  \forall \,
			s_0 \leq s \leq S \, , 
		\end{equation}
		{\rm independently of} $ \bar\tn$;
\\[1mm]
2. For any $(\omega,\gamma) $ in 
		\begin{equation}\label{tDtCn}
			\begin{aligned}
				\tT\tC_{\bar\tn+1}(2\upsilon,\tau) & :=\tT\tC_{\bar\tn+1}(\tm_{1,\bar\tn},2\upsilon,\tau)\\
				&:= \Big\{ (\omega,\gamma)\in\R^\nu\times[\gamma_1,\gamma_2] \,:\, |(\omega-\tm_{1,\bar\tn}\ora{\jmath})\cdot\ell| \geq  2\upsilon\braket{\ell}^{-\tau} \,\ \forall\,0<|\ell|\leq N_{\bar\tn} \Big\}
			\end{aligned}
		\end{equation}
		the operator $\cL_{\rm TR}$ in \eqref{LTR} is conjugated to
		\begin{equation}\label{coniugaLTR}
			\cE^{-1} \cL_{\rm TR} \cE = \omega\cdot\pa_\vf +\tm_{1,\bar\tn}\,\pa_y  + \bP_{2}^\perp\, ,
		\end{equation}
		where 
		\begin{equation}\label{R2perp}
			\bP_{2}^\perp := \begin{pmatrix}
				\pa_y p_{\bar\tn} & 0 \\ 0 &  p_{\bar\tn} \pa_y
			\end{pmatrix}\,,
		\end{equation}
		and	 the real, quasi-periodic traveling wave function $p_{\bar\tn}(\vf,y)$, $\even(\vf,y)$, satisfies, for some $\sigma=\sigma(\tau,\nu,k_0)>0$ and for any $s_0 \leq s \leq S$,
		\begin{equation}\label{stime.pn.not.w}
			\normk{p_{\bar\tn}}{s}\lesssim_{s,\tb}\varepsilon\,N_{\bar\tn-1}^{-\ta}(1+\normk{\fI_0}{s+\sigma+\tb}) \, ;  
		\end{equation}
3.	The operators $\cE^{\pm}$ are $\cD^{k_0}$-$(k_0+1)$-tame, the operators $\cE^{\pm 1}-{\rm Id}$, $(\cE^{\pm 1}-{\rm Id})^*$ are $\cD^{k_0}$-$(k_0+2)$-tame
	with tame constants satisfying, for some $\sigma:=\sigma(\tau,\nu,k_0)>0$ and for all $s_0\leq s\leq S - \sigma$,
	\begin{align}
		& \fM_{\cE^{\pm 1}}(s) \lesssim_{S} 1+ \normk{\fI_0}{s+\sigma} \,, \ \fM_{\cE^{\pm 1}-{\rm Id}}(s) + \fM_{\left(\cE^{\pm 1}-{\rm Id}\right)^*}(s) \lesssim_{S}\varepsilon\upsilon^{-1} ( 1+ \normk{\fI_0}{s+\sigma+\tb})\, . \label{step5.est11}			
	\end{align}
4. Furthermore, for any $s_1$ as in \eqref{s1s0},
	\begin{align}
		& | \Delta_{12} \tm_{1,\bar\tn} |  \lesssim 
		\varepsilon \norm{i_1-i_2}_{s_1+\sigma}  \,, \quad \|\Delta_{12}\beta\|_{s_1} \lesssim_{s_1} \varepsilon\upsilon^{-1} \|i_1 -i_2\|_{s_1+\sigma+\tb}
		\label{step5.est7}\,, \\
		& \| \Delta_{12} (\cA) h  \|_{s_1} \lesssim_{s_1} \varepsilon \upsilon^{-1} \norm{i_1-i_2}_{s_1+\sigma+\tb}\norm{h}_{s_1+\sigma +\tb} \,, \quad \cA \in \{ \cE^{\pm 1} , (\cE^{\pm 1})^* \}\,.\label{step5.est13}
	\end{align}
\end{lem}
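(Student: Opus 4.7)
I would set up a finite KAM-type iterative scheme that straightens the transport operator $\cL_{\rm TR}$ in \eqref{LTR} through $\bar\tn+1$ consecutive symplectic conjugations $\cE_0, \cE_1, \ldots, \cE_{\bar\tn}$, each of the form \eqref{defcE} induced by a small quasi-periodic traveling wave $\beta_n(\vf,x)$, $\odd(\vf,x)$. After these conjugations, the variable transport coefficient is replaced by a constant $\tm_{1,\bar\tn}$ plus a super-exponentially small remainder, which will become the function $p_{\bar\tn}$ in \eqref{R2perp}. The composite $\cE = \cE_0 \cdots \cE_{\bar\tn}$ is again of the form \eqref{defcE} induced by a total diffeomorphism $\beta$.

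\textbf{Homological equation and Egorov expansion.} At the $(n+1)$-th step, the operator on the lower diagonal entry has the form $\omega \cdot \pa_\vf + (\tm_{1,n} + a_n(\vf,y)) \pa_y$, with matching symplectic form on the upper entry via $(1+\beta_{n,x}) \cB_n$. Following \cite{FGMP,BM20}, I would look for $\beta_{n+1}$ solving the homological equation
\begin{equation*}
\omega \cdot \pa_\vf \beta_{n+1} + \tm_{1,n}\, \pa_y \beta_{n+1} = \Pi_{N_n}\!\left(\tm_{1,n+1} - \tm_{1,n} - a_n\right),
\end{equation*}
with the choice $\tm_{1,n+1} - \tm_{1,n} := \langle a_n \rangle_{\vf,y}$ to kill the average, and solve it through the extended operator \eqref{paext} applied in $\tT\tC(\tm_{1,n}; \upsilon, \tau)$, so that $\beta_{n+1}$ is defined for all parameters and satisfies \eqref{lem:diopha.eq}. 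The restriction to the traveling-wave indexes $\vec\jmath \cdot \ell + j = 0$ in \eqref{DCm} is crucial here. Then Proposition \ref{egorov} yields the conjugated operator $\omega\cdot\pa_\vf + (\tm_{1,n+1} + a_{n+1}(\vf,y)) \pa_y$ up to smoothing terms that I absorb into $a_{n+1} \pa_y$ via composition with $\pa_y^{-1}$, with the standard quadratic bound $\normk{a_{n+1}}{s} \lesssim_{s} \normk{a_n}{s_0+\sigma}\normk{a_n}{s} + N_n^{-\tb}\normk{a_n}{s+\tb}$. The upper diagonal entry is conjugated by $(1+\beta_{n+1,x}) \cB_{n+1}$ — chosen precisely so the adjoint structure $\pa_y (\tm_{1,n+1} + a_{n+1})$ appears, preserving the block-diagonal symplectic form in \eqref{R2perp}.

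\textbf{Iteration, convergence, and estimates.} Iterating $\bar\tn+1$ times with the scale \eqref{scala.strai} and the choice of $\ta, \tb$ in \eqref{tbta}, the standard KAM super-exponential bookkeeping yields $\normk{a_{\bar\tn}}{s} \lesssim_{S,\tb} \varepsilon\, N_{\bar\tn-1}^{-\ta} (1 + \normk{\fI_0}{s+\sigma+\tb})$, which on setting $p_{\bar\tn} := a_{\bar\tn}$ gives \eqref{stime.pn.not.w}. Estimate \eqref{beta.FGMP.est} on $\tm_{1,\bar\tn}$ and on the composite $\beta$ follows by summing the telescopic bounds at each step (the $n$-th correction obeys $|\tm_{1,n+1} - \tm_{1,n}|^{k_0,\upsilon} \lesssim \varepsilon N_{n-1}^{-\ta}$), and Lemma \ref{product+diffeo} together with the composition formula give \eqref{step5.est11}. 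The reversibility-preserving and momentum-preserving nature of each $\cE_n$ (hence of $\cE$) follows from choosing each $\beta_n$ as an $\odd(\vf,x)$ quasi-periodic traveling wave, which is automatic from the structure of the homological equation and the parity of $\wtV$ (Lemma \ref{BVtilde.mom}). The Lipschitz-in-torus bounds \eqref{step5.est7}--\eqref{step5.est13} are tracked along the iteration using \eqref{lem:diopha.eq.12} at each homological step.

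\textbf{Main obstacle.} The most delicate point is the interplay between the \emph{symplectic} structure of $\cE$ (the $(1+\beta_x)\cB$ factor on the upper slot vs.\ plain $\cB$ on the lower slot) and the desired block-diagonal form \eqref{R2perp} of the remainder $\bP_2^\perp$: one must verify that the Egorov expansion of the conjugated operator preserves exactly this adjoint pairing $\pa_y\, p_{\bar\tn}$ versus $p_{\bar\tn}\, \pa_y$ at each step, with no spurious lower-order terms escaping the pattern. This requires a careful algebraic identification using that $\cE$ is Hamiltonian-symplectic. A secondary, more analytic, obstacle is that the homological equations at intermediate steps require non-resonance conditions for the \emph{intermediate} constants $\tm_{1,n}$, while the hypothesis \eqref{tDtCn} only provides the condition for the \emph{final} $\tm_{1,\bar\tn}$: this necessitates a Cantor-set inclusion argument, which works thanks to the geometric decay $|\tm_{1,n+1} - \tm_{1,n}| \lesssim \varepsilon N_{n-1}^{-\ta}$ together with the super-polynomial growth of the cutoffs $N_n$, forcing $\tT\tC_{\bar\tn+1}(\tm_{1,\bar\tn}, 2\upsilon, \tau) \subset \bigcap_{n \leq \bar\tn} \tT\tC_{n+1}(\tm_{1,n}, \upsilon, \tau)$ for $\varepsilon$ small enough.
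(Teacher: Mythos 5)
Your proposal is correct in substance and follows the same KAM-iteration approach, but the paper organizes the argument differently in a way worth noting. The paper first abstracts the entire iteration into a purely scalar result (Theorem \ref{thm:as} and Corollary \ref{cor.as} in Appendix \ref{app:FGMP}): one straightens the scalar transport operator $X_0 = \omega\cdot\pa_\vf + p_0\pa_x$ with $p_0 = \wtV$, via a sequence of diffeomorphisms $\cG_n$ solving homological equations of exactly the form you write, with the intermediate non-resonance sets $\t\Lambda_n^{\upsilon,\rm T}$ and a final inclusion $\tT\tC_{\bar\tn+1}(\tm_{1,\bar\tn},2\upsilon,\tau)\subseteq\t\Lambda_{\bar\tn+1}^{\upsilon,\rm T}$ established precisely by the telescopic bound $|\tm_{1,n+1}-\tm_{1,n}|\lesssim\varepsilon N_{n-1}^{-\ta}$ you identify. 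The matrix operator $\cL_{\rm TR}$ is then conjugated by the \emph{single} symplectic map $\cE$ built from the composite $\beta=\beta_{\bar\tn}$, and the upper-left entry $A_1$ is not computed iteratively: one invokes symplecticity exactly once, observing that $\cE^{-1}\cL_{\rm TR}\cE$ is Hamiltonian so that $A_1 = -((\tm_{1,\bar\tn}+p_{\bar\tn})\pa_y)^* = \tm_{1,\bar\tn}\pa_y + \pa_y p_{\bar\tn}$. This is cleaner than tracking the $(1+\beta_{n,x})\cB_n$ pairing through every iterate, as your ``main obstacle'' paragraph anticipates one would have to do. Two smaller remarks: (i) your invocation of the Egorov theorem is unnecessary — the conjugation of a first-order transport operator by a diffeomorphism is an exact algebraic identity (as in display $\cG_\tn^{-1}X_\tn\cG_\tn = \omega\cdot\pa_\vf + \{\cG_\tn^{-1}(\omega\cdot\pa_\vf g_\tn + (\tm_{1,\tn}+p_\tn)(1+(g_\tn)_x))\}\pa_y$), producing no lower-order remainders to absorb; (ii) your quadratic Nash–Moser bound is missing the small-divisor loss $N_n^{\tau_1+1}\upsilon^{-1}$ in front of the quadratic term, which appears in the paper's Lemma \ref{stime.iter.ptn} and is why the smallness condition \eqref{small.V.as.AP} carries the factor $N_0^{\tau_2}\upsilon^{-1}$.
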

\begin{proof}
We apply Theorem \ref{thm:as} and Corollary \ref{cor.as} 
to  the transport operator  $  
X_0 = \omega \cdot \partial_\varphi + \widetilde V \partial_x $, 
which has the form \eqref{defX0} with $ p_0 = \widetilde V $.
By 
\eqref{lem:ga2} and \eqref{ansatz_I0_s0},
the smallness conditions \eqref{small.V.as.AP} and 
\eqref{small12} 
hold for $ N_0^{\tau_2} 
\varepsilon \upsilon^{-1} $  sufficiently small. 
Therefore there exist a constant $ \tm_{1, \bar \tn} \in \R $ and a 
quasi-periodic traveling wave $\beta(\vf,x):=\beta_{\bar\tn}(\vf,x)$, $\odd(\vf,x)$,
 such that, for any $(\omega,\gamma) $ in $\tT\tC_{\bar\tn+1}(2\upsilon,\tau) \subseteq \t\Lambda_{\bar\tn+1}^{\upsilon,\rm T}\subseteq \t\Lambda_{\bar\tn}^{\upsilon,\rm T} $  (see Corollary \ref{cor.TC}) we have  
$$
 \cB_{\bar \tn}^{-1} (\omega \cdot \partial_\varphi + \widetilde V \partial_x) \cB_{\bar\tn} =  
 	\omega\cdot \pa_\vf + (\tm_{1,\bar\tn} + p_{\bar\tn}(\vf,y)) \pa_y  
$$
where the function  $p_{\bar\tn} $ satisfies \eqref{stime.pn.not.w} by \eqref{stime.pn.w} and \eqref{lem:ga2}. 
The estimates 
\eqref{tm.est}, \eqref{stima.B2}, \eqref{lem:ga2}  imply \eqref{beta.FGMP.est} and \eqref{step5.est11}.
The conjugated operator of $ \cL_{\rm TR}$ in \eqref{LTR} is 
$$
	\cE^{-1}\cL_{\rm TR}\cE = 
	\omega\cdot\pa_\vf +  \begin{pmatrix}
	 A_1 & 0 \\ 0 &  (\tm_{1,\bar\tn} + p_{\bar\tn}) \pa_y  
	\end{pmatrix}
$$
where  $ \omega\cdot\pa_\vf +  A_1 =  \cB^{-1} (1+\beta_x)^{-1}\big( \omega\cdot\pa_\vf + \pa_x \wtV  \big) (1+\beta_x) \cB $. 
Since $\cL_{\rm TR}$ is Hamiltonian (Definition \ref{def:HS}), and the map $\cE$ is symplectic, we have that $\cE^{-1}\cL_{\rm TR}\cE$ is Hamiltonian as well. In particular
 $ A_1 =- ( (\tm_{1,\bar\tn} + p_{\bar\tn}) \pa_y  )^* =
 \tm_{1,\bar\tn} \pa_y  + \pa_y p_{\bar\tn}  $. 
 This proves  \eqref{coniugaLTR}-\eqref{R2perp}. The estimates 
 \eqref{step5.est7}-\eqref{step5.est13} follow by \eqref{estp121}-\eqref{estm121},
 the bound for 
 $\|\Delta_{12}\beta_{\bar\tn}\|_{s_1} $ in  Corollary \ref{cor.as} and \eqref{lem:ga4}-\eqref{lem:ga6}.
\end{proof}

\begin{rem}
	Actually, for any $(\omega,\gamma)\in\tT\tC_{\bar\tn+1}(2\upsilon,\tau)$ in \eqref{tDtCn}, Theorem \ref{thm:as} and Corollary \ref{cor.TC} would imply also the conjugation of $\cL_{\rm TR}$ to the operator $\omega\cdot\pa_\vf + \tm_{1, \bar\tn+1}\pa_y$ for some $\tm_{1, \bar\tn+1}\in\R$, up to a remainder $\bP_{2}^\perp = O( \varepsilon N_{\bar\tn}^{-\ta})$. For simplicity we stated only the conjugation in \eqref{coniugaLTR}. We shall use the non-resonance condition in \eqref{tDtCn} also later in Sections \ref{sec:order12}, \ref{sec:order0} .
\end{rem}

The next lemma is needed in order to prove the inclusion of the Cantor sets associated 
to two nearby approximate solutions. 

\begin{lem}\label{inclu.fgmp}
	Let $i_1, i_2$ be close enough and $0<2\upsilon-\rho<2\upsilon<1$. Then
	\begin{equation*}\label{condi.1}
		 \varepsilon C(s_1) N_{\bar\tn}^{\tau+1}\|i_1-i_2\|_{s_1+\sigma} \leq \rho 
	\quad \Rightarrow \quad 
		\tT\tC_{\bar\tn+1}(2\upsilon,\tau)(i_1)\subseteq \tT\tC_{\bar\tn+1}(2\upsilon-\rho,\tau)(i_2) \, . 
	\end{equation*}
\end{lem}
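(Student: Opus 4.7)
The plan is a direct triangle inequality argument, exploiting that the only $i$--dependence in the definition \eqref{tDtCn} of $\tT\tC_{\bar\tn+1}(2\upsilon,\tau)$ is through the constant $\tm_{1,\bar\tn}=\tm_{1,\bar\tn}(i)$, which varies Lipschitz--continuously with $i$ by \eqref{step5.est7}.

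First, fix $(\omega,\gamma)\in\tT\tC_{\bar\tn+1}(2\upsilon,\tau)(i_1)$ and any $\ell\in\Z^\nu$ with $0<|\ell|\leq N_{\bar\tn}$. By definition,
\[
\bigl|(\omega-\tm_{1,\bar\tn}(i_1)\ora{\jmath})\cdot\ell\bigr|\geq 2\upsilon\braket{\ell}^{-\tau}.
\]
Write $\tm_{1,\bar\tn}(i_2)=\tm_{1,\bar\tn}(i_1)+\Delta_{12}\tm_{1,\bar\tn}$ and apply the triangle inequality to get
\[
\bigl|(\omega-\tm_{1,\bar\tn}(i_2)\ora{\jmath})\cdot\ell\bigr|\geq \bigl|(\omega-\tm_{1,\bar\tn}(i_1)\ora{\jmath})\cdot\ell\bigr| - |\Delta_{12}\tm_{1,\bar\tn}|\,|\ora{\jmath}\cdot\ell|.
\]

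Next, I would bound the error term: by \eqref{step5.est7} we have $|\Delta_{12}\tm_{1,\bar\tn}|\lesssim \varepsilon\|i_1-i_2\|_{s_1+\sigma}$, and since $|\ora{\jmath}\cdot\ell|\leq |\ora{\jmath}|\,|\ell|\lesssim \braket{\ell}$ (where the implicit constant depends only on the fixed tangential sites $\S$), we have, for $|\ell|\leq N_{\bar\tn}$,
\[
|\Delta_{12}\tm_{1,\bar\tn}|\,|\ora{\jmath}\cdot\ell|\leq C\varepsilon\|i_1-i_2\|_{s_1+\sigma}\,\braket{\ell}\leq \bigl(C\,\varepsilon\,N_{\bar\tn}^{\tau+1}\|i_1-i_2\|_{s_1+\sigma}\bigr)\,\braket{\ell}^{-\tau},
\]
where in the last step we used $\braket{\ell}^{\tau+1}\leq \braket{N_{\bar\tn}}^{\tau+1}\lesssim N_{\bar\tn}^{\tau+1}$. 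By the hypothesis $\varepsilon C(s_1) N_{\bar\tn}^{\tau+1}\|i_1-i_2\|_{s_1+\sigma}\leq \rho$ (with $C(s_1)$ absorbing the constants above), this error is $\leq \rho\braket{\ell}^{-\tau}$.

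Combining the two estimates yields
\[
\bigl|(\omega-\tm_{1,\bar\tn}(i_2)\ora{\jmath})\cdot\ell\bigr|\geq 2\upsilon\braket{\ell}^{-\tau}-\rho\braket{\ell}^{-\tau}=(2\upsilon-\rho)\braket{\ell}^{-\tau}
\]
for every $0<|\ell|\leq N_{\bar\tn}$, which proves $(\omega,\gamma)\in\tT\tC_{\bar\tn+1}(2\upsilon-\rho,\tau)(i_2)$. There is no real obstacle here; the only subtlety is the book-keeping of the constant $C(s_1)$, which depends on $|\ora{\jmath}|$ and the implicit constant in \eqref{step5.est7}.
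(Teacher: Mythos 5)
Your proof is correct and follows essentially the same argument as the paper: a triangle inequality on $(\omega-\tm_{1,\bar\tn}(i)\ora{\jmath})\cdot\ell$, the Lipschitz bound $|\Delta_{12}\tm_{1,\bar\tn}|\lesssim\varepsilon\|i_1-i_2\|_{s_1+\sigma}$ from \eqref{step5.est7}, and the observation that $|\ell|\leq N_{\bar\tn}$ allows absorbing the error into $\rho\braket{\ell}^{-\tau}$. The only cosmetic difference is that you keep the factor $\braket{\ell}^{-\tau}$ and multiply by $\braket{\ell}^{\tau+1}\leq N_{\bar\tn}^{\tau+1}$, whereas the paper bounds $|\ell|\leq N_{\bar\tn}$ directly and then uses $\braket{\ell}^{-\tau}\geq N_{\bar\tn}^{-\tau}$; these are the same estimate.
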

\begin{proof}
	For any $(\omega,\gamma)\in\tT\tC_{\bar\tn+1}(2\upsilon,\tau)(i_1)$, using also \eqref{step5.est7}, we have, for any $\ell\in\Z^\nu\setminus\{0\}$, $|\ell|\leq N_{\bar\tn}$,
	\begin{equation*}
	\begin{aligned}
		|(\omega-\tm_{1,\bar\tn}(i_2)\ora{\jmath})\cdot\ell| 
		& \geq |(\omega-\tm_{1,\bar\tn}(i_1)\ora{\jmath})\cdot\ell| - C|\Delta_{12}\tm_{1,\bar\tn}| |\ell| \\
		& \geq \frac{2\upsilon}{\braket{\ell}^\tau}- C(s_1)\varepsilon N_{\bar\tn}\|i_1-i_2\|_{s_1+\sigma} \geq \frac{2\upsilon-\rho}{\braket{\ell}^\tau}\,.
		\end{aligned}
	\end{equation*}
	 We conclude that $(\omega,\gamma)\in\tT\tC_{\bar\tn+1}(2\upsilon-\rho,\tau)(i_2)$.
\end{proof}

We now conjugate the whole operator $\cL_1$ in \eqref{cL00}-\eqref{bRG} by the operator $\cE$ in \eqref{defcE}. 

We first  compute the conjugation of the matrix
\begin{equation*}
	\begin{aligned}
		\cE^{-1}&\begin{pmatrix}
			-\frac{\gamma}{2}G(0)\pa_x^{-1} & -G(0) \\
			a - \left(\frac{\gamma}{2}\right)^2\pa_x^{-1}G(0)\pa_x^{-1}& 
			-\frac{\gamma}{2}\pa_x^{-1}G(0)
		\end{pmatrix} \cE \\
	& = \begin{pmatrix}
		-\frac{\gamma}{2}\cB^{-1}(1+\beta_x)^{-1}G(0)\pa_x^{-1}(1+\beta_x)\cB & -\cB^{-1}(1+\beta_x)^{-1}G(0)\cB \\
		\cB^{-1}\big(a - \left(\frac{\gamma}{2}\right)^2\pa_x^{-1}G(0)\pa_x^{-1}\big)(1+\beta_x)\cB& 
		-\frac{\gamma}{2}\cB^{-1}\pa_x^{-1}G(0)\cB
	\end{pmatrix} \, . 
	\end{aligned}
\end{equation*}
The multiplication operator for  $a(\vf,x)$ 
is transformed into the multiplication operator for the function
\begin{equation}\label{ta_conj}
\cB^{-1} a(1+\beta_x)\cB = \cB^{-1}\big(a(1+\beta_x)\big)\,. 
\end{equation}
We write the Dirichlet-Neumann operator  $G(0)  $ in \eqref{G(0)} as
\begin{equation}
\label{G.exp}
G(0) = G(0, \tth) = 
\pa_x \cH T(\tth) \,,
\end{equation}
where $ \cH $ is the Hilbert transform defined in \eqref{Hilbert-transf} and 
\begin{equation}
\label{Ttth}
T(\tth) := \begin{cases}
\tanh(\tth |D|) = {\rm Id} +  \Op(r_\tth)  & \text{ if } \tth < + \infty \, , 
\qquad r_{\tth} (\xi) := -\frac{2}{1+ e^{2 \tth |\xi| \chi(\xi)}} \in S^{- \infty} \, ,  \\
{\rm Id}  & \text{ if } \tth = \infty \, .
\end{cases}
\end{equation}
We have the conjugation formula (see  formula (7.42) in \cite{BBHM})  
\begin{equation}\label{conG}
\cB^{-1} G(0) \cB = \left\{\cB^{-1}(1+\beta_x)\right\} G(0) + \cR_1 \,,
\end{equation}
where 
\begin{equation*}
\cR_1:=
\left\{\cB^{-1}(1+\beta_x) \right\} \pa_y \big(
\l \cH \left(\cB^{-1} \Op(r_\tth) \cB - \Op(r_\tth) \right)+
\left( \cB^{-1} \cH \cB - \cH \right) ( \cB^{-1} T(\tth) \cB) \big)  \, . 
\end{equation*}
The operator $\cR_1 $ is in $ \Ops^{-\infty}  $ because both 
$  \cB^{-1} \Op(r_\tth) \cB - \Op(r_\tth) $ and $\cB^{-1}\cH \cB - \cH$ 
are in $ \Ops^{-\infty}  $ and there is $ \s > 0 $
such that, for any $m \in \N$,  $\alpha \in \N_0$ and $s \geq s_0 $, 
\begin{equation}
\label{ht.t}
\begin{aligned}
& \normk{ \cB^{-1} \cH \cB - \cH}{-m, s, \alpha} 
\lesssim_{m, s, \alpha, k_0}  
\normk{\beta}{s+m+\alpha + \sigma} \, , \\
& \normk{ \cB^{-1} \Op(r_\tth) \cB - \Op(r_\tth)}{-m, s, \alpha} 
\lesssim_{m, s, \alpha, k_0}  
\normk{\beta}{s+m+\alpha + \sigma} \, . 
\end{aligned}
\end{equation}
The first estimate is given in Lemmata  2.36 and 2.32 in \cite{BM}, whereas 
the second one follows by that fact that $r_\tth \in S^{-\infty}$ (see \eqref{Ttth}), 
Lemma 2.18 in \cite{BBHM} and Lemmata 2.34 and  2.32  in \cite{BM}.
Therefore by \eqref{conG} we obtain
\begin{equation}\label{conDN0}
\cB^{-1} (1+\beta_x)^{-1} G(0)\cB = \{\cB^{-1}(1+\beta_x)^{-1}\}\cB^{-1}G(0)\cB = G(0) + 
\cR_B \,,
\end{equation}
where
\begin{equation}
\label{wtR1}
\cR_B := \{\cB^{-1} (1+\beta_x)^{-1}  \} \, \cR_1 \,.
\end{equation}
Next we transform $G(0)\pa_x^{-1}$.
By \eqref{G.exp} and using the identities $ \cH \pa_x \pa_x^{-1} = \cH $ and
$ \cH T(\tth) = \partial_y^{-1} G(0) $ on the periodic functions, 
we have that
\begin{equation}\label{G(0)pax-1_ego}
\begin{aligned}
	& \cB^{-1}(1+\beta_x)^{-1}G(0)\pa_x^{-1} (1+\beta_x)\cB = G(0)\pa_y^{-1} + \cR_A \\
	&\cB^{-1} \pa_x^{-1}G(0)\cB    
	= \pa_y^{-1}G(0) + \cR_D\,,
\end{aligned}
\end{equation}
where 
\begin{equation}\label{cR2}
\begin{aligned}
\cR_D & = (\cB^{-1} \cH \cB - \cH )(\cB^{-1}T(\tth)  \cB) +
\cH \big(\cB^{-1} \Op(r_\tth) \cB - \Op(r_\tth) \big)\,, \\
\cR_A & = \{ \cB^{-1}(1+\beta_x)^{-1} \}\big[ \cH T(\tth), \{\cB^{-1}(1+\beta_x)\}-1 \big] \\
& \ \ + \{\cB^{-1}(1+\beta_x)^{-1}\}\cR_D\{\cB^{-1}(1+\beta_x)\}\,.
\end{aligned}
\end{equation}
The operator $\cR_D$ is in $ \Ops^{-\infty} $  by \eqref{ht.t}, \eqref{Ttth}.
Also $ \cR_A $ is in $ \Ops^{-\infty} $
using 
that, by Lemma 2.35 of \cite{BM} and \eqref{Ttth},  there is $ \s > 0 $
such that, for any $m \in \N$, $s \geq s_0 $,  and $\alpha \in \N_0$, 
\begin{equation}
\label{Ht.comm}
\normk{[ \cH T(\tth),  \wt a] }{-m, s, \alpha} 
\lesssim_{m, s, \alpha, k_0}  
\normk{\wt a}{s+m+\alpha + \sigma} \, .
\end{equation}
Finally we conjugate $\pa_x^{-1} G(0)\pa_x^{-1}$.
By  the Egorov Proposition \ref{egorov} applied to $ \pa_x^{-1} $, 
we have that, for any $ N \in\N $, 
\begin{equation}\label{pax-1_ego}
\cB^{-1} \pa_x^{-1}(1+\beta_x) \cB=\cB^{-1}\pa_x^{-1}\cB\, \{\cB^{-1}(1+\beta_x)\} = \pa_y^{-1} 
+ P^{(1)}_{-2,N} (\varphi, x, D)
+ \tR_N\,,
\end{equation}
where $ P^{(1)}_{-2,N} (\varphi, x, D) \in \Ops^{-2}$ is  given by
$$
P^{(1)}_{-2,N} (\varphi, x, D) := 
\big[\{\cB^{-1}(1+\beta_x)^{-1}\},  \pa_y^{-1} \big]\{\cB^{-1}(1 + \beta_x)\}   +
\sum_{j=1}^N p_{-1-j}\pa_y^{-1-j}  
\{ \cB^{-1}(1 + \beta_x) \}  
$$
with functions $ p_{-1-j}(\lambda; \varphi, y)$, $ j = 0, \ldots, N $, satisfying \eqref{norm-pk}  
and $\tR_N$ 
is a regularizing operator satisfying the estimate \eqref{stima resto Egorov teo astratto}. 
So, using \eqref{G(0)pax-1_ego} and \eqref{pax-1_ego}, we obtain
\begin{equation}\label{termine_non_bello}
\begin{aligned}
	\cB^{-1}\pa_x^{-1}G(0)\pa_x^{-1}(1+\beta_x) \cB & = (\cB^{-1}\pa_x^{-1}G(0)\cB)(\cB^{-1}\pa_x^{-1}(1+\beta_x)\cB) \\
	& = \pa_y^{-1} G(0) \pa_y^{-1} +P_{-2,N}^{(2)}  +\tR_{2,N}
\end{aligned}
\end{equation}
where 
\begin{equation}\label{tildecP-1}
\begin{aligned}
P_{-2,N}^{(2)}  &:= \pa_y^{-1} G(0)   P^{(1)}_{-2,N} (\varphi, x, D) \in \Ops^{-2}
\end{aligned}
\end{equation}
and $\tR_{2,N} $ is the regularizing operator 
\begin{equation}\label{cR3}
\tR_{2,N}:= \cR_D (\cB^{-1}\pa_x^{-1}(1+\beta_x)\cB) +  G(0) \pa_y^{-1}\tR_N  \, .
\end{equation}
In conclusion, by Lemma \ref{conju.tr}, \eqref{ta_conj}, \eqref{conDN0}, \eqref{G(0)pax-1_ego} and \eqref{termine_non_bello} 
we obtain the following lemma, which summarizes the main result of this section.

\begin{lem}\label{lemma:riassu} Let $N\in\N$. For any $ \bar \tn \in \N_0 $ and for all $ (\omega, \gamma) \in \tT\tC_{\bar\tn+1}(2\upsilon,\tau) $, the operator $\cL_1$ in \eqref{cL00} is conjugated to the real, Hamiltonian, reversible and momentum preserving operator
\begin{equation}
\label{cL2}
\begin{aligned}
\cL_2  := \cE^{-1} \cL_1 \cE  
= \omega\cdot\partial_\vf + \tm_{1,\bar\tn} \pa_y \,+  & 
\begin{pmatrix}
-\frac{\gamma}{2} G(0) \pa_y^{-1} & -  G(0) \\
 a_1 - \left(\frac{\gamma}{2}\right)^2 \pa_y^{-1} G(0) \pa_y^{-1} & -\frac{\gamma}{2} \pa_y^{-1} G(0)
\end{pmatrix} \\ & +\begin{pmatrix}
0 & 0 \\
 - \left(\frac{\gamma}{2}\right)^2 P_{-2, N}^{(2)} &  0
\end{pmatrix}
+ \bR_{2}^\Psi + \bT_{2,N} + \bP_{2}^\perp\,,
\end{aligned}
\end{equation}
defined for any $(\omega,\gamma)\in\R^\nu\times[\gamma_1,\gamma_2]$, where:
\\[1mm]
1. The constant $\tm_{1,\bar\tn}=\tm_{1,\bar\tn}(\omega,\gamma) \in \R $  satisfies
	$ |\tm_{1,\bar\tn}|^{k_0,\upsilon}\lesssim\varepsilon  $, independently on $ \bar \tn $;
\\[1mm]
2. The real quasi-periodic traveling wave
	$   a_1  :=  \cB^{-1}\big(a(1+\beta_x)\big) $, $\even(\vf,x) $, 
	satisfies, for some $\sigma:= \sigma(k_0,\tau, \nu)>0$ and for all $s_0 \leq s \leq S-\sigma$,
	\begin{equation}\label{AS.est1}
		\normk{a_1 - g}{s} \lesssim_{s} \varepsilon \upsilon^{-1}(1+\normk{\fI_0}{s+\sigma})\,;
	\end{equation}
3. The operator $P_{-2, N}^{(2)}$ 
	is a pseudodifferential operator in $\Ops^{-2}$,  reversibility and momentum preserving,  and satisfies, for some $\sigma_N:=\sigma_N(\tau,\nu,N)>0$, for finitely many 
	$ 0 \leq \alpha \le \alpha (M)  $ (fixed in Remark \ref{fix:alpha}) 
	and for all $s_0\leq s \leq S-\sigma_N-\alpha$,
	\begin{equation}\label{AS.est2}
		\normk{P_{-2, N}^{(2)}}{-2,s,\alpha} \lesssim_{ s,N, \alpha} \varepsilon \upsilon^{-1}(1+\normk{\fI_0}{s+\sigma_N+\alpha})\,;
	\end{equation}
4. For any $ \tq \in \N^\nu_0 $ with $ |\tq| \leq \tq_0$, 
	$n_1, n_2 \in \N_0 $  with $ n_1 + n_2  \leq N -(k_0 + \tq_0)  +2 $,  the  
	operator $\langle D \rangle^{n_1}\partial_{\vphi}^\tq (\bR_{2}^\Psi(\vphi) + \bT_{2,N}(\vf)) \langle D \rangle^{n_2}$ is 
	$\cD^{k_0} $-tame with a tame constant satisfying, for some $\sigma_N(\tq_0) := \sigma_N(\tq_0,k_0,\tau,\nu)>0$ and for any $s_0 \leq s \leq S -\sigma_N(\tq_0)$,   
	\begin{equation}\label{AS.est3}
		{\mathfrak M}_{\langle D \rangle^{n_1}\partial_{\vphi}^\tq (\bR_{2}^\Psi(\vphi) + \bT_{2,N}(\vf)) \langle D \rangle^{n_2}}(s) \lesssim_{S, N, \tq_0} 
		\varepsilon \upsilon^{-1} \big( 1+ \normk{\fI_0}{s+\sigma_N(\tq_0)} \big)\,;
	\end{equation} 
5. The operator $\bP_{2}^\perp$ is defined in \eqref{R2perp}
	and the function $ p_{\bar\tn} $ satisfies \eqref{stime.pn.not.w};
\\[1mm]
6. Furthermore, for any $s_1$ as in \eqref{s1s0}, finitely many 
$ 0 \leq \alpha \le \alpha (M)  $,
$\tq\in\N_0^\nu$, with $\abs\tq\leq \tq_0$, and $n_1,n_2 \in\N_0$, with $n_1+n_2\leq N- \tq_0 + 1 $,
\begin{align}
	&  |\Delta_{12}\tm_{1,\bar \tn} |  \lesssim_{s_1} 
	\varepsilon \norm{i_1-i_2}_{s_1+\sigma} \,,  \ 
	\| \Delta_{12} a_1 \|_{s_1}  \lesssim 
	\varepsilon \upsilon^{-1}\norm{i_1-i_2}_{s_1+\sigma}  \, , 
	\label{AS.est4}\\
	& 	\| \Delta_{12}P_{-2, N}^{(2)}  \|_{-2,s_1,\alpha}  \lesssim_{s_1,N,\alpha} \varepsilon \upsilon^{-1}\norm{ i_1-i_2 }_{s_1+\sigma_N+\alpha}\,,
	\label{AS.est5}\\
	& \norm{\braket{D}^{n_1}\pa_\vf^\tq \Delta_{12} (\bR_{2}^\Psi(\vphi) + \bT_{2,N}(\vf))\braket{D}^{n_2} }_{\cL(H^{s_1})} \lesssim_{s_1, N, \tq_0} \varepsilon \upsilon^{-1} \norm{ i_1-i_2}_{s_1+\sigma_N(\tq_0)} \,.
	\label{AS.est6} 
\end{align}
\end{lem}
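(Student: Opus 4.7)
The plan is to apply Lemma \ref{conju.tr} to conjugate the pure transport block $\cL_{\rm TR}$ in \eqref{LTR} via the symplectic map $\cE$ of \eqref{defcE}, and then to compute, block by block, how $\cE$ transforms the remaining zero/negative order pieces of $\cL_1$ in \eqref{cL00}. Lemma \ref{conju.tr} produces the constant $\tm_{1,\bar\tn}$ with $|\tm_{1,\bar\tn}|^{k_0,\upsilon}\lesssim\varepsilon$, the odd traveling-wave profile $\beta=\beta_{\bar\tn}$ satisfying \eqref{beta.FGMP.est}, and the exponentially small exceptional remainder $\bP_2^\perp$ in \eqref{R2perp}, with the bound \eqref{stime.pn.not.w} on $p_{\bar\tn}$. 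For the off-diagonal zero-order multiplication by $a$, the identity $\cB^{-1}a(1+\beta_x)\cB=\cB^{-1}(a(1+\beta_x))$ yields the multiplication operator by $a_1:=\cB^{-1}(a(1+\beta_x))$, whose bound \eqref{AS.est1} is obtained from the product estimate \eqref{prod}, Lemma \ref{product+diffeo}, the estimate \eqref{lem:ga2} for $a$ and \eqref{beta.FGMP.est} for $\beta$. The parity/traveling-wave character of $a_1$ follows from that of $a$ (Lemma \ref{lem:good_unknwon}) and $\beta$, together with Lemma \ref{lem:mom_prop}.

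For the Dirichlet--Neumann terms one uses the splitting \eqref{DNGeta} to write $G(\eta)=G(0)+\cR_G(\eta)$ and conjugates $G(0)$ and the operators $G(0)\pa_x^{-1}$, $\pa_x^{-1}G(0)$, $\pa_x^{-1}G(0)\pa_x^{-1}$ by $\cE$ using precisely the formulas \eqref{conG}, \eqref{G(0)pax-1_ego}, \eqref{termine_non_bello}, \eqref{tildecP-1}. The resulting pseudodifferential symbol at order $-2$ is $-(\gamma/2)^2 P_{-2,N}^{(2)}$, whose norm bound \eqref{AS.est2} comes from Lemma \ref{pseudo_compo} applied to the Egorov expansion \eqref{pax-1_ego} (Proposition \ref{egorov}), together with \eqref{norm-pk} and \eqref{beta.FGMP.est}. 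Everything else generated by the conjugation---namely the smoothing pieces $\cR_B,\cR_A,\cR_D$ defined in \eqref{wtR1}, \eqref{cR2}, the Egorov remainder $\tR_N$, and the operator $\tR_{2,N}$ in \eqref{cR3}, together with the $\cE$-conjugates of $\bR_1$ in \eqref{bRG} and of the pieces $\cR_G(\eta)$ coming from \eqref{DNGeta}---is regrouped into $\bR_2^\Psi+\bT_{2,N}$. The tame bound \eqref{AS.est3} on their $\langle D\rangle^{n_1}\pa_\vf^\tq(\cdot)\langle D\rangle^{n_2}$-conjugates is obtained by combining the conjugation estimate \eqref{stima resto Egorov teo astratto} from Proposition \ref{egorov}, the smoothing bounds for $\cH$ and $T(\tth)$ in \eqref{ht.t}, \eqref{Ht.comm}, the Dirichlet--Neumann remainder estimate \eqref{est:RG}, and the tameness transfer given by Lemmata \ref{tame_compo}--\ref{tame_pesudodiff}, where the interchange of derivatives $\pa_\vf^\tq$ with $\cE^{\pm 1}$ costs a fixed number of derivatives controlled by \eqref{step5.est11}. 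The Lipschitz-in-$i$ bounds \eqref{AS.est4}--\eqref{AS.est6} follow by the same chain of estimates applied to the differences, using the $\Delta_{12}$-bounds \eqref{step5.est7}, \eqref{step5.est13}, \eqref{lem:ga4}, \eqref{lem:ga6}.

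Finally, the algebraic properties of $\cL_2$ are inherited: $\cE$ is symplectic by construction in \eqref{defcE}, so $\cL_2=\cE^{-1}\cL_1\cE$ is Hamiltonian by Definition \ref{def:HS} (this is also what forces the shape $\pa_y p_{\bar\tn}$ in the top-left entry of $\bP_2^\perp$ as derived in the proof of Lemma \ref{conju.tr}); since $\beta$ is an $\odd(\vf,x)$ quasi-periodic traveling wave, $\cB$ and hence $\cE$ are reversibility and momentum preserving (Lemmata \ref{rev_defn}, \ref{lem:mom_prop}, \ref{product+diffeo}), whence $\cL_2$ inherits reversibility and momentum preservation from $\cL_1$. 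The main technical obstacle is the bookkeeping in step two: one must assemble all the smoothing pieces produced by conjugating $G(0)^{\pm 1}\pa_x^{\pm 1}$-type factors and by the $\cR_G(\eta)$ tail of $G(\eta)$ into a single remainder satisfying the tame estimate \eqref{AS.est3} uniformly in the number of $\vf$-derivatives and in the $\langle D\rangle^{n_1},\langle D\rangle^{n_2}$-weights, with the precise loss $\sigma_N(\tq_0)$ tracked through the Egorov Proposition \ref{egorov}; the extension from $\tT\tC_{\bar\tn+1}(2\upsilon,\tau)$ to all $(\omega,\gamma)\in\R^\nu\times[\gamma_1,\gamma_2]$ is a direct consequence of the analogous extension property built into Lemma \ref{conju.tr}.
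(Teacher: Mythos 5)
Your proposal follows essentially the same route as the paper's own proof: apply the almost-straightening Lemma \ref{conju.tr} to produce $\tm_{1,\bar\tn}$, $\beta$ and $\bP_2^\perp$, then conjugate the remaining zero/negative-order pieces block by block using precisely the identities \eqref{ta_conj}, \eqref{conG}, \eqref{G(0)pax-1_ego}, \eqref{termine_non_bello}, \eqref{tildecP-1}, the Egorov Proposition \ref{egorov}, and the Dirichlet--Neumann decomposition \eqref{DNGeta}, collecting the smoothing pieces $\cR_A,\cR_B,\cR_D,\tR_{2,N}$ and the $\cE$-conjugate of $\bR_1$ into $\bR_2^\Psi+\bT_{2,N}$. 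The estimates you invoke (\eqref{prod}, \eqref{lem:ga2}, \eqref{beta.FGMP.est}, \eqref{step5.est11}, \eqref{ht.t}, \eqref{Ht.comm}, \eqref{est:RG}, \eqref{stima resto Egorov teo astratto}, Lemmata \ref{pseudo_compo}, \ref{tame_compo}, \ref{tame_pesudodiff}) and the algebraic bookkeeping (symplecticity and Hamiltonian structure forcing $\pa_y p_{\bar\tn}$, reversibility/momentum preservation from the parity and traveling-wave character of $\beta$, extension to all $(\omega,\gamma)$ via the cut-off construction) are all exactly those used in the paper, so the argument is correct and equivalent.
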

\begin{proof}
	Item 1 follows by Lemma \ref{conju.tr}. The function 
	$ a_1   $  
	satisfies  \eqref{AS.est1} by \eqref{ta}, \eqref{prod}, \eqref{lem:ga2}, \eqref{step5.est11}, \eqref{beta.FGMP.est}. The estimate \eqref{AS.est2} follows by \eqref{tildecP-1}, Proposition \ref{egorov} and Lemmata \ref{pseudo_compo}, \ref{pseudo_commu}, \ref{product+diffeo}, \ref{conju.tr}. The  operators $\bR_{2}^\Psi $ and $ \bT_{2,N} $ in
	\eqref{cL2} are 	
$$
		\bR_{2}^\Psi:=-\begin{pmatrix}
			\frac{\gamma}{2}\cR_{A} & \cR_{B}  \\ 
			0 & \frac{\gamma}{2}\cR_{D}
		\end{pmatrix} + \cE^{-1}\bR_1\cE \,,  
		\qquad
		\bT_{2,N} := -\left(\frac{\gamma}{2}\right)^2\begin{pmatrix}
			0 & 0\\
			\tR_{2,N} & 0
		\end{pmatrix}\,,
$$
	where $ \cR_B $, $\cR_{A} $, $\cR_{D}$, are  defined in \eqref{wtR1}, \eqref{cR2}, and $\bR_1$, $ {\mathtt R}_{2,N}  $ in \eqref{bRG}, \eqref{cR3}. 
Thus  the estimate \eqref{AS.est3} holds  by Lemmata \ref{tame_compo}, \ref{tame_pesudodiff}, \ref{conju.tr},  \eqref{ht.t}, \eqref{Ht.comm}, Proposition \ref{egorov}, Lemma \ref{DN_pseudo_est}, \eqref{beta.FGMP.est} and Lemmata 2.34, 2.32 in \cite{BM}. The estimates \eqref{AS.est4}-\eqref{AS.est6} are proved similarly.
\end{proof}

\subsection{Symmetrization of the  order $ 1/ 2 $}\label{subsec:princ}

The goal of this section is to symmetrize 
the order $ 1/2 $ of the quasi-periodic Hamiltonian operator 
 $\cL_2$ in \eqref{cL2}. From now on, we neglect the contribution of the operator $ \bP_{2}^\perp$, which will be conjugated in Section \ref{sec:conc}. For simplicity of notation we denote such operator $\cL_2$ as well. 
\\[1mm]
{\bf Step 1:} We first conjugate the operator $\cL_2$ in \eqref{cL2}, where we relabel the space variable $y\rightsquigarrow x$, by the real, symplectic, reversibility preserving and momentum preserving transformation
\begin{equation}\label{map_test_M}
\wt\cM:= \begin{pmatrix}
\Lambda & 0 \\ 0 & \Lambda^{-1}
\end{pmatrix}\,,\quad
\wt\cM^{-1} := \begin{pmatrix}
\Lambda^{-1} & 0 \\ 0 & \Lambda
\end{pmatrix}\,,
\end{equation}
where $\Lambda \in \Ops^{\frac14} $ is the Fourier multiplier
\begin{equation}\label{Lambda}
\Lambda := \tfrac{1}{\sqrt g}\pi_0 + M(D)\,, \quad \text{with inverse} \quad \Lambda^{-1}:= \sqrt g \pi_0 + M(D)^{-1} \in \Ops^{-\frac14}  \,,
\end{equation}
with $\pi_0 $
defined in \eqref{defpi0} and  (cfr. \eqref{eq:T_sym})
\begin{equation}\label{M(D)}
	M(D):=G(0)^\frac14 \big( g - (\tfrac{\gamma}{2})^2\pa_x^{-1}G(0)\pa_x^{-1} \big)^{-\frac14} \in \Ops^{\frac14}\,.
\end{equation}
We have the identities $ \Lambda^{-1} G(0) \Lambda^{-1}  = \omega (\gamma, D) $ and 
\begin{equation}\label{LCL}
\Lambda\big( g   -  \big(\tfrac{\gamma}{2}\big)^2 
\pa_x^{-1}G(0) \pa_x^{-1} \big) \Lambda  = \Lambda^{-1}G(0)\Lambda^{-1} + \pi_0 = \omega(\gamma, D)+\pi_0 \, ,  
\end{equation}
where $\omega(\gamma, D) \in \Ops^{\frac12} $ is defined in \eqref{eq:omega0}.

By \eqref{cL2} we compute
\begin{equation}
\begin{aligned}
\label{cL3}
\cL_3 := \wt\cM^{-1} \cL_2 \wt\cM= & \ \omega\cdot \partial_\vf +\tm_{1,\bar\tn}\pa_x  + 
\begin{pmatrix}
-\frac{\gamma}{2} G(0) \pa_x^{-1}  
&  - \Lambda^{-1} G(0)\Lambda^{-1} \\ 
\Lambda\big(a_1 - (\frac{\gamma}{2})^2\pa_x^{-1}G(0)\pa_x^{-1}\big) \Lambda
&  -\frac{\gamma}{2} G(0) \pa_x^{-1}
\end{pmatrix}    \\
& +
\begin{pmatrix}
0
& 0  \\
-(\frac{\gamma}{2})^2 \Lambda P_{-2,N}^{(2)}\Lambda &
0
\end{pmatrix} + \wt\cM^{-1}\bR_2^\Psi\wt\cM+  \wt\cM^{-1}\bT_{2,N}\wt\cM   \,.
\end{aligned}
\end{equation}
By \eqref{LCL},  \eqref{Lambda} and \eqref{M(D)}, we get
\begin{align}
	&\Lambda\big(a_1 - (\tfrac{\gamma}{2})^2\pa_x^{-1}G(0)\pa_x^{-1}\big) \Lambda
	  = \Lambda\big(g - (\tfrac{\gamma}{2})^2\pa_x^{-1}G(0)\pa_x^{-1}\big) \Lambda + \Lambda(a_1-g)\Lambda  \label{conto2}\\
	& \ \ \ \ \ \ \ = \omega(\gamma,D) + (a_1-g)\Lambda^2 + [\Lambda,a_1]\Lambda + \pi_0 \nonumber\\
	& \ \ \ \ \ \ \ = \big( 1 + \tfrac{a_1-g}{g} \big)\omega(\gamma,D) + \tfrac{a_1-g}{g}\big(g\Lambda^2 -\omega(\gamma,D)\big) + [\Lambda,a_1]\Lambda + \pi_0\nonumber \\
	& \ \ \ \ \ \ \ = a_2^2 \omega(\gamma,D) + \tfrac{a_1-g}{g}(\tfrac{\gamma}{2})^2 M(D)^2 \pa_x^{-1}G(0)\pa_x^{-1} + [\Lambda,a_1]\Lambda + \pi_0 + \tfrac{a_1-g}{g}\pi_0  \nonumber
\end{align}
where $a_2 $ is  the real quasi-periodic traveling wave function
(with $a_1 $ defined in Lemma \ref{lemma:riassu})
\begin{equation}\label{a6}
	a_2 := \sqrt{\tfrac{a_1}{g}} = \sqrt{1+\tfrac{a_1-g}{g}} \, , \quad \even(\vf,x) \,  
	\, .
\end{equation}
Therefore, by \eqref{cL3}, \eqref{LCL}, \eqref{conto2} we obtain
\begin{equation}\label{cL3_final}
	\begin{aligned}
	\cL_3 & = \omega\cdot\pa_\vf +\tm_{1,\bar\tn} \pa_x +\begin{pmatrix}
	-\frac{\gamma}{2}G(0)\pa_x^{-1} & -\omega(\gamma,D) \\ a_2\omega(\gamma,D)a_2 & -\frac{\gamma}{2}\pa_x^{-1}G(0)
	\end{pmatrix} + \begin{pmatrix}
	0 & 0 \\ \pi_0 & 0
	\end{pmatrix} \\
	 & + \begin{pmatrix}
	 0 & 0 \\ C_3  & 0
	 \end{pmatrix} + \bR_{3}^\Psi + \bT_{3,N} \,,
	\end{aligned}
\end{equation}
where
\begin{equation}
	C_3  := a_2 [a_2,\omega(\gamma,D)]+ \tfrac{a_1-g}{g}(\tfrac{\gamma}{2})^2 M(D)^2 \pa_x^{-1}G(0)\pa_x^{-1} + [\Lambda,a_1]\Lambda 
	- (\tfrac{\gamma}{2})^2 \Lambda P_{-2,N}^{(2)}\Lambda
	  \label{C3} 
\end{equation}
is in  $ \Ops^{-\frac12} $ 
and
\begin{equation}\label{R3T3}
	\bR_3^\Psi := \wt\cM^{-1}\bR_2^\Psi\wt\cM +\begin{pmatrix}
	0 & 0 \\
	(\tfrac{a_1}{g}-1)\pi_0 & 0
	\end{pmatrix}\,, \quad \bT_{3,N}:=  \wt\cM^{-1}\bT_{2,N}\wt\cM\, . 
\end{equation}
The operator $\cL_3$ in \eqref{cL3_final} is Hamiltonian, reversible and momentum preserving.
\\[1mm]
{\bf Step 2:}
We now conjugate the operator  $\cL_3 $  in \eqref{cL3_final}  with the symplectic matrix
of multiplication  operators
\begin{equation*}
\label{Q}
\cQ := \begin{pmatrix}
q & 0 \\
0 & q^{-1}
\end{pmatrix} \ , \qquad
\cQ^{-1} := \begin{pmatrix}
q^{-1} & 0 \\
0 & q
\end{pmatrix}\,,
\end{equation*}
where  $ q $ is a real function, close to $ 1$,  to be determined, see \eqref{q}. 
We have that 
\begin{align}
\label{cL4}
\cL_4  := \cQ^{-1} \cL_3 \cQ  = 
\omega\cdot \partial_\vf +\tm_{1,\bar\tn}\pa_x  + 
 \begin{pmatrix}
A & B \\
C &  D 
\end{pmatrix} 
+  \cQ^{-1} (\bR_3^\Psi + \bT_{3,N}) \cQ \,,
\end{align}
where
(actually $ D = - A^* $, 
see  Definition \ref{def:HS})
\begin{align}
& A :=-\tfrac{\gamma}{2}  q^{-1}  G(0) \pa_x^{-1}  q +  \tm_{1,\bar\tn}q^{-1}  q_x + 
q^{-1} (\omega \cdot \pa_\vphi q)\,, \label{defA} \\
& B :=  - q^{-1}  \omega(\gamma,D)  q^{-1}  \,,\\
& C := q a_2 \omega(\gamma,D) a_2 q  + q \pi_0 q + q C_3 q   \,,
\\
&D :=  -\tfrac{\gamma}{2} q \pa_x^{-1} G(0)q^{-1}  - \tm_{1,\bar\tn} q^{-1} q_x - q^{-1}(\omega\cdot\pa_\vf q)  \, . \label{defD}
\end{align}
We choose the function $ q $ so that the coefficients 
of the highest order terms of the off-diagonal self-adjoint operators 
$ B $ and $  C $ satisfy $q^{-1} = q  a_2  $, namely 
as the  real quasi-periodic traveling wave, $\even(\vf,x)$ 
\begin{equation}
\label{q}
q(\vf,x) := a_2(\vf,x)^{-\frac12}\, .
\end{equation}
Thus $ \cQ $ is reversibility and momentum preserving. 

In view of \eqref{defA}-\eqref{defD} and \eqref{q}
the operator $\cL_4 $ in \eqref{cL4}  becomes
\begin{equation}
\begin{aligned}
\label{cL4_final}
\cL_4 & =  \omega\cdot \partial_\vf + \tm_{1,\bar\tn} \pa_x  + 
 \begin{pmatrix}
-\frac{\gamma}{2} G(0) \pa_x^{-1}  &  - a_{2}^\frac12  \omega(\gamma,D)a_2^\frac12 \\ a_2^\frac12 \omega(\gamma,D) a_2^\frac12
&  -\frac{\gamma}{2}\pa_x^{-1} G(0) 
\end{pmatrix}  \\
& \ \ \ + \begin{pmatrix}
0 & 0 \\ \pi_0 & 0
\end{pmatrix} +  \begin{pmatrix}
a_3 & 
0 \\
C_4 
 &
-a_3
\end{pmatrix} + \bR_4^\Psi +  \bT_{4,N}  \,,
\end{aligned}
\end{equation}
where $ a_3 $ is  the real quasi-periodic traveling wave function, $\odd(\vf,x)$,
\begin{align}
	a_3  & := \tm_{1,\bar\tn} q^{-1}q_x + q^{-1}(\omega\cdot\pa_\vf q) \label{a7} \, , 
	\quad C_4   := q  C_3   q 	\in\Ops^{-\frac12}\,, 
	\end{align} 
and 
$ \bR_{4}^\Psi , \bT_{4,N}$ are the smoothing remainders
(recall that $ G(0) \partial_x^{-1} = \cH T(\tth) $)
\begin{equation}
\begin{aligned}
& \bR_{4}^\Psi := 
 \begin{pmatrix}
- \frac{\gamma}{2}q^{-1} [ \cH T(\tth), q-1]  & 
0   \\
q\pi_0 q -\pi_0 &  
-\frac{\gamma}{2}[q-1,\cH T(\tth)]q^{-1}
\end{pmatrix}  + \cQ^{-1} \bR_{3}^\Psi\cQ 
\in \Ops^{-\infty}\,  , 
\label{bR4}
\\
& \bT_{4,N}:= \cQ^{-1} \bT_{3,N}\cQ \, . 
\end{aligned}
\end{equation}
The operator $\cL_4$ in \eqref{cL4_final} is Hamiltonian, reversible and momentum preserving.
\\[1mm]
{\bf Step 3:} 
We finally move in complex coordinates, conjugating the operator $\cL_4 $  in \eqref{cL4_final} via the 
transformation $\cC$ defined in \eqref{C_transform}. 
The main result of this section is the following lemma. 

\begin{lem}\label{LEMMONE}
	Let $N \in \N $, $ \tq_0 \in \N_0 $. We have that 
	\begin{equation}\label{cL5}
	\begin{aligned}
	\cL_5 := &\, (\wt \cM \cQ \cC)^{-1} \cL_2 \wt \cM \cQ \cC
	\\ = &\,  \omega\cdot\pa_\vf + \tm_{1,\bar\tn} \pa_x + \im\, a_2(\vf,x)\b\Omega(\gamma,D)  + \im\,\b\Pi_0 + a_4 \cH+ \bR_5^{(-\frac12,d)} + \bR_5^{(0,o)} + \bT_{5,N} \,,
	\end{aligned}
	\end{equation}
	where:
\\[1mm]
1. The real 
				quasi-periodic traveling wave $a_2(\vf,x)$  defined in \eqref{a6},
				$ \even(\vf,x)$,  satisfies, 
	for some $\sigma=\sigma(k_0,\tau,\nu)>$ and for any $s_0 \leq s \leq S - \sigma$,
		\begin{equation}\label{m12.stima.f}
	\| a_2 - 1 \|_s^{k_0, \upsilon} \lesssim \varepsilon \upsilon^{-1} (1+\normk{\fI_0}{s+\sigma}) \,;
		\end{equation}
2. $\b\Omega(\gamma, D)$ is the matrix of Fourier multipliers  
		(see \eqref{eq:lin00_ww_C}, \eqref{Omega})
		\begin{equation}
		\label{step5.est1}
		\b\Omega(\gamma, D)=
		\begin{pmatrix}
		\Omega(\gamma, D) &  0\\
		0  & - \bar{\Omega(\gamma, D)} 
		\end{pmatrix}, \quad 
		\Omega(\gamma, D) = \omega (\gamma, D) + \im \,\frac{\gamma}{2}\partial_x^{-1} G(0) \, ; 
		\end{equation}
3. The operator 
\begin{equation*}\label{Pi0}
\b\Pi_0:=
\frac{1}{2}\begin{pmatrix}
\pi_0 & \pi_0 \\ - \pi_0 & -\pi_0
\end{pmatrix}\,.
\end{equation*}
4. The real quasi-periodic traveling wave 
	$ a_4(\vf,x) := \tfrac{\gamma}{2}(a_2(\vf,x)-1) $, $\even(\vf,x) $, 
	satisfies, 
		for some $\sigma:= $ $ \sigma(k_0,\tau, \nu)>0$ and for all $s_0 \leq s \leq S- \sigma$, 
		\begin{equation}
		\label{step5.est3}
	\normk{a_4}{s}   \lesssim_{s} \varepsilon \upsilon^{-1}  ( 1 + \normk{\fI_0}{s+\sigma} ) \,;
		\end{equation}
5. $\bR_5^{(-\frac12,d)}  \in \Ops^{- \frac12} $ and $ \bR_5^{(0,o)} 
		\in \Ops^{0} $ 
		are pseudodifferential operators 
		of the form
		\begin{align}\footnotesize
		\label{step5.est4}
		\bR_5^{(-\frac12,d)} :=
		\begin{pmatrix}
		r_5^{(d)}(\vf, x, D)  & 0 \\
		0  &  \bar{r_5^{(d)}(\vf,x, D)}
		\end{pmatrix} , 
		\quad
		\bR_5^{(0,o)} :=
		\begin{pmatrix}
		0 & r_5^{(o)}(\vf,x, D)  \\
		\bar{r_5^{(o)}(\vf,x, D)} & 0 
		\end{pmatrix} \, , \nonumber
		\end{align} 
		reversibility and momentum preserving,  satisfying, 
		for  some $\sigma_N := \sigma(\tau, \nu, N)>0$,  
		for finitely many 
$ 0 \leq \alpha \le \alpha (M)  $ (fixed in Remark \ref{fix:alpha}),  and for all $s_0 \leq s \leq S-\sigma_N - 3\alpha $,
		\begin{equation}
		\label{step5.est5}
	\normk{ \bR_5^{(-\frac12,d)}}{-\frac12,s,\alpha} +	\normk{ \bR_5^{(0,o)} }{0,s,\alpha} \lesssim_{ s,N, \alpha} \varepsilon\upsilon^{-1}  ( 1+ \normk{\fI_0}{s+\sigma_N +3\alpha} )\,;
		\end{equation}
6.		For any $ \tq \in \N^\nu_0 $ with $ |\tq| \leq \tq_0$, 
		$n_1, n_2 \in \N_0 $  with $ n_1 + n_2  \leq N -(k_0 + \tq_0) + \frac32  $,  the  
		operator $\langle D \rangle^{n_1}\partial_{\vphi}^\tq \bT_{5, N}(\vphi) \langle D \rangle^{n_2}$ is 
		$\cD^{k_0} $-tame with a tame constant satisfying, for some $\sigma_N(\tq_0) := \sigma_N(\tq_0,k_0,\tau,\nu)>0$ and for any $s_0 \leq s \leq S -\sigma_N(\tq_0)$,   
		\begin{equation}\label{step5.est6}
	{\mathfrak M}_{\langle D \rangle^{n_1}\partial_{\vphi}^\tq \bT_{5, N}(\vphi) \langle D \rangle^{n_2}}(s) \lesssim_{S, N, \tq_0} 
		\varepsilon\upsilon^{-1}  \big( 1+ \normk{\fI_0}{s+\sigma_N(\tq_0)} \big)\,;
		\end{equation}
7. The operators $\cQ^{\pm 1}$, $\cQ^{\pm 1}-{\rm Id}$, $(\cQ^{\pm 1}-{\rm Id})^*$ are $\cD^{k_0}$-tame
		with tame constants satisfying, for some $\sigma:=\sigma(\tau,\nu,k_0)>0$ and for all $s_0\leq s\leq S - \sigma$,
		\begin{align}
		& \fM_{\cQ^{\pm 1}}(s) \lesssim_{S} 1+ \normk{\fI_0}{s+\sigma} \,, \ \ 
	\fM_{\cQ^{\pm 1}-{\rm Id}}(s) + \fM_{\left(\cQ^{\pm 1}-{\rm Id}\right)^*}(s) \lesssim_{S}\varepsilon \upsilon^{-1}  ( 1+ \normk{\fI_0}{s+\sigma})\label{step5.est12} \, . 
		\end{align}
8. Furthermore, for any $s_1$ as in \eqref{s1s0}, 
		finitely many 
$ 0 \leq \alpha \le \alpha (M)  $, $\tq\in\N_0^\nu$, with $\abs\tq\leq \tq_0$, and $n_1,n_2 \in\N_0$, with $n_1+n_2\leq N- \tq_0 + \frac12 $,
		\begin{align}
		& \| \Delta_{12} (\cA) h  \|_{s_1} \lesssim_{s_1} \varepsilon {\upsilon^{-1}}\norm{i_1-i_2}_{s_1+\sigma}\norm{h}_{s_1+\sigma} \,, \quad \cA \in \{ \cQ^{\pm 1} = (\cQ^{\pm 1})^*  \}\,, \label{step5.est131} \\ 
		&  \|\Delta_{12}a_2 \|_{s_1}  \lesssim_{s_1} 
		\varepsilon {\upsilon^{-1}}\norm{i_1-i_2}_{s_1+\sigma} \,,  \ 
		\| \Delta_{12} a_4 \|_{s_1}  \lesssim 
		\varepsilon{\upsilon^{-1}} \norm{i_1-i_2}_{s_1+\sigma}  \, , 
		\label{step5.est71}\\
		& \| \Delta_{12} \bR_5^{(-\frac12,d)} \|_{-\frac12,s_1,\alpha} + \|\Delta_{12} \bR_5^{(0,o)}\|_{0,s_1,\alpha} \lesssim_{s_1,N,\alpha} \varepsilon{\upsilon^{-1}} \norm{ i_1-i_2 }_{s_1+\sigma_N+2\alpha}\,,\label{step5.est8}\\
		& \norm{\braket{D}^{n_1}\pa_\vf^\tq \Delta_{12}\bT_{5,N}(\vf)\braket{D}^{n_2} }_{\cL(H^{s_1})} \lesssim_{s_1, N, \tq_0} \varepsilon{\upsilon^{-1}}  \norm{ i_1-i_2}_{s_1+\sigma_N(\tq_0)} \,.\label{step5.est9} 
		\end{align}
	The real operator $\cL_5$ is Hamiltonian, reversible and momentum  preserving. 
\end{lem}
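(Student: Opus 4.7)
The plan is to carry out in order the three symplectic, reversibility and momentum preserving conjugations already outlined in Steps 1--3: first the multiplier $\wt\cM$ from \eqref{map_test_M}, which symmetrizes the order $1/2$ up to a multiplicative factor $a_2^2$; second the scalar gauge $\cQ$ with the specific choice $q=a_2^{-1/2}$ from \eqref{q} that equalizes the two off-diagonal blocks; and third the complex-coordinate change $\cC$ from \eqref{C_transform}. At each step the Hamiltonian, reversible and momentum-preserving structure is preserved because $\Lambda$, $q$ and $\cC$ are self-adjoint/symplectic and the underlying functions are even in $(\vf,x)$; the algebraic properties of $\cL_2$ (Lemma \ref{lemma:riassu}) combined with Lemmata \ref{rev_defn_C}, \ref{lem:mom_pseudo}, \ref{lem:mom_prop} ensure the claimed structure for $\cL_5$.

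The algebraic computation is essentially done in the text: using the key identity \eqref{LCL} and $\Lambda^{-1}G(0)\Lambda^{-1}=\omega(\gamma,D)$, the block $\Lambda(a_1-(\gamma/2)^2\pa_x^{-1}G(0)\pa_x^{-1})\Lambda$ splits as $a_2^2\,\omega(\gamma,D)+\pi_0 + C_3$, where $C_3\in\Ops^{-1/2}$ collects the commutator $[\Lambda,a_1]\Lambda$, the contribution from $(a_1-g)/g$ and the conjugated $P_{-2,N}^{(2)}$; this yields \eqref{cL3_final} with $a_2=\sqrt{a_1/g}$. Inserting $q^{-1}=qa_2$ produces in \eqref{cL4_final} a single off-diagonal symbol $a_2^{1/2}\omega(\gamma,D)a_2^{1/2}$, the diagonal correction $a_3$ in \eqref{a7}, and extra $[q,\cH T(\tth)]$ commutators absorbed into $\bR_4^\Psi$. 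Applying $\cC$ finally diagonalizes the skew block into $\pm\im\, a_2^{1/2}\omega(\gamma,D)a_2^{1/2}$, while the diagonal vorticity block $-(\gamma/2)\{G(0)\pa_x^{-1},\pa_x^{-1}G(0)\}$ combines via $\cC$ with $\omega(\gamma,D)$ into the full matrix Fourier multiplier $\im\,\b\Omega(\gamma,D)$ of \eqref{step5.est1}. Using $a_2^{1/2}\omega(\gamma,D) a_2^{1/2}=a_2\,\omega(\gamma,D)+[a_2^{1/2},\omega(\gamma,D)]a_2^{1/2}$ together with the identity $\pa_x^{-1}G(0)=\cH T(\tth)=\cH+\Ops^{-\infty}$ produces the leading term $\im\,a_2\,\b\Omega(\gamma,D)$ plus the Hilbert transform contribution $a_4\cH$ with $a_4=\tfrac{\gamma}{2}(a_2-1)$ arising from $(a_2-1)\cdot\tfrac{\gamma}{2}\cH$; the smoothing tail $a_2\,T(\tth)-T(\tth)\in\Ops^{-\infty}$ and the order $-1/2$ commutator $[a_2^{1/2},\omega(\gamma,D)]a_2^{1/2}$ are absorbed into $\bR_5^{(-\frac12,d)}$, while $a_3$ and the $\pi_0$-block go into $\bR_5^{(0,o)}$.

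The estimates are then obtained by routine application of the tame pseudo-differential calculus. The bound \eqref{m12.stima.f} for $a_2-1$ follows from $a_2=\sqrt{1+(a_1-g)/g}$, Moser's Lemma \ref{compo_moser} and \eqref{AS.est1}; \eqref{step5.est3} for $a_4$ is then immediate. The tame estimates \eqref{step5.est12} for $\cQ^{\pm 1}-{\rm Id}$ come from Lemma \ref{tame_pesudodiff}. The symbol bounds \eqref{step5.est5} for $\bR_5^{(-\frac12,d)}$ and $\bR_5^{(0,o)}$ follow from Lemma \ref{pseudo_compo} and the commutator gain of Lemma \ref{pseudo_commu} applied to $[\Lambda,a_1]\Lambda$, $a_2[a_2,\omega(\gamma,D)]$ and $\Lambda P_{-2,N}^{(2)}\Lambda$; the loss $3\alpha$ in the ansatz norm of $\fI_0$ accounts for three successive symbol conjugations, each requiring $\alpha$ extra derivatives. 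The Lipschitz-in-$i$ bounds \eqref{step5.est131}--\eqref{step5.est9} come from the same computations applied to $\Delta_{12}$, using \eqref{AS.est4}--\eqref{AS.est6} and the telescopic identity $\Delta_{12}(AB)=(\Delta_{12}A)B_1+A_2(\Delta_{12}B)$.

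The main obstacle is the careful propagation of the tame bounds for the smoothing remainder $\bT_{5,N}$. Starting from the estimate \eqref{AS.est3} on $\bT_{2,N}$, conjugation by $\wt\cM$ costs $\pm 1/4$ on each side since $\Lambda^{\pm 1}\in \Ops^{\pm 1/4}$, so the range of admissible weights $\braket{D}^{n_1}\cdots\braket{D}^{n_2}$ shrinks by $1/2$ on each side. Hence, after the two pseudo-differential conjugations, the permissible range for $n_1+n_2$ drops by $3/2$, which is exactly the loss encoded by $n_1+n_2\leq N-(k_0+\tq_0)+\tfrac{3}{2}$ in \eqref{step5.est6}. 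Lemma \ref{tame_compo} then gives the stated tame bound by controlling $\braket{D}^{n_1+1/2}\pa_\vf^\tq \bT_{2,N}\braket{D}^{n_2+1/2}$; the analogous reasoning with the difference operator $\Delta_{12}$ yields \eqref{step5.est9}.
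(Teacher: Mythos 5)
Your proposal follows essentially the same route as the paper: retrace the three conjugations $\wt\cM$, $\cQ$, $\cC$ that are carried out in the text preceding the lemma, read off the symbols $r_5^{(d)}$, $r_5^{(o)}$ from \eqref{cL4_final} and \eqref{C_transformed}, and then invoke the pseudo-differential and tame calculus together with the estimates of Lemma \ref{lemma:riassu}. The algebraic accounting is correct: $\cR_1$ indeed equals $\im a_2\Omega(\gamma,D) + a_4\cH + \tfrac{\im}{2}\pi_0 + r_5^{(d)}$ with $r_5^{(d)} = \tfrac{\gamma}{2}(a_2-1)\cH(T(\tth)-1) + \im\big(\tfrac12 C_4 + a_2^{1/2}[\omega(\gamma,D),a_2^{1/2}]\big)$, matching the paper's \eqref{r5dr50}, and $\cR_2 = a_3 + \tfrac{\im}{2}(\pi_0 + C_4)$ gives $r_5^{(o)}$.

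One piece of bookkeeping in your last paragraph is inconsistent and would mislead a reader. You claim that the admissible range for $n_1+n_2$ "drops by $3/2$," citing the constraint $n_1+n_2\leq N-(k_0+\tq_0)+\tfrac32$ in \eqref{step5.est6}. But compare with Lemma \ref{lemma:riassu}, item 4, where the constraint on $\bR_2^\Psi + \bT_{2,N}$ is $n_1+n_2 \leq N-(k_0+\tq_0)+2$: the actual drop is $\tfrac12$, not $\tfrac32$. That $\tfrac12$ is exactly the loss of $1/4$ on each side from conjugating by $\Lambda^{\pm1}\in\Ops^{\pm1/4}$; the subsequent conjugations by $\cQ$ (a multiplication operator, order $0$) and $\cC$ (a constant matrix) do not shift the weights $\braket{D}^{n_i}$. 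The same $\tfrac12$ drop appears between the difference estimates \eqref{AS.est6} (constraint $n_1+n_2\leq N-\tq_0+1$) and \eqref{step5.est9} (constraint $n_1+n_2\leq N-\tq_0+\tfrac12$). Your phrase "shrinks by $1/2$ on each side" and "after the two pseudo-differential conjugations" suggests you double-counted: there is only one conjugation ($\wt\cM$) that is genuinely pseudo-differential of non-zero order, and it costs $1/4$ per side. Also, the smoothing tail you call "$a_2 T(\tth) - T(\tth)$" should be $\tfrac{\gamma}{2}(a_2-1)\cH(T(\tth)-1)$; these are not the same operator, though both lie in $\Ops^{-\infty}$ and the slip is harmless. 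Neither of these slips affects the validity of the argument, but the $3/2$ figure should be corrected to $1/2$.
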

\begin{proof}
	By the expression of $ \cL_4 $ in \eqref{cL4_final}  and \eqref{C_transformed} 
	we obtain that $ {\cal L}_5 $ has the form  \eqref{cL5}
	with	
	\begin{equation}
	\begin{aligned}
		r_5^{(d)} & := 
		\tfrac{\gamma}{2}  (a_2 - 1)\cH (T(\tth)-1) + 
		\im\big( \tfrac12 C_4  +a_{2}^\frac12[\omega(\gamma,D),  a_2^\frac12] \big)  \in \Ops^{-\frac12} \,,  \\ 
		 r_5^{(o)} & := a_3 + \tfrac{\im}{2} C_4 \in\Ops^{0}  \label{r5dr50}
	\end{aligned}
	\end{equation}
	(with $  C_4$ given  in \eqref{a7}) and 
$		\bT_{5,N}:=\cC^{-1}(\bR_{4}^\Psi+\bT_{4,N})\cC $. 
	The function $q$ defined in \eqref{q},
	with $ a_2 $  in  \eqref{a6}, 
	satisfies, by \eqref{AS.est1} and Lemma 
	\ref{compo_moser},
	 for all $s_0 \leq s  \leq S-\sigma$,
	\begin{equation}\label{q.est}
	 \normk{q^{\pm 1}-1}{s} \lesssim_{s} \varepsilon{\upsilon^{-1}} ( 1+\normk{\fI_0}{s+\sigma} ) \,.
	\end{equation}
	The estimates \eqref{m12.stima.f} and \eqref{step5.est3} 
	follows by  \eqref{a6} and \eqref{q.est}.
The estimate \eqref{step5.est5} follows by  \eqref{r5dr50}, \eqref{m12.stima.f}, 
	\eqref{q},
	 \eqref{C3}, \eqref{a6}, \eqref{AS.est1}, \eqref{AS.est2}, \eqref{a7},  \eqref{Lambda}, 
	  \eqref{eq:T_sym}, 
	Lemma \ref{lemma:riassu}. 
The estimate \eqref{step5.est6} follows by 
\eqref{bR4}, \eqref{R3T3}, \eqref{Ht.comm}, \eqref{AS.est3}, \eqref{AS.est1}
	Lemmata \ref{tame_compo}, \ref{tame_pesudodiff}, \eqref{q.est}.
	The estimates \eqref{step5.est12} 
	follow by 
	Lemmata \ref{tame_pesudodiff} and \eqref{q.est}.
		The estimates \eqref{step5.est131}-
		\eqref{step5.est9} are proved similarly.
\end{proof}

\subsection{Symmetrization up to smoothing remainders}\label{sec:block_dec}
The goal of this section is to transform the operator $\cL_5$ in \eqref{cL5} into the operator $\cL_{6}$ in \eqref{cL6M} which is block diagonal up to a regularizing remainder. 
From this step we do not preserve any further the Hamiltonian structure, 
but only the reversible and momentum preserving one
(it is  sufficient for proving Theorem \ref{NMT}).   

\begin{lem}\label{block_dec_lemma}
	Fix $ \fm, N \in \N $,  $ \tq_0 \in \N_0$.	There exist  
	real, reversibility  and momentum  preserving operator matrices 
	$\{ \bX_k \}_{k=1}^{\fm}$   of the form
	\begin{equation}\label{geno}
	\bX_k:= \begin{pmatrix}
	0 & \chi_k(\vf,x,D) \\ \bar{\chi_k(\vf,x,D)} & 0   
	\end{pmatrix},
	\qquad
	\chi_k(\vf,x,\xi) \in S^{-\frac{k}{2}} \, ,
	\end{equation}
	such that, conjugating the operator $ \cL_5 $ in \eqref{cL5}  via the map 
	\begin{equation}\label{bPHIM}
	\b\Phi_{\fm}:= e^{\bX_1}\circ \cdots \circ e^{\bX_{\fm}} \, ,
	\end{equation} 
	we obtain the real, reversible and  momentum preserving operator 
	\begin{equation}\label{cL6M}
	\begin{aligned}
	\cL_6 & := \cL_{6}^{(\fm)} := \b\Phi_{\fm}^{- 1} \, \cL_5 \, \b\Phi_{\fm} \\
	&=  \omega\cdot\partial_\vf +\tm_{1,\bar\tn}\pa_x + \im \,a_2 \b\Omega(\gamma, D) +\im\b\Pi_0 + a_4\cH + \bR_{6}^{(-\frac12, d)} + \bR_{6}^{(-\frac{\fm}{2}, o)} + \bT_{6,N}\,,
	\end{aligned}
	\end{equation}
	where:
\\[1mm]
1.	$\bR_{6}^{(-\frac12,d)}  $ is a block-diagonal operator 
	\begin{align*}
	\bR_{6}^{(-\frac12,d)} := \bR_{6,\fm}^{(-\frac12,d)} & := \begin{pmatrix}
	r_{6}^{(d)}(\vf,x,D) & 0 \\ 
	0 &\bar{r_{6}^{(d)}(\vf,x,D)}
	\end{pmatrix} \in \Ops^{-\frac12}  \,,
	\end{align*}
	$ \bR_{6}^{(- \frac{\tm}{2}, o)} $ is a smoothing off diagonal remainder 	
	\begin{align}
	\bR_{6}^{(- \frac{\tm}{2}, o)} := \bR_{6,\fm}^{(-\frac{\fm}{2}, o)}  & := \begin{pmatrix}
	0 & r_{6}^{(o)}(\vf,x,D) \\ 
	\bar{r_{6}^{(o)}(\vf,x,D)} & 0 
	\end{pmatrix} \in \Ops^{- \frac{\fm}{2}} \label{R6o} \, , 
	\end{align}
	satisfying
	for finitely many 
$ 0 \leq \alpha \le \alpha (\fm)  $ (fixed in Remark \ref{fix:alpha}), 
	for  some $ \sigma_N := \sigma_N(k_0,\tau, \nu, N)>0 $,   
	$\aleph_{\fm}(\alpha) > 0 $ and for all 
	$ s_0 \leq s \leq S- \sigma_N-\aleph_{\fm}(\alpha)$, 
	\begin{align}
	&\normk{\bR_{6}^{(-\frac12,d)}}{-\frac12,s,\alpha} + 
	\normk{\bR_{6}^{(-\frac{\fm}{2}, o)}}{-\frac{\fm}{2},s,\alpha} \lesssim_{ s, \fm, N, \alpha} \varepsilon {\upsilon^{-1}} \big( 1+\normk{\fI_0}{s+\sigma_N+\aleph_{\fm}(\alpha)} \big) \, \label{bR6esti1} .
	\end{align}
	Both	$\bR_{6}^{(-\frac12,d)}  $ and $ \bR_{6}^{(- \frac{\fm}{2}, o)} $ are 
	reversible and momentum preserving;
\\[1mm] 
2. 
	For any $ \tq \in \N^\nu_0 $ with $ |\tq| \leq \tq_0$, 
	$n_1, n_2 \in \N_0 $  with $ n_1 + n_2 \leq N -(k_0+\tq_0) + \frac32  $,  the  
	operator $\langle D \rangle^{n_1}\partial_{\vphi}^\tq \bT_{6, N}(\vphi) \langle D \rangle^{n_2}$ is 
	$\cD^{k_0} $-tame with a tame constant satisfying, for some  $\sigma_N(\tq_0) := 
	\sigma_N(k_0,\tau, \nu, \tq_0) $, for 
	any $s_0 \leq s \leq S -\sigma_N(\tq_0) -\aleph_{\fm}(0)$, 
	\begin{equation}\label{block.est2}
	{\mathfrak M}_{\langle D \rangle^{n_1}\partial_{\vphi}^\tq \bT_{6, N}(\vphi) \langle D \rangle^{n_2}}(s) \lesssim_{S, \fm, N, \tq_0} 
	\varepsilon {\upsilon^{-1}}( 1+ \normk{\fI_0}{s+\sigma_N(\tq_0) + \aleph_{\fm}(0)} )\,.
	\end{equation}
3.
	The conjugation map $ \b\Phi_{\fm}$ in \eqref{bPHIM} satisfies, 
	for all $s_0 \leq s \leq S-\sigma_N-\aleph_{\fm}(0)$,
	\begin{equation}\label{block.est6}
	\normk{ \b\Phi_{\fm}^{\pm 1}-{\rm Id} }{0,s,0} + \normk{\left(\b\Phi_{\fm}^{\pm 1}-{\rm Id}\right)^*  }{0,s,0} \lesssim_{s, \fm, N} \varepsilon{\upsilon^{-1}} ( 1+\normk{\fI_0}{s+\sigma_N+ \aleph_{\fm}(0)} )\,.
	\end{equation}
4. 
	Furthermore, for any $s_1$ as in \eqref{s1s0}, finitely many 
$ 0 \leq \alpha \le \alpha (\fm)  $, $\tq\in\N_0^\nu$, with $\abs\tq \leq \tq_0$, and $n_1,n_2\in\N_0$, with $n_1+n_2\leq N- \tq_0 + \frac12$, we have
	\begin{align}
	&\|\Delta_{12} \bR_{6}^{(-\frac12,d)} \|_{-\frac12,s_1,\alpha} +\|\Delta_{12} \bR_{6}^{(-\frac{\fm}{2},o)} \|_{-\frac{\fm}{2}, s_1, \alpha} \lesssim_{ s_1, \fm, N, \alpha} \varepsilon{\upsilon^{-1}} \norm{ i_1-i_2 }_{s_1+\sigma_N+\aleph_{\fm}(\alpha)}   \,, \label{block.est3} \\
	& \| \braket{D}^{n_1} \pa_\vf^\tq \Delta_{12} \bT_{6,N} \braket{D}^{n_2}\|_{\cL(H^{s_1})} \lesssim_{s_1,
		\fm, N, \tq_0} \varepsilon{\upsilon^{-1}} \norm{i_1-i_2}_{s_1+ \sigma_N(\tq_0)+ \aleph_{\fm}(0) }\,,\label{block.est4}\\
	& \|\Delta_{12} \b\Phi_{\fm}^{\pm 1} \|_{0,s_1,0} +\|\Delta_{12} (\b\Phi_{\fm}^{\pm 1})^* \|_{0,s_1,0} \lesssim_{s_1, \fm, N} \varepsilon {\upsilon^{-1}}\norm{ i_1-i_2 }_{s_1+\sigma_N+\aleph_{\fm}(0)} \,. \label{block.est5}
	\end{align}
\end{lem}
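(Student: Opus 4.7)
The plan is to prove Lemma \ref{block_dec_lemma} by finite induction on $\fm$, at each step using a conjugation $e^{\bX_k}$ by an off-diagonal pseudodifferential operator of negative order $-k/2$ to cancel the principal symbol of the off-diagonal remainder, gaining $1/2$ in order. The base case $\fm=0$ is immediate from Lemma \ref{LEMMONE}: set $\b\Phi_0={\rm Id}$, $\bR_{6,0}^{(-1/2,d)}=\bR_5^{(-1/2,d)}$, $\bR_{6,0}^{(0,o)}=\bR_5^{(0,o)}$, $\bT_{6,0,N}=\bT_{5,N}$. For the inductive step, assume the lemma holds at index $k-1$ and consider the operator $\cL_6^{(k-1)}$ of the form \eqref{cL6M} with off-diagonal remainder $\bR_{6,k-1}^{(-(k-1)/2,o)}$ having symbol $r_{6,k-1}^{(o)}\in S^{-(k-1)/2}$.

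The key observation is that for $\bX_k$ of the form \eqref{geno}, a direct computation of the matrix product gives
\[
[\im a_2\b\Omega(\gamma,D),\,\bX_k]=\im\begin{pmatrix}0 & a_2\,\Omega(\gamma,D)\chi_k+\chi_k\,a_2\,\bar\Omega(\gamma,D)\\ -a_2\,\bar\Omega(\gamma,D)\bar\chi_k-\bar\chi_k\,a_2\,\Omega(\gamma,D) & 0\end{pmatrix},
\]
which is \emph{not} a commutator on the off-diagonal but a sum of compositions. By Lemma \ref{pseudo_compo}, its upper-right symbol has leading part $2\im\,a_2(\vf,x)\,\omega(\gamma,\xi)\,\chi_k(\vf,x,\xi)$, since $\Omega+\bar\Omega=2\omega(\gamma,D)$ with $\omega_j(\gamma)\sim\sqrt{g\,|j|}>0$ (for $|j|$ large, by Lemma \ref{rem:exp_omegaj_gam}). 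Since $a_2$ is close to $1$ by \eqref{m12.stima.f}, we may solve the symbolic cohomological equation explicitly by setting
\[
\chi_k(\vf,x,\xi):=-\frac{\chi(\xi)\,r_{6,k-1}^{(o)}(\vf,x,\xi)}{2\im\,a_2(\vf,x)\,\omega(\gamma,\xi)}\in S^{-k/2},
\]
where $\chi$ is the cutoff in \eqref{cutoff}. The reversibility and momentum-preserving properties of $\bX_k$ follow from Lemmata \ref{rev_defn_C} and \ref{lem:mom_pseudo} applied to $\chi_k$, inherited from those of $r_{6,k-1}^{(o)}$ and $a_2$.

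Using the Lie expansion \eqref{lie_abstract} with $M$ large (and with $\omega\cdot\pa_\vf$ treated via \eqref{lie_omega_devf}), one conjugates $\cL_6^{(k-1)}$ by $e^{\bX_k}$ and identifies the various contributions by order. The single commutator with $\im a_2\b\Omega$ cancels $\bR_{6,k-1}^{(-(k-1)/2,o)}$ at leading order, leaving an off-diagonal piece of order $-k/2$; the commutators with $\omega\cdot\pa_\vf$, $\tm_{1,\bar\tn}\pa_x$, and $a_4\cH$ produce contributions of order at most $-k/2$ (the first two are pointwise differentiations of the symbol $\chi_k$, which preserve the order; the third drops by one by Lemma \ref{pseudo_commu}); $[\im\b\Pi_0,\bX_k]$ and commutators with $\bR_{6,k-1}^{(-1/2,d)}$ are absorbed into diagonal remainders of order $-1/2$ (or lower), and higher iterated commutators, together with $[\bT_{6,k-1,N},\bX_k]$, contribute to $\bT_{6,k,N}$ via Lemma \ref{pseudo_compo} and the fact that $\bX_k^j$ has order $-jk/2$. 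The diagonal corrections of order $-1/2$ produced in the process are collected into the updated $\bR_{6,k}^{(-1/2,d)}$; estimates \eqref{bR6esti1} at the new step follow from Lemmata \ref{pseudo_compo}, \ref{pseudo_commu}, Lemma \ref{Neumann pseudo diff} applied to $e^{\bX_k}\in\Ops^0$, and the inductive bounds, with the loss $\aleph_{k}(\alpha)$ updated to $\aleph_{k-1}(\alpha)+\sigma$ for a suitable fixed $\sigma$ depending only on $\alpha,k_0,\tau,\nu$. The tame estimates \eqref{block.est2} on $\bT_{6,k,N}$ follow from Lemma \ref{tame_compo} combined with the pseudodifferential remainder bounds in Lemma \ref{pseudo_compo}; the bound \eqref{block.est6} on $\b\Phi_{\fm}^{\pm 1}-{\rm Id}$ is obtained from Lemma \ref{Neumann pseudo diff} and Lemma \ref{tame_pesudodiff}, and the variational estimates \eqref{block.est3}-\eqref{block.est5} by differentiating all of the above in $i$ and re-using the same calculus.

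The main obstacle is the careful bookkeeping of the loss of derivatives $\aleph_{\fm}(\alpha)$: at each induction step one applies the composition and commutator estimates of Lemmata \ref{pseudo_compo}-\ref{pseudo_commu}, each of which costs a finite number of derivatives depending on $\alpha$ (and on the indices $n_1,n_2,\tq_0$ appearing in the tame estimates on $\bT_{6,N}$), so that $\aleph_{\fm}(\alpha)$ grows linearly in $\fm$. One must verify that this cumulative loss is consistent with the eventual choice of $\fm$ (fixed once and for all depending on $\tau,\nu$, see \eqref{M_choice}) so that, in the final Nash-Moser scheme, the bound $s_1+\aleph_{\fm}(\alpha)+\sigma_N\le S$ is respected. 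All the algebraic structure (reality, reversibility \eqref{Ri-RAR}, momentum preservation of Definition \ref{def:mom.pres}) is preserved at each step because the cohomological equation for $\chi_k$ inherits the right parity/translation properties from $r_{6,k-1}^{(o)}$ and $a_2$, and the symmetry-preserving class of operators is closed under composition, commutators, and the exponential by Lemmata \ref{lem:mom_prop} and \ref{lem:mom_pseudo}.
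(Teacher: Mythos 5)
Your proposal is correct and follows essentially the same route as the paper's own proof: a finite induction on $\fm$ starting from $\cL_6^{(0)}=\cL_5$, with the generator $\chi_{k}=-\chi(\xi)r_{6,k-1}^{(o)}/(2\im\,a_2\,\omega(\gamma,\xi))$ chosen to solve the scalar cohomological equation, and a Lie expansion separating orders. In particular you correctly identify the key algebraic point — that the block structure of $\b\Omega$ with its opposite signs on the two diagonal entries turns the off-diagonal part of the matrix commutator $[\im a_2\b\Omega,\bX_k]$ into a \emph{sum} $a_2\Omega\chi_k+\chi_k a_2\bar\Omega$ rather than a difference, so there is no cancellation and the principal symbol $2\im a_2\omega(\gamma,\xi)\chi_k$ is of order $-(k-1)/2$, exactly what is needed to cancel $r_{6,k-1}^{(o)}$ — which is precisely the computation carried out in the paper (cfr. \eqref{eq:hom_scalar}). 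The bookkeeping of $\aleph_{\fm}(\alpha)$ and the preservation of reversibility and momentum are also handled as in the paper.
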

\begin{proof}
	The proof is inductive. 
	The operator  $\cL_6^{(0)}:= \cL_5 $ satisfies \eqref{bR6esti1}-\eqref{block.est2}  
	with $	\aleph_0(\alpha) := 3\alpha $, by 
	\eqref{step5.est5}-\eqref{step5.est6}.
	Suppose we have done already $\fm$ steps obtaining an operator
	$ \cL_6^{(\fm)} $ as in \eqref{cL6M} with 
	$  \bR_{6,\fm}^{(-\frac12, d)} := \bR_{6}^{(-\frac12, d)} $ and 
	$  \bR_{6,\fm}^{(-\frac12, o)} := \bR_{6}^{(-\frac{\fm}{2}, o)}  $
	and	the remainder 
	$ {\bf \Phi}_{\fm}^{-1} \bT_{5,N} {\bf \Phi}_{\fm}$, instead of 
	$ \bT_{6,N} $.
	  We now 
	show how to define $ \cL_6^{(\fm+1)} $.
	Let 
	\begin{equation}\label{chiM+1}
	\chi_{\fm+1}(\vf,x,\xi):= -\big(2\im\,  a_2(\vf,x) \omega (\gamma, \xi) \big)^{-1}r_{6,\fm}^{(o)}(\vf,x,\xi) \chi (\xi) 
	\in S^{-\frac{\fm}{2}-\frac12} \, , 
	\end{equation}
	where $ \chi $ is the cut-off function defined in \eqref{cutoff} and 
	$\omega (\gamma, \xi) $ is the symbol 
	(cfr. \eqref{eq:omega0}) 
	$$
	\omega (\gamma, \xi):= 
	\sqrt{  G(0; \xi) \Big(g +  
		\frac{\gamma^2}{4} \frac{G(0; \xi)}{\xi^2}    \Big)} \in S^{\frac12} \, , 
	\ \ G(0; \xi) := 
	\begin{cases}
	\chi (\xi) |\xi| \tanh (\tth |\xi|) \, , \  \tth < + \infty \cr
	\chi (\xi) |\xi| \, , \qquad \qquad 	\ \,  \ \tth = + \infty \,.
	\end{cases}
	$$
	Note that
	$\chi_{\fm +1} $ in \eqref{chiM+1} is well defined because 
	$ \omega (\gamma, \xi) $ is positive on the support of $ \chi (\xi ) $ and
	 $a_2(\vf,x)$ is close to 1. 	
	We conjugate the operator $\cL_6^{(\fm)}  $ in \eqref{cL6M}  by the flow generated by 
	$\bX_{\fm+1}$ of the form \eqref{geno} with	
	$ \chi_{\fm+1} (\vphi, x, \xi) $ defined in \eqref{chiM+1}. 
	By	
	\eqref{bR6esti1} 
	and \eqref{step5.est1}, for suitable constants $\aleph_{\fm+1}(\alpha)>\aleph_{\fm}(\alpha)$, for finitely many $\alpha\in\N_0$ and for any $s_0 \leq s \leq S- \sigma_N - \aleph_{\fm+1}(\alpha)$,
	\begin{equation}\label{chiM+1_est}
	\normk{\bX_{\fm+1}}{-\frac{\fm}{2}-\frac{1}{2},s,\alpha} \lesssim_{s, \fm, \alpha} \varepsilon {\upsilon^{-1}}\big( 1+ \normk{\fI_0}{s+\sigma_N + \aleph_{\fm+1}(\alpha)} \big) \,.
	\end{equation} 
	Therefore, by Lemmata \ref{Neumann pseudo diff}, \ref{pseudo_compo} and
	the induction assumption \eqref{block.est6} for $\b\Phi_{\fm}$, 
	the conjugation map $\b\Phi_{\fm+1}:= \b\Phi_{\fm}e^{\bX_{\fm+1}}$ is well defined 
	and satisfies estimate \eqref{block.est6} with $\fm+1$.
	By the Lie expansion \eqref{lie_abstract} 
	we have
	\begin{align}
	\label{conM+1}
	\cL_6^{(\fm+1)} & := e^{-\bX_{\fm+1}} \,  \cL_6^{(\fm)} \, e^{ \bX_{\fm+1}}  \\
	& =\omega\cdot \partial_\vf +\tm_{1,\bar\tn}\pa_x+ \im a_2 \b\Omega(\gamma,D)+\im\b\Pi_0 +a_4\cH + \bR_{6,\fm}^{(-\frac12,d)} \nonumber\\
	\nonumber
	& - \big[\bX_{\fm+1},\tm_{1,\bar\tn}\pa_x+ \im\,a_2\b\Omega(\gamma,D) \big] + \bR_{6,\fm}^{(-\frac{\fm}{2},o)} + {\bf \Phi}_{\fm+1}^{-1} \bT_{5,N} {\bf \Phi}_{\fm+1}  \\
	\label{geno12}
	& -\int_0^1 \LieTr{\tau \bX_{\fm+1}}{\big[\bX_{\fm+1} \, , \, \omega\cdot \partial_\vf +\im\b\Pi_0+ a_4\cH + \bR_{6,\fm}^{(-\frac12,d)}  \big]}\wrt \tau\\
	\label{geno13}
	& -\int_0^1 \LieTr{\tau\bX_{\fm+1}}{\left[\bX_{\fm+1}, \bR_{6,\fm}^{(-\frac{\fm}{2},o)}  \right]}\wrt \tau \\
	\label{geno14}
	& +\int_0^1(1-\tau)\LieTr{\tau\bX_{\fm+1}}{\left[\bX_{\fm+1},\left[\bX_{\fm+1},\tm_{1,\bar\tn}\pa_x +\im\, a_2\b\Omega(\gamma,D)\right]\right]}\wrt \tau \, .
	\end{align}
	In view of  \eqref{geno}, \eqref{step5.est1} and \eqref{R6o},  we have that
	$$
	-\big[ \bX_{\fm+1},\tm_{1,\bar\tn}\pa_x + \im\, a_2 \b\Omega(\gamma,D) \big] + \bR_{6,\fm}^{(- \frac{\fm}{2},o)}  
	=
	\begin{pmatrix}
	0 & Z_{\fm+1} \\ 
	\bar{Z_{\fm+1}} & 0 
	\end{pmatrix} =: \bZ_{\fm+1} \, , 
	$$
	where, denoting for brevity  $ \chi_{\fm+1} := \chi_{\fm+1}(\vf,x,\xi) $, it results 
	\begin{equation}\label{eq:hom_scalar}
		\begin{aligned}
			Z_{\fm+1} & 
			= 
			\im
			\left(  {\rm Op}(\chi_{\fm+1}) a_2 \, \omega(\gamma,D) + a_2 \, \omega(\gamma,D)
			{\rm Op}(\chi_{\fm+1})
			\right) \nonumber \\
			&	\quad +  \left[ 
			{\rm Op}(\chi_{\fm+1}), - \tm_{1,\bar\tn} \pa_x + a_2 \,   \tfrac{\gamma}{2}\pa_x^{-1}G(0)  \right]		
			+ {\rm Op}(r_{6,\fm}^{(o)}) \, . 
		\end{aligned}
	\end{equation}
	By \eqref{compo_symb}, \eqref{eq:rem_comp_tame} 
	and since
	$  \chi_{\fm+1}  \in S^{-\frac{\fm}{2} -\frac12} $ by 
	\eqref{chiM+1}, 
	we have that 
	$$
	{\rm Op}(\chi_{\fm+1}) a_2 \omega(\gamma,D) + a_2\omega(\gamma,D) {\rm Op}(\chi_{\fm+1}) 
	= \Op\big(  2 a_2
	\omega (\gamma, \xi)\chi_{\fm+1} \big) + {\mathtt r}_{\fm+1} \, ,
	$$
	where ${\mathtt r}_{\fm+1} $ 
	is in $ \Ops^{-\frac{\fm}{2}-1} $. By  \eqref{chiM+1} and \eqref{eq:hom_scalar} 
	$$
	Z_{\fm+1}  =  \im {\mathtt r}_{\fm+1}  +
	 \left[ 
			{\rm Op}(\chi_{\fm+1}), - \tm_{1,\bar\tn} \pa_x + a_2  \tfrac{\gamma}{2}\pa_x^{-1}G(0)  \right]
	+ {\rm Op}(r_{6,\fm}^{(o)}(1- \chi (\xi)))  \in \Ops^{-\frac{\fm}{2}-\frac12} \, . 
	$$
	The
	remaining pseudodifferential operators in \eqref{geno12}-\eqref{geno14} 
	have order $ \Ops^{-\frac{\fm+1}{2}} $. 
	Therefore the operator $ \cL_6^{(\fm+1)} $ in \eqref{conM+1} has the form 
	\eqref{cL6M} at $ \fm+ 1 $ with 
	\begin{equation}\label{newRM+1}
	\bR_{6,\fm+1}^{(-\frac12,d)}+\bR_{6,\fm+1}^{(-\frac{\fm+1}{2},o)}
	:= \bR_{6,\fm}^{(-\frac12,d)}+\bZ_{\fm+1}+\eqref{geno12}+\eqref{geno13}+\eqref{geno14}
	\end{equation}
	and a smoothing remainder 
	$ {\bf \Phi}_{\fm+1}^{-1} \bT_{5,N} {\bf \Phi}_{\fm+1}$.
	By 
	Lemmata \ref{pseudo_compo}, \ref{pseudo_commu}, 
	\eqref{bR6esti1},
	\eqref{chiM+1_est}, \eqref{step5.est3}, we conclude that 
	$\bR_{6,\fm+1}^{(-\frac12,d)}$ and $\bR_{6,\fm+1}^{(-\frac{\fm+1}{2},o)}$ satisfy \eqref{bR6esti1} at order $\fm+1$ for
	suitable constants $ \aleph_{\fm+1} (\alpha) >  \aleph_{\fm} (\alpha) $. 
	Moreover the operator $\b\Phi_{\fm+1}^{-1} \bT_{5, N} \b\Phi_{\fm+1} $ satisfies 
	\eqref{block.est2} at order  $\fm+1$ by Lemmata \ref{tame_compo}, \ref{tame_pesudodiff} and \eqref{step5.est6}, \eqref{block.est6}.
	Estimates \eqref{block.est3}-\eqref{block.est5} follow similarly.
	By \eqref{chiM+1}, 
	Lemmata \ref{rev_defn_C}, \ref{lem:mom_pseudo},
	and the induction assumption that $\bR_{6,\fm}^{(-\frac{\fm}{2},o)}$ is reversible and momentum preserving, we get that $\bX_{\fm+1}$ is reversibility and momentum preserving, and so are $e^{\pm\bX_{\fm+1}}$. We deduce that
	$\cL_{6}^{(\fm+1)}$ is reversible and momentum preserving, in particular
	$ \bR_{6,\fm+1}^{(-\frac{\fm+1}{2},o)} $
	in \eqref{newRM+1}.
\end{proof}

\begin{rem}\label{rem:after_block}
	The number of regularizing iterations $ \fm \in\N$ will be 
	fixed by the KAM reduction scheme in Section \ref{sec:KAM}, more precisely
we take 	$ \fm = 2 M $ with $ M $ in \eqref{M_choice}. Note that it is independent of the Sobolev index $s$. 
\end{rem}

So far the operator $ \cL_6 $ of  Lemma \ref{block_dec_lemma} depends on 
two indexes $ \fm, N  $ which provide respectively the order 
of the regularizing  off-diagonal remainder $ \bR_{6}^{(- \frac{\fm}{2}, o)} $ and of the smoothing 
tame operator $ \bT_{6,N} $.
From now on we  fix
\begin{equation}\label{M=N}  
\fm  := 2M \, , \ M \in \N \, ,   \quad N = M \, .
\end{equation}

\subsection{Reduction of the order 1/2}\label{sec:order12}
The goal of this section is to transform the operator $\cL_6$ in \eqref{cL6M}
with $ \fm  := 2M $, $N=M$ (cfr. \eqref{M=N}), into the 
operator $\cL_7$ in \eqref{cL7} whose  coefficient
in front of $\b\Omega(\gamma,D)$ is  constant. 
First we rewrite 
$$
\cL_{6} = \omega\cdot \pa_\vf + \begin{pmatrix}
P_6 & 0 \\ 0 & \bar{P_6}
\end{pmatrix} + \im\b\Pi_0 + \bR_{6}^{(-M,o)} + \bT_{6,M}\, ,
$$
having denoted 
\begin{equation}
\label{P6}
P_6 := P_6(\vf,x,D) := \tm_{1,\bar\tn} \pa_x + \im a_2(\vf,x) \Omega(\gamma, D) + a_4 \cH +  r_6^{(d)}(\vf,x, D) \,.
\end{equation} 
We  conjugate $\cL_{6}$ 
through the real operator 
\begin{equation}\label{Phi1}
\b\Phi(\vf) := \begin{pmatrix}
\Phi(\vf) & 0 \\ 0 & \bar{\Phi}(\vf)
\end{pmatrix} 
\end{equation}
where $\Phi(\vf):=\Phi^\tau(\vf)|_{\tau=1} $ is the time $ 1 $-flow of the PDE
\begin{equation}\label{phi.problem}
\begin{cases}
\partial_\tau \Phi^\tau(\vf) =  \im A(\vf)  \Phi^\tau(\vf) \, , \\
\Phi^0(\vf) = {\rm Id} 	\,  , 
\end{cases}   \qquad A(\vf) := b(\vf, x) |D|^{\frac12} \, , 
\end{equation}
and $b(\vf,x)  $ is a real quasi-periodic traveling wave, $\odd(\vf,x)$,  
chosen later, see \eqref{eqo1}. 
Thus 
$ \im b(\vf, x) |D|^{\frac12}$ is reversibility and momentum preserving as well as 
$	\b\Phi (\vf)  $.
Moreover
$\Phi\pi_0 = \pi_0 = \Phi^{-1}\pi_0$, which implies
\begin{equation}\label{phipi0}
\b\Phi^{-1}\b\Pi_0 \b\Phi = \b\Pi_0\b\Phi \,.
\end{equation}
By the Lie expansion \eqref{lie_abstract} we have  
\begin{equation}
\begin{aligned}
\Phi^{-1} P_6 \Phi  & = 
P_6 - \im [A, P_6] - \frac12 [A, [A, P_6]]+  \sum_{n=3}^{2M+1} \frac{(-\im)^n}{n!} \ad_{A(\vf)}^n(P_6) + T_{M}\, ,\label{lin1} \\
T_{M}& :=  \frac{(- \im)^{2M+2}}{(2M+1)!} \int_0^1 (1 - \tau)^{2M+1} \Phi^{-\tau}(\vf)\,
\ad_{A(\vf)}^{2M+2}(P_6) \,\Phi^\tau (\vf)  \di \tau \, ,   
\end{aligned}
\end{equation}
and, by \eqref{lie_omega_devf}, 
\begin{align}
\Phi^{-1} \circ \omega\cdot \pa_\vf \circ \Phi & = \omega\cdot\pa_\vf + \im 
(\omega\cdot\pa_\vf A) + \frac12 [A,\omega\cdot\pa_\vf A] -  \sum_{n=3}^{2M+1} \frac{(-\im)^n}{n!} \ad_{A(\vf)}^{n-1}(\omega\cdot \pa_\vf A(\vf)) + T_{M}'\,, \notag \\
T_{M}' & := - \frac{(- \im)^{2M+2}}{(2M+1)!} 
\int_0^1 (1 - \tau)^{2M+1} \Phi^{-\tau}(\vf) \,
\ad_{A(\vf)}^{2M+1}(\omega\cdot \pa_\vf A(\vf))\, \Phi^\tau (\vf) \di \tau \, . \label{lin2}
\end{align}
Note that $ \ad_{A(\vf)}^{2M+2}(P_6) $ and $ \ad_{A(\vf)}^{2M+1}(\omega\cdot \pa_\vf A(\vf)) $ are in $ \Ops^{-M} $. 
We now determine the pseudo-differential term of order $ 1/ 2 $
in \eqref{lin1}-\eqref{lin2}. 
We use the expansion of the linear dispersion operator $ \Omega(\gamma,D) $, 
defined by  \eqref{Om-om}, \eqref{def:Gj0}, and, since $j\to c_j(\gamma)\in S^{0}$ (see \eqref{cj_zero}),  
\begin{equation}\label{Omegakx}
	\Omega(\gamma,D) = \sqrt{g}|D|^\frac12 + \im\,\tfrac{\gamma}{2}\cH + r_{-\frac12}(\gamma,D) \,,  \quad 
	r_{-\frac12}(\gamma, D)\in \Ops^{-\frac12}  \, , 
\end{equation}
where $\cH$ is the Hilbert transform in \eqref{Hilbert-transf}.
By  \eqref{P6}, that $ A = b  |D|^{\frac12} $, 
\eqref{eq:moyal_exp}, \eqref{Omegakx} 
we get
	\begin{align}\label{sviP6AP6}
		[A,P_6] & = \big[ b|D|^\frac12, \tm_{1, \tn}\pa_x + \im\,\sqrt{g} a_2 |D|^\frac12 +(a_4-\tfrac{\gamma}{2}a_2)\cH + r_6^{(d)}(x,D) + \im\,a_2 r_{-\frac12}(\gamma,D)\big]
		  \notag \\
		& = -\tm_{1,\bar\tn} b_x |D|^\frac12 -\im\tfrac{\sqrt g}{2}(b_x a_2 -(a_2)_x b)\cH + \Op(r_{b,-\frac12}) \,,
	\end{align}
where $ r_{b,-\frac12} \in S^{-\frac12} $ is small with $b$.
As a consequence,
the contribution at order $\frac12$  of the operator 
 $\im \,\omega\cdot\pa_\vf A + P_6 - \im [A, P_6]  $
is 
 $  \im\big(\omega\cdot\pa_\vf b + \tm_{1,\bar\tn}  b_x + \sqrt{g}\, a_2) |D|^{\frac12} $.
We choose $b(\vf,x)$ as the solution of
\begin{equation}\label{solu12}
(\omega\cdot\pa_\vf  + \tm_{1,\bar\tn} \pa_x )b  + 
\sqrt{g} \, \Pi_{N_{\bar\tn}}  \,a_2 
 = \sqrt{g}\, \tm_\frac12 
\end{equation}
where $\tm_{\frac12}  $ is the average (see \eqref{def:avera})
\begin{equation}
\label{tm12}
	\tm_{\frac12} 
	:= \braket{a_2}_{\vf,x} 
	\,.
\end{equation}
We define $ b(\vf,x) $ to be the real, $\odd(\vf,x)$,
quasi-periodic traveling wave 
\begin{equation}\label{eqo1}
		b(\vf,x) := - \sqrt{g} (\omega\cdot\pa_\vf + \tm_{1,\bar\tn}\pa_x)_{\rm ext}^{-1}\big(\Pi_{N_{\bar\tn}} a_2(\vf,x) - \tm_{\frac12} \big) 
\end{equation}
recall \eqref{paext}. 
Note that $b(\vf,x) $ and $ \tm_{\frac12} $ are 
defined for any $(\omega,\gamma)\in\R^\nu\times[\gamma_1,\gamma_2]$
and that, 
for any $(\omega,\gamma)\in\tT\tC_{\bar\tn+1}(2\upsilon,\tau)$
 defined in \eqref{tDtCn}, it solves \eqref{solu12}.

We deduce by \eqref{lin1}, \eqref{lin2}, \eqref{P6}, \eqref{sviP6AP6}-\eqref{eqo1}, that,
for any $(\omega,\gamma)\in\tT\tC_{\bar\tn+1}(2\upsilon,\tau)$,  
\begin{equation*}
\begin{aligned} 
L_7&:= \Phi^{-1}(\vf)\left( \omega\cdot \pa_\vf + P_6 \right) \Phi(\vf) \\
&= \omega\cdot\pa_\vf + \tm_{1,\bar\tn} \pa_x + \im\,\tm_{\frac12} \Omega(\gamma, D)+ a_5 \cH + \Op(r_7^{(d)}) + T_M + T_M' +\im\sqrt{g}(\Pi_{N_{\bar\tn}}^\perp a_2)|D|^\frac12 \,,
\end{aligned}
\end{equation*}
where $a_5(\vf,x)$ is the real function (using that $ a_4 = \frac{\gamma}{2} (a_2 -1 )$)
\begin{equation}\label{a3d}
\begin{aligned}	 
a_5:= & \,  
  \tfrac{\gamma}{2}( \tm_{\frac12} - 1 )
-\tfrac{\sqrt g}{2}(b_x a_2 -(a_2)_x b) \\
&+\tfrac{\tm_{1,\bar\tn}}{4} \big( b_{xx} b - b_x^2 \big) + \tfrac14\big( b(\omega\cdot\pa_\vf b)_x - (\omega\cdot\pa_\vf b)b_x \big)\,,
\end{aligned}
\end{equation}
and 
\begin{align}
	&\Op(r_7^{(d)}):= 
	\Op( - \im r_{b,-\frac12} + \im\, (a_2 - \tm_{\frac12})
	r_{-\frac12}(\gamma,D) +  r_6^{(d)}) \notag  \\
	&\ \ \ + \tfrac12 \big[ b|D|^\frac12, \im\tfrac{\sqrt g}{2}(b_x a_2 -(a_2)_x b)\cH - \Op(r_{b,-\frac12})\big] + \tfrac12\Op(\wtr_{2}(b|\xi|^\frac12,(\tm_{1,\bar\tn}b_x + \omega \cdot \pa_\vf b) |\xi|^{\frac12})) \notag \\
	& \ \ \ +  \sum_{n=3}^{2M+1} \frac{(-\im)^n}{n!} \ad_{A(\vf)}^n(P_6)- \sum_{n=3}^{2M+1} \frac{(-\im)^n}{n!} \ad_{A(\vf)}^{n-1}(\omega\cdot \pa_\vf A(\vf))  \in \Ops^{-\frac12} \, , \label{r7d}
\end{align}
with  $\wtr_{2}(\,\cdot\,,\,\cdot\,)$ defined  in \eqref{eq:moyal_exp}.  
In conclusion we have the following lemma. 

\begin{lem}\label{red1}
	 Let $ M \in \N $, $ \tq_0 \in \N_0 $. 
	Let  $b(\vf,x)$ 
	be the quasi-periodic traveling wave function $\odd(\vf,x)$, defined in  
	\eqref{eqo1}.
	Then, for any $\bar\tn\in\N_0$,  conjugating $ \cL_6  $ in \eqref{cL6M} 
	via the invertible, real, reversibility and momentum preserving 
	map $ \b\Phi $ defined in \eqref{Phi1}-\eqref{phi.problem}, we obtain,
	for any $ (\omega,\gamma) \in \tT\tC_{\bar\tn+1} (2\upsilon, \tau ) $,   
	the real, reversible and  momentum preserving operator 
	\begin{equation}
	\begin{aligned}\label{cL7}
	\cL_7  & :=   {\bf \Phi}^{-1} \cL_6 {\bf \Phi}\\
	& = \omega\cdot\partial_\vf +\tm_{1,\bar\tn}\,\pa_x + \im \,\tm_{\frac12} \b\Omega(\gamma, D) + a_5\cH + \im\b\Pi_0 + \bR_{7}^{(-\frac12, d)} + \bT_{7,M} + \bQ_7^\perp \,,
	\end{aligned}
	\end{equation}
    defined	for any $(\omega,\gamma)\in\R^\nu\times[\gamma_1,\gamma_2]$,  where:
\\[1mm]
1.		The real constant $ \tm_\frac12$ defined in \eqref{tm12} satisfies 
		$	| \tm_\frac12-1|^{k_0, \upsilon} \lesssim 
		\varepsilon {\upsilon^{-1}}$;
\\[1mm]
2.
		The real, quasi-periodic traveling wave function $ a_5(\vf,x)  $ defined in \eqref{a3d},
		$\even(\vf,x)$,
		satisfies, for some $\sigma= \sigma(\tau,\nu,k_0)>0$, for all $s_0\leq s \leq S-\sigma$,
		\begin{align}\label{a2d.est}
		&  \normk{a_5}{s}\lesssim_{s} \varepsilon {\upsilon^{-2}} ( 1 + \normk{\fI_0}{s+\sigma} )\, , \quad
	|\langle a_5 \rangle_{\varphi,x}|^{k_0, \upsilon} 
	\lesssim \varepsilon{\upsilon^{-1}}  \, ; 
		\end{align}
3. $ \bR_{7}^{(-\frac12,d)} $ is  the block-diagonal operator 
		\begin{align*}
		\bR_{7}^{(-\frac12,d)} & := \begin{pmatrix}
		r_{7}^{(d)}(\vf,x,D) & 0 \\ 
		0 &\bar{r_{7}^{(d)}(\vf,x,D)}
		\end{pmatrix} \in \Ops^{-\frac12} 
		\end{align*}
		with $ r_{7}^{(d)}(\vf,x,D)  $ defined in \eqref{r7d}, 
		that satisfies  for finitely many 
$ 0 \leq \alpha \le \alpha (M)  $ (fixed in Remark \ref{fix:alpha}),  
		for  some  $\sigma_M (\alpha):= \sigma_M(k_0,\tau, \nu, \alpha)>0$ and
		for all $s_0 \leq s \leq S- \sigma_M(\alpha)$, 
		\begin{align}
		&\normk{\bR_{7}^{(-\frac12,d)} }{-\frac12,s,\alpha}  \lesssim_{s, M, \alpha} \varepsilon {\upsilon^{-2}}( 1+\normk{\fI_0}{s+\sigma_M( \alpha)} ) \, \label{bR8.est} \,;
		\end{align}
4.  For any $ \tq \in \N^\nu_0 $ with $ |\tq| \leq \tq_0$, 
		$ n_1, n_2 \in \N_0 $  with $ n_1 + n_2  \leq M - \frac32(k_0+\tq_0)  + \frac32  $,  the  
		operator $\langle D \rangle^{n_1}\partial_{\vphi}^\tq \bT_{7,M}(\vphi) \langle D \rangle^{n_2}$ is 
		$\cD^{k_0} $-tame with a tame constant satisfying, for some 
		$ \sigma_M(\tq_0) := \sigma_M(k_0,\tau, \nu, \tq_0) $,
		for any $s_0 \leq s \leq S - \sigma_M(\tq_0) $, 
		\begin{equation}\label{red1.estT}
		{\mathfrak M}_{\langle D \rangle^{n_1}\partial_{\vphi}^\tq \bT_{7,M}(\vphi) \langle D \rangle^{n_2}}(s) \lesssim_{S, M, \tq_0} 
		\varepsilon {\upsilon^{-2}}( 1+ \normk{\fI_0}{s+\sigma_M(\tq_0)} )\,;
		\end{equation}
5. The operator $\bQ_{7}^\perp$ is 
		\begin{equation}\label{R7perp}
			\bQ_{7}^\perp := \im\sqrt g (\Pi_{N_{\bar\tn}}^\perp a_2)|D|^\frac12 \begin{pmatrix}
			1 & 0 \\ 0 & -1
			\end{pmatrix}\,,
		\end{equation}
		where $a_2(\vf,x)$ is defined in \eqref{a6} and satisfies 
		\eqref{m12.stima.f};
\\[1mm]		
6. The operators $ \b\Phi^{\pm 1} -{\rm Id}$, $(\b\Phi^{\pm 1}-{\rm Id})^*$ are $\cD^{k_0}$-$\frac12(k_0+1)$-tame, with tame constants satisfying, for some $\sigma>0$ and for all $s_0\leq s \leq S-\sigma$,
		\begin{align}
		& \fM_{\b\Phi^{\pm 1} -{\rm Id}}(s) + \fM_{(\b\Phi^{\pm 1}-{\rm Id})^*}(s) \lesssim_{S} \varepsilon{\upsilon^{-2}}( 1 + \normk{\fI_0}{s+ \sigma}) \label{red1.est4}\,.
		\end{align}
7.	Furthermore, for any $s_1$ as in \eqref{s1s0}, 
	 finitely many 
$ 0 \leq \alpha \le \alpha (M)  $, $\tq\in\N_0^\nu$, with $\abs\tq \leq \tq_0$, and $n_1,n_2\in\N_0$, with $n_1+n_2\leq M-\frac32 \tq_0 $, we have
	\begin{align}
	& \| \Delta_{12} a_5 \|_{s_1} \lesssim_{s_1} \varepsilon {\upsilon^{-2}}\norm{i_1-i_2}_{s_1+\sigma} \,, 
	\  | \Delta_{12} \tm_\frac12|  
	\lesssim \varepsilon{\upsilon^{-1}} \norm{i_1-i_2}_{s_0+\sigma}  \, , 
	\label{red1.est1}  \\
	&\|\Delta_{12} \bR_{7}^{(-\frac12,d)} \|_{-\frac12,s_1,\alpha} \lesssim_{s_1, M, \alpha} \varepsilon {\upsilon^{-2}}\norm{ i_1-i_2 }_{s_1+\sigma_M (\alpha)}   \,, \label{red1.est2} \\
	& \| \braket{D}^{n_1} \pa_\vf^\tq \Delta_{12} \bT_{7,M} \braket{D}^{n_2}\|_{\cL(H^{s_1})} \lesssim_{s_1,M, \tq_0} \varepsilon {\upsilon^{-2}}\norm{i_1-i_2}_{s_1+ \sigma_M(\tq_0) }\,,\label{red1.est3} \\
	& \| \Delta_{12} (\cA) h \|_{s_1} \lesssim_{s_1} \varepsilon {\upsilon^{-2}}\norm{i_1-i_2}_{s_1+\sigma} \norm{h}_{s_1+\sigma} \,, \quad \cA \in \{ \b\Phi^{\pm 1}, (\b\Phi^{\pm 1})^*\}\,. \label{red1.est6} 
	\end{align}
\end{lem}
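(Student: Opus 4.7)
The plan is to realize the conjugation $\cL_7 = \b\Phi^{-1}\cL_6\b\Phi$ by exploiting the Lie expansion already written out in the lines preceding the statement, and then to verify the analytic estimates and algebraic symmetries. First I would check that $A(\vf)=b(\vf,x)|D|^{1/2}\in\Ops^{1/2}$ with $b$ a real, $\odd(\vf,x)$ quasi-periodic traveling wave generates a well-defined, bounded flow $\Phi^\tau(\vf)$ on every $H^s$: since $\im A$ is self-adjoint and reversibility/momentum preserving, the flow $\b\Phi$ in \eqref{Phi1} is real, reversibility preserving and momentum preserving by Lemma \ref{lem:mom_prop} and the characterization \eqref{Ri-RAR} (cf. Lemma \ref{rev_defn_C} and Lemma \ref{lem:mom_pseudo}). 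The identity \eqref{phipi0} follows because $\Phi$ is built from Fourier multipliers annihilating the zero mode through the factor $|D|^{1/2}$.

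Next I would solve the homological equation \eqref{solu12}, which is the core step. The choice \eqref{tm12}, $\tm_\frac12:=\langle a_2\rangle_{\vf,x}$, ensures that $\Pi_{N_{\bar\tn}}(a_2-\tm_\frac12)$ has zero $(\vf,x)$-average, so the extended inverse \eqref{paext} of $\omega\cdot\pa_\vf+\tm_{1,\bar\tn}\pa_x$ applied in \eqref{eqo1} defines $b(\vf,x)$ on the whole parameter set $\R^\nu\times[\gamma_1,\gamma_2]$. The small divisor bound \eqref{lem:diopha.eq} (with the loss $\mu=k_0+\tau(k_0+1)=\tau_1$, cf. \eqref{tbta}) combined with \eqref{m12.stima.f} gives $\|b\|_s^{k_0,\upsilon}\lesssim_s\varepsilon\upsilon^{-2}(1+\|\fI_0\|_{s+\sigma}^{k_0,\upsilon})$ and, by \eqref{lem:diopha.eq.12}, the Lipschitz bound in $i$; the projector $\Pi_{N_{\bar\tn}}$ plays no role in the size estimate but cuts off the indices that are not controlled by \eqref{tDtCn}, leaving the tail $\sqrt g\,(\Pi_{N_{\bar\tn}}^\perp a_2)|D|^{1/2}$ as the remainder $\bQ_7^\perp$ in \eqref{R7perp}. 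On the Cantor set $\tT\tC_{\bar\tn+1}(2\upsilon,\tau)$ equation \eqref{solu12} holds exactly, and inserting \eqref{sviP6AP6} in the Lie expansion \eqref{lin1}--\eqref{lin2} produces the cancellation of the order $\tfrac12$ symbol, leaving the constant coefficient operator $\im\tm_\frac12\Omega(\gamma,D)$ plus the zero-order term $a_5\cH$ displayed in \eqref{a3d}.

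The remaining step is to collect and estimate the pseudodifferential residues. Using Lemma \ref{pseudo_compo} for symbol compositions, Lemma \ref{pseudo_commu} for iterated commutators $\ad_{A(\vf)}^n$ (each one lowers the order by $1/2$), and Lemma \ref{Neumann pseudo diff} for the flow itself, the symbol \eqref{r7d} lies in $S^{-1/2}$ and all bounds \eqref{bR8.est} follow by the tame product estimate \eqref{prod} together with \eqref{m12.stima.f}, \eqref{bR6esti1} (for the remainder produced by $r_6^{(d)}$), and the bound just obtained for $b$; the loss $\upsilon^{-2}$ is the sum of the one from \eqref{m12.stima.f} and the one from inverting the transport operator. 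The terms $T_M, T_M'$ in \eqref{lin1}--\eqref{lin2} together with $\b\Phi^{-1}(\bR_6^{(-M,o)}+\bT_{6,M})\b\Phi$ form the smoothing remainder $\bT_{7,M}$; the tame estimate \eqref{red1.estT} is obtained by the conjugation rules for $\cD^{k_0}$-tame operators sandwiched by $\langle D\rangle^{n_1},\langle D\rangle^{n_2}$ (using Lemma \ref{tame_compo} and \eqref{block.est2}--\eqref{block.est6}), observing that each of the $2M+2$ commutators in $T_M,T_M'$ produces $M$ free regularizing derivatives, which is why the admissible range for $n_1+n_2$ is shifted by $-\tfrac{3}{2}$ compared with \eqref{block.est2}. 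The estimate \eqref{red1.est4} on $\b\Phi^{\pm1}-{\rm Id}$ and on its adjoint is a direct application of Lemma \ref{Neumann pseudo diff} to $\im A\in\Ops^{1/2}$, where the $\tfrac12(k_0+1)$ loss of derivatives accounts for the factor $|D|^{1/2}$ and the $k_0$ parameter derivatives. The Lipschitz-in-$i$ statements \eqref{red1.est1}--\eqref{red1.est6} follow by differentiating each previous estimate and using \eqref{step5.est71}, \eqref{block.est3}--\eqref{block.est5}, and \eqref{lem:diopha.eq.12}.

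The main obstacle is bookkeeping: the number of regularizing commutators needed in the Lie expansion ($2M+2$) must be matched with the number of smoothing factors available from $\bR_6^{(-M,o)}$ and $\bT_{6,M}$, and with the losses accumulated at the previous steps (encoded in $\aleph_\fm$ and the choice $\fm=2M$, $N=M$ in \eqref{M=N}), so that the tame constants in \eqref{bR8.est}, \eqref{red1.estT} close on the finite scale of regularity $s\le S-\sigma_M(\tq_0)$. The reversibility and momentum preserving character of $\cL_7$ finally follows because each of the ingredients—$\tm_{1,\bar\tn}\pa_x$, $\im\tm_\frac12\b\Omega(\gamma,D)$, $\im\b\Pi_0$, $a_5\cH$ with $a_5$ even, and the real $\odd(\vf,x)$ datum $b$ generating $\b\Phi$—is reversibility and momentum preserving by the criteria in Lemmata \ref{rev_defn_C} and \ref{lem:mom_pseudo}, and conjugation by a reversibility and momentum preserving flow preserves these properties.
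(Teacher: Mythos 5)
Your overall scheme — solve the homological equation \eqref{solu12} for $b$ with $\tm_{1/2}:=\langle a_2\rangle_{\vf,x}$, use \eqref{lem:diopha.eq} to bound $b$ by $\varepsilon\upsilon^{-2}(1+\|\fI_0\|_{s+\sigma}^{k_0,\upsilon})$, insert the cancellation of order $1/2$ into the Lie expansion \eqref{lin1}--\eqref{lin2}, collect the residues into $\bR_7^{(-1/2,d)}$ and $\bT_{7,M}$, and leave the tail $\im\sqrt g(\Pi_{N_{\bar\tn}}^\perp a_2)|D|^{1/2}$ as $\bQ_7^\perp$ — coincides with the paper's proof. However, there is one genuine gap in how you justify the tame estimates for the conjugating flow itself, i.e. item~6.

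You cite Lemma \ref{Neumann pseudo diff} to bound $\b\Phi^{\pm 1}-{\rm Id}$, but that lemma applies only to the exponential of an operator in $\Ops^0$, where the series $\sum A^n/n!$ converges in symbol norms. Here the generator is $A(\vf)=b(\vf,x)|D|^{1/2}\in\Ops^{1/2}$, and the exponential of a half-order $\Psi$DO is not itself a pseudodifferential operator: the flow $\Phi^\tau(\vf)$ is a half-wave/Egorov-type propagator, whose boundedness and tame estimates on $H^s$ (with the loss of $\tfrac12(k_0+1)$ derivatives stated in item~6) require an energy argument, not a Neumann series. The paper avoids this by invoking Proposition 2.37 and Lemma 2.38 of \cite{BBHM}, which are precisely the technical results for flows generated by order-$1/2$ operators. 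Relatedly, your claim that ``$\im A$ is self-adjoint'' is incorrect: $A=b|D|^{1/2}$ is not self-adjoint unless $b$ is constant, so the flow is not isometric; well-posedness of \eqref{phi.problem} instead follows from the usual Kreiss/energy estimate because the commutator $[b|D|^{1/2},(b|D|^{1/2})^*]$ is of order $0$. These corrections also percolate into your treatment of \eqref{red1.estT}: when conjugating $\bT_{6,M}$ by $\b\Phi$, the $\tfrac12$-derivative loss of $\b\Phi^{\pm1}$ per $k_0$- and $\vf$-derivative is exactly what produces the shift from $N-(k_0+\tq_0)$ in \eqref{block.est2} to $M-\tfrac32(k_0+\tq_0)$ in \eqref{red1.estT}, so that part of the bookkeeping must track the tame constant of $\b\Phi$, not a Neumann-series bound. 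Fix item~6 by replacing the reference to Lemma \ref{Neumann pseudo diff} with the flow estimates of \cite[Prop.~2.37, Lemma~2.38]{BBHM}, and the rest of the proof goes through.
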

\begin{proof}
	The estimate $ | \tm_{\frac12}-1|^{k_0, \upsilon} \lesssim 
	\varepsilon \upsilon^{-1}$  follows by \eqref{tm12} and \eqref{m12.stima.f}.
	The function $b(\vf,x)$ defined in \eqref{eqo1} satisfies, by 
	\eqref{lem:diopha.eq} and  \eqref{m12.stima.f}, 
	\begin{equation}\label{b1b2rho.est}
		\normk{ b }{s} \lesssim_{s} \varepsilon \upsilon^{-2}( 1 + \normk{\fI_0}{s+\sigma}) 
	\end{equation}
	for some $\sigma>0$ and for all $s_0\leq s \leq S-\sigma$.
	The estimate \eqref{a2d.est} is deduced by \eqref{a3d},
	$ | \tm_{\frac12}-1|^{k_0, \upsilon} \lesssim 
	\varepsilon \upsilon^{-1}$,  
	\eqref{b1b2rho.est},   \eqref{m12.stima.f}, \eqref{ansatz_I0_s0}. 
	The estimate \eqref{bR8.est}  follows by \eqref{r7d}, \eqref{P6},  Lemmata \ref{pseudo_compo}, \ref{pseudo_commu}, \ref{product+diffeo} and 
	\eqref{b1b2rho.est}, \eqref{bR6esti1}, \eqref{m12.stima.f}, \eqref{step5.est3}.
	The smoothing term $ \bT_{7, M} $  in \eqref{cL7} is, using also \eqref{phipi0},
	$$
	\bT_{7, M}:= 
	\b\Phi^{-1} \bT_{6,M} \b\Phi 
	+ \im\b\Pi_0(\b\Phi -{\rm Id}) + \b\Phi^{-1}\bR_{6}^{(-M,o)} \b\Phi  +
	\begin{pmatrix}
	T_M + T_M' & 0 \\ 0 & \overline{T_M} + \overline{T_M'} 
	\end{pmatrix} 
	$$ 
	with $T_M$ and $T_M'$ defined in \eqref{lin1}, \eqref{lin2}.
	The estimate \eqref{red1.estT} 
	follows by \eqref{P6},   Lemmata \ref{tame_compo}, \ref{tame_pesudodiff}, 
	the tame estimates of $ \b\Phi $ in Proposition 2.37 in \cite{BBHM}, 
	and  \eqref{step5.est3}, \eqref{b1b2rho.est}, \eqref{red1.est4}, \eqref{block.est2}.
	The estimate \eqref{red1.est4} follows by 
	Lemma 2.38 in \cite{BBHM} 	
	and \eqref{b1b2rho.est}.
	The estimates \eqref{red1.est1}, \eqref{red1.est2}, \eqref{red1.est3}, \eqref{red1.est6} are proved in the same fashion, using also \eqref{lem:diopha.eq.12}.
\end{proof}

\subsection{Reduction of the order 0}\label{sec:order0}

The goal of this section is to transform the operator $\cL_7$ in \eqref{cL7} into the operator $\cL_8$ in \eqref{cL8} whose coefficient in front of the Hilbert transform $\cH$ is a real constant. From now on, we neglect the contribution of $\bQ_{7}^\perp$ in \eqref{cL7} 
which will be conjugated in Section \ref{sec:conc}. 
For simplicity of notation we denote such operator $\cL_7 $ as well. 
We first write 
$$
\cL_7 = \omega\cdot \pa_\vf + \begin{pmatrix}
P_7 & 0 \\ 0 & \bar{P_7}
\end{pmatrix} + \im\b\Pi_0 + \bT_{7,M}\,,
$$
where 
\begin{equation}
\label{P7}
P_7 := \tm_{1,\bar\tn}\pa_x + \im \tm_\frac12 \Omega(\gamma, D)  + a_5 (\varphi,x) \cH + \Op ( r_7^{(d)}) \, . 
\end{equation}
We conjugate  $\cL_7$ through the time-$1$ flow 
$\Psi(\vf):= \Psi^\tau(\vf)|_{\tau =1}$ generated by  
\begin{equation}\label{Psi.problem}
\partial_\tau \Psi^\tau(\vf) =  B(\vf)  \Psi^\tau(\vf) \, , \
\Psi^0(\vf) = {\rm Id} 	\,  ,   \quad B(\vf) :=  b_1(\vf,x) \cH \, , 
\end{equation}
where  $b_1(\vf, x)$ is a  real quasi-periodic traveling wave $ {\rm odd}(\vphi,x) $
chosen later 
(see \eqref{eqo2}) and $\cH$ is the Hilbert transform in \eqref{Hilbert-transf}. 
Thus by Lemmata \ref{rev_defn_C}, \ref{lem:mom_pseudo} the operator 
$  b_1(\vf,x) \cH $  is reversibility and momentum preserving and 
so is its flow $ \Psi^\tau (\vphi)$.
Note that, since $ \cH (1) = 0 $,  
\begin{equation}\label{psipi0}
\Psi (\varphi) \pi_0 = \pi_0 = \Psi^{-1}(\varphi) \pi_0 
\,.
\end{equation}
By the Lie expansion in \eqref{lie_abstract} we have 
\begin{equation}
\begin{aligned}
\Psi^{-1} P_7 \Psi  & = 
P_7 -  [B, P_7] +   \sum_{n=2}^{M} \frac{(-1)^n}{n!} \ad_{B(\vf)}^n(P_7) + L_{M} \,,\label{lin3} \\
L_{M} & :=  \frac{(- 1)^{M+1}}{M!} \int_0^1 (1 - \tau)^{M} \Psi^{-\tau}(\vf) \,
\ad_{B(\vf)}^{M+1}(P_7) \, \Psi^\tau (\vf)  \di \tau \, ,  
\end{aligned}
\end{equation}
and, by \eqref{lie_omega_devf}, 
\begin{equation}
\begin{aligned}
\label{lin4}
\Psi^{-1} \circ \omega\cdot\pa_\vf \circ \Psi & = \omega\cdot\pa_\vf +  (\omega\cdot\pa_\vf B(\vf)) - \sum_{n=2}^{M} \frac{(-1)^n}{n!} \ad_{B(\vf)}^{n-1}(\omega\cdot \pa_\vf B(\vf)) + L_{M}'\,, \\
L_{M}' & :=  \frac{(- 1)^{M}}{M!} \int_0^1 (1 - \tau)^{M} \Psi^{-\tau}(\vf) \,
\ad_{B(\vf)}^{M}(\omega\cdot \pa_\vf B(\vf)) \, \Psi^\tau (\vf) \di \tau \, . 
\end{aligned}
\end{equation}
The number $ M $ will be fixed in \eqref{M_choice}.
The contributions at order $0$ come from 
$ (\omega\cdot\pa_\vf B )+ P_7 -  [B, P_7] $.
Since $ B = b_1 \cH $, 
by \eqref{P7}, \eqref{eq:moyal_exp} and \eqref{Omegakx} we have
\begin{equation}\label{P8BP8}
	\begin{aligned}
	[B,P_7] & = - \tm_{1,\bar\tn}(b_1)_x\cH + \Op(r_{b_1, - \frac12}) \,,
	\end{aligned}
\end{equation}
where $ \Op(r_{b_1, - \frac12}) \in\Ops^{- \frac12} $ is small with $b_1$.
As a consequence,  the $0$  order term of the operator 
$\omega\cdot\pa_\vf B + P_7 - [B, P_7]   $
is $ \big(\omega\cdot\pa_\vf b_1 + \tm_{1,\bar\tn} (b_1)_x + a_5 \big)\cH $.
We choose $b_1 $ as the solution of 
\begin{equation}\label{b1bom}
(\omega\cdot\pa_\vf b_1 +  \tm_{1,\bar\tn}\pa_x) b_1 +\Pi_{N_{\bar\tn}}  a_5 = \tm_{0}
\end{equation}
where $\tm_{0} $ is the average  (see \eqref{def:avera})
\begin{equation}\label{tm0}
\tm_{0}:= \braket{a_5}_{\vf,x}\,.
\end{equation}
We define $ b_1(\vf,x) $ to be the real, $ {\rm odd}(\vphi,x) $, quasi-periodic traveling wave
\begin{equation}\label{eqo2}
	b_1(\vf,x) := - (\omega\cdot\pa_\vf + \tm_{1,\bar\tn}\pa_x)_{\rm ext}^{-1}\big(\Pi_{N_{\bar\tn}} a_5(\vf,x) - \tm_{0} \big) \,, 
\end{equation}
 recall \eqref{paext}. 
Note that $b_1(\vf,x)$ is  defined for any $(\omega,\gamma)\in\R^\nu\times[\gamma_1,\gamma_2]$ and that, 
for any $(\omega,\gamma)\in\tT\tC_{\bar\tn+1}(2\upsilon,\tau)$  
defined in \eqref{tDtCn}, it solves \eqref{b1bom}.

We deduce by \eqref{lin3}-\eqref{lin4} and \eqref{P8BP8}, \eqref{eqo2}, that,
for any $(\omega,\gamma)\in\tT\tC_{\bar\tn+1}(2\upsilon,\tau)$,  
\begin{equation*}\label{defL8}
\begin{aligned} 
L_8:= & \ \Psi^{-1}(\vf)\left( \omega\cdot \pa_\vf + P_7 \right) \Psi(\vf) \\
= & \ \omega\cdot\pa_\vf + \tm_{1,\bar\tn} \pa_x + \im\,\tm_{\frac12} \Omega(\gamma, D)+ \tm_{0} \cH + \Op(r_8^{(d)}) + L_M + L_M' + (\Pi_{N_{\bar\tn}}^\perp a_5)\cH\,,
\end{aligned}
\end{equation*}
where 
\begin{equation}\label{r78d}
\begin{aligned}
\Op(r_8^{(d)}) & :=
\Op( - r_{b_1,-\frac12} +   r_7^{(d)}) \\
&
+  \sum_{n=2}^{M} \frac{(-1)^n}{n!} \ad_{B(\vf)}^n(P_7)- \sum_{n=2}^{M} \frac{(-1)^n}{n!} \ad_{B(\vf)}^{n-1}(\omega\cdot \pa_\vf B(\vf))  \in \Ops^{-\frac12} \, .
\end{aligned}
\end{equation}
In conclusion we have the following lemma. 

\begin{lem}\label{red12}
	Let $ M \in \N $, $ \tq_0 \in \N_0 $. 
	Let  $b_1$ be the 
	quasi-periodic traveling wave defined in \eqref{eqo2}.
	Then, for any $\bar\tn\in\N_0$, 
	 conjugating the operator $ \cL_7  $ in \eqref{cL7} 
	via the invertible, real, reversibility and momentum preserving 
	map  $ \Psi (\vphi)$ (cfr. \eqref{Psi.problem}), we obtain,
	for any $(\omega,\gamma)\in\tT\tC_{\bar\tn+1}(2\upsilon,\tau)$, 
	the real, reversible and  momentum preserving operator 
	\begin{equation}
	\begin{aligned}\label{cL8}
	\cL_{8}  & :=   \Psi^{-1} \cL_7  \Psi \\
	& = \omega\cdot\partial_\vf +\tm_{1,\bar\tn}\pa_x +  \im \,\tm_\frac12 \b\Omega(\gamma, D) + \tm_{0}\cH + \im\b\Pi_0 + \bR_{8}^{(-\frac12, d)} + \bT_{8,M}+\bQ_{8}^\perp\,,
	\end{aligned}
	\end{equation}
	defined for any $(\omega,\gamma)\in\R^\nu\times[\gamma_1,\gamma_2]$, where
\\[1mm]
1.		The constant $ \tm_{0}$ defined in \eqref{tm0} satisfies $| \tm_{0} |^{k_0, \upsilon} \lesssim 	\varepsilon  {\upsilon^{-1}} $;
\\[1mm]
2. $ \bR_{8}^{(-\frac12,d)} $ is  the block-diagonal operator 
		\begin{align*}
		\bR_{8}^{(-\frac12,d)} & = \begin{pmatrix}
		r_{8}^{(d)}(\vf,x,D) & 0 \\ 
		0 &\bar{r_{8}^{(d)}(\vf,x,D)}
		\end{pmatrix} \in \Ops^{-\frac12} \,,
		\end{align*}
		with $ r_{8}^{(d)}(\vf,x,D) $ defined in \eqref{r78d}
		that satisfies, for some $\sigma_M:= \sigma_M(k_0,\tau,\nu)>0$ 
		and  for all $s_0 \leq s \leq S-\sigma_M$, 
		\begin{align}
		&\normk{\bR_{8}^{(-\frac12,d)} }{-\frac12,s,1}  \lesssim_{s,M} {\varepsilon \upsilon^{-3}} ( 1+\normk{\fI_0}{s+\sigma_M} ) \,; \label{bR8} 
		\end{align}
3.  For any $ \tq \in \N^\nu_0 $ with $ |\tq| \leq \tq_0$, 
		$n_1, n_2 \in \N_0 $  with 
		$ n_1 + n_2  \leq M -  \frac32 (k_0+\tq_0)  + \frac32  $,  the  
		operator $\langle D \rangle^{n_1}\partial_{\vphi}^\tq \bT_{8, M}(\vphi) \langle D \rangle^{n_2}$ is 
		$\cD^{k_0} $-tame with a tame constant satisfying, 
		for some $\sigma_M(\tq_0) := \sigma_M(k_0, \tau, \nu, \tq_0) $, for any $s_0 \leq s \leq S - \sigma_M(\tq_0) $, 
		\begin{equation}\label{red12.estT}
		{\mathfrak M}_{\langle D \rangle^{n_1}\partial_{\vphi}^\tq \bT_{8,M}(\vphi) \langle D \rangle^{n_2}}(s) \lesssim_{S, M, \tq_0} 
		{\varepsilon \upsilon^{-3}}( 1+ \normk{\fI_0}{s+\sigma_M(\tq_0)} )\,;
		\end{equation}
4. The operator $\bQ_{8}^\perp$ is 
		\begin{equation}\label{R8perp}
			\bQ_{8}^\perp := (\Pi_{N_{\bar\tn}}^\perp a_5)\cH\begin{pmatrix}
			1 & 0 \\ 0 & 1
			\end{pmatrix}\,,
		\end{equation}
		where $a_5(\vf,x)$ is defined in \eqref{a3d} and satisfies \eqref{a2d.est};
\\[1mm]
5. The operators $ \Psi^{\pm 1} -{\rm Id}$, $(\Psi^{\pm 1}-{\rm Id})^*$ are $\cD^{k_0}$-tame, with tame constants satisfying, for some $\sigma := \sigma (k_0, \tau, \nu ) > 0 $ and for all $s_0 \leq s \leq S-\sigma$,
		\begin{align}
		& \fM_{\Psi^{\pm 1} -{\rm Id}}(s) + \fM_{(\Psi^{\pm 1}-{\rm Id})^*}(s) \lesssim_{s} {\varepsilon\upsilon^{-3}}( 1 + \normk{\fI_0}{s+ \sigma})\, ; \label{red12.est4}
		\end{align}
6. 
	Furthermore, for any $s_1$ as in \eqref{s1s0}, $\tq\in\N_0^\nu$, with $\abs\tq \leq \tq_0$, and $n_1,n_2\in\N_0$, with $n_1+n_2\leq M-  \frac32 \tq_0 $, we have
	\begin{align}
	&\|\Delta_{12} \bR_{8}^{(-\frac12,d)} \|_{-\frac12,s_1,1} \lesssim_{s_1,M} 
	{\varepsilon \upsilon^{-3}}\norm{ i_1-i_2 }_{s_1+\sigma_M}   \,, 
	\  | \Delta_{12} \tm_{0} |  
	\lesssim {\varepsilon \upsilon^{-1}} \norm{i_1-i_2}_{s_0+\sigma} \, , 
	\label{red12.est2} \\
	& \| \braket{D}^{n_1} \pa_\vf^\tq \Delta_{12} \bT_{8,M} \braket{D}^{n_2}\|_{\cL(H^{s_1})} \lesssim_{s_1, M, \tq_0} {\varepsilon\upsilon^{-3}} \norm{i_1-i_2}_{s_1+ \sigma_M(\tq_0) }\,,\label{red12.est3}\\
	& \| \Delta_{12} (\Psi^{\pm 1})h \|_{s_1} + \| \Delta_{12} (\Psi^{\pm 1})^*h \|_{s_1} \lesssim_{s_1} {\varepsilon\upsilon^{-3}} \norm{ i_1 - i_2}_{s_1+\sigma} \norm{h}_{s_1+\sigma} \,. \label{red12.est5}
	\end{align}
\end{lem}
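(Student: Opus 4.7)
The proof follows the same blueprint as Lemma \ref{red1}, now working at order $0$ instead of $1/2$. The plan is to construct $b_1(\varphi,x)$ as a solution of the cohomological equation \eqref{b1bom}, conjugate $\cL_7$ by the flow $\Psi$ of \eqref{Psi.problem}, and read off the reduced operator $\cL_8$ together with the stated estimates. Since the generator $B = b_1 \cH$ lies in $\Ops^0$ (the Hilbert transform replacing $|D|^{1/2}$), each commutator $[B,\,\cdot\,]$ gains one derivative in the pseudo-differential order by \eqref{eq:moyal_exp}, which is why the truncated Lie series \eqref{lin3}-\eqref{lin4} produce remainders $L_M, L_M' \in \Ops^{-M}$.

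First I would verify the cohomological equation. By the Melnikov condition \eqref{tDtCn}, and since $\Pi_{N_{\bar\tn}} a_5 - \tm_0$ is a quasi-periodic traveling wave with zero $(\varphi,x)$-average, formula \eqref{eqo2} produces a real $\mathrm{odd}(\varphi,x)$ function $b_1$, defined for all $(\omega,\gamma)$ through the extended inverse \eqref{paext}. Combining the tame estimate \eqref{lem:diopha.eq} with the bound \eqref{a2d.est} on $a_5$ yields $\normk{b_1}{s} \lesssim \varepsilon\upsilon^{-3}(1+\normk{\fI_0}{s+\sigma})$; the three factors of $\upsilon^{-1}$ arise respectively from solving the transport equation, from the bound on $a_5$, and from the overall loss in the reduction. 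The bound $|\tm_0|^{k_0,\upsilon}\lesssim\varepsilon\upsilon^{-1}$ is immediate from \eqref{tm0} and the averaged part of \eqref{a2d.est}. Reality, reversibility- and momentum-preservation of $\Psi^{\pm1}$ are inherited from $B$ by Lemmata \ref{rev_defn_C} and \ref{lem:mom_pseudo}.

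Next I would carry out the algebraic reduction. Applying \eqref{lie_abstract} to $\Psi^{-1}P_7\Psi$ and \eqref{lie_omega_devf} to $\Psi^{-1}(\omega\cdot\pa_\vf)\Psi$, the order-$0$ coefficient of $\cH$ equals $\omega\cdot\pa_\vf b_1 + \tm_{1,\bar\tn}(b_1)_x + a_5$; indeed by \eqref{P8BP8} the only contribution of $-[B,P_7]$ at order $0$ is $\tm_{1,\bar\tn}(b_1)_x\cH$, since $[B,\im\tm_{\frac12}\Omega(\gamma,D)]$, $[B,a_5\cH]$ and $[B,\Op(r_7^{(d)})]$ all lie in $\Ops^{-1/2}$ by \eqref{Omegakx} and the symbolic calculus. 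The choice \eqref{eqo2} of $b_1$, valid on $\tT\tC_{\bar\tn+1}(2\upsilon,\tau)$, reduces this coefficient to $\tm_0 + \Pi_{N_{\bar\tn}}^\perp a_5$, giving the constant contribution $\tm_0\cH$ plus the ``tail'' $\bQ_8^\perp$ of \eqref{R8perp}. All remaining pseudo-differential pieces collect into $\Op(r_8^{(d)})$ of \eqref{r78d}, which has order $-1/2$ because each summand either already has order $-1/2$ or arises as a commutator of the form $\ad_B^n(P_7)$, $\ad_B^{n-1}(\omega\cdot\pa_\vf B)$ with $n\geq 2$, placing it in $\Ops^{1-n}\subset\Ops^{-1}$. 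The remaining contribution $\Psi^{-1}\im\b\Pi_0\Psi - \im\b\Pi_0$ vanishes by \eqref{psipi0}.

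Finally I would close the quantitative estimates. The bound \eqref{bR8} follows from \eqref{r78d} by Lemmata \ref{pseudo_compo}, \ref{pseudo_commu} applied to each summand, using \eqref{bR8.est} on $r_7^{(d)}$ and the estimate above for $b_1$. For the tame smoothing term $\bT_{8,M} = \Psi^{-1}\bT_{7,M}\Psi + \mathrm{diag}(L_M+L_M',\overline{L_M+L_M'})$, the order counting $\ad_B^{M+1}(P_7),\ad_B^M(\omega\cdot\pa_\vf B)\in\Ops^{-M}$ together with Lemmata \ref{tame_compo}, \ref{tame_pesudodiff}, the estimate \eqref{red1.estT} for $\bT_{7,M}$, and the tame bound on $\Psi^{\pm1}$ yield \eqref{red12.estT}; the derivative counts match those in the statement because conjugation by $\Psi\in\Ops^0$ preserves them. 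Tame estimates on $\Psi^{\pm1} - \mathrm{Id}$ and $(\Psi^{\pm1}-\mathrm{Id})^*$ giving \eqref{red12.est4} follow from the bound on $b_1$ by the standard flow estimate (as in Lemma 2.38 of \cite{BBHM}). The Lipschitz-type bounds \eqref{red12.est2}-\eqref{red12.est5} are obtained by repeating the same argument on variations, now invoking \eqref{lem:diopha.eq.12} to handle the difference of transport small divisors and the already proved $\Delta_{12}$-bounds \eqref{red1.est1}-\eqref{red1.est6} for $a_5,\bR_7^{(-\frac12,d)},\bT_{7,M}$ and $\b\Phi$. The only delicate point, which is ultimately straightforward, is the book-keeping of orders in the Lie series to make sure that the truncation at $M$ produces exactly the claimed gain and derivative thresholds; everything else is a routine application of the tame calculus developed in Section \ref{sec:FS}.
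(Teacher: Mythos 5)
Your proposal follows the same blueprint as the paper's proof: define $b_1$ via \eqref{eqo2}, conjugate $\cL_7$ by the flow $\Psi$ of $B=b_1\cH$, track orders through the Lie expansions \eqref{lin3}--\eqref{lin4}, and collect the residual symbols in $\Op(r_8^{(d)})$ and the smoothing terms in $\bT_{8,M}$. The order counting, the estimate $\normk{b_1}{s}\lesssim\varepsilon\upsilon^{-3}(1+\normk{\fI_0}{s+\sigma})$, the bound on $\tm_0$, and the use of the tame calculus all match the paper.

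There is one genuine error: you claim that
``$\Psi^{-1}\im\b\Pi_0\Psi-\im\b\Pi_0$ vanishes by \eqref{psipi0}.''
This is false. Identity \eqref{psipi0} gives $\Psi^{\pm 1}\pi_0=\pi_0$, i.e.\ $\Psi$ acts as the identity \emph{on} the range of $\pi_0$, so one has $\Psi^{-1}\b\Pi_0=\b\Pi_0$ and hence
$\Psi^{-1}\b\Pi_0\Psi=\b\Pi_0\Psi$. But $\pi_0\Psi\neq\pi_0$ in general: indeed $\pi_0 B=\pi_0(b_1\cH\cdot)$ does not vanish because, although $\cH u$ has zero mean, the product $b_1\cH u$ need not, so $\Psi$ does not preserve the $x$-average. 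Therefore
$\Psi^{-1}\b\Pi_0\Psi-\b\Pi_0=\b\Pi_0(\Psi-\mathrm{Id})$
is a nonzero, $\vf$-dependent, infinitely smoothing operator. The paper correctly absorbs it into the tame remainder by setting
\begin{equation*}
\bT_{8,M}:=\Psi^{-1}\bT_{7,M}\Psi+\im\b\Pi_0(\Psi-\mathrm{Id})
+\begin{pmatrix} L_M+L_M' & 0 \\ 0 & \overline{L_M}+\overline{L_M'} \end{pmatrix}\,,
\end{equation*}
and estimates the extra piece via \eqref{red12.est4}. Your expression for $\bT_{8,M}$ omits this term, so as written your argument would fail to account for it. The fix is immediate — $\im\b\Pi_0(\Psi-\mathrm{Id})$ is controlled by the tame constant of $\Psi^{\pm1}-\mathrm{Id}$ — but the assertion that it vanishes must be corrected.
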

\begin{proof}
	The function $b_1(\vf,x)$ defined in \eqref{eqo2}, satisfies, 
	by 
	\eqref{a2d.est}, \eqref{lem:diopha.eq}, 
	for some $\sigma>0$ and for all $s_0 \leq s \leq S-\sigma$, 
	\begin{equation}\label{b3.est}
	\normk{ b_1 }{s} \lesssim_{s} \varepsilon \upsilon^{-3}(1+ \normk{\fI_0}{s+\sigma}) \, .
	\end{equation}
	The estimate for $\tm_{0}$ follows by \eqref{tm0} and  \eqref{a2d.est}.
	The estimate \eqref{bR8} follows by \eqref{r78d}, \eqref{P7}, Lemmata \ref{pseudo_compo}, 
	\ref{pseudo_commu}, 
	and \eqref{a2d.est}, \eqref{bR8.est}, \eqref{b3.est}.
	Using \eqref{psipi0},
	the smoothing term $  \bT_{8,M}  $ in \eqref{cL8} is
	\begin{equation*}
	\bT_{8,M} := { \Psi}^{-1} \bT_{7,M} {\Psi} + 
	\im \b\Pi_0( \Psi - {\rm Id}) +
	\begin{pmatrix}
	L_M + L_M' & 0 \\ 0 & \overline{L_M} + \overline{L_M'}  
	\end{pmatrix}   	
	\end{equation*}
	with $L_M$ and $L_M'$ introduced in \eqref{lin3}, \eqref{lin4}.
	The estimate \eqref{red12.estT}   follows by Lemmata \ref{tame_compo}, \ref{tame_pesudodiff},  \ref{Neumann pseudo diff},   \eqref{P7}, \eqref{a2d.est}, \eqref{red1.estT}, \eqref{b3.est}, \eqref{red12.est4}.
	The estimate \eqref{red12.est4} follows by Lemmata 
	\ref{Neumann pseudo diff}, 
	\ref{tame_pesudodiff} and  \eqref{b3.est}. 
	The estimates \eqref{red12.est2}, \eqref{red12.est3}, \eqref{red12.est5} are proved in the same fashion.
\end{proof}

\begin{rem}\label{fix:alpha}
	In Proposition \ref{end_redu} we shall 
	estimate $ \| [\pa_x, \bR_8^{(-\frac12,d)} ]\|_{-\frac12,s,0}^{k_0,\upsilon}$ 
	using  \eqref{bR8} and \eqref{eq:comm_tame_AB}. In order to  
	control $ \| \bR_8^{(-\frac12,d)}  \|_{-\frac12,s,1}^{k_0, \upsilon} $  we used the 
	{estimates \eqref{bR8.est}} for finitely many $ \alpha \in \N_0 $, 
	$ \alpha \leq \alpha (M) $, depending on $ M $, as well similar estimates for $\bR_6^{(-\frac12, d)}$, $\bR_5^{(-\frac12, d)}$, etc. 
	In Proposition 
	\ref{end_redu} we shall use 
	\eqref{red12.est2}-\eqref{red12.est3} only  for $ s_1 = s_0 $. 
\end{rem}

\subsection{Conclusion: reduction of $\cL_\omega$}\label{sec:conc}

By Sections \ref{subsec:good}-\ref{sec:order0}, 
 the linear operator $\cL$  in \eqref{Linea10} is conjugated, 
under the map 
\begin{equation}\label{W1W2}
\cW := \cZ \cE  \wt\cM \cQ \cC \b\Phi_{2M} 
\b\Phi \Psi \,,
\end{equation}
for any $ (\omega,\gamma) \in \tT\tC_{\bar\tn+1}(2\upsilon,\tau) $,  $\bar\tn\in\N_0 $, 
into the real, reversible and momentum preserving operator 
\begin{equation}\label{cL9cL}
\cW^{-1} \cL \cW = \cL_{8} - \bQ_{8}^\perp + \bP_{\bar\tn}^\perp + \bQ_{\bar\tn}^\perp \,,
\end{equation}
where $\cL_{8}$ is defined in \eqref{cL8}, 
and
\begin{equation}\label{Rtnperp}
	\bP_{\bar\tn}^\perp:= \big( \wt\cM \cQ \cC \b\Phi_{2 M} 
	\b\Phi \Psi \big)^{-1} \bP_{2}^\perp \wt\cM \cQ \cC \b\Phi_{2M} 
	\b\Phi \Psi\,, \quad \bQ_{\bar\tn}^\perp := \Psi^{-1} \bQ_{7}^\perp \Psi + \bQ_{8}^\perp\,,
\end{equation}
with $\bP_{2}^\perp$, $\bQ_{7}^\perp$ and $\bQ_{8}^\perp$ defined 
respectively in \eqref{R2perp}, \eqref{R7perp} and \eqref{R8perp}; these  
operators  are exponentially small, and will contribute to the remainders estimated in Lemma \ref{RKtn.est}.
Moreover, $\cL_{8} $ is defined for any $ (\omega,\gamma) \in \R^\nu\times[\gamma_1,\gamma_2] $. 

Now we deduce a similar conjugation result for the projected operator 
$ \cL_\omega $ in \eqref{Lomegatrue}, i.e. \eqref{cLomega_again}, which acts
in the normal subspace $ \acca_{\S^+,\Sigma}^\angle $. 
We first introduce some notation. 
We denote  by $ \Pi_{\S^+,\Sigma}^\intercal $ and
$  \Pi_{\S^+,\Sigma}^\angle  $ 
the projections 
on the subspaces $\acca_{\S^+,\Sigma}^\intercal$ and $ \acca_{\S^+,\Sigma}^\angle$ defined in Section \ref{sec:decomp}. 
In view of Remark \ref{phase-space-ext}, we denote, 
with a small abuse of notation, 
$ \Pi_{\S_0^+, \Sigma}^\intercal:= \Pi_{\S^+, \Sigma}^\intercal + \pi_0 $, so that
$ \Pi_{\S_0^+, \Sigma}^\intercal + \Pi_{\S^+,\Sigma}^\angle = {\rm Id}$ on the whole
$ L^2 \times L^2 $. 
We remind that $ \S_0 = \S \cup \{0\} $, where  $ \S $ is the set defined in \eqref{def.S}. 
We denote by $ \Pi_{{\S}_0} := \Pi_\S^\intercal + \pi_0 $, where 
$  \Pi_\S^\intercal $ is defined below  \eqref{def:HS0bot}. 
We have 
$  \Pi_{{\S}_0} + \Pi_{{\S}_0}^\perp  = {\rm Id}  $. 
Arguing as in Lemma 7.15 in \cite{BFM} we have the following. 

\begin{lem}\label{lemmaWperp}
	Let $ M > 0 $. There is $ \sigma_M > 0 $ (depending also on $ k_0, \tau, \nu $) such that, 
	assuming \eqref{ansatz_I0_s0} with $ \mu_0 \geq \sigma_M $, the following holds: 
	the map  $\cW$ defined in \eqref{W1W2} 
	has the form 
	\begin{equation}\label{Wi}
	\cW = \wt\cM\cC + \cR(\varepsilon) \,, 
	\end{equation}
	where,  for all $s_0 \leq s \leq S- \sigma_M$, 
	\begin{align}
	\normk{\cR(\varepsilon)h}{s} &  \lesssim_{ S, M} {\varepsilon\upsilon^{-3}}\big( \normk{h}{s+\sigma_M} + \normk{\fI_0}{s+\sigma_M} \normk{h}{s_0 +\sigma_M} \big) \, . \label{Wi.est1} 
	\end{align}
	Moreover 
	\begin{equation}\label{W1W2_proj}
	\cW^\perp:= \Pi_{\S^+,\Sigma}^\angle \cW \Pi_{{\S}_0}^\perp 
	\end{equation}
	is invertible and, 
	for all $s_0 \leq s \leq S-\sigma_M$,
	\begin{equation}
	\begin{aligned}
	\normk{(\cW^\perp)^{\pm 1} h}{s} & \lesssim_{ S, M} \normk{h}{s+\sigma_M} + \normk{\fI_0}{s+\sigma_M} \normk{h}{s_0 +\sigma_M} \,\label{qui.1}  \,, \\  
	\|  \Delta_{12} (\cW^\perp)^{\pm 1} h \|_{s_1} &\lesssim_{ s_1, M} 
	{\varepsilon \upsilon^{-3}}\norm{i_1-i_2}_{s_1 + \sigma_M} \norm{h}_{s_1+\sigma_M} \, . 
	\end{aligned}
	\end{equation}
	The operator $ \cW^\perp $
	maps (anti)-reversible, respectively traveling, waves, into
	(anti)-reversible, respectively traveling, waves.
\end{lem}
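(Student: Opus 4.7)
\textbf{Proof plan for Lemma \ref{lemmaWperp}.}

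The strategy is to exploit the fact that every factor in the composition $\cW = \cZ \cE \wt\cM \cQ \cC \b\Phi_{2M} \b\Phi \Psi$ defined in \eqref{W1W2} is either exactly equal to $\wt\cM$ or $\cC$, or is a small perturbation of the identity whose size has been controlled in the previous lemmas of Section \ref{sec:linnorm}. Concretely: $\cZ - {\rm Id}$ is $O(\varepsilon)$ by Lemma \ref{lem:good_unknwon}; $\cE - {\rm Id}$ is $O(\varepsilon\upsilon^{-1})$ by \eqref{step5.est11}; $\cQ - {\rm Id}$ is $O(\varepsilon\upsilon^{-1})$ by \eqref{step5.est12}; $\b\Phi_{2M} - {\rm Id}$ is $O(\varepsilon\upsilon^{-1})$ by \eqref{block.est6}; $\b\Phi - {\rm Id}$ is $O(\varepsilon\upsilon^{-2})$ by \eqref{red1.est4}; and $\Psi - {\rm Id}$ is $O(\varepsilon\upsilon^{-3})$ by \eqref{red12.est4}. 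The dominant loss $\varepsilon\upsilon^{-3}$ determines the size of the remainder in \eqref{Wi.est1}.

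First, to prove \eqref{Wi}--\eqref{Wi.est1}, I would write a telescoping identity
\begin{equation*}
\cW - \wt\cM\cC \;=\; (\cZ - {\rm Id})\,\cE\wt\cM\cQ\cC\b\Phi_{2M}\b\Phi\Psi \;+\; (\cE - {\rm Id})\,\wt\cM\cQ\cC\b\Phi_{2M}\b\Phi\Psi \;+\;\cdots
\end{equation*}
isolating one factor $\cA_j - {\rm Id}$ at a time while keeping $\wt\cM\cC$ in the middle untouched, so that the remaining pieces reduce to the bare $\wt\cM\cC$ when the last term is peeled off. Each summand is then estimated through Lemma \ref{tame_compo} (composition of $\cD^{k_0}$-tame operators) together with the individual tame bounds listed above, giving \eqref{Wi.est1} with $\sigma_M$ the largest loss of derivatives among those lemmas. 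The analogous Lipschitz bound for $\Delta_{12}$ in \eqref{qui.1} follows verbatim from the second halves \eqref{lem:ga6}, \eqref{step5.est13}, \eqref{step5.est131}, \eqref{block.est5}, \eqref{red1.est6}, \eqref{red12.est5} of the same lemmas.

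Next, for the invertibility of $\cW^\perp$ in \eqref{W1W2_proj}, I would use that the unperturbed map $\wt\cM\cC$ already intertwines the two projections. Indeed, by \eqref{proiez} and the construction of $\wt\cM$ in \eqref{map_test_M}--\eqref{Lambda}, which differs from the Fourier multiplier $\cM$ in \eqref{eq:T_sym} only on the zero mode, the operator $\Pi^\angle_{\S^+,\Sigma}\wt\cM\cC\Pi^\perp_{\S_0}$ is (up to identification via $\cM\cC$) a diagonal Fourier multiplier with eigenvalues bounded away from zero on $\bH^\perp_{\S_0}$. Inserting \eqref{Wi} we get
\begin{equation*}
\cW^\perp \;=\; \Pi^\angle_{\S^+,\Sigma}\wt\cM\cC\,\Pi^\perp_{\S_0} \;+\; \Pi^\angle_{\S^+,\Sigma}\cR(\varepsilon)\Pi^\perp_{\S_0} \;=\; \cW^\perp_0\bigl({\rm Id} + (\cW^\perp_0)^{-1}\Pi^\angle_{\S^+,\Sigma}\cR(\varepsilon)\Pi^\perp_{\S_0}\bigr),
\end{equation*}
and by \eqref{Wi.est1} and the smallness hypothesis $\varepsilon\upsilon^{-\tk} \ll 1$, the second factor is invertible by a Neumann series. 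The tame estimates for $(\cW^\perp)^{\pm 1}$ in \eqref{qui.1} then follow from Lemma \ref{tame_compo} applied to this Neumann expansion; the Lipschitz bound is obtained with the classical identity $\Delta_{12} A^{-1} = - A_2^{-1}\,(\Delta_{12}A)\,A_1^{-1}$.

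Finally, the preservation of (anti)-reversibility and of the traveling--wave structure is algebraic: each of the six factors in \eqref{W1W2} has been constructed to be reversibility preserving and momentum preserving (Lemmas \ref{lem:good_unknwon}, \ref{conju.tr}, \ref{LEMMONE}, \ref{block_dec_lemma}, \ref{red1}, \ref{red12}), hence so is $\cW$; by Lemmas \ref{proj_rev} and \ref{lem:proj.momentum} the symplectic projections $\Pi^\angle_{\S^+,\Sigma}$ and $\Pi^\perp_{\S_0}$ commute with the involution $\cS$ and the translations $\tau_\vs$, so $\cW^\perp$ inherits both symmetries and therefore maps (anti)-reversible traveling waves to (anti)-reversible traveling waves. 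The main technical point in the argument is the careful bookkeeping of the derivative losses in the telescoping step, especially the fact that the Nash--Moser ansatz \eqref{ansatz_I0_s0} is strong enough to absorb the worst constant $\sigma_M$ coming from Lemmas \ref{conju.tr} and \ref{block_dec_lemma}; everything else is a routine assembly of estimates already available.
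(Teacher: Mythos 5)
Your proposal is correct and follows the same route as the paper, which proves this lemma by the argument of Lemma 7.15 in \cite{BFM}: a telescoping decomposition of $\cW - \wt\cM\cC$ to bound $\cR(\varepsilon)$ factor by factor, the observation that $\wt\cM\cC\Pi_{\S_0}^\perp = \cM\cC\Pi_{\S_0}^\perp$ (since $\wt\cM$ differs from $\cM$ only on the zero mode) combined with \eqref{proiez} to identify $\cW_0^\perp = \Pi_{\S^+,\Sigma}^\angle\wt\cM\cC\Pi_{\S_0}^\perp$ as an isomorphism, a tame Neumann series to invert $\cW^\perp$, and the closure of reversibility and momentum preservation under composition and projection. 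The only minor imprecision is that $\cW_0^\perp$ is an isomorphism between the subspaces $\bH_{\S_0}^\perp$ and $\acca_{\S^+,\Sigma}^\angle$ rather than a ``diagonal multiplier with eigenvalues bounded away from zero'', but this does not affect the argument.
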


For any $ (\omega,\gamma) \in \tT\tC_{\bar\tn+1}(2\upsilon,\tau) $,  $\bar\tn\in\N_0 $, 
the operator $\cL_\omega$ in \eqref{Lomegatrue} (i.e. \eqref{cLomega_again}) is
conjugated under the map $ \cW^\perp $ to
\begin{equation}\label{LomLp}
\cL_\bot :=	(\cW^\perp)^{-1} \cL_\omega \cW^\perp = 
\Pi_{{\S}_0}^\perp \,(\cL_{8}-\bQ_{8}^\perp)\,\Pi_{{\S}_0}^\perp + \bP_{\perp,\bar\tn} + \bQ_{\perp,\bar\tn} +  \cR^f
\end{equation}
where
\begin{equation}\label{bRperptn}
	\bP_{\perp,\bar\tn}:=\Pi_{{\S}_0}^\perp \bP_{\bar\tn}^\perp\Pi_{{\S}_0}^\perp\,, \quad \bQ_{\perp,\bar\tn}:=\Pi_{{\S}_0}^\perp \bQ_{\bar\tn}^\perp\Pi_{{\S}_0}^\perp	
\end{equation}
 and $\cR^f$ is, 
by \eqref{W1W2_proj},
\eqref{cL9cL}, \eqref{Wi} 
and 
\eqref{proiez},
\begin{equation} 
\begin{aligned}\label{Rintercal}
\cR^f  & :=  (\cW^\perp)^{-1}\Pi_{\S^+, \Sigma}^\angle \cR(\varepsilon) 
\Pi_{\S_0} \big(\cL_{8} - \bQ_{8}^\perp+\bP_{\bar\tn}^\perp+\bQ_{\bar\tn}^\perp \big)  \Pi_{\S_0}^\bot   \\
& 
- (\cW^\perp)^{-1}\Pi_{\S^+, \Sigma}^\angle 
\cL \Pi_{\S_0^+, \Sigma}^\intercal 
\cR(\varepsilon) \Pi_{\S_0}^\bot 
- \varepsilon (\cW^\perp)^{-1}\Pi_{\S^+, \Sigma}^\angle J R \cW^\perp \, .  
\end{aligned}
\end{equation}

\begin{lem}
	The operator $\cR^f$ in \eqref{Rintercal} has the finite rank form \eqref{finite_rank_R}, \eqref{gjchij_est}. Moreover, 
	let $\tq_0\in\N_0$ and $M \geq \frac32(k_0+\tq_0) +\frac32$. There exists $\aleph(M,\tq_0)>0$ (depending also on $k_0$, $\tau$, $\nu$) such that, for any $n_1, n_2\in\N_0$, with $n_1+n_2 \leq M -  \frac32(k_0+\tq_0)+\frac32 $, and any $\tq\in\N_0^\nu$, with $\abs \tq \leq \tq_0$, the operator $\braket{D}^{n_1} \pa_\vf^\tq \cR^f \braket{D}^{n_2} $ 
	is $\cD^{k_0}$-tame, with a tame constant satisfying
	\begin{align}
	& \fM_{\braket{D}^{n_1} \pa_\vf^\tq \cR^f \braket{D}^{n_2}}(s) \lesssim_{ S, M, \tq_0} {\varepsilon\upsilon^{-3}}(1+\normk{\fI_0}{s+\aleph(M,\tq_0)}) \, , 
	\ \forall s_0 \leq s \leq S-\aleph(M,\tq_0) \, ,  \label{quo.est1}  \\
	&  \| \braket{D}^{n_1} \pa_\vf^\tq\Delta_{12} \cR^f \braket{D}^{n_2} \|_{\cL(H^{s_1})} \lesssim_{s_1, M, \tq_0} {\varepsilon\upsilon^{-3}} \norm{i_1-i_2}_{s_1+\aleph(M,\tq_0)} \, , 
	\label{quo.est2}
	\end{align}
	for any $s_1$ as in \eqref{s1s0}. 
\end{lem}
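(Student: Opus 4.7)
The plan is to verify the finite-rank structure of $\cR^f$ first, and then derive the tame bounds \eqref{quo.est1}--\eqref{quo.est2} by bookkeeping the composition of tame estimates already established.

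\textbf{Finite rank structure.} I would observe that each of the three summands in \eqref{Rintercal} factors through a finite-dimensional subspace. The first term contains $\Pi_{\S_0}$ in the middle, which projects on the finite set of Fourier modes indexed by $\S_0 = \S \cup \{0\}$, hence has rank at most $2(\nu+1)$. Since the range of $\Pi_{\S_0}$ is spanned by the functions $\{e^{\pm \im \bar\jmath_a x}\}_{a=1}^\nu \cup \{1\}$, one can decompose this summand explicitly as a sum of rank-one operators $h\mapsto (h,g_j)_{L^2}\chi_j$ with $g_j,\chi_j \in \acca_{\S^+,\Sigma}^\angle$ (the projection $\Pi_{\S^+,\Sigma}^\angle$ on the outside and the factor $\Pi_{\S_0}^\perp$ on the right ensure the proper range and kernel). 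The second summand is handled identically via $\Pi_{\S_0^+,\Sigma}^\intercal$, which has finite rank $2(\nu+1)$. The third summand, $\varepsilon (\cW^\perp)^{-1}\Pi_{\S^+,\Sigma}^\angle J R\,\cW^\perp$, inherits the finite-rank representation \eqref{finite_rank_R} from $R$ directly, with new $g_j$'s obtained by applying $(\cW^\perp)^*$ to the old ones (mapped appropriately by the projections), and new $\chi_j$'s obtained by applying $(\cW^\perp)^{-1}\Pi_{\S^+,\Sigma}^\angle J$ to the old $\chi_j$'s of $R$.

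\textbf{Tame estimates on the generators $g_j,\chi_j$.} For each rank-one block, I would estimate $\normk{g_j}{s}+\normk{\chi_j}{s}$ via the composition rules. The building blocks are:
\begin{itemize}
\item the tame estimates \eqref{Wi.est1}, \eqref{qui.1} for $\cR(\varepsilon)$ and $(\cW^\perp)^{\pm 1}$ from Lemma~\ref{lemmaWperp};
\item the estimates \eqref{bR8}, \eqref{red12.estT}, together with $|\tm_{1,\bar\tn}|^{k_0,\upsilon}, |\tm_\frac12-1|^{k_0,\upsilon}, |\tm_0|^{k_0,\upsilon}\lesssim\varepsilon\upsilon^{-1}$, and the $\Ops^{1/2}$ bound on $\b\Omega(\gamma,D)$, controlling $\cL_8$ acting on the finite-rank range of $\Pi_{\S_0}$;
\item the exponentially small bounds \eqref{stime.pn.not.w} and \eqref{a2d.est} for $\bP_{\bar\tn}^\perp$, $\bQ_{\bar\tn}^\perp$ (which are even smaller than required);
\item Lemma~\ref{lem:K02} for $R$.
\end{itemize}
Applying Lemma~\ref{tame_compo} repeatedly, and using the ansatz \eqref{ansatz_I0_s0}, one gets $\normk{g_j}{s}+\normk{\chi_j}{s}\lesssim_{S,M}\varepsilon\upsilon^{-3}(1+\normk{\fI_0}{s+\sigma_M})$ for some $\sigma_M$, which is precisely the bound needed for \eqref{gjchij_est} (with an enlarged loss).

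\textbf{Weighted tame estimates with $\braket{D}^{n_1}\pa_\vf^\tq(\cdot)\braket{D}^{n_2}$.} The crucial observation is that, since $\cR^f$ has finite rank with generators in $\acca_{\S^+,\Sigma}^\angle$ admitting smooth tame estimates, the operator $\braket{D}^{n_1}\pa_\vf^\tq \cR^f \braket{D}^{n_2}$ is itself finite-rank with generators $\braket{D}^{n_1}\pa_\vf^{\tq_1}\chi_j$ and $\braket{D}^{n_2}\pa_\vf^{\tq_2}g_j$ (by Leibniz, splitting $\tq=\tq_1+\tq_2$). For the first two summands the factor $\braket{D}^{n_2}$ on the right passes through $\Pi_{\S_0}^\perp$ harmlessly; for the third summand it is absorbed into the redefinition of the generators of $R$ via \eqref{gjchij_est} applied to higher regularity index $s+n_2$. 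The factor $\braket{D}^{n_1}$ on the left acts on the finite-dimensional range (all functions in $\bH_\S\oplus\{1\}$ or in the span of the $\chi_j$'s), so it only causes a loss on the smooth generators.  The only genuine constraint on $(n_1,n_2,\tq)$ comes from the contribution $\bT_{8,M}$ inside $\cL_8$, which via \eqref{red12.estT} is tame provided $n_1+n_2\leq M-\tfrac{3}{2}(k_0+\tq_0)+\tfrac{3}{2}$; this matches exactly the hypothesis. Collecting the derivative losses yields \eqref{quo.est1} with a suitable $\aleph(M,\tq_0)$.

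\textbf{Lipschitz estimates \eqref{quo.est2}.} These follow from the analogous $\Delta_{12}$-estimates already at hand, namely \eqref{Wi.est1}'s analogue, \eqref{qui.1} (second line), \eqref{red12.est2}, \eqref{red12.est3}, and \eqref{gjchij_est}, combined via the usual telescopic identity $\Delta_{12}(AB) = (\Delta_{12}A)B(i_2)+A(i_1)(\Delta_{12}B)$ and Lemma~\ref{tame_compo}. The main bookkeeping obstacle---and likely the only delicate point---is making sure the losses in $s$ from each composition and from each commutator with $\braket{D}^{n_i}$, $\pa_\vf^\tq$ are all controlled by a single constant $\aleph(M,\tq_0)$; this is a purely mechanical consolidation of the constants $\sigma_M(\tq_0)$, $\aleph_{2M}(\alpha)$, etc., appearing in Lemmata~\ref{lemma:riassu}--\ref{red12} and \ref{lemmaWperp}.
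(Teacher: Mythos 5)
Your proof is correct and follows essentially the same route as the paper: identify the finite rank structure from the finite-dimensional projectors $\Pi_{\S_0}$, $\Pi_{\S_0^+,\Sigma}^\intercal$ (first two summands) and from $R$ (third summand), then combine the tame estimates from Lemmata \ref{lemmaWperp}, \ref{red12} and \eqref{gjchij_est} via \eqref{prod} and the composition rules, with the constraint on $n_1+n_2$ inherited from \eqref{red12.estT}. The paper's own proof is just a one-line citation of these same ingredients; your elaboration of where the generators live and how $\braket{D}^{n_1}\pa_\vf^\tq(\cdot)\braket{D}^{n_2}$ acts on a finite-rank operator is sound, with the minor imprecision that the $\chi_j$'s lie in $\acca_{\S^+,\Sigma}^\angle$ (the target of $(\cW^\perp)^{-1}\Pi_{\S^+,\Sigma}^\angle\cR(\varepsilon)$), not in the range of $\Pi_{\S_0}$.
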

\begin{proof}
	The first two terms in \eqref{Rintercal} have  the finite rank form 
	\eqref{finite_rank_R} because of the presence of the finite dimensional projectors 
	$ \Pi_{\S_0} $ and $ \Pi_{\S_0^+,\Sigma}^\intercal $. 
	In the last term, the operator $ R $ has  the finite rank form 
	\eqref{finite_rank_R}. 	
	The estimate \eqref{quo.est1} follows by  \eqref{Rintercal}, \eqref{W1W2}, \eqref{W1W2_proj}, \eqref{cL8}, \eqref{finite_rank_R}, \eqref{prod} and \eqref{Wi.est1}, \eqref{qui.1},  \eqref{bR8}, \eqref{red12.estT}, \eqref{gjchij_est}.
	The estimate \eqref{quo.est2} follows similarly.
\end{proof}

\begin{lem}\label{RKtn.est}
	The operators $\bP_{\perp,\bar\tn}$ and $\bQ_{\perp,\bar\tn}$ defined in \eqref{bRperptn}, \eqref{Rtnperp} satisfy, for some $\sigma_M=\sigma_M(k_0,\tau,\nu)>0$,
		for all $ s_0\leq s\leq S - \sigma_M$, 
	\begin{align}
		\normk{\bP_{\perp,\bar\tn}h}{s} & \lesssim_{S} \varepsilon N_{\bar\tn-1}^{-\ta}\big( \normk{h}{s+\sigma_M} + 
		{ \normk{\fI_0}{s+\sigma_M+\tb}\normk{h}{s_0+\sigma_M} } \big)\,, \ \  \forall\,s_0\leq s\leq S - \sigma_M\,, \label{Ptn.est0} \\
		\normk{\bQ_{\perp,\bar\tn}h}{s_0} & \lesssim_{S}
		{\varepsilon\upsilon^{-2}} N_{\bar\tn}^{-{\rm b}}
		\big( {1 + \normk{\fI_0}{s_0+\sigma_M+{\rm b}}\big)\normk{h}{s_0+ \frac12}}   \,, \,  \forall\,{\rm b}>0\,, \label{Qtn.est1} \\
		\normk{\bQ_{\perp,\bar\tn}h}{s} & \lesssim_{S} {\varepsilon\upsilon^{-2}} \big( \normk{h}{s+\frac12} + \normk{\fI_0}{s+\sigma_M}\normk{h}{s_0+\frac12} \big) \, .   \label{Qtn.est2}
	\end{align}
\end{lem}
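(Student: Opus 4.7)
\textbf{Proof plan for Lemma \ref{RKtn.est}.}
The plan is to exploit the structural formulas \eqref{Rtnperp} and \eqref{bRperptn} together with the tame bounds already established for the conjugation maps $\wt\cM, \cQ, \cC, \b\Phi_{2M}, \b\Phi, \Psi$ and for the small remainders $\bP_2^\perp$, $\bQ_7^\perp$, $\bQ_8^\perp$. All three bounds then follow by Lemma \ref{tame_compo} and the smoothing estimates \eqref{SM12}. The projectors $\Pi_{\S_0}^\perp$ appearing on either side in \eqref{bRperptn} are $L^2$-bounded and thus contribute no loss.

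First I would treat $\bP_{\perp,\bar\tn}$. By \eqref{R2perp}, $\bP_2^\perp$ is an order-one matrix operator whose only coefficient is the function $p_{\bar\tn}(\vf,y)$, which by \eqref{stime.pn.not.w} satisfies $\normk{p_{\bar\tn}}{s}\lesssim_{s,\tb}\varepsilon N_{\bar\tn-1}^{-\ta}(1+\normk{\fI_0}{s+\sigma+\tb})$. Thus $\bP_2^\perp$ is $\cD^{k_0}$-$1$-tame with the same tame constant. Next I would apply Lemma \ref{tame_compo} repeatedly to sandwich $\bP_2^\perp$ by the chain $(\wt\cM \cQ \cC \b\Phi_{2M} \b\Phi \Psi)^{\pm 1}$, using the tame bounds \eqref{step5.est11}, \eqref{step5.est12}, \eqref{block.est6}, \eqref{red1.est4}, \eqref{red12.est4} and their adjoint counterparts. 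None of these tame constants carries the small factor $N_{\bar\tn-1}^{-\ta}$, but each is $O(1 + \normk{\fI_0}{s+\sigma_M})$, so the composition preserves the exponential smallness inherited from $p_{\bar\tn}$. This yields \eqref{Ptn.est0}, with the half-derivative loss from $\pa_y$ in $\bP_2^\perp$ absorbed in $\sigma_M$.

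For $\bQ_{\perp,\bar\tn}$ I would treat the two summands in \eqref{Rtnperp} separately. The operator $\bQ_8^\perp$ in \eqref{R8perp} is multiplication by $\Pi_{N_{\bar\tn}}^\perp a_5$ composed with the bounded Hilbert transform $\cH$; by \eqref{SM12} one has
\begin{equation*}
\normk{\Pi_{N_{\bar\tn}}^\perp a_5}{s} \leq N_{\bar\tn}^{-b}\normk{a_5}{s+b} \qquad \forall\,b>0\,,
\end{equation*}
and combined with \eqref{a2d.est} this produces multiplication tame constants of the form required in \eqref{Qtn.est1}. Similarly $\bQ_7^\perp$ in \eqref{R7perp} is multiplication by $\Pi_{N_{\bar\tn}}^\perp a_2$ composed with $|D|^{1/2}$: the $|D|^{1/2}$ factor is responsible for the half-derivative loss $\norm h_{s_0+1/2}$ appearing on the right of \eqref{Qtn.est1}-\eqref{Qtn.est2}, while \eqref{m12.stima.f} controls $a_2$. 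Conjugating $\bQ_7^\perp$ by $\Psi^{\pm 1}$ introduces only $O(1+\normk{\fI_0}{s+\sigma})$ tame constants via \eqref{red12.est4}, so the two gains $N_{\bar\tn}^{-b}$ (for \eqref{Qtn.est1}) and the absence of such a gain (for \eqref{Qtn.est2}, where we simply use $\normk{\Pi_{N_{\bar\tn}}^\perp a}{s}\leq\normk{a}{s}$) are transferred to the conjugated operators unchanged.

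The main obstacle is bookkeeping rather than substance: one must check that the constant $\sigma_M$ arising from iterating the tame estimates of Lemmas \ref{LEMMONE}, \ref{block_dec_lemma}, \ref{red1}, \ref{red12} is uniform in $\bar\tn$ (which it is, since none of those lemmas involves $\bar\tn$), and that the regularity index $s+\tb$ appearing in \eqref{stime.pn.not.w} only enters multiplied by $\normk{\fI_0}{\cdot}$, not by $\normk h{\cdot}$. This asymmetry is what accounts for the presence of the high-norm term $\normk{\fI_0}{s+\sigma_M+\tb}$ on the right of \eqref{Ptn.est0} while the norm of $h$ loses only $\sigma_M$ derivatives, and analogously for the term $\normk{\fI_0}{s_0+\sigma_M+b}$ in \eqref{Qtn.est1}.
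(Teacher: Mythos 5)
Your plan is correct and matches the paper's (very terse) proof, which is simply a citation of the structural formulas \eqref{Rtnperp}, \eqref{bRperptn}, \eqref{R2perp}, \eqref{R7perp}, \eqref{R8perp} and the tame/smoothing estimates \eqref{stime.pn.not.w}, \eqref{m12.stima.f}, \eqref{a2d.est}, \eqref{SM12}, \eqref{qui.1}, \eqref{red12.est4}, \eqref{red1.est4}, \eqref{block.est6}, \eqref{step5.est12}. Two minor imprecisions that do not affect the argument: $\bP_2^\perp$ in \eqref{R2perp} is a first-order operator (the factor is $\pa_y$, not $|D|^{1/2}$), so the derivative loss it contributes is one full derivative rather than ``half''; and since the conjugating chain $\wt\cM\cQ\cC\b\Phi_{2M}\b\Phi\Psi$ does not contain $\cE$, the estimate \eqref{step5.est11} is not actually needed.
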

\begin{proof}
	The estimates \eqref{Ptn.est0}, \eqref{Qtn.est1}, \eqref{Qtn.est2} follow from \eqref{bRperptn}, \eqref{Rtnperp}, \eqref{R2perp}, \eqref{R7perp}, \eqref{R8perp}, using the estimates \eqref{stime.pn.not.w}, \eqref{m12.stima.f}, \eqref{a2d.est}, \eqref{SM12}, \eqref{qui.1}, \eqref{red12.est4}, \eqref{red1.est4}, \eqref{block.est6}, \eqref{step5.est12}. 
\end{proof}

The next proposition summarizes the main result of this section. 

\begin{prop}\label{end_redu}
	{\bf (Reduction of $ {\cal L}_\omega $ up to smoothing operators)}
	For any $ \bar \tn \in \N_0 $ and  for all $(\omega,\gamma)\in \tT\tC_{\bar\tn+1}(2\upsilon,\tau)$ (cfr. \eqref{tDtCn}), the operator $\cL_\omega$ in \eqref{Lomegatrue} (i.e. \eqref{cLomega_again}) is conjugated as in \eqref{LomLp} to the real, reversible and momentum preserving operator $\cL_\perp $. For all $ (\omega, \gamma) \in \R^\nu \times [\gamma_1, \gamma_2] $ the extended operator defined by the right hand side in \eqref{LomLp}, 
	 has the form  
	\begin{equation}\label{Lperp}
	\cL_\perp = \omega\cdot\pa_\vf \uno_\perp + \im\,\bD_\perp + \bR_\perp +
	\bP_{\perp,\bar\tn} + \bQ_{\perp,\bar\tn}  \,,
	\end{equation}
	where $\uno_\perp$ denotes the identity map of 
	$  \bH_{{\mathbb S}_0}^\bot $ (cfr.  \eqref{def:HS0bot}) and:
\\[1mm]
1.  $\bD_\perp$ is the diagonal operator
		\begin{equation*}\label{Dperp}
		\bD_\perp := \begin{pmatrix}
		\cD_\perp & 0 \\ 0 & -\bar{\cD_\perp}
		\end{pmatrix} \,, \quad \cD_\perp:= \diag_{j\in \S_0^c} \mu_j \,, \quad 
		\S_0^c:= \Z\setminus (\S\cup\{0\})  \, , 
		\end{equation*}
		with eigenvalues 
		$ \mu_j :=\tm_{1,\bar\tn}j +
		\tm_{\frac12}\Omega_j(\gamma) - \tm_{0}\,\sgn(j) \in \R\,, $
		where $\Omega_j(\gamma) $ is the dispersion relation  \eqref{def:Omegajk} and 
		the real constants $ \tm_{1,\bar\tn}, \tm_{\frac12}, \tm_{0} $, defined respectively in Lemma \ref{conju.tr}, \eqref{tm12}, \eqref{tm0}, satisfy 
		\begin{equation}\label{const_small}
		\begin{aligned}
		| \tm_{1,\bar\tn}|^{k_0,\upsilon} \lesssim \varepsilon \, , \quad
			| \tm_{\frac12} - 1 |^{k_0,\upsilon}  + 
			|\tm_{0} |^{k_0,\upsilon}  \lesssim \varepsilon  \upsilon^{-1} \,. 
		\end{aligned}
		\end{equation}
		In addition, for some $ \sigma > 0 $, 
		\begin{equation}\label{const_smallV} 
		| \Delta_{12} \tm_{1,\bar\tn}  | \lesssim \varepsilon \norm{i_1-i_2}_{s_0+\sigma} \, , 
		\quad 
		 | \Delta_{12} \tm_{\frac12}| + 
		|\Delta_{12} \tm_{0} | \lesssim \varepsilon \upsilon^{-1} 
		\norm{i_1-i_2}_{s_0+\sigma}  \,;
		\end{equation} 
2. 
For any $\tq_0\in \N_0$, $M  > \frac32(k_0+\tq_0) +\frac32$, there is a constant $\aleph(M,\tq_0)>0$ (depending also on $k_0$, $\tau$, $\nu$) such that, assuming \eqref{ansatz_I0_s0} 
		with $\mu_0 \geq \aleph(M,\tq_0)$, for any $s_0\leq s \leq S - \aleph(M,\tq_0)$,   $\tq\in\N_0^\nu$, with $\abs\tq \leq \tq_0$, the operators 
		$ \langle D \rangle^\frac14\pa_\vf^\tq\bR_\perp \langle D \rangle^\frac14 $, $ \langle D \rangle^\frac14[\pa_\vf^\tq\bR_\perp, \pa_x]\langle D \rangle^\frac14 $  
		are $\cD^{k_0}$-tame with  tame constants satisfying
		\begin{align}
		&\fM_{\langle D \rangle^\frac14\pa_\vf^\tq\bR_\perp\langle D \rangle^\frac14} (s), \  
		\fM_{\langle D \rangle^\frac14[\pa_\vf^\tq\bR_\perp, \pa_x]\langle D \rangle^\frac14} (s) \lesssim_{ S, M, \tq_0} {\varepsilon\upsilon^{-3} }(1+ \normk{\fI_0}{s+\aleph(M,\tq_0)})\,. \label{fine.rid.1}
		\end{align}
		Moreover,
		for any 
		$\tq\in\N_0^\nu$, with $\abs\tq\leq \tq_0$,
		\begin{equation}
		\| \langle D \rangle^\frac14\pa_\vf^\tq\Delta_{12}\bR_\perp \langle D \rangle^\frac14  \|_{\cL(H^{s_0})} + 
		\| \langle D \rangle^\frac14\pa_\vf^\tq\Delta_{12}[\bR_\perp, \pa_x]  \langle D \rangle^\frac14 \|_{\cL(H^{s_0})} \lesssim_{M} {\varepsilon \upsilon^{-3}} \norm{i_1-i_2}_{s_0+\aleph(M,\tq_0)}\label{fine.rid.2} \,.
		\end{equation}
 The operator $\bR_\perp := \bR_\perp (\varphi) $  is real, reversible and momentum preserving.		
		\\[1mm]
	3. The remainders $\bP_{\perp,\bar\tn}, \bQ_{\perp,\bar\tn}$
	are defined  in \eqref{bRperptn} and satisfy the estimates 
	\eqref{Ptn.est0}-\eqref{Qtn.est2}.  
\end{prop}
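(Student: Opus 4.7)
The strategy is to \emph{assemble} the results of the six preceding subsections (Lemma \ref{lem:good_unknwon}, Lemma \ref{lemma:riassu}, Lemma \ref{LEMMONE}, Lemma \ref{block_dec_lemma}, Lemma \ref{red1}, Lemma \ref{red12}) together with the projection computation of Section \ref{sec:conc} (in particular \eqref{LomLp} and the finite rank analysis \eqref{Rintercal}). No new transformation is introduced here: the point is to exhibit the precise decomposition \eqref{Lperp}, read off the eigenvalues $\mu_j$, and repackage the accumulated bounds as tame estimates of the form required by the subsequent KAM reducibility.

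First I would take \eqref{LomLp} as the starting point and substitute the explicit form of $\cL_8$ from \eqref{cL8}. After the substitution we obtain
\[
\cL_\perp \;=\; \Pi_{\S_0}^\perp\!\left(\omega\!\cdot\!\pa_\vf + \tm_{1,\bar\tn}\pa_x + \im\tm_{\frac12}\bOmega(\gamma,D) + \tm_0\cH + \im\bPi_0 + \bR_8^{(-\frac12,d)} + \bT_{8,M}\right)\!\Pi_{\S_0}^\perp + \bP_{\perp,\bar\tn} + \bQ_{\perp,\bar\tn} + \cR^f,
\]
where I have already absorbed $-\bQ_8^\perp$ into the projected smoothing piece. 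Now $\im\bPi_0$ has range in the constants and is killed by $\Pi_{\S_0}^\perp$, whereas the Fourier multipliers $\pa_x$, $\bOmega(\gamma,D)$, $\cH$ act as multiplication by $\im j$, $\Omega_j(\gamma)$ and $-\im\,\sgn(j)$ respectively on $\bH_{\S_0}^\perp$. Writing the diagonal contribution in the basis $\{e^{\im j x}\}_{j\in\S_0^c}$ identifies, on the first (and by conjugation, second) component, the eigenvalue
\[
\mu_j \;=\; \tm_{1,\bar\tn}\,j + \tm_{\frac12}\Omega_j(\gamma) - \tm_0\,\sgn(j),
\]
which gives \eqref{Lperp} with $\bD_\perp$ as stated and
\[
\bR_\perp \;:=\; \Pi_{\S_0}^\perp\,\bR_8^{(-\frac12,d)}\,\Pi_{\S_0}^\perp + \Pi_{\S_0}^\perp\,\bT_{8,M}\,\Pi_{\S_0}^\perp + \cR^f.
\]
The estimates \eqref{const_small}, \eqref{const_smallV} for $\tm_{1,\bar\tn}$, $\tm_{\frac12}$, $\tm_0$ are quoted directly from Lemma \ref{conju.tr}, Lemma \ref{red1}, Lemma \ref{red12}, respectively from items \eqref{AS.est4}, \eqref{red1.est1}, \eqref{red12.est2}. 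All reversibility and momentum-preserving properties transport through the chain because each intermediate conjugation has been checked to preserve both structures; the surviving $\bR_\perp$ is therefore real, reversible and momentum preserving.

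The technical core is the tame bound \eqref{fine.rid.1} for $\langle D\rangle^{1/4}\pa_\vf^\tq\bR_\perp\langle D\rangle^{1/4}$ and for the commutator with $\pa_x$. I would treat each summand of $\bR_\perp$ separately. For the pseudodifferential term $\Pi_{\S_0}^\perp\bR_8^{(-\frac12,d)}\Pi_{\S_0}^\perp$, composing with $\langle D\rangle^{1/4}$ on both sides yields an operator in $\Ops^0$ by Lemma \ref{pseudo_compo}; its norm $\|\cdot\|_{0,s,0}^{k_0,\upsilon}$ is controlled by $\|\bR_8^{(-\frac12,d)}\|_{-\frac12,s,1}^{k_0,\upsilon}$, which is \eqref{bR8}, and the tame estimate then follows from Lemma \ref{tame_pesudodiff}. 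The commutator with $\pa_x$ has order $-1/2+1-1=-1/2$ by Lemma \ref{pseudo_commu}, and sandwiching with $\langle D\rangle^{1/4}$ again lands in $\Ops^0$; this is where one uses that the pseudodifferential norms \eqref{bR8.est}–\eqref{bR8} are available for enough values of $\alpha$ (see Remark \ref{fix:alpha}). The $\vf$-derivatives $\pa_\vf^\tq$ are handled by differentiating the symbol and re-applying the same compositions. For the smoothing piece $\Pi_{\S_0}^\perp\bT_{8,M}\Pi_{\S_0}^\perp$, I would use \eqref{red12.estT} with $(n_1,n_2)=(1,1)$ (the $1/4$ exponents are absorbed by the $\langle D\rangle$), which is available provided $M\ge \frac32(k_0+\tq_0)+\frac32$; this is precisely the assumption of the proposition, and it also fixes $\aleph(M,\tq_0)$. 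For the finite-rank $\cR^f$, apply \eqref{quo.est1}. The difference bound \eqref{fine.rid.2} follows by the very same decomposition using \eqref{red12.est2}, \eqref{red12.est3}, \eqref{quo.est2} together with \eqref{bR8}.

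Finally, item 3 of the proposition is \eqref{Ptn.est0}–\eqref{Qtn.est2}, which have already been proved in Lemma \ref{RKtn.est}, so nothing is required. The only genuine bookkeeping difficulty is matching the loss indices: one must choose $\aleph(M,\tq_0)$ large enough to dominate $\sigma_M(\tq_0)$, the loss $\sigma_M$ from \eqref{bR8}, the loss $\aleph_{2M}(\alpha)$ from the block-diagonalization of Lemma \ref{block_dec_lemma}, and the loss $\sigma_M$ from Lemma \ref{lemmaWperp}. I expect this accounting of derivative losses—ensuring one single exponent $\aleph(M,\tq_0)$ absorbs all the intermediate ones and respects the ansatz \eqref{ansatz_I0_s0}—to be the only place where care is genuinely needed; everything else is a transcription of the preceding lemmas.
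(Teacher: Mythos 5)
Your overall architecture agrees with the paper's proof: the decomposition $\bR_\perp := \Pi_{\S_0}^\perp(\bR_8^{(-\frac12,d)}+\bT_{8,M})\Pi_{\S_0}^\perp + \cR^f$, the recognition that $\im\b\Pi_0$ is annihilated by $\Pi_{\S_0}^\perp$, the sourcing of \eqref{const_small}--\eqref{const_smallV} from the previous lemmas, the use of Lemmata~\ref{pseudo_compo}, \ref{pseudo_commu}, \ref{tame_pesudodiff} combined with \eqref{bR8}, \eqref{red12.estT}, \eqref{quo.est1} for \eqref{fine.rid.1}, and the fact that item~3 is already Lemma~\ref{RKtn.est}, all coincide with what the paper does.

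There is one concrete mistake in the treatment of the tame smoothing piece $\Pi_{\S_0}^\perp\bT_{8,M}\Pi_{\S_0}^\perp$. You invoke \eqref{red12.estT} with $(n_1,n_2)=(1,1)$, arguing that the exponent $\tfrac14$ is absorbed by $\langle D\rangle$. That is enough for $\langle D\rangle^{1/4}\pa_\vf^\tq\bT_{8,M}\langle D\rangle^{1/4}$, but it does \emph{not} cover the commutator estimate also required by \eqref{fine.rid.1}. Expanding $\langle D\rangle^{1/4}[\pa_\vf^\tq\bT_{8,M},\pa_x]\langle D\rangle^{1/4}$ as a difference of two terms, you must factor
$\langle D\rangle^{1/4}\,\pa_\vf^\tq\bT_{8,M}\,\pa_x\langle D\rangle^{1/4}=\langle D\rangle^{\frac14-n_1}\bigl(\langle D\rangle^{n_1}\pa_\vf^\tq\bT_{8,M}\langle D\rangle^{n_2}\bigr)\langle D\rangle^{-n_2}\pa_x\langle D\rangle^{1/4}$,
and for the outer factors to be bounded you need $n_1\geq\tfrac14$ and $n_2\geq\tfrac54$, i.e.\ $(n_1,n_2)=(1,2)$; the symmetric term requires $(n_1,n_2)=(2,1)$. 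With $(1,1)$ the factor $\langle D\rangle^{-1}\pa_x\langle D\rangle^{1/4}$ has positive order $\tfrac14$ and is unbounded, so the estimate fails. The paper explicitly makes the choice $(n_1,n_2)=(1,2)$ and $(2,1)$ for precisely this reason, and this is consistent with the hypothesis $M>\tfrac32(k_0+\tq_0)+\tfrac32$ which guarantees $n_1+n_2=3$ is admissible in the constraint of Lemma~\ref{red12}. Correcting this one line makes your argument match the paper's.
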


\begin{proof}
	By \eqref{LomLp} and \eqref{cL8} we deduce \eqref{Lperp}
	with 
	$
	\bR_\perp := \Pi_{{\S}_0}^\perp(\bR_{8}^{(-\frac12,d)}+ \bT_{8,M})\Pi_{{\S}_0}^\perp + \cR^f  $. 
	The estimates \eqref{const_small}-\eqref{const_smallV} follow by Lemmata \ref{LEMMONE}, \ref{red1}, \ref{red12}. 
	The estimate \eqref{fine.rid.1} follows by Lemmata \ref{pseudo_compo},  \ref{pseudo_commu},  \ref{tame_pesudodiff}, 
	\eqref{bR8} and  \eqref{red12.estT}, \eqref{quo.est1}, choosing $(n_1,n_2)=(1,2),(2,1)$.
	The estimate \eqref{fine.rid.2} follows similarly.
\end{proof}

\section{Almost-diagonalization and invertibility of $\cL_\omega$}\label{sec:KAM}

In this section we diagonalize the operator 
$\omega\cdot\pa_\vf \uno_\perp + \im \,\bD_\bot + \bR_\perp (\varphi) $
obtained neglecting from  $ \cL_\perp $  in \eqref{Lperp} 
the remainders $\bP_{\perp,\bar\tn}$ and $\bQ_{\perp,\bar\tn}$.
We implement a KAM iterative scheme.
As starting point, we consider the real, reversible and momentum preserving operator,
acting in $  \bH_{{\mathbb S}_0}^\bot $, 
\begin{equation}\label{bL0}
\bL_0 := \bL_0(i) := 
\omega\cdot\pa_\vf \uno_\perp + \im \,\bD_0 + \bR_\perp^{(0)} \,,
\end{equation}
defined for all $(\omega,\gamma)\in \R^\nu \times[\gamma_1,\gamma_2]$, with diagonal part (with respect to the exponential basis)
\begin{equation}\label{D0}
\begin{aligned}
\bD_0&:= \begin{pmatrix}
\cD_0 & 0 \\ 0 & -\bar{\cD_0}
\end{pmatrix} \,,\quad
\cD_0 := \diag_{j \in \S_0^c} \mu_{j}^{(0)} \,, \quad \mu_{j}^{(0)}:= \tm_{1,\bar\tn} j+\tm_{\frac12}\Omega_j(\gamma)-\tm_{0}\,\sgn(j) \,,
\end{aligned}
\end{equation}
where $  \S_0^c = \Z \setminus \S_0 $, $ \S_0 = \S \cup \{0\}  $,  
the real constants $\tm_{1, \bar \tn} $, $ \tm_{\frac12} $,  $ \tm_{0} $ 
satisfy  \eqref{const_small}-\eqref{const_smallV} and 
\begin{equation}\label{Rperp0}
\bR_\perp^{(0)}:= \bR_\perp := \begin{pmatrix}
R_\perp^{(0,d)} & R_\perp^{(0,o)} \\ \bar{R_\perp^{(0,o)}} & \bar{R_\perp^{(0,d)}}
\end{pmatrix}\,, \ \quad R_\perp^{(0,d)}: 
H_{{\mathbb S}_0}^\perp  
\rightarrow H_{{\mathbb S}_0}^\perp  \,, \
R_\perp^{(0,o)} : H_{-{\mathbb S}_0}^\perp  
\rightarrow H_{{\mathbb S}_0}^\perp \, , 
\end{equation}
which is a real, reversible, momentum preserving operator satisfying  \eqref{fine.rid.1}, \eqref{fine.rid.2}. We denote 
$ H_{\pm \S_0}^\bot = \{ h(x) = \sum_{j \not \in \pm \S_0} h_j e^{ \pm \im j x} \in L^2 \} $.  
Note that 
\begin{equation}\label{Dbar}
\bar{\cD_0} :  H_{- {\mathbb S}_0}^\bot \to H_{- {\mathbb S}_0}^\bot  \, , 
\quad   \bar{\cD_0} = {\rm diag}_{j \in - \S_0^c}( \mu_{-j}^{(0)} ) \, . 
\end{equation}
Proposition \ref{end_redu} implies that the operator $ \bR_\perp^{(0)} $ 
satisfies the estimates of Lemma \ref{Rperp_small} below 
by fixing the constant $ M $ large enough
(which means  performing sufficiently many regularizing steps in Section \ref{sec:block_dec}), namely
\begin{equation}\label{M_choice}
M:= \big[  \tfrac32(k_0 + s_0 + \tb) + \tfrac32 \big] +1 \in \N\,,
\end{equation}
where $\tb$ is defined in \eqref{tbta}.
We also set 
\begin{equation}\label{cb.mub}
\mu(\tb):= \aleph(M,s_0+\tb) \, , 
\end{equation}
where the constant  $\aleph(M,\tq_0)$ is given  in Proposition \ref{end_redu}, with $\tq_0=s_0+\tb$.
\begin{lem}\label{Rperp_small}   
	{\bf (Smallness of $ \bR_\perp^{(0)} $)}
	Assume \eqref{ansatz_I0_s0} with $\mu_0 \geq \mu(\tb)$. Then the operators
	$ \langle D \rangle^\frac14\bR_\perp^{(0)} \langle D \rangle^\frac14$, $ \langle D \rangle^\frac14[\bR_\perp^{(0)} , \pa_x] \langle D \rangle^\frac14$, and 
	$ \langle D \rangle^\frac14\pa_{\vf_m}^{s_0} \bR_\perp^{(0)}\langle D \rangle^\frac14 $, $  \langle D \rangle^\frac14[\pa_{\vf_m}^{s_0} \bR_\perp^{(0)}, \pa_x] \langle D \rangle^\frac14$, 
	$\langle D \rangle^\frac14 \pa_{\vf_m}^{s_0+\tb} \bR^{(0)}_\perp \langle D \rangle^\frac14$, 
	\\
	$\langle D \rangle^\frac14[\pa_{\vf_m}^{s_0+\tb} \bR^{(0)}_\perp, \pa_x]\langle D \rangle^\frac14 $, 
	$ m = 1, \ldots, \nu $, 
	are $\cD^{k_0}$-tame. Defining
	\begin{align}
	& \mathbb{M}_0(s):= \max \big\{ \fM_{\langle D \rangle^\frac14\bR^{(0)}_\perp\langle D \rangle^\frac14}(s), \, \fM_{\langle D \rangle^\frac14[\bR^{(0)}_\perp,\pa_x]\langle D \rangle^\frac14}(s), \, \notag \\
	& \quad \quad \quad \quad \quad  \fM_{\langle D \rangle^\frac14\pa_{\vf_m}^{s_0}\bR^{(0)}_\perp\langle D \rangle^\frac14}(s), \,  \fM_{\langle D \rangle^\frac14[\pa_{\vf_m}^{s_0}\bR^{(0)}_\perp,\pa_x]\langle D \rangle^\frac14}(s),  m = 1, \ldots, \nu \big\}
	\label{M0s} \\
	& \mathbb{M}_0(s,\tb):= \max \big\{ \fM_{\langle D \rangle^\frac14\pa_{\vf_m}^{s_0+\tb}\bR^{(0)}_\perp\langle D \rangle^\frac14}(s), \ \fM_{\langle D \rangle^\frac14[\pa_{\vf_m}^{s_0+\tb}\bR^{(0)}_\perp,\pa_x]\langle D \rangle^\frac14}(s) \, , \, m = 1, \ldots, \nu \big\}\,,\label{M0sb}
	\end{align}
	we have, for all $ s_0 \leq s \leq S- \mu(\tb)$,
	\begin{equation}\label{M0ss.est}
	\fM_0(s,\tb):= \max\Set{\mathbb{M}_0(s), \mathbb{M}_0(s,\tb)  } \leq C(S){ \frac{\varepsilon}{\upsilon^3}}(1 + \normk{\fI_0}{s+ \mu (\tb)})  \,, \  \fM_0(s_0,\tb) \leq C(S) {\frac{\varepsilon}{\upsilon^3}} \, . 
	\end{equation}
	Moreover, 
	for all $\tq \in\N_0^\nu$,  with $\abs \tq \leq s_0+\tb$, 
	\begin{equation}\label{Moss.delta}
	\| \langle D \rangle^\frac14\pa_\vf^\tq \Delta_{12} \bR^{(0)}_\perp \langle D \rangle^\frac14 \|_{\cL(H^{s_0})}\,, \ \|\langle D \rangle^\frac14 \Delta_{12}[\pa_\vf^\tq \bR^{(0)}_\perp,\pa_x] \langle D \rangle^\frac14 \|_{\cL(H^{s_0})} \leq C(S) {\varepsilon\upsilon^{-3}}\norm{i_1-i_2}_{s_0+\mu(\tb)} \,.
	\end{equation}
\end{lem}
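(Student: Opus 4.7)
The plan is to derive the lemma as a direct specialization of Proposition \ref{end_redu} with a careful bookkeeping of the parameters $\tq_0$, $M$, and $\mu(\tb)$. I would apply Proposition \ref{end_redu} with $\tq_0 := s_0+\tb$, so that the tame and Lipschitz bounds \eqref{fine.rid.1}, \eqref{fine.rid.2} hold for every $\tq\in\N_0^\nu$ with $|\tq|\leq s_0+\tb$; in particular they cover the three relevant orders of $\vf$--derivatives appearing in the definitions \eqref{M0s}--\eqref{M0sb}, namely $|\tq|=0$, $|\tq|=s_0$ and $|\tq|=s_0+\tb$ (taking $\tq=s_0 e_m$ and $\tq=(s_0+\tb) e_m$ for $m=1,\dots,\nu$).

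The first point to check is that the choice of $M$ in \eqref{M_choice}, namely $M=[\tfrac32(k_0+s_0+\tb)+\tfrac32]+1$, is admissible in Proposition \ref{end_redu}, i.e.\ $M>\tfrac32(k_0+\tq_0)+\tfrac32$ with $\tq_0=s_0+\tb$; this is immediate from \eqref{M_choice}. With this choice, the regularity loss produced by the proposition is $\aleph(M,s_0+\tb)$, which by the definition \eqref{cb.mub} is exactly $\mu(\tb)$. Since the ansatz \eqref{ansatz_I0_s0} is assumed with $\mu_0\geq\mu(\tb)$, the hypotheses of Proposition \ref{end_redu} are satisfied for all $s_0\leq s\leq S-\mu(\tb)$.

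I would then read off \eqref{fine.rid.1} for $\tq=0$, $\tq=s_0 e_m$, and $\tq=(s_0+\tb)e_m$: each of the six operators listed in \eqref{M0s}--\eqref{M0sb} has a $\cD^{k_0}$--tame constant bounded by $C(S)\varepsilon\upsilon^{-3}(1+\|\fI_0\|_{s+\mu(\tb)}^{k_0,\upsilon})$. Taking the maximum yields the first inequality of \eqref{M0ss.est}. The bound at $s=s_0$ follows by evaluating at $s=s_0$ and using the smallness assumption $\|\fI_0\|_{s_0+\mu(\tb)}^{k_0,\upsilon}\leq 1$ from \eqref{ansatz_I0_s0}, so that $1+\|\fI_0\|_{s_0+\mu(\tb)}^{k_0,\upsilon}\leq 2$. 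The Lipschitz variation estimate \eqref{Moss.delta} is obtained in exactly the same way from \eqref{fine.rid.2}, applied to the same choices of $\tq$ with $|\tq|\leq s_0+\tb$.

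There is no real obstacle here beyond matching indices: the lemma is essentially a reformulation of Proposition \ref{end_redu} tailored to the specific operator norms needed to start the KAM reducibility scheme of Section \ref{sec:KAM}. The only thing to be careful about is that $\bR^{(0)}_\perp$ coincides with $\bR_\perp$ of Proposition \ref{end_redu} (by \eqref{Rperp0}), so that the smoothing factors $\langle D\rangle^{1/4}$ on both sides and the commutator with $\pa_x$ are precisely those controlled by \eqref{fine.rid.1}--\eqref{fine.rid.2}, and that the algebraic properties (real, reversible, momentum preserving) transfer from Proposition \ref{end_redu} to $\bR^{(0)}_\perp$.
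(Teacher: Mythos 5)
Your proposal is correct and follows essentially the same route as the paper, whose own proof is the one-line remark that the bounds follow from \eqref{fine.rid.1}--\eqref{fine.rid.2} in Proposition \ref{end_redu} together with the choices \eqref{M_choice} and \eqref{cb.mub}. You simply make explicit the bookkeeping the paper leaves implicit: taking $\tq_0=s_0+\tb$ in Proposition \ref{end_redu}, verifying $M>\tfrac32(k_0+\tq_0)+\tfrac32$ from \eqref{M_choice}, identifying $\aleph(M,s_0+\tb)=\mu(\tb)$ via \eqref{cb.mub}, and reading off the six tame constants and the Lipschitz variation for the relevant $\tq$.
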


\begin{proof}
	Recalling \eqref{M0s}, \eqref{M0sb}, 	the bounds \eqref{M0ss.est}-\eqref{Moss.delta} follow
	by \eqref{fine.rid.1}, 
	\eqref{M_choice}, \eqref{cb.mub},  \eqref{fine.rid.2}.
\end{proof}

We perform the almost-reducibility of $\bL_0$ along the scale $(N_{\tn})_{\tn\in\N_0}$, defined in \eqref{scala.strai}.

\begin{thm}\label{iterative_KAM}
	{\bf (Almost-diagonalization of $\bL_0$: KAM iteration)}
	There exists $\tau_2(\tau,\nu)> \tau_1(\tau,\nu)+1 + \ta $ 
	(with $ \tau_1 , \ta $  defined in \eqref{tbta})	such that, for all $S >s_0$, there is $N_0:=N_0(S,\tb)\in\N$ such that, if
	\begin{equation}\label{small_KAM_con}
	N_0^{\tau_2} \fM_0(s_0,\tb)\upsilon^{-1} \leq 1 \,,
	\end{equation}
	then, for all $\bar\tn\in\N_0$, $\tn=0,1,\ldots,\bar\tn$:
	\\[1mm]
	$({\bf S1})_\tn$ There exists a real, reversible and momentum preserving operator
	\begin{equation}\label{bLn}
	\begin{aligned}
	&\bL_\tn := \omega\cdot\pa_\vf\uno_\perp +\im\,\bD_\tn + \bR_\perp^{(\tn)}\,, \\
	&\bD_\tn:= \begin{pmatrix}
	\cD_\tn & 0 \\ 0 & -\bar{\cD_\tn}
	\end{pmatrix}\,, \quad \cD_\tn:= \diag_{j\in\S_0^c} \mu_{j}^{(\tn)} \,,
	\end{aligned}
	\end{equation}
	defined for all $(\omega,\gamma)$ in $\R^\nu\times [\gamma_1,\gamma_2]$,
	where $\mu_{j}^{(\tn)}$ are $k_0$-times differentiable real functions 
	\begin{equation}\label{eigen_KAM}
	\mu_{j}^{(\tn)}(\omega,\gamma):= \mu_{j}^{(0)}(\omega,\gamma) + \fr_j^{(\tn)}(\omega,\gamma)\,, \quad \mu_{j}^{(0)} = \tm_{1,\bar\tn}\,j+\tm_{\frac12}\,\Omega_j(\gamma)-\tm_{0}\,\sgn(j)\,,
	\end{equation}
	satisfying $ \fr_j^{(0)} = 0 $ and, for $ \tn \geq 1 $,  	
	\begin{align}
	\label{rem.eigen.KAM}
	&	|j|^\frac12	| \fr_j^{(\tn)} |^{k_0,\upsilon}  \leq C(S, \tb) \varepsilon\upsilon^{-3}  \, , \quad 	
	|j|^\frac12 | \mu_{j}^{(\tn)} - \mu_{j}^{(\tn-1)} |^{k_0,\upsilon} 
	\leq C(S, \tb) \varepsilon\upsilon^{-3} N_{\tn-2}^{-\ta} \, , 
	\ \   \forall j\in\S_0^c \, \,. 
	\end{align}
	The remainder
	\begin{equation}\label{bRperpn}
	\bR_\perp^{(\tn)}:= \begin{pmatrix}
	R_\perp^{(\tn,d)} & R_\perp^{(\tn,o)}\\
	\bar{R_\perp^{(\tn,o)} } & \bar{R_\perp^{(\tn,d)} } 
	\end{pmatrix} 
	, \ \quad R_\perp^{(\tn,d)}: 
	H_{{\mathbb S}_0}^\perp  
	\rightarrow H_{{\mathbb S}_0}^\perp  \,, \
	R_\perp^{(\tn,o)} : H_{-{\mathbb S}_0}^\perp  
	\rightarrow H_{{\mathbb S}_0}^\perp
	\end{equation}
	is $\cD^{k_0}$-$(-\frac12)$-modulo-tame
	with a modulo-tame constant 
\begin{equation}\label{Mn.sharp}
	\fM_\tn^\sharp (s):= \fM_{\langle D \rangle^{\frac14}\bR_\perp^{(\tn)}\langle D \rangle^{\frac14}}^\sharp(s)\,, \quad  \fM_\tn^\sharp(s,\tb):= \fM_{\langle D \rangle^{\frac14}\langle\pa_\vf\rangle^\tb \bR_\perp^{(\tn)}\langle D \rangle^{\frac14} }^\sharp(s)\,,
\end{equation}
	which satisfy, for some constant $C_*(s_0,\tb) > 0 $, for all $s_0\leq s \leq S-\mu(\tb)$,
	\begin{equation}\label{small_scheme}
	\fM_\tn^\sharp (s) \leq C_*(s_0,\tb) \fM_0(s,\tb)
	N_{\tn -1}^{-\ta}   \,, \quad \fM_\tn^\sharp(s,\tb) \leq C_*(s_0,\tb) 
	\fM_0 (s,\tb) N_{\tn-1}   \,.
	\end{equation}
	Define the sets $\t\Lambda_\tn^\upsilon=\t\Lambda_\tn^\upsilon(i)$ by $\t\Lambda_0^\upsilon:=\R^\nu\times [\gamma_1,\gamma_2]$ and, for $ \tn =1,...,\bar\tn $, 
	\begin{equation}\label{tLambdan}
	\begin{aligned}
	\t\Lambda_\tn^\upsilon:= & \big\{ \lambda=(\omega,\gamma) \in \t\Lambda_{\tn-1}^\upsilon \, :  \,  \\
	& \ \ \big| \omega\cdot\ell + \mu_{j}^{(\tn -1)} - \mu_{j'}^{(\tn-1)} \big| \geq \upsilon\, \braket{\ell}^{-\tau}  \\
	& \ \ \forall\,\abs\ell \leq N_{\tn-1}\,,\ j,j'\notin\S_0 \,, \  (\ell,j,j')\neq(0,j,j),\text{ with }\ora{\jmath} \cdot\ell +j-j'=0 \, ,  \\
	&\  \ \big| \omega\cdot\ell + \mu_{j}^{(\tn -1)} + \mu_{j'}^{(\tn-1)} \big| \geq \upsilon \,\big(\abs j^\frac12 + |j'|^\frac12\big) \braket{\ell}^{-\tau}  \\
	& \ \  \forall\,\abs\ell \leq N_{\tn-1}\,, \ j,  j' \notin \S_0
	\text{ with }\ora{\jmath} \cdot\ell +j+j'=0  	
	\big\}\,.
	\end{aligned}
	\end{equation}
	For $\tn\geq 1$ there exists a real, reversibility  and momentum preserving map, defined for all $(\omega,\gamma)\in \R^\nu\times [\gamma_1,\gamma_2]$, of the form
	$$
	\b\Phi_{\tn-1} = e^{\bX_{\tn-1}}\,, \quad \bX_{\tn-1}:=\begin{pmatrix}
	X_{\tn-1}^{(d)} & X_{\tn-1}^{(o)} \\ \bar{X_{\tn-1}^{(o)}} & \bar{X_{\tn-1}^{(d)}}
	\end{pmatrix}\,, \
	\  X_{\tn-1}^{(d)}: 
	H_{{\mathbb S}_0}^\perp  
	\rightarrow H_{{\mathbb S}_0}^\perp  \,, \
	X_{\tn-1}^{(o)} : H_{-{\mathbb S}_0}^\perp  
	\rightarrow H_{{\mathbb S}_0}^\perp \, ,
	$$
	such that, for all $\lambda \in \t\Lambda_{\tn}^\upsilon$, the following conjugation formula holds:
	\begin{equation}\label{ITE-CON}
	\bL_\tn = \b\Phi_{\tn-1}^{-1} \bL_{\tn-1}\b\Phi_{\tn-1}\,.
	\end{equation}
	The operators $\bX_{\tn-1}$, $\braket{\pa_\vf}^\tb \bX_{\tn-1}$, 
	are $\cD^{k_0}$-$(-\frac12)$-modulo-tame 
	with modulo tame constants satisfying, for all $s_0\leq s \leq S - \mu(\tb)$,
	\begin{equation}\label{gen.modulo.KAM}
	\begin{aligned}
	\fM_{\langle D \rangle^{\frac14}\bX_{\tn-1}\langle D \rangle^{\frac14}}^\sharp(s) & \leq
	C(s_0,\tb) \upsilon^{-1} 
	N_{\tn-1}^{\tau_1} N_{\tn-2}^{-\ta} \fM_0(s,\tb)\,,\\
	\fM_{\langle D \rangle^{\frac14}\langle\pa_\vf\rangle^\tb \bX_{\tn-1}\langle D \rangle^{\frac14}}^\sharp (s) & \leq  
	C(s_0,\tb) \upsilon^{-1}  N_{\tn-1}^{\tau_1} N_{\tn-2} \fM_0(s,\tb)\, . 
	\end{aligned}
	\end{equation}
	$({\bf S2})_\tn$ Let $i_1(\omega,\gamma)$, $i_2(\omega,\gamma)$ such that $\bR_\perp^{(\tn)}(i_1)$, $\bR_\perp^{(\tn)}(i_2)$ satisfy \eqref{M0ss.est}, \eqref{Moss.delta}. Then, for all {$(\omega,\gamma)  \in \R^\nu \times \R$}
	\begin{align}
	&\| \langle D \rangle^\frac14|\Delta_{12} \bR_\perp^{(\tn)} |\langle D \rangle^\frac14 \|_{\cL(H^{s_0})}\lesssim_{S,\tb} \varepsilon\upsilon^{-3} N_{\tn-1}^{-\ta}\norm{i_1-i_2}_{s_0+\mu(\tb)} \,, \label{S3_1}\\
	& \|\langle D \rangle^\frac14 | \braket{\pa_\vf}^\tb \Delta_{12} \bR_\perp^{(\tn)}  |\langle D \rangle^\frac14\|_{\cL(H^{s_0})}\lesssim_{S,\tb}\varepsilon\upsilon^{-3} N_{\tn-1}\norm{i_1-i_2}_{s_0+\mu(\tb)} \,.\label{S3_2}
	\end{align}
	Furthermore, for $ \tn \geq 1 $, for all $j\in\S_0^c$,
	\begin{align}
 & |j|^\frac12| \Delta_{12} (\fr_j^{(\tn)} - \fr_j^{(\tn-1)}) | \leq C \| \langle D \rangle^\frac14| \Delta_{12}\bR_\perp^{(\tn)} | \langle D \rangle^\frac14 \|_{\cL(H^{s_0})}  \, , \\
 & |j|^\frac12| \Delta_{12} \fr_j^{(\tn)} | \leq C(S, \tb)\varepsilon\upsilon^{-3} \norm{i_1-i_2}_{s_0+\mu(\tb)} \,. 
	\end{align}
	$({\bf S3})_\tn$ Let $i_1, i_2$ be like in $({\bf S2})_\tn$ and $0 <  \rho < \upsilon/2$. Then
	\begin{equation}\label{INCLPRO}
	\varepsilon\upsilon^{-3}C(S) N_{\tn-1}^{\tau +1}\norm{i_1-i_2}_{s_0+\mu(\tb)}\leq \rho\quad \Rightarrow \quad \t\Lambda_\tn^\upsilon(i_1) \subseteq \t\Lambda_\tn^{\upsilon-\rho}(i_2) \,. 
	\end{equation}
\end{thm}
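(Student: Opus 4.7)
The scheme is a standard KAM reducibility iteration adapted to the present context, where the main novelties are the exponent choices $(\tau,\tau_1,\tau_2,\ta,\tb)$ dictated by \eqref{tbta}, the preservation along the iteration of the reversible and momentum-preserving structure, and the $(-\tfrac12)$-modulo-tame norms from Section~\ref{subsec:semiphi} which are the natural framework to control the $|j|^{1/2}$ decay of the eigenvalue corrections in \eqref{rem.eigen.KAM}.

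\textbf{Initialization ($\tn=0$).} Setting $\fr_j^{(0)}=0$ and $\bR_\perp^{(0)}:=\bR_\perp$, the decomposition \eqref{bLn} coincides with \eqref{bL0}. The modulo-tame bounds \eqref{small_scheme} at $\tn=0$ follow from Lemma~\ref{Rperp_small} combined with the sufficient condition Lemma~\ref{mod-to-tame} applied to $R=\bR_\perp^{(0)}$, which turns the $\cD^{k_0}$-tame bounds on $\langle D\rangle^{1/4}\bR_\perp^{(0)}\langle D\rangle^{1/4}$ and on its commutator with $\pa_x$ into $\cD^{k_0}$-$(-\tfrac12)$-modulo-tame bounds, with an $N_{-1}=1$ factor. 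Properties $(\bS2)_0$, $(\bS3)_0$ are trivial.

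\textbf{Inductive step $\tn\rightsquigarrow \tn+1$.} Write the Lie expansion \eqref{lie_abstract} at order $M=1$:
\begin{equation*}
\b\Phi_\tn^{-1}\bL_\tn\b\Phi_\tn \;=\; \omega\cdot\pa_\vf\uno_\perp+\im\bD_\tn \;+\; \bigl(\omega\cdot\pa_\vf \bX_\tn +[\im\bD_\tn,\bX_\tn]+\Pi_{N_\tn}\bR_\perp^{(\tn)}\bigr) \;+\; \cE_\tn
\end{equation*}
where $\cE_\tn$ collects $\Pi_{N_\tn}^\perp\bR_\perp^{(\tn)}$ together with the integral remainder and the nested commutators. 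I choose $\bX_\tn$ to solve the \emph{homological equation}
\begin{equation*}
\omega\cdot\pa_\vf \bX_\tn + [\im\bD_\tn,\bX_\tn] + \Pi_{N_\tn}\bR_\perp^{(\tn)} \;=\; \bigl[\bR_\perp^{(\tn)}\bigr]_{\rm res}\,,
\end{equation*}
where $[\cdot]_{\rm res}$ keeps only the resonant diagonal entries $(\ell,j,j')=(0,j,j)$ (the only ones compatible with the momentum condition $\vec\jmath\cdot\ell+j-j'=0$ and $\ell=0$). Componentwise, on the diagonal block
\begin{equation*}
(X_\tn^{(d)})_j^{j'}(\ell)=-\frac{(R_\perp^{(\tn,d)})_j^{j'}(\ell)}{\im(\omega\cdot\ell+\mu_j^{(\tn)}-\mu_{j'}^{(\tn)})}\,,\qquad \vec\jmath\cdot\ell+j-j'=0,\ |\ell|\le N_\tn,\ (\ell,j,j')\neq(0,j,j),
\end{equation*}
and on the off-diagonal block with the denominator $\omega\cdot\ell+\mu_j^{(\tn)}+\mu_{j'}^{(\tn)}$ and the constraint $\vec\jmath\cdot\ell+j+j'=0$; on the resonant set and for $|\ell|>N_\tn$ we set $\bX_\tn=0$. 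The divisors are $\ge\upsilon\langle\ell\rangle^{-\tau}$ (resp.\ $\ge\upsilon(|j|^{1/2}+|j'|^{1/2})\langle\ell\rangle^{-\tau}$) exactly on $\t\Lambda_{\tn+1}^\upsilon$ by \eqref{tLambdan}, and are extended to all $(\omega,\gamma)$ via the cut-off mechanism of \eqref{paext}; the reversible and momentum-preserving structure of $\bR_\perp^{(\tn)}$ transfers to $\bX_\tn$ by Lemmata~\ref{rev_defn_C}, \ref{lem:mom_pseudo}.

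Differentiating $k_0$ times in $\lambda=(\omega,\gamma)$ the small divisor $\omega\cdot\ell+\mu_j^{(\tn)}\pm\mu_{j'}^{(\tn)}$ costs a factor $\upsilon^{-(k_0+1)}\langle\ell\rangle^{\tau(k_0+1)}$; combined with the $|j|^{1/2}+|j'|^{1/2}$ denominator, which absorbs the $\langle D\rangle^{1/4}$ weights, this yields the bound \eqref{gen.modulo.KAM}, with the $N_\tn^{\tau_1}$ loss and the inductive factors $N_{\tn-1}^{-\ta}$ (respectively $N_{\tn-1}$) coming from $\fM_\tn^\sharp(s)$ (resp.\ $\fM_\tn^\sharp(s,\tb)$) via the induction hypothesis \eqref{small_scheme}. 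The new eigenvalues are
$$\mu_j^{(\tn+1)}:=\mu_j^{(\tn)}+(R_\perp^{(\tn,d)})_j^j(0)=:\mu_j^{(\tn)}+\fz_j^{(\tn)},$$
which are real by reversibility; the $|j|^{1/2}$-weighted bound \eqref{rem.eigen.KAM} follows from $|j||\fz_j^{(\tn)}|\lesssim\|\langle D\rangle^{1/4}\bR_\perp^{(\tn)}\langle D\rangle^{1/4}\|$ and \eqref{small_scheme}.

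\textbf{New remainder.} Putting everything together, $\bL_{\tn+1}$ takes the form \eqref{bLn} with
\begin{equation*}
\bR_\perp^{(\tn+1)} = \Pi_{N_\tn}^\perp\bR_\perp^{(\tn)} + \tfrac12[\bX_\tn,\Pi_{N_\tn}\bR_\perp^{(\tn)}+[\bR_\perp^{(\tn)}]_{\rm res}]+\int_0^1(1-\tau)e^{-\tau\bX_\tn}[\bX_\tn,[\bX_\tn,\im\bD_\tn]]e^{\tau\bX_\tn}d\tau+\cdots
\end{equation*}
The smoothing estimate \eqref{modulo_smooth} gives $\fM^\sharp_{\langle D\rangle^{1/4}\Pi_{N_\tn}^\perp\bR_\perp^{(\tn)}\langle D\rangle^{1/4}}(s)\le N_\tn^{-\tb}\fM_\tn^\sharp(s,\tb)$; the commutator/integral terms are estimated by Lemmata~\ref{modulo_sumcomp-12}--\ref{modulo_expo-12}, producing a quadratic bound of the type $C\upsilon^{-1}N_\tn^{\tau_1}N_{\tn-1}^{-\ta}\fM_0(s,\tb)\cdot\fM_\tn^\sharp(s)$. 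The crucial algebra is the \emph{scaling check} $\fM_{\tn+1}^\sharp(s)\le C_*\fM_0(s,\tb)N_\tn^{-\ta}$ and the high-norm counterpart $\fM_{\tn+1}^\sharp(s,\tb)\le C_*\fM_0(s,\tb)N_\tn$, which use the recursion $N_\tn=N_{\tn-1}^\chi$, $\chi=3/2$, together with the choice $\tb=[\ta]+2$ and $\ta=3(\tau_1+1)$ in \eqref{tbta} and the smallness \eqref{small_KAM_con}: this is the only place where the parameter $\tau_2$ is fixed, as the exponent one obtains after a careful bookkeeping of the $N_\tn^{\tau_1}$ and $N_{\tn-1}^{-\ta}$ factors.

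\textbf{Properties $(\bS2)_{\tn+1}$ and $(\bS3)_{\tn+1}$.} For $(\bS2)_{\tn+1}$ I apply $\Delta_{12}$ to the homological equation: the variation $\Delta_{12}\bX_\tn$ satisfies an inhomogeneous equation driven by $\Delta_{12}\bR_\perp^{(\tn)}$ and by the variation of the divisors, which by $(\bS2)_\tn$ is controlled by $\varepsilon\upsilon^{-3}N_{\tn-1}^{-\ta}\|i_1-i_2\|$, and then I propagate along the conjugation formula exactly as above; the $|j|^{1/2}$ estimate of $\Delta_{12}\fr_j^{(\tn+1)}$ follows from the operator bound by extracting the $(0,j,j)$ entry. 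For $(\bS3)_{\tn+1}$ the inclusion \eqref{INCLPRO} is a direct consequence of $(\bS2)_\tn$: for $(\omega,\gamma)\in\t\Lambda_{\tn+1}^\upsilon(i_1)$ and $|\ell|\le N_\tn$,
$$|\omega\cdot\ell+\mu_j^{(\tn)}(i_2)\pm\mu_{j'}^{(\tn)}(i_2)|\ge\tfrac{\upsilon}{\langle\ell\rangle^\tau}-C|\Delta_{12}(\mu_j^{(\tn)}\pm\mu_{j'}^{(\tn)})|\ge\tfrac{\upsilon-\rho}{\langle\ell\rangle^\tau},$$
using the telescopic bound on $\sum_{\tk\le\tn}|\Delta_{12}\fr_j^{(\tk)}|$ and the smallness hypothesis.

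\textbf{Main obstacle.} The delicate point is the \emph{balance of the exponents} that forces the quadratic-in-$N_\tn$ recursion \eqref{small_scheme} to close. One has to track simultaneously two modulo-tame norms, $\fM_\tn^\sharp(s)$ and $\fM_\tn^\sharp(s,\tb)$, because the low norm shrinks super-exponentially while the high norm grows at rate $N_{\tn-1}$, and the smoothing gain $N_\tn^{-\tb}$ has to beat the small-divisor loss $N_\tn^{\tau_1}$ squared in the commutator estimate; this is exactly what forces the choice $\tb>2\tau_1+\ta$ behind \eqref{tbta}. The rest of the argument is a bookkeeping exercise, essentially identical to the reducibility scheme of \cite{BBHM,BFM}, to which the reader is referred for the analogous estimates in the standing/capillary case.
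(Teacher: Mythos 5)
Your proposal follows the paper's own KAM reducibility scheme essentially line by line: initialization via Lemma~\ref{mod-to-tame}, the homological equation with the diagonal average subtracted, the cut-off extension of the divisors to the whole parameter space, the two-norm recursion $(\fM_\tn^\sharp(s),\fM_\tn^\sharp(s,\tb))$, and the telescopic argument for $(\bS3)_\tn$. So the structure is right.

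However, the exponent bookkeeping you flag as ``the delicate point'' is stated incorrectly. You claim the recursion forces $\tb>2\tau_1+\ta$, on the grounds that the smoothing gain $N_\tn^{-\tb}$ must beat the small-divisor loss ``$N_\tn^{\tau_1}$ squared''. Neither part is right. First, the commutator bound \eqref{remas.1} is \emph{linear}, not quadratic, in the small-divisor loss: every term in the Lie expansion, after substituting the homological equation, is of the form $[\bX_\tn,\Pi_{N_\tn}\bR_\perp^{(\tn)}-[\bR_\perp^{(\tn)}]]$ or $[\bX_\tn,\bR_\perp^{(\tn)}]$, with exactly one factor of $\bX_\tn$ contributing the single $N_\tn^{\tau_1}\upsilon^{-1}$; there is no unbounded $\bD_\tn$ left in the remainder precisely because the homological equation has been used. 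Second, your claimed constraint is not even satisfied by the paper's parameters: with $\ta=3(\tau_1+1)$ and $\tb=[\ta]+2$ one has $\tb\approx\ta$, which is far smaller than $2\tau_1+\ta$ as soon as $\tau_1\geq 1$. The actual balance, with $N_\tn=N_{\tn-1}^{3/2}$, is that the smoothing term $N_\tn^{-\tb}N_{\tn-1}\fM_0(s,\tb)$ needs $\tb\geq\ta+\tfrac23$, and the quadratic term $N_\tn^{\tau_1}\upsilon^{-1}N_{\tn-1}^{-2\ta}\fM_0(s,\tb)\fM_0(s_0,\tb)$ needs $\ta\geq 3\tau_1$ together with the smallness condition \eqref{small_KAM_con} on $N_0^{\tau_2}\fM_0(s_0,\tb)\upsilon^{-1}$ to absorb the residual $N_0$-power. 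If one believed $\tb>2\tau_1+\ta$ were necessary, one would conclude the scheme does not close, so this is not a harmless slip.

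Two smaller points. The new eigenvalue correction should be $\mu_j^{(\tn+1)}=\mu_j^{(\tn)}-\im(R_\perp^{(\tn,d)})_j^j(0)$; without the $-\im$ factor (cf.\ $\bD_+=\bD-\im[\bR_\perp]$ in \eqref{D+R+}) the eigenvalues are not real, and the realness crucially uses that reversibility \eqref{rev:Fourier} makes $(R_\perp^{(\tn,d)})_j^j(0)$ purely imaginary. Also the weight is $|j|^{1/2}$, not $|j|$: the two $\langle D\rangle^{1/4}$ factors give $\langle j\rangle^{1/2}$, matching \eqref{rem.eigen.KAM}.
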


Theorem \ref{iterative_KAM} implies also that the invertible operator
\begin{equation}\label{bUn}
\bU_0:=\uno_\perp\,, \quad  \bU_{\bar\tn} := \b\Phi_0 \circ \ldots \circ \b\Phi_{\bar\tn-1} \, , \quad  \bar \tn \geq 1 \, , 
\end{equation}
has almost diagonalized $\bL_0$. We have indeed the following corollary.
\begin{thm}\label{KAMRED}
	{\bf (Almost-diagonalization of $ \bL_0 $)}
	Assume \eqref{ansatz_I0_s0} with $\mu_0 \geq \mu(\tb)$. 
	For all $S>s_0$, there exist $N_0=N_0(S,\tb)>0$ and $\delta_0=\delta_0(S)>0$ such that, if the smallness condition
	\begin{equation}\label{KAM_small_cond}
	N_0^{\tau_2}\varepsilon\upsilon^{-4} \leq \delta_0
	\end{equation}
	holds, where $\tau_2=\tau_2(\tau,\nu)$ is defined in Theorem \ref{iterative_KAM}, then, for all $\bar\tn\in\N_0$ and for all $(\omega,\gamma)\in \R^\nu\times [\gamma_1,\gamma_2]$ the operator $\bU_{\bar\tn}$ in \eqref{bUn} is well-defined, the operators $\bU_{\bar\tn}^{\pm 1}- \uno_\perp$ are $\cD^{k_0}$-$(-\frac12)$-modulo-tame with modulo-tame constants satisfying, for all $s_0\leq s \leq S - \mu(\tb)$,
	\begin{equation}\label{flow.sharp.kam}
	\fM_{\langle D \rangle^{\frac14}(\bU_{\bar\tn}^{\pm 1}-\uno_\perp)\langle D \rangle^{\frac14}}^\sharp (s) \lesssim_{S} \varepsilon \upsilon^{-4} N_0^{\tau_1}(1+\normk{\fI_0}{s+\mu(\tb)})\,,
	\end{equation}
	where $\tau_1$ is given by \eqref{tbta}. 
	Moreover $\bU_{\bar\tn}$, $\bU_{\bar\tn}^{-1}$ are real, reversibility and momentum preserving. 
	The operator 
	$ \bL_{\bar\tn} = \omega\cdot\pa_\vf\uno_\perp + \im\,\bD_{\bar\tn} + \bR_\perp^{(\bar\tn)}$, 
	defined in	\eqref{bLn}  
	with $\tn=\bar\tn$ is real, reversible and momentum preserving. 
	The operator $ \bR_\perp^{(\bar\tn)}  $ 
	is $\cD^{k_0}$-$(-\tfrac12)$-modulo-tame with a modulo-tame constant 
	satisfying, for all $ s_0 \leq s \leq S-\mu(\tb) $,
	\begin{equation}\label{stimaRN}
		\fM_{\langle D \rangle^{\frac14}\bR_\perp^{(\bar\tn)}\langle D \rangle^{\frac14}}^\sharp (s) 
		\lesssim_{S} \varepsilon\upsilon^{-3} N_{\bar\tn-1}^{-\ta} (1+\normk{\fI_0}{s+\mu(\tb)}) \,.
	\end{equation}
	Moreover, for all $(\omega,\gamma)$ in
	$	\t\Lambda_{\bar\tn}^\upsilon = \t\Lambda_{\bar\tn}^\upsilon(i) = \bigcap_{\tn=0}^{\bar\tn } \t\Lambda_\tn^\upsilon $, 
	where the sets $\t\Lambda_\tn^\upsilon$ are defined in \eqref{tLambdan}, 
	the  conjugation formula 
	$	\bL_{\bar\tn} := \bU_{\bar\tn}^{-1} \bL_0 \bU_{\bar\tn} $ holds.	
\end{thm}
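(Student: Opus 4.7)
The plan is to bootstrap Theorem \ref{iterative_KAM} from a single KAM step to the infinite-step statement, assembling the transformations $\b\Phi_{\tn}$ into the product $\bU_{\bar\tn}$ and controlling all relevant norms via the modulo-tame calculus of Section \ref{subsec:semiphi}. The smallness condition \eqref{KAM_small_cond} together with the bound $\fM_0(s_0,\tb)\leq C(S)\varepsilon\upsilon^{-3}$ from \eqref{M0ss.est} will make the scheme convergent: at step $\tn$ the generator $\bX_{\tn-1}$ has modulo-tame norm of order $\varepsilon\upsilon^{-4} N_{\tn-1}^{\tau_1} N_{\tn-2}^{-\ta}$ at low regularity, and since $\ta > \tau_1+1$ by \eqref{tbta}, the series $\sum_\tn N_{\tn-1}^{\tau_1} N_{\tn-2}^{-\ta}$ converges. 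The KAM smallness \eqref{small_KAM_con} required by Theorem \ref{iterative_KAM} follows from \eqref{KAM_small_cond} and $\fM_0(s_0,\tb)\leq C\varepsilon\upsilon^{-3}$, so the iterative step is applicable for every $\tn$.

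First I would estimate each flow map $\b\Phi_{\tn-1}=e^{\bX_{\tn-1}}$. Applying Lemma \ref{modulo_expo-12} to $\bX_{\tn-1}$ (whose modulo-tame bounds are \eqref{gen.modulo.KAM}), and noting that the low-norm hypothesis $\fM^\sharp_{\langle D\rangle^{1/4}\bX_{\tn-1}\langle D\rangle^{1/4}}(s_0)\leq 1$ holds by \eqref{gen.modulo.KAM} at $s=s_0$ together with \eqref{KAM_small_cond}, gives
\begin{equation*}
\fM^\sharp_{\langle D\rangle^{1/4}(\b\Phi_{\tn-1}^{\pm 1}-\uno_\perp)\langle D\rangle^{1/4}}(s)\lesssim_{s_0,\tb}\upsilon^{-1} N_{\tn-1}^{\tau_1}N_{\tn-2}^{-\ta}\fM_0(s,\tb).
\end{equation*}
Next I would control $\bU_{\bar\tn}=\b\Phi_0\circ\cdots\circ\b\Phi_{\bar\tn-1}$ by writing $\bU_{\bar\tn}-\uno_\perp$ as a telescoping sum and applying the composition estimate in Lemma \ref{modulo_sumcomp-12} inductively. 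The low-regularity factors $\fM^\sharp_{\langle D\rangle^{1/4}(\b\Phi_j^{\pm 1}-\uno_\perp)\langle D\rangle^{1/4}}(s_0)$ contribute a geometric factor $\leq \tfrac12$ per step by \eqref{KAM_small_cond}, while the high-regularity factor grows only by the convergent sum $\sum_{\tn\geq 0}N_{\tn-1}^{\tau_1}N_{\tn-2}^{-\ta}\lesssim 1$. Substituting $\fM_0(s,\tb)\lesssim_S \varepsilon\upsilon^{-3}(1+\|\fI_0\|^{k_0,\upsilon}_{s+\mu(\tb)})$ from \eqref{M0ss.est} and absorbing a single factor $N_0^{\tau_1}$ yields \eqref{flow.sharp.kam}. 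The analogous estimate for $\bU_{\bar\tn}^{-1}-\uno_\perp$ follows identically from the bounds on $\b\Phi_{\tn-1}^{-1}$ in \eqref{gen.modulo.KAM}.

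The algebraic properties are inherited: each $\b\Phi_{\tn-1}$ is real, reversibility and momentum preserving by $({\bf S1})_\tn$, and these classes are closed under composition by Lemmata \ref{lem:mom_prop} (momentum preserving) and the discussion following Definition \ref{rev_defn} (reversibility preserving). Hence $\bU_{\bar\tn}^{\pm 1}$ lie in both classes, and consequently $\bL_{\bar\tn}=\omega\cdot\pa_\vf\uno_\perp+\im\bD_{\bar\tn}+\bR_\perp^{(\bar\tn)}$ is real, reversible and momentum preserving since $\bL_0$ is and conjugation preserves these properties. The bound \eqref{stimaRN} on $\bR_\perp^{(\bar\tn)}$ is just the first inequality of \eqref{small_scheme} at $\tn=\bar\tn$ combined with \eqref{M0ss.est}. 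Finally, the conjugation identity $\bL_{\bar\tn}=\bU_{\bar\tn}^{-1}\bL_0\bU_{\bar\tn}$ on $\t\Lambda_{\bar\tn}^\upsilon=\bigcap_{\tn=0}^{\bar\tn}\t\Lambda_\tn^\upsilon$ is obtained by iterating \eqref{ITE-CON}: on $\t\Lambda_{\tn}^\upsilon\subseteq\t\Lambda_{\tn-1}^\upsilon$ we have $\bL_\tn=\b\Phi_{\tn-1}^{-1}\bL_{\tn-1}\b\Phi_{\tn-1}$, so telescoping yields the claim on the intersection.

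The main obstacle is purely quantitative: making sure the geometric series generated by composing the $\b\Phi_{\tn-1}$'s converges uniformly and that the loss factor $N_{\tn-1}^{\tau_1}$ in the generator bound \eqref{gen.modulo.KAM} is compensated by the super-exponential decay $N_{\tn-2}^{-\ta}$. This is precisely guaranteed by the definition $\ta=3(\tau_1+1)$ in \eqref{tbta} together with $\chi=3/2$ in \eqref{scala.strai}, so $N_{\tn-1}^{\tau_1}N_{\tn-2}^{-\ta}\leq N_{\tn-2}^{\chi\tau_1-\ta}$ is summable. All other estimates are standard iterations of the tame/modulo-tame algebra, with no genuinely new difficulty beyond the bookkeeping of constants, and the quantitative smallness required for \eqref{KAM_small_cond} is exactly what the scheme needs.
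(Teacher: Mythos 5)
The proposal is correct and follows exactly the approach the paper intends: Theorem \ref{KAMRED} is a direct corollary of Theorem \ref{iterative_KAM}, obtained by verifying that \eqref{KAM_small_cond} implies \eqref{small_KAM_con} via \eqref{M0ss.est}, composing the flow maps $\b\Phi_{\tn}$ with the modulo-tame calculus (Lemmata \ref{modulo_sumcomp-12}, \ref{modulo_expo-12}) and using the super-exponential decay $N_{\tn-1}^{\tau_1}N_{\tn-2}^{-\ta}\leq N_{\tn-2}^{\chi\tau_1-\ta}$ guaranteed by $\ta = 3(\tau_1+1)$ in \eqref{tbta}, and then iterating \eqref{ITE-CON}. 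The paper states this as an immediate consequence without detailed proof (referring to \cite{BM}, \cite{BBHM}, \cite{BFM}), and your derivation is precisely the standard argument being invoked.
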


\subsection*{Proof of Theorem \ref{iterative_KAM}}

The proof of Theorem \ref{iterative_KAM} is  inductive. We first show that $({\bf S1})_\tn$-$({\bf S3})_\tn$ hold when $\tn=0$. 

\paragraph{Proof of $({\bf S1})_0$-$({\bf S3})_0$.}

Properties \eqref{bLn}-\eqref{eigen_KAM}, \eqref{bRperpn}  for $\tn=0$ hold by \eqref{bL0}, \eqref{D0}, \eqref{Rperp0} with $\fr_j^{(0)} =0$. 
Moreover, by Lemma \ref{mod-to-tame}, we deduce that,  
	for any $s_0 \leq s  \leq S-\mu(\tb)$, we have 
$\fM_0^\sharp(s), \fM_0^\sharp (s,\tb) \lesssim_{s_0,\tb} \fM_0(s,\tb)$
and  \eqref{small_scheme} for $\tn=0$ holds.
The estimates \eqref{S3_1}, \eqref{S3_2}
at $ \tn = 0 $ follows similarly by \eqref{Moss.delta}. 
Finally  $({\bf S3})_0$ is trivial since $\t\Lambda_0^\upsilon(i_1)= \t\Lambda_0^{\upsilon-\rho}(i_2) = \R^\nu\times [\gamma_1,\gamma_2]$.

\paragraph{The reducibility step.}
We now describe the generic inductive step, showing how to transform $\bL_\tn$ into $\bL_{\tn+1}$ by the conjugation with $\b\Phi_{\tn}$. For sake of simplicity , we drop the index $\tn$ and we write $+$ instead of $\tn+1$, so that we write $\bL:=\bL_\tn$, $\bL_+:=\bL_{\tn+1}$, $\bR_\perp:= \bR_\perp^{(\tn)}$, $\bR_\perp^{(+)}:= \bR_\perp^{(\tn+1)}$, $N:= N_\tn$, etc. 
We conjugate $\bL$ in \eqref{bLn} by a transformation of the form 
\begin{equation}\label{trasf.KAM}
\b\Phi:= e^{\bX} \,, \quad \bX:= \begin{pmatrix}
X^{(d)} & X^{(o)} \\ \bar{X^{(o)}} & \bar{X^{(d)}} 
\end{pmatrix}, 
\  X^{(d)}: 
H_{{\mathbb S}_0}^\perp  
\rightarrow H_{{\mathbb S}_0}^\perp  \,,	\
X^{(o)} : H_{-{\mathbb S}_0}^\perp  
\rightarrow H_{{\mathbb S}_0}^\perp \, , 
\end{equation}
where $\bX$ is a bounded linear operator, chosen below in \eqref{Xdn.sol}, \eqref{Xon.sol}. By the Lie expansions 
\eqref{lie_abstract}-\eqref{lie_omega_devf} 
we have
\begin{align} \label{bL+}
\bL_+  := \b\Phi^{-1} \bL \b\Phi 
& = 
\omega\cdot \pa_\vf\uno_\perp + \im\,\bD +( (\omega\cdot\pa_\vf \bX) - \im [\bX,\bD] + \Pi_N \bR_\perp ) + \Pi_N^\perp \bR_\perp  \\
& \ \ \nonumber 
- \int_{0}^1 e^{-\tau \bX} [\bX,\bR_\perp] e^{\tau\bX} \wrt\tau 
- \int_{0}^1 (1-\tau)e^{-\tau \bX} [\bX, (\omega\cdot\pa_\vf \bX) - \im [\bX,\bD]]e^{\tau\bX} \wrt\tau 
\end{align}
where $\Pi_N$ is defined in \eqref{PiNA} and $\Pi_N^\perp:= {\rm Id}- \Pi_N$. We want to solve the homological equation
\begin{equation}\label{homeq.KAM}
\omega\cdot\pa_\vf \bX - \im [\bX,\bD] + \Pi_N \bR_\perp = [\bR_\perp]
\end{equation}
where
\begin{equation}\label{ZRperp}
[ \bR_\perp]:= \begin{pmatrix}
[R_\perp^{(d)}] & 0 \\ 0 & [\bar{ R_\perp^{(d)}} ]
\end{pmatrix} \, , \quad [R_\perp^{(d)}] := {\rm diag}_{j \in \S_0^c} (R_\perp^{(d)})_j^j(0) \, . 
\end{equation}
By \eqref{bLn}, \eqref{bRperpn} and \eqref{trasf.KAM}, the homological equation \eqref{homeq.KAM} is equivalent to the two scalar homological equations
\begin{equation}\label{homeq.sys}
\begin{aligned}
& \omega\cdot \pa_\vf X^{(d)} -\im (X^{(d)} \cD - \cD X^{(d)} ) + \Pi_N R_\perp^{(d)} = [R_\perp^{(d)}] \,\\
& \omega\cdot \pa_\vf X^{(o)} + \im(X^{(o)}\bar{\cD} + \cD X^{(o)}) + \Pi_N R_\perp^{(o)} = 0 \,.
\end{aligned}
\end{equation}
Recalling \eqref{bLn} and since $ \bar{\cD} = {\rm diag}_{j \in - \S_0^c}( \mu_{-j}) $, acting in
$ H_{- \S_0}^\bot $ (see \eqref{Dbar})  the solutions of \eqref{homeq.sys} are, for 
all $ (\omega, \gamma) \in \t\Lambda_{\tn+1}^{\upsilon} $ (see \eqref{tLambdan} 
with $ \tn \rightsquigarrow \tn + 1 $)
\begin{align}
& (X^{(d)})_j^{j'}(\ell) := \begin{cases}
-\dfrac{(R_\perp^{(d)})_j^{j'}(\ell)}{\im(\omega\cdot\ell + \mu_j-\mu_{j'})} & \ \text{ if }\begin{cases}
(\ell,j,j')\neq (0,j,j), \ j,j'\in\S_0^c, \ \braket{\ell}\leq N  \\
\ell\cdot\ora{\jmath} + j-j'= 0
\end{cases}\\
0 & \ \text{ otherwise} \, , 
\end{cases}  \label{Xdn.sol} \\
& (X^{(o)})_j^{j'}(\ell) := \begin{cases}
-\dfrac{(R_\perp^{(o)})_j^{j'}(\ell)}{\im(\omega\cdot\ell + \mu_j+\mu_{-j'})} & \  \text{ if }\begin{cases}
\forall \, \ell\in\Z^\nu \ j, - j'\in\S_0^c,  \ \braket{\ell}\leq N  \\
\ell\cdot\ora{\jmath} + j-j'= 0
\end{cases}\\
0 & \  \text{ otherwise} \, . 
\end{cases} \label{Xon.sol}
\end{align}
Note that, since $ - j' \in \S_0^c $, we can apply 
the bounds \eqref{tLambdan} for $ (\omega, \gamma) \in \t\Lambda_{\tn+1}^{\upsilon} $. 

\begin{lem}\label{X_gen.hom}
	{\bf (Homological equations)}
	The real operator $\bX$ defined in \eqref{trasf.KAM}, \eqref{Xdn.sol}, \eqref{Xon.sol}, 
	(which for all $(\omega,\gamma)\in\t\Lambda_{\tn+1}^{\upsilon}$ solves the homological equation \eqref{homeq.KAM}) admits an extension to the whole parameter space 
	$ \R^\nu \times [\gamma_1, \gamma_2] $. Such extended operator 
	is  $\cD^{k_0}$-$(-\tfrac12)$-modulo-tame satisfying, for all $s_0\leq s \leq S-\mu(\tb)$,
	\begin{equation}\label{gen.est}
	\fM_{\langle D \rangle^{\frac14}\bX\langle D \rangle^{\frac14}}^\sharp(s) \lesssim_{k_0} N^{\tau_1}\upsilon^{-1} \fM^\sharp(s)\,, \quad \fM_{\langle D \rangle^{\frac14}\langle\pa_\vf\rangle^\tb \bX\langle D \rangle^{\frac14}}^\sharp(s) \lesssim_{k_0} N^{\tau_1} \upsilon^{-1} \fM^\sharp (s,\tb) \,, 
	\end{equation}
	where $\tau_1:=\tau(k_0+1)+k_0$.
	For all 
	$(\omega,\gamma)\in  \R^\nu \times \R$,
	\begin{align}
	&\| \langle D \rangle^\frac14 |\Delta_{12} \bX| \langle D \rangle^\frac14 \|_{\cL(H^{s_0})} \lesssim \notag \\
	&
	N^{2\tau+1} { \upsilon^{-1}}( \| \langle D \rangle^\frac14|\bR_\perp(i_2)| \langle D \rangle^\frac14\|_{\cL(H^{s_0})}  \norm{i_1-i_2}_{s_0+\mu(\tb)} + \| \langle D \rangle^\frac14 |\Delta_{12} \bR_\perp| \langle D \rangle^\frac14\|_{\cL(H^{s_0})}  )\,, \label{X12.est1} \\
	& \| \langle D \rangle^\frac14 | \langle\pa_\vf\rangle^\tb \Delta_{12} \bX | \langle D \rangle^\frac14\|_{\cL(H^{s_0})} \lesssim \notag \\
	& N^{2\tau+1} { \upsilon^{-1}}( \| \langle D \rangle^\frac14 | \langle\pa_\vf\rangle^\tb\bR_\perp(i_2) |\langle D \rangle^\frac14 \|_{\cL(H^{s_0})}  \norm{i_1-i_2}_{s_0+\mu(\tb)} + \| \langle D \rangle^\frac14 | \langle\pa_\vf\rangle^\tb\Delta_{12} \bR_\perp | \langle D \rangle^\frac14 \|_{\cL(H^{s_0})}  )\,. \label{X12.est2}
	\end{align}
	The operator $\bX$ is  reversibility  and momentum preserving.
\end{lem}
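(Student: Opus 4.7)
\medskip

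\noindent\textbf{Proof plan.} The strategy is standard in KAM reducibility schemes: the formulas \eqref{Xdn.sol}--\eqref{Xon.sol} define the matrix elements of $\bX$ on the resonant parameter set $\t\Lambda_{\tn+1}^{\upsilon}$, and we extend the operator to the whole $\R^\nu\times[\gamma_1,\gamma_2]$ by inserting a cut-off into the small divisors. Concretely, I replace the denominators by
\[
\im(\omega\cdot\ell+\mu_j-\mu_{j'})\,\chi\bigl((\omega\cdot\ell+\mu_j-\mu_{j'})\upsilon^{-1}\langle\ell\rangle^{\tau}\bigr)^{-1}
\]
and similarly for the off-diagonal block, with $\chi$ the cut-off in \eqref{cutoff}, together with the same restrictions $\langle\ell\rangle\leq N$ and $\vec\jmath\cdot\ell+j\mp j'=0$. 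On $\t\Lambda_{\tn+1}^\upsilon$ the cut-off equals $1$ by \eqref{tLambdan}, so the extended operator coincides with the one solving \eqref{homeq.KAM}. The momentum restriction on the nonzero matrix elements is inherited from the corresponding property of $\bR_\perp$ (which is momentum preserving by $({\bf S1})_\tn$), and this is what ensures that the extended operator $\bX$ is still momentum preserving; reversibility of $\bX$ follows from reversibility of $\bR_\perp$ and the fact that $\mu_j\in\R$, so that the denominators are purely imaginary and cancel the $\im$ appearing in the reversibility characterization of Lemma~\ref{rev_defn_C}.

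\smallskip

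\noindent For the tame bound \eqref{gen.est}, I differentiate the cut-off denominators $k_0$ times in $\lambda=(\omega,\gamma)$ using Faà di Bruno/Leibniz. Each differentiation of $(\omega\cdot\ell+\mu_j-\mu_{j'})^{-1}$ produces a factor $\upsilon^{-1}\langle\ell\rangle^\tau$ from the standard small-divisor lower bound guaranteed by the cut-off, together with a polynomial factor in $|\ell|$ coming from $\partial_\omega(\omega\cdot\ell)$ and derivatives of $\mu_j$, which are controlled using \eqref{const_small} and \eqref{rem.eigen.KAM} (the eigenvalues are of the form $\tm_{1,\bar\tn} j+\tm_{\frac12}\Omega_j(\gamma)-\tm_0\sgn(j)+\fr_j^{(\tn)}$ with $k_0$-uniform bounds). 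Summing over $|k|\leq k_0$ and weighting by $\upsilon^{|k|}$ yields, for each nonzero entry of $\bX$, the pointwise bound
\[
\upsilon^{|k|}|\partial_\lambda^k (X^{(d)})_j^{j'}(\ell)|
\lesssim_{k_0}\upsilon^{-1}\langle\ell\rangle^{(k_0+1)\tau+k_0}\sum_{|k'|\leq k_0}\upsilon^{|k'|}|\partial_\lambda^{k'}(R_\perp^{(d)})_j^{j'}(\ell)|,
\]
and the analogous bound for $X^{(o)}$, where the restriction $\langle\ell\rangle\leq N$ converts $\langle\ell\rangle^{(k_0+1)\tau+k_0}$ into $N^{\tau_1}$. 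Applying the definition \eqref{def:mod-tame} of $\cD^{k_0}$-$(-\tfrac12)$-modulo-tameness termwise (the weights $\langle j\rangle^{1/4}\langle j'\rangle^{1/4}$ commute with scalar multiplication by small divisors), one deduces the two estimates in \eqref{gen.est}; the second, involving $\langle\pa_\vf\rangle^\tb$, is obtained identically since the factor $\langle\ell-\ell'\rangle^{\tb}$ commutes with the small divisors that depend only on $\ell-\ell'$.

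\smallskip

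\noindent The Lipschitz-type estimates \eqref{X12.est1}--\eqref{X12.est2} follow from the identity
\[
\Delta_{12}\!\left(\frac{R_\perp^{(d)}}{\omega\cdot\ell+\mu_j-\mu_{j'}}\right)
=\frac{\Delta_{12}R_\perp^{(d)}}{\omega\cdot\ell+\mu_j(i_2)-\mu_{j'}(i_2)}
-\frac{R_\perp^{(d)}(i_1)\,(\Delta_{12}(\mu_j-\mu_{j'}))}{(\omega\cdot\ell+\mu_j(i_1)-\mu_{j'}(i_1))(\omega\cdot\ell+\mu_j(i_2)-\mu_{j'}(i_2))},
\]
and the analogous formula for the off-diagonal block. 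The first term contributes the $\Delta_{12}\bR_\perp$ summand on the right-hand sides of \eqref{X12.est1}--\eqref{X12.est2}, weighted by one small-divisor factor $\upsilon^{-1}\langle\ell\rangle^\tau$. The second term contributes the $\bR_\perp(i_2)\,\|i_1-i_2\|$ summand, weighted by two small-divisor factors, i.e.\ $\upsilon^{-1}\langle\ell\rangle^{2\tau+1}$, after using $|\Delta_{12}\mu_j|\lesssim\|i_1-i_2\|_{s_0+\sigma}$ from \eqref{const_smallV} and the inductive bound on $\Delta_{12}\fr_j^{(\tn)}$ in $({\bf S2})_\tn$. The restriction $\langle\ell\rangle\leq N$ converts $\langle\ell\rangle^{2\tau+1}$ into $N^{2\tau+1}$, and the weights $\langle D\rangle^{1/4}$ together with the operator-norm inequality $\||A|h\|_{s_0}\leq\||B|h\|_{s_0}$ whenever $|A_j^{j'}(\ell)|\leq |B_j^{j'}(\ell)|$ give \eqref{X12.est1}--\eqref{X12.est2} with the same structure.

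\smallskip

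\noindent The main technical point to be careful about is not any single estimate in isolation but the bookkeeping: one has to verify that the cut-off extension preserves the momentum restriction $\vec\jmath\cdot\ell+j\mp j'=0$ (so that $\bX$ is momentum preserving by Lemma~\ref{lem:mom_pseudo}), that the reversibility signs in \eqref{rev:Fourier} are compatible with division by a purely imaginary denominator, and that all weighted sums used to pass from the pointwise matrix-element bounds to the modulo-tame constants of $\langle D\rangle^{1/4}\bX\langle D\rangle^{1/4}$ and $\langle D\rangle^{1/4}\langle\pa_\vf\rangle^{\tb}\bX\langle D\rangle^{1/4}$ control $\|\cdot\|_s$ in both the high norm $s$ and the low norm $s_0$, according to Definition~\ref{Dk0-modulo-12}.
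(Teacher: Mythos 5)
Your proposal follows essentially the same route as the paper: extend $\bX$ by inserting the cut-off $\chi(f(\lambda)\upsilon^{-1}\langle\ell\rangle^\tau)$ into the small divisors, derive the pointwise bound $|\pa_\lambda^k g_{\ell,j,j'}|\lesssim_{k_0}\langle\ell\rangle^{\tau_1}\upsilon^{-1-|k|}$ with $\tau_1=\tau(k_0+1)+k_0$, multiply by the matrix elements of $\bR_\perp$ and transfer the pointwise estimate to the modulo-tame constant of $\langle D\rangle^{1/4}\langle\pa_\vf\rangle^\tb\bX\langle D\rangle^{1/4}$ via the Cauchy–Schwarz summation in the definition, using $\langle\ell-\ell'\rangle\leq N$ to convert $\langle\ell-\ell'\rangle^{\tau_1}$ into $N^{\tau_1}$; the Lipschitz estimates and the momentum/reversibility preservation are treated the same way. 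The argument is correct and no genuinely different ingredient is used.
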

\begin{proof}
	We prove that \eqref{gen.est} holds for $X^{(d)}$. The proof for $X^{(o)}$ holds analogously.	First, we extend the solution in \eqref{Xdn.sol} to all
	$\lambda  $ in $\R^\nu\times [\gamma_1,\gamma_2]$ by setting (without any further relabeling)
	$ (X^{(d)})_j^{j'}(\ell) = \im\,g_{\ell,j,j'}(\lambda) (R_\perp^{(d)})_j^{j'}(\ell) $,
	where 
	$$
	g_{\ell,j,j'}(\lambda) := \frac{\chi(f(\lambda)\rho^{-1})}{f(\lambda)} \,, \quad f(\lambda):= \omega\cdot\ell + \mu_{j}-\mu_{j'} \,, \quad  \rho 
	:= \upsilon \braket{\ell}^{-\tau}\, , 
	$$
	and	$\chi$ is the cut-off function  \eqref{cutoff}. 
	By \eqref{eigen_KAM}, \eqref{rem.eigen.KAM}, \eqref{const_small},  \eqref{tLambdan}, 
	Lemma \ref{rem:exp_omegaj_gam}, together with \eqref{cutoff},
	we deduce that, for any $ k_1 \in\N_0^\nu$, $\abs{k_1}\leq k_0$,
	\begin{equation*}
	\sup_{\abs{k_1}\leq k_0} 
	\big| \pa_\lambda^{k_1} g_{\ell,j,j'} \big|	\lesssim_{k_0} \braket{\ell}^{\tau_1} \upsilon^{-1-\abs{k_1}} \, , \quad \tau_1 = \tau(k_0+1)+k_0 \, , 
	\end{equation*}
	and we deduce,  for all $0\leq \abs k \leq k_0$,
	\begin{align}
	| \pa_\lambda^k (X^{(d)})_j^{j'}(\ell) | 
	& \lesssim_{k_0} 
	\sum_{k_1+k_2=k} 
	|\pa_\lambda^{k_1}g_{\ell,j,j'}(\lambda)| 
	|\pa_\lambda^{k_2} (R_\perp^{(d)})_j^{j'}(\ell)| \nonumber  \\
	& 	\lesssim_{k_0}  \braket{\ell}^{\tau_1} \upsilon^{-1-\abs{k}} \sum_{\abs{k_2}\leq \abs k} \upsilon^{\abs{k_2}} | \pa_\lambda^{k_2} (R_\perp^{(d)})_j^{j'}(\ell) | \,.\label{equa1}
	\end{align}
	By \eqref{Xdn.sol} we have that $ (X^{(d)})_j^{j'}(\ell)= 0 $ 
	for all $ \langle \ell \rangle > N $. 
	Therefore, for all $ |k| \leq k_0 $,  we have
	\begin{align}
	&		\| \langle D \rangle^\frac14| \braket{\pa_\vf}^\tb \pa_\lambda^k X^{(d)}  | \langle D \rangle^\frac14 h \|_{s}^2 \leq \sum_{\ell,j}\braket{\ell,j}^{2s} \Big( \sum_{\braket{\ell-\ell'}\leq N,j'} | \braket{\ell-\ell'}^\tb\langle j \rangle^\frac14 \langle j' \rangle^\frac14 \pa_\lambda^k (X^{(d)})_j^{j'}(\ell-\ell') | | h_{\ell',j'}| \Big)^2 \notag \\
	&  \stackrel{\eqref{equa1}}{\lesssim_{k_0}} N^{2\tau_1}\upsilon^{-2(1+\abs k)} \sum_{\abs{k_2}\leq \abs k}\upsilon^{2\abs{k_2}} \sum_{\ell,j}\braket{\ell,j}^{2s} 
	\Big( \sum_{\ell',j'} | \braket{\ell-\ell'}^\tb \langle j \rangle^\frac14 \langle j' \rangle^\frac14 \pa_\lambda^{k_2} (R_\perp^{(d)})_j^{j'}(\ell-\ell') | | h_{\ell',j'}| \Big)^2 \notag \\
	& \lesssim_{k_0}  N^{2\tau_1} \upsilon^{-2(1+\abs k)} \sum_{\abs{k_2}\leq \abs k} \upsilon^{2\abs{k_2}} \|\langle D \rangle^\frac14 | \braket{\pa_\vf}^\tb \pa_\lambda^{k_2}R_\perp^{(d)} | \langle D \rangle^\frac14|h| \|_s^2 \notag \\
	& \stackrel{\ref{Dk0-modulo-12}, \eqref{Mn.sharp}}{\lesssim_{k_0}} N^{2\tau_1} \upsilon^{-2(1+\abs k)} \big( \fM^\sharp(s,\tb)^2 \norm h_{s_0}^2 + \fM^\sharp (s_0,\tb)^2 \norm h_s^2 
	\big) \,, \nonumber
	\end{align}
	and, by Definition \ref{Dk0-modulo-12}, 
	we conclude that
	$
	\fM_{\langle D \rangle^{\frac14}\langle\pa_\vf\rangle^\tb X^{(d)}\langle D \rangle^{\frac14}}^\sharp (s) \lesssim_{k_0} N^{\tau_1} \upsilon^{-1} \fM^\sharp(s,\tb) $.
	The analogous estimates for $\braket{\pa_\vf}^\tb X^{(o)}$,  $X^{(d)}$,
	$X^{(o)}$ and \eqref{X12.est1}, \eqref{X12.est2} follow similarly.
	By induction, the operator $\bR_\perp$ is  reversible and momentum preserving. Therefore, by \eqref{trasf.KAM}, \eqref{Xdn.sol}, \eqref{Xon.sol} and Lemmata \ref{rev_defn_C}, \ref{lem:mom_pseudo}, it follows that $\bX$ is 
	reversibility and momentum preserving.
\end{proof}

By \eqref{bL+}, \eqref{homeq.KAM},
for all $ \lambda \in\t\Lambda_{\tn+1}^{\upsilon}$, we have 
\begin{equation}\label{DEFL+}
\bL_+ = \b\Phi^{-1} \bL \b\Phi = \omega\cdot\pa_\vf \uno_\perp + \im\,\bD_+ + \bR_\perp^{(+)}\,,
\end{equation}
where
\begin{equation}\label{D+R+}
\begin{aligned}
& \bD_+:= \bD -\im [\bR_\perp] \,, \\
& \bR_\perp^{(+)}:=
\Pi_N^\perp \bR_\perp - \int_0^1 e^{-\tau\bX}[\bX,\bR_\perp] e^{\tau \bX} \wrt\tau + \int_0^1 (1-\tau) e^{-\tau \bX} [\bX, \Pi_N\bR_\perp-[\bR_\perp]] e^{\tau\bX} \wrt\tau \,.
\end{aligned}
\end{equation}
The right hand side of \eqref{DEFL+}-\eqref{D+R+} define an extension of 
$\bL_+$ to the whole parameter space $ \R^\nu \times [\gamma_1, \gamma_2] $,
since $ \bR_\perp $ and $ \bX $ are defined on $ \R^\nu \times [\gamma_1, \gamma_2] $. 

The new operator $\bL_+$ in \eqref{DEFL+} 
has the same form of $\bL$ in \eqref{bLn} with the 
non-diagonal remainder $\bR_\perp^{(+)}$ which is the sum of a 
term $ \Pi_N^\perp \bR_\perp $ supported on high frequencies 
and a quadratic function of $\bX$ and $\bR_\perp $. 
The new normal form $\bD_+$ is diagonal:
\begin{lem}\label{new.normal}
	{\bf (New diagonal part)}
	For all $(\omega,\gamma)\in\R^\nu\times [\gamma_1,\gamma_2]$,  the new normal form is
	\begin{equation*}\label{nn}
	\begin{aligned}
	\im \, \bD_+ = \im \, \bD + [\bR_\perp] = \im \begin{pmatrix}
	\cD_+ & 0 \\ 0 & -\bar{\cD_+}
	\end{pmatrix}\,, \ 
	\cD_+:= \diag_{j\in\S_0^c}\mu_{j}^{(+)}\,, 
	\  \mu_{j}^{(+)} := \mu_{j}+ \tr_j \in \R\,,
	\end{aligned}
	\end{equation*}
	where each $ \tr_j $ satisfies, on $\R^\nu\times[\gamma_1,\gamma_2]$,
	\begin{equation}\label{defrjNF}
	|j|^\frac12 |\tr_j|^{k_0,\upsilon} = |j|^\frac12 | \mu_{j}^{(+)}-\mu_{j} |^{k_0,\upsilon} \lesssim \fM^\sharp(s_0) \, .
	\end{equation}
	Moreover, given tori $i_1(\omega,\gamma), i_2(\omega,\gamma)$, we have  
	$ |j|^\frac12| \tr_j (i_1)- \tr_j(i_2) | \lesssim \| \langle D \rangle^\frac14 | \Delta_{12} \bR_\perp | \langle D \rangle^\frac14\|_{\cL(H^{s_0})} $. 
\end{lem}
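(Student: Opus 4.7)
\textbf{Proof plan for Lemma \ref{new.normal}.} The identity $\im\bD_+ = \im\bD + [\bR_\perp]$ is nothing but the definition of $\bD_+$ given in \eqref{D+R+}, multiplied by $\im$; this formula makes sense on the whole of $\R^\nu\times[\gamma_1,\gamma_2]$ because $[\bR_\perp]$ is defined in \eqref{ZRperp} from the matrix entries of $\bR_\perp$, which have been extended to the full parameter space at the previous inductive step. Since $[\bR_\perp]$ is block-diagonal with diagonal entries $(R_\perp^{(d)})_j^j(0)$ and their conjugates (acting on $H_{-\S_0}^\perp$), the sum $\bD+(-\im)[\bR_\perp]$ is itself diagonal in the exponential basis, so I would set
\[
\tr_j := -\im\,(R_\perp^{(d)})_j^j(0)\,,\qquad \mu_j^{(+)} := \mu_j + \tr_j\,,
\]
which gives the stated diagonal structure. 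The only algebraic point left is the reality of $\tr_j$, and this is where I would invoke Lemma \ref{rev_defn_C}: the inductive hypothesis ($\bR_\perp^{(\tn)}$ reversible) yields $(R_\perp^{(d)})_j^j(\ell) = -\overline{(R_\perp^{(d)})_j^j(\ell)}$ for every $\ell\in\Z^\nu$; specializing to $\ell=0$ shows that $(R_\perp^{(d)})_j^j(0)$ is purely imaginary, hence $\tr_j\in\R$.

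For the quantitative bound \eqref{defrjNF}, the strategy is to extract the diagonal matrix element $(R_\perp^{(d)})_j^j(0)$ by testing against the basis function $e_{0,j}(\vf,x):=e^{\im j x}$. Using that $\langle D\rangle^{1/4} e_{0,j} = \langle j\rangle^{1/4} e_{0,j}$ for $j\in\S_0^c$, and expanding $\bR_\perp e_{0,j}$ in Fourier series, the coefficient of $e_{0,j}$ in $\langle D\rangle^{1/4}|\partial_\lambda^k\bR_\perp|\langle D\rangle^{1/4}e_{0,j}$ dominates $\langle j\rangle^{1/2}\,|(\partial_\lambda^k R_\perp^{(d)})_j^j(0)|$, so that
\[
\langle j\rangle^{1/2}\,|(\partial_\lambda^k R_\perp^{(d)})_j^j(0)|\,\langle j\rangle^{s_0} \;\leq\; \bigl\|\langle D\rangle^{1/4}|\partial_\lambda^k\bR_\perp|\langle D\rangle^{1/4} e_{0,j}\bigr\|_{s_0}\,,\qquad |k|\leq k_0\,.
\]
Multiplying by $\upsilon^{|k|}$, summing over $|k|\leq k_0$, and invoking Definition \ref{Dk0-modulo-12} with test function $h=e_{0,j}$ (of $s_0$-norm $\langle j\rangle^{s_0}$), the right hand side is bounded by $2\,\fM^\sharp(s_0)\,\langle j\rangle^{s_0}$; dividing by $\langle j\rangle^{s_0}$ gives $|j|^{1/2}|\tr_j|^{k_0,\upsilon}\lesssim\fM^\sharp(s_0)$.

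The Lipschitz estimate in $i$ is entirely analogous: the same test on $e_{0,j}$ applied to $\Delta_{12}\bR_\perp$ (which is still a bounded operator on $H^{s_0}$) gives
\[
|j|^{1/2}\,|\tr_j(i_1)-\tr_j(i_2)| \;=\; |j|^{1/2}\,|(\Delta_{12} R_\perp^{(d)})_j^j(0)| \;\lesssim\; \bigl\|\langle D\rangle^{1/4}|\Delta_{12}\bR_\perp|\langle D\rangle^{1/4}\bigr\|_{\cL(H^{s_0})}\,.
\]
I do not anticipate any serious obstacle here: the whole argument is a direct unpacking of the modulo-tame definition on a single basis element, combined with the reversibility bookkeeping of Section \ref{subsec:ham_st}. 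The only delicate point to double-check is that the extension of $\bR_\perp$ to the whole parameter set $\R^\nu\times[\gamma_1,\gamma_2]$ performed at the previous inductive step preserves the reversibility relation for the matrix coefficients at $\ell=0$ (it does, since reversibility is a pointwise-in-$\lambda$ algebraic constraint on the Fourier matrix entries, preserved by the explicit formulas \eqref{Xdn.sol}--\eqref{Xon.sol} and \eqref{D+R+}).
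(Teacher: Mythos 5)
Your proposal is correct and matches the paper's own proof: you define $\tr_j=-\im(R_\perp^{(d)})_j^j(0)$, use the reversibility relation \eqref{rev:Fourier} at $\ell=0$ to get $\tr_j\in\R$, and extract the bound $|j|^{1/2}\upsilon^{|k|}|\partial_\lambda^k(R_\perp^{(d)})_j^j(0)|\lesssim\fM^\sharp(s_0)$ from the modulo-tame inequality at $s=s_0$, which is exactly what the paper does (it simply phrases the matrix-element extraction as "and therefore" without spelling out the test function). The Lipschitz bound follows the same pattern, so there is no gap.
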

\begin{proof}
	Recalling 
	\eqref{ZRperp}, we have that  $ \tr_j:= - \im (R_\perp^{(d)})_j^j(0)$, 
	for all $ j \in \S_0^c $.  By the reversibility of $R_\perp^{(d)} $ and \eqref{rev:Fourier}
	we deduce that  $\tr_j\in\R $.
	Recalling the definition of $\fM^\sharp(s_0)$ in \eqref{Mn.sharp} (with $s=s_0$) and Definition \ref{Dk0-modulo-12}, we have, for all $0\leq \abs k \leq k_0$, $\| \langle D \rangle^\frac14 | \pa_\lambda^k R_\perp^{(d)} | \langle D \rangle^\frac14 h \|_{s_0} \leq 2 \upsilon^{-|k|} \fM^\sharp(s_0) \norm h_{s_0} $, and therefore
	$	|j|^\frac12	|  \pa_\lambda^k (R_\perp^{(d)})_j^j(0) | \lesssim \upsilon^{-\abs k} \fM^\sharp(s_0) \,.
	$
	Hence \eqref{defrjNF} follows. 
	The last bound  for $ |j |^\frac12 | \tr_j (i_1)- \tr_j(i_2) |$ 
	follows analogously.
\end{proof}

\paragraph{The iterative step.}
Assume that the statements $({\bf S1})_{\tn}$-$({\bf S3})_{\tn}$ are true. We now  prove $({\bf S1})_{\tn+1}$-$({\bf S3})_{\tn+1}$. 
For simplicity (as in other parts of the paper) 
we omit to write the dependence on $ k_0 $, which is considered as a  fixed constant.
\\
{\sc Proof of $({\bf S1})_{\tn+1}$}.
The real operator $\bX_{\tn}$ 
defined in Lemma \ref{X_gen.hom} 
is defined for all $(\omega,\gamma)\in\R^\nu\times[\gamma_1,\gamma_2]$ and, by \eqref{gen.est}, \eqref{small_scheme}, 
satisfies the estimates \eqref{gen.modulo.KAM} at the step $\tn+1$. 
The flow maps $\b\Phi_{\tn}^{\pm 1} = e^{\pm \bX_{\tn}} $ are well defined by
Lemma \ref{modulo_expo-12}.  
By \eqref{DEFL+}, for all $ \lambda \in \t\Lambda_{\tn+1}^\upsilon$, the  
conjugation formula \eqref{ITE-CON} holds at the step $\tn+1$. 
The operator $\bX_{\tn}$ is reversibility and momentum preserving, and 
so are the operators $\b\Phi_{\tn}^{\pm 1} = e^{\pm \bX_{\tn}} $.
By Lemma \ref{new.normal}, the operator $\bD_{\tn+1}$ is diagonal with eigenvalues
$ \mu_{j}^{(\tn+1)}:\R^\nu\times[\gamma_1,\gamma_2]\rightarrow \R $,
$ \mu_{j}^{(\tn+1)} = \mu_{j}^{(0)} + \fr_j^{(\tn+1)} $ 
with $ \fr_j^{(\tn+1)} := \fr_j^{(\tn)} + \tr_j^{(\tn)}$ 
satisfying, using also \eqref{small_scheme},  
\eqref{rem.eigen.KAM} 
at the step $\tn+1$. 
The next lemma provides  the estimates of the remainder $ \bR_\perp^{(\tn+1)} = \bR_\perp^{(+)} $
defined in \eqref{D+R+}.
\begin{lem}\label{rema.scheme}
	The operators $\bR_\perp^{(\tn+1)}$ and  $\braket{\pa_\vf}^\tb \bR_\perp^{(\tn+1)}$ 
	are  $\cD^{k_0}$-$(-\frac12)$-modulo-tame with  modulo-tame constants satisfying, for any $s_0 \leq s \leq S-\mu(\tb)$,
	\begin{align}\label{remas.1}
	& \fM_{\tn+1}^\sharp(s)   \lesssim_{s} N_\tn^{-\tb} \fM_\tn^\sharp (s,\tb) + N_\tn^{\tau_1} \upsilon^{-1} \fM_\tn^\sharp (s)\fM_\tn^\sharp (s_0)\,, \\	
	& \label{remas.2}
	\fM_{\tn+1}^\sharp(s,\tb)  \lesssim_{s,\tb} \fM_\tn^\sharp(s,\tb)+ N_\tn^{\tau_1}\upsilon^{-1}
	\big( \fM_\tn^\sharp(s,\tb)\fM_\tn^\sharp(s_0)+\fM_\tn^\sharp(s_0,\tb)\fM_\tn^\sharp(s) \big) \,.
	\end{align}
\end{lem}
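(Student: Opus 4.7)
My plan is to estimate separately the three contributions to $\bR_\perp^{(\tn+1)}=\bR_\perp^{(+)}$ in \eqref{D+R+}, using the modulo-tame calculus of Section~\ref{subsec:semiphi}, the homological bounds of Lemma~\ref{X_gen.hom}, and the iterative smallness assumption which guarantees $\fM_\tn^\sharp(s_0)\lesssim \upsilon$ via \eqref{small_scheme} and \eqref{small_KAM_con}. Throughout I will silently use that $\bX_\tn$, $\bR_\perp^{(\tn)}$ are $\cD^{k_0}$-$(-\tfrac12)$-modulo-tame, which by Lemma~\ref{modulo_sumcomp-12} makes their composition and commutator modulo-tame as well, and by Lemma~\ref{modulo_expo-12} makes $e^{\pm\tau\bX_\tn}-{\rm Id}$ modulo-tame with constants controlled by $\fM_{\langle D\rangle^{1/4}\bX_\tn\langle D\rangle^{1/4}}^\sharp$.

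For the high-frequency term $\Pi_{N_\tn}^\perp\bR_\perp^{(\tn)}$ I would directly invoke the smoothing inequality \eqref{modulo_smooth}, which loses a factor $N_\tn^{-\tb}$ in the low-norm bound but nothing in the $\braket{\pa_\vf}^\tb$-weighted one, yielding the $N_\tn^{-\tb}\fM_\tn^\sharp(s,\tb)$ contribution in \eqref{remas.1} and the $\fM_\tn^\sharp(s,\tb)$ contribution in \eqref{remas.2}. For the remaining two integrals, I would bound the integrand uniformly in $\tau\in[0,1]$ by Lemma~\ref{modulo_sumcomp-12} applied to the triple product $e^{-\tau\bX_\tn}\cdot[\bX_\tn,\,\cdot\,]\cdot e^{\tau\bX_\tn}$. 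Using Lemma~\ref{X_gen.hom} to convert $\bX_\tn$-norms into $\bR_\perp^{(\tn)}$-norms gives, schematically,
\begin{align*}
\fM_{\langle D\rangle^{1/4}[\bX_\tn,\bR_\perp^{(\tn)}]\langle D\rangle^{1/4}}^\sharp(s) &\lesssim_{s} N_\tn^{\tau_1}\upsilon^{-1}\fM_\tn^\sharp(s)\fM_\tn^\sharp(s_0),
\end{align*}
and the analogous bound with $\Pi_{N_\tn}\bR_\perp^{(\tn)}-[\bR_\perp^{(\tn)}]$ in place of $\bR_\perp^{(\tn)}$, noting that the diagonal part $[\bR_\perp^{(\tn)}]$ satisfies the same modulo-tame estimate as $\bR_\perp^{(\tn)}$ itself. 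Sandwiching by $e^{\pm\tau\bX_\tn}={\rm Id}+(e^{\pm\tau\bX_\tn}-{\rm Id})$ and using $\fM_{\langle D\rangle^{1/4}(e^{\pm\tau\bX_\tn}-{\rm Id})\langle D\rangle^{1/4}}^\sharp(s_0)\lesssim N_\tn^{\tau_1}\upsilon^{-1}\fM_\tn^\sharp(s_0)\ll 1$ from Lemma~\ref{modulo_expo-12} combined with the smallness condition, the outer factors contribute only a multiplicative constant. Summing the three contributions gives \eqref{remas.1}.

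For \eqref{remas.2} I would redo the same decomposition but apply the $\braket{\pa_\vf}^\tb$-version of Lemma~\ref{modulo_sumcomp-12}. The only care needed is to distribute $\langle\pa_\vf\rangle^\tb$ across the triple product: this generates four terms in which $\langle\pa_\vf\rangle^\tb$ falls either on $\bX_\tn$, on $\bR_\perp^{(\tn)}$, or on one of the two exponentials. Using again Lemma~\ref{X_gen.hom} to trade $\langle\pa_\vf\rangle^\tb\bX_\tn$ for $\langle\pa_\vf\rangle^\tb\bR_\perp^{(\tn)}$, Lemma~\ref{modulo_expo-12} to handle the exponentials, and the low-norm smallness $\fM_\tn^\sharp(s_0)\ll 1$ to absorb the quadratic self-products, one obtains
$$
N_\tn^{\tau_1}\upsilon^{-1}\bigl(\fM_\tn^\sharp(s,\tb)\fM_\tn^\sharp(s_0)+\fM_\tn^\sharp(s_0,\tb)\fM_\tn^\sharp(s)\bigr),
$$
together with the un-smoothed contribution $\fM_\tn^\sharp(s,\tb)$ from $\Pi_{N_\tn}^\perp\bR_\perp^{(\tn)}$, which together form \eqref{remas.2}.

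The only delicate point I foresee is the bookkeeping around the $\braket{\pa_\vf}^\tb$-weighted composition inequality: one must verify that no term of the form $\fM_\tn^\sharp(s,\tb)\fM_\tn^\sharp(s_0,\tb)$ appears—such a term would spoil the iteration because $\fM_\tn^\sharp(s_0,\tb)$ grows like $N_{\tn-1}$ rather than staying small. This is avoided precisely because at least one factor in each product coming from Lemma~\ref{modulo_sumcomp-12} is evaluated at $s_0$ without the $\tb$-weight, and that factor is controlled by $\fM_\tn^\sharp(s_0)\leq C_*(s_0,\tb)\fM_0(s_0,\tb)N_{\tn-1}^{-\ta}$, which is genuinely small thanks to \eqref{small_KAM_con}. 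Once this is checked, the two estimates \eqref{remas.1}, \eqref{remas.2} serve as the inductive input for the standard KAM quadratic-convergence argument closing $({\bf S1})_{\tn+1}$.
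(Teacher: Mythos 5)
Your proposal is correct and follows exactly the route the paper takes: the paper's own proof of this lemma is a one-line citation of the decomposition \eqref{D+R+}, the modulo-tame calculus Lemmata \ref{modulo_sumcomp-12}, \ref{modulo_expo-12} and \eqref{modulo_smooth}, the homological bound \eqref{gen.est}, and the smallness/scale conditions \eqref{small_scheme}, \eqref{tbta}, \eqref{scala.strai}, \eqref{small_KAM_con}, which is precisely what you spell out. Your closing remark about why no $\fM_\tn^\sharp(s,\tb)\fM_\tn^\sharp(s_0,\tb)$ term can appear correctly identifies the structural feature of Lemma \ref{modulo_sumcomp-12} that makes the quadratic scheme close.
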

\begin{proof}
	The estimates \eqref{remas.1}, \eqref{remas.2} follow by \eqref{D+R+}, Lemmata 
	\ref{modulo_sumcomp-12}, \ref{modulo_expo-12}, \eqref{modulo_smooth}
	and  \eqref{gen.est},
	\eqref{small_scheme}, \eqref{tbta}, \eqref{scala.strai}, \eqref{small_KAM_con}. 
\end{proof}

\begin{lem}\label{QSKAM} Estimates \eqref{small_scheme} holds at the step $\tn+1$.
\end{lem}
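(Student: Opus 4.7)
The proof is a direct algebraic verification: we insert the inductive hypothesis $({\bf S1})_{\tn}$, namely
$\fM_\tn^\sharp(s) \leq C_*\fM_0(s,\tb) N_{\tn-1}^{-\ta}$ and $\fM_\tn^\sharp(s,\tb) \leq C_*\fM_0(s,\tb) N_{\tn-1}$,
into the recursive bounds \eqref{remas.1}--\eqref{remas.2} of Lemma \ref{rema.scheme}, and check that the resulting bounds are better than the target ones at step $\tn+1$. The whole game is to exploit the choices $\chi=3/2$ in \eqref{scala.strai} and $\tb=[\ta]+2$, $\ta=3(\tau_1+1)$ in \eqref{tbta}, together with the smallness condition \eqref{KAM_small_cond} and the base estimate $\fM_0(s_0,\tb)\leq C(S)\varepsilon\upsilon^{-3}$ from \eqref{M0ss.est}.

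First I would treat $\fM_{\tn+1}^\sharp(s)$. Substituting $({\bf S1})_{\tn}$ in \eqref{remas.1} gives
\begin{equation*}
\fM_{\tn+1}^\sharp(s) \leq C(s,\tb)\Big( N_\tn^{-\tb} C_* \fM_0(s,\tb) N_{\tn-1} + N_\tn^{\tau_1}\upsilon^{-1} C_*^2 \fM_0(s,\tb)\fM_0(s_0,\tb) N_{\tn-1}^{-2\ta}\Big).
\end{equation*}
Dividing by the target $C_* \fM_0(s,\tb) N_\tn^{-\ta}$ and using $N_\tn = N_{\tn-1}^\chi$, the first term becomes
$[C(s,\tb)/C_*]\,N_{\tn-1}^{\chi(\ta-\tb)+1}$; since $\tb-\ta\geq 1$ and $\chi=3/2$, the exponent is $\leq -1/2$, so the term is $\leq 1/2$ for $N_0=N_0(S,\tb)$ large enough. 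For the second term, one computes using $\ta=3(\tau_1+1)$,
\begin{equation*}
\chi(\tau_1+\ta)-2\ta = \tfrac{3}{2}(4\tau_1+3)-6\tau_1-6 = -\tfrac{3}{2},
\end{equation*}
so after dividing one obtains a bound $C(s,\tb) C_* \upsilon^{-1}\fM_0(s_0,\tb) N_{\tn-1}^{-3/2}\leq C(S,\tb)C_*\,\varepsilon\upsilon^{-4} N_{\tn-1}^{-3/2}$, which is $\leq 1/2$ by \eqref{KAM_small_cond} after taking $\delta_0=\delta_0(S)$ small.

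Next, for $\fM_{\tn+1}^\sharp(s,\tb)$, substituting $({\bf S1})_{\tn}$ in \eqref{remas.2} yields
\begin{equation*}
\fM_{\tn+1}^\sharp(s,\tb) \leq C(s,\tb)\Big( C_*\fM_0(s,\tb) N_{\tn-1} + 2\,N_\tn^{\tau_1}\upsilon^{-1} C_*^2 \fM_0(s,\tb)\fM_0(s_0,\tb) N_{\tn-1}^{\,1-\ta}\Big).
\end{equation*}
Dividing by the target $C_*\fM_0(s,\tb) N_\tn=C_*\fM_0(s,\tb)N_{\tn-1}^{3/2}$, the first term is $[C(s,\tb)/1]\,N_{\tn-1}^{-1/2}$, absorbed by $N_0=N_0(S,\tb)$ large. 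For the second one finds the exponent $\chi\tau_1-\ta-1/2=-\tfrac{3}{2}\tau_1-\tfrac{7}{2}$; hence, using $\fM_0(s_0,\tb)\leq C(S)\varepsilon\upsilon^{-3}$, the quadratic contribution is controlled by $C(S,\tb)C_*\,\varepsilon\upsilon^{-4} N_{\tn-1}^{-3\tau_1/2-7/2}$, again absorbed via \eqref{KAM_small_cond}.

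The only (minor) point to be careful about is the order in which the constants are fixed. First I would fix $C_*(s_0,\tb)$ large enough to dominate the constant $C(s_0,\tb)$ appearing in Lemma \ref{rema.scheme} at $s=s_0$ (which in turn forces the base case of the induction). Then, for the general $s_0\leq s\leq S-\mu(\tb)$, the larger constants $C(s,\tb)\leq C(S,\tb)$ are absorbed either by a power $N_0^{-1/2}$ (first terms) or by $\delta_0=\delta_0(S)$ in the smallness condition \eqref{KAM_small_cond} (second, quadratic terms), which is the same mechanism that makes the KAM scheme converge. There is no real obstacle; it is a routine exponent bookkeeping exercise built into the definitions of $\tb$, $\ta$ and $\chi$.
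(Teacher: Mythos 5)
Your proof is correct and follows the same approach as the paper's terse one-line proof (substitute the inductive estimates into the recursion of Lemma \ref{rema.scheme}, exploit the definitions of $\tb$, $\ta$, $\chi$ in \eqref{tbta}, \eqref{scala.strai}, and the smallness condition), with the same exponent bookkeeping: $\chi(\ta-\tb)+1\leq -1/2$ and $\chi(\tau_1+\ta)-2\ta=-3/2$. Two small slips, neither affecting the argument: in your displayed inequalities you wrote $N_\tn^{\tau_1+1}$ where \eqref{remas.1}--\eqref{remas.2} have $N_\tn^{\tau_1}$ (your exponent arithmetic correctly uses $\tau_1$, so this is a typo; with the extra $+1$ the first exponent would be $0$, not $-3/2$, and the scheme would fail), and you invoke \eqref{KAM_small_cond} rather than \eqref{small_KAM_con}, which is the hypothesis actually in force inside Theorem \ref{iterative_KAM} — though the two are equivalent up to $\fM_0(s_0,\tb)\lesssim\varepsilon\upsilon^{-3}$.
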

\begin{proof}
	It follows by \eqref{remas.1},  \eqref{remas.2}, \eqref{small_scheme} at the step $\tn$, \eqref{tbta}, 
	the smallness condition \eqref{small_KAM_con} with $N_0=N_0(S,s_0,\tb) >0$ large enough and taking $ \tau_2 > \tau_1 +1 + \ta $. 
\end{proof}
Finally  $\bR_\perp^{(\tn+1)}$  is real, reversible and momentum preserving as $\bR_\perp^{(\tn)}$,
since $\bX_\tn$ is real, reversibility and momentum preserving. This concludes the proof of $({\bf S1})_{\tn+1}$.\\
{\sc Proof of $({\bf S2})_{\tn+1}$}. It follows by similar arguments and we omit it.
\\
{\sc Proof of $({\bf S3})_{\tn+1}$}. {Use
 \eqref{eigen_KAM},  \eqref{const_small}-\eqref{const_smallV},   $({\bf S2})_{\tn}$,
and  the momentum conditions in \eqref{tLambdan}.}

\subsection*{Almost invertibility of $\cL_\omega$}

By \eqref{Lperp}, \eqref{bL0}, \eqref{LomLp} and Theorem \ref{KAMRED},
we obtain
\begin{equation}\label{Lom+kam}
\cL_\omega = \bW_{\bar\tn} \bL_{\bar\tn} \bW_{\bar\tn}^{-1} + \cW^\perp\bP_{\perp,\bar\tn}(\cW^\perp)^{-1} + \cW^\perp\bQ_{\perp,\bar\tn}(\cW^\perp)^{-1} \,, \quad \bW_{\bar\tn}:= \cW^\perp \bU_{\bar\tn}\,, 
\end{equation}
where the operator $\bL_{\bar\tn}$ is defined in \eqref{bLn} with $\tn=\bar\tn $ 
and $\bP_{\perp,\bar\tn}$, $\bQ_{\perp,\bar\tn}$
satisfy the estimates in Lemma \ref{RKtn.est}. 
By \eqref{qui.1} and \eqref{flow.sharp.kam},
we have, for some $\sigma:=\sigma(\tau,\nu,k_0) > 0 $,
for any $ s_0 \leq s \leq S - \mu(\tb) - \sigma$,
\begin{equation}\label{bW.est}
 \normk{\bW_{\bar\tn}^{\pm 1}h}{s} \lesssim_{S} \normk{h}{s+\sigma} + \normk{\fI_0}{s+\mu(\tb)+\sigma}\normk{h}{s_0+\sigma} \, . 
\end{equation}
In order to verify the almost invertibility assumption (AI) of $ \cL_\omega$ 
in Section \ref{sec:approx_inv}, 
we decompose the operator $\bL_{\bar\tn}$ in \eqref{bLn} 
(with $\bar\tn$ instead of $\tn$) as
\begin{equation}\label{bL.dec}
\bL_{\bar\tn} = \bD_{\bar\tn}^{<} + \bQ_\perp^{(\bar\tn)} + \bR_\perp^{(\bar\tn)}
\end{equation}
where $\bR_\perp^{(\bar\tn)}$ satisfies \eqref{stimaRN}, whereas
\begin{equation}\label{bDbQ}
\bD_{\bar\tn}^<  := \Pi_{K_{\bar\tn}}(\omega\cdot \pa_\vf \uno_\perp + \im\, \bD_{\bar\tn}) \Pi_{K_{\bar\tn}} + \Pi_{K_{\bar\tn}}^\perp \, , \quad
\bQ_\perp^{(\bar\tn)}  := \Pi_{K_{\bar\tn}}^\perp(\omega\cdot \pa_\vf \uno_\perp + \im\, \bD_{\bar\tn}) \Pi_{K_{\bar\tn}}^\perp - \Pi_{K_{\bar\tn}}^\perp \, , 
\end{equation}
and  the smoothing operator  $\Pi_{K}$ on the traveling waves is defined in 
\eqref{pro:N}, and $ \Pi_K^\perp := {\rm Id}-\Pi_K  $. The  constants $ K_\tn $ in \eqref{bDbQ} are $ K_\tn := K_0^{\chi^\tn} $, $ \chi= 3/2 $ (cfr. \eqref{scales}), 
and $ K_0 $ will be fixed  in \eqref{param.NASH}. 

\begin{lem}\label{first.meln}
	{\bf (First order Melnikov non-resonance conditions)}
	For all $\lambda=(\omega,\gamma)$ in
	\begin{equation}\label{I.meln}
	\begin{aligned}
	\t\Lambda_{\bar\tn+1}^{\upsilon,I} 
	:= \Big\{ 
	\lambda\in\R^\nu\times[\gamma_1,\gamma_2] \, : \,  | \omega\cdot\ell +\mu_{j}^{(\bar\tn)} | \geq \upsilon \frac{\abs j^\frac12}{ \braket{\ell}^{\tau}} \, , \,  
	\forall \abs\ell\leq K_{\bar\tn}, \, j\in\S_0^c \, , j + 
	\vec \jmath \cdot \ell = 0  \Big\}\,,
	\end{aligned}
	\end{equation}
	on the subspace of the traveling waves  
	$ \tau_\vs g(\vf) = g(\vf - \ora{\jmath}\vs) $, $ \vs \in \R $,  
	such that $ g(\vf, \cdot ) \in \bH_{{\mathbb S}_0}^\bot $, 
	the operator $\bD_{\bar\tn}^<$ in \eqref{bDbQ} is invertible and there exists an extension of the inverse operator (that we denote in the same way) to the whole $\R^\nu\times [\gamma_1,\gamma_2]$ satisfying the estimate
	\begin{equation}\label{D<.inv}
	\normk{(\bD_{\bar\tn}^<)^{-1}g}{s} \lesssim_{k_0} \upsilon^{-1} \normk{g}{s+\tau_1} \,, \quad \tau_1=k_0+\tau(k_0+1) \,.
	\end{equation}
	Moreover $ (\bD_{\bar\tn}^<)^{-1} g $ is a traveling wave. 
\end{lem}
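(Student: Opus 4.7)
First I would work out the action of $\bD_{\bar\tn}^<$ in Fourier. Writing a traveling wave $g=(g_1,\bar g_1)$ with $g_1\in H_{\S_0}^\bot$, we have $g_1(\vf,x)=\sum_{(\ell,j),\,j\in\S_0^c,\,\vec\jmath\cdot\ell+j=0}(g_1)_{\ell,j}e^{\im(\ell\cdot\vf+jx)}$, and by \eqref{bLn}--\eqref{bDbQ} the operator $\bD_{\bar\tn}^<$ is block-diagonal and acts diagonally on Fourier modes: on modes with $|\ell|>K_{\bar\tn}$ it is the identity (coming from $\Pi_{K_{\bar\tn}}^\bot$), whereas on modes with $|\ell|\leq K_{\bar\tn}$ the eigenvalue on the first block is $\im(\omega\cdot\ell+\mu_j^{(\bar\tn)})$. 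The second block is the complex conjugate, so reality of $g$ reduces the problem to inverting the first block only. Crucially, the operator preserves both the momentum constraint $\vec\jmath\cdot\ell+j=0$ and the condition $j\in\S_0^c$, so the inverse (once defined) automatically produces a traveling wave in $\bH_{\S_0}^\bot$.

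Next I would construct the extended inverse on the whole parameter space by inserting the smooth cutoff $\chi$ from \eqref{cutoff}, setting
\begin{equation*}
(\bD_{\bar\tn}^<)^{-1}g:=\sum_{|\ell|\leq K_{\bar\tn},\,j\in\S_0^c,\,\vec\jmath\cdot\ell+j=0}\frac{\chi\bigl((\omega\cdot\ell+\mu_j^{(\bar\tn)})\upsilon^{-1}\langle\ell\rangle^{\tau}|j|^{-\frac12}\bigr)}{\im\,(\omega\cdot\ell+\mu_j^{(\bar\tn)})}\,g_{\ell,j}\,e^{\im(\ell\cdot\vf+jx)}+\Pi_{K_{\bar\tn}}^\bot g,
\end{equation*}
exactly in the spirit of \eqref{paext}. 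On the set $\tilde\Lambda_{\bar\tn+1}^{\upsilon,I}$ defined in \eqref{I.meln} the cutoff equals $1$ for every admissible triple $(\ell,j)$, so this extension solves $\bD_{\bar\tn}^<h=g$ there.

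To get the weighted tame bound \eqref{D<.inv} I would differentiate the Fourier multiplier in $\lambda=(\omega,\gamma)$ up to order $k_0$. Using \eqref{eigen_KAM}, \eqref{const_small} and the asymptotic from Lemma \ref{rem:exp_omegaj_gam}, the derivatives $\partial_\lambda^{k}\mu_j^{(\bar\tn)}$ stay bounded uniformly in $j$ (after factoring the leading $\sqrt g|j|^{1/2}$ that cancels with the denominator), and the Faà di Bruno expansion of $\partial_\lambda^{k}(1/f)$, combined with the support property $|f|\gtrsim \upsilon|j|^{1/2}\langle\ell\rangle^{-\tau}$ imposed by $\chi$, gives the pointwise bound
\begin{equation*}
\upsilon^{|k|}\bigl|\partial_\lambda^{k}\bigl[\chi(\cdot)/(\im f)\bigr]\bigr|\lesssim_{k_0}\upsilon^{-1}\,|j|^{-\frac12}\langle\ell\rangle^{\tau(|k|+1)+|k|},\qquad |k|\leq k_0.
\end{equation*}
Summing in $\ell^2$ and using $|j|\geq 1$ on $\S_0^c$ together with the momentum bound $\langle\ell,j\rangle\lesssim\langle\ell\rangle$ yields precisely $\|(\bD_{\bar\tn}^<)^{-1}g\|_s^{k_0,\upsilon}\lesssim_{k_0}\upsilon^{-1}\|g\|_{s+\tau_1}^{k_0,\upsilon}$ with $\tau_1=k_0+\tau(k_0+1)$, since the $\Pi_{K_{\bar\tn}}^\bot$ piece is trivially bounded.

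The main technical point (rather than a serious obstacle) is the careful bookkeeping of the $|j|^{1/2}$ weight appearing in the Melnikov denominator of \eqref{I.meln}: one must check that it is always absorbed by the corresponding factor in the derivative estimate of $\mu_j^{(\bar\tn)}$ (which is itself $O(|j|^{1/2})$ by Lemma \ref{rem:exp_omegaj_gam}), so that no $|j|^{1/2}$ loss propagates into the final bound. Everything else is a direct adaptation of the standard small-divisor Lemma argument behind \eqref{lem:diopha.eq}, now with the $j$-dependent Melnikov weight.
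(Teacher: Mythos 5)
Your approach is correct and essentially the same as the paper's, whose proof of this lemma is simply ``arguing as in Lemma \ref{X_gen.hom}'': the cutoff-extended Fourier multiplier together with the $k_0$-fold parameter derivative estimate, the $|j|^{1/2}$ in the Melnikov denominator absorbing the weight, and the $\Pi_{K_{\bar\tn}}^\perp$ block being the identity. One small imprecision in your explanation: it is not that $\partial_\lambda^k \mu_j^{(\bar\tn)}$ is uniformly bounded in $j$ after factoring $\sqrt{g}|j|^{1/2}$ --- the terms $\partial_\omega(\tm_{1,\bar\tn}\, j)$ and $\partial_\omega(\omega\cdot\ell)=\ell$ grow linearly in $|j|\sim|\ell|$ --- but this is harmless because the momentum constraint $j=-\ora{\jmath}\cdot\ell$ converts those $|j|$ factors into powers of $\braket{\ell}$, exactly as in the chain of estimates behind Lemma \ref{X_gen.hom}, so the stated pointwise bound and the resulting $\langle\ell\rangle^{\tau_1}$ loss are correct.
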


\begin{proof}
	The estimate \eqref{D<.inv} follows arguing as in Lemma \ref{X_gen.hom}. 
\end{proof}
Standard smoothing properties imply that the operator $\bQ_\perp^{(\bar\tn)}$ in \eqref{bDbQ} satisfies, for any traveling wave $ h \in \bH_{{\mathbb S}_0}^\bot $, 
for all $ b>0$, 
\begin{equation}\label{bQ.est}
\normk{\bQ_\perp^{(\bar\tn)}h}{s_0} \lesssim K_{\bar\tn}^{- b} 
\normk{h}{s_0+ b+1} \,, \quad \normk{\bQ_\perp^{(\bar\tn)}h}{s} \lesssim \normk{h}{s+1} \,.
\end{equation}
By the decompositions \eqref{Lom+kam}, \eqref{bL.dec}, Theorem \ref{KAMRED}
(note that \eqref{ansatz} and
Lemma \ref{torus_iso} imply  \eqref{ansatz_I0_s0}), 
Proposition \ref{end_redu}, 
the fact that  $ \bW_{\bar\tn} $
maps (anti)-reversible, respectively traveling, waves, into
(anti)-reversible, respectively traveling, waves (Lemma \ref{lemmaWperp})
and estimates \eqref{bW.est}, \eqref{D<.inv}, \eqref{bQ.est},  \eqref{SM12} 
we deduce the following theorem.
\begin{thm}\label{almo.inve}
	{\bf (Almost invertibility of $ \cL_\omega $)}
	Assume \eqref{ansatz}. Let $ \ta, \tb $ as in \eqref{tbta} and 
	$ M $ as in \eqref{M_choice}.  Let  $S>s_0+k_0$ and assume the smallness condition \eqref{KAM_small_cond}.  Then the almost invertibility assumption (AI) 
	in Section \ref{sec:approx_inv}  holds with $ {\mathtt \Lambda}_o $ replaced by  
	\begin{equation}\label{bLambdan}
	\b\Lambda_{\bar\tn+1}^\upsilon :=  \b\Lambda_{\bar\tn+1}^\upsilon(i)
	:= \t\Lambda_{\bar\tn+1}^\upsilon\cap \t\Lambda_{\bar\tn+1}^{\upsilon,I} \cap \tT\tC_{\bar\tn+1}(2\upsilon,\tau) \,,
	\end{equation}
	(see \eqref{tLambdan}, \eqref{I.meln}, \eqref{tDtCn}) and, with $\mu(\tb)$ defined in \eqref{cb.mub},    
	$$
	\begin{aligned}
	\cL_\omega^{<} := \bW_{\bar\tn} \bD_{\bar\tn}^< \bW_{\bar\tn}^{-1} \,, \quad 
	\cR_\omega := \bW_{\bar\tn} \bR_\perp^{(\bar\tn)} \bW_{\bar\tn}^{-1} + \cW^\perp\bP_{\perp,\bar\tn}(\cW^\perp)^{-1} \,, \\
	   \cR_\omega^\perp  := \bW_{\bar\tn} \bQ_\perp^{(\bar\tn)} \bW_{\bar\tn}^{-1} +\cW^\perp\bQ_{\perp,\bar\tn}(\cW^\perp)^{-1}\, . 
	\end{aligned}
	$$
{ In particular $ \cR_\omega  $, $ \cR_\omega^\perp  $	satisfy 
\eqref{almi1}, \eqref{almi2}, \eqref{almi3}.}
\end{thm}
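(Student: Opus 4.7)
The plan is to verify assumption (AI) by taking the decomposition \eqref{Lom+kam} of $\cL_\omega$ produced by Theorem \ref{KAMRED} and Proposition \ref{end_redu}, substituting the splitting \eqref{bL.dec} of $\bL_{\bar\tn}$, and matching the resulting four pieces against the target form $\cL_\omega^{<}+\cR_\omega+\cR_\omega^\perp$. Concretely, combining \eqref{Lom+kam} and \eqref{bL.dec} gives
\[
\cL_\omega \;=\; \bW_{\bar\tn}\bD_{\bar\tn}^{<}\bW_{\bar\tn}^{-1}\;+\;\bigl(\bW_{\bar\tn}\bR_\perp^{(\bar\tn)}\bW_{\bar\tn}^{-1}+\cW^\perp \bP_{\perp,\bar\tn}(\cW^\perp)^{-1}\bigr)\;+\;\bigl(\bW_{\bar\tn}\bQ_\perp^{(\bar\tn)}\bW_{\bar\tn}^{-1}+\cW^\perp \bQ_{\perp,\bar\tn}(\cW^\perp)^{-1}\bigr),
\]
which is exactly the claimed identification of $\cL_\omega^{<}$, $\cR_\omega$ and $\cR_\omega^\perp$. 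Note that $\b\Lambda_{\bar\tn+1}^\upsilon$ is built as the intersection in \eqref{bLambdan}: the factor $\tT\tC_{\bar\tn+1}(2\upsilon,\tau)$ guarantees the straightening in Lemma \ref{conju.tr} (hence the whole symplectic reduction chain leading to $\cL_\perp$) is valid; $\t\Lambda_{\bar\tn}^\upsilon$ guarantees the KAM conjugation formula $\bL_{\bar\tn}=\bU_{\bar\tn}^{-1}\bL_0\bU_{\bar\tn}$ of Theorem \ref{KAMRED}; and $\t\Lambda_{\bar\tn+1}^{\upsilon,I}$ is precisely the first Melnikov set required for the inversion of $\bD_{\bar\tn}^<$.

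For the invertibility \eqref{almi4}, given an anti-reversible traveling wave $g\in H_\angle^{s+\sigma}$, I would set $h:=\bW_{\bar\tn}(\bD_{\bar\tn}^<)^{-1}\bW_{\bar\tn}^{-1}g$. Since $\bW_{\bar\tn}=\cW^\perp\bU_{\bar\tn}$, and by Lemma \ref{lemmaWperp} the map $\cW^\perp$ and its inverse map (anti)-reversible traveling waves to (anti)-reversible traveling waves, while $\bU_{\bar\tn}^{\pm1}$ are reversibility and momentum preserving by Theorem \ref{KAMRED}, the vector $\bW_{\bar\tn}^{-1}g$ is again an anti-reversible traveling wave; Lemma \ref{first.meln} then gives a well-defined reversible traveling wave $(\bD_{\bar\tn}^<)^{-1}\bW_{\bar\tn}^{-1}g$ on $\t\Lambda_{\bar\tn+1}^{\upsilon,I}\supseteq \b\Lambda_{\bar\tn+1}^\upsilon$, and applying $\bW_{\bar\tn}$ yields a reversible traveling wave $h$. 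The tame bound \eqref{almi4} follows by composing the estimate \eqref{bW.est} for $\bW_{\bar\tn}^{\pm1}$ twice with the bound \eqref{D<.inv} of Lemma \ref{first.meln}, using the interpolation \eqref{prod} to absorb the ansatz factor $\normk{\fI_0}{s+\mu(\tb)+\sigma}$ into the mixed-norm term.

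For the estimate \eqref{almi1} on $\cR_\omega$, I would estimate the two summands separately. For $\bW_{\bar\tn}\bR_\perp^{(\bar\tn)}\bW_{\bar\tn}^{-1}$ one applies \eqref{bW.est} together with \eqref{stimaRN} (and Lemma \ref{mod-to-tame} to convert the modulo-tame bound into an ordinary tame bound with loss $1/2$), gaining the smallness factor $\varepsilon\upsilon^{-3}N_{\bar\tn-1}^{-\ta}$. For $\cW^\perp\bP_{\perp,\bar\tn}(\cW^\perp)^{-1}$ one uses \eqref{qui.1} and \eqref{Ptn.est0}, producing the factor $\varepsilon N_{\bar\tn-1}^{-\ta}$. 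Summing, and using \eqref{KAM_small_cond} to simplify the $\upsilon$-powers to the stated $\varepsilon\upsilon^{-3}$, gives \eqref{almi1}. For the smoothing remainder $\cR_\omega^\perp$, the bound \eqref{bQ.est} of $\bQ_\perp^{(\bar\tn)}$ produces a factor $K_{\bar\tn}^{-b}$ together with a loss of one derivative, which combined with \eqref{bW.est} yields the high-norm bound \eqref{almi3} with $\sigma$ large enough and the low-norm bound \eqref{almi2} (one picks up the $\upsilon^{-1}$ via the composition with $\bW_{\bar\tn}$). For the second summand one uses \eqref{Qtn.est1}--\eqref{Qtn.est2}, observing that $N_{\bar\tn}=K_{\bar\tn}^p$ with $p\geq1$ implies $N_{\bar\tn}^{-b}\leq K_{\bar\tn}^{-b}$.

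The main obstacle is purely bookkeeping: one must verify that the loss of derivatives accumulated along the three steps (conjugation by $\cW^\perp$, almost-reduction by $\bU_{\bar\tn}$, Fourier truncation at scale $K_{\bar\tn}$) is compatible with the single loss $\mu(\tb)+\sigma$ allowed in (AI); this is ensured by the choice \eqref{M_choice} of $M$ and the definition \eqref{cb.mub} of $\mu(\tb)$, together with the hypothesis that $S>s_0+k_0$ is sufficiently large. Beyond this bookkeeping, every analytic ingredient has already been established in Lemmas \ref{lemmaWperp}, \ref{RKtn.est}, \ref{first.meln}, Theorem \ref{KAMRED} and Proposition \ref{end_redu}.
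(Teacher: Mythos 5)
Your plan is correct and follows essentially the same route as the paper, whose own proof is just the one-line statement that the theorem follows from the decompositions \eqref{Lom+kam} and \eqref{bL.dec}, from Theorem \ref{KAMRED} and Proposition \ref{end_redu}, from Lemmata \ref{lemmaWperp} and \ref{first.meln}, and from estimates \eqref{bW.est}, \eqref{D<.inv}, \eqref{bQ.est}, \eqref{SM12}; you have simply unfolded that sentence. One small slip: Lemma \ref{mod-to-tame} gives the implication tame $\Rightarrow$ modulo-tame, not the converse — the passage from the modulo-tame bound \eqref{stimaRN} to the ordinary $\cD^{k_0}$-tame estimate needed in \eqref{almi1} is the trivial direction (the majorant bound controls the operator's action and yields a $\frac12$-derivative gain), so it does not require that lemma.
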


\section{Proof of Theorem \ref{NMT}} 
\label{sec:NaM}

Theorem \ref{NMT} is a consequence of Theorem \ref{NASH} below. 
In turn Theorem \ref{NASH} is deduced,  in a by now standard way, from the 
almost invertibility result of $ \cL_\omega $ of Theorem \ref{almo.inve}, as in \cite{BM,BBHM,BFM}. 
{Remark that the estimates \eqref{pfi0}, \eqref{pfi1}, \eqref{pfi2}, \eqref{pfi3} coincide with  
(5.49)-(5.52) in \cite{BBHM} with $ M = 1 / 2$.} Therefore this section shall be short.

\smallskip

We consider the finite dimensional subspaces of traveling wave variations 
\begin{equation*}
E_\tn := \big\{ 
\fI(\vf)= (\Theta,I,w)(\vf) \	 {\rm such \ that } \ \eqref{mompres_aa1} \ {\rm holds} \ :  \ 
\Theta = \Pi_\tn \Theta\,, \ I=\Pi_\tn I \,, \ w = \Pi_\tn w \big\}
\end{equation*}
where $\Pi_\tn w := \Pi_{K_\tn} w $   
are defined  as in \eqref{pro:N} with  $ K_\tn $ in 
\eqref{scales}, 
and we denote with the same symbol $\Pi_\tn g(\vf)  
:= \sum_{\abs\ell\leq K_\tn} g_\ell e^{\im\ell\cdot\vf}$.  
Note that the  projector $\Pi_{\tn}$ maps (anti)-reversible traveling variations into (anti)-reversible traveling variations.

In view of the Nash-Moser Theorem \ref{NASH} we introduce the constants 
\begin{align}
&\ta_1 := \max\{ 6\sigma_1 + 13, \chi(p(\tau+1) + \mu(\tb)+2\sigma_1)+1 \} \label{a1} \, , \quad
\ta_2 := \chi^{-1} \ta_1 -\mu(\tb)-2\sigma_1  \, , \\
&  \mu_1 := 3(\mu(\tb)+2\sigma_1)+1  \, , \quad 
\tb_1 := \ta_1 + 2\mu(\tb) + 4\sigma_1 + 3 +\chi^{-1}\mu_1\,,  \quad 
\chi = 3/2 \label{b1}\\
& \sigma_1:= \max\{ \bar\sigma, 2 s_0+2k_0+5 \} \,, \quad S - \mu(\tb) - \bar\sigma= s_0 + \tb_1\,,\label{sigma1}
\end{align}
where $\bar\sigma=\bar\sigma(\tau,\nu,k_0)>0$ is defined by Theorem \ref{alm.approx.inv}, 
$ 2 s_0+2k_0+5$ is the largest loss of regularity in the estimates of the Hamiltonian vector field $X_P$ in Lemma \ref{XP_est}, $\mu(\tb)$ is defined in \eqref{cb.mub}, 
and $\tb =[\ta]+2 $ is defined in \eqref{tbta}. The exponent $p$ in \eqref{scales} 
is required to satisfy
\begin{equation}\label{p.cond}
p \ta >
\tfrac12 \ta_1 + \tfrac32 \sigma_1 \,.
\end{equation}
By  \eqref{tbta}, 
and  the definition of $\ta_1$ in \eqref{a1}, there exists $p=p(\tau,\nu,k_0)$ such that \eqref{p.cond} holds, for example  we fix
$ p:=\frac{3(\mu(\tb)+4\sigma_1+1)}{\ta} $. 

Given a function
$ W = (\fI,\beta) $ where $ \fI  $ is the periodic component of a torus as in
\eqref{ICal} and $ \beta  \in  \R^\nu $, 
we denote $	\normk{W}{s} := \normk{\fI}{s}+\abs\beta^{k_0,\upsilon} $.

\begin{thm}{\bf (Nash-Moser)} \label{NASH}
	There exist $\delta_0, C_*>0$ such that, if
	\begin{equation}\label{param.NASH}
	K_0^{\tau_3} \varepsilon\upsilon^{-4} < \delta_0 \,, \
	\tau_3:= \max\{ p\tau_2, 2\sigma_1+\ta_1+4 \} \,, \
	K_0 := \upsilon^{-1}\,, \
	\upsilon:= \varepsilon^{\rm a}\,, \  0< {\rm a} <(4+\tau_3)^{-1}\,,
	\end{equation}
	where $\tau_2=\tau_2(\tau,\nu)$ is given by Theorem \ref{iterative_KAM}, then, for all $\tn\geq 0$:
	\begin{itemize}
		\item[$(\cP 1)_\tn$] There exists a $k_0$-times differentiable function $\wtW_\tn:\R^\nu\times[\gamma_1,\gamma_2]\rightarrow E_{\tn-1}\times \R^\nu$, $\lambda=(\omega,\gamma)\mapsto \wtW_\tn(\lambda):= (\wt\fI_\tn, \wt\alpha_\tn-\omega)$, for $\tn \geq 1 $, and $\wtW_0:=0$, satisfying
		\begin{equation}\label{P1.1}
		\normk{\wtW_\tn}{s_0+\mu(\tb)+\sigma_1} \leq C_*\varepsilon\upsilon^{-1} \,.
		\end{equation}
		Let $\wtU_\tn:= U_0+\wtW_\tn$, where $U_0:= (\vf,0,0,\omega)$. The difference $\wtH_\tn:= \wtU_\tn-\wtU_{\tn-1}$, for $\tn \geq 1 $, satisfies
		\begin{equation}\label{P1.2}
		\begin{aligned}
		& \normk{\wtH_1}{s_0+\mu(\tb)+\sigma_1}\leq C_* \varepsilon\upsilon^{-1}\,, \quad
		\normk{\wtH_\tn}{s_0+\mu(\tb)+\sigma_1} \leq C_* \varepsilon\upsilon^{-1} K_{\tn-1}^{-\ta_2}\,, \ \forall\, \tn\geq 2 \,.
		\end{aligned}
		\end{equation}
		The torus embedding $ \wti_\tn := (\vf,0,0) + \wt\fI_\tn $ 
		is  reversible and   traveling, 
		i.e.  \eqref{RTTT} holds.
		\item[$(\cP 2)_\tn$] 
		We define
		\begin{equation}\label{P2.1}
		\cG_0:= \t\Omega \times [\gamma_1,\gamma_2]\,, \quad  \cG_{\tn+1}:= \cG_{\tn} \cap \b\Lambda_{\tn+1}^\upsilon(\wti_\tn) \,, \quad \forall\,\tn \geq 0 \,,
		\end{equation}
		where $\b\Lambda_{\tn+1}^\upsilon(\wti_\tn)$ is defined in \eqref{bLambdan}. Then, for all $\lambda \in \cG_{\tn}$ , setting $K_{-1}:=1$, we have
		\begin{equation*}\label{P2.2}
		\normk{\cF(\wtU_\tn)}{s_0} \leq C_* \varepsilon K_{\tn-1}^{-\ta_1} \,.
		\end{equation*}
		\item[$(\cP 3)_\tn$]{\sc (High norms)} 
		For all $\lambda \in \cG_{\tn}$, we have
		$ \normk{\wtW_\tn}{s_0+\tb_1} \leq C_* \varepsilon\upsilon^{-1} K_{\tn-1}^{\mu_1} $. 
	\end{itemize}
\end{thm}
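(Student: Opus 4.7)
The strategy is the standard Nash--Moser iteration based on the almost approximate right inverse $\bT_{\tn} := \bT_0(\wti_{\tn},\wt\alpha_{\tn})$ supplied by Theorem \ref{alm.approx.inv}, built from the almost invertibility assumption (AI), which is in turn furnished by Theorem \ref{almo.inve} on the set $\b\Lambda_{\tn+1}^{\upsilon}(\wti_{\tn})$ defined in \eqref{bLambdan}. The iterative map will be
\begin{equation*}
\wtH_{\tn+1} := \wtW_{\tn+1} - \wtW_\tn := - \Pi_\tn \, \bT_\tn \, \Pi_\tn \cF(\wtU_\tn)\,,
\end{equation*}
where $\Pi_\tn$ acts componentwise by Fourier truncation at $K_\tn$ (preserving the reversible traveling--wave structure, thanks to Lemmata \ref{proj_rev}, \ref{lem:proj.momentum}, and to the fact that the first three components of $\bT_\tn g$ form a reversible traveling--wave variation for anti--reversible traveling inputs). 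Initialization at $\tn=0$ is direct: $\wtW_0 = 0$ gives $\cF(U_0) = (0,0,-\varepsilon\,\Pi_{\S^+,\Sigma}^\angle J\nabla_w P(U_0))$, whose norm is $O(\varepsilon)$ by Lemma \ref{XP_est}, and $(\cP 2)_0$ follows for $C_*$ large enough, while $(\cP 1)_0$, $(\cP 3)_0$ are trivial.

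For the inductive step, assume $(\cP 1)_\tn$--$(\cP 3)_\tn$. The bound \eqref{P1.1} at level $\tn$ ensures that the ansatz \eqref{ansatz} holds (with $\mu = \mu(\tb)+\bar\sigma$ by the choice \eqref{sigma1}) at the torus $\wti_\tn$; thus Theorem \ref{almo.inve} applies, giving the almost approximate inverse $\bT_\tn$ satisfying \eqref{tame-es-AI} and the splitting $\di_{i,\alpha}\cF(\wti_\tn)\bT_\tn - \mathrm{Id} = \cP_\tn + \cP_{\omega,\tn} + \cP_{\omega,\tn}^\perp$ with the estimates \eqref{pfi0}--\eqref{pfi3}. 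The subsequent step $\wtU_{\tn+1} := \wtU_\tn + \wtH_{\tn+1}$ satisfies, by Taylor expansion,
\begin{equation*}
\cF(\wtU_{\tn+1}) = \Pi_\tn^\perp \cF(\wtU_\tn) \,+\, \big(\cP_\tn + \cP_{\omega,\tn} + \cP_{\omega,\tn}^\perp\big)\Pi_\tn \cF(\wtU_\tn) \,+\, Q_\tn\,,
\end{equation*}
where $Q_\tn := \cF(\wtU_{\tn+1}) - \cF(\wtU_\tn) - \di_{i,\alpha}\cF(\wtU_\tn) \wtH_{\tn+1}$ is the quadratic Nash--Moser remainder, controlled by Lemma \ref{XP_est}. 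Reversibility and the traveling--wave property are propagated because $\Pi_\tn$, $\bT_\tn$ and the Hamiltonian vector field $X_{H_{\wt\alpha_\tn}}$ all preserve the relevant symmetries (Sections \ref{subsec:ham_st} and Lemma \ref{proj_rev}), so that $\wti_{\tn+1}$ remains a reversible traveling torus.

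The main obstacle, as usual, is to carefully propagate simultaneously the low--norm quadratic decay $(\cP 2)_{\tn+1}$ and the controlled high--norm divergence $(\cP 3)_{\tn+1}$, while respecting the delicate balance of parameters $\ta_1, \ta_2, \tau_3, p, \mu_1, \tb_1$ fixed in \eqref{a1}--\eqref{sigma1} and \eqref{param.NASH}. At $s = s_0$ one interpolates between the low--norm bound $\|\cF(\wtU_\tn)\|_{s_0}^{k_0,\upsilon} \leq C_*\varepsilon K_{\tn-1}^{-\ta_1}$ and the high--norm bound coming from $(\cP 3)_\tn$; inserting into the error estimates \eqref{pfi0}--\eqref{pfi3} (together with \eqref{tame-es-AI} and the smoothing bounds \eqref{SM12}), one obtains
\begin{equation*}
\normk{\cF(\wtU_{\tn+1})}{s_0} \,\lesssim\, K_\tn^{-\tb} \normk{\cF(\wtU_\tn)}{s_0+\tb}  + K_\tn^{\sigma_1}\upsilon^{-1} \big(\normk{\cF(\wtU_\tn)}{s_0}\big)^2 + \varepsilon\upsilon^{-4}N_{\tn-1}^{-\ta}K_\tn^{\sigma_1}\normk{\cF(\wtU_\tn)}{s_0}\,,
\end{equation*}
for suitable $\sigma_1$, and the quadratic inequality closes provided $\ta_1, p, \tau_3$ are chosen as in \eqref{a1}, \eqref{p.cond}, \eqref{param.NASH}, using $N_\tn = K_\tn^p$ from \eqref{scales} and the smallness \eqref{param.NASH}. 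The high--norm bound $(\cP 3)_{\tn+1}$ follows from the tame estimate \eqref{tame-es-AI} applied at regularity $s_0+\tb_1$, and the definition of $\mu_1, \tb_1$ in \eqref{b1} is tuned precisely so that the recursion $\normk{\wtW_{\tn+1}}{s_0+\tb_1} \leq \normk{\wtW_\tn}{s_0+\tb_1} + C K_\tn^{\mu_1 - \ta_2}\cdots$ remains bounded. Finally $(\cP 1)_{\tn+1}$ and \eqref{P1.2} are obtained interpolating \eqref{P1.1} and $(\cP 3)_{\tn+1}$ via \eqref{SM12} and plugging in the low--norm control of $\bT_\tn \Pi_\tn \cF(\wtU_\tn)$. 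This is the same scheme as in \cite[\S 9]{BBHM} and \cite[\S 8]{BFM} with $M = 1/2$, so no new ingredient is needed.
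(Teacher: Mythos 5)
Your proposal follows exactly the same scheme the paper itself invokes: it defers to the standard Nash--Moser iteration of \cite{BM,BBHM} (with the reversibility/traveling-wave verification from \cite{BFM}), and your iterative step $\wtH_{\tn+1}=-\Pi_\tn\bT_\tn\Pi_\tn\cF(\wtU_\tn)$, Taylor expansion of $\cF$, low/high norm bookkeeping, and use of \eqref{pfi0}--\eqref{pfi3} with $M=1/2$ are precisely what those references carry out. The plan is correct.
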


\begin{proof}
	The inductive proof follows exactly as in \cite{BM,BBHM}. The 
	verification that each approximate torus 
	$ \wti_\tn $ is reversible and traveling is given in  \cite{BFM}. 
\end{proof}

Theorem \ref{NMT} is a by now standard corollary of Theorem \ref{NASH}, as in
\cite{BM,BBHM,BFM}.
Let $\upsilon = \varepsilon^{\rm a}$, with $0<{\rm a}<{\rm a_0}:= 1/(4+\tau_3)$. Then, the smallness condition in \eqref{param.NASH} is verified for $0<\varepsilon<\varepsilon_0$ small enough and Theorem \ref{NASH} holds. 
By \eqref{P1.2},
the sequence of functions
$	\wtW_\tn = \wtU_\tn - (\vf,0,0,\omega) = 
(\wt\fI_\tn,\wt\alpha_\tn-\omega) $ 
converges to a function
$	W_\infty : \R^\nu\times [\gamma_1,\gamma_2]
\rightarrow H_\vf^{s_0} \times H_\vf^{s_0} \times H^{s_0} \times \R^\nu $, and
we define
$$ 
U_\infty := (i_\infty,\alpha_\infty) := (\vf,0,0,\omega) + W_\infty \, . 
$$ 
The torus $ i_\infty $ is reversible and traveling, i.e. \eqref{RTTT} holds. 
By \eqref{P1.1}, \eqref{P1.2}, we also deduce the bounds
\begin{equation}\label{Uinfty.est}
\begin{aligned}
&\normk{U_\infty-U_0}{s_0+\mu(\tb)+\sigma_1} \leq C_* \varepsilon\upsilon^{-1} \,,
\quad
\normk{U_\infty-\wtU_\tn}{s_0+\mu(\tb)+\sigma_1} \leq C \varepsilon\upsilon^{-1} K_\tn^{-\ta_2} \,, \  \forall\,\tn \geq 1 \, .
\end{aligned}
\end{equation}
In particular \eqref{alpha_infty}-\eqref{i.infty.est} hold.
By Theorem \ref{NASH}-$(\cP 2)_\tn$, we deduce that $\cF(\lambda;U_\infty(\lambda))=0$ for any $\lambda $ in the set 
$$
\bigcap_{\tn\in\N_0} \cG_{\tn} = \cG_0 \cap \bigcap_{\tn \geq 1} \b\Lambda_\tn^\upsilon(\wti_{\tn-1}) \stackrel{\eqref{bLambdan}}{=} \cG_0 \cap \Big[ \bigcap_{\tn \geq 1} \t\Lambda_\tn^\upsilon (\wti_{\tn-1}) \Big] 
\cap \Big[ \bigcap_{\tn \geq 1} \t\Lambda_\tn^{\upsilon,I}(\wti_{\tn-1}) \Big] \cap \Big[ \bigcap_{\tn \geq 1} \tT\tC_{\tn}(2\upsilon,\tau)(\wti_{\tn-1}) \Big]\,,
$$
where $ \cG_0:= \t\Omega\times[\gamma_1,\gamma_2] $. 
To conclude the proof of Theorem \ref{NMT} 
it remains only to define the $  \mu_j^\infty $ in \eqref{def:FE} and prove that the  set $\cC_\infty^\upsilon$ in \eqref{dioph0}-\eqref{2meln+} is contained in $\cap_{\tn \geq 0}\cG_{\tn} $. We first define 
\begin{equation}\label{Ginfty}
\cG_\infty := \cG_0 \cap \Big[ \bigcap_{\tn \geq 1} \t\Lambda_\tn^{2\upsilon} (i_\infty) \Big] \cap \Big[ \bigcap_{\tn \geq 1} \t\Lambda_\tn^{2\upsilon,I}(i_\infty) \Big] \cap \Big[ \bigcap_{\tn \geq 1} \tT\tC_\tn(4\upsilon,\tau)(i_\infty) \Big] \, . 
\end{equation}
Using that the approximate solution 
$ \wtU_\tn $ is exponentially close to the limit $ U_\infty $ 
according to \eqref{Uinfty.est}, 
and relying on the inclusion properties of the set of non-resonant parameters 
stated precisely in Lemmata \ref{inclu.fgmp} and \eqref{INCLPRO}, one
directly deduces   the following lemma (cfr. e.g. Lemma 8.6 in \cite{BM}).

\begin{lem}\label{sub1}
	$\cG_\infty \subseteq \cap_{\tn \geq 0}\cG_{\tn}$, where $\cG_{\tn}$ are defined in \eqref{P2.1}.
\end{lem}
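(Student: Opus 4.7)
The plan is to proceed by induction on $\tn$, showing that for every $\lambda\in\cG_\infty$ one has $\lambda\in\cG_\tn$ for all $\tn\ge 0$. The base case $\tn=0$ is immediate since $\cG_\infty\subseteq\cG_0$ by \eqref{Ginfty}. For the inductive step, assuming $\lambda\in\cG_\tn$, it suffices by \eqref{P2.1} to verify
\begin{equation*}
\lambda\in\b\Lambda_{\tn+1}^\upsilon(\wti_\tn)
=\t\Lambda_{\tn+1}^\upsilon(\wti_\tn)\cap\t\Lambda_{\tn+1}^{\upsilon,I}(\wti_\tn)\cap\tT\tC_{\tn+1}(2\upsilon,\tau)(\wti_\tn).
\end{equation*}
By definition of $\cG_\infty$, the same three conditions hold at the limit torus $i_\infty$, but with the strengthened constants $2\upsilon$, $2\upsilon$, and $4\upsilon$ respectively. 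So the task is to transfer each of these bounds from $i_\infty$ to the approximate torus $\wti_\tn$, paying the cost of $\upsilon$ (resp.\ $2\upsilon$) in the small divisor constants. The key input is the high-frequency estimate
\begin{equation*}
\|U_\infty-\wtU_\tn\|_{s_0+\mu(\tb)+\sigma_1}^{k_0,\upsilon}\le C\varepsilon\upsilon^{-1}K_\tn^{-\ta_2}
\end{equation*}
from \eqref{Uinfty.est}, which makes the perturbation of $\wti_\tn$ from $i_\infty$ exponentially small in $K_\tn$.

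The transport condition is handled by Lemma \ref{inclu.fgmp} applied with $i_1=i_\infty$, $i_2=\wti_\tn$, and $\rho=2\upsilon$: the smallness requirement $\varepsilon C(s_1)N_\tn^{\tau+1}\|i_\infty-\wti_\tn\|_{s_1+\sigma}\le 2\upsilon$ reduces, via \eqref{Uinfty.est} and the scale $N_\tn=K_\tn^p$ from \eqref{scales}, to $C\varepsilon^2\upsilon^{-2}K_\tn^{p(\tau+1)-\ta_2}\le 2\upsilon$, which holds by the choice \eqref{a1}--\eqref{p.cond} of exponents together with the NM smallness \eqref{param.NASH}. Hence $\lambda\in\tT\tC_{\tn+1}(2\upsilon,\tau)(\wti_\tn)$. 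The second Melnikov condition follows from property $({\bf S3})_\tn$ of Theorem \ref{iterative_KAM}, namely the inclusion \eqref{INCLPRO}, applied with $i_1=i_\infty$, $i_2=\wti_\tn$, and $\rho=\upsilon$: the required bound $\varepsilon\upsilon^{-3}C(S)N_\tn^{\tau+1}\|i_\infty-\wti_\tn\|_{s_0+\mu(\tb)}\le\upsilon$ is verified in the same way, so $\lambda\in\t\Lambda_{\tn+1}^\upsilon(\wti_\tn)$.

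For the first Melnikov condition $\t\Lambda_{\tn+1}^{\upsilon,I}(\wti_\tn)$ defined in \eqref{I.meln}, the argument is analogous but requires an explicit Lipschitz-in-$i$ estimate on the final perturbed eigenvalues $\mu_j^{(\tn)}$: writing $\mu_j^{(\tn)}=\tm_{1,\bar\tn}j+\tm_{1/2}\Omega_j(\gamma)-\tm_0\sgn(j)+\fr_j^{(\tn)}$ and combining the variations \eqref{const_smallV} of $\tm_{1,\bar\tn}$, $\tm_{1/2}$, $\tm_0$ with the bound $|j|^{1/2}|\Delta_{12}\fr_j^{(\tn)}|\le C\varepsilon\upsilon^{-3}\|i_1-i_2\|_{s_0+\mu(\tb)}$ from $({\bf S2})_\tn$, one obtains
\begin{equation*}
|\mu_j^{(\tn)}(i_\infty)-\mu_j^{(\tn)}(\wti_\tn)|\le C\varepsilon\upsilon^{-3}|j|^{-1/2}\|i_\infty-\wti_\tn\|_{s_0+\mu(\tb)}.
\end{equation*}
Thus, for $\lambda\in\cG_\infty$, $|\ell|\le K_\tn$ and $j=-\ora{\jmath}\cdot\ell\in\S_0^c$,
\begin{equation*}
|\omega\cdot\ell+\mu_j^{(\tn)}(\wti_\tn)|\ge\frac{2\upsilon|j|^{1/2}}{\langle\ell\rangle^\tau}-C\varepsilon^2\upsilon^{-4}|j|^{-1/2}K_\tn^{-\ta_2}\ge\frac{\upsilon|j|^{1/2}}{\langle\ell\rangle^\tau},
\end{equation*}
where the last inequality follows from $|j|\lesssim\langle\ell\rangle$ (by the momentum constraint $j+\ora{\jmath}\cdot\ell=0$) and from \eqref{param.NASH}, \eqref{a1}. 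Hence $\lambda\in\t\Lambda_{\tn+1}^{\upsilon,I}(\wti_\tn)$, which closes the induction. The only delicate point, and the one on which the whole argument pivots, is that every loss of $\upsilon$ in the non-resonance constants must be dominated by the factor $K_\tn^{-\ta_2}\upsilon^{-c}$ coming from \eqref{Uinfty.est}: this is the reason the constants $\ta_1,\ta_2,p,\mu_1,\tb_1$ in \eqref{a1}--\eqref{sigma1} are tuned so that $p(\tau+1)-\ta_2<0$ and $\upsilon$-powers close up, together with the smallness \eqref{param.NASH}.
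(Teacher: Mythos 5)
Your overall approach is correct and matches what the paper does (an induction on $\tn$ that invokes \eqref{Uinfty.est}, Lemma \ref{inclu.fgmp} for the transport condition, \eqref{INCLPRO} for the second Melnikov conditions, and a by-hand Lipschitz estimate for the first Melnikov conditions). However, the explicit Lipschitz-in-$i$ bound you write for the eigenvalues is wrong. From $\mu_j^{(\tn)}=\tm_{1,\bar\tn}j+\tm_{\frac12}\Omega_j(\gamma)-\tm_0\,\sgn(j)+\fr_j^{(\tn)}$, the contribution of $\Delta_{12}\tm_{1,\bar\tn}$ comes with a factor $|j|$, that of $\Delta_{12}\tm_{\frac12}$ comes with $|\Omega_j(\gamma)|\sim|j|^{1/2}$ (by Lemma \ref{rem:exp_omegaj_gam}), and only the $\fr_j^{(\tn)}$ piece carries the $|j|^{-1/2}$ decay from $({\bf S2})_\tn$. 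The correct bound is therefore of the form
\begin{equation*}
|\mu_j^{(\tn)}(i_\infty)-\mu_j^{(\tn)}(\wti_\tn)|\lesssim \varepsilon\upsilon^{-3}\big(|j|+|j|^{1/2}+1+|j|^{-1/2}\big)\,\|i_\infty-\wti_\tn\|_{s_0+\mu(\tb)},
\end{equation*}
\emph{growing} with $|j|$, not decaying as $|j|^{-1/2}$ as you stated. Consequently, the displayed chain of inequalities for $|\omega\cdot\ell+\mu_j^{(\tn)}(\wti_\tn)|$ has the wrong intermediate term.

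This does not invalidate the argument: precisely because of the momentum restriction $j+\ora{\jmath}\cdot\ell=0$ with $|\ell|\le K_\tn$, one has $|j|\lesssim\langle\ell\rangle\le K_\tn$, so the worst term $\varepsilon\upsilon^{-3}|j|\,\|i_\infty-\wti_\tn\|$ can be absorbed into the right-hand side $\upsilon|j|^{1/2}\langle\ell\rangle^{-\tau}$ at the cost of a factor $|j|^{1/2}\langle\ell\rangle^{\tau}\lesssim\langle\ell\rangle^{\tau+1/2}\le K_\tn^{\tau+1/2}$, and since $\ta_2>p(\tau+1)>\tau+\tfrac12$ the exponential gain $K_\tn^{-\ta_2}$ from \eqref{Uinfty.est} dominates. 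You even invoke $|j|\lesssim\langle\ell\rangle$ in the last inequality, so you clearly had the right mechanism in mind, but you should invoke it \emph{before} stating the Lipschitz bound (to control the $|j|$- and $|j|^{1/2}$-growing terms), not only in the last step.

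One further small point: \eqref{INCLPRO} requires $0<\rho<\upsilon/2$ after rescaling, and your application with $\rho=\upsilon$ to pass from $\t\Lambda_{\tn+1}^{2\upsilon}(i_\infty)$ to $\t\Lambda_{\tn+1}^{\upsilon}(\wti_\tn)$ sits exactly at the boundary of the stated hypothesis; either apply it in two steps with $\rho=\upsilon/2$ each, or note that the proof of $({\bf S3})_\tn$ accommodates $\rho<\upsilon$. This is a cosmetic matter, but worth handling cleanly.
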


Then we define the $ \mu_{j}^\infty$ in \eqref{def:FE}
with
$  \tm_{1,\tn}^\infty := \tm_{1,\tn}(i_\infty) $, $ \tm_\frac12^\infty=\tm_\frac12(i_\infty)$, 
$ \tm_{0}^\infty= \tm_{0}(i_\infty)$, 
and  $\tm_{1,\tn}, \tm_\frac12, \tm_{0}$ are provided in Proposition \ref{end_redu}.
By \eqref{rem.eigen.KAM}, 
the sequence $(\fr_j^{(\tn)}(i_\infty))_{\tn\in\N}$, 
with $\fr_j^{(\tn)}$  given by Theorem \ref{iterative_KAM}-$({\bf S1})_\tn$ (evaluated at 
$ i = i_\infty $), is a Cauchy sequence in 
$|\,\cdot\, |^{k_0,\upsilon}$. 
Then we define 
$  \fr_j^\infty := \lim_{\tn\to \infty} \fr_j^{(\tn)} (i_\infty) $, for any $ j\in\S_0^c $,  
which satisfies 
$ |j|^\frac12| \fr_j^\infty - \fr_j^{(\tn)}(i_\infty)|^{k_0,\upsilon} \leq 
C \varepsilon\upsilon^{-3} N_{\tn-1}^{-\ta} $ for any $  \tn \geq 0 $. 
Then, recalling $\fr_j^{(0)}(i_\infty) = 0 $ and \eqref{const_small},  
the estimates \eqref{coeff_fin_small} hold 
(here $C= C(S)$ with $S $ fixed in \eqref{sigma1}).
Finally one checks (see e.g.  Lemma 8.7 in \cite{BM})
that the  Cantor set $\cC_\infty^\upsilon$ in \eqref{dioph0}-\eqref{2meln+} satisfies
$\cC_\infty^\upsilon\subseteq \cG_\infty$, with $\cG_\infty$ defined in \eqref{Ginfty}, 
and  Lemma \ref{sub1} implies 
that $\cC_\infty^\upsilon\subseteq \cap_{\tn \geq 0} \cG_{\tn}$.
This concludes the proof of Theorem \ref{NMT}.

\appendix

\section{Almost straightening of  a transport operator}\label{app:FGMP}

The main results of this appendix are Theorem \ref{thm:as} and Corollary \ref{cor.as}.
The goal is to almost-straighten a
linear quasi-periodic transport operator of the form 
\begin{equation}\label{defX0}
X_0 :=\omega\cdot\pa_\vf + p_0 (\vphi, x) \pa_x\,, 
\end{equation}
to a constant coefficient one $\omega\cdot\pa_\vf + \tm_{1,\tn}\pa_x$, up to a small 
term $p_{\tn}\pa_x$, see \eqref{Xtn.gtn} and \eqref{stime.pn.w}. 
We follow the scheme of Section 4 in \cite{BM20}.

We first introduce the following weighted-graded Sobolev norm: for any $u=u(\lambda)\in H^{s}(\T^{\nu+1})$,  $s \in \R $,  $k_0$-times differentiable with respect to $\lambda=(\omega,\gamma)\in\R^\nu\times[\gamma_1,\gamma_2]$, 
we define  the weighted graded Sobolev norm 
\begin{equation*}
	\absk{u}{s}:= \sum_{k\in\N^{\nu+1} \atop 0\leq |k|\leq k_0} \upsilon^{|k|}\sup_{\lambda\in\R^\nu\times[\gamma_1,\gamma_2]}\| \pa_\lambda^k u(\lambda) \|_{s-|k|}\,.
\end{equation*}
This norm satisfies usual tame and interpolation estimates. 
The main reason to use this norm is the estimate \eqref{est:compo} for the composition 
 operator where there is no loss of $ k_0 $-derivatives on the highest norm
 $ | u |_{s}^{k_0,\upsilon} $, unlike 
the corresponding estimate \eqref{est:compo-loss} for the $ \| \ \|_{s}^{k_0,\upsilon} $.
This is used in a crucial way to prove   \eqref{ptn.s+b} and then 
deduce the a-priori bound \eqref{stime.pn.w}
for the divergence of the high norms of the functions $ p_{\tn} $. 
 In the following we consider
\begin{equation*}
	\mathfrak{s}_0:=s_0+k_0 >\frac12(\nu+1)+k_0\,.
\end{equation*}
We report the following estimates which can be proved by 
adapting the arguments of  \cite{BM}.
\begin{lem}\label{proprieta}
	The following hold:
\\[1mm]
(i)  For any $s \in \R$, we have $\absk{u}{s}\leq \normk{u}{s}\leq \absk{u}{s+k_0}$.
\\[1mm]
(ii)  For any  $s\geq \mathfrak{s}_0$, 
		we have 
		$
		\absk{uv}{s}\leq C(s)\absk{u}{s}\absk{v}{\mathfrak{s}_0} +C(\mathfrak{s}_0)\absk{u}{\mathfrak{s}_0}\absk{v}{s}.
		$ 
		The tame constant $C(s) := C(s,k_0)$ is monotone in $s\geq \mathfrak{s}_0$.
\\[1mm]
(iii) For $ N \geq 1$ and $ \alpha \geq 0 $ 
		we have $\absk{\Pi_N u}{s}\leq N^{\alpha}\absk{u}{s-\alpha}$  and $\absk{\Pi_N^\perp u}{s}\leq N^{-\alpha}\absk{u}{s+\alpha}$,  $ \forall s \in \R $.
\\[1mm]
(iv) Let $\absk{\beta}{2\mathfrak{s}_0+1}\leq \delta(\mathfrak{s}_0)$ small enough. Then the composition operator $\cB$ defined as in \eqref{defcB} satisfies the tame estimate, for any $s\geq \mathfrak{s}_0 +1 $,
		\begin{equation}\label{est:compo}
			\absk{\cB u}{s} \leq C(s) (\absk{u}{s} + \absk{\beta}{s} \absk{u}{\mathfrak{s}_0+1} ) . 
		\end{equation}
The tame constant $C(s) := C(s,k_0) $ is monotone in $s\geq \mathfrak{s}_0$. 

Moreover the diffeomorphism $x\mapsto x + \beta(\vf,x)$ is invertible and the inverse diffeomorphism $y \mapsto y + \breve{\beta}(\vf,y)$  satisfies, for any $s\geq \mathfrak{s}_0$, 
$\absk{\breve{\beta}}{s}\leq C(s) \absk{\beta}{s} $.
\\[1mm]
(v) For any $\epsilon>0$, $a_0, b_0\geq 0$ and $p,q>0$, there exists $C_\epsilon=C_\epsilon(p,q)>0$, with $C_1<1$, such that $$\absk{u}{a_0+p}\absk{v}{b_0+q}\leq \epsilon\absk{u}{a_0+p+q}\absk{v}{b_0}
+ C_\epsilon\absk{u}{a_0}\absk{v}{b_0+p+q}
\,.$$
\end{lem}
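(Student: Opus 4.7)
\textbf{Proof plan for Lemma \ref{proprieta}.} The unifying idea is that each clause reduces, via the Leibniz rule in $\lambda$, to the corresponding statement for the plain Sobolev norms $\|\cdot\|_s$ that is already recorded (or standard), the only subtlety being that the graded index $s-|k|$ forces us to track how many $\lambda$-derivatives we spend. For the natural bookkeeping, the key monotonicity fact I will invoke repeatedly is: if $|k|\le k_0$ and $r\ge \mathfrak{s}_0=s_0+k_0$, then $r-|k|\ge s_0$, so ordinary tame product/composition estimates apply to $\|\partial_\lambda^k(\cdot)\|_{r-|k|}$.

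Item $(i)$ is immediate from the monotonicity $\|\cdot\|_{s-|k|}\le\|\cdot\|_s\le\|\cdot\|_{s-|k|+k_0}$ applied term by term in the definition of $|u|_s^{k_0,\upsilon}$. For $(ii)$ I will write $\partial_\lambda^k(uv)=\sum_{k_1+k_2=k}\binom{k}{k_1}\partial_\lambda^{k_1}u\,\partial_\lambda^{k_2}v$, estimate each term in $\|\cdot\|_{s-|k|}$ by the standard tame product \eqref{prod} (valid since $s-|k|\ge s_0$), and then regroup as
\[
\upsilon^{|k_1|}\|\partial_\lambda^{k_1}u\|_{s-|k|}\le |u|_{s-|k_2|}^{k_0,\upsilon}\le|u|_s^{k_0,\upsilon},\qquad \upsilon^{|k_2|}\|\partial_\lambda^{k_2}v\|_{s_0}\le |v|_{\mathfrak{s}_0}^{k_0,\upsilon},
\]
the latter because $s_0\le \mathfrak{s}_0-|k_2|$. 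The monotonicity of the tame constant $C(s)$ in $s$ then yields the asymmetric form stated. Item $(iii)$ is trivial because $\Pi_N$ commutes with $\partial_\lambda$, so I just apply the smoothing estimates \eqref{SM12} to each $\partial_\lambda^k u$ at regularity $s-|k|$. Item $(v)$ follows from the classical Sobolev interpolation inequality applied to $\|\partial_\lambda^{k_1}u\|_{(a_0+p)-|k_1|}$ and $\|\partial_\lambda^{k_2}v\|_{(b_0+q)-|k_2|}$, then recombining in the graded norms.

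The nontrivial point is $(iv)$, which captures the very reason the graded norm is introduced: a $\lambda$-derivative of $\mathcal{B}u$ costs one $x$-derivative on $u$, but we want no loss of regularity on the high-norm factor $|u|_s^{k_0,\upsilon}$. Here I will use Faà di Bruno to expand
\[
\partial_\lambda^k(\mathcal{B}u)=\sum_{|k_0'|+|\mathbf m_1|+\cdots+|\mathbf m_j|=|k|,\ |\mathbf m_i|\ge 1} c_{k_0',\mathbf m}\,\mathcal{B}\bigl(\partial_\lambda^{k_0'}\partial_x^{j}u\bigr)\prod_{i=1}^{j}\partial_\lambda^{\mathbf m_i}\beta,
\]
and then estimate each summand in $\|\cdot\|_{s-|k|}$. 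The crucial identity is the inequality $j\le \sum_i|\mathbf m_i|=|k|-|k_0'|$, so that
\[
\|\partial_\lambda^{k_0'}\partial_x^{j}u\|_{s-|k|}\le \|\partial_\lambda^{k_0'}u\|_{s-|k|+j}\le\|\partial_\lambda^{k_0'}u\|_{s-|k_0'|},
\]
whose $\upsilon^{|k_0'|}$-weighted form is exactly the $k_0'$-term of $|u|_s^{k_0,\upsilon}$; multiplying by the $\mathfrak{s}_0$-norms of the low-order $\beta$-factors (and once by $|\beta|_s$ for the term where the composition bound contributes $\|\beta\|_{s-|k|}$ via \eqref{est:compo-loss}) and summing over the finitely many Faà di Bruno patterns produces the claimed bound \eqref{est:compo} with tame constant $C(s)$ monotone in $s$. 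The invertibility of the diffeomorphism and the bound on $\breve\beta$ then follow from the implicit function theorem at regularity $\mathfrak{s}_0+1$, combined with the tame estimate applied to the fixed-point identity $\breve\beta(\varphi,y)=-\beta(\varphi,y+\breve\beta(\varphi,y))$, exactly as in Lemma \ref{product+diffeo}. The main obstacle throughout $(iv)$ is this bookkeeping of derivatives in the Faà di Bruno expansion: the gain $r+j\le s-|k_0'|$ is tight and encodes precisely the design of the graded norm, so any sloppier estimate reintroduces the loss of $k_0$ derivatives that the present lemma is meant to avoid.
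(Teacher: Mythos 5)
Your proof is correct, and it takes the approach the paper has in mind: the paper itself does not write out a proof of Lemma~\ref{proprieta} (it merely says the estimates "can be proved by adapting the arguments of \cite{BM}"), and your reconstruction — Leibniz/Fa\`a di Bruno in $\lambda$, estimating each summand in $\|\cdot\|_{s-|k|}$ with the ordinary tame product and composition bounds, and regrouping using the shift $s-|k|\le s-|k_1|$ — is exactly that adaptation.

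The one point worth making explicit, which you identify correctly as the heart of~(iv), is the combinatorial inequality behind the "no loss" in the high norm: in the Fa\`a di Bruno expansion of $\partial_\lambda^k(\cB u)$, if $j$ is the number of $x$-derivatives falling on $u$ and $k_0'$ the remaining $\lambda$-multi-index on $u$, then $j\le |k|-|k_0'|$ because each of the $j$ blocks carries at least one $\lambda$-derivative of $\beta$; hence $s-|k|+j\le s-|k_0'|$, and $\upsilon^{|k_0'|}\|\partial_\lambda^{k_0'}u\|_{s-|k_0'|}$ is exactly a term of $\absk{u}{s}$ (not $\absk{u}{s+k_0}$). Two routine verifications you rightly leave implicit: the low-norm factors $\|\partial_\lambda^{k_0'}\partial_x^j u\|_{s_0+1}\le\|\partial_\lambda^{k_0'}u\|_{\mathfrak{s}_0+1-|k_0'|}$ need $j\le k_0-|k_0'|$, which follows from the same inequality; and applying the $k_0=0$ composition estimate at regularity $s-|k|$ requires $s-|k|\ge s_0$, i.e.\ $s\ge\mathfrak{s}_0$, with the smallness hypothesis $\absk{\beta}{2\mathfrak{s}_0+1}\le\delta$ dominating the $\|\beta\|_{2s_0+2}$-smallness of Lemma~\ref{product+diffeo} since $k_0\ge 1$. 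With these in place the bookkeeping closes, so the proof is complete.
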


We now state the  almost straightening result of the quasi-periodic transport operator. 
Remind that $ N_{\tn}:=N_0^{\chi^{\tn}} $, $ \chi=3/2 $, $ N_{-1}:=1 $, see \eqref{scala.strai}. 

\begin{thm}[{\bf Almost straightening}]\label{thm:as}
Consider the quasi-periodic transport operator $ X_0 $  in \eqref{defX0}
where  $ p_0 (\vphi,x) $ is a quasi-periodic traveling wave, $ {\rm even} (\vphi,x) $, defined for all $ (\omega, \gamma) \in  \R^\nu \times [\gamma_1, \gamma_2] $. 
For any $S>\mathfrak{s}_0$, there exist $\tau_2> \tau_1 + 1+  \ta$, 
$\updelta := \updelta(S,\mathfrak{s}_0,k_0,\tb) \in (0,1) $ and  
	$N_0 :=N_0(S,\mathfrak{s}_0,k_0,\tb) \in \N $ (with $\tau_1$, $\ta$, $\tb$ defined in \eqref{tbta}) such that, if 
	\begin{equation}\label{small.V.as.AP}
		N_0^{\tau_2} \,  \absk{p_0}{2\mathfrak{s}_0+\tb+1} \,  \upsilon^{-1} \leq \updelta 
		 \,,
	\end{equation}
	then, for any $\bar\tn\in\N_0$, for any $\tn = 0, \ldots, \bar \tn$, the following holds true:
	\\[1mm]
	$ \bf (S1)_\tn$ There exists a linear quasi-periodic transport operator
	\begin{equation}\label{Xtn.gtn}
			X_{\tn} := \omega\cdot \pa_\vf + (\tm_{1,\tn} + p_{\tn}(\vf,x)) \pa_x \,,	
	\end{equation}
defined for all $(\omega, \gamma) \in \R^\nu \times [\gamma_1, \gamma_2]$,
where  $p_\tn (\vf, x) $ is a quasi-periodic traveling wave function, 
		${\rm even}(\vf, x)$, such that,
		for any $\mathfrak{s}_0\leq s \leq S$, 
	\begin{equation}\label{stime.pn.w}
		\absk{p_{\tn}}{s} \leq C(s,\tb) N_{\tn-1}^{-\ta}\absk{p_0}{s+\tb}\,, \quad \absk{p_{\tn}}{s+\tb} \leq C(s,\tb) N_{\tn-1}\absk{p_0}{s+\tb}\,,
	\end{equation}
for some constant $C(s,\tb) \geq 1 $ monotone in $s\in[\mathfrak{s}_0,S]$, and 
$\tm_{1,\tn} $ is a real constant satisfying
	\begin{equation}\label{tm.est}
		|\tm_{1,\tn}|^{k_0,\upsilon} \leq 2 
		\,  \absk{p_0}{\mathfrak{s}_0+\tb} \, , \quad 
		|\tm_{1,\tn}-\tm_{1,\tn-1}|^{k_0,\upsilon}  
		\leq C(\mathfrak{s}_0,\tb) N_{\tn-2}^{-\ta} \absk{p_0}{\mathfrak{s}_0+\tb} \, , 
		\, \forall \tn \geq 2 
		 \, . 
		\end{equation}
		Let $ \t\Lambda_{0}^{\rm T}  := \R^\nu \times [\gamma_1, \gamma_2] $, 
		and, for $\tn \geq 1$,  
	\begin{equation}\label{set.nonres.tn}
	\begin{aligned}
		\t\Lambda_{\tn}^{\rm T} & :=	\t\Lambda_{\tn}^{\upsilon,\rm T}(p_0)\\
		&  :=\big\{ (\omega,\gamma)\in\t\Lambda_{\tn-1}^{\rm T} \,:\, |(\omega-\tm_{1,\tn-1}\ora{\jmath})\cdot\ell|\geq \upsilon \braket{\ell}^{-\tau} \ \forall\,\ell\in\Z^{\nu}\setminus\{0\} \,, \ |\ell|\leq N_{\tn-1}  \big\}\, . 
		\end{aligned}
	\end{equation}	
For $\tn \geq 1$, 
there exists  a  quasi-periodic traveling wave function  $g_{\tn-1}(\vf,x)$, ${\rm odd}(\vf, x)$, 
defined for all $ (\omega, \gamma) \in \R^\nu \times [\gamma_1, \gamma_2] $,  fulfilling for any $\mathfrak{s}_0\leq s \leq S$,
	\begin{equation}\label{gtn.est.better}
		\absk{g_{\tn-1}}{s}\leq C(s) N_{\tn-1}^{\tau_1}\upsilon^{-1}\absk{\Pi_{N_{\tn-1}}p_{\tn-1}}{s} \, , 
	\end{equation}
for some  constant $C(s) \geq 1 $ monotone in $s\in[\mathfrak{s}_0,S]$, such that, defining the composition operators 
	\begin{equation*}
		(\cG_{\tn-1} u)(\vf,x) := u(\vf,x+ g_{\tn-1}(\vf,x))\,, \ \  (\cG_{\tn-1}^{-1}u)(\vf,y) := u(\vf,y+\breve{g}_{\tn-1}(\vf,y))\,,
	\end{equation*}
	where $  x=y+\breve{g}_{\tn-1}(\vf,y)$ is the inverse diffeomorphism of 
	$ y = x + g_{\tn-1}(\vf,x) $, 
	the following conjugation formula holds: 	for any $(\omega,\gamma)$ in the set $ \t\Lambda_{\tn}^{\rm T} $ (cfr. \eqref{set.nonres.tn}) we have 
	\begin{equation}\label{Xtn.gtn2}
			X_{\tn} =\cG_{\tn-1}^{-1} \,  X_{\tn-1}  \, \cG_{\tn-1}	 \, . 
	\end{equation}
$ \bf (S2)_\tn$ 
Let $ \Delta_{12} p_0 := p_{0,1} - p_{0,2}  $. For any $s_1 \in [s_0+1, S] $, there exist $C(s_1)>0$ and 
 $\updelta'(s_1)\in(0,1)$ such that
 if 
\begin{equation}\label{small12}
	N_0^{\tau_2} \sup_{(\omega, \gamma) \in \R^\nu \times [\gamma_1, \gamma_2]}
	\big( \| p_{0,1} \|_{s_1 + \tb}+\| p_{0,2} \|_{s_1+\tb} \big)
	\upsilon^{-1} \leq \updelta'(s_1)  \,,
\end{equation}  
then, for all $(\omega, \gamma) \in \R^\nu \times \R$,
\begin{align}
\label{estp121}
&\|\Delta_{12} p_\tn\|_{s_1-1}  \leq C({s_1})   N_{\tn-1}^{-\ta}\|\Delta_{12} p_0\|_{s_1+\tb} \, , 
\quad \|\Delta_{12} p_\tn\|_{s_1+\tb} \leq C({s_1})
N_{\tn-1}\|\Delta_{12} p_0\|_{s_1+\tb} \\
& |\Delta_{12}(\tm_{1, \tn+1}-\tm_{1, \tn})| \leq \|\Delta_{12}p_\tn\|_{s_0}\,, \quad |\Delta_{12} \tm_{1,\tn}| \leq C({s_1}) \|\Delta_{12} p_0\|_{s_0}\,.\label{estm121}
\end{align}	 
Moreover  for any $s \geq s_0$,
\begin{align}
	& \|\Delta_{12} g_\tn\|_{s} \lesssim_{s} \upsilon^{-1}\big( \|\Pi_{N_{\tn}}\Delta_{12}p_\tn\|_{s+\tau} + \upsilon^{-1} |\Delta_{12}\tm_{1, \tn}|\|\Pi_{N_{\tn}} p_{\tn,2} \|_{s+2\tau+1} \big)\,. \label{estg12}
\end{align}
\end{thm}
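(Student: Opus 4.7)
The plan is to construct inductively the sequence $X_\tn$, the diffeomorphisms $y = x + g_{\tn-1}(\vf,x)$, and the constants $\tm_{1,\tn}$ via a KAM-type scheme. Assuming $({\bf S1})_{\tn}$, I would define the new generator of the diffeomorphism as
\begin{equation*}
g_{\tn}(\vf,x) := -(\omega\cdot\pa_\vf + \tm_{1,\tn}\pa_x)^{-1}_{\rm ext}\bigl(\Pi_{N_\tn} p_\tn - \langle p_\tn\rangle_{\vf,x}\bigr),
\end{equation*}
using the extended small-divisor operator from \eqref{paext}. This automatically makes $g_\tn$ defined on the full parameter space $\R^\nu\times[\gamma_1,\gamma_2]$, a quasi-periodic traveling wave (as $p_\tn$ is), and $\odd(\vf,x)$ since $p_\tn$ is $\even(\vf,x)$. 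The estimate \eqref{gtn.est.better} follows from \eqref{lem:diopha.eq} applied in the $|\cdot|^{k_0,\upsilon}_s$ norm, and the smallness condition \eqref{small.V.as.AP} together with Lemma \ref{proprieta}(iv) ensures $\cG_\tn$ is a bona fide diffeomorphism.

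The key computation is the conjugation formula. Setting $\tm_{1,\tn+1}:=\tm_{1,\tn}+\langle p_\tn\rangle_{\vf,x}$, and using that under $y=x+g_\tn(\vf,x)$ one has the transport identity, the homological equation $(\omega\cdot\pa_\vf+\tm_{1,\tn}\pa_x)g_\tn=-\Pi_{N_\tn}p_\tn+\langle p_\tn\rangle_{\vf,x}$ holds on the set $\t\Lambda_{\tn+1}^{\rm T}$ defined in \eqref{set.nonres.tn}, and a direct calculation gives \eqref{Xtn.gtn2} with
\begin{equation*}
p_{\tn+1}(\vf,y) := \cG_\tn^{-1}\bigl[\Pi_{N_\tn}^\perp p_\tn + p_\tn\,\pa_x g_\tn\bigr](\vf,y).
\end{equation*}
Since $\cG_\tn^{-1}$ maps constants to constants, the transport operator keeps its form \eqref{Xtn.gtn} on $\t\Lambda_{\tn+1}^{\rm T}$; off this set the same formula defines the extension.

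The inductive estimates \eqref{stime.pn.w}--\eqref{tm.est} are proven simultaneously. For the low-norm bound, the smoothing term contributes $|\Pi_{N_\tn}^\perp p_\tn|^{k_0,\upsilon}_s \leq N_\tn^{-\tb}|p_\tn|^{k_0,\upsilon}_{s+\tb} \lesssim N_\tn^{-\tb}N_{\tn-1}|p_0|^{k_0,\upsilon}_{s+\tb}$ by the inductive high-norm bound, which beats $N_\tn^{-\ta}$ because $\tb=[\ta]+2$ and $N_{\tn-1}=N_\tn^{1/\chi}=N_\tn^{2/3}$. The quadratic term $p_\tn\pa_x g_\tn$ is controlled by the tame product estimate of Lemma \ref{proprieta}(ii) combined with \eqref{gtn.est.better}: it yields a factor $N_\tn^{\tau_1}\upsilon^{-1}(|p_\tn|^{k_0,\upsilon}_{\mathfrak{s}_0})^2 \lesssim N_\tn^{\tau_1}\upsilon^{-1}N_{\tn-1}^{-2\ta}|p_0|^2_{\mathfrak{s}_0+\tb}$, and the choice $\ta=3(\tau_1+1)$ makes this bounded by $N_\tn^{-\ta}$ once \eqref{small.V.as.AP} is in force with $\tau_2$ large enough. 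The high-norm bound is similar but uses Lemma \ref{proprieta}(v) to trade low against high regularity; the composition with $\cG_\tn^{-1}$ is absorbed via \eqref{est:compo}, whose tameness is crucial here (this is exactly where the $|\cdot|^{k_0,\upsilon}_s$ norm is superior to $\|\cdot\|^{k_0,\upsilon}_s$, since it avoids the extra loss of $k_0$ derivatives per step that would otherwise destroy the scheme). Estimates \eqref{tm.est} follow because $|\langle p_\tn\rangle_{\vf,x}|^{k_0,\upsilon}\lesssim|p_\tn|^{k_0,\upsilon}_{\mathfrak{s}_0}$.

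For part $({\bf S2})_\tn$ I would differentiate every step of the construction: $\Delta_{12}\tm_{1,\tn+1}$ is the average of $\Delta_{12}p_\tn$, while $\Delta_{12}g_\tn$ contains a term from $\Delta_{12}p_\tn$ and a term from $\Delta_{12}\tm_{1,\tn}$, the latter handled by the Lipschitz dependence \eqref{lem:diopha.eq.12} of the extended inverse on the constant; the bound \eqref{estg12} is then immediate, and \eqref{estp121}--\eqref{estm121} propagate inductively using the product rule on $\cG_\tn^{-1}[\Pi_{N_\tn}^\perp p_\tn+p_\tn\pa_x g_\tn]$ under the analogue smallness condition \eqref{small12}. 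The main obstacle is the coupling between the low and high norm iterations at the quadratic step: naive bounds would force the high norm to grow geometrically. The resolution is the standard asymmetric tame product, which places the entire high-norm loss on \emph{one} factor while the other retains the gain $N_{\tn-1}^{-\ta}$; this, together with the gap between $\mathfrak{s}_0$ and $s+\tb$ and the choice $\ta=3(\tau_1+1)$, closes the induction and yields super-exponential convergence of $\tm_{1,\tn}$ and near-exponential decay of $p_\tn$ in low norm.
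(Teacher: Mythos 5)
Your proposal follows essentially the same route as the paper's proof: the same generator $g_\tn$ defined via the extended inverse $(\omega\cdot\pa_\vf + \tm_{1,\tn}\pa_x)^{-1}_{\rm ext}$ applied to $\Pi_{N_\tn}p_\tn - \langle p_\tn\rangle_{\vf,x}$, the same conjugation computation producing $p_{\tn+1} = \cG_\tn^{-1}[\Pi_{N_\tn}^\perp p_\tn + p_\tn\,(g_\tn)_x]$ and $\tm_{1,\tn+1} = \tm_{1,\tn} + \langle p_\tn\rangle_{\vf,x}$, and the same pair of Nash-Moser-type inequalities (paper's Lemma \ref{stime.iter.ptn}, eqs.\ \eqref{ptn.s}--\eqref{ptn.s+b}) closed by the choice $\ta = 3(\tau_1+1)$, $\tb = [\ta]+2$ and the smallness condition. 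One small inaccuracy: you say the high-norm iterate is closed by trading regularity via Lemma \ref{proprieta}(v); the paper in fact obtains \eqref{ptn.s+b} by repeating the derivation of \eqref{ptn.s} with $s\rightsquigarrow s+\tb$ and using only the trivial bound $\absk{\Pi_{N_\tn}^\perp p_\tn}{s+\tb}\leq\absk{p_\tn}{s+\tb}$ — no interpolation is needed; the interpolation estimate (v) is reserved for other parts of the paper. This does not affect correctness of your argument.
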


We deduce the following corollaries.

\begin{cor}\label{cor.TC}
For any $ \bar \tn \in \N_0 $ we have the inclusion 
$ \tT\tC_{\bar \tn+1}(\tm_{1,\bar \tn}, 2\upsilon,\tau) 
\subset \t\Lambda_{\bar \tn+1}^{\upsilon, \rm T} 
$ where the set $ \tT\tC_{\bar \tn+1}(\tm_{1,\bar \tn},2\upsilon,\tau)$ is defined in \eqref{tDtCn}.
\end{cor}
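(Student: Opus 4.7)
The plan is to verify, for any fixed $(\omega,\gamma)\in\tT\tC_{\bar\tn+1}(\tm_{1,\bar\tn},2\upsilon,\tau)$, all the non-resonance conditions built into \eqref{set.nonres.tn}. Unfolding the recursive definition of $\t\Lambda_{\bar\tn+1}^{\upsilon,\rm T}$, these amount to requiring
\begin{equation*}
|(\omega-\tm_{1,\tn}\ora{\jmath})\cdot\ell|\geq \upsilon\braket{\ell}^{-\tau}\qquad\forall\,\tn\in\{0,1,\ldots,\bar\tn\},\ 0<|\ell|\leq N_{\tn}.
\end{equation*}
The natural strategy is to fix $\tn$ and $\ell$ with $0 < |\ell|\leq N_{\tn}\leq N_{\bar\tn}$, use the hypothesis at the top level, and show that the perturbation from replacing $\tm_{1,\bar\tn}$ by $\tm_{1,\tn}$ consumes at most half of the available gap $2\upsilon$.

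Concretely, the triangle inequality gives
\begin{equation*}
|(\omega-\tm_{1,\tn}\ora{\jmath})\cdot\ell|\geq 2\upsilon\braket{\ell}^{-\tau} - |\tm_{1,\bar\tn}-\tm_{1,\tn}|\,|\ora{\jmath}\cdot\ell|,
\end{equation*}
and the key step is to bound the increment $|\tm_{1,\bar\tn}-\tm_{1,\tn}|$ by telescoping. For $\tn\geq 1$, the second estimate in \eqref{tm.est} applied term-by-term, together with the super-exponential growth $N_k=N_{k-1}^{3/2}$, yields
\begin{equation*}
|\tm_{1,\bar\tn}-\tm_{1,\tn}|\leq C(\mathfrak{s}_0,\tb)\,\absk{p_0}{\mathfrak{s}_0+\tb}\sum_{k=\tn+1}^{\bar\tn}N_{k-2}^{-\ta}\leq C'\,\absk{p_0}{\mathfrak{s}_0+\tb}\,N_{\tn-1}^{-\ta},
\end{equation*}
the geometric tail being dominated by its first term. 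For $\tn=0$ the first estimate in \eqref{tm.est} combined with $\tm_{1,0}=0$ gives $|\tm_{1,\bar\tn}|\leq 2\,\absk{p_0}{\mathfrak{s}_0+\tb}$, which is of the same form upon using $N_{-1}:=1$.

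Using $|\ora{\jmath}\cdot\ell|\leq C\,|\ell|\leq CN_{\tn}$ and $\braket{\ell}\leq N_{\tn}$, the remainder is controlled by
\begin{equation*}
|\tm_{1,\bar\tn}-\tm_{1,\tn}|\,|\ora{\jmath}\cdot\ell|\leq C''\,\absk{p_0}{\mathfrak{s}_0+\tb}\,N_{\tn-1}^{-\ta}\,N_{\tn}^{\tau+1}\,\braket{\ell}^{-\tau}.
\end{equation*}
Since $N_{\tn}^{\tau+1}=N_{\tn-1}^{\chi(\tau+1)}$ with $\chi=3/2$, and $\ta=3(\tau_1+1)>\chi(\tau+1)$ by the definition of $\tau_1$ in \eqref{tbta}, the factor $N_{\tn-1}^{\chi(\tau+1)-\ta}$ is bounded by $1$; for $\tn=0$ it equals $N_0^{\tau+1}$. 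The smallness hypothesis \eqref{small.V.as.AP}, which gives $\absk{p_0}{\mathfrak{s}_0+\tb}\leq \updelta\,\upsilon\,N_0^{-\tau_2}$, combined with the choice $\tau_2>\tau_1+1+\ta\geq \tau+1$, makes the whole factor $\absk{p_0}{\mathfrak{s}_0+\tb}\,N_{\tn-1}^{-\ta}\,N_{\tn}^{\tau+1}$ bounded by $\upsilon/C''$ provided $\updelta$ is chosen small enough (uniformly in $\tn\leq\bar\tn$ and $\bar\tn$). Hence the remainder is at most $\upsilon\braket{\ell}^{-\tau}$, and the claimed Diophantine condition at level $\tn$ follows. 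The only delicate bookkeeping is checking that the exponent $\chi(\tau+1)-\ta$ is negative and that the geometric sum is dominated by its first term, both of which are immediate from the definitions \eqref{tbta} and \eqref{scala.strai}.
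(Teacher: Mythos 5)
Your proposal is correct and follows essentially the same route as the paper's proof: bound the telescoping difference $|\tm_{1,\bar\tn}-\tm_{1,k}|$ via \eqref{tm.est}, control the resulting error term $|\tm_{1,\bar\tn}-\tm_{1,k}|\,|\ora{\jmath}\cdot\ell|$ by $N_k^{\tau+1}N_{k-1}^{-\ta}\absk{p_0}{\mathfrak{s}_0+\tb}\braket{\ell}^{-\tau}$, and absorb it using \eqref{small.V.as.AP} and the exponent relations in \eqref{tbta}. The extra explicit bookkeeping you add (the separate $\tn=0$ case via $N_{-1}=1$ and the check $\chi(\tau+1)<\ta$) is consistent with, and slightly more detailed than, what the paper leaves implicit.
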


\begin{proof}
When $\bar\tn=0$, by definition we have $\tT\tC_1(2\upsilon,\tau) \subset \t\Lambda_1^{\upsilon,\rm T}$.
Let $(\omega,\gamma)\in\tT\tC_{\bar \tn+1}(\tm_{1,\bar \tn},2\upsilon,\tau)$. 
For any  $ k = 0, \ldots, \bar \tn-1$ we have, by \eqref{tm.est}, 
\begin{equation}\label{differe}
| \tm_{1,\bar \tn} - \tm_{1,k} | \lesssim_{\mathfrak{s}_0,\tb} 
N_{k-1}^{-\ta} \absk{p_0}{\mathfrak{s}_0+\tb}  \, .
\end{equation}
By  \eqref{tDtCn} and \eqref{differe}, for all $ 0 < |\ell | \leq N_{k} $, 
\begin{align*}
| (\omega-\tm_{1,k}\ora{\jmath})\cdot\ell| & \geq  | (\omega-\tm_{1, \bar \tn}\ora{\jmath})\cdot\ell| -  |\tm_{1, \bar \tn} - \tm_{1,k}||\ora{\jmath}| |\ell | \\
& \geq 2\upsilon\braket{\ell}^{-\tau}  -   C N_{k-1}^{-\ta} \absk{p_0}{\mathfrak{s}_0+\tb}   |\ell |
\geq \upsilon\braket{\ell}^{-\tau}
\end{align*}
if $C  N_{k}^{\tau+1} N_{k-1}^{-\ta}\absk{p_0}{\mathfrak{s}_0+\tb} \upsilon^{-1} < 1 $, which is satisfied by \eqref{small.V.as.AP} and \eqref{tbta}.
Thus, recalling \eqref{set.nonres.tn}, 
we have proved that $(\omega,\gamma)\in\t\Lambda_{\bar \tn+1}^{\upsilon,\rm T}$. 
\end{proof}


\begin{cor}
\label{cor.as}
For any $\bar\tn\in\N_0$ and $(\omega,\gamma)\in\tT\tC_{\bar\tn+1}(\tm_{1,\bar\tn},2\upsilon,\tau)$
we have the conjugation formula 
$$
X_{\bar\tn} = \cB_{\bar \tn}^{-1}X_0 \cB_{\bar\tn} \qquad \text{ where} \qquad \cB_0:={\rm Id} \,, \quad
\cB_{\bar \tn}:= \cG_{0} \circ \cdots \circ \cG_{\bar\tn-1} \,, \ \bar\tn\geq 1\,,  
$$ 
and  $X_{\bar\tn}$ is given in \eqref{Xtn.gtn} with $\tn=\bar\tn$. Moreover, when $\bar\tn\geq 1$, for 
any $ \tn = 1, \ldots,  \bar \tn $, each $\cB_{ \tn} $ 
is the composition  operator 
induced by the diffeomorphism of the torus $x \mapsto x +\beta_{ \tn}(\vf,x)$,
$ (\cB_{ \tn}u)(\vf, x) = u(\vf,x+\beta_{ \tn}(\vf,x)) $, where
the function $\beta_{ \tn}$
is a quasi-periodic traveling wave, ${\rm odd}(\vf, x)$, satisfying, 
for any $\mathfrak{s}_0\leq s\leq S$, for some  constant $\underline{C}(S) \geq 1 $,  
\begin{equation}\label{stima.B2}
 \absk{\beta_{ \tn}}{s} \leq  \underline{C}(S)\upsilon^{-1} N_{0}^{\tau_1}\absk{p_0}{s+\tb}\, . 
\end{equation}
Furthermore, 
for $p_{0, 1}, p_{0,2} $ fulfilling \eqref{small12}, we have $\|\Delta_{12}\beta_{\bar\tn}\|_{s_1} \leq \bar C({S})\upsilon^{-1} N_0^{\tau} \|\Delta_{12}p_0\|_{s_1+\tb}$.
\end{cor}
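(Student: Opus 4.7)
\smallskip

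\noindent\textbf{Proof plan for Corollary \ref{cor.as}.}
The conjugation formula $X_{\bar\tn}=\cB_{\bar\tn}^{-1}X_0\cB_{\bar\tn}$ is obtained by iterating \eqref{Xtn.gtn2}: given $(\omega,\gamma)\in\tT\tC_{\bar\tn+1}(\tm_{1,\bar\tn},2\upsilon,\tau)$, Corollary \ref{cor.TC} places such $(\omega,\gamma)$ in $\t\Lambda_{\bar\tn+1}^{\upsilon,\rm T}\subseteq \t\Lambda_{\tn+1}^{\upsilon,\rm T}$ for every $\tn=0,\ldots,\bar\tn-1$ (by the nesting in \eqref{set.nonres.tn}), so \eqref{Xtn.gtn2} applies at each step and composing the identities $X_{\tn+1}=\cG_{\tn}^{-1}X_\tn\cG_\tn$ yields the claim. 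The fact that $\cB_{\bar\tn}=\cG_0\circ\cdots\circ\cG_{\bar\tn-1}$ is itself a composition operator follows inductively: if $\cB_\tn u(\vf,x)=u(\vf,x+\beta_\tn(\vf,x))$, then $\cB_{\tn+1}=\cB_\tn\circ\cG_\tn$ corresponds to the diffeomorphism
\begin{equation*}
\beta_{\tn+1}(\vf,x)=\beta_\tn(\vf,x)+g_\tn\bigl(\vf,x+\beta_\tn(\vf,x)\bigr)=\beta_\tn(\vf,x)+(\cB_\tn g_\tn)(\vf,x),
\end{equation*}
with $\beta_0:=0$. Since each $g_\tn$ is an $\odd(\vf,x)$ quasi-periodic traveling wave (by $({\bf S1})_{\tn+1}$) and these properties are stable under the composition structure above (the shift by $\vec\jmath\vs$ acts as a pure translation in $x$ on traveling waves), each $\beta_\tn$ is also an $\odd(\vf,x)$ quasi-periodic traveling wave.

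For the estimate \eqref{stima.B2} the plan is to run the induction on $\tn$ using Lemma \ref{proprieta}(iv) applied to $\cB_\tn g_\tn$. Combining \eqref{gtn.est.better} with \eqref{stime.pn.w} gives the crucial bound
\begin{equation*}
\absk{g_\tn}{s}\leq C(s,\tb)\,\upsilon^{-1}\,N_\tn^{\tau_1}N_{\tn-1}^{-\ta}\,\absk{p_0}{s+\tb},\qquad \tn\geq 1,
\end{equation*}
(and $\absk{g_0}{s}\leq C(s)\upsilon^{-1}N_0^{\tau_1}\absk{p_0}{s}$ using $N_{-1}=1$). Since $\tau_2>\tau_1+1+\ta$, the choice $\chi=3/2$ and $\ta=3(\tau_1+1)$ from \eqref{tbta} yields $\chi\tau_1-\ta=-\tfrac{3}{2}(\tau_1+2)<0$, so $\sum_{\tn\geq 0}N_\tn^{\tau_1}N_{\tn-1}^{-\ta}\lesssim 1$ geometrically and in fact is dominated by the first term $N_0^{\tau_1}$. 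The smallness assumption \eqref{small.V.as.AP} ensures that $\absk{g_\tn}{\mathfrak{s}_0+1}$ and the inductive bound $\absk{\beta_\tn}{\mathfrak{s}_0+1}$ stay below the threshold $\delta(\mathfrak{s}_0)$ needed to apply Lemma \ref{proprieta}(iv). The recursion $\absk{\beta_{\tn+1}}{s}\leq\absk{\beta_\tn}{s}+C(s)(\absk{g_\tn}{s}+\absk{\beta_\tn}{s}\absk{g_\tn}{\mathfrak{s}_0+1})$ then closes with the claimed constant $\underline{C}(S)\upsilon^{-1}N_0^{\tau_1}\absk{p_0}{s+\tb}$, by absorbing the geometric tail into $\underline{C}(S)$.

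For the Lipschitz-type estimate on $\Delta_{12}\beta_{\bar\tn}$ the approach is analogous, telescoping
\begin{equation*}
\Delta_{12}\beta_{\tn+1}=\Delta_{12}\beta_\tn+\Delta_{12}(\cB_\tn g_\tn),
\end{equation*}
and using the product/composition estimates together with \eqref{estg12}, \eqref{estp121}, \eqref{estm121} to bound $\|\Delta_{12}g_\tn\|_{s_1}$. The summability of $N_\tn^{\tau}N_{\tn-1}^{-\ta}$ guarantees that $\|\Delta_{12}\beta_{\bar\tn}\|_{s_1}$ is controlled by $\bar C(S)\upsilon^{-1}N_0^\tau\|\Delta_{12}p_0\|_{s_1+\tb}$, uniformly in $\bar\tn$.

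The main technical obstacle will be to propagate simultaneously the two estimates in \eqref{stime.pn.w} (one small in $N_{\tn-1}^{-\ta}$, one with divergent high norm $N_{\tn-1}$) through the composition $\cB_\tn g_\tn$ without losing the absolute bound on $\absk{\beta_\tn}{s}$: one has to use the interpolation in Lemma \ref{proprieta}(v) in combination with the smallness \eqref{small.V.as.AP} so that the divergent factor $N_{\tn-1}$ in the high-norm bound of $g_\tn$ is absorbed by the exponentially fast gain $N_{\tn-1}^{-\ta}$ coming from the second factor in the tame product, giving a uniformly summable series.
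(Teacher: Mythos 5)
Your proposal reproduces the paper's argument: iterate \eqref{Xtn.gtn2} via the nesting of the sets in \eqref{set.nonres.tn} and Corollary \ref{cor.TC}; write $\beta_\tn=\beta_{\tn-1}+\cB_{\tn-1}g_{\tn-1}$; propagate the $\odd(\vf,x)$ traveling-wave property; and estimate the increments $\absk{\beta_k-\beta_{k-1}}{s}$ with the composition bound of Lemma \ref{proprieta}(iv) together with \eqref{gtn.est.better} and \eqref{stime.pn.w}, closing by the super-geometric decay of $N_{k-1}^{\tau_1}N_{k-2}^{-\ta}$ (indeed $\chi\tau_1-\ta<0$), then telescope. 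The Lipschitz estimate by telescoping with \eqref{estg12}--\eqref{estm121} is also what the paper does. So the route is the same.

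The one place where you over-complicate is the final paragraph on the ``main technical obstacle.'' There is no need to propagate the second (divergent, $\propto N_{\tn-1}$) bound of \eqref{stime.pn.w} through this corollary, nor to invoke the interpolation of Lemma \ref{proprieta}(v). That high-norm bound is used only inside the proof of Theorem \ref{thm:as} (in Lemma \ref{stime.iter.ptn}, to close the Nash--Moser scheme for $p_\tn$); here the first, convergent bound $\absk{p_\tn}{s}\le C\,N_{\tn-1}^{-\ta}\absk{p_0}{s+\tb}$ suffices. Indeed the composition estimate \eqref{est:compo} only requires $\absk{g_{k-1}}{s}$ and $\absk{g_{k-1}}{\mathfrak{s}_0+1}$, both controlled by \eqref{gtn.est.better} and the first inequality of \eqref{stime.pn.w}, at the cost of $\tb$ extra derivatives on $p_0$; the quadratic term $\absk{\beta_{k-1}}{s}\absk{g_{k-1}}{\mathfrak{s}_0+1}$ is then absorbed using the smallness \eqref{small.V.as.AP} on $\upsilon^{-1}\absk{p_0}{\mathfrak{s}_0+\tb+1}$, and the geometric tail is summed directly. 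No interpolation lemma is involved.
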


\begin{proof}
Let $\bar\tn\geq 1$ and we argue by induction on $ \tn = 1, \ldots, \bar \tn $.
 For $ \tn=1$ we have that $\beta_1= g_0 $. 
 Hence, using \eqref{gtn.est.better}, we get, for any 
 $ \mathfrak{s}_0\leq s\leq S $,  
\begin{equation}\label{base}
	\absk{\beta_1}{s} \leq   C(S)\upsilon^{-1} N_{0}^{\tau_1 }\absk{p_0}{s}\, ,
\end{equation}
which proves \eqref{stima.B2} for $ \tn = 1 $ and provided $\underline{C}(S) \geq  C(S)$. 
If $ \bar \tn \geq 2 $, for $ \tn = 2 , \ldots, \bar \tn  $ the operator 
$\cB_{ \tn  } =\cB_{ \tn-1}\circ\cG_{\tn-1}$
is the composition operator induced by the diffeomorphism 
\begin{equation}\label{diff.n}
	\beta_{\tn}(\vf,x) = \beta_{\tn-1}(\vf,x) + g_{\tn-1}(\vf,x+\beta_{\tn-1}(\vf,x)) 
	= \beta_{\tn-1}(\vf,x) + \{\cB_{\tn-1} g_{\tn-1}\}(\vf,x)\,.
\end{equation}
Since $ g_0 (\vf,x)  $ is a quasi-periodic traveling wave ${\rm odd}(\vf, x)$ each 
$ \beta_{\tn}(\vf,x) $ is a quasi-periodic traveling wave ${\rm odd}(\vf, x)$. 
We now assume by induction that \eqref{stima.B2} up to $  \tn - 1  $. 
We first prove that, for any $ k = 2,..., \tn  $, we have, 
 for any $\mathfrak{s}_0 \leq s \leq S$,
	\begin{align}
		\absk{\beta_{k}-\beta_{k-1}}{s} & \stackrel{\eqref{diff.n}} 
		= \absk{\cB_{k-1} g_{k-1}}{s} 
		\stackrel{\eqref{est:compo}} 
		{\leq} C(s) \left( \absk{g_{k-1}}{s}+  \absk{\beta_{k-1}}{s}\absk{g_{k-1}}{\mathfrak{s}_0+1}\right) \notag \\
		\notag
		& \stackrel{\eqref{gtn.est.better},\eqref{stime.pn.w},\eqref{stima.B2}} 
		{\leq} C(S,\tb)
		N_{k-1}^{\tau_1} \upsilon^{-1} N_{k-2}^{-\ta}\absk{p_0}{s+\tb}
		\\
		& \qquad \qquad   \qquad 
		 + C(S,\tb) \underline{C}(S) \upsilon^{-2} N_{0}^{\tau_1}  	 N_{k-1}^{\tau_1}N_{k-2}^{-\ta}
		 \absk{p_0}{s+\tb}\absk{p_0}{\mathfrak{s}_0+\tb+1} \notag  \\
		& \stackrel{\eqref{small.V.as.AP}} {\leq}
		C(S, \tb) \, (1 + \underline{C}(S) ) \, 
		 \upsilon^{-1}N_{k-1}^{\tau_1}N_{k-2}^{-\ta} 
		 \absk{p_0}{s+\tb} \, .  \label{stima.B1}
	\end{align}
 By
 \eqref{stima.B1} and  \eqref{base}, we derive, for any $ \tn = 2 , \ldots, \bar \tn  $ and setting  $b:= \ta  -  \frac12 \tau_1  \geq  1  $  (see 
 \eqref{tbta})
\begin{align*}
		\absk{\beta_{\tn}}{s} 
		& \leq \sum_{k=2}^{\tn}\absk{\beta_{k}-\beta_{k-1}}{s} + \absk{\beta_1}{s} \\
	& 	\leq  
		\big(  C(S, \tb) \, (1+ \underline{C}(S)) \, N_0^{-b} + C(S) \big) \upsilon^{-1}N_{0}^{\tau_1}\absk{p_0}{s+\tb}\\
		& \leq \underline{C}(S) \upsilon^{-1}N_{0}^{\tau_1}\absk{p_0}{s+\tb}\, 
\end{align*}
provided $C(S, \tb) \, N_0^{-b} \leq \frac12$ and  $\underline{C}(S) :=  1+ 2 C(S)$.
This proves \eqref{stima.B2} at the step 
$\tn+1$. 

The estimate for $\Delta_{12}\beta_{\bar \tn}$ follows similarly 
by  \eqref{estp121}--\eqref{estg12},  \eqref{stime.pn.w}, 
\eqref{small12}.
\end{proof}

\begin{rem}
If the function $ p_0(\vphi,x)$ in \eqref{defX0} is not  a quasi-periodic traveling wave $ p_0(\vphi,x)$, the same kind of conjugation result holds requiring in
\eqref{set.nonres.tn} the non resonance conditions 
$$
		| \omega \cdot \ell + \tm_{1,\tn-1} j |\geq \upsilon \braket{\ell}^{-\tau}, \  \forall\,
		(\ell, j) \in (\Z^{\nu} \times \Z) \setminus\{0\} \,, \ |(\ell,j)|\leq N_{\tn-1} \, . 
$$
\end{rem}

\begin{proof}[Proof of Theorem \ref{thm:as}]
The proof is inductive. 
In Lemma \ref{stime.iter.ptn} 
we prove that
the norms $ |p_\tn|_s^{k_0, \upsilon} $ satisfy 
inequalities typical of
a Nash-Moser iterative scheme, which converges 
under the  smallness low norm condition \eqref{small.V.as.AP}.
\\[1mm]
\noindent{\bf The step $\tn = 0$}. The items $\bf (S1)_0$, $\bf (S2)_0$, 
hold with  $\tm_{1,0}:=0$ (the estimates \eqref{tm.est},  \eqref{stime.pn.w} are trivial, as well as \eqref{estp121}-\eqref{estm121}). 
\\[1mm]
\noindent{\bf The reducibility step.} 
We now describe the generic inductive step, showing how to transform
$X_{\tn}$ in \eqref{Xtn.gtn}  into $X_{\tn+1 }$  by  conjugating with the 
composition operator
$\cG_{\tn}$ induced by a diffeomorphism $ x + g_{\tn} (\varphi,x) $ for a periodic
function $ g_{\tn} (\varphi,x) $ (defined in \eqref{gtn.def}).
A direct computation gives 
\begin{equation*}
	\begin{aligned}
	\cG_{\tn}^{-1} \, X_{\tn} \, \cG_{\tn }
&= \omega\cdot\pa_\vf + \{\cG_{\tn}^{-1}\big( \omega\cdot\pa_\vf g_{\tn} + (\tm_{1,\tn}+p_{\tn})(1+(g_{\tn})_x) \big)\} \pa_y \\
& =  \omega\cdot\pa_\vf + 
 \tm_{1,\tn} \pa_y + 
\{\cG_{\tn}^{-1}\big( (\omega\cdot\pa_\vf  + \tm_{1,\tn} \pa_x )g_{\tn} +p_{\tn}+ p_{\tn}(g_{\tn})_x \big)\} \pa_y \, . 
	\end{aligned}
	\end{equation*}
	We choose  $g_{\tn}(\vf,x)$ as the solution of  the homological equation
	\begin{equation}
	\label{he.as}
 (\omega\cdot\pa_\vf + \tm_{1,\tn}\pa_x)	g_{\tn}(\vf,x) +\Pi_{N_{\tn}} p_{\tn} = \braket{p_{\tn}}_{\vf,x}   
	\end{equation}
where $ \braket{  p_{\tn} }_{\vf,x}  $  is the average of $p_{\tn} $ defined as in \eqref{def:avera}. So 
 we define
	\begin{equation}\label{gtn.def}
			g_{\tn}(\vf,x) := - (\omega\cdot\pa_\vf + \tm_{1,\tn}\pa_x)_{\rm ext}^{-1}(\Pi_{N_{\tn}} p_{\tn}- \braket{p_{\tn}}_{\vf,x}) 
	\end{equation}
	where the operator $(\omega\cdot\pa_\vf + \tm_{1,\tn}\pa_x)_{\rm ext}^{-1}$ is introduced 
	in \eqref{paext}.	
The function  $g_{\tn}(\vf,x) $ is defined for all parameters $(\omega, \gamma) \in \R^\nu \times [\gamma_1, \gamma_2]$, it is a quasi-periodic traveling wave, ${\rm odd}(\vf,x)$,  fulfills \eqref{gtn.est.better} at the step $\tn$ (by \eqref{lem:diopha.eq}), 
and for any $(\omega, \gamma)$ in the set $\t\Lambda_{\tn+1}^{\rm T}$ defined in \eqref{set.nonres.tn}, it solves the homological equation \eqref{he.as}. 
By  \eqref{gtn.est.better}  at the step $\tn$, \eqref{stime.pn.w}, \eqref{small.V.as.AP},
$ \ta \geq \chi \tau_1 + 3 $ (see \eqref{tbta}) 
\begin{equation}\label{g.piccolo}
	\absk{g_{\tn}}{2\mathfrak{s}_0+1} \leq C(\mathfrak{s}_0) N_{\tn}^{\tau_1}N_{\tn-1}^{-\ta}\absk{p_0}{2\mathfrak{s}_0+\tb+1} \upsilon^{-1} 
< \delta(\mathfrak{s}_0)  
	\,
\end{equation}
provided $N_0$ is large enough. 
 By Lemma \ref{proprieta}-4 
the diffeomorphism $ y =  x + g_{\tn} (\vphi,x) $ is invertible 
and its inverse $ x =  y + \breve g_{\tn} (\vphi,y) $ (which induces
the operator  $\cG_{\tn}^{-1}$)   satisfies
\begin{equation}\label{stimagbreve}
\absk{\breve g_{\tn}}{s} \leq C(s) \absk{g_{\tn}}{s} \, .
\end{equation}
	For any $(\omega, \gamma)$ in  $\t\Lambda_{\tn+1}^{\rm T}$, the operator   
	$ X_{\tn+1} = \cG_{\tn}^{-1} \, X_{\tn} \, \cG_{\tn} $ takes the form \eqref{Xtn.gtn} at step $\tn+1$ with 
	\begin{equation}\label{mtn.ptn}
		\tm_{1,\tn+1}:= \tm_{1,\tn} + \braket{p_{\tn}}_{\vf,x} \in\R\,, \quad p_{\tn+1}(\vf,y):= \{ \cG_{\tn}^{-1}\big( \Pi_{N_{\tn}}^\perp p_{\tn} + p_{\tn} (g_{\tn})_x \big) \}(\vf,y)\,.
	\end{equation}
This  verifies \eqref{Xtn.gtn2}  at step $\tn+1$.	Note that the constant $\tm_{1,\tn+1} \in \R $ and the function 
$p_{\tn+1}(\vf,y) $ in \eqref{mtn.ptn} are defined for all $(\omega,\gamma)\in\R^\nu\times[\gamma_1,\gamma_2]$.

\smallskip

In order to prove the inductive  estimates \eqref{stime.pn.w}, we first show the following iterative estimates of Nash-Moser type. 

\begin{lem}\label{stime.iter.ptn}
	The function $p_{\tn+1}$ defined in \eqref{mtn.ptn} satisfies, for any $\mathfrak{s}_0\leq s \leq S$,
	\begin{align}
		& \absk{p_{\tn+1}}{s} \leq C_1(s) \big( N_{\tn}^{-\tb} \absk{p_{\tn}}{s+\tb} + N_{\tn}^{\tau_1+1}\upsilon^{-1} \absk{p_{\tn}}{s}\absk{p_{\tn}}{\mathfrak{s}_0}\big) \label{ptn.s} \\
		& \absk{p_{\tn+1}}{s+\tb} \leq  C_2(s,\tb) \big( \absk{p_{\tn}}{s+\tb} + N_{\tn}^{\tau_1+1}\upsilon^{-1} \absk{p_{\tn}}{s+\tb}\absk{p_{\tn}}{\mathfrak{s}_0} \big) \label{ptn.s+b}
	\end{align}
where the positive constants $C_1(s), C_2(s,\tb)$ are monotone in $ \mathfrak{s}_0 \leq s \leq S$.
\end{lem}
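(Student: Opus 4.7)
The starting point is the explicit formula
\[
p_{\tn+1}(\vf,y) = \bigl\{\cG_{\tn}^{-1}\bigl( \Pi_{N_{\tn}}^\perp p_{\tn} + p_{\tn}\,(g_{\tn})_x \bigr)\bigr\}(\vf,y),
\]
so the plan is to estimate $|p_{\tn+1}|_s^{k_0,\upsilon}$ by applying the composition estimate of Lemma \ref{proprieta}-(iv) to $\cG_{\tn}^{-1}$ and then bounding the two summands $\Pi_{N_\tn}^\perp p_\tn$ (the ``smoothed out'' term) and $p_\tn (g_\tn)_x$ (the ``quadratic'' term) separately, using the tame product estimate of Lemma \ref{proprieta}-(ii), the smoothing estimate of Lemma \ref{proprieta}-(iii), and the sharp bound \eqref{gtn.est.better} on the solution $g_\tn$ of the homological equation. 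Throughout I shall use crucially that $g_\tn$ is frequency-localized to $|\ell|,|j|\le N_\tn$ (it is defined by applying the cut-off diophantine inverse to $\Pi_{N_\tn}p_\tn - \langle p_\tn\rangle_{\vf,x}$), so that $|(g_\tn)_x|_s \le N_\tn|g_\tn|_s \lesssim N_\tn^{\tau_1+1}\upsilon^{-1}|p_\tn|_s$; and that $|\breve g_\tn|_s \lesssim_s |g_\tn|_s$ by Lemma \ref{proprieta}-(iv).

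Concretely, setting $f_\tn := \Pi_{N_\tn}^\perp p_\tn + p_\tn (g_\tn)_x$, the composition estimate gives
\[
|p_{\tn+1}|_s^{k_0,\upsilon} \le C(s)\bigl( |f_\tn|_s^{k_0,\upsilon} + |\breve g_\tn|_s^{k_0,\upsilon}\,|f_\tn|_{\mathfrak{s}_0+1}^{k_0,\upsilon}\bigr),
\]
provided the smallness condition $|\breve g_\tn|_{2\mathfrak{s}_0+1}^{k_0,\upsilon}\ll 1$ is in force, which is exactly \eqref{g.piccolo}. For $|f_\tn|_s^{k_0,\upsilon}$ I would split: the smoothing gives $|\Pi_{N_\tn}^\perp p_\tn|_s^{k_0,\upsilon} \le N_\tn^{-\tb}|p_\tn|_{s+\tb}^{k_0,\upsilon}$, and the tame product bound together with the frequency cut-off for $g_\tn$ yields
\[
|p_\tn (g_\tn)_x|_s^{k_0,\upsilon} \lesssim_s |p_\tn|_s^{k_0,\upsilon}|(g_\tn)_x|_{\mathfrak{s}_0}^{k_0,\upsilon} + |p_\tn|_{\mathfrak{s}_0}^{k_0,\upsilon}|(g_\tn)_x|_s^{k_0,\upsilon} \lesssim_s N_\tn^{\tau_1+1}\upsilon^{-1}\,|p_\tn|_s^{k_0,\upsilon}|p_\tn|_{\mathfrak{s}_0}^{k_0,\upsilon}.
\]
This delivers the two summands on the right of \eqref{ptn.s}. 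For the cross term $|\breve g_\tn|_s|f_\tn|_{\mathfrak{s}_0+1}$, I would bound $|\breve g_\tn|_s^{k_0,\upsilon}\lesssim_s N_\tn^{\tau_1}\upsilon^{-1}|p_\tn|_s^{k_0,\upsilon}$, and $|f_\tn|_{\mathfrak{s}_0+1}^{k_0,\upsilon}\lesssim N_\tn^{-\tb+1}|p_\tn|_{\mathfrak{s}_0+\tb}^{k_0,\upsilon} + N_\tn^{\tau_1+2}\upsilon^{-1}(|p_\tn|_{\mathfrak{s}_0+1}^{k_0,\upsilon})^2$; under the smallness \eqref{small.V.as.AP} the low-norm factors are harmless, and using the interpolation Lemma \ref{proprieta}-(v) to absorb the extra $|p_\tn|_{\mathfrak{s}_0+\tb}^{k_0,\upsilon}$ into a product of the form $|p_\tn|_{s+\tb}^{k_0,\upsilon}|p_\tn|_{\mathfrak{s}_0}^{k_0,\upsilon}$ plus an $N_\tn^{-\tb}|p_\tn|_{s+\tb}^{k_0,\upsilon}$ piece, one arrives exactly at \eqref{ptn.s}.

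For the high-norm estimate \eqref{ptn.s+b}, I would repeat the same scheme with $s$ replaced by $s+\tb$: now the smoothing on $\Pi_{N_\tn}^\perp p_\tn$ yields only $|p_\tn|_{s+\tb}^{k_0,\upsilon}$ with no negative power of $N_\tn$, while the product term and composition cross-term give, as above, a contribution of the form $N_\tn^{\tau_1+1}\upsilon^{-1}|p_\tn|_{s+\tb}^{k_0,\upsilon}|p_\tn|_{\mathfrak{s}_0}^{k_0,\upsilon}$. Here it is essential to use the graded norm $|\cdot|_s^{k_0,\upsilon}$ rather than $\|\cdot\|_s^{k_0,\upsilon}$, because \eqref{est:compo} (and not \eqref{est:compo-loss}) has no loss of $k_0$ derivatives on the top Sobolev index $|u|_{s+\tb}^{k_0,\upsilon}$—otherwise the high-norm bound would blow up geometrically along the iteration.

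The main obstacle, in my view, is precisely this: keeping the constant in front of $|p_\tn|_{s+\tb}^{k_0,\upsilon}$ on the right-hand side of \eqref{ptn.s+b} bounded by an $s$-dependent constant $C_2(s,\tb)$ independent of $\tn$, so that when the estimates are iterated in $\tn$ the high norm grows only as $N_{\tn-1}$, as claimed in \eqref{stime.pn.w}. This forces one to carefully apportion derivatives between the two factors in each product and to use interpolation (Lemma \ref{proprieta}-(v)) rather than the naive tame estimate whenever a term of the form $|p_\tn|_{s+\tb-1}^{k_0,\upsilon}|g_\tn|_{\mathfrak{s}_0+1}^{k_0,\upsilon}$ threatens to appear with an uncontrolled $s$-dependent prefactor; it also forces the use of the low-norm smallness \eqref{small.V.as.AP} to absorb the factor $N_\tn^{\tau_1+1}\upsilon^{-1}|p_\tn|_{\mathfrak{s}_0}^{k_0,\upsilon}$ at the appropriate moments so that it does not combine with $N_\tn^{\tau_1+1}\upsilon^{-1}$ in a way that defeats the $\tn$-uniform bound on $C_2(s,\tb)$.
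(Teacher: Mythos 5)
Your proof is correct and takes essentially the same route as the paper: write $p_{\tn+1}=\cG_{\tn}^{-1}F_\tn$ with $F_\tn=\Pi_{N_\tn}^\perp p_\tn + p_\tn(g_\tn)_x$, apply the no-loss composition estimate \eqref{est:compo} of Lemma \ref{proprieta}-(iv) with the smallness \eqref{g.piccolo}, then bound $F_\tn$ by combining the smoothing estimate of item (iii) for the tail $\Pi_{N_\tn}^\perp p_\tn$, the tame product estimate of item (ii) for $p_\tn(g_\tn)_x$, the homological-equation bound \eqref{gtn.est.better} together with the frequency localization of $g_\tn$ to trade a derivative for a factor $N_\tn$, and finally the interpolation of item (v) plus the smallness \eqref{small.V.as.AP} to reabsorb the cross term coming from $|\breve g_\tn|_s\,|F_\tn|_{\mathfrak{s}_0+1}$. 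The only cosmetic difference is the bookkeeping in that cross term: the paper moves the smoothing factor $N_\tn$ onto $\absk{\Pi_{N_\tn}p_\tn}{s}\le N_\tn\absk{p_\tn}{s-1}$ and then interpolates $\absk{p_\tn}{s-1}\absk{p_\tn}{\mathfrak{s}_0+1}\lesssim\absk{p_\tn}{s}\absk{p_\tn}{\mathfrak{s}_0}$, whereas you keep $\absk{p_\tn}{s}$ and let the negative power $N_\tn^{-\tb+1}$ in $|F_\tn|_{\mathfrak{s}_0+1}$ do the absorption; both land on \eqref{ptn.s}, and the same replacement $s\rightsquigarrow s+\tb$ gives \eqref{ptn.s+b}.
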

\begin{proof}
	 We first show the estimate \eqref{ptn.s}. We write $p_{\tn+1}$ in \eqref{mtn.ptn} as $p_{\tn+1}:= \cG_{\tn}^{-1} F_{\tn} $ with $F_{\tn}:=\Pi_{N_{\tn}}^\perp p_{\tn} + p_{\tn} (g_{\tn})_x $. 
	 By Lemma \ref{proprieta}-item 2,
	 we get
	\begin{equation}\label{A.26}
			\absk{F_{\tn}}{s}  \leq \absk{\Pi_{N_{\tn}}^\perp p_{\tn}}{s} + C(s) \absk{p_{\tn}}{s}\absk{g_{\tn}}{\mathfrak{s}_0+1}+C(\mathfrak{s}_0)\absk{p_{\tn}}{\mathfrak{s}_0} \absk{g_{\tn}}{s+1}\,.
	\end{equation}
	By \eqref{est:compo},  
	\eqref{stimagbreve},
		 \eqref{A.26},  
	\eqref{gtn.est.better}  at step $\tn$, Lemma \ref{proprieta} and \eqref{g.piccolo},
	we have
		\begin{align}
			\absk{p_{\tn+1}}{s} &\lesssim_{s} \absk{F_{\tn}}{s} + \absk{g_{\tn}}{s}\absk{F_{\tn}}{\mathfrak{s}_0+1} \notag \\
			& \lesssim_{s} \absk{\Pi_{N_{\tn}}^\perp p_{\tn}}{s}+ \absk{p_{\tn}}{s}\absk{g_{\tn}}{\mathfrak{s}_0+1}+\absk{p_{\tn}}{\mathfrak{s}_0} \absk{g_{\tn}}{s+1}   \notag\\
			& \ \ +\absk{g_{\tn}}{s}\big( \absk{\Pi_{N_{\tn}}^\perp p_{\tn}}{\mathfrak{s}_0+1}+ \absk{p_{\tn}}{\mathfrak{s}_0+1}\absk{g_{\tn}}{\mathfrak{s}_0+1}+\absk{p_{\tn}}{\mathfrak{s}_0} \absk{g_{\tn}}{\mathfrak{s}_0+2} \big) \notag \\
			& \lesssim_{s} \absk{\Pi_{N_{\tn}}^\perp p_{\tn}}{s}+ \absk{p_{\tn}}{s}\absk{g_{\tn}}{\mathfrak{s}_0+1} +\absk{p_{\tn}}{\mathfrak{s}_0} \absk{g_{\tn}}{s+1} 
			+N_{\tn}^{\tau_1+1}\upsilon^{-1}\absk{p_{\tn}}{s-1} \absk{p_{\tn}}{\mathfrak{s}_0+1}   \notag \\
			& \lesssim_{s} N_{\tn}^{-\tb} \absk{p_{\tn}}{s+\tb} + N_{\tn}^{\tau_1+1}\upsilon^{-1} \absk{p_{\tn}}{s}\absk{p_{\tn}}{\mathfrak{s}_0}\,, \label{cont0}
		\end{align}
	which is  \eqref{ptn.s}. The estimate \eqref{ptn.s+b} follows 
	as for \eqref{cont0} (with $ s \rightsquigarrow  s + \tb $).
\end{proof}

As a corollary of the previous lemma we deduce the following lemma.
\begin{lem}
The estimates \eqref{stime.pn.w}-\eqref{tm.est}  hold at the  step $ \tn + 1 $.
\end{lem}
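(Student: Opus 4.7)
The strategy is to combine the iterative estimates of Lemma \ref{stime.iter.ptn} with the inductive hypotheses \eqref{stime.pn.w}--\eqref{tm.est} at step $\tn$, and then use the scale properties $N_{\tn}=N_{\tn-1}^{\chi}$, the relations $\tb=[\ta]+2$ and $\ta=3(\tau_1+1)$ in \eqref{tbta}, together with the smallness assumption \eqref{small.V.as.AP}, to close the induction.

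First I would handle the low-norm estimate. Substituting the induction hypotheses $|p_\tn|_{s+\tb}^{k_0,\upsilon}\leq C(s,\tb) N_{\tn-1}|p_0|_{s+\tb}^{k_0,\upsilon}$ and $|p_\tn|_s^{k_0,\upsilon},|p_\tn|_{\mathfrak{s}_0}^{k_0,\upsilon}\leq C(s,\tb) N_{\tn-1}^{-\ta}|p_0|_{s+\tb}^{k_0,\upsilon}$ into \eqref{ptn.s} yields
\[
|p_{\tn+1}|_s^{k_0,\upsilon} \leq C(s,\tb)\bigl( N_\tn^{-\tb} N_{\tn-1}+ N_\tn^{\tau_1+1}\upsilon^{-1} N_{\tn-1}^{-2\ta}|p_0|_{\mathfrak{s}_0+\tb}^{k_0,\upsilon}\bigr)|p_0|_{s+\tb}^{k_0,\upsilon}.
\]
Using $N_\tn=N_{\tn-1}^{3/2}$ and the definitions $\tb=[\ta]+2\geq \ta+1$ and $\ta\geq 3\tau_1/2 + 3$, each factor in parentheses is bounded, up to a constant, by $\tfrac12 N_\tn^{-\ta}$ once $N_0$ is taken large enough and $\upsilon^{-1}|p_0|_{\mathfrak{s}_0+\tb}^{k_0,\upsilon}$ is controlled by $N_0^{-\tau_2}$ via \eqref{small.V.as.AP}; the choice $\tau_2>\tau_1+1+\ta$ is what makes the quadratic (Nash--Moser) term absorbable by the linear one. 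This gives the first bound in \eqref{stime.pn.w} at step $\tn+1$.

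The high-norm estimate is similar but easier: from \eqref{ptn.s+b} and the induction hypothesis,
\[
|p_{\tn+1}|_{s+\tb}^{k_0,\upsilon}\leq C(s,\tb)\bigl(1 + N_\tn^{\tau_1+1}\upsilon^{-1} C(\mathfrak{s}_0,\tb) N_{\tn-1}^{-\ta}|p_0|_{\mathfrak{s}_0+\tb}^{k_0,\upsilon}\bigr)\, N_{\tn-1}|p_0|_{s+\tb}^{k_0,\upsilon}.
\]
The parenthesized factor is $\leq 2$ by \eqref{small.V.as.AP} (and $\ta>\tau_1+1$), and since $N_{\tn-1}\leq N_\tn$ this gives $|p_{\tn+1}|_{s+\tb}^{k_0,\upsilon}\leq C(s,\tb) N_\tn |p_0|_{s+\tb}^{k_0,\upsilon}$, as required.

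Finally, for the constants: by \eqref{mtn.ptn} we have $\tm_{1,\tn+1}-\tm_{1,\tn}=\langle p_\tn\rangle_{\vf,x}$, so
\[
|\tm_{1,\tn+1}-\tm_{1,\tn}|^{k_0,\upsilon}\leq |p_\tn|_{\mathfrak{s}_0}^{k_0,\upsilon}\leq C(\mathfrak{s}_0,\tb) N_{\tn-1}^{-\ta}|p_0|_{\mathfrak{s}_0+\tb}^{k_0,\upsilon},
\]
which proves the difference estimate in \eqref{tm.est}. Summing the telescoping series $\tm_{1,\tn+1}=\sum_{k=0}^{\tn}(\tm_{1,k+1}-\tm_{1,k})$, starting from $\tm_{1,0}=0$, the first term is bounded by $|\langle p_0\rangle_{\vf,x}|^{k_0,\upsilon}\leq|p_0|_{\mathfrak{s}_0+\tb}^{k_0,\upsilon}$ and the remaining ones contribute $\sum_{k\geq 1} C(\mathfrak{s}_0,\tb) N_{k-1}^{-\ta}|p_0|_{\mathfrak{s}_0+\tb}^{k_0,\upsilon}$, which is convergent (since $\ta\geq 1$) and bounded by $|p_0|_{\mathfrak{s}_0+\tb}^{k_0,\upsilon}$ for $N_0$ large, so $|\tm_{1,\tn+1}|^{k_0,\upsilon}\leq 2|p_0|_{\mathfrak{s}_0+\tb}^{k_0,\upsilon}$.

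The only delicate point is the bookkeeping of the constants: the constant $C(s,\tb)$ in \eqref{stime.pn.w} at step $\tn+1$ is produced by composing $C_1(s),C_2(s,\tb)$ with the inductive $C(s,\tb)$ from step $\tn$. The factor gained per step is $\tfrac12$ coming from the smallness provided by \eqref{small.V.as.AP} (through $N_0$ large and $\tau_2>\tau_1+1+\ta$), which prevents the constants from blowing up along the iteration. This is a standard Nash--Moser bookkeeping and is the only step that truly requires care; the rest is algebraic manipulation of the scale $N_\tn=N_{\tn-1}^{3/2}$.
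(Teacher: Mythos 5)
Your proof is correct and follows essentially the same route as the paper's: plug the inductive hypotheses \eqref{stime.pn.w} at step $\tn$ into the two Nash--Moser-type inequalities \eqref{ptn.s}--\eqref{ptn.s+b} of Lemma \ref{stime.iter.ptn}, use $N_\tn=N_{\tn-1}^{3/2}$ together with the relations in \eqref{tbta} and the smallness condition \eqref{small.V.as.AP} to absorb both terms into $C(s,\tb)N_\tn^{-\ta}$ (resp.\ $C(s,\tb)N_\tn$), and then deduce \eqref{tm.est} from $\tm_{1,\tn+1}-\tm_{1,\tn}=\langle p_\tn\rangle_{\vf,x}$ via the first bound and telescoping. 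The one place where your writing is slightly looser than it should be is the high-norm step: going from the factor ``$2\cdot N_{\tn-1}$'' to ``$C(s,\tb)N_\tn$'' requires $2C_2(s,\tb)N_{\tn-1}\leq C(s,\tb)N_\tn$, i.e.\ $N_{\tn-1}^{1/2}\gtrsim C_2(s,\tb)$, which is precisely why the paper says ``eventually increasing $N_0$''; you gesture at this bookkeeping in your closing paragraph but do not make the requirement $N_0^{1/2}\geq 2C_2(S,\tb)$ explicit.
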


\begin{proof}
	By \eqref{ptn.s} and \eqref{stime.pn.w}  we have, for any $\mathfrak{s}_0 \leq s \leq S$,
	\begin{align*}
		\absk{p_{\tn+1}}{s} & \leq C_1(S) \, C(s, \tb) \big( N_{\tn}^{-\tb}N_{\tn-1}^{}\absk{p_0}{s+\tb}+ C(\mathfrak{s}_0,\tb)\upsilon^{-1}N_{\tn}^{\tau_1+1}N_{\tn-1}^{-2\ta}\absk{p_0}{s+\tb}\absk{p_0}{\mathfrak{s}_0+\tb}\big)\\
		&  \leq C(s,\tb) N_{\tn}^{-\ta}\absk{p_0}{s+\tb}
	\end{align*} 
	asking that
$	 C_1(S) 	N_{\tn}^{-\tb}N_{\tn-1}^{}\leq \tfrac12 N_{\tn}^{-\ta} $ and 
$ C_1(S)C(\mathfrak{s}_0,\tb) \upsilon^{-1}N_{\tn}^{\tau_1+1}N_{\tn-1}^{-2\ta}\absk{p_0}{\mathfrak{s}_0+\tb} \leq \tfrac12 N_{\tn}^{-\ta} $, 
	which both follow by \eqref{tbta}, the smallness assumption \eqref{small.V.as.AP} and taking $N_0 := N_0 (S) > 0 $ sufficiently large. This proves the first estimate of \eqref{stime.pn.w} at step $\tn+1$. 
	The second follows in a similar way, eventually increasing $N_0$.

Finally we have, by \eqref{mtn.ptn} and the first inequality in \eqref{stime.pn.w},  
\begin{equation}\label{diffm1m2}
		|\tm_{1,\tn+1}-\tm_{1,\tn}|^{k_0,\upsilon} = | \braket{p_{\tn}}_{\varphi,x}|^{k_0,\upsilon} \leq \absk{p_{\tn}}{\mathfrak{s}_0} \leq C(\mathfrak{s}_0,\tb) N_{\tn-1}^{-\ta} \absk{p_0}{\mathfrak{s}_0+\tb} \, , 
\end{equation}
proving the second estimate  \eqref{tm.est} at step $\tn+1$.
Writing $ \tm_{1,\tn+1} = 
\sum_{j=0}^{\tn}  (\tm_{1,j+1}-\tm_{1,j}) $ and recalling that $ \tm_{1,0} = 0  $, 
	we deduce by \eqref{diffm1m2} the first estimate  \eqref{tm.est} at step $\tn+1$.
\end{proof}

The proof of $\bf (S1)_{\tn+1}$ is complete. The item  $\bf (S2)_{\tn+1}$ follows by similar inductive arguments. 
The proof of Theorem \ref{thm:as} is concluded.
\end{proof}

\paragraph{Acknowledgements.}
We thank Riccardo Montalto for many useful discussions.
The work of the author L.F. is supported by Tamkeen under the NYU Abu Dhabi Research Institute grant CG002.

\begin{footnotesize}
	
\end{footnotesize}

\end{document}